\def\hB{\hspace*{\fill}$\qed$}
\title{ $E$-theory  is compactly assembled}
\author{
Ulrich Bunke\thanks{Fakult{\"a}t f{\"u}r Mathematik,
Universit{\"a}t Regensburg,
93040 Regensburg,
ulrich.bunke@mathematik.uni-regensburg.de} 
 \ and Benjamin Dünzinger\thanks{Fakult{\"a}t f{\"u}r Mathematik,
	Universit{\"a}t Regensburg,
	93040 Regensburg, 
	benjamin.duenzinger@ur.de} 
}
\numberwithin{equation}{section}
\newtheorem{theorem}{Theorem}[section] 
\newtheorem{prop}[theorem]{Proposition}
\newtheorem{lem}[theorem]{Lemma}
\newtheorem{constr}[theorem]{Construction}
\newtheorem{ddd}[theorem]{Definition}
\newtheorem{kor}[theorem]{Corollary}
\newtheorem{prob}[theorem]{Problem}
\theoremstyle{remark}
\theoremstyle{definition}
\newtheorem{ex}[theorem]{Example}
\newtheorem{rem}[theorem]{Remark}
\newcommand{\bGs}{\cB^{\Es} }
\newcommand{\nGs}{\cN^{\Es} }
\newcommand{\cKue}{\cK}
\newcommand{\bGsk}{\cB^{\KKs} }
\newcommand{\nGsk}{\cN^{\KKs} }
\newcommand{\cTmakk}{\cK^{\KKs}_{\max} }
\newcommand{\cTmikk}{\cK^{\KKs}_{\min} }
\newcommand{\cTe}{\cK^{\Es} }
\newcommand{\trkma}{\cT^{\KKs}_{\max} }
\newcommand{\trkmi}{\cT^{\KKs}_{\min} }
\newcommand{\tre}{\cT^{\Es} }
\newcommand{\unit}{1}
\newcommand{\kks}{\mathrm{kk}_{\sepa}}
\newcommand{\KKs}{\mathrm{KK}_{\sepa}}
\newcommand{\cfil}{\mathrm{cfil}}
\newcommand{\lex}{\mathrm{lex}}
\newcommand{\AGc}{\mathrm{A}^G}
\newcommand{\AsG}{\mathrm{A}^{G}_{\mathrm{sep},0}}
\newcommand{\asG}{a^G_{\mathrm{sep},0}}
\newcommand{\AsGc}{\mathrm{A}^{G}_{\mathrm{sep}}}
\newcommand{\asGc}{a^G_{\mathrm{sep}}}
\newcommand{\asGck}{a^G_{\mathrm{sep},K_{G}}}
\newcommand{\EsGf}{\mathrm{E}^G_{\sepa, \fin}}
\newcommand{\esGf}{\mathrm{e}^G_{\sepa, \fin}}
\newcommand{\esGfh}{\mathrm{e}^G_{\sepa, h,\fin}}
\newcommand{\aGc}{\mathrm{a}^{G}}
\newcommand{\EssIm}{\mathrm{EssIm}}
\newcommand{\lax}{\mathrm{lax}}
\newcommand{\pcas}{compactly assembled}
\newcommand{\casmbl}{\mathrm{casmbl}}
\newcommand{\disc}{\mathrm{disc}}
\newcommand{\fin}{\mathrm{fin}}
\newcommand{\EE}{\mathrm{E}}
\newcommand{\sepa}{\mathrm{sep}}
\newcommand{\A}{\mathbb{A}}
\newcommand{\kkGs}{\mathrm{kk}_{\sepa}^{G}}
\newcommand{\KKG}{\mathrm{KK}^{G}}
\newcommand{\KKGs}{\mathrm{KK}_{\sepa}^{G}}
\newcommand{\nCalg}{C^{*}\mathbf{Alg}^{\mathrm{nu}}}
\newcommand{\F}{\mathbb{F}}
\newcommand{\ho}{\mathrm{ho}}
\newcommand{\Res}{\mathrm{Res}}
\newcommand{\Pro}{\mathbf{Pro}}
\newcommand{\cN}{\mathcal{N}}
\newcommand{\Fin}{\mathbf{Fin}}
\newcommand{\bB}{{\mathbf{B}}}
\newcommand{\Fib}{{\mathrm{Fib}}}
\newcommand{\incl}{\mathrm{incl}}
\newcommand{\cK}{\mathcal{K}}
\newcommand{\bA}{{\mathbf{A}}}
\newcommand{\const}{{\mathtt{const}}}
\newcommand{\cO}{{\mathcal{O}}}
\newcommand{\cU}{{\mathcal{U}}}
\newcommand{\cD}{{\mathcal{D}}}
 \newcommand{\Cat}{{\mathbf{Cat}}}
\newcommand{\Group}{\mathbf{Group}}
\newcommand{\uHom}{\underline{\mathrm{Hom}}}
\newcommand{\Spc}{\mathbf{An}}
\renewcommand{\Pr}{\mathbf{Pr}}
\newcommand{\st}{\mathrm{st}}
\newcommand{\op}{\mathrm{op}}
\newcommand{\stCatp}{\mathbf{Cat}^{\mathrm{perf}}_{\infty}}
\newcommand{\KK}{\mathrm{KK}}
\renewcommand{\loc}{\mathrm{loc}}
\newcommand{\exa}{\mathrm{ex}}
\newcommand{\K}{\mathrm{K}}
\newcommand{\PrLst}{\mathbf{\Pr}^{\mathrm{L}}_{\mathrm{st}}}
\newcommand{\EsG}{\mathrm{E}^{G}_{\sepa}}
\newcommand{\EsH}{\mathrm{E}^{H}_{\sepa}}
\newcommand{\EsGght}{\mathrm{E}^{G}_{\sepa,GHT}}
\newcommand{\Es}{\mathrm{E}_{\sepa}}
\newcommand{\EsGs}{ \mathrm{E}_{\sepa,\oplus}^{G}}
\newcommand{\esGs}{ \mathrm{e}_{\sepa,\oplus}^{G}}
\newcommand{\EG}{\mathrm{E}^{G}}
\newcommand{\eG}{\mathrm{e}^{G}}
\newcommand{\eGG}{\mathrm{e}^{G}}
\newcommand{\EsGn}{\mathrm{E}^{G}_{\sepa,0}}
\newcommand{\esG}{\mathrm{e}^{G}_{\sepa}}
\newcommand{\esH}{\mathrm{e}^{H}_{\sepa}}
\newcommand{\esGght}{\mathrm{e}^{G}_{\sepa,GHT}}
\newcommand{\es}{\mathrm{e}_{\sepa}}
\newcommand{\hesG}{\hat{\mathrm{e}}^{G}_{\sepa}}
\newcommand{\esGn}{\mathrm{e}^{G}_{\sepa,0}}
\newcommand{\esGnh}{\mathrm{e}^{G}_{\sepa,h,0}}
\newcommand{\Profinl}{\mathbf{ProFin}^{\aleph_{1}}}
\newcommand{\shp}{\mathrm{shp}}
\newcommand{\dual}{\mathrm{dual}}
\newcommand{\sh}{\mathrm{sh}}
\newcommand{\cd}{\mathrm{cd}}
\begin{document}

\maketitle

\begin{abstract}   
We show that the equivariant $E$-theory category $\EsG$ for separable $C^{*}$-algebras is a compactly assembled stable $\infty$-category. We derive this result as a consequence of the shape theory for $C^{*}$-algebras developed by Blackadar and Dardarlat and {a  new $ \infty $-categorical refinement of the category of asymptotic morphisms $ \AsGc $.}   As an application we investigate  {a} topological enrichment of the homotopy category of a compactly assembled $\infty$-category in general  and argue that  the results of Carrión and Schafhauser on 
the enrichment of the classical $E$-theory category can be  derived by  specialization. 	\end{abstract}

\tableofcontents
\setcounter{tocdepth}{5}
	
\section{Introduction}

\subsection{Statement of the main result}

The   $\infty$-category $\Pr^{L}_{\st}$ of stable presentable $\infty$-categories and colimit-preserving functors  is equipped   with the symmetric monoidal structure given by the Lurie tensor product   and therefore admits a notion of dualizable objects. According to
 \cite[D.7.0.7]{sag}  the  dualizable objects in $\Pr^{L}_{\st}$ can equivalently be characterized as  the compactly assembled
presentable stable $\infty$-categories. In particular, compactly  generated   presentable  stable $\infty$-categories are 
examples of compactly assembled
  stable $\infty$-categories. They comprise the image  
of the ind-completion functor  $\Ind:\stCatp\to \Pr^{L}_{\st}$. This functor identifies  the category of 
small idempotent complete stable $\infty$-categories with the subcategory 
$\Pr^{LL}_{\omega,\st}\subseteq \Pr^{L}_{\st}$ of compactly generated stable presentable $\infty$-categories  and
  strongly cocontinuous  functors (colimit preserving functors  whose  right-adjoints also preserve colimits). 
  
  Dualizable  stable $\infty$-categories 
  recently attracted great attention since  \cite{Efimov_2024} has shown that
  every localizing invariant $L:\stCatp\to \Sp$ (a reduced functor {preserving} {bi-fibre}
 sequences)  %that are simultaneously fibre and cofibre sequences   to fibre sequences)
  has a unique extension $\hat L$ as in 
  {$$\xymatrix{\stCatp\ar[r]_{\simeq}^{\Ind}\ar[rd]_{L}& \Pr^{LL}_{\omega,\st}\ar[r]^{\subseteq} & \Pr^{{\dual}}\ar@{..>}[dl]^{\hat L}   \\ & \Sp } \ ,$$}where $\Pr^{{\dual}}\subseteq \Pr^{L}_{\st}$%\fben{Würde hier auch eine kürzere notation bevorzugen. Z. b.: $Pr_{dual}$. Die jetzige notation ist aber für mich in dem Kontext auch in ordnung.} 
  is the subcategory of dualizable 
   presentable stable  $\infty$-categories and  strongly cocontinuous  functors.
  Known classes of examples of   dualizable   presentable stable $\infty$-categories
  beyond the compactly generated ones are given by 
  sheaves on locally compact topological spaces with values in  spectra \cite[Proposition 21.1.7.1]{sag},  certain
  {``nuclear"} module  categories appearing in 
 analytic geometry  \cite{Efimov_Inverse_Limits} or the theory of analytic stacks developed by Clausen-Scholze,  but also the 
 target  of the universal {finitary} localizing invariant%\fben{Könnte verwirrend sein, weil das ist nicht die universelle Invariante bezgl, den Invarianten, die wir oben localizing genannt haben (man braucht auch noch finitary). Im Kontext der Einleitung ist es aber für mich präzise genug, so wie du es geschrieben hast.}
 \begin{equation}\label{gwerfwerfwef}\cU_{\loc}:\stCatp\to\mathrm{Mot}_{\mathrm{loc}}
\end{equation} \cite{Efimov_Motives}. We  refer to \cite{som}
 for a good introduction to many of these aspects. 
 
 The main goal of the present paper is to contribute a new class of examples
of dualizable  presentable stable  $\infty$-categories coming from non-commutative homotopy theory of $C^{*}$-algebras. Let $G$ be a countable group and $G\nCalg$ be the category of $G$-$C^{*}$-algebras. By \cref{oopwerfewrsfg}.\ref{oigopregpwerferwf9} 
we then have the universal
homotopy invariant, $K_{G}$-stable, exact, s-finitary and countable sum-preserving functor
\begin{equation}\label{gweroijfweiofwerfwerfwerfwr}\eG:G\nCalg\to \EG
\end{equation} to a cocomplete stable $\infty$-category. This functor is an $\infty$-categorical enhancement of the classical equivariant $E$-theory functor \cite{Guentner_2000}.  
The following is the main result of the present paper.  
\begin{theorem}\label{kohperthtgertgert} For every countable group $G$
the stable $\infty$-category $\EG$ is presentable and dualizable.
 \end{theorem}
 The  equivariant $E$-theory functor  \eqref{gweroijfweiofwerfwerfwerfwr} could be viewed as the non-commutative homotopy theoretic analogue of the universal localizing invariant \eqref{gwerfwerfwef}, and then \cref{kohperthtgertgert} becomes the analogue of the dualizability of $\mathrm{Mot}_{\mathrm{loc}}$ from \cite{Efimov_Motives}.

As shown in \cite{Efimov_Motives} every compactly assembled presentable stable $\infty$-category $\bC$
can be written in the form
$\bC\simeq \Ind_{\aleph_{1}}(\bC^{\aleph_{1}})$
where   $\bC^{\aleph_{1}} \subseteq \bC$ is  the full 
%{countably cocomplete stable}
  {subcategory}  of $\aleph_{1}$-compact objects and $ \Ind_{\aleph_{1}}$ is the functor which freely adds all $\aleph_{1}$-filtered colimits. 
In fact, the $\aleph_{1}$-$\Ind$-completion functor 
$$\Ind_{\aleph_{1}}:\Cat_{\infty}^{\exa,\casmbl} \to \Pr^{{\dual}}$$
induces an equivalence from the category of small countably cocomplete compactly assembled 
  stable $\infty$-categories and countable colimit-  and compact maps preserving functors to  $\Pr^{{\dual}}$.

In the present paper we will mainly work on the level of small  
catergories. In the case of 
equivariant $E$-theory  by  \cref{iuhiowerfwefewf}  we have  an equivalence $(\EG)^{\aleph_{1}}\simeq \EsG $, where $\EsG$ is the target of the universal
homotopy invariant, $K_{G}$-stable, exact and countable sum-preserving functor
$$\esG :G\nCalg_{\sepa}\to \EsG $$
from separable $G$-$C^{*}$-algebras to a countably cocomplete stable $\infty$-category.
In order to show \cref{kohperthtgertgert} we must therefore  show that $\EsG $ is compactly assembled (in the sense of countably cocomplete stable $\infty$-categories). To this end
we generalize and apply the  shape theory for  $C^{*}$-algebras \cite{zbMATH03996430}, \cite{Dadarlat_1994}. %\fben{Etwas weg gelassen.}
%\ben{to the equivariant case}

In addition to the verification of  \cref{kohperthtgertgert},  this paper 
develops   some foundational material on compact maps and shape systems in countably cocomplete $\infty$-categories
in \cref{kopwretgretreht}, and it also discusses {applications to the homotopy theory of $C^{*}$-algebras in \cref{tgkowpegfwerwggerfwf}.} 
%{text before: the specialization general constructions and results 
%from $\infty$-category theory to the homotopy theory of $C^{*}$-algebras in \cref{tgkowpegfwerwggerfwf}}.

%
%
% in order to construct an $\infty$-categorical enhancement $A^{G}_{\sepa}$ of the homotopy category
%of asymptotic morphisms.  We   show first that
%$A^{G}_{\sepa}$ is compactly assembled and then conclude that $\EE_{\sepa}^{G}$ is so. To this end  we   present
%the latter category as a sequence of left and right Bousfield localizations of $A^{G}_{\sepa}$, all preserving the property of being compactly assembled. %Upon applying $\Ind_{\aleph_{1}}$ the theorem follows.
%
%
 \subsection{A detailed introduction}
 
 This subsection gives a more detailed overview {on} the results of this paper including the main definitions and precise statements.
 %\uli{The present paper builds on the analogy of the semi-projective approximations of 
%separable $C^{*}$-algebras  considered in the shape theory of $C^{*}$-algebras \cite{zbMATH03996430}, \cite{Dadarlat_1994} and the compact approximation of objects   in countably cocomplete $\infty$-categories \cite[Sec. 21.1.2.]{sag}.}
% In this paper  we consider  the relationship between the shape theory for  $C^{*}$-algebras \cite{zbMATH03996430}, \cite{Dadarlat_1994} and the condition of being \pcas{} for $\infty$-categories admitting countable filtered colimits \cite[Sec. 21.1.2.]{sag}.
We let $G$ be a  countable group and denote by $G\nCalg_{\sepa}$   the category of separable  $G$-$C^{*}$-algebras.
The  homotopy theory in this category  will be captured {by} 
  functors \begin{equation}\label{fqwefewdqwedwefqf}\asGc:G\nCalg_{\sepa}\to \AsGc\ ,\qquad \esG:G\nCalg_{\sepa}\to \EsG\ .
\end{equation} 
Here $\asGc$ is {defined as}
the universal homotopy invariant and countable filtered colimit preserving functor to an $\infty$-category admitting countable filtered colimits, and $\esG$ is {defined as} the universal homotopy invariant, $K_{G}$-stable, exact and countable sum-preserving functor to a countably cocomplete stable $\infty$-category (see \cref{fjioqwefdqwedewdqwed} for a description of these properties). 
 These functors {turn out to be}  $\infty$-categorical enhancements {of} {classically known}  functors
  \begin{equation}\label{fqwefewdqwedeerererwefqf}\asG:G\nCalg_{\sepa}\to \AsG\ ,\qquad \esGn:G\nCalg_{\sepa}\to \EsGn\ ,\end{equation}  where
 $\AsG$ is the category of separable $C^{*}$-algebras and homotopy classes  of asymptotic morphisms introduced in \cite{Guentner_2000} (see also \cite{Dadarlat_1994} for the case of a trivial group), while $\EsGn$ is the classical equivariant  $E$-theory category constructed in the same reference (see also \cite{MR1068250}, \cite{zbMATH04182148}  for the case of a trivial group  $G$). 
 %The functors $\asGc$ and $\esG$ clearly are essentially uniquely characterized by their respective universal properties. 
 The proof of the  existence  of the functors {in \eqref{fqwefewdqwedwefqf}} by providing constructions is a central topic of this paper. % and will be {outlined} in this {subsection} %introduction
%  further below.
 
We next introduce the notion of a compactly assembled $\infty$-category.
 If $\bD$ is an $\infty$-category admitting countable filtered colimits, then
 the canonical morphism $y:\bD\to \Ind^{\aleph_{1}}(\bD)$ has a left-adjoint $|-|: \Ind^{\aleph_{1}}(\bD)\to \bD$
 called the realization functor.  Here $\Ind^{\aleph_{1}}(\bD)$ is the closure of the essential image of $y:\bD\to \Ind(\bD)$ under countable filtered colimits. Adapting the characterization \cite[Sec. 21.1.2.10]{sag} of being compactly assembled for large $\infty$-categories to the case of small $\infty$-categories just admitting  countable filtered colimits we make the following definition. 
 
\begin{ddd}\label{eruwigowergwergferfrw} The $\infty$-category $\bD$ is called  \pcas{} if the realization functor admits a    left-adjoint.
\end{ddd}
{In order to highlight the analogy with the shape systems for $C^{*}$-algebras as described below} this left-adjoint $S:\bD\to  \Ind^{\aleph_{1}}(\bD)$ will be called the shape functor.

 \begin{rem}\label{ojkbpertherthregertg}In our context of an $\infty$-category $\bD$ just admitting countable filtered colimits  an object $A$ of $\bD$ will be called compact if the functor $\Map_{\bD}(A,-):\bD\to \Spc$ preserves countable filtered colimits. We say that $\bD$ is compactly generated if every object of $\bD$ is equivalent to a countable filtered colimit of a system of compact objects.
If $\bD$ is   compactly generated, then it is \pcas{} in the sense of \cref{eruwigowergwergferfrw}.  But  a general     $\infty$-category with countable filtered colimits might have very few compact objects and being  \pcas{} is a good replacement of the condition of being compactly generated.\hB \end{rem}
  
  \begin{ex}
  
{If $H$ is a finite group{,} then by \cref{gjiseogpgregesg} we know that $\esH(\C)$ is a compact object of $\EsH$.
 The induction functor %\footnote{In order to keep this paper at a reasonable length we will not discuss the construction of the induction functor in detail. It is completely analogous to the construction of the induction functor for equivariant $KK$-theory considered in \cite{KKG}.} 
 $\Ind_{H}^{G}:\EsH\to \EsG$  has a countable colimit preserving right-adjoint $\Res^{G}_{H}$ (see \cref{kogregwergwerg}) and therefore preserves compact objects. In particular the objects $\Ind_{H}^{G}(\esH(\C))\simeq \esG(C_{0}(G/H))$
 are compact {in} $\EsG$ for all finite subgroups $H$ of $G$. Further note that the collection of compact objects is closed under finite colimits and retracts. But in general $\EsG$ is not generated by these compact objects. {If it were, then as a consequence of \cite[Thm. 13.1]{Guentner_2000}  the Baum-Connes assembly map for the maximal crossed product would be an equivalence, but this is known to be false e.g. for groups with property $T$ having a $\gamma$-element \cite[Cor. 5.6]{Aparicio_2019}.} We also do not expect that {$\EsG$} is compactly generated at all.} \hB\end{ex}

The central result of the present paper is the following (see \cref{vsiowerfsdfuiohsdfoijasdf} and \cref{vsiowerfsdfuiohsdfoijasdf1}).
\begin{theorem}\label{jifofqweewf9}
The $\infty$-categories $\AsGc$ and $\EsG$ are  \pcas{}.
\end{theorem}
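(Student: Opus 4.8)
The plan is to sidestep compact generation --- which, as the introduction stresses, fails for $\EsG$ --- and instead establish \pcas{}-ness by a retract argument. Since a compactly generated $\infty$-category admitting countable filtered colimits is \pcas{} (recalled after \cref{eruwigowergwergferfrw}), and a retract of a \pcas{} $\infty$-category along functors preserving countable filtered colimits is again \pcas{} (a stability property of the \pcas{} condition analogous to the closure of dualizable objects under retracts; cf.\ \cite[Sec.~21.1.2]{sag}), it suffices to realize $\EsG$, and separately $\AsGc$, as a retract of a compactly generated $\infty$-category with countable filtered colimits, the two structure functors preserving countable filtered colimits. I describe the case of $\EsG$; the argument for $\AsGc$ is analogous, and in fact simpler since fewer relations are involved, and the comparison functor between the two models links the statements.

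The input is the new construction of $\EsG$ given earlier in the paper, which I read as factoring $\esG$ through a compactly generated stable $\infty$-category $\mathcal{E}^{G}$ --- the $\Ind^{\aleph_{1}}$-completion of a small stable $\infty$-category assembled, via asymptotic morphisms and $K_{G}$-stabilization, from the \emph{semiprojective} (``shape-finite'') separable $G$-$C^{*}$-algebras --- followed by a Bousfield localization $q\colon\mathcal{E}^{G}\to\EsG$ enforcing excision and countable additivity. The compact generators of $\mathcal{E}^{G}$ are the classes of the semiprojective algebras (compact in $\mathcal{E}^{G}$ essentially by construction, reflecting the classical fact that asymptotic morphisms out of a semiprojective algebra detect countable inductive limits; compare the compactness statements around \cref{gjiseogpgregesg}). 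Let $j\colon\EsG\to\mathcal{E}^{G}$ be the fully faithful right adjoint of $q$, so $q\circ j\simeq\mathrm{id}$. The proof then reduces to the single assertion that \emph{$j$ preserves countable filtered colimits}, equivalently that the subcategory of $q$-local objects of $\mathcal{E}^{G}$ is closed under countable filtered colimits ($q$ is smashing in the relevant countable sense). Granting this, $q\circ j\simeq\mathrm{id}$ exhibits $\EsG$ as a retract, in $\infty$-categories with countable filtered colimits and colimit-preserving functors, of the compactly generated $\mathcal{E}^{G}$, and we conclude by the reduction above; the shape functor of $\EsG$ is then induced from that of $\mathcal{E}^{G}$.

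To prove that $j$ preserves countable filtered colimits I use the shape theory of Blackadar and Dadarlat \cite{zbMATH03996430, Dadarlat_1994}. Since $\EsG$ is generated under countable colimits by the image of $\esG$, and the $q$-local objects of $\mathcal{E}^{G}$ that are \emph{compactly exhausted} --- countable filtered colimits of diagrams with compact transition morphisms --- are closed under countable colimits and retracts, it suffices to show that $j(\esG(\mathfrak{A}))$ is compactly exhausted for every separable $G$-$C^{*}$-algebra $\mathfrak{A}$. Shape theory provides, for arbitrary separable $\mathfrak{A}$, a canonical approximation by a countable system of semiprojective $G$-$C^{*}$-algebras; the task is to push this system through the new construction and to identify its countable filtered colimit in $\mathcal{E}^{G}$ with $j(\esG(\mathfrak{A}))$, the transition morphisms then factoring through the semiprojective --- and hence, in $\mathcal{E}^{G}$, compact --- classes.

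The hard part --- and the reason genuine $C^{*}$-algebraic input is required rather than a formal argument --- is the tension between the \emph{contravariance} of $E$-theory in the algebra variable and the failure of compact generation for $\EsG$: Blackadar--Dadarlat shape theory most naturally realizes $\mathfrak{A}$ as a \emph{pro}-system of semiprojective algebras, and one must show that inside the model $\mathcal{E}^{G}$, after $K_{G}$-stabilization and using the asymptotic-morphism description of the mapping spaces together with the resulting $\lim^{1}$-exact sequences, this pro-system becomes a genuine countable filtered \emph{colimit} with compact transition morphisms --- whose colimit, consistently with the non-compact-generation of $\EsG$, need not itself be compact. Carrying out this ``pro-to-ind'' translation by combining the approximation statements of \cite{zbMATH03996430, Dadarlat_1994} with the explicit telescope and asymptotic-morphism models from the new construction of $\EsG$ is the technical core; the remaining points --- that $q$ is a Bousfield localization with the stated compact generators, and the formal consequences of $j$ preserving countable filtered colimits --- are routine adjunction bookkeeping.
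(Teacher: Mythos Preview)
Your overall strategy---exhibit $\EsG$ (resp.\ $\AsGc$) as a Bousfield localization of a compactly generated $\infty$-category whose right adjoint preserves countable filtered colimits, and then invoke that \pcas{}-ness passes to such retracts---is exactly the paper's approach. But your execution has two substantive gaps.

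First, the compactly generated source is not a stable category built from semiprojective algebras with a single localization $q$ ``enforcing excision and countable additivity.'' The paper starts from $\Ind^{\aleph_{1}}(G\nCalg_{\sepa,h})$---the countable ind-completion of the homotopy localization of \emph{all} separable $G$-$C^{*}$-algebras, not stable and not restricted to semiprojectives---and passes to $\EsGf$ through a chain of four Bousfield localizations ($R$, $L_{\hat K_{G}}$, $R_{K_{G}}$, $R_{S^{2}}$), of both left and right type. Each step needs its own check that the relevant right adjoint preserves countable filtered colimits; for the idempotent-(co)algebra localizations $L_{\hat K_{G}}$, $R_{K_{G}}$, $R_{S^{2}}$ this is formal, but for the first step $R$ it is the content of \cref{wreig90wregfwerf} and is where the shape theory enters. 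Your single-$q$ picture collapses this structure and leaves you unable to say why $j$ preserves filtered colimits beyond restating the goal.

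Second, your ``pro-to-ind translation'' is based on a misreading of Blackadar--Dadarlat. A shape system for $A$ is by definition an $\nat$-indexed system $(A_{n})$ with semiprojective transition maps and $A\cong\colim_{n}A_{n}$: it is already an ind-presentation, not a pro-system, and there is no contravariance obstacle to overcome. What shape theory is actually used for is to show that $\colim_{n}yL_{h}(A_{n})$ is a $W_{R}$-local replacement of $yL_{h}(A)$ in $\Ind^{\aleph_{1}}(G\nCalg_{\sepa,h})$, hence that $R$ is a right Bousfield localization. The technical core is the \emph{shifted lifting property} of explicit semiprojective maps (\cref{xjnbcuiohwedfiohgeriosdfylk}), which via the general slp criterion \cref{bidfosdfivosdifhgnbeoiosdf} forces the relevant comparison of towers of mapping anima to be an equivalence. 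That is where the genuine $C^{*}$-algebraic input sits---not in any pro/ind reversal or $\lim^{1}$ computation.
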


If $\bD$ is an $\infty$-category admitting countable filtered colimits, then according to \cref{qrjiofoqdedeqw} we have various  notions of compactness of morphisms in $\bD$. % In particular, the
% notion of weak compactness is very similar to the condition above. 
The following definition is a translation of   \cref{qrjiofoqdedeqw}.\ref{trgkpwegfwerfweffwef}.   
\begin{ddd}\label{kogpwegerferfw1} A map $f:A\to B$ in $\bD$   is weakly compact if  for every system $(C_{n})_{n\in \nat}$ with colimit $C:=\colim_{n\in \nat}C_{n}$ and map $B\to C$ in $\bD$ there exists a lift $$\xymatrix{A\ar[d]^{f}\ar@{..>}[r]&C_{n}\ar[d]\\ B\ar[r]&  { C}}$$ {for some $n$ in $\nat$.}
\end{ddd}

%Consider now  an $\infty$-category $\bD$ admitting countable filtered colimits.
\begin{ddd}\label{kgopwertgerferfwref}
A system {$(A_{n})_{n\in \nat}$} in $\bD$ is called {a} slp-compact exhaustion of $A$ if $A\simeq \colim_{n\in \nat} A_{n}$
and the structure maps $A_{n}\to A_{n+1}$ are slp-compact for all $n$ in $\nat$.\end{ddd}
%{deleted: Again} 
 The addition of  $slp$ indicates a strengthening of weak compactness  \cref{qrjiofoqdedeqw}.\ref{trgkprererwegfwerfweffwef}.
We will use the following characterization of being \pcas{} in terms of slp-exhaustions
which follows from combination of \cref{gkopwerfweferwfw}
and \cref{regijfowergffreferwfw}. {This characterization provides a version of the internal characterization of compactly assembled categories due to Clausen in the least structured setting possible.}
    \begin{prop}\label{tkogpwegfrfweg9} If $\bD$ is  an $\infty$-category admitting countable filtered colimits, then
 it is \pcas{} if and only if every object  admits an slp-compact exhaustion.
 \end{prop}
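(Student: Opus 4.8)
The plan is to prove the two implications separately, in each case reducing the global statement to a pointwise one about the existence of the shape functor at a single object.

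\emph{From slp-compact exhaustions to \pcas.} Assume every object of $\bD$ admits an slp-compact exhaustion. I would build the left adjoint $S$ of the realization functor $|-|:\Ind^{\aleph_{1}}(\bD)\to\bD$ objectwise. For each object $A$ choose an slp-compact exhaustion $A\simeq\colim_{n\in\nat}A_{n}$ and set $S(A):=\colim_{n\in\nat}y(A_{n})$, the colimit formed in $\Ind(\bD)$; since it is a countable filtered colimit of representables it lies in $\Ind^{\aleph_{1}}(\bD)$. The crux is to verify the adjunction equivalence $\Map_{\Ind^{\aleph_{1}}(\bD)}(S(A),Y)\simeq\Map_{\bD}(A,|Y|)$, naturally in $Y$. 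Writing $Y\simeq\colim_{m\in\nat}y(B_{m})$ and using that each $y(A_{n})$ is compact in $\Ind(\bD)$, the left side becomes $\lim_{n}\colim_{m}\Map_{\bD}(A_{n},B_{m})$, whereas the right side is $\lim_{n}\Map_{\bD}(A_{n},\colim_{m}B_{m})$; the slp-compactness of the transition maps $A_{n}\to A_{n+1}$ is precisely the coherent system of lifts that identifies these two limits. It then remains to check that $S(A)$ does not depend, up to canonical equivalence, on the chosen exhaustion — two exhaustions of $A$ admit a common cofinal refinement by the lifting property, hence define the same ind-object — and that $S$ assembles into a functor, which is automatic once $S(A)$ is recognised as the solution of a universal problem. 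Together with the chain $S\dashv|-|\dashv y$ this exhibits $\bD$ as \pcas.

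\emph{From \pcas{} to slp-compact exhaustions.} Assume $S\dashv|-|$. Since $|-|\dashv y$ with $y$ fully faithful, a standard argument shows $S$ is fully faithful and that the unit $A\to|S(A)|$ is an equivalence. Fix an object $A$. As $S(A)\in\Ind^{\aleph_{1}}(\bD)$ it is a countable filtered colimit of representables, and after a routine cofinality reduction we may write $S(A)\simeq\colim_{n\in\nat}y(A_{n})$; applying $|-|$ gives $A\simeq|S(A)|\simeq\colim_{n\in\nat}A_{n}$. It remains to arrange the presentation so that the maps $A_{n}\to A_{n+1}$ are slp-compact. For this I would identify $S(A)$ with the ind-object obtained as the colimit of $y(Y)$ over the filtered category of slp-compact morphisms $Y\to A$ in $\bD$: this ind-object maps canonically to $S(A)$, and the hypothesis $S(A)\in\Ind^{\aleph_{1}}(\bD)$ forces that indexing category to contain a cofinal sequence, which is exactly an slp-compact exhaustion of $A$. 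This is the content I expect to be packaged in \cref{gkopwerfweferwfw} and \cref{regijfowergffreferwfw}, whose combination then yields the Proposition.

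The step I expect to be the main obstacle is the coherence in the first implication: upgrading the lifting property that defines weak compactness (\cref{kogpwegerferfw1}) — a statement visible only on homotopy sets, as a pro-isomorphism — to an honest equivalence of mapping spaces $\lim_{n}\colim_{m}\Map_{\bD}(A_{n},B_{m})\xrightarrow{\ \sim\ }\lim_{n}\Map_{\bD}(A_{n},\colim_{m}B_{m})$. This is exactly why the paper works with slp-compactness rather than weak compactness, and keeping track of the resulting homotopy-coherent lifting data — rather than any isolated clever step — is where the real effort lies. A secondary, more clerical obstacle is turning the objectwise construction of $S$ into a functor and verifying independence of the chosen exhaustions, which is routine but must be done carefully, since the objects $A_{n}$ are not themselves assumed to be compact.
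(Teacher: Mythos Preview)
Your first implication is essentially the paper's argument: the equivalence
\[
\lim_{n}\colim_{m}\Map_{\bD}(A_{n},B_{m})\ \simeq\ \lim_{n}\Map_{\bD}(A_{n},\colim_{m}B_{m})
\]
is exactly the content of \cref{bidfosdfivosdifhgnbeoiosdf} as used in \cref{wkotpgsfvsfgfdgg}, and your worries about independence of choices and functoriality dissolve once ``admits a shape'' is phrased as a corepresentability condition (\cref{ekrogpegegferfwf}), as the paper does.

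Your second implication has a genuine gap. Extracting a cofinal $\nat$-indexed sequence from the category of slp-compact morphisms $Y\to A$ produces an slp-compact \emph{approximation} (each $Y_{n}\to A$ is slp-compact), not an slp-compact \emph{exhaustion} (each transition map $Y_{n}\to Y_{n+1}$ is slp-compact); these are different notions (\cref{jigowergwerfewrfwerf}), and the Proposition asks for the latter. You also give no argument that this indexing category is filtered, nor why its associated ind-object should coincide with $S(A)$.

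The paper's proof of \cref{regijfowergffreferwfw} proceeds quite differently and is where the real content lies. Starting from any presentation $S(A)\simeq\colim_{n\in\nat}y(A_{n})$, one uses that $S$ is a right Bousfield localization, so the counit $\epsilon:S\circ|-|\to\id$ is an equivalence on the image of $S$; in particular $\epsilon_{S(|y(A)|)}$ is an equivalence. Unwinding this on the level of the compact generators $y(A_{n})$ yields, for each $n$, a factorization
\[
y(A_{n})\ \longrightarrow\ S(|y(A_{k(n)})|)\ \xrightarrow{\ \epsilon\ }\ y(A_{k(n)})
\]
in $\Ind^{\aleph_{1}}(\bD)$ for some $k(n)>n$. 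Applying $|-|$ and invoking \cref{wegjiopewgfsg}, the resulting map $A_{n}\to A_{k(n)}$ is strongly compact. A cofinal subsequence of $(A_{n})_{n\in\nat}$ is then a strongly compact (hence slp-compact) exhaustion of $A$. The key tool you are missing is \cref{wegjiopewgfsg}: any morphism of the form $|y(A')|\to|S(B)|$ coming from $\Ind^{\aleph_{1}}(\bD)$ is automatically strongly compact, and the counit trick is precisely what manufactures such factorizations inside a given presentation of $S(A)$.
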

 {We now turn attention to $C^{*}$-algebras.}
  The basic notion of the shape theory in the category $G\nCalg_{\sepa}$   is that of a semi-projective homomorphism $f:A\to B$ {which was} introduced in the non-equivariant situation by \cite{zbMATH03996430}.  %  \uli{Note the analogy with \cref{kogpwegerferfw1}.}
 %\uli{The following is very similar to \cref{kogpwegerferfw1}.}
\begin{ddd}\label{kogpwegerferfw}  {A} homomorphism $f:A\to B$  in $G\nCalg_{\sepa}$ is semi-projective if for every   separable $G$-$C^{*}$-algebra $Z$ and increasing sequence $(I_{n})_{n\in \nat}$   of {invariant} ideals in $Z$   with  direct limit ideal $I:=\overline{\bigcup_{n\in \nat} I_{n}}$ and map $B\to Z/I$  there exists a factorization
$$\xymatrix{A\ar[d]^{f}\ar@{..>}[r]&Z/I_{n}\ar[d]\\ B\ar[r]&  { Z/I} }$$
{for some $n$ in $\nat$.}
\end{ddd}

The similarity of \cref{kogpwegerferfw} and  \cref{kogpwegerferfw1} is not only formal.
The following  theorem (see \cref{gjoiowepgrfwefrefw})  directly  relates the semi-projectivity of maps in $G\nCalg_{\sepa}$
with the strong compactness (this is our strongest notion of compactness \cref{qrjiofoqdedeqw}.\ref{trgkprerergwerferfwfwegfwerfweffwef}) of the induced morphism in $\AsGc$ or $\EsG$.
We call $B$ in $G\nCalg_{\sepa}$ twice suspended and $K_{G}$-stable if
$B\cong B'\otimes C_{0}(\R^{2})\otimes K_{G}$ for some
$B'$ in $G\nCalg_{\sepa}$, where $K_{G}:=K(L^{2}(G)\otimes \ell^{2}) $. 

\begin{theorem}\label{rjifoqopffefwewerfr}\mbox{}
Let $A\to B$ be a semi-projective morphism in $G\nCalg_{\sepa}$.
\begin{enumerate}
\item
 The induced morphism   $\asGc(f): \asGc(A)\to \asGc(B)$ is strongly  compact in  $\AsGc$.   \item\label{jiwergowegwerfwerf} If $B$ is twice suspended and $K_{G}$-stable, then $\esG(f): \esG(A)\to \esG(B)$   is strongly compact in  $\EsG$.  \end{enumerate}
\end{theorem}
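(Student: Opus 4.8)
The plan is to read the lifting conditions of \cref{qrjiofoqdedeqw} that define weak and strong compactness directly off the $C^{*}$-algebraic lifting property of \cref{kogpwegerferfw}, using the new constructions of $\AsGc$ and $\esG$ to present the relevant mapping spaces and countable filtered colimits by genuine $C^{*}$-algebraic, resp.\ asymptotic-morphism-theoretic, data. The structural inputs are: $\asGc$ preserves filtered colimits and $\esG$ preserves countable filtered colimits (for $\esG$ this follows from exactness together with countable-sum-preservation, via mapping-telescope sequences), both computed along $C^{*}$-algebraic direct limits; a countable filtered colimit in $G\nCalg_{\sepa}$ is such a direct limit; and for an increasing sequence of ideals $(I_{n})_{n\in\IN}$ in a separable $G$-$C^{*}$-algebra $Z$ with $I=\overline{\bigcup_{n}I_{n}}$ one has $\colim_{n}Z/I_{n}\cong Z/I$ with transition maps the canonical surjections. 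The new construction is what allows one to recognise the colimit cocones occurring in \cref{kogpwegerferfw1}, and the maps out of $B$ into such a colimit, as (retracts or cofinal subsystems of) systems of exactly this quotient shape with $B\to Z/I$ a genuine $*$-homomorphism --- the same kind of data that enters the definition of asymptotic morphisms and of $E$-theory.

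Granting this, I would first establish weak compactness of $\asGc(f)$ and of $\esG(f)$: given a countable filtered system with colimit $C$ and a map out of $B$, model it as above and apply \cref{kogpwegerferfw} to $f\colon A\to B$; the resulting factorization is precisely a lift in the sense of \cref{kogpwegerferfw1}. The harder step is to upgrade this $\pi_{0}$-level statement to the full homotopy-coherent lifting demanded by strong compactness, \cref{qrjiofoqdedeqw}.\ref{trgkprerergwerferfwfwegfwerfweffwef}. The key is that the lifting property in \cref{kogpwegerferfw} is \emph{absolute}: it holds for \emph{every} separable $Z$ and every increasing ideal sequence, so it can be fed with cylinder and path algebras --- replacing $Z$ by $C([0,1]^{k},Z)$, whose ideals $C([0,1]^{k},I_{n})$ again form an increasing sequence with closure $C([0,1]^{k},I)$ --- whence $f$ lifts not only $*$-homomorphisms out of $B$ but also homotopies between them and their higher analogues. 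Organising these lifts over the simplices of the relevant mapping spaces, i.e.\ matching them to the combinatorial characterisation of strong compactness supplied by \cref{qrjiofoqdedeqw}, yields strong compactness of $\asGc(f)$, and of $\esG(f)$ once the target is known to be suitably modeled.

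That last proviso is exactly what the hypothesis of the second assertion supplies. In the new construction of $\EsG$, the objects $\esG(B)$ with $B$ twice suspended and $K_{G}$-stable are the ones whose mapping spectra and countable filtered colimits are computed on the nose by asymptotic-morphism data at the $C^{*}$-algebra $B$ itself: Bott periodicity $C_{0}(\R^{2})\otimes(-)\simeq(-)$ and $K_{G}$-stability absorb the suspensions and the $K_{G}$-stabilisation that otherwise mediate between the asymptotic-morphism picture and $\EsG$. Hence, for such $B$, the argument of the previous paragraph runs verbatim inside $\EsG$: the cocones of \cref{kogpwegerferfw1} and the maps out of $\esG(B)$ are presented by $C^{*}$-algebraic quotient systems at $B$, and \cref{kogpwegerferfw} together with its cylinder-algebra iterates delivers the coherent lifts. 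Equivalently, one transports the strong compactness of $\asGc(f)$ from the first assertion along the canonical countable-colimit-preserving functor from a suspension and $K_{G}$-stabilisation of $\AsGc$ to $\EsG$, which is faithful enough on mapping data with such a target.

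I expect the principal obstacle to be the honest matching between the rigid $C^{*}$-algebraic lifting of semiprojectivity --- a strict statement about $*$-homomorphisms and strictly commuting squares --- and the homotopy-coherent lifting required by strong compactness: one must know, from the new constructions, that the mapping spaces and the countable filtered colimits in $\AsGc$ and $\EsG$ admit models rigid enough (via asymptotic morphisms, pro-$C^{*}$-algebras, and cylinder algebras) for the shape-theoretic lift to be applied simplex by simplex, and, in the second assertion, that the passage to $\EsG$ leaves this undisturbed on the subcategory of twice suspended and $K_{G}$-stable targets. Everything else --- the identification $\colim_{n}Z/I_{n}\cong Z/I$, preservation of the pertinent colimits by $\asGc$ and $\esG$, and the closure properties of compact morphisms --- is routine given the earlier sections.
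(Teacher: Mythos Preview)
Your approach has a genuine gap, and the paper takes a substantially different route.

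The gap is this. To test strong (or even weak) compactness of $\asGc(f)$ in $\AsGc$ via \cref{qrjiofoqdedeqw}, you must quantify over arbitrary countable filtered systems $(Z_i)_{i\in I}$ \emph{in $\AsGc$} and arbitrary maps $\asGc(B)\to\colim_i Z_i$ \emph{in $\AsGc$}. But morphisms in $\AsGc$ are not $*$-homomorphisms: at the homotopy-category level they are homotopy classes of asymptotic morphisms, and the structure maps of a filtered system in $\AsGc$ need not arise from any system of $*$-homomorphisms in $G\nCalg_{\sepa}$. Your proposal to ``model'' such data by a quotient system $Z/I_n\to Z/I$ together with a genuine $*$-homomorphism $B\to Z/I$ is precisely the step that is unjustified, and without it \cref{kogpwegerferfw} does not apply. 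The cylinder-algebra upgrade from weak to strong compactness inherits the same defect; moreover the paper only establishes the full slp (\cref{xjnbcuiohwedfiohgeriosdfylk}) for \emph{explicit} semi-projective maps, not arbitrary ones, and even that result is stated for systems of $*$-homomorphisms in $G\nCalg_{\sepa}$, not for systems in $\AsGc$.

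The paper's argument is indirect and sidesteps this obstacle entirely. It first proves \cref{jifofqweewf9} --- that $\AsGc$ and $\EsG$ are compactly assembled --- by exhibiting them as iterated Bousfield localizations (with countable-filtered-colimit-preserving right adjoints) of $\Ind^{\aleph_1}(G\nCalg_{\sepa,h})$ and invoking \cref{neivmsdhjghdikvhjf}, \cref{iopjsdfvniohwefriohjqw}, \cref{mxcvbnuiohwefioja}. This yields a shape functor, and \cref{jigowggergwe9} identifies $S_{\AsGc}(\asGc(B))\simeq\colim_n y_{\AsGc}(\asGc(B_n))$ for any shape system $(B_n)$ of $B$ (and analogously in $\EsG$ when $B$ is twice suspended and $K_G$-stable). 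The proof of the theorem (see \cref{gjoiowepgrfwefrefw}) is then a single stroke: semi-projectivity of $f$ applied to the system $(B_n)$ gives a strict factorization $f\colon A\to B_n\to B$ for some $n$, hence a map $g\colon y(\asGc(A))\to y(\asGc(B_n))\to S_{\AsGc}(\asGc(B))$ in $\Ind^{\aleph_1}(\AsGc)$ with $|g|\simeq\asGc(f)$, and the general \cref{wegjiopewgfsg} says every map of the form $|y(A)|\to|S(B)|$ is strongly compact. Thus semi-projectivity is invoked exactly once, for a single one-categorical lift, and all homotopy coherence is absorbed into \cref{wegjiopewgfsg}; there is no attempt to model arbitrary $\AsGc$-systems by strict $C^{*}$-data.
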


The proof of \cref{jifofqweewf9} is based on the
 shape theory for $C^{*}$-algebras  developed by  \cite{zbMATH03996430} 
and the insights of  \cite{zbMATH02240070}.  
Let $A$ be  a separable $G$-$C^{*}$-algebra.
\begin{ddd}A  system $(A_{n})_{n\in \nat}$ in $G\nCalg_{\sepa}$ is called a shape system for   $A$  
if $A\cong \colim_{n\in \nat} A_{n}$ and the structure maps $A_{n}\to A_{n+1}$ are semi-projective for all $n$ in $\nat$.
\end{ddd}
{Note the analogy with \cref{kgopwertgerferfwref}.}
In \cref{erijgioepgwegerfwef} we extend the theory of \cite{zbMATH03996430} to the equivariant case.  In particular we will show in \cref{ojkergperwerge}:\begin{prop} \label{rejgioiowergerfwf9} Every object of $G\nCalg_{\sepa}$
admits an explicit shape system.
\end{prop}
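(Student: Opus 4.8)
The plan is to carry over Blackadar's construction of shape systems for separable $C^{*}$-algebras \cite{zbMATH03996430} to the category $G\nCalg_{\sepa}$, the one genuinely new point being the control of equivariance. The restriction functor $G\nCalg_{\sepa}\to \nCalg_{\sepa}$ along $\{e\}\subseteq G$ admits a left adjoint $B\mapsto \mathbf{Fr}_{G}(B):=\ast_{g\in G}B$, the full free product of $G$-indexed copies of $B$ with $G$ permuting the factors; since $G$ is countable, $\mathbf{Fr}_{G}(B)$ is separable whenever $B$ is, and, most importantly, \emph{$\mathbf{Fr}_{G}(B)$ is projective --- hence semi-projective --- in $G\nCalg_{\sepa}$ whenever $B$ is projective in $\nCalg_{\sepa}$.} Indeed, an equivariant lifting problem as in \cref{kogpwegerferfw} for a map $\mathbf{Fr}_{G}(B)\to Z/I$ corresponds, under the adjunction, to a non-equivariant map $B\to Z/I$; a non-equivariant lift $B\to Z/I_{n}$ exists by projectivity of $B$, and its adjoint is an equivariant map $\mathbf{Fr}_{G}(B)\to Z/I_{n}$ which, by naturality of the adjunction, solves the original problem. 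Applying this to the projective algebras $F_{k}:=C^{*}(x_{1},\dots,x_{k}\mid \|x_{i}\|\le 1)$ (universal on $k$ contractions) gives the equivariant building blocks $\mathbf{F}_{k}:=\mathbf{Fr}_{G}(F_{k})$, with $\mathbf{Fr}_{G}(F_{\infty})=\colim_{k}\mathbf{F}_{k}$, where $F_{\infty}$ is the free $C^{*}$-algebra on a countable sequence of contractions.

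Now let $A$ be a separable $G$-$C^{*}$-algebra. Choosing a countable dense subset of the unit ball of $A$ and passing to adjoints yields a surjection $\pi\colon \mathbf{Fr}_{G}(F_{\infty})\twoheadrightarrow A$; let $\mathfrak{J}=\ker\pi$, a $G$-invariant ideal. Following Blackadar, the key step is to exhibit $\mathfrak{J}$ as the closure of an increasing union of $G$-invariant ideals $\mathfrak{J}_{m}\subseteq \mathbf{F}_{k_{m}}$ (with $k_{m}\to\infty$), each generated as a $G$-invariant ideal by finitely many elements, chosen of the special \emph{liftable} form --- approximate norm relations --- so that the finite quotient $A_{m}:=\mathbf{F}_{k_{m}}/\mathfrak{J}_{m}$ remains semi-projective in $G\nCalg_{\sepa}$. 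A diagonal/cofinality argument then identifies $\colim_{m}A_{m}\cong A$, since the approximate relations converge to the defining ones of $A$. Finally, semi-projectivity of the \emph{algebra} $A_{m}$ formally forces every morphism out of $A_{m}$ --- in particular each structure map $A_{m}\to A_{m+1}$ --- to be a semi-projective \emph{morphism} in the sense of \cref{kogpwegerferfw}: given a lifting problem for $A_{m}\to A_{m+1}\to Z/I$, apply semi-projectivity of $A_{m}$ to the composite $A_{m}\to Z/I$ to obtain a factorization through some $Z/I_{n}$, which is exactly the desired lift over $A_{m}\to A_{m+1}$. Hence $(A_{m})_{m\in\nat}$ is an explicit shape system for $A$.

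The main obstacle --- and the only place where the equivariant case demands more than bookkeeping --- is the semi-projectivity of the finite quotients $A_{m}$, i.e.\ the $G$-equivariant version of Blackadar's perturbation lemmas: if an equivariant $*$-homomorphism $\psi\colon \mathbf{F}_{k_{m}}\to Z/I_{n}$ satisfies the relations generating $\mathfrak{J}_{m}$ only modulo the limit ideal $I$, then, after possibly enlarging $n$, $\psi$ can be perturbed to an equivariant $*$-homomorphism that satisfies those relations exactly and still agrees with $\psi$ modulo $I$. Non-equivariantly this is proved by lifting the images of the generators and correcting them by functional calculus so that the (liftable) relations hold; the new difficulty is that the corrected generators must again assemble into an \emph{equivariant} homomorphism, which --- because the $G$-orbits on the generating set are typically infinite --- cannot be achieved by averaging over orbits and must instead be routed through the adjunction $B\mapsto \mathbf{Fr}_{G}(B)$, so that only the generators at the unit factor are perturbed and the $G$-action transports the correction without disturbing the norm estimates. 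Once these equivariant stability statements are established, the remaining steps --- the choice of a dense generating set, the filtration of $\mathfrak{J}$, the colimit identification, and the passage from semi-projective algebras to semi-projective connecting maps --- are a routine adaptation of \cite{zbMATH03996430} together with the techniques of \cite{zbMATH02240070}.
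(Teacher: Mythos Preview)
Your approach aims for a stronger conclusion than is needed and, in fact, a false one: you claim that each intermediate algebra $A_m = \mathbf{F}_{k_m}/\mathfrak{J}_m$ is a semi-projective \emph{object} of $G\nCalg_{\sepa}$, and then deduce semi-projectivity of the structure maps formally. But universal $C^*$-algebras defined by non-strict norm relations $\|p(x)\|\le r$ are not in general semi-projective: given a homomorphism $A_m\to Z/I$ in which such a relation is satisfied with equality, there is no mechanism for lifting to $Z/I_n$ while keeping $\|p(\tilde\psi(x))\|\le r$ --- the norm can only increase when lifting. The ``perturbation lemmas'' you invoke do not provide this; they are approximate-stability statements, not exact-lifting statements, and non-strict norm bounds are the standard example of relations that are \emph{not} stable in this sense. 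So the hypothesis of your final paragraph is simply not available, and the ``main obstacle'' you identify is not merely difficult but unattainable as stated.

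The paper (following Blackadar) sidesteps this entirely by never asserting semi-projectivity of the $A_n$. The device is to arrange that the relations defining $A_n$, of the form $\|p(a)\|\le\|\ev(p(a))\|_A+\tfrac1n$, are satisfied with \emph{strict} inequality in $A_{n+1}$ (since $\tfrac1{n+1}<\tfrac1n$). Given any $A_{n+1}\to Z/I$, one lifts the images of the finitely many orbit-representatives of generators arbitrarily to $Z$ and uses only that $\lim_k\|\iota_k(-)\|_{Z/I_k}=\|\iota(-)\|_{Z/I}$: the finitely many strict inequalities then persist in $Z/I_k$ for $k$ large, and the universal property of $A_n$ produces the equivariant lift --- no correction, no functional calculus. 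This is precisely \cref{egjweogrefwerfw} and \cref{tokgprhhehtheth}, and it is what makes the structure maps \emph{explicit} semi-projective in the paper's technical sense, a point your proposal does not engage with. Your use of the adjunction $\mathbf{Fr}_G$ to control equivariance is fine and is implicitly what the paper does when it indexes generators by $K\times G$ and lifts only orbit-representatives; but the substance of the argument is the strict-versus-non-strict inequality mechanism, not any perturbation result.
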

The additional adjective explicit is technically relevant but not  important for the purpose of this introduction.
  In order to understand why \cref{jifofqweewf9} is true  in the case of $\AsGc$ one could observe   that $\asGc$ by \cref{rjifoqopffefwewerfr}   maps   shape systems in $G\nCalg_{\sepa}$ to slp-compact exhaustions in $\AsGc$   and then apply the combination of \cref{rejgioiowergerfwf9} and \cref{tkogpwegfrfweg9}.  In order to deduce that $\EsG$ is \pcas{} we use that this category is obtained from $\AsGc$ by a sequence of Bousfield localizations
which preserve the property of being \pcas.  
But note that our actual argument is more indirect as we first show \cref{jifofqweewf9} and then use it in order to deduce
\cref{rjifoqopffefwewerfr}.

 %The property that $\bD$  is  compactly assembled has interesting consequences for the homotopy theory in $\bD$,  but also for  the  place of  $\bD$  in  the $\infty$-category of $\infty$-categories which is discussed at length in  \cite[Sec. 21.1.2]{sag}.
 
  In the present paper we will discuss some consequences   of  being \pcas{} which are interesting in the context of $C^{*}$-algebra theory.

By \cref{ijwqfofjewfqedfq9} a morphism $B\to C$ in a {pointed countably cocomplete $\infty$-category $\bD$} is called a strong phantom map if the composition $A\to B\to C$ vanishes for every weakly compact map $A\to B$. In the present paper we will discuss some consequences of the fact that $\bD$ is \pcas{} for the  calculus of  {strong} phantom maps.

{Strong phantom maps in equivariant $E$-theory  occur naturally. For example, if}
two homomorphisms $f,g:A\to B$ in $G\nCalg_{\sepa}$ are approximatively unitary equivalent, then $\esG(f)-\esG(g)$ is a  {strong} phantom map in $\EsG$ (see  \cref{jireogwegrewgw9}).  {In \cref{werrwer34rwffefwefwerf} we will show that the boundary {map} $\esG(C)\to \Sigma \esG(A)$ associated to a weakly quasi-diagonal extension $0\to A\to B\to C\to 0$ is a  {strong} phantom map.}
 {Strong phantom} maps in equivariant $E$-theory were also used in  \cite{Bunke:2023ab} in order to capture the Rokhlin property of $G$-$C^{*}$-algebras for finite groups $G$.

If $\bD$ is an $\infty$-category
admitting countable  filtered colimits, then the  morphism sets of its homotopy category $\ho(\bD)$ have {two} interesting topologies, namely the shape topology and the light condensed topology (the latter is only defined under the additional assumption of the existence of finite products), see \cref{erjogpwergwerfrwefwrefrwefwer} and  \cref{werjigowegerfwerfwrefw}. If $\bD$ is \pcas{}, then these topologies coincide and have very nice properties. In the case of $\AsGc$ and $\EsG$ the  topologies on the morphism sets of their homotopy categories $\AsG$ and $\EsGn$ have previously been introduced in \cite{Carrion:2023aa}. In \cref{wgjkopwetgerwfrefwrfwref}
we will argue that most of the statements about these topologies obtained in  \cite{Carrion:2023aa} are specializations of  general statements which are true for the homotopy category of any \pcas{} $\infty$-category.

\subsection{Description of the contents of the paper}

%We now describe the contents of the paper in greater detail.
The first section  provides a reference for the basic definitions and results  from the theory of {compactly assembled} $\infty$-categories with countable filtered colimits. In \cref{werogjwpegorefkmlerwgwergwerg} we introduce the notions of {weakly compact, slp compact} and strongly compact morphisms. In \cref{ergkoperwgwerfwefref} we introduce the notion of a shape of an object. 
If every object of the category admits a shape, then the category  turns out to be \pcas. We then analyse 
the interaction of compactness and shapes with functors. In particular for any $\infty$-category with countable  filtered colimits $\bD$ we find a maximal full subcategory which is \pcas, see \cref{wregiojwoeprgerwfrewfwref} for a precise formulation.  In \cref{erwgjoiwpergwerfrefrewfw} we  give a sufficient criterion {for} the existence of shapes of an object in terms of the existence {of} slp-compact exhaustions (see \cref{wkotpgsfvsfgfdgg}).
Note that some of the statements shown in these sections would simplify if one assumed that the $\infty$-category were \pcas. But we decided to
develop this theory in the general case in order to make it applicable also to the stable $\infty$-category  $\KK^{G}_{\sepa}$ from \cite{KKG} which is not known to be \pcas. So we can now form and study (in future) the  largest \pcas{} subcategory $(\KK^{G}_{\sepa})^{\casmbl}$ of $\KKG_{\sepa}$ or consider those  objects in $\KKG_{\sepa}$ which are known to admit 
shapes.
In \cref{qifjgioerfwefdqwefqwef} we show a technical result (crucially used in  \cref{erwgjoiwpergwerfrefrewfw}) ensuring that a slp-map of  {towers} $(A_{n})_{n\in \nat^{\op}}\to (B_{n})_{n\in \nat^{\op}}$  of  anima induces an equivalence on the level of limits.   Finally, in \cref{weokgpwegwerfwfrfrefwerf} we introduce the notion of strong phantom maps in a pointed $\infty$-category $\bD$ admitting countable filtered colimits (see \cref{ijwqfofjewfqedfq9}). If $\bD$ is stable, then we show  in \cref{iujgofffsdafdfaf} that the existence of shapes  of objects  (or even the weaker condition of weakly compact  approximations) allows 
to form countable infinite sums of strong {phantoms.  
If} $\bD$ is \pcas, then as a consequence of \cref{weokrgpwregrefrfw9} the composition of any two phantom morphisms in $\bD$ is trivial. We furthermore show in \cref{wiotgjwogirwefwefw} that a filtered colimit preserving exact  functor between compactly assembled stable $\infty$-categories preserves strong phantom maps.

The \cref{wrejgiowergerfwfwef} is devoted to the construction of the functors $\asGc$ and $\esG$ in \eqref{fqwefewdqwedwefqf} and the verification of our main result \cref{jifofqweewf9}.
As a start, in \cref{erijgioepgwegerfwef} we extend the theory of \cite{zbMATH03996430} to the equivariant case. 
The main new insight is that the homotopy  lifting results for  explicit semi-projective  maps shown in  \cite{zbMATH03996430}
can be interpreted  as the slp lifting property  for squares (see \cref{kotgpergoeprtkhgerth}) of mapping spaces in the homotopy localization $G\nCalg_{\sepa,h}$, see \cref{xjnbcuiohwedfiohgeriosdfylk}. This is the basis for translating
semi-projectivity of $f$  in $G\nCalg_{\sepa}$ to versions of compactness of the maps   $\asGc(f)$ or $\esG(f)$  in $\AsGc$ or $\EsG$. 

In \cref{gjkowperefwfrefrfrefw} we characterize the equivariant $E$-theory functor $\esG:G\nCalg_{\sepa}\to \EsG$ essentially uniquely by universal properties. In particular we explain in detail
all $C^{*}$-algebraic notions appearing in this characterization.
  This sets up the task of showing the existence of such a functor 
by providing a construction. This  construction is the program of the subsequent \cref{twhkogprthtrerge},  \cref{twhkogprthtrerge1} and \cref{irthjzgjdiogjhjdicnmjksdisjf}. We provide three essentially independent constructions of $\esG$ which highlight different properties and foundations.

 In    \cref{twhkogprthtrerge} we construct a functor
$\esGght$ as a Dwyer-Kan localization of $G\nCalg_{\sepa}$ at the morphisms which are inverted by the classical $E$-theory functor $\esGn$. This construction closely follows  \cite{LN}, \cite{KKG} giving   analogous constructions of stable $\infty$-categories representing $KK$-theory. The subscript $GHT$ abbreviates Guentner-Higson-Trout
and indicates that this functor is designed to be an $\infty$-categorical enhancement of the classical functor $\esGn$ constructed by these authors in  \cite{Guentner_2000}. The verification that it has the desired universal properties depends heavily on the classical equivariant $E$-theory \cite{Guentner_2000}. 

In \cref{twhkogprthtrerge1} we give a purely homotopy theoretic construction of a functor $\esGs$ by forcing the desired universal properties. This construction is    an equivariant  generalization of the construction of the non-equivariant $E$-theory  $\es:\nCalg_{\sepa}\to \Es$ in \cite{keb}. It is independent of the classical $E$-theory. 
In the following we stress two aspects which are new in comparison with  \cite{keb}. In
 the non-equivariant case the $K$-stabilization was a left Bousfield localization $L_{K}$ induced by an idempotent
algebra in $\nCalg_{\sepa,h}$  represented by the algebra $K$ of compact operators. 
The $K_{G}$-stabilization   in the case of finite groups is completely analogous. But for {infinite} groups $G$
 the corresponding $K_{G}$-stabilization consists of the composition of a left Bousfield localization  induced by an idempotent algebra $\hat K_{G}=K((L^{2}(G)\oplus \C)\otimes \ell^{2})$ in $G\nCalg_{\sepa,h}$ and a right Bousfield localization induced by $K_{G}$ considered as an idempotent coalgebra in $L_{\hat K_{G}}G\nCalg_{\sepa,h}$. The  problem in the case of an {infinite}   group  $G$ is that
$K_{G}$ lacks invariant projections. They are   added to $\hat K_{G}$ by adding the artifical summand $\C$ to $L^{2}(G)$,
but then $L_{\hat K_{G}}$ does not invert all $K_{G}$-equivalences so that the second localization becomes necessary.
The details are explained in \cref{tgokptegerferfwefwf}.
The second difference to \cite{keb} is that we {modify the step forcing exactness}  in order to ensure that the $E$-theory functor
preserves countable sums. This {property} is indicated by the subscript $\oplus$ in the notation.

Finally in \cref{irthjzgjdiogjhjdicnmjksdisjf} we first construct the universal homotopy invariant and countable {filtered} colimit preserving
functor $\asGc:G\nCalg\to \AsGc$. This construction is strongly  inspired {by}  but technically independent {from} 
\cite{Dadarlat_1994}. We then proceed similarly as for $\esGs$ and force the  remaining universal properties  
in order to get a functor $\esGf$. The subscript $\fin$ stands for finitary and indicates the relevance of the property of preserving countable filtered colimits. The step in which we enforce Bott periodicity
depends technically on the identification  of $\ho\circ \asGc$ with the classical functor $\asG$
and  facts about the asymptotic morphism representing the Bott element  shown in  \cite{Guentner_2000}.

In \cref{xvbnfiohregiohsdfpojsdf} we then verify that the functors $\esGght$, $\esGs$ and $\esGf$ are really equivalent in an essentially unique manner. We can now savely use the notation $\esG$ for any of them.
Note that each of the constructions adds some additional information. So $\esGght$ provides the information that $\esG$ is a Dwyer-Kan localization. To understand the  {functor $\esGs$} does not require to understand asymptotic morphisms so that this construction may make equivariant $E$-theory more accessible to homotopy theory oriented readers. Finally, the construction of the functor $\esGf$ is crucial for our proof of the main \cref{jifofqweewf9} since 
it allows to follow through the consequences of the existence of shape systems for separable  $G$-$C^{*}$-algebras to the existence of shapes of the objects in $\EsGf$.

 In \cref{kogpwergefrefwrfwrf} we finally show our main  \cref{jifofqweewf9}.
The idea is to present $\EsGf$ as the result of a sequence of left- and right Bousfield localizations with filtered colimit preserving right adjoints of 
the  category $\Ind^{\aleph_{1}}(G\nCalg_{\sepa,h})$. We then use that a category of the form $\Ind^{\aleph_{1}}(\dots)$ is always compactly generated and therefore \pcas, and that the localizations preserve the property of being \pcas.
  In addition in \cref{jigowggergwe9} we describe explicit representatives for the shapes of objects in $\AsGc$ or $\EsG$.

 {In \cref{gugeiwprgejrgpowjeipof} we discuss further properties of the category $\Es$. Using the classical example of Skandalis \cite{zbMATH04065714} we observe in  \Cref{vbnuiojaweiofhjjdf}.\ref{uirjofjfgjfid} that 
 $\Es$ is strictly larger than its bootstrap class.
 This shows that $\Es$ is compactly assembled for a non-trivial reason. The question whether
  $\Es$  is even rigid remains open for the moment.  But  in \Cref{vbnuiojaweiofhjjdf}.\ref{wiofnvgkeqka}-\ref{oiiaenwfgonaoej2qr}. we can answer the analogous question for
  $KK$-theory by showing that     $\KKs$ with one of  $\otimes$ or $\otimes_{\min}$ is not rigid.}
  
  {As an application of $\Es$ {being compactly assembled} we can show that 
  a map of separable $C^*$-algebras $f: A \to B$ is an $E$-theory equivalence  if and only if it is inverted by all functors $L: \nCalg_{{\sepa}} \to \Sp$ that are $K$-stable, homotopy invariant, exact and preserve filtered colimits \Cref{oenvgpwffoq}. Note that these are precisely the formal properties of the $K$-theory functor, see \cref{ijvwiopjkdfehj9}.
% Since the argument involves the minimal tensor product which does not descend to $\Es$, it  does not work in the case of $E$-theory.

 In \cref{tgkowpegfwerwggerfwf} we study topologies on the morphism sets $[A,B]$ of the homotopy category of an $\infty$-category $\bD$  with countable filtered colimits.
In \cref{erjogpwergwerfrwefwrefrwefwer} we introduce  (under the additional assumption of the existence of finite products) the light condensed topology. The name {is}  motivated by the recent development of {\em condensed mathematics} advertised by D. Clausen and P. Scholze. If $X$  is a light profinite  {space} (i.e., countable cofiltered limit $X\cong \lim_{i\in I} X_{i}$ of finite discrete spaces), then we can define the power $B^{X}:=\colim_{i\in I^{\op}}\prod_{X_{i}}B $. Given an element of $[A,B^{X}]$ the composition with the evaluations at the points of $X$ gives a map  $X\to [A,B]$.
The light condensed topology on the  morphism sets  is defined by   declaring  all these maps to be continuous. 

In  \cref{werjigowegerfwerfwrefw} we introduce a second topology called the shape topology on the morphisms sets  by the condition that $[A,B]\to [A',B]^{\disc}$ should be continuous for all weakly compact morphisms ${A'\to A}$.
We will see that in the presence of weakly compact approximations (so in particular if $\bD$ is \pcas) the light condensed topology coincides with the shape topology.  This interplay has nice consequences:
the topology is  fist countable, the composition map is continuous,  the set of converging sequences in $[A,B]$ {is} modelled by the set $[A,B^{\nat_{+}}]$, {and in the stable case $[A,B]$ is a topological group.}  The study of the topological morphism sets of $\ho(\bD)$ is just a one-categorical reflection of a light condensed anima structure on the mapping  {anima} of $\bD$ which will be investigated in a future work.

In \cref{erjogiopwregwrefwref} {we} define a category $\bar \bD$ by forming the Hausdorff quotient of the morphism spaces.   If $\bD$ is \pcas{}, then $\bar \bD$ is topologically enriched, i.e., the composition is continuous. Furthermore,  the  functor $\ho(\bD)\to \bar \bD$    preserves filtered colimits \cref{kopgpertherhetrhgeget} and   is conservative  by \cref{wekotgpwgrefwf} provided  {that} $\bD$ is stable. 

Again we formulate the statements in  \cref{tgkowpegfwerwggerfwf} for general
$\infty$-categories admitting countable filtered colimits and require additional assumptions about existence of weakly compact approximations etc locally in order to make it applicable e.g. to  categories like $\KKGs$ which are not known to be \pcas.

In the final \cref{wgjkopwetgerwfrefwrfwref} we specialize the results of the preceding sections to the categories $\AsGc$ and $\EsG$ and explain how they are related with the results of \cite{Carrion:2023aa}. A new aspect in the case of $C^{*}$-algebras is that one can relate the analysis in $G\nCalg_{\sepa}$  directly with analysis in the $\infty$-categories $\AsGc$ or $\EsG$ (in the sense of condensed mathematics).  This applies e.g. to {the} construction of function objects.
By \cref{ojiviopwgfrwerferfwf} for every $B$ in $G\nCalg_{\sepa}$ and light profinite space $X$ we have equivalences $$\asGc(C(X,B))\simeq \asGc(B)^{X}\ , \qquad  \esG(C(X,B))\simeq \esG(B)^{X}\ .$$
Furthermore, by \cref{gjweriofwefgdbgb} the functors
$$\asG:G\nCalg_{\sepa}\to \AsG\ , \qquad  \esGn: G\nCalg_{\sepa}\to \EsGn$$
are continuous with respect to the topological enrichments of the domain (point-norm topology on the sets of homomorphisms) and the shape topology on the morphism sets.

 {\em Acknowledgement: The authors were supported by the SFB 1085 (Higher Invariants) funded by the Deutsche Forschungsgemeinschaft (DFG). }

\section{$\infty$-categories}\label{kopwretgretreht}

\subsection{Compact maps}\label{werogjwpegorefkmlerwgwergwerg}
 
 In {this} section we introduce variants of the concept of a compact map  in the context of countably cocomplete $\infty$-categories.
As a preparation we introduce the following two conditions on commutative squares in the $\infty$-category of anima $\Spc$. 
  We call the bold commutative square
 $$ \xymatrix{X \ar[r] & Y \\ K \ar[r] \ar[u] & L\ar@{-->}[ul]\ar[u]}$$
 of anima liftable if it admits the dashed completion. Note that this involves the condition that
 the composition of the fillers of the two triangles is equivalent to the filler of the square.
 	Equivalently, a bold square as above 
	  is liftable if and only if the associated bold diagram
	\begin{equation}\label{puighygbqrjkb}
  \xymatrix{& K\ar[dd]\ar[dl]\ar[dr]& \\ L\ar[dr]  \ar@{-->}[rr] & & X\ar[dl] \\&Y&} \end{equation}
	in the slice category $\Spc_{/Y}$ of anima over $Y$ admits a dashed filler.

\begin{ddd}\label{kotgpergoeprtkhgerth} We  say that  the upper bold  commutative square
$$ \xymatrix{X_0 \ar[r] & Y_0 \\ X_1 \ar[r] \ar[u] & Y_1 \ar[u]\\ K \ar@{..>}[r]  \ar@{..>}[u] &L\ar@{-->}[uul]  \ar@{..>}[u]}$$
satisfies the shifted lifting property (slp), if for every   dotted square with finite anima $K$  and $L$
the outer square is liftable as indicated by the dashed arrow. 

If we  fix $K\to L$,  then we will also say that it has the slp for $K\to L$. \end{ddd}
		
We consider an   $\infty$-category $\bD$  which admits all countable filtered colimits.
 \setlist[enumerate]{itemsep=-0.4cm}
By a  system $(Z_{i})_{i\in I}$  in $\bD$   we mean a functor $Z:I\to \bD$ from a countable filtered $\infty$-category. 
  
  We consider a morphism $f:A\to B$ in $\bD$.
 If $(Z_{i})_{i\in I}$  is a  system  in $\bD$, then we can consider the square of anima
 \begin{equation}\label{qrfwfewdqewdqe} 
\xymatrix{\colim_{i\in I}\Map_{\bD}(A,Z_{i})\ar[r] & \Map_{\bD}(A,\colim_{i\in I}Z_{i})  \\\ar[u]^{f^{*}} \colim_{i\in I}\Map_{\bD}(B,Z_{i})\ar[r] & \ar[u]^{f^{*}} \Map_{\bD}(B,\colim_{i\in I}Z_{i})} 
\end{equation}

\begin{ddd}\label{qrjiofoqdedeqw}
 The morphism $ f:A\to B$   is called:
 \begin{enumerate}
 \item \label{trgkprerergwerferfwfwegfwerfweffwef} strongly compact if \eqref{qrfwfewdqewdqe} is liftable
 \item\label{trgkprererwegfwerfweffwef}  slp compact, if \eqref{qrfwfewdqewdqe} has the slp
 \item\label{trgkpwegfwerfweffwef} weakly compact if  \eqref{qrfwfewdqewdqe} has the slp for $\emptyset\to *$
 \end{enumerate}
 for every  system $(Z_{i})_{i\in I}$.
 \end{ddd}
For all three versions of compactness the sets of corresponding morphisms form a two-sided ideal in $\bD$.
 We have  the following implications:
 $$\mbox{strongly compact}\implies \mbox{slp compact}\implies \mbox{weakly compact}\ .$$
 If $A$ is a compact object, then any map $A\to B$ is strongly compact since the upper map in \eqref{qrfwfewdqewdqe} is an equivalence.
 
 \begin{rem}\label{jowtogpwgerferfwefewfref}The condition of being weakly compact is equivalent to the condition that 
  for every 
 system $(Z_{i})_{i\in I}$
and bold part of the diagram
\begin{equation}\label{vsdfvsdfefefvsvfsdvsv}\xymatrix{A\ar[d]^{f}\ar@{..>}[r] & Z_{k}\ar[d] \\B \ar[r] & \colim_{i\in I}Z_{i}}
\end{equation}  
there exists  $k$ in $I$ and a commutative completion by the dotted part. \hB
 \end{rem}

\begin{rem}
Strongly and  weakly compact maps are usually considered in the context of large presentable $\infty$-categories \cite{som}  where the definitions are attributed to J. Lurie and D. Clausen.
In this case one drops the countability condition for the systems $(Z_{i})_{i\in I}$.

A typical example for the  $\infty$-category $\bD$  is the category of $\aleph_{1}$-compact
objects in a large presentable $\infty$-category. In this case (weak) compactness of a morphism in $\bD$ is equivalent to (weak) compactness (as defined in \cite{som}) of the morphism considered in the large category. 
  
We introduce the  intermediate notion of slp compactness since  it is the  slp condition which can   
directly be checked in our applications to $C^{*}$-algebras, see e.g. \cref{xjnbcuiohwedfiohgeriosdfylk}.
\hB
\end{rem}

 \begin{rem}\label{iuergoregregwf}
If $\bD$ is a stable countably cocomplete $\infty$-category, then weak compactness of a morphism 
 morphism $f:A\to B$ can also be characterized by the following conditions:
 \begin{enumerate} \item 
For any   system $(F_{j})_{j\in J}$ with $\colim_{j\in J}F_{j}\simeq 0$ and map $B\to F_{k}$ for some $k$ in $J$ there exists $l$ in $J$ with a morphism $k\to l$ such that
$A\xrightarrow{f}B\to F_{k}\to F_{l}$ vanishes.
\item  For every family $(Y_{n})_{n\in \nat}$ in $\bD$ and bold part of the diagram 
$$\xymatrix{A\ar@{..>}[r]\ar[d] &\bigoplus_{n=0}^{k} Y_{n}\ar[d] \\ B\ar[r] & \bigoplus_{n\in \nat} Y_{n}} $$
there exists an integer $k$ and the dotted completion.
\end{enumerate} 
Furthermore, in the stable case it follows from  \cref{vniowsdfiojgdf} that 
  weakly compact maps are slp compact. 
\hB
\end{rem}

The following is straighforward to check for all three cases   of $?$ in \{strongly, slp, weakly\}.
\begin{lem}\label{aerpogjkwpergwefffwrfrefw}
A left-adjoint functor $L:\bD\to \bC$ with a countable colimit-preserving right-adjoint $R:\bC\to \bD$ preserves $?$-compact maps.
\end{lem}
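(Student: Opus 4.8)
The plan is to reduce the $?$-compactness of $L(f)$ in $\bC$ to the $?$-compactness of $f$ in $\bD$ by transporting the defining square \eqref{qrfwfewdqewdqe} across the adjunction $L\dashv R$, using that $R$ preserves countable filtered colimits. So let $f\colon A\to B$ be a $?$-compact morphism in $\bD$ (for a fixed $?\in\{$strongly, slp, weakly$\}$), and let $(W_{i})_{i\in I}$ be an arbitrary system in $\bC$. I must verify that the square \eqref{qrfwfewdqewdqe} associated with $L(f)\colon L(A)\to L(B)$ and $(W_{i})_{i\in I}$ has the lifting property corresponding to $?$, i.e.\ liftability, the slp, or the slp for $\emptyset\to *$, according to \cref{qrjiofoqdedeqw} and \cref{kotgpergoeprtkhgerth}.

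First I would rewrite all four corners of that square. The adjunction gives equivalences $\Map_{\bC}(L(-),-)\simeq \Map_{\bD}(-,R(-))$, natural in both variables, which identify the left-hand column $\colim_{i\in I}\Map_{\bC}(L(B),W_{i})\xrightarrow{L(f)^{*}}\colim_{i\in I}\Map_{\bC}(L(A),W_{i})$ with $\colim_{i\in I}\Map_{\bD}(B,R(W_{i}))\xrightarrow{f^{*}}\colim_{i\in I}\Map_{\bD}(A,R(W_{i}))$. For the right-hand column one uses in addition that $R$ preserves the countable filtered colimit $\colim_{i\in I}W_{i}$, so that $\Map_{\bC}(L(A),\colim_{i\in I}W_{i})\simeq \Map_{\bD}(A,R(\colim_{i\in I}W_{i}))\simeq \Map_{\bD}(A,\colim_{i\in I}R(W_{i}))$, and likewise with $B$ in place of $A$. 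Since these equivalences are natural and the comparison map $\colim_{i\in I}R(W_{i})\to R(\colim_{i\in I}W_{i})$ is an equivalence, they assemble into an equivalence of commutative squares of anima between the square \eqref{qrfwfewdqewdqe} for $L(f)$ and $(W_{i})_{i\in I}$ and the square \eqref{qrfwfewdqewdqe} for $f$ and the system $(R(W_{i}))_{i\in I}$ in $\bD$.

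Then the conclusion is immediate: the index $\infty$-category $I$ is still countable filtered, so $(R(W_{i}))_{i\in I}$ is an admissible system in $\bD$, and the hypothesis that $f$ is $?$-compact yields exactly the desired lifting property for the square \eqref{qrfwfewdqewdqe} of $f$ and $(R(W_{i}))_{i\in I}$. As liftability, the slp, and the slp for $\emptyset\to *$ are each invariant under equivalences of commutative squares of anima, the square for $L(f)$ and $(W_{i})_{i\in I}$ inherits the same property. Since $(W_{i})_{i\in I}$ was arbitrary, $L(f)$ is $?$-compact, which proves the lemma for all three choices of $?$ simultaneously.

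The one point demanding care — and the main (and rather mild) obstacle — is the coherence in the first step: one must check that the objectwise adjunction equivalences together with the colimit-comparison equivalence for $R$ genuinely glue into an equivalence of squares, i.e.\ are compatible with the restriction maps $L(f)^{*}$ versus $f^{*}$, with the maps induced by the colimit cocones $W_{i}\to\colim_{i}W_{i}$, and with the colimit insertions. This is a purely formal naturality verification, since the unit/counit of $L\dashv R$ and the comparison map $\colim R\to R\colim$ are all natural, so no genuinely new idea is required beyond this bookkeeping.
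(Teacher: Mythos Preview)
Your proof is correct and is precisely the straightforward verification the paper has in mind; the paper does not spell out a proof at all, merely stating that the assertion is ``straightforward to check for all three cases''. Your adjunction-transport argument, together with the observation that $R$ preserving countable filtered colimits identifies the square \eqref{qrfwfewdqewdqe} for $L(f)$ and $(W_i)_{i\in I}$ with the square for $f$ and $(R(W_i))_{i\in I}$, is exactly what is intended.
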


\subsection{Shapes and \pcas{} $\infty$-categories}\label{ergkoperwgwerfwefref}

In this section we introduce the concept of a shape of an object in an $\infty$-category which admits countable filtered colimits.   
The word shape is motivated by our application of the theory to  $C^{*}$-algebras culminating in \cref{jigowggergwe9} asserting 
that a shape system  for a $C^{*}$-algebra represents the shape of its image in equivariant $E$-theory. 

The condition that every object of $\bD$ admits a shape turns out to be  one of many equivalent characterizations of $\bD$ being \pcas. But since we also want to apply the theory  to the stable $\infty$-category $\KK^{G}_{\sepa}$ from  \cite{KKG} which is not known to be   \pcas{}
we develop the notions as much as possible locally  in $\bD$.

Let $\bD$ be a small $\infty$-category admitting all countable filtered colimits. 
By $y:\bD\to \Ind(\bD)$ we denote the  ind-completion of $\bD$. We then let
$\Ind^{\aleph_{1}}(\bD) $ denote the full subcategory of $\Ind(\bD)$ obtained as the closure of the image of $y$ under countable filtered colimits.  Note
 that  $\Ind^{\aleph_{1}}(\bD) $ is essentially small.

 From now one we consider the functor $y:\bD\to \Ind^{\aleph_{1}}(\bD)$. 
 Below we will use the following facts.
  \begin{enumerate}\item   For  every object in $ \Ind^{\aleph_{1}}(\bD) $ there exists a  system $(A_{n})_{n\in \nat}$ in $\bD$ such that    
$$\colim_{{n \in \nat}}y(A_{{n}}) \simeq A\ .$$ {This} can be shown using \cite[Prop. 5.3.1.18]{htt}.
\item The functor $y$ is fully faithful  and preserves all limits. 
\item The objects in the image of $y$ are  compact.
\item  If $\bD$ is stable, then so is $\Ind^{\aleph_{1}}(\bD) $  and $y$ preserves   finite  colimits.   
  \end{enumerate}

Since we assume that $\bD$ admits all countable  filtered  colimits
we have an adjunction
\begin{equation}\label{fvvsqervfsvsdf}|-|: \Ind^{\aleph_{1}}(\bD) \leftrightarrows  \bD:y\ ,
\end{equation} 
where $|-|$ is called the realization. 
Since  $y$ is fully faithful the counit of this adjunction  is an  equivalence  \begin{equation}\label{sfdvojkogpvvsfdvsdvsdfv}|-| \circ y \stackrel{\simeq}{\to} \id_{\bD}\ .
\end{equation} 
\begin{ddd}\label{ekrogpegegferfwf}
We say that an object $A$ in $\bD$  admits a shape  if the functor $$\Map_{\bD}(A,|-|): \Ind^{\aleph_{1}}(\bD) \to \Spc$$
is corepresentable.
 We let $\bD^{\shp}$ denote the full subcategory of objects which admit a shape. \end{ddd}
 By definition,  the shape of $A$ is an object $S(A)$ in $ \Ind^{\aleph_{1}}(\bD)$ together with a map
 $\eta_{A}:A\to |S(A)|$   such that the induced map 
\begin{equation}\label{sdfvfeqrvcfvsdvfdsvs}\Map_{ \Ind^{\aleph_{1}}(\bD)} (S(A),\hat B)\stackrel{|-|}{\to}\Map_{\bD}(|S(A)|,|\hat B|)
\stackrel{{\eta_{A}^{\ast}}}{\to} \Map_{\bD}(A,|\hat B|)
\end{equation} 
is an equivalence for all $\hat B$ in $ \Ind^{\aleph_{1}}(\bD) $.
The collection of shapes and maps assemble to the shape functor 
 and natural transformation \begin{equation}\label{fwqeddcvsdvcs}S:\bD^{\shp}\to \Ind^{\aleph_{1}}(\bD) \ , \qquad \eta: \incl_{\bD^{\shp}\to \bD}\to |-|\circ  S \ . 
\end{equation}  

\begin{lem}\label{jwoirthopgfrfwrgre}
The transformation $\eta$ in \eqref{fwqeddcvsdvcs} is an equivalence.
\end{lem}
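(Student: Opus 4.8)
The plan is to show that for every object $A$ in $\bD^{\shp}$ the structure map $\eta_A : A \to |S(A)|$ is an equivalence in $\bD$, since $\eta$ in \eqref{fwqeddcvsdvcs} is the collection of these maps. The key observation is that $A$ itself lies in the essential image of $y$: indeed $A \simeq |y(A)|$ by \eqref{sfdvojkogpvvsfdvsdvsdfv}, and $y(A)$ is a compact (in particular perfectly good) candidate for the shape. More precisely, I want to argue that $(y(A), \id_A)$ corepresents $\Map_\bD(A, |-|)$, so that by essential uniqueness of corepresenting objects there is an equivalence $S(A) \simeq y(A)$ under which $\eta_A$ corresponds to the identity of $A$.

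First I would check that $y(A)$ together with the identity map $\id_A : A \to |y(A)| \simeq A$ satisfies the defining property \eqref{sdfvfeqrvcfvsdvfdsvs} of a shape. That property asks that for every $\hat B$ in $\Ind^{\aleph_1}(\bD)$ the composite
$$
\Map_{\Ind^{\aleph_1}(\bD)}(y(A), \hat B) \xrightarrow{|-|} \Map_\bD(|y(A)|, |\hat B|) \xrightarrow{\id_A^\ast} \Map_\bD(A, |\hat B|)
$$
is an equivalence. The second map is an equivalence because it is induced by the equivalence $|y(A)| \xrightarrow{\simeq} A$ of \eqref{sfdvojkogpvvsfdvsdvsdfv}. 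The first map is an equivalence precisely because $y$ is the unit of the adjunction \eqref{fvvsqervfsvsdf} with $|-|$ as left adjoint: the adjunction gives a natural equivalence $\Map_{\Ind^{\aleph_1}(\bD)}(y(A), \hat B) \simeq \Map_\bD(A, |\hat B|)$, and one identifies this adjunction equivalence with the displayed composite (this is the standard compatibility of an adjunction unit with the counit, here using that the counit $|-| \circ y \xrightarrow{\simeq} \id$ is an equivalence because $y$ is fully faithful). Hence $(y(A), \id_A)$ is a shape of $A$.

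Now, since shapes are by \cref{ekrogpegegferfwf} corepresenting objects for the functor $\Map_\bD(A, |-|)$, any two are canonically equivalent: there is a unique equivalence $\varphi : S(A) \xrightarrow{\simeq} y(A)$ in $\Ind^{\aleph_1}(\bD)$ compatible with the structure maps, i.e.\ with $|\varphi| \circ \eta_A$ equivalent to $\id_A : A \to |y(A)| \simeq A$ under the canonical identification. Applying $|-|$ and using that $|\varphi|$ is an equivalence, we conclude that $\eta_A : A \to |S(A)|$ is an equivalence in $\bD$. Doing this naturally in $A$ — which follows from the naturality of the adjunction $(|-|, y)$ and of the corepresentability equivalence — gives that the transformation $\eta$ of \eqref{fwqeddcvsdvcs} is an equivalence, as claimed.

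The only point requiring a little care, and the main (mild) obstacle, is the bookkeeping in the second paragraph: one must verify that the abstract adjunction equivalence $\Map_{\Ind^{\aleph_1}(\bD)}(y(A),\hat B) \simeq \Map_\bD(A,|\hat B|)$ really is the concrete composite appearing in \eqref{sdfvfeqrvcfvsdvfdsvs}, i.e.\ that applying $|-|$ and precomposing with the unit recovers the adjunction bijection. This is a formal triangle-identity argument, but it is the step where one actually uses both that $|-|$ is left adjoint to $y$ and that the counit is an equivalence; everything else is a direct appeal to uniqueness of corepresenting objects.
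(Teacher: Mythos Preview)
There is a genuine gap. Your argument purports to show that $(y(A),\id_A)$ is a shape of $A$, but notice that nowhere do you use the hypothesis $A\in\bD^{\shp}$; your reasoning, if valid, would produce a shape of \emph{every} object of $\bD$, making $\bD$ automatically compactly assembled. That cannot be right, and indeed the error is in the adjunction step. The adjunction \eqref{fvvsqervfsvsdf} has $|-|$ as \emph{left} adjoint and $y$ as \emph{right} adjoint, so it gives
\[
\Map_{\bD}(|X|,B)\;\simeq\;\Map_{\Ind^{\aleph_1}(\bD)}(X,y(B)),
\]
not the equivalence $\Map_{\Ind^{\aleph_1}(\bD)}(y(A),\hat B)\simeq\Map_{\bD}(A,|\hat B|)$ that you invoke. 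The latter would require the opposite adjunction $y\dashv |-|$. Concretely, writing $\hat B\simeq\colim_n y(B_n)$, the left side is $\colim_n\Map_{\bD}(A,B_n)$ (by compactness of $y(A)$) while the right side is $\Map_{\bD}(A,\colim_n B_n)$; these agree precisely when $A$ is compact in $\bD$, which is not assumed.

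The paper's proof avoids this by never claiming $y(A)$ is a shape. Instead it specialises the defining equivalence \eqref{sdfvfeqrvcfvsdvfdsvs} to $\hat B=y(B)$ for $B\in\bD$, and combines it with the (correctly oriented) adjunction equivalence $\Map_{\Ind^{\aleph_1}(\bD)}(S(A),y(B))\simeq\Map_{\bD}(|S(A)|,B)$ and the counit $|y(B)|\simeq B$ to deduce that $\eta_A^*:\Map_{\bD}(|S(A)|,B)\to\Map_{\bD}(A,B)$ is an equivalence for all $B$; Yoneda then finishes. The point is that one only needs the shape property for $\hat B$ in the image of $y$, where the adjunction can be applied in the available direction.
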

\begin{proof}
We consider  $A$ in $\bD^{\shp}$ and $B$ in $\bD$ and the following diagram
   $$\xymatrix{&\Map_{\Ind^{\aleph_{1}}(\bD)   }(S(A),y(B))\ar[d]^{|-|}\ar[ddr]^{\simeq}\ar[ddl]_{\simeq}&\\&\Map_{\bD}(|S(A)|,|y(B)|)\ar[dr]^{\simeq}_{counit}\ar[dl]^{\eta_{A}^{*}}&\\ \Map_{\bD}(A,|y(B)|)\ar[dr]_{\simeq}^{counit}&&\Map_{\bD}(|S(A)|,B)\ar[dl]_{\eta_{A}^{*}} \\&\Map_{\bD}(A,B)&}$$
   where the equivalences denoted by  $counit$  are induced by post-composition with the counit map of  {the} adjunction \eqref{fvvsqervfsvsdf}. {This adjunction} also implies that that the upper right diagonal map is an equivalence.
   The upper left diagonal map is  the  equivalence  \eqref{sdfvfeqrvcfvsdvfdsvs} applied to $\hat B:=y(B)$.  
    We conclude from the upper {right} triangle that the upper vertical map $|-|$ is an equivalence.
        Consequently the left diagonal map $\eta_{A}^{*}$ is an equivalence. 
        We finally conclude that the right diagonal map $\eta_{A}^{*}$ is an equivalence. 
 This implies the assertion.
      \end{proof}

\begin{lem}\label{werojgpwertgwerrfwerfw}
$\bD^{\shp}$ is   closed under countable filtered  colimits.
If $\bD$ is stable, then so is $\bD^{\shp}$.
\end{lem}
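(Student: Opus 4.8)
The plan is to prove closure under countable filtered colimits directly from the defining corepresentability condition in Definition \ref{ekrogpegegferfwf}, and then to deduce stability from closure under finite colimits plus the fact that $\bD^{\shp}$ contains the zero object (which has shape $y(0)$, since $\Map_\bD(0,|-|)$ is constant at $*$, hence corepresented by $y(0)$).

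For the first assertion, let $(A_j)_{j\in J}$ be a countable filtered system in $\bD^{\shp}$ with colimit $A := \colim_{j\in J} A_j$ formed in $\bD$. I would set $S(A) := \colim_{j\in J} S(A_j)$, the colimit computed in $\Ind^{\aleph_1}(\bD)$, which exists and again lies in $\Ind^{\aleph_1}(\bD)$ since that category is closed under countable filtered colimits inside $\Ind(\bD)$. The candidate structure map $\eta_A\colon A\to |S(A)|$ is obtained by applying the realization $|-|$ (which preserves colimits, being a left adjoint by \eqref{fvvsqervfsvsdf}) to $\colim_j \eta_{A_j}$ and using $|S(A)| = |\colim_j S(A_j)| \simeq \colim_j |S(A_j)|$ together with $\colim_j |S(A_j)| \simeq \colim_j A_j = A$ via the equivalences of Lemma \ref{jwoirthopgfrfwrgre}. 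Now for any $\hat B$ in $\Ind^{\aleph_1}(\bD)$ I must check that the composite in \eqref{sdfvfeqrvcfvsdvfdsvs} is an equivalence. Because $S(A) = \colim_j S(A_j)$ is a \emph{countable filtered} colimit of objects each of which is — via the shape property — such that $\Map_{\Ind^{\aleph_1}(\bD)}(S(A_j),\hat B)\simeq \Map_\bD(A_j,|\hat B|)$, and because mapping out of a filtered colimit turns it into a limit, I compute
\[
\Map_{\Ind^{\aleph_1}(\bD)}(S(A),\hat B) \simeq \lim_{j\in J^{\op}} \Map_{\Ind^{\aleph_1}(\bD)}(S(A_j),\hat B) \simeq \lim_{j\in J^{\op}} \Map_\bD(A_j,|\hat B|) \simeq \Map_\bD(A,|\hat B|),
\]
where the last step uses $A = \colim_j A_j$. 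One then checks that this chain of equivalences is compatible with the maps defining \eqref{sdfvfeqrvcfvsdvfdsvs}, i.e. that it is implemented by $\eta_A^* \circ |-|$; this is a naturality bookkeeping argument using that each constituent equivalence is implemented by $\eta_{A_j}^*\circ |-|$.

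For stability, assuming $\bD$ is stable, $\Ind^{\aleph_1}(\bD)$ is stable and $y$ preserves finite colimits (the facts recorded before Definition \ref{ekrogpegegferfwf}). Since $\bD^{\shp}$ already contains $0$ and is closed under countable filtered colimits by the above, it suffices to show closure under cofibers. Given $f\colon A\to A'$ in $\bD^{\shp}$, I would argue that the shape of $\cofib(f)$ is $\cofib(S(f))$ in $\Ind^{\aleph_1}(\bD)$, where $S(f)$ is the map induced by functoriality of shapes (which exists once one knows $\bD^{\shp}\to \Ind^{\aleph_1}(\bD)$ is functorial — this is \eqref{fwqeddcvsdvcs}, already established). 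Indeed, applying $\Map_{\Ind^{\aleph_1}(\bD)}(-,\hat B)$ to the cofiber sequence $S(A)\to S(A')\to \cofib(S(f))$ yields a fiber sequence which, under the shape equivalences for $A$ and $A'$, matches the fiber sequence obtained by applying $\Map_\bD(-,|\hat B|)$ to $A\to A'\to \cofib(f)$; since $\Map_\bD(\cofib(f),|\hat B|)$ is the fiber of $\Map_\bD(A',|\hat B|)\to\Map_\bD(A,|\hat B|)$ and similarly for $\cofib(S(f))$, the map $\Map_{\Ind^{\aleph_1}(\bD)}(\cofib(S(f)),\hat B)\to\Map_\bD(\cofib(f),|\hat B|)$ is an equivalence by the five lemma for fiber sequences of anima. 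The main obstacle I anticipate is not any single computation but the careful verification that all the comparison maps are the right ones — i.e. that the ad hoc equivalences built from colimit/limit manipulations genuinely coincide with the canonical maps in \eqref{sdfvfeqrvcfvsdvfdsvs}. This is handled cleanly by phrasing the shape property as the statement that $y\colon\bD\to\Ind^{\aleph_1}(\bD)$ restricted to $\bD^{\shp}$ admits a partial left adjoint $S$, so that $S$ automatically commutes with whatever colimits exist and are preserved, and then invoking uniqueness of adjoints; I would adopt that formulation to keep the bookkeeping minimal.
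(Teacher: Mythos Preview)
Your argument for closure under countable filtered colimits is correct and matches the paper's (terse) proof: one checks that $\colim_{j} S(A_j)$ represents the shape of $\colim_j A_j$ via exactly the chain of equivalences you wrote down.

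For stability there is a genuine gap. You assert that ``it suffices to show closure under cofibers'', but a full subcategory of a stable $\infty$-category containing $0$ and closed under cofibers need not be stable: it is then closed under $\Sigma$, but not automatically under $\Omega$ (think of connective spectra inside $\Sp$). Your five-lemma argument correctly shows $\cofib(S(f))$ is a shape for $\cofib(f)$, but this alone does not give closure under desuspension. The paper closes this gap by checking directly that $\Omega S(A)$ represents the shape of $\Omega A$: since $\Ind^{\aleph_1}(\bD)$ is stable and $|-|$, being a left adjoint, preserves $\Sigma$, one has
\[
\Map_{\Ind^{\aleph_1}(\bD)}(\Omega S(A),\hat B)\simeq \Map_{\Ind^{\aleph_1}(\bD)}(S(A),\Sigma\hat B)\simeq \Map_{\bD}(A,|\Sigma\hat B|)\simeq \Map_{\bD}(A,\Sigma|\hat B|)\simeq \Map_{\bD}(\Omega A,|\hat B|).
\]
You need to add this (or an analogous argument for fibers) to conclude. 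Note also that your proposed ``partial left adjoint'' reformulation does not resolve this on its own: a left adjoint preserves colimits, which handles cofibers and $\Sigma$, but says nothing about $\Omega$.
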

\begin{proof}
 Let $(A_{i})_{i\in I}$ be a system in $\bD^{\shp}$.
Then using  \eqref{sdfvfeqrvcfvsdvfdsvs} one checks that  {$\colim_{i \in I} S(A_i)$} represents a shape for  {$\colim_{i \in I}A_i$}. 
 Hence $\bD^{\shp}$ is closed under countable filtered colimits.
 If $\bD$ is stable, then one furthermore checks that $\bD^{\shp}$ is closed under finite colimits, and  
  that $\Omega  S(A)$ represents the shape of $\Omega A$.
 Consequently $\bD^{\shp}$ is a stable subcategory of $\bD$.
 \end{proof}

\begin{lem}\label{wegjiopewgfsg} We assume that  $A$ is in $\bD$, $B$ is in $\bD^{\shp}$, and  that $f:y(A)\to S (B)$ is any map. Then the map  \begin{equation}\label{efwefaefsdfaef}  |y(A)|\xrightarrow{|f|} |S(B)|
\end{equation}  is strongly compact. \end{lem}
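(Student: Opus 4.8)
The plan is to transport the square \eqref{qrfwfewdqewdqe} for the morphism $|f|$ along the adjunction \eqref{fvvsqervfsvsdf} into $\Ind^{\aleph_{1}}(\bD)$ and then construct the required dashed lift explicitly from two inputs: that $y(A)$ is a compact object of $\Ind^{\aleph_{1}}(\bD)$, and that $\Map_{\Ind^{\aleph_{1}}(\bD)}(S(B),-)$ depends only on the realization. Fix a system $(Z_{i})_{i\in I}$ in $\bD$, set $W:=\colim_{i\in I}y(Z_{i})$ in $\Ind^{\aleph_{1}}(\bD)$ (so that $|W|\simeq\colim_{i\in I}Z_{i}$ by \eqref{sfdvojkogpvvsfdvsdvsdfv} and cocontinuity of $|-|$), and write $\eta\colon W\to y(|W|)$ for the unit of \eqref{fvvsqervfsvsdf}. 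The adjunction supplies natural equivalences $\Map_{\bD}(|y(A)|,C)\simeq\Map_{\Ind^{\aleph_{1}}(\bD)}(y(A),y(C))$ and $\Map_{\bD}(|S(B)|,C)\simeq\Map_{\Ind^{\aleph_{1}}(\bD)}(S(B),y(C))$ for $C$ in $\bD$, compatible with the precomposition maps $|f|^{*}$ and $f^{*}$. Under these equivalences the square \eqref{qrfwfewdqewdqe} for $|f|$ and $(Z_{i})_{i\in I}$ becomes a square of anima whose top map $t$ factors, by the universal property of $W$ and naturality of $\eta$, as
$$\colim_{i\in I}\Map_{\Ind^{\aleph_{1}}(\bD)}(y(A),y(Z_{i}))\xrightarrow{\ \simeq\ }\Map_{\Ind^{\aleph_{1}}(\bD)}(y(A),W)\xrightarrow{\ \eta_{*}\ }\Map_{\Ind^{\aleph_{1}}(\bD)}(y(A),y(|W|))\ ,$$
the first arrow being an equivalence because $y(A)$ is compact, and whose bottom map $b$ factors analogously as the canonical comparison $\colim_{i\in I}\Map_{\Ind^{\aleph_{1}}(\bD)}(S(B),y(Z_{i}))\to\Map_{\Ind^{\aleph_{1}}(\bD)}(S(B),W)$ followed by $\eta_{*}$.

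The decisive point is that $\eta_{*}\colon\Map_{\Ind^{\aleph_{1}}(\bD)}(S(B),W)\to\Map_{\Ind^{\aleph_{1}}(\bD)}(S(B),y(|W|))$ is an equivalence. Indeed, $B$ lies in $\bD^{\shp}$, so by \eqref{sdfvfeqrvcfvsdvfdsvs} the functor $\Map_{\Ind^{\aleph_{1}}(\bD)}(S(B),-)$ is equivalent to $\Map_{\bD}(B,|-|)$, whence $\eta_{*}$ is identified with $\Map_{\bD}(B,|\eta|)$; and $|\eta|$ is an equivalence by the triangle identity together with \eqref{sfdvojkogpvvsfdvsdvsdfv}. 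This is precisely where the hypothesis $B\in\bD^{\shp}$ enters: replacing $S(B)$ by $y(B)$ would turn $\eta_{*}$ into the comparison map $\colim_{i}\Map_{\bD}(B,Z_{i})\to\Map_{\bD}(B,\colim_{i}Z_{i})$, which is not an equivalence in general — this is exactly why a general morphism of $\bD$ need not be strongly compact. Using the invertibility of $\eta_{*}$, I define the dashed arrow as the composite
$$\Map_{\Ind^{\aleph_{1}}(\bD)}(S(B),y(|W|))\xrightarrow{\ \eta_{*}^{-1}\ }\Map_{\Ind^{\aleph_{1}}(\bD)}(S(B),W)\xrightarrow{\ f^{*}\ }\Map_{\Ind^{\aleph_{1}}(\bD)}(y(A),W)\xrightarrow{\ \simeq\ }\colim_{i\in I}\Map_{\Ind^{\aleph_{1}}(\bD)}(y(A),y(Z_{i}))\ .$$
Commutativity of the resulting lower triangle follows from naturality of $f^{*}$ with respect to $\eta$, and of the upper triangle from naturality of $f^{*}$ with respect to the cocone maps $y(Z_{i})\to W$.

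To honour the coherence built into the definition of liftability — the filler of the square must restrict to the chosen fillers of the two triangles — I would carry out the last step in the slice category $\Spc_{/\Map_{\Ind^{\aleph_{1}}(\bD)}(y(A),y(|W|))}$ in the form \eqref{puighygbqrjkb}: there the problem is to fill the diagram along the map $b$, and after the factorization through the equivalence $\eta_{*}$ this filling problem has an essentially unique solution, which the composite above realizes. I expect the only genuine work to be the bookkeeping of the first paragraph — checking that the transported square is the one claimed and that $t$ and $b$ factor through $\eta_{*}$ — which amounts to tracking the adjunction \eqref{fvvsqervfsvsdf} and its invertible counit \eqref{sfdvojkogpvvsfdvsdvsdfv}; there is no conceptual obstacle once $\eta_{*}$ has been recognized as invertible after applying $\Map_{\Ind^{\aleph_{1}}(\bD)}(S(B),-)$.
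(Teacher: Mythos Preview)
Your proposal is correct and follows essentially the same route as the paper's proof: both transport the square \eqref{qrfwfewdqewdqe} into $\Ind^{\aleph_{1}}(\bD)$ via the adjunction \eqref{fvvsqervfsvsdf}, and both extract the lift from the combination of the compactness of $y(A)$ (making one comparison map an equivalence) and the universal property \eqref{sdfvfeqrvcfvsdvfdsvs} of $S(B)$ (making another an equivalence). The only cosmetic difference is that you organize the argument around the unit $\eta\colon W\to y(|W|)$ and the observation that $\eta_{*}$ is invertible after $\Map_{\Ind^{\aleph_{1}}(\bD)}(S(B),-)$, whereas the paper writes out a single four-row commuting rectangle and reads off the dashed arrow directly; the underlying equivalences are the same.
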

 {\begin{proof}
	Let $(Z_{i})_{i\in I}$ be a  system in $\bD$.  
	Then we consider the bold part of the diagram 
	\begin{equation}\label{qrfwfewdqewdqe1} 
		\xymatrix{\colim_{i\in I}\Map_{\bD}(\lvert y(A)\rvert,Z_{i}) \ar[r] & \Map_{\bD}(\lvert y(A) \rvert,\colim_{i\in I} Z_i) \\\colim_{i\in I}\Map_{\Ind^{\aleph_{1}}(\bD)}(y(A),y(Z_{i}))\ar[r]^-{\simeq}_{!} \ar[u]^{\lvert - \rvert}_{\simeq}&\ar[u]^{\lvert - \rvert} \Map_{\Ind^{\aleph_{1}}(\bD)}(y(A), \colim_{i \in I} y(Z_i))    \\\ar[u]^{f^{*}} \colim_{i\in I}\Map_{\Ind^{\aleph_{1}}(\bD) }(S(B),y(Z_{i}))\ar[r] & \ar[u]^{f^{*}}  \Map_{\Ind^{\aleph_{1}}(\bD)}(S(B),\colim_{i \in I}y(Z_i))     \\\ar@{<-}[u]^{\lvert - \rvert}_{\simeq} \colim_{i\in I}\Map_{\bD}(|S(B)|,Z_{i})\ar[r] & \ar@{<-}[u]^{\lvert - \rvert}_{\simeq} \Map_{\bD}(|S(B)|,\colim_{i \in I}Z_i)\ar@{-->}[uuul] } \ .
	\end{equation}
	Here, we have implicitly used the natural identifications $\lvert y(Z_i) \rvert \simeq Z_i$ and $\lvert \colim_{i \in I} y(Z_i) \rvert \simeq \colim Z_i $. 
	The marked map is an equivalence since $y(A)$ is compact, and the two lower vertical maps are {equivalences} due to the universal property of the shape \eqref{sdfvfeqrvcfvsdvfdsvs}. It is straightforward to see that the composite of the vertical maps is given by $\lvert f \rvert^\ast$. Using that the marked map is an equivalence one find the desired dashed arrow.  \end{proof}}

 \begin{lem}\label{kowogpergrefrefrfw} If $A,B$ are in $\bD^{\shp}$, then any weakly compact  map $f:A\to B$   is automatically strongly compact.   \end{lem}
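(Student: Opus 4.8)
The plan is to exploit the defining property of the shape together with the strong-compactness statement of \cref{wegjiopewgfsg}. Suppose $A,B$ are in $\bD^{\shp}$ and $f:A\to B$ is weakly compact. By \cref{jwoirthopgfrfwrgre} the natural maps $\eta_A:A\to |S(A)|$ and $\eta_B:B\to |S(B)|$ are equivalences, so without loss of generality we may identify $A\simeq |S(A)|$ and $B\simeq |S(B)|$, and $f$ with $|g|$ for a map $g:S(A)\to S(B)$ in $\Ind^{\aleph_1}(\bD)$ obtained by transporting $f$ along these equivalences. (Concretely, $g$ is the image of $f$ under the equivalence \eqref{sdfvfeqrvcfvsdvfdsvs} with $\hat B:=S(B)$, precomposed with $\eta_A$.) The goal is then to show that $|g|$ is strongly compact.

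The key point is that \cref{wegjiopewgfsg} gives strong compactness of $|h|$ for any map $h:y(C)\to S(B)$ with $C$ in $\bD$. So the strategy is to write $S(A)$ as a countable filtered colimit $S(A)\simeq\colim_{n\in\nat}y(A_n)$ (possible by the first listed fact about $\Ind^{\aleph_1}(\bD)$), and to try to factor $g$, or at least enough of $g$, through one of the $y(A_n)$. Here is where weak compactness of $f=|g|$ enters: applying weak compactness (in the form of \cref{jowtogpwgerferfwefewfref}) to the system $(y(A_n))_{n\in\nat}$ in $\bD$ — note $|y(A_n)|\simeq A_n$ and $\colim_n|y(A_n)|\simeq|S(A)|\simeq A$ — together with the canonical map $B\simeq|S(B)|\xrightarrow{?}$ … — one extracts a factorization of the composite $A\xrightarrow{f}B$ through some $A_k=|y(A_k)|$, i.e.\ a commuting triangle exhibiting $f$ as $|y(A_k)|\to|S(A)|$ composed with a map $A_k\to B$. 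More precisely, I would run the lifting diagram \eqref{vsdfvsdfefefvsvfsdvsv} with $Z_i=y(A_i)$, the right vertical map $\colim_i y(A_i)\to \colim_i y(A_i)$ being the identity on $S(A)$, the bottom map $B\to|S(A)|$ being $\eta_B^{-1}$ followed by — no, better: set it up so that $f:A\to B$ is the left vertical, $B\xrightarrow{\eta_B}|S(B)|$ and we want $f$ itself to lift through $A_k\to A\simeq|S(A)|$. The upshot is a factorization $f\simeq (A_k\xrightarrow{\alpha} B)\circ(|\iota_k|)$ where $\iota_k:y(A_k)\to S(A)$ is the colimit structure map.

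Now I would conclude as follows. The map $\alpha:A_k\to B\simeq|S(B)|$ corresponds, via \eqref{sdfvfeqrvcfvsdvfdsvs} applied with $A$ replaced by $A_k$ (which is in $\bD$, hence trivially covered) — actually via the universal property of the shape of $B$ using that $y(A_k)$ is compact and the realization adjunction — to a map $\beta:y(A_k)\to S(B)$ in $\Ind^{\aleph_1}(\bD)$ with $|\beta|\simeq\alpha$. Hence $f\simeq|\beta|\circ|\iota_k|\simeq|\beta\circ\iota_k|$, and $\beta\circ\iota_k:y(A_k)\to S(B)$ is a map from an object in the image of $y$; thus $|\beta\circ\iota_k|:|y(A_k)|\simeq A_k\to|S(B)|\simeq B$ — wait, that is $\alpha$, not $f$. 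The correct bookkeeping: we have $f\simeq\alpha\circ|\iota_k|$ with $|\iota_k|:A\to A_k$? No — $\iota_k:y(A_k)\to S(A)$, so $|\iota_k|:A_k\to|S(A)|\simeq A$, and weak compactness instead gives a lift the other way. I would therefore instead apply weak compactness to obtain a map $A\to A_k$ (a "partial retraction") splitting off enough of the colimit; composing, $f$ factors as $A\to A_k\xrightarrow{\alpha}B$. Then $\alpha=|\beta|$ as above for some $\beta:y(A_k)\to S(B)$, so by \cref{wegjiopewgfsg} the map $\alpha:A_k\to B$ is strongly compact. Since strongly compact maps form a two-sided ideal (stated after \cref{qrjiofoqdedeqw}), the composite $f:A\to A_k\xrightarrow{\alpha}B$ is strongly compact as well, which is the claim.

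The main obstacle is the second step: producing from weak compactness of $f$ the genuine factorization of (a suitable incarnation of) $f$ through some $y(A_k)$ in a way compatible with all the realization identifications — i.e.\ correctly setting up the lifting square \eqref{vsdfvsdfefefvsvfsdvsv} so that its solution is exactly such a factorization, and then checking coherently that the resulting map $A_k\to B$ does realize a map out of $y(A_k)$ into $S(B)$ so that \cref{wegjiopewgfsg} applies. Everything else (the ideal property, the shape universal property, $y$ fully faithful and limit-preserving) is immediate from the facts collected in this subsection.
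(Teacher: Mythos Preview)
Your overall strategy---pass to shapes, factor through an object of the form $y(-)$, invoke \cref{wegjiopewgfsg}, and use that strongly compact maps form an ideal---is exactly right, and it is the paper's strategy. But you decomposed the wrong shape, and this is why the middle of your argument collapses into guesswork. Write $S(B)\simeq\colim_{n\in\nat}y(B_n)$, not $S(A)$. The missing ingredient is \cref{aerpogjkwpergwefffwrfrefw}: since $S$ is left adjoint to $|-|$ and $|-|$ preserves countable (filtered) colimits (being itself a left adjoint to $y$), the functor $S$ preserves weakly compact maps, so $S(f):S(A)\to S(B)$ is weakly compact in $\Ind^{\aleph_1}(\bD)$. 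Now the lifting square \eqref{vsdfvsdfefefvsvfsdvsv} can be set up \emph{correctly}: take $(Z_i)=(y(B_i))$ with colimit $S(B)$, the bottom map $S(B)\xrightarrow{\id}S(B)$, and the left vertical map $S(f)$. Weak compactness of $S(f)$ produces a factorization $S(A)\to y(B_n)\to S(B)$. Applying $|-|$ recovers $f$ as $A\to B_n\to B$, and the second map $|y(B_n)\to S(B)|$ is strongly compact by \cref{wegjiopewgfsg}; the ideal property finishes.

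Your version, by contrast, asks for a factorization $A\to A_k\to B$ of $f$. Weak compactness of $f$ cannot produce this: the lifting property requires a map $B\to\colim_n A_n\simeq A$, which you do not have. Even if you had such a factorization, your claim that $\alpha:A_k\to B$ is $|\beta|$ for some $\beta:y(A_k)\to S(B)$ is unjustified: the map $\Map_{\Ind^{\aleph_1}(\bD)}(y(A_k),S(B))\to\Map_\bD(A_k,B)$ identifies with $\colim_n\Map_\bD(A_k,B_n)\to\Map_\bD(A_k,\colim_n B_n)$, which is only an equivalence when $A_k$ is compact in $\bD$---and that is precisely what is not assumed. Both problems evaporate once you put the colimit presentation on the target side and transport weak compactness to $\Ind^{\aleph_1}(\bD)$ via \cref{aerpogjkwpergwefffwrfrefw}.
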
 \begin{proof}  Let $f:A\to B$ be a weakly compact map. Then $S(f):S(A)\to S(B)$ is
	weakly compact by \cref{aerpogjkwpergwefffwrfrefw}.  We consider a system
	$(B_{n})_{n\in \nat}$  in $\bD$ such that $\colim_{n\in \nat} y(B_{n})\simeq S(B)$. Using \cref{jowtogpwgerferfwefewfref} we see that there exists  
	$n$ in $\nat$ such that $S(f)$ factorizes over $S(A)\to y(B_{n})\to S(B)$. We recover $f$ by applying $|-|$ 
	and use \cref{wegjiopewgfsg} in order to conclude that $f$ is strongly compact.
\end{proof}

\begin{ddd}\label{ijtowggwerwgerg9}
We say that $\bD$  is  \pcas{}  if  $\bD^{\shp}=\bD$.  
\end{ddd}

If $\bD$ is  \pcas, then the shape functor is  {the} left-adjoint {in the}  adjunction 
\begin{equation}\label{fwqeddcvsdvcs1}S:\bD\leftrightarrows \Ind^{\aleph_{1}}(\bD) :|-|\ , \qquad \eta:\id_{\bD}\stackrel{\simeq}{\to} |-|\circ S \end{equation} whose 
  unit  {$\eta$}    is an equivalence by \cref{jwoirthopgfrfwrgre}.

{
\begin{rem}
	A small \pcas{} $\infty$-category is the category of $\aleph_{1}$-compact objects in a
	large $\infty$-category which is compactly assembled in the sense of Lurie \cite[Sec. 21.1.2.]{sag}. {The latter notion}
	 is an $\infty$-categorical version of the notion of a {continuous} category by Johnstone-Joyal \cite{jj}.
	\hB
\end{rem}
}

 The following is a consequence of \cref{kowogpergrefrefrfw}.
 \begin{kor}\label{kgoprgfeegferfwef}
 If $\bD$ is \pcas, then weakly compact morphisms are automatically strongly compact.
 \end{kor}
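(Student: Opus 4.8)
The statement to prove is \cref{kgoprgfeegferfwef}: if $\bD$ is \pcas{}, then weakly compact morphisms are automatically strongly compact. The key observation is that \pcas{} means $\bD^{\shp} = \bD$ by \cref{ijtowggwerwgerg9}, so every object of $\bD$ admits a shape. This is precisely the hypothesis needed to invoke \cref{kowogpergrefrefrfw}.

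The plan is immediate. Let $f\colon A\to B$ be a weakly compact morphism in $\bD$. Since $\bD$ is \pcas{}, by definition $\bD^{\shp}=\bD$, so in particular both $A$ and $B$ lie in $\bD^{\shp}$, i.e.\ both admit shapes. Now \cref{kowogpergrefrefrfw} applies verbatim with this $f$: a weakly compact map between two objects of $\bD^{\shp}$ is automatically strongly compact. Hence $f$ is strongly compact, which is the assertion.

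There is essentially no obstacle here: the corollary is a direct specialization of \cref{kowogpergrefrefrfw} to the global situation in which every object admits a shape. The only thing to check is that the hypotheses of \cref{kowogpergrefrefrfw} are met, and this is exactly the content of the definition of \pcas{}. The substantive work — factoring the shape map $S(f)$ through some $y(B_n)$ via weak compactness and \cref{jowtogpwgerferfwefewfref}, then recovering $f$ by applying the realization $|-|$ and appealing to the strong compactness of realized maps from \cref{wegjiopewgfsg} — has already been carried out in the proof of \cref{kowogpergrefrefrfw} and need not be repeated.
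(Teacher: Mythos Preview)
Your proof is correct and matches the paper's approach exactly: the paper simply states that the corollary is a consequence of \cref{kowogpergrefrefrfw}, and you have correctly spelled out why the hypotheses of that lemma are satisfied under the assumption that $\bD$ is \pcas{}.
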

 Thus in a \pcas{} category all our  compactness variants {from \cref{qrjiofoqdedeqw}} coincide.

Let now $\bD$ be an $\infty$-category admitting countable filtered colimits and $S:\bD^{\shp}\to \bD$ be the partial shape functor from \eqref{fwqeddcvsdvcs}. 
 If $\bD'$ is a full subcategory of $\bD$, then we can consider $\Ind^{\aleph_{1}} (\bD') $ as a full subcategory of $\Ind^{\aleph_{1}} (\bD) $.
%If $\bD$ is a countably cocomplete stable $\infty$-category, then 
In the following we will  construct the maximal \pcas{} full subcategory $\bD^{\casmbl}$ of $\bD$, see \cref{wregiojwoeprgerwfrewfwref} for a precise statement.
 Note that in general  we do not expect that the full subcategory  $\bD^{\shp}$ of $\bD$  is   \pcas{}  since it is not clear that the functor $S$ takes values in the full subcategory $\Ind^{\aleph_{1}} (\bD^{\shp}) $ of $\Ind^{\aleph_{1}} (\bD ) $. 
  We therefore define by {transfinite recursion} a decreasing family {$(\bD^{(i)})_{i}$ of full subcategories
  of $\bD$}. We set $\bD^{(0)}:=\bD^{\shp}$. {Let $i$ be an ordinal and assume that $\bD^{(j)}$ has been defined for all ordinals $j<i$. 
  \begin{enumerate}
  \item If $i$ is a successor ordinal, then $i=j+1$ for some ordinal $j$ and we define  $\bD^{(i)}$ to be the subcategory of $\bD^{(j)}$ on objects $A$ with $S(A)$ in $\Ind^{\aleph_{1}} (\bD^{(j)}) $.
  \item If $i$ is a limit ordinal, then we define $\bD^{(i)}:=\bigcap_{j<i} \bD^{(j)}$. \end{enumerate}
 %  If $\bD^{(i)}$ is defined, then
%we let
%$\bD^{(i+1)}$ be the subcategory of $\bD^{(i)}$ on objects $A$ with $S(A)$ in $\Ind^{\aleph_{1}} (\bD^{(i)}) $.
 Since $\bD$ is small there exists an ordinal $i'$ such that $\bD^{(i')}=\bD^{(i'+1)}$. We then define  the full subcategory  $$\bD^{\casmbl}:= \bD^{(i')}$$  of $\bD$.   This is independent of the choice of $i'$.}

\begin{lem}\label{ekopgwfwregwrgweg}
The  subcategory $\bD^{\casmbl}$ is closed under countable filtered colimits  and   \pcas.  If $\bD$ is stable, then so is  $\bD^{\casmbl}$.    
\end{lem}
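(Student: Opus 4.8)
The plan is to prove the closure and stability statements by an induction over the stages $\bD^{(i)}$, and to reduce the compact-assembly statement to a single claim about where the shape functor sends $\bD^{\casmbl}$.

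\textbf{Closure under countable filtered colimits and stability.} First I would show, by induction on $i\in\nat$, that $\bD^{(i)}$ is closed under countable filtered colimits in $\bD$, and is a stable subcategory whenever $\bD$ is. The case $i=0$ is \cref{werojgpwertgwerrfwerfw}. For the step, let $(A_j)_{j\in J}$ be a countable filtered system in $\bD^{(i+1)}\subseteq\bD^{(i)}$ with colimit $A$. By the inductive hypothesis $A\in\bD^{(i)}\subseteq\bD^{(0)}=\bD^{\shp}$, so $S(A)$ is defined, and the proof of \cref{werojgpwertgwerrfwerfw} identifies $S(A)\simeq\colim_j S(A_j)$ in $\Ind^{\aleph_1}(\bD)$. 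Each $S(A_j)$ lies in $\Ind^{\aleph_1}(\bD^{(i)})$ by the definition of $\bD^{(i+1)}$, and $\Ind^{\aleph_1}(\bD^{(i)})$ — being the closure of $y(\bD^{(i)})$ under countable filtered colimits inside $\Ind^{\aleph_1}(\bD)$ — is closed under such colimits, so $S(A)\in\Ind^{\aleph_1}(\bD^{(i)})$ and hence $A\in\bD^{(i+1)}$. In the stable case the same reference gives $S(\Omega A)\simeq\Omega S(A)$ and that $S$ preserves finite colimits on $\bD^{(i)}$; since $\Ind^{\aleph_1}(\bD^{(i)})$ is then stable, hence closed under $\Omega$ and cofibres, it follows that $\bD^{(i+1)}$ is closed under $\Omega$ and finite colimits and is therefore a stable subcategory. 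Intersecting over $i$ shows that $\bD^{\casmbl}=\bigcap_i\bD^{(i)}$ is closed under countable filtered colimits, and stable if $\bD$ is.

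\textbf{Reduction for compact assembly.} Because $\bD^{\casmbl}$ is closed under countable filtered colimits, the functor $\Ind^{\aleph_1}(\bD^{\casmbl})\hookrightarrow\Ind^{\aleph_1}(\bD)$ is fully faithful, and the realization of $\bD$, restricted along it, lands in $\bD^{\casmbl}$ and is the realization of $\bD^{\casmbl}$. Hence, if $A\in\bD^{\casmbl}$ and its shape $S(A)\in\Ind^{\aleph_1}(\bD)$ happens to lie in $\Ind^{\aleph_1}(\bD^{\casmbl})$, then for every $\hat B\in\Ind^{\aleph_1}(\bD^{\casmbl})$ one has, by fullness and the defining property \eqref{sdfvfeqrvcfvsdvfdsvs} of $S(A)$,
\[
\Map_{\Ind^{\aleph_1}(\bD^{\casmbl})}(S(A),\hat B)\simeq\Map_{\Ind^{\aleph_1}(\bD)}(S(A),\hat B)\simeq\Map_{\bD}(A,|\hat B|)\simeq\Map_{\bD^{\casmbl}}(A,|\hat B|)\ ,
\]
so $S(A)$ is a shape of $A$ in $\bD^{\casmbl}$ and $A\in(\bD^{\casmbl})^{\shp}$. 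Thus it suffices to prove that $S$ maps $\bD^{\casmbl}$ into $\Ind^{\aleph_1}(\bD^{\casmbl})$; equivalently, encoding the hypothesis $A\in\bD^{\casmbl}$ as $S(A)\in\bigcap_i\Ind^{\aleph_1}(\bD^{(i)})$, that
\[
\bigcap_i\Ind^{\aleph_1}(\bD^{(i)})=\Ind^{\aleph_1}\!\big(\textstyle\bigcap_i\bD^{(i)}\big)\ .
\]

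\textbf{The core point and the main obstacle.} The inclusion ``$\supseteq$'' is formal; ``$\subseteq$'' is the heart of the matter and is genuinely false for an arbitrary decreasing chain of full subcategories closed under countable filtered colimits: a naive interleaving of the countable presentations $S(A)\simeq\colim_n y(A^{(i)}_n)$ with $A^{(i)}_n\in\bD^{(i)}$ (available for each $i$ since $A\in\bD^{(i+1)}$, using \cite[Prop. 5.3.1.18]{htt}) only produces a diagonal tower whose $m$-th term lies in $\bD^{(m)}$, not in the intersection. The plan is to exploit that the chain $(\bD^{(i)})$ is not merely decreasing but is manufactured by iterating the colimit-preserving shape functor $S$: using the compactness of the objects $y(D)$, $D\in\bD$, together with the closure of each $\bD^{(i)}$ under the countable filtered colimits that arise (established above) and the recursive relation $\bD^{(i+1)}=\{D\in\bD^{(i)}:S(D)\in\Ind^{\aleph_1}(\bD^{(i)})\}$, one re-extracts from the family of level-wise presentations of the fixed object $S(A)$ a single cofinal system $y(C_0)\to y(C_1)\to\cdots$ with colimit $S(A)$ whose terms all lie in $\bigcap_i\bD^{(i)}=\bD^{\casmbl}$. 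Making this re-indexing precise — turning the diagonal into a genuine intersection, i.e. showing that the iteration stabilises at stage $\omega$ — is the main obstacle of the proof; once it is in place, the reduction of the previous paragraph applied to $S(A)$ finishes the argument and $\bD^{\casmbl}$ is \pcas.
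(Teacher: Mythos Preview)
Your treatment of closure under countable filtered colimits and of stability is the same as the paper's: both induct on $i$, using \cref{werojgpwertgwerrfwerfw} together with the fact that $S$ preserves countable filtered colimits (and, in the stable case, finite limits and colimits).

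For the compactly-assembled part, the paper's entire argument is the single sentence ``By construction, if $A$ is in $\bD^{\casmbl}$, then $S(A)\in\Ind^{\aleph_{1}}(\bD^{\casmbl})$.'' You have correctly isolated this as the crux, correctly reduced it to the inclusion $\bigcap_i\Ind^{\aleph_1}(\bD^{(i)})\subseteq\Ind^{\aleph_1}\bigl(\bigcap_i\bD^{(i)}\bigr)$, and correctly observed that this inclusion fails for an arbitrary decreasing chain of full subcategories closed under countable filtered colimits. Your proposed re-indexing argument exploiting the recursive definition of the $\bD^{(i)}$ is only sketched, and you honestly flag it as the main obstacle; the paper does not supply such an argument either --- it simply asserts the conclusion. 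So your proposal and the paper take the same approach, and the difficulty you identify is not addressed by the paper's proof. Whether this reflects a genuine lacuna, an argument the authors considered routine, or an intended transfinite version of the construction (note the unexplained index set ``$I$'' rather than $\nat$ in the definition $\bD^{\casmbl}:=\bigcap_{i\in I}\bD^{(i)}$, which would make the claim immediate once the tower stabilises) is not resolved by the text.
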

\begin{proof}
 The subcategory  $\bD^{\shp}$ is  closed under countable filtered colimits by \cref{werojgpwertgwerrfwerfw}.
Since the functor $S$ preserves filtered colimits we can conclude by transfinite induction  that
 $ \bD^{(i)}$ is closed under countable filtered colimits {for every ordinal $i$}.
 This implies that $\bD^{\casmbl}$ is closed under countable filtered colimits.
By construction, if $A$ is in  $\bD^{\casmbl}$, then $S(A)\in\Ind^{\aleph_{1}} (\bD^{\casmbl}) $. Consequently, 
$\bD^{\casmbl}$ is  \pcas.  

For stability we argue similarly, using the second assertion of  \cref{werojgpwertgwerrfwerfw}.
\end{proof}

 Let $F:\bC\to \bD$ be a  countable  filtered colimit preserving  functor between  $\infty$-categories addmitting countable filtered colimits. 
 It essentially uniquely  extends to a countable filtered colimit preserving functor
 $\hat F $ fitting into \begin{equation}\label{wrefrefreferwffwrfw} \xymatrix{\bC\ar[r]^{F}\ar[d]^{y_{\bC}} &\bD \ar[d]^{y_{\bD}} \\\Ind^{\aleph_{1}} (\bC )  \ar[r]^{\hat F} &\Ind^{\aleph_{1}} (\bD ) }  \ .
\end{equation}
The condition that $F$ preserves countable filtered colimits  is equivalent to the condition that the natural comparison morphism  is an equivalence 
 \begin{equation}\label{fwewerfreferfwfref2}\lvert - \rvert_\bD \circ \hat F \stackrel{\simeq}{\to} F \circ \lvert - \rvert_\bC\ .\end{equation}  

\begin{ddd} \label{gijwreigoerfureoifuoerfwerfefwefwefrerfw}

We say that $F$ preserves shapes  if for every $A$ in $\bC^{\shp}$ the  object $\hat F(S_{\bC}(A))$ in 
$\Ind^{\aleph_{1}}(\bD)$ together with the map $$F(A)\stackrel{F(\eta_{A})}{\to}   F(|S_{\bC}(A)|_{\bC}) \stackrel{\simeq}{\to} |\hat F(S_{\bC}(A))|_{\bD}$$ represents the shape of $F(A)$.
 \end{ddd}
 Note that for a shape preserving functor $F$ we have  $$F(\bC^{\shp})\subseteq \bD^{\shp}\ .$$

 The following result provides examples of shape preserving functors.
 Assume that $$L:\bC\leftrightarrows \bD:R$$ is an adjunction.    \begin{lem}\label{kjogperwgefweff} If  $R$ preserves countable  filtered colimits, then
 $L$ preserves shapes.
 \end{lem}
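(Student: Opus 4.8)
The plan is to exhibit $\hat F(S_\bC(A))$ as a corepresenting object for $\Map_\bD(F(A), |-|_\bD)$ by a direct computation with adjunctions. First I would fix $A$ in $\bC^{\shp}$ and an arbitrary object $\hat B$ in $\Ind^{\aleph_1}(\bD)$, and try to build a chain of natural equivalences
\[
\Map_{\Ind^{\aleph_1}(\bD)}\bigl(\hat F(S_\bC(A)),\hat B\bigr)\;\simeq\;\Map_\bD\bigl(F(A),|\hat B|_\bD\bigr),
\]
compatible with the structure map of Definition~\ref{gijwreigoerfureoifuoerfwerfefwefwefrerfw}. The key observation is that the hypothesis "$R$ preserves countable filtered colimits" lets us apply \cref{aerpogjkwpergwefffwrfrefw}-style reasoning at the level of ind-categories: the extended functor $\hat F$ in \eqref{wrefrefreferfwffwrfw} has a right adjoint $\hat R:\Ind^{\aleph_1}(\bD)\to\Ind^{\aleph_1}(\bC)$, because $R$ being countable filtered colimit preserving implies the pair $(L,R)$ passes to an adjunction $(\hat F,\hat R)$ on the $\Ind^{\aleph_1}$-completions. (Concretely, $\hat R$ is the essentially unique countable filtered colimit preserving extension of $y_\bC\circ R$.) Moreover, because $R$ preserves countable filtered colimits, the Beck--Chevalley type square relating $R$ with the realizations commutes, i.e.\ $|-|_\bC\circ\hat R\simeq R\circ|-|_\bD$, which is the dual of \eqref{fwewerfreferfwfref2}.

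Granting this, the chain of equivalences reads: starting from $\Map_{\Ind^{\aleph_1}(\bD)}(\hat F(S_\bC(A)),\hat B)$, adjunction $(\hat F,\hat R)$ gives $\Map_{\Ind^{\aleph_1}(\bC)}(S_\bC(A),\hat R(\hat B))$; the universal property of the shape of $A$, namely \eqref{sdfvfeqrvcfvsdvfdsvs} applied with the object $\hat R(\hat B)$ of $\Ind^{\aleph_1}(\bC)$, rewrites this as $\Map_\bC(A,|\hat R(\hat B)|_\bC)$; the Beck--Chevalley equivalence $|\hat R(\hat B)|_\bC\simeq R(|\hat B|_\bD)$ turns this into $\Map_\bC(A,R(|\hat B|_\bD))$; and finally the original adjunction $(L,R)=(F,R)$ identifies this with $\Map_\bD(F(A),|\hat B|_\bD)$. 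Tracing through what each step does to the canonical element (the identity of $\hat F(S_\bC(A))$, equivalently the unit $\eta_A$) shows that the composite equivalence is induced by the structure map
\[
F(A)\xrightarrow{F(\eta_A)} F(|S_\bC(A)|_\bC)\xrightarrow{\;\simeq\;}|\hat F(S_\bC(A))|_\bD,
\]
exactly as required by \cref{gijwreigoerfureoifuoerfwerfefwefwefrerfw}. This proves $F$ preserves shapes, and in particular $F(\bC^{\shp})\subseteq\bD^{\shp}$.

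The main obstacle I anticipate is the first bullet point: producing the adjoint pair $(\hat F,\hat R)$ on $\Ind^{\aleph_1}$-completions together with the Beck--Chevalley compatibility $|-|_\bC\circ\hat R\simeq R\circ|-|_\bD$. For the ordinary $\Ind$-completion the adjunction transfer is standard, but one must check that $\hat R$ actually lands in the closure-under-countable-filtered-colimits subcategory $\Ind^{\aleph_1}(\bC)\subseteq\Ind(\bC)$, which uses that $R$ (hence $y_\bC\circ R$) preserves countable filtered colimits together with the fact recalled above that every object of $\Ind^{\aleph_1}$ is a countable filtered colimit of objects in the image of $y$. The Beck--Chevalley square is then forced by uniqueness of countable filtered colimit preserving extensions, since both $|-|_\bC\circ\hat R$ and $R\circ|-|_\bD$ restrict to $R$ along $y_\bD$ and both preserve countable filtered colimits. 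Everything else is a formal manipulation of mapping spaces, so once the adjunction-on-$\Ind^{\aleph_1}$ is in place the statement follows cleanly.
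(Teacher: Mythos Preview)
Your proof is correct and follows essentially the same route as the paper: both pass to an induced adjunction $\hat L \dashv \hat R$ on the $\Ind^{\aleph_1}$-completions, then chain together the adjunction, the defining property \eqref{sdfvfeqrvcfvsdvfdsvs} of the shape, the compatibility $|{-}|_\bC \circ \hat R \simeq R \circ |{-}|_\bD$, and the original adjunction $L \dashv R$, finally tracing the identity through to identify the structure map as $L(\eta_A)$. Your discussion of why $\hat R$ lands in $\Ind^{\aleph_1}(\bC)$ and why the Beck--Chevalley square commutes is more explicit than the paper, which simply asserts the induced adjunction and the equivalence $|\hat R(-)|_\bC \simeq R(|{-}|_\bD)$ without further comment.
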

 \begin{proof}
 We have an induced adjunction 
 $$\hat L:\Ind^{\aleph_{1}}(\bC)\leftrightarrows \Ind^{\aleph_{1}}(\bD):\hat R\ .$$
 The equivalence of functors $\bC^{\shp,\op}\times \Ind^{\aleph_{1}}(\bD)\to \Spc$
 \begin{align*}
\Map_{\Ind^{\aleph_{1}}(\bD)}(\hat L(S_{\bC}(-)),-)\simeq\Map_{\Ind^{\aleph_{1}}(\bC)}(S_{\bC}(-),\hat R(-))
&\simeq\Map_{ \bC}(-, |\hat R(-)|_{\bC})\\\ \simeq 
\Map_{ \bC}(-, R(|-|_{\bD}))&\:\:\simeq
\Map_{ \bD}(L(-), |-|_{\bD}) 
\end{align*}
sends  for every $A$ in $\bC^{\shp}$ the identity $\id_{\hat L(S_{\bC}(A))}$ to the map
$L(\eta_{A}):L(A)\to | \hat L (S_{\bC}(A))|_{\bD}$ which presents 
  $\hat L(S_{\bC}(A))$  as the shape of $L(A)$.
  \end{proof}

 Let $\Cat_{\infty}^{\cfil,\shp}$  be the subcategory of $\Cat_{\infty}$ of  small    $\infty$-categories admitting countable filtered colimits
and countable  filtered colimit  and shape   preserving functors and
$\Cat_{\infty}^{\casmbl}$ be {the}  full subcategory of $\Cat_{\infty}^{\cfil,\shp}$ of  \pcas{} categories. 
Similarly, we let   $\Cat_{\infty}^{\exa,\cfil,\shp}$  be the subcategory of $\Cat^{\exa}_{\infty}$ of  small  countably cocomplete  stable $\infty$-categories 
and exact, countable filtered colimit  and shape   preserving functors and
$\Cat^{\exa,\casmbl}_{\infty}$ be  {its full subcategory} 
%the full subcategory of $\Cat_{\infty}^{\exa,\cfil,\shp}$
of      \pcas{} categories. 

\begin{prop}\label{wregiojwoeprgerwfrewfwref} We have right Bousfield localizations 
$$\incl:\Cat_{\infty}^{\casmbl}\leftrightarrows  \Cat_{\infty}^{\cfil,\shp}:(-)^{\casmbl}$$
and 
$$\incl:\Cat_{\infty}^{\exa,\casmbl}\leftrightarrows  \Cat_{\infty}^{\exa,\cfil,\shp}:(-)^{\casmbl}\ .$$
\end{prop}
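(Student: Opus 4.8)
The plan is to establish the two right Bousfield localizations by exhibiting $(-)^{\casmbl}$ as right adjoint to the inclusion, using the universal property of the counit rather than grinding through explicit adjunction formulas. Since $\Cat_{\infty}^{\shp}$ is by definition a full subcategory of $\Cat_{\infty}^{\cfil,\shp}$ (and similarly in the stable case), it suffices to show that for every $\bD$ in $\Cat_{\infty}^{\cfil,\shp}$ the category $\bD^{\casmbl}$ lies in $\Cat_{\infty}^{\shp}$, that the inclusion $\bD^{\casmbl}\hookrightarrow\bD$ is a morphism in $\Cat_{\infty}^{\cfil,\shp}$, and that this inclusion is terminal among all morphisms into $\bD$ from \pcas{} categories. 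The first point is precisely \cref{ekopgwfwregwrgweg}: $\bD^{\casmbl}$ is closed under countable filtered colimits and is \pcas{} (and stable when $\bD$ is). For the second point I must check that $\incl:\bD^{\casmbl}\to\bD$ preserves countable filtered colimits (immediate, since $\bD^{\casmbl}$ is closed under them and colimits in a full closed-under-colimits subcategory agree with those in the ambient category) and preserves shapes. Shape preservation for the inclusion will follow from the definition of $\bD^{\casmbl}$: for $A$ in $\bD^{\casmbl}$ we have $S_{\bD}(A)\in\Ind^{\aleph_{1}}(\bD^{\casmbl})$, and one checks that this object, which is also $\widehat{\incl}(S_{\bD^{\casmbl}}(A))$, together with $\eta_A$, corepresents $\Map_{\bD}(A,|-|_{\bD})$ — this is really a tautology once one knows $S_{\bD^{\casmbl}}$ is computed by restricting $S_{\bD}$.

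The crux is the universal property. Given any $\bE$ in $\Cat_{\infty}^{\shp}$ and a morphism $F:\bE\to\bD$ in $\Cat_{\infty}^{\cfil,\shp}$, I need to show $F$ factors (essentially uniquely) through $\bD^{\casmbl}$, i.e.\ that $F(\bE)\subseteq\bD^{\casmbl}$ on objects (full faithfulness of the factorization is then automatic since $\bD^{\casmbl}$ is full). The argument is an induction on $i$ showing $F(\bE)\subseteq\bD^{(i)}$ for all $i\in\nat$. The base case $F(\bE)\subseteq\bD^{(0)}=\bD^{\shp}$ holds because $F$ preserves shapes, hence $F(\bE^{\shp})\subseteq\bD^{\shp}$, and $\bE^{\shp}=\bE$ since $\bE$ is \pcas. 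For the inductive step, assume $F(\bE)\subseteq\bD^{(i)}$; I must show that for $A$ in $\bE$ the shape $S_{\bD}(F(A))$ lies in $\Ind^{\aleph_{1}}(\bD^{(i)})$. Since $F$ preserves shapes, $S_{\bD}(F(A))\simeq\hat F(S_{\bE}(A))$. Now $\bE$ is \pcas, so $S_{\bE}(A)\in\Ind^{\aleph_{1}}(\bE)$, meaning $S_{\bE}(A)\simeq\colim_{n}y_{\bE}(A_n)$ for some system $(A_n)$ in $\bE$. Applying the colimit-preserving functor $\hat F$ and using $\hat F\circ y_{\bE}\simeq y_{\bD}\circ F$ (the commuting square \eqref{wrefrefreferwffwrfw}) gives $\hat F(S_{\bE}(A))\simeq\colim_{n}y_{\bD}(F(A_n))$, which lies in $\Ind^{\aleph_{1}}(\bD^{(i)})$ because each $F(A_n)\in\bD^{(i)}$ by the inductive hypothesis and $\Ind^{\aleph_{1}}(\bD^{(i)})$ is closed under countable filtered colimits inside $\Ind^{\aleph_{1}}(\bD)$. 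This closes the induction, so $F(\bE)\subseteq\bigcap_i\bD^{(i)}=\bD^{\casmbl}$.

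Finally, the factorization $\bE\to\bD^{\casmbl}$ so obtained is again a morphism in $\Cat_{\infty}^{\cfil,\shp}$: it preserves countable filtered colimits because $F$ does and these are computed the same way in $\bD^{\casmbl}$ as in $\bD$, and it preserves shapes because $F$ does and the shape functor on $\bD^{\casmbl}$ is the restriction of that on $\bD$ (using again that $\Ind^{\aleph_{1}}(\bD^{\casmbl})\hookrightarrow\Ind^{\aleph_{1}}(\bD)$ is fully faithful and realization is compatible). Essential uniqueness of the factorization is automatic from full faithfulness of $\incl$. This verifies that $\incl:\Cat_{\infty}^{\shp}\to\Cat_{\infty}^{\cfil,\shp}$ admits the right adjoint $(-)^{\casmbl}$ with counit the inclusion $\bD^{\casmbl}\to\bD$, i.e.\ it is a right Bousfield localization; since every step respected stability and exactness, the stable version follows verbatim with $\Cat^{\exa}_{\infty}$ in place of $\Cat_{\infty}$, invoking the stable clauses of \cref{werojgpwertgwerrfwerfw} and \cref{ekopgwfwregwrgweg}.

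The main obstacle I anticipate is the bookkeeping around shape preservation being stable under the inductive construction and under passing to the subcategory: one must be careful that $S_{\bD^{(i)}}$ really is computed by restricting $S_{\bD}$, which hinges on $\Ind^{\aleph_{1}}(\bD^{(i)})\hookrightarrow\Ind^{\aleph_{1}}(\bD)$ being fully faithful and compatible with realization — a point that deserves an explicit (if short) verification rather than being taken for granted.
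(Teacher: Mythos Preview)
Your proposal is correct and follows essentially the same route as the paper: the inductive argument showing $F(\bE)\subseteq\bD^{(i)}$ for all $i$ is exactly the content of the paper's \cref{tgijoworegvferwfrewfw} (specialized to $\bC=\bE$ compactly assembled), and the universal property is then deduced in the same way. The paper packages the induction as a separate lemma about arbitrary shape-preserving functors $F:\bC\to\bD$ sending $\bC^{\casmbl}$ into $\bD^{\casmbl}$, but the substance is identical; your more explicit discussion of why the inclusion $\bD^{\casmbl}\hookrightarrow\bD$ preserves shapes is a point the paper asserts without elaboration.
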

\begin{proof}
We first show that the construction $\bD\mapsto \bD^{\casmbl}$ extends to a functor.
In the  statement  below we use the following notation: If $\Phi:\cC\to \cD$ is a functor between $\infty$-categories and $\cD'$ is a full subcategory of $\cD$, then we write $\Phi(\cC)\subseteq \cD'$ for the assertion that the essential image of $\Phi$ is contained in $\cD'$.
  \begin{lem}\label{tgijoworegvferwfrewfw}
 If $F:\bC\to \bD$ preserves  shapes, then $F(\bC^{\casmbl})\subseteq  \bD^{\casmbl}$. 
   \end{lem}
 \begin{proof}
 We argue  {by transfinite induction that}  $F(\bC^{(i)})\subseteq \bD^{(i)}$.
For $i=0$ the assertion is true by assumption.
{Let $i$ be an ordinal.
\begin{enumerate}
\item  If $i$ is a successor ordinal, then $i=j+1$ for some ordinal $j$.  Then we have $F(\bC^{(j)})\subseteq \bD^{(j)}$ by induction hypothesis. Let $A$ be in $\bC^{(i)}$ so that $S_{\bC}(A)\in \Ind^{\aleph_1}(\bC^{(j)})$.
 Then $S_{\bD}(F(A))\simeq \hat F(S_{\bC}(A))\in \hat F(\Ind^{\aleph_{1}} (\bC^{(j)} ) )\subseteq \Ind^{\aleph_{1}} (\bD^{(j)} ) $.
 Hence $F(A)\in \bD^{(i)}$.
 \item If $i$ is a limit ordinal, then  clearly
 $$F(\bC^{(i)})=F(\bigcap_{j<i} \bC^{(j)})\subseteq \bigcap_{{j<i}} F(\bC^{(j)})\subseteq \bigcap_{{j <i}} \bD^{(j)}=\bD^{(i)}\ .$$
\end{enumerate}
}
%
% Assume that $F(\bC^{(i)})\subseteq \bD^{(i)}$.
% Then also $\hat F(\Ind^{\aleph_{1}} (\bC^{(i)} ) )\subseteq\Ind^{\aleph_{1}} (\bD^{(i)} ) $.
%  Let $A$ be in $\bC^{(i+1)}$.
% Then $S_{\bD}(F(A))\simeq \hat F(S_{\bC}(A))\in \hat F(\Ind^{\aleph_{1}} (\bC^{(i)} ) )\subseteq\Ind^{\aleph_{1}} (\bD^{(i)} ) $.
% Hence $F(A)\in \bD^{(i+1)}$.
 \end{proof}

In the following statement the superscripts $\exa$, $\cfil$, $ \shp$ stand for  exact, 
countable filtered colimit,  and shape   preserving functors respectively.
 Note that the inclusion $\bD^{\casmbl}\to \bD$  preserves countable filtered colimits and shapes
(and also finite colimits and limits in the stable case).
The following result completes the proof of \cref{wregiojwoeprgerwfrewfwref}.

\begin{lem} If $\bC$ is  \pcas{} and $\bD$ is an $\infty$-category admitting countable filtered colimits, then
 the inclusion $\bD^{\casmbl}\to \bD$ induces an equivalence 
$$\Fun^{\cfil,\shp}(\bC,\bD^{\casmbl})\stackrel{\simeq}{\to}\Fun^{\cfil,\shp}(\bC,\bD). $$
If $\bC$ and $\bD$ are in addition stable, then  the inclusion also induces an equivalence  
$$\Fun^{\exa,\cfil,\shp}(\bC,\bD^{\casmbl})\stackrel{\simeq}{\to}\Fun^{\exa,\cfil,\shp}(\bC,\bD) $$

\end{lem}
\begin{proof}
Let $F:\bC\to \bD$ be a  countable colimit and shape preserving functor.  We must show that 
$F$ takes values in $\bD^{\casmbl}$. But since $\bC^{\casmbl}=\bC$ this immediately follows from \cref{tgijoworegvferwfrewfw}.
The argument in the stable case is analoguous.
\end{proof}\end{proof}

\subsection{Compact approximations and exhaustions}\label{erwgjoiwpergwerfrefrewfw}

We consider an $\infty$-category $\bD$ admitting countable filtered colimits.  
In this section we characterize the existence of a shape  of an object $A$ of $\bD$ in terms of the existence of suitable exhaustions of $A$.  Conversely we discuss the consequences of the assumption that $\bD$ is \pcas{} to the existence of exhaustions of its objects.

  In the following we will say that a system  $(A_{n})_{n\in \nat}$ in $\bD$ is an approximation  of $A:=\colim_{n\in \nat }A_{n}$.  
 Let $?$ be in \{weakly, slp, strongly\}.

  \begin{ddd}\label{jigowergwerfewrfwerf}\mbox{}
\begin{enumerate}
\item A ?-compact approximation of $A$  in $\bD$ is an approximation   $(A_{n})_{n\in \nat}$ such that   the structure maps $A_{i}\to A$ are ?-compact for all $n$ in $\nat$.
\item A ?-compact exhaustion   of $A$  is an  approximation $(A_{n})_{n\in \nat}$ such that 
 the structure maps $A_{n}\to A_{n+1}$ are ?-compact  for all $n$ in $\nat$.   
\end{enumerate}\end{ddd}
Note that a ?-compact exhaustion is a ?-compact approximation.

 The following result gives a criterion when an approximation  represents the shape of an object $A$ of $\bD$.
 
\begin{prop} \label{qrjigowergwergrwewf}
The object $A$ admits a shape if and only if it admits an approximation $(A_{n})_{n\in \nat}$ such that the  canonical map
  \begin{equation}\label{adsvasdccdcaseccacsdc}\lim_{n\in \nat} \colim_{i\in I}\Map_{ \bD}(A_{n},Z_{i}) \to \Map_{\bD}(A,\colim_{i\in I}Z_{i})\end{equation} is an equivalence {for all systems $(Z_i)_{i \in I}$ in $ \bD $}. In this case $S(A)\simeq \colim_{n\in \nat} y(A_{n})$.
 \end{prop}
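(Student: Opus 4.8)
The plan is to take, for a given approximation $(A_n)_{n\in\nat}$ of $A$, the object $\hat S:=\colim_{n\in\nat}y(A_n)$ of $\Ind^{\aleph_{1}}(\bD)$ as the natural candidate for a shape of $A$, and to identify the map \eqref{sdfvfeqrvcfvsdvfdsvs} that tests this candidate with the canonical map \eqref{adsvasdccdcaseccacsdc}. First I would record the formal input. Since $|-|$ is a left adjoint it preserves all colimits, so the counit equivalence \eqref{sfdvojkogpvvsfdvsdvsdfv} gives $|\colim_{i\in I}y(Z_i)|\simeq\colim_{i\in I}Z_i$ for every system $(Z_i)_{i\in I}$; in particular $|\hat S|\simeq\colim_{n}A_n\simeq A$, and I write $\eta_A\colon A\xrightarrow{\simeq}|\hat S|$ for the resulting equivalence. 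Recall moreover that $y$ is fully faithful, that the objects in its image are compact in $\Ind^{\aleph_{1}}(\bD)$, and that every object of $\Ind^{\aleph_{1}}(\bD)$ has the form $\colim_{i\in I}y(Z_i)$ for some system $(Z_i)_{i\in I}$ in $\bD$ (even with $I=\nat$). Hence quantifying over $\hat B\in\Ind^{\aleph_{1}}(\bD)$ is the same as quantifying over systems $(Z_i)_{i\in I}$.

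Next, for a fixed system $(Z_i)_{i\in I}$ I would put $\hat B:=\colim_{i\in I}y(Z_i)$, so that $|\hat B|\simeq\colim_{i\in I}Z_i$, and compute both sides of \eqref{sdfvfeqrvcfvsdvfdsvs} for $\hat S$. Using that $\hat S$ is a countable filtered colimit, that each $y(A_n)$ is compact, and that $y$ is fully faithful, one gets
\[\Map_{\Ind^{\aleph_{1}}(\bD)}(\hat S,\hat B)\simeq\lim_{n\in\nat}\colim_{i\in I}\Map_{\Ind^{\aleph_{1}}(\bD)}(y(A_n),y(Z_i))\simeq\lim_{n\in\nat}\colim_{i\in I}\Map_{\bD}(A_n,Z_i),\]
the source of \eqref{adsvasdccdcaseccacsdc}, while $\Map_{\bD}(|\hat S|,|\hat B|)\simeq\Map_{\bD}(A,\colim_{i\in I}Z_i)$ via $\eta_A$ and the description of $|\hat B|$, the target of \eqref{adsvasdccdcaseccacsdc}. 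The key step is then to verify that under these identifications the composite $\eta_A^{*}\circ|-|$ of \eqref{sdfvfeqrvcfvsdvfdsvs} (with $S(A)$ replaced by $\hat S$) becomes precisely the canonical map \eqref{adsvasdccdcaseccacsdc}: this is a diagram chase built from the adjunction $|-|\dashv y$, the naturality of the compactness equivalences, and the colimit preservation of $|-|$. I expect this identification, though routine, to be the only genuinely delicate point of the proof.

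Granting it, both implications follow formally. For the "if" direction, if $(A_n)_n$ is an approximation of $A$ for which \eqref{adsvasdccdcaseccacsdc} is an equivalence for all systems, then \eqref{sdfvfeqrvcfvsdvfdsvs} is an equivalence for all $\hat B$, so $(\hat S,\eta_A)$ exhibits $\hat S=\colim_n y(A_n)$ as a shape of $A$; in particular $S(A)\simeq\colim_n y(A_n)$. For the "only if" direction, if $A$ admits a shape I would choose a system $(A_n)_{n\in\nat}$ with $\colim_n y(A_n)\simeq S(A)$; applying $|-|$ and using that $\eta$ is an equivalence (\cref{jwoirthopgfrfwrgre}) yields $\colim_n A_n\simeq|S(A)|\simeq A$, so $(A_n)_n$ is an approximation of $A$ with $\hat S\simeq S(A)$, and \eqref{sdfvfeqrvcfvsdvfdsvs} is an equivalence for all $\hat B$ by the universal property of the shape; the identification above then shows that \eqref{adsvasdccdcaseccacsdc} is an equivalence for all systems, which completes the argument.
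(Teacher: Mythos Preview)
Your proposal is correct and follows essentially the same approach as the paper's proof: both set $\hat S=\colim_n y(A_n)$, identify $\Map_{\Ind^{\aleph_1}(\bD)}(\hat S,\colim_i y(Z_i))$ with the source of \eqref{adsvasdccdcaseccacsdc} via compactness of the $y(A_n)$ and full faithfulness of $y$, identify $\Map_{\bD}(A,|\colim_i y(Z_i)|)$ with its target, and then use that every object of $\Ind^{\aleph_1}(\bD)$ has the form $\colim_i y(Z_i)$. The only organizational difference is that you perform the identification of \eqref{sdfvfeqrvcfvsdvfdsvs} with \eqref{adsvasdccdcaseccacsdc} once and apply it symmetrically to both implications, whereas the paper treats the two directions separately, for the ``only if'' direction passing through the auxiliary comparison map $\colim_i y(Z_i)\to y(\colim_i Z_i)$ and the equivalence \eqref{vrewopkvpowefwervcfsd} it induces on $\Map(S(A),-)$.
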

\begin{proof}
{We first assume that  $A$ admits a shape $(S(A),\eta_{A} )$. By  \Cref{jwoirthopgfrfwrgre} 
the map $\eta_{A}:|S(A)|\to A$ is an equivalence.  There exists   a system  $(A_{n})_{n\in \nat} $ in $\bD$ with \begin{equation}\label{gwerfwrefwrfrwerr}\colim_{n\in \nat} y(A_{n})\simeq S(A)\ .\end{equation} 
We apply $|-|$ and compose with $\eta_{A}$ in order to get an equivalence
$\colim_{n\in \nat}A_{n}\simeq A$, i.e, $(A_{n})_{n\in \nat}$ is an approximation of $A$.
Finally  \eqref{adsvasdccdcaseccacsdc} 
is the composition of the two equivalences 
  \begin{equation}\label{ehpkortzherthtrg} 
	\lim_{n \in \nat} \colim_{i \in I} \Map_{\bD} (A_n, Z_i) \simeq \Map_{\Ind^{\aleph_1}(\bD)}(\colim_{n \in \nat} y(A_n), \colim_{i \in I} y(Z_i)) \xrightarrow{\eta_{A}^{*}\circ\lvert-\rvert  } \Map_{\bD}(A,  \colim_{i \in I} Z_i)\ .
\end{equation}}For  the converse we assume that $(A_{n})_{n\in \nat}$ is an approximation of $A$ such that  \eqref{adsvasdccdcaseccacsdc} is an equivalence.
Then we set $S(A):=\colim_{n\in \nat} y(A_{n})  $
and define the equivalence $\eta_{A}: |S(A)|\simeq \colim_{n\in \nat}A_{n} \stackrel{\simeq}{\to}  A$.
We conclude that the second map in \eqref{ehpkortzherthtrg}  is an equivalence showing that $(S(A),\eta_{A})$ is a shape of $A$.
\end{proof}
 
 Let $(A_{n})_{n\in \nat}$ be an approximation of $A$. 
 
 \begin{kor}\label{wrijotgwegrerfwerf9} If 
   $\colim_{n\in \nat} y(A_{n})$ represents the shape of $A$, then $(A_{n})_{n\in \nat}$  is a weakly compact approximation.
\end{kor}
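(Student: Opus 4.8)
The plan is to deduce the corollary directly from the characterization in \cref{qrjigowergwergrwewf} together with the description of weak compactness in \cref{jowtogpwgerferfwefewfref} (equivalently \cref{iuergoregregwf} in the stable case, though we will not need stability). So suppose $(A_{n})_{n\in \nat}$ is an approximation of $A$ such that $\colim_{n\in \nat} y(A_{n})$ represents the shape $S(A)$. By \cref{qrjigowergwergrwewf} this is equivalent to saying that for every system $(Z_{i})_{i\in I}$ in $\bD$ the canonical map
$$\lim_{n\in \nat}\colim_{i\in I}\Map_{\bD}(A_{n},Z_{i})\to \Map_{\bD}(A,\colim_{i\in I}Z_{i})$$
is an equivalence. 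We must show that each structure map $A_{n}\to A$ is weakly compact, i.e. that for every system $(Z_{i})_{i\in I}$ the square \eqref{qrfwfewdqewdqe} (with $f$ the map $A_{n}\to A$) has the slp for $\emptyset\to *$; by \cref{jowtogpwgerferfwefewfref} this amounts to lifting any diagram of the form \eqref{vsdfvsdfefefvsvfsdvsv}, i.e. given a map $B:=A\to \colim_{i\in I}Z_{i}$, producing some $k\in I$ and a factorization of the composite $A_{n}\to A\to \colim_{i\in I}Z_{i}$ through $Z_{k}$.

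First I would fix $n$ and a map $\alpha\colon A\to \colim_{i\in I}Z_{i}$; we want to lift the composite $\alpha\circ(A_{n}\to A)$ to some $Z_{k}$. The key observation is that the limit $\lim_{m\in \nat}\colim_{i\in I}\Map_{\bD}(A_{m},Z_{i})$ maps to each term of the tower, in particular to the $n$-th term $\colim_{i\in I}\Map_{\bD}(A_{n},Z_{i})$, and by the equivalence above this limit is identified with $\Map_{\bD}(A,\colim_{i\in I}Z_{i})$, which contains the point $\alpha$. Chasing $\alpha$ through this equivalence and then projecting to the $n$-th term produces a point of $\colim_{i\in I}\Map_{\bD}(A_{n},Z_{i})$; since $I$ is filtered, a point of a filtered colimit of anima comes from some term $\Map_{\bD}(A_{n},Z_{i})$, i.e. from an actual map $A_{n}\to Z_{i}$. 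One then has to check that this map, postcomposed with $Z_{i}\to \colim_{i\in I}Z_{i}$, agrees with $\alpha\circ(A_{n}\to A)$ — but this is exactly the statement that the composite
$$\lim_{m\in \nat}\colim_{i\in I}\Map_{\bD}(A_{m},Z_{i})\to \colim_{i\in I}\Map_{\bD}(A_{n},Z_{i})\to \Map_{\bD}(A_{n},\colim_{i\in I}Z_{i})$$
equals restriction along $A_{n}\to A$ composed with the equivalence \eqref{adsvasdccdcaseccacsdc}, which is built into the commuting square \eqref{qrfwfewdqewdqe}. After possibly passing to a larger index the two maps $A_{n}\to \colim_{i\in I}Z_{i}$ become equal already at a finite stage (again using filteredness and that $\Map_{\bD}(A_{n},-)$-classes of maps into $Z_{j}$ that agree in the colimit agree after composing with some $Z_{j}\to Z_{k}$), which yields the desired $k$ and factorization.

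The only mildly delicate point — and the one I would state carefully — is the bookkeeping of homotopies: a "factorization through $Z_{k}$" in the $\infty$-categorical sense requires not just the two maps $A_{n}\to Z_{k}$ and $A_{n}\to \colim Z_{i}$ but a coherent identification of the latter with the composite, compatible with the given map $\alpha$. This is precisely encoded by the slp-for-$\emptyset\to*$ reformulation in \cref{jowtogpwgerferfwefewfref}, so the cleanest route is to argue at the level of the square \eqref{qrfwfewdqewdqe} directly: the hypothesis says the bottom-right term $\Map_{\bD}(A,\colim Z_{i})$ is the limit of the tower formed by the bottom-left terms $\colim_{i}\Map_{\bD}(A_{m},Z_{i})$ as $m$ varies, and in particular the bottom horizontal map factors through the $n$-th bottom-left corner $\colim_{i}\Map_{\bD}(A_{n},Z_{i})$, which is exactly the weak-compactness lifting for $f\colon A_{n}\to A$. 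I expect this last translation — from "$S(A)$ is the colimit of the $y(A_{n})$" to "the map from $\Map_{\bD}(A,\colim Z_{i})$ factors through the $n$-th stage of the tower" — to be the main (though small) obstacle, and it is handled entirely by unwinding \eqref{adsvasdccdcaseccacsdc} and the definition of the limit of a $\nat$-indexed tower.
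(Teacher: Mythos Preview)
Your proposal is correct and follows essentially the same approach as the paper: invoke the equivalence \eqref{adsvasdccdcaseccacsdc} from \cref{qrjigowergwergrwewf}, project to the $n$-th term of the tower, and use that $\pi_{0}$ commutes with filtered colimits to extract a lift $A_{n}\to Z_{k}$. The paper condenses all of this to the single sentence ``apply $\pi_{0}$ to the equivalence \eqref{adsvasdccdcaseccacsdc}'' --- your discussion of homotopy coherence and the ``pass to a larger index'' step is unnecessary once one observes that weak compactness is a $\pi_{0}$-statement (a point in $\colim_{i}\Map_{\bD}(A_{n},Z_{i})$ is by definition a map $A_{n}\to Z_{k}$ together with the required homotopy in $\Map_{\bD}(A_{n},\colim_{i}Z_{i})$), so the commutativity of the square \eqref{qrfwfewdqewdqe} already supplies the factorization.
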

\begin{proof}   We  apply  $\pi_{0}$ to the equivalence \eqref{adsvasdccdcaseccacsdc}.   \end{proof}

The following theorem shows that the  converse  of \cref{wrijotgwegrerfwerf9}  holds under {a} stronger assumption.    

 \begin{theorem}\label{wkotpgsfvsfgfdgg}
If $(A_{n})_{n\in \nat}$ is a slp-compact exhaustion of  $A$, then $A$ admits a shape and $S(A)\simeq \colim_{n\in \nat}
y(A_{n})$. 
\end{theorem}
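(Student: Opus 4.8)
By \cref{qrjigowergwergrwewf}, it suffices to show that for an slp-compact exhaustion $(A_n)_{n\in\nat}$ of $A$ the canonical comparison map
$$\lim_{n\in\nat}\colim_{i\in I}\Map_\bD(A_n,Z_i)\to\Map_\bD(A,\colim_{i\in I}Z_i)$$
is an equivalence for every system $(Z_i)_{i\in I}$ in $\bD$. Since $A\simeq\colim_n A_n$, the right-hand side is $\lim_n\Map_\bD(A_n,\colim_i Z_i)$, so the claim is equivalently that the tower of squares
$$\xymatrix@C=2.5em{\colim_i\Map_\bD(A_n,Z_i)\ar[r]&\Map_\bD(A_n,\colim_i Z_i)}$$
induces an equivalence on limits over $n\in\nat^{\op}$. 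The plan is to feed this into the technical result announced as the content of \cref{qifjgioerfwefdqwefqwef} — namely that an slp-map of towers of anima induces an equivalence on limits. Concretely, I would consider the two towers $\bigl(\colim_i\Map_\bD(A_n,Z_i)\bigr)_{n\in\nat^{\op}}$ and $\bigl(\Map_\bD(A_n,\colim_i Z_i)\bigr)_{n\in\nat^{\op}}$, together with the natural transformation between them.

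First I would verify that the map between these two towers is, stagewise, part of an slp square in the sense of \cref{kotgpergoeprtkhgerth}: for each $n$, the square
$$\xymatrix{\colim_i\Map_\bD(A_{n+1},Z_i)\ar[r]&\Map_\bD(A_{n+1},\colim_i Z_i)\\ \colim_i\Map_\bD(A_n,Z_i)\ar[r]\ar[u]&\Map_\bD(A_n,\colim_i Z_i)\ar[u]}$$
is exactly the square \eqref{qrfwfewdqewdqe} attached to the structure map $f\colon A_n\to A_{n+1}$ (with the two columns identified via the restriction maps), which has the slp precisely because $f$ is slp-compact. So the map of towers consists of a compatible family of slp squares, and \cref{qifjgioerfwefdqwefqwef} then applies to give an equivalence on limits. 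Combined with the identification $\lim_n\Map_\bD(A_n,\colim_i Z_i)\simeq\Map_\bD(A,\colim_i Z_i)$, this establishes the hypothesis of \cref{qrjigowergwergrwewf}, hence $A$ admits a shape with $S(A)\simeq\colim_{n\in\nat}y(A_n)$, as claimed.

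The main obstacle is matching the hypotheses of \cref{qifjgioerfwefdqwefqwef} precisely: that result concerns an ``slp-map of towers'', and I need to check that an slp-compact \emph{exhaustion} (consecutive structure maps slp-compact) translates into the tower-level slp condition of that proposition, rather than the a priori stronger condition that each $A_n\to A$ is slp-compact. The point is that slp is formulated for a \emph{single} square against all finite $K\to L$, and a tower of such squares composes correctly; but one has to be careful that the colimits $\colim_i\Map_\bD(-,Z_i)$ are taken over the \emph{fixed} filtered system $I$ at every level, so that the tower in $n$ is genuinely a diagram of anima of the shape required by \cref{qifjgioerfwefdqwefqwef}, and that the coherences of the fillers of the individual slp squares assemble. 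I expect this bookkeeping — rather than any substantial new idea — to be the only real work, since the conceptual content has been isolated into \cref{qrjigowergwergrwewf} and \cref{qifjgioerfwefdqwefqwef}.
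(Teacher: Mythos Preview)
Your proposal is correct and follows exactly the paper's approach: reduce via \cref{qrjigowergwergrwewf} to showing the map of towers has the slp, then invoke \cref{bidfosdfivosdifhgnbeoiosdf}. One small correction: in your displayed square the indices $n$ and $n+1$ are swapped---for $f\colon A_n\to A_{n+1}$ the square \eqref{qrfwfewdqewdqe} has $\Map_\bD(A_n,-)$ on top and $\Map_\bD(A_{n+1},-)$ on the bottom (with $f^*$ going up), which then matches \cref{gwjeiorrfrefrfw} verbatim; so your final ``obstacle'' dissolves---the translation from slp-compact exhaustion to slp map of towers is literally by definition and requires no further coherence bookkeeping.
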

\begin{proof} We use the criterion \cref{qrjigowergwergrwewf}.
We consider a system $(Z_{i})_{i\in I}$ and set $Z:=\colim_{i\in I} Z_{i}$. By assumption the map of systems
$$(\colim_{i\in I}\Map_{ \bD}(A_{n},Z_{i}))_{n\in \nat^{\op}}\to (\Map_{\bD}(A_{n},Z))_{n\in \nat^{\op}}\ .$$
has the slp in the sense of \cref{qifjgioerfwefdqwefqwef}. 
Hence it follows from  \cref{bidfosdfivosdifhgnbeoiosdf} {below}
 that
  \begin{equation}\label{adsvasdccdcaseertertccacsdc}\lim_{n\in \nat} \colim_{i\in I}\Map_{ \bD}(A_{n},Z_{i}) \to\lim_{n\in \nat} \Map_{\bD}(A_{n},\colim_{i\in I}Z_{i})\end{equation}
  is an equivalence. 
  \end{proof}
  
  \begin{kor}\label{gkopwerfweferwfw}
If every object of $\bD$ admits an slp-compact exhaustion, then $\bD$ is \pcas.
\end{kor}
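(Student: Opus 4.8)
The plan is to read this off directly from \cref{wkotpgsfvsfgfdgg} together with the definition of being \pcas. Recall from \cref{ijtowggwerwgerg9} that $\bD$ is \pcas{} exactly when $\bD^{\shp}=\bD$, that is, when every object of $\bD$ admits a shape in the sense of \cref{ekrogpegegferfwf}.

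So first I would fix an arbitrary object $A$ of $\bD$. By hypothesis it admits an slp-compact exhaustion, namely an approximation $(A_n)_{n\in\nat}$ with $A\simeq\colim_{n\in\nat}A_n$ whose structure maps $A_n\to A_{n+1}$ are slp-compact. This is precisely the input required by \cref{wkotpgsfvsfgfdgg}, which then yields that $A$ admits a shape, with $S(A)\simeq\colim_{n\in\nat}y(A_n)$; in particular $A\in\bD^{\shp}$. Since $A$ was arbitrary, $\bD^{\shp}=\bD$, and hence $\bD$ is \pcas. Equivalently, by the remark following \cref{ijtowggwerwgerg9}, the shape functor $S$ is then defined on all of $\bD$ and provides the left adjoint to the realization $|-|$ of \eqref{fvvsqervfsvsdf}.

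There is no real obstacle here: all the work is contained in the proof of \cref{wkotpgsfvsfgfdgg}, which in turn rests on the limit-comparison statement for slp-maps of towers of anima (see \cref{qifjgioerfwefdqwefqwef} and \cref{bidfosdfivosdifhgnbeoiosdf}). The corollary is merely the globalization of that theorem over all objects of $\bD$.
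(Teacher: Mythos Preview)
Your proof is correct and is exactly the intended argument: the paper states this as an immediate corollary of \cref{wkotpgsfvsfgfdgg} without further proof, and your unpacking via \cref{ijtowggwerwgerg9} is precisely what is meant.
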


    \begin{theorem}\label{regijfowergffreferwfw} If $\bD$ is \pcas, then
 every object of $\bD$ admits a strongly compact exhaustion.
 \end{theorem}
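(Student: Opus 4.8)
The plan is to produce the exhaustion by transporting a presentation of the shape of $A$ back into $\bD$. Fix $A\in\bD$. Since $\bD$ is \pcas{}, $A$ admits a shape $S(A)\in\Ind^{\aleph_1}(\bD)$, and by the form of \cite[Prop.\ 5.3.1.18]{htt} recalled in \cref{ergkoperwgwerfwefref} there is a system $(E_n)_{n\in\nat}$ in $\bD$ with $S(A)\simeq\colim_{n\in\nat}y(E_n)$; write $\beta_n\colon y(E_n)\to S(A)$ for the cocone and $t_{n,m}\colon E_n\to E_m$ for the transition maps. Applying the cocontinuous functor $|-|$ together with \eqref{sfdvojkogpvvsfdvsdvsdfv} and \cref{jwoirthopgfrfwrgre} yields $A\simeq\colim_{n\in\nat}E_n$ with cocone $\gamma_n\simeq|\beta_n|$. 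I would then show that, after passing to a cofinal subsequence, the transition maps of $(E_n)_{n\in\nat}$ are already strongly compact; the restriction of the functor $(E_n)_{n\in\nat}\colon\nat\to\bD$ to that cofinal subposet is then an honest functor with colimit $A$, hence the desired strongly compact exhaustion.

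Since $S$ is the left adjoint in $S\dashv|-|$ (cf.\ \eqref{fwqeddcvsdvcs1}) it is cocontinuous, so also $S(A)\simeq\colim_{n\in\nat}S(E_n)$, with cocone $\sigma_n:=S(\gamma_n)$. Let $\alpha_n\colon S(E_n)\to y(E_n)$ be the value at $y(E_n)$ of the counit of $S\dashv|-|$ (equivalently, the component of the induced natural transformation $S\Rightarrow y$). A short diagram chase with the triangle identities gives $\sigma_n\simeq\beta_n\circ\alpha_n$ and that $|\alpha_n|\colon|S(E_n)|\to E_n$ is an equivalence. The crucial claim is then: \emph{for every $n$ there exist $m>n$ and a map $\phi_n\colon y(E_n)\to S(E_m)$ with $\alpha_m\circ\phi_n\simeq y(t_{n,m})$.} Granting this, \cref{wegjiopewgfsg}, applied to the map $\phi_n\colon y(E_n)\to S(E_m)$ (noting $E_m\in\bD^{\shp}=\bD$), shows $|\phi_n|\colon E_n\to|S(E_m)|$ is strongly compact; composing with the equivalence $|\alpha_m|$ and using that strongly compact maps form a two-sided ideal, the transition $t_{n,m}\simeq|\alpha_m|\circ|\phi_n|$ is strongly compact. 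Choosing $d_0:=0$ and $d_{k+1}:=m(d_k)$ produces a cofinal subsequence $(d_k)_k$ of $\nat$ all of whose consecutive transitions $t_{d_k,d_{k+1}}$ are strongly compact, which is exactly the exhaustion sought.

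It remains to prove the claim. Because $y(E_n)$ is a compact object of $\Ind^{\aleph_1}(\bD)$ and $\beta_n\colon y(E_n)\to S(A)\simeq\colim_m S(E_m)$, the map $\beta_n$ factors through a finite stage: there are $m_0>n$ and $\psi\colon y(E_n)\to S(E_{m_0})$ with $\sigma_{m_0}\circ\psi\simeq\beta_n$. Using $\sigma_{m_0}\simeq\beta_{m_0}\circ\alpha_{m_0}$ and the cocone relation $\beta_{m_0}\circ y(t_{n,m_0})\simeq\beta_n$, the two maps $\alpha_{m_0}\circ\psi$ and $y(t_{n,m_0})\colon y(E_n)\to y(E_{m_0})$ become homotopic after composing with $\beta_{m_0}$; compactness of $y(E_n)$ (and $S(A)\simeq\colim_{m\ge m_0}y(E_m)$) then provides $m\ge m_0$ with $y(t_{m_0,m})\circ\alpha_{m_0}\circ\psi\simeq y(t_{n,m})$, and naturality of $\alpha$ shows that $\phi_n:=S(t_{m_0,m})\circ\psi$ satisfies $\alpha_m\circ\phi_n\simeq y(t_{n,m})$.

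The step I expect to be the main obstacle is precisely this last refinement: a bare lift $\psi$ of $\beta_n$ through a finite stage is not enough, because an arbitrary such lift need not be compatible with the original transition maps, and an incompatible choice would yield a subsystem whose colimit is not $A$ — this already fails for $A\simeq 0\simeq\colim(\IS\xrightarrow{0}\IS\to\cdots)$ in spectra, where $\psi$ could be taken to be an equivalence. Composing once more with $S(t_{m_0,m})$ for suitable $m$ is what enforces $\alpha_m\circ\phi_n\simeq y(t_{n,m})$ and thereby keeps the colimit of the modified system equal to $A$; the remaining ingredients (cocontinuity of $S$ and $|-|$, the equivalence $|y(-)|\simeq\id$, the identity $\sigma_n\simeq\beta_n\circ\alpha_n$, and compactness of the $y(E_n)$ in $\Ind^{\aleph_1}(\bD)$) are routine given \cref{ergkoperwgwerfwefref}.
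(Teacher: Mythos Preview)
Your proposal is correct and takes essentially the same approach as the paper's proof: both present $S(A)$ as $\colim_n y(E_n)$, use the counit of $S\dashv|-|$ together with compactness of the $y(E_n)$ in $\Ind^{\aleph_1}(\bD)$ to factor the structure maps through some $S(E_m)$, invoke \cref{wegjiopewgfsg}, and pass to a cofinal subsequence. The only organisational difference is that the paper packages the compactness argument into a single $\lim$--$\colim$ mapping-space diagram (lifting the family $(\id_{y(E_n)})_n$ along the equivalence induced by $\epsilon_{S(A)}$), whereas you carry out the two-step factor-then-equalise argument for each $n$ separately; the content is the same.
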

 {
 	 \begin{proof}
 		We fix $A$   in $\bD$. Then there exists a system $(A_{n})_{n\in  \nat}$ in $\bD$ 
 		such that  $$\colim_{n\in \nat} y(A_{n})\simeq S(A)\ .$$ Let $\epsilon: S \circ \lvert - \rvert \to \id_{\bD}$ be the counit of the adjunction \eqref{fwqeddcvsdvcs1}. 
		Since $S$ is a right Bousfield localization the map 
		$ \epsilon_{S (\lvert y (A) \rvert)}: S ( \lvert  S ( \lvert y(A) \rvert )\rvert )\to S  (\lvert y(A) \rvert)  $ is an equivalence. Since the target and domain of $\epsilon $ commute with countable filtered colimits $\epsilon_{S (\lvert y( A) \rvert)}$ is  the colimit of the system of maps $ (\epsilon_{y(A_n)} :S(\lvert y(A_n)  {\rvert)} \to y(A_n))_{n \in \nat} $. The commutative diagram
 		$$\hspace{-1cm}\xymatrix{\Map_{\Ind^{\aleph_{1}}(\bD)}(S(A),S  (\lvert  S(  \lvert y(A) \rvert) \rvert)) \ar[rr]^-{\epsilon_{{S( \lvert y (A) \rvert)},\ast}}_-{\simeq} \ar[d]^{\simeq} && \Map_{\Ind^{\aleph_{1}}(\bD)}(S(A),S ( \lvert y(A) \rvert)) \ar[d]^{\simeq} \\ \lim_{n \in \nat}\colim_{k\in \nat}\Map_{\Ind^{\aleph_{1}}(\bD)}(y(A_n),S(|y(A_k)|))\ar[rr]^{\colim_k \epsilon_{y(A_k), \ast}} && \lim_{n \in \nat}\colim_{k\in \nat}\Map_{\Ind^{\aleph_{1}}(\bD)}(y(A_n),y(A_k))  } \ .$$
 		implies that the lower horizontal map is an equivalence.
		We interpret the family  $(\id_{y(A_n)})_{n}$  as an element of the right lower corner.
	It has  a pre-image	
		 $(f_{n})_{n}$ in the left lower corner.  Then $f_{n} $ is a preimage of $\id_{y(A_n)}$ under 
 		$$ \colim_{k{\in \nat} } \epsilon_{y(A_k), \ast}: \colim_{k\in \nat}\Map_{\Ind^{\aleph_{1}}(\bD)}(y(A_n),S(|y(A_k)
		|))\to\colim_{k\in \nat}  \Map_{\Ind^{\aleph_{1}}(\bD)}(y(A_n),y(A_k))\ . $$
		 It induces a  factorization
 		$$ \xymatrix{& S(|y(A_{k(n)})|) \ar[dr]^{\epsilon_{y(A_{k(n)})}}\\ y(A_n) \ar@{-->}[ur] \ar[rr] && y(A_{k(n)})  }  $$ for some  $k(n) \geq n+1$.  Therefore for all $n  $ in $ \nat$ the structure map $A_n \to A_{k(n)}$ is strongly compact by \cref{wegjiopewgfsg} and \eqref{sfdvojkogpvvsfdvsdvsdfv}.
		We can conclude that a cofinal subsystem of the system $(A_{n})_{n\in \nat}$ is a  strongly compact exhaustion of $A$.
 	\end{proof}
}

 {
	In the following we clarify the relation between weakly compact maps and slp compact maps.
	\begin{prop}\label{vniowsdfiojgdf}
		If $\bD$ admits finite limits and filtered colimits in $\bD$ commute with finite limits, then weakly compact maps are slp compact.
	\end{prop}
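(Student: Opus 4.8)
The plan is to verify the slp of the square \eqref{qrfwfewdqewdqe} directly. Fix a weakly compact morphism $f\colon A\to B$, a system $(Z_{i})_{i\in I}$ with colimit $Z:=\colim_{i\in I}Z_{i}$, and a dotted square as in \cref{kotgpergoeprtkhgerth} for a given map of finite anima $K\to L$; the task is to produce the dashed lift. The first step is to rewrite the corner data of that dotted square inside $\bD$. Since $\bD$ admits finite limits, the powers (cotensors) $(-)^{K}$ and $(-)^{L}$ exist in $\bD$, and since $K$ and $L$ are finite anima these powers are computed by finitely many finite limits; by hypothesis they therefore commute with the filtered colimit along $I$, so that $Z^{K}\simeq\colim_{i}Z_{i}^{K}$ and $Z^{L}\simeq\colim_{i}Z_{i}^{L}$. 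Using in addition that finite anima are compact objects of $\Spc$, one has $\colim_{i}\Map_{\bD}(-,Z_{i}^{K})\simeq\Map_{\Spc}(K,\colim_{i}\Map_{\bD}(-,Z_{i}))$ and likewise for $L$, and the canonical map $X_{1}\to Y_{1}$ of \eqref{qrfwfewdqewdqe} becomes, after applying $\Map_{\Spc}(K,-)$, the map $\colim_{i}\Map_{\bD}(B,Z_{i}^{K})\to\Map_{\bD}(B,Z^{K})$. Transporting the dotted square through these adjunctions yields precisely: a stage $i_{1}\in I$, a map $\hat a\colon B\to Z_{i_{1}}^{K}$ (a finite-stage representative of $K\to X_{1}$, using compactness of $K$), a map $\hat b\colon B\to\colim_{i}Z_{i}^{L}$ (encoding $L\to Y_{1}$), and a homotopy in $\Map_{\bD}(B,\colim_{i}Z_{i}^{K})$ between $\hat a$ followed by the structure map and $\hat b$ followed by the restriction along $K\to L$.

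The second step is to package this data as a single point and feed it to weak compactness. For $i\geq i_{1}$ set $W_{i}:=Z_{i_{1}}^{K}\times_{Z_{i}^{K}}Z_{i}^{L}$, a finite limit in $\bD$ and hence an object of $\bD$, with the two legs the structure map and the restriction. Because finite limits commute with the filtered colimit along the cofinal subcategory $I_{\geq i_{1}}$, and the colimit of a constant diagram over the weakly contractible category $I_{\geq i_{1}}$ is its value, one obtains $\colim_{i\geq i_{1}}W_{i}\simeq Z_{i_{1}}^{K}\times_{Z^{K}}Z^{L}$. Since $\Map_{\bD}(B,-)$ carries this pullback to the corresponding pullback of anima, the triple $(\hat a,\hat b,\text{homotopy})$ above is exactly a map $\varpi\colon B\to\colim_{i\geq i_{1}}W_{i}$. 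Apply the weak compactness of $f$ to the system $(W_{i})_{i\geq i_{1}}$ and the map $\varpi$: by \cref{jowtogpwgerferfwefewfref} (the slp for $\emptyset\to*$, which moreover furnishes the filler of the commutative square in \eqref{vsdfvsdfefefvsvfsdvsv}) there are $j\geq i_{1}$, a map $\tilde\varpi\colon A\to W_{j}$, and a homotopy $P$ in $\Map_{\bD}(A,\colim_{i}W_{i})$ between $\tilde\varpi$ composed into the colimit and $A\xrightarrow{f}B\xrightarrow{\varpi}\colim_{i}W_{i}$.

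The third step is to read off the lift. The $Z_{j}^{L}$-component of $\tilde\varpi$ is a point of $\Map_{\bD}(A,Z_{j}^{L})$, and its image in $\colim_{i}\Map_{\bD}(A,Z_{i}^{L})\simeq\Map_{\Spc}(L,X_{0})$ is the desired dashed map $L\to X_{0}$. The filler of the triangle over $L$ is obtained by projecting $P$ along $W_{i}\to Z_{i}^{L}$ (and identifying the $Z^{L}$-component of $\varpi$ with $\hat b$, hence with the induced map from the given $L\to Y_{1}$); the filler of the triangle over $K$ is obtained by projecting $P$ along $W_{i}\to Z_{i_{1}}^{K}$, splicing the result with the homotopy internal to the pullback $W_{j}$, and identifying the $Z_{i_{1}}^{K}$-component of $\varpi$ with $\hat a$; and the compatibility of these two fillers with the square filler of the outer square follows, after the same projections, from the homotopy internal to $\varpi$ together with the commutativity of the bold square. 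I expect the main obstacle to be exactly this last coherence bookkeeping. The cleanest way to organize it is to phrase liftability as the filler problem in the slice $\Spc_{/Z^{K}}$ exhibited in \eqref{puighygbqrjkb}, so that the whole verification becomes a single lifting problem against $\colim_{i}\Map_{\bD}(A,Z_{i}^{K})\to\Map_{\bD}(A,Z^{K})$ solved once and for all by applying weak compactness to the system $(W_{i})$; since $\Map_{\bD}(A,\colim_{i}W_{i})$ is itself the relevant pullback of anima, the components of $P$ are then automatically compatible and no separate coherence argument is needed.
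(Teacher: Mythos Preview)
Your proposal is correct and follows essentially the same approach as the paper: reduce the slp lifting problem via cotensors by $K$ and $L$ to a single point-lifting problem, form the auxiliary pullback system $Z_{i_{1}}^{K}\times_{Z_{j}^{K}}Z_{j}^{L}$ whose filtered colimit is $Z_{i_{1}}^{K}\times_{Z^{K}}Z^{L}$ (using that filtered colimits commute with finite limits), apply weak compactness to obtain a finite-stage lift, and project to the $Z_{j}^{L}$-component. The only organizational difference is that the paper begins by isolating the general criterion (a square is liftable iff the induced point $*\to X^{K}\times_{Y^{K}}Y^{L}$ lifts along $X^{L}\to X^{K}\times_{Y^{K}}Y^{L}$), which is exactly the slice reformulation you propose at the end; starting from that criterion makes the coherence bookkeeping you flag disappear from the outset rather than requiring the component-by-component verification you sketch in your third step.
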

	\begin{proof}
		We start with a general criterion for a square of anima to be liftable: The solid part of a square of anima
		$$ \xymatrix{X \ar[r] & Y  \\ K \ar[u] \ar[r] & L \ar[u] \ar@{-->}[ul] } $$
		induces a point $\ast \to X^K \times_{Y^K} Y^L$ and a canonical comparison map $X^L \to X^K \times_{Y^K} Y^L$. The above diagram admits a dashed lift if and only if the solid diagram below admits a dashed lift
		$$ \xymatrix{& X^L \ar[d]\\ \ast \ar@{-->}[ru] \ar[r] & X^K \times_{Y^K} Y^L } . $$
		Let $f: B \to A$ be a weakly compact map in $\bD$ and $(Z_i)_{i \in I}$ a system   with colimit Z. We have to solve the lifting problem
		$$ \xymatrix{\colim_{i \in I} \Map_{\bD}(B, Z_i) \ar[r] & \Map_{\bD}(B,Z) \\ \colim_{i \in I} \Map_{\bD}(A,Z_i) \ar[r] \ar[u]^{f^\ast} & \Map_{\bD}(A, Z) \ar[u]^{f^{\ast}} \\ K \ar[r] \ar[u] & L \ar@{-->}[uul] \ar[u]}. $$
		for all finite anima $K $ and $L$. Due to the above criterion and the finiteness of $K$ and $L$, this is equivalent to solving the lifting problem
		\begin{equation}\label{grewwrefewrfrewfrfw2} \xymatrix{& & \colim_{i \in I} \Map_{\bD}(B, Z_i^L) \ar[d]\\ \ast \ar@{-->}[rru] \ar[r] & \colim_{i \in I}\Map_{\bD}(A, Z_i^K \times_{Z^K} Z^L) \ar[r]  & \colim_{i \in I}\Map_{\bD}(B, Z_i^K \times_{Z^K} Z^L) } . \end{equation} 
		In order to obtain such a lift we consider the diagram
		\begin{equation}\label{grewwrefewrfrewfrfw} \xymatrix{B \ar[d]^f \ar@{-->}[r] & Z_i^K \times_{Z_j^K} Z_j^L \ar[d] \\ A \ar[r]^-{!} & Z_i^K \times_{Z^K} Z^L } \ , 
\end{equation}  		where the marked map is induced by the given point in $\colim_{i \in I} \Map_{\bD}(A, Z_i^K \times_{Z^K} Z^L)$. We can now use  the weak compactness of $f$ and $$ \colim_{(i \to j) \in I_{i/}} Z_i^K  \times_{Z_j^K} Z_j^L \simeq Z_i^K \times_{Z^K} Z^L   $$
  in order to obtain the dashed  arrow in \eqref{grewwrefewrfrewfrfw}.		 		The composite
		$$\xymatrix{B\ar@{-->}[r]& Z_i^K \times_{Z_j^K} Z_j^L \ar[r]^-{\pr_{Z_j^L}} &Z_j^L} $$
		interpreted as a point in $ \colim_{i \in I}\Map_{\bD}(B, Z_i^L) $ provides an appropriate lift in \eqref{grewwrefewrfrewfrfw2}.
	\end{proof}
This provides another argument for a Theorem of Clausen and Lurie. 
\begin{theorem}[Clausen, Lurie, see {\cite[Thm. 2.2.11, Lemma 2.3.13]{som}}] \label{vnaefnauen}
	We assume that $\bD$ is   left exact, admits countable filtered colimits,  and filtered colimits in $\bD$ commute with finite limits. If $(A_n)_{n \in \nat}$ is a weakly compact exhaustion of $A$, then   $\colim_{ n \in \nat} y(A_n)$ represents the shape of $A$.
\end{theorem}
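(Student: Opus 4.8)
The plan is to obtain this statement as a formal consequence of the two results just established, \cref{vniowsdfiojgdf} and \cref{wkotpgsfvsfgfdgg}. First I would note that the hypotheses imposed on $\bD$ — left exactness, existence of countable filtered colimits, and commutation of countable filtered colimits with finite limits — are precisely those of \cref{vniowsdfiojgdf}. Hence that proposition applies and shows that every weakly compact morphism of $\bD$ is slp compact.

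Second, I would feed this into the given exhaustion. By definition of a weakly compact exhaustion (\cref{jigowergwerfewrfwerf}) each structure map $A_n \to A_{n+1}$ is weakly compact, hence slp compact by the previous step. Thus $(A_n)_{n \in \nat}$ is in fact an slp-compact exhaustion of $A$. Applying \cref{wkotpgsfvsfgfdgg} to it then yields that $A$ admits a shape and that $S(A) \simeq \colim_{n \in \nat} y(A_n)$, which is the assertion.

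I do not expect any real obstacle at this point: the genuine work has already been done, namely the lifting-problem manipulation with exponentials by finite anima in the proof of \cref{vniowsdfiojgdf}, and the slp-tower argument underlying \cref{wkotpgsfvsfgfdgg} (via the criterion \cref{qrjigowergwergrwewf} and the equivalence \eqref{adsvasdccdcaseccacsdc}). The only points requiring a moment's care are purely bookkeeping: \cref{vniowsdfiojgdf} is phrased for a single weakly compact map, so one applies it to each of the countably many structure maps; and one should record that a weakly compact exhaustion is in particular an approximation, so that \cref{wkotpgsfvsfgfdgg} is applicable verbatim. A more self-contained alternative would bypass the slp intermediary and verify \eqref{adsvasdccdcaseccacsdc} directly by rerunning the proof of \cref{wkotpgsfvsfgfdgg} with weak compactness and left exactness in place of slp compactness, but the factored route above is shorter and reuses the machinery already in place.
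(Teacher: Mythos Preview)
Your proposal is correct and follows exactly the paper's own proof, which reads in its entirety: ``This is a consequence of \cref{vniowsdfiojgdf} and \cref{wkotpgsfvsfgfdgg}.'' Your write-up simply unpacks this one-line argument in more detail.
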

\begin{proof}
	This is a consequence of \cref{vniowsdfiojgdf} and \cref{wkotpgsfvsfgfdgg}.
\end{proof}
}

\subsection{slp maps between towers of anima} \label{qifjgioerfwefdqwefqwef}
The main result of this technical section is  \cref{bidfosdfivosdifhgnbeoiosdf} which goes into the proofs of \cref{wkotpgsfvsfgfdgg} and \cref{wreig90wregfwerf}.
\begin{ddd} \label{gwjeiorrfrefrfw}A map   $(X_n)_{n \in \nat^{\op}} \to (Y_n)_{n \in \nat^{\op}}$ in  $\Fun(\nat^{\op}, \Spc)$
 has the slp ({shifted lifting property})   if for every $n$ in $\nat$ the commutative  square
$$ \xymatrix{X_n \ar[r] & Y_n \\ X_{n+1} \ar[r]  \ar[u] & Y_{n+1} \ar[u]}$$ 
has the slp in the sense of  \cref{kotgpergoeprtkhgerth}.\end{ddd}
 \begin{theorem} \label{bidfosdfivosdifhgnbeoiosdf}
	If a map $f:(X_n)_{n \in \nat^{\op}} \to (Y_n)_{n \in \nat^{\op}}$ in $\Fun(\nat^{\op}, \Spc)$ satisfies the slp, then the induced map
	$$ \lim_{n \in \nat^{\op}} X_n \to \lim_{n \in \nat^{\op}} Y_n $$
	is an equivalence.
\end{theorem}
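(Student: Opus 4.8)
The plan is to show that the map on limits is an equivalence by exhibiting an explicit inverse built from the shifted lifting property, applied level by level and then assembled along the tower. First I would recall that for a tower $(X_n)_{n\in\nat^{\op}}$ the limit $\lim_n X_n$ can be computed as the limit of the cofiltered diagram, and a point of $\lim_n X_n$ is a compatible family of points $x_n\in X_n$ together with coherences $x_n\simeq (\text{image of }x_{n+1})$. More usefully, I would work fibrewise over $\lim_n Y_n$: it suffices to show that for every point $y=(y_n)\in\lim_n Y_n$ the fibre of $\lim_n X_n\to\lim_n Y_n$ over $y$ is contractible. Since limits commute with limits, this fibre is $\lim_n F_n$ where $F_n$ is the fibre of $X_n\to Y_n$ over $y_n$ (using the compatibilities to identify the transition maps), so the statement reduces to: if each square defining the tower satisfies the slp, then the tower of fibres $(F_n)_{n\in\nat^{\op}}$ has contractible limit.

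The heart of the argument is then a Mittag--Leffler / lifting statement for the tower of fibres $F_\bullet$. The slp of the square
$$\xymatrix{X_n \ar[r] & Y_n \\ X_{n+1}\ar[r]\ar[u] & Y_{n+1}\ar[u]}$$
for all finite anima $K\to L$ translates, after passing to fibres over the chosen point of $Y_n$ resp.\ $Y_{n+1}$, into the statement that the map $F_{n+1}\to F_n$ has the right lifting property against all maps of finite anima $K\to L$ in the strong (square-filling-compatible) sense of the liftability condition in \eqref{puighygbqrjkb}. The key consequences I would extract are: (i) taking $K=\emptyset$, $L=*$, every point of $F_n$ lifts to $F_{n+1}$, hence each $F_n$ is nonempty as soon as some $F_m$ with $m\ge n$ is — and in fact, running this up the tower, $\lim_n F_n$ is nonempty; (ii) taking $K=S^{k-1}\hookrightarrow L=D^k$ (or the finite anima $\partial\Delta^k\to\Delta^k$), the lift of homotopy-sphere data shows that the maps $\pi_k(F_{n+1})\to\pi_k(F_n)$ satisfy a lifting property strong enough to kill $\lim^1$ and make $\lim_n F_n$ $k$-connected for every $k$. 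Concretely, I would argue that for any basepoint of $\lim_n F_n$ and any $k\ge 0$, an element of $\lim_n \pi_k(F_n)$ — or more precisely a compatible family of maps $S^k\to F_n$ — can be coherently nullhomotoped by inductively solving the lifting problems with $K=S^k$, $L=D^{k+1}$ along the tower, using the square-compatibility clause of liftability to glue the nullhomotopies into an actual nullhomotopy at the level of $\lim_n F_n$. Since $\lim_n F_n$ is nonempty and all its homotopy groups (at every basepoint) vanish, it is contractible.

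The step I expect to be the main obstacle is the bookkeeping of coherences: "liftability" in \eqref{puighygbqrjkb} is not just the existence of a diagonal filler but the existence of one compatible with the two triangle-fillers, and one must feed exactly this strengthened form into an induction along $\nat^{\op}$ so that the level-wise lifts assemble to a genuine point, resp.\ nullhomotopy, in the limit rather than merely a compatible family up to unspecified homotopy. I would handle this by phrasing the induction in the slice $\Spc_{/\lim_n Y_n}$ (or $\Spc_{/Y_n}$ at finite stages) as in \eqref{puighygbqrjkb}, and by choosing the finite anima $K\to L$ at stage $n$ to encode both the datum to be lifted and the coherence with what was already constructed at stages $>n$ — so that the slp at stage $n$ produces the next term together with its gluing data in one stroke. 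Once this inductive scheme is set up, passing to the limit is formal: $\lim_n F_n \simeq \lim_n(\text{an object obtained as a sequential limit of a tower of weak equivalences after truncation})$, which is contractible; hence the original map on limits is an equivalence. An alternative, more streamlined route I would keep in mind is to invoke the postnikov-tower / $\lim^1$ description of $\pi_*$ of a sequential limit of spaces: the slp implies the tower of fibres is "essentially constant after truncation" in the sense that $\lim^1\pi_1 F_n = *$ and $\lim\pi_k F_n=*$ for all $k$, which again gives contractibility of $\lim_n F_n$ and therefore the claim.
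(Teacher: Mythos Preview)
Your overall strategy coincides with the paper's: reduce to the tower of fibres over a point of $\lim_n Y_n$ (the paper phrases this as pullback-stability of the slp, which it proves as a lemma), and then show the resulting tower $(F_n)\to\underline{*}$ has contractible limit by controlling $\pi_0$, the $\lim^1$-term, and higher homotopy. Your ``alternative route'' at the end \emph{is} the paper's proof. The paper's key organizational move, which you do not isolate, is to prove two closure lemmas: slp maps are stable under pullback, and stable under applying $\Omega$ at any compatible system of basepoints (via the adjunction $\Sigma_+\dashv\Omega$). With these in hand one only needs a single $\pi_0$-statement --- if $(X_n)\to\underline{*}$ has the slp then the pro-set $(\pi_0 X_n)$ is pro-isomorphic to $\underline{*}$ --- and everything else follows from the Milnor $\lim^1$-sequence and the general surjectivity of $\pi_0(\lim_n X_n)\to\lim_n\pi_0 X_n$ for towers.

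Your claim (i) contains a genuine confusion that would derail the direct construction you sketch. Taking $K=\emptyset$, $L=*$ in the slp does \emph{not} show that points of $F_n$ lift to $F_{n+1}$; the shift goes the other way. What you get is: from the trivial map $\emptyset\to F_{n+1}$ one obtains a point of $F_n$. So each $F_n$ is nonempty, but nothing yet says these points are compatible along the tower, and ``running this up the tower'' is exactly where the coherence problem you flag later bites. The paper resolves this not by an explicit inductive construction but by the pro-system argument: slp for $\emptyset\to *$ gives $\pi_0 F_n\neq\emptyset$, slp for $S^0\to *$ gives that each transition map $\pi_0 F_{n+1}\to\pi_0 F_n$ is constant, hence $(\pi_0 F_n)\cong\underline{*}$ in $\Pro(\Set)$, so $\lim_n\pi_0 F_n\cong *$, and surjectivity of $\pi_0(\lim)\to\lim\pi_0$ gives a point. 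Then loop-stability of slp runs the same argument on $(\Omega(F_n,x_n))$ to kill $\lim^1\pi_1$. I recommend abandoning the hands-on coherence induction in favour of this route; it is both shorter and avoids the basepoint and gluing bookkeeping you correctly identify as the obstacle.
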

\begin{proof}
The idea is to show that the map induces a bijection on the level of $\pi_{0}$ and isomorphisms
on homotopy groups for all choices of base points.  For the argument we need some closure properties 
of the class of squares with the slp. As a preparation for their derivation 
  we 
 consider an adjunction \begin{equation}\label{vsdfvberfvvsdfv}  L: \bA \rightleftarrows \bB: R\end{equation}  between $\infty$-categories. Furthermore let  $f: A \to A^\prime$ be a map in $\bA$ and $B$ be an object in $\bB$.
\begin{lem} \label{pqtgvbnuklnkloojh}
	 The following are equivalent:
	\begin{enumerate}
		\item  \label{qjieorgwergwegwref}	The diagram $$ \xymatrix{& R(B) \\ A \ar[ur]^g \ar[rr]^f && A^\prime \ar@{-->}[lu]} $$ admits a dashed completion.
		\item\label{qjieorgwergwegwref1} The diagram $$ \xymatrix{& B \\ L(A) \ar[ur]^{counit \circ L(g)} \ar[rr]^{L(f)} && L(A^\prime) \ar@{-->}[lu]} $$ admits a dashed completion.
	\end{enumerate}
\end{lem}
\begin{proof}
	We assume Assertion \ref{qjieorgwergwegwref} and get the triangle from Assertion \ref{qjieorgwergwegwref1} by applying $L$ and post-composing the counit:
	$$ \xymatrix{& B&\\&L(R(B))\ar[u]^{counit}& \\ L(A) \ar[ur]^{ L(g)} \ar[rr]^{L(f)} \ar@/^1cm/[uur]&& L(A^\prime) \ar@/^-1cm/@{-->}[luu]\ar[ul]}\ .$$
	We now assume Assertion \ref{qjieorgwergwegwref1} 
	and apply  $R$ in order to get the desired triangle from  Assertion \ref{qjieorgwergwegwref}:
	$$  \xymatrix{& R(B) \\ R(L(A)) \ar[ur]^{R(counit \circ L(g))} \ar[rr]^{R(L(f))} && R(L(A^\prime)) \ar[lu] \\ A \ar@/^3cm/[uur]^g\ar[u]_{{unit}} \ar[rr]^{f} & &\ar@{-->}@/^-3cm/[uul] A^\prime, \ar[u]^{{
				unit}}}  \ .$$
	The left deformed triangle   commutes due to the triangle identities for the adjunction \eqref{vsdfvberfvvsdfv}.
\end{proof}
\begin{lem}
	Let 
	$$ \xymatrix{ X \ar[r]  & Y \\ X^\prime \ar[u] \ar[r] & Y^\prime \ar[u]^f \\ K \ar[r] \ar[u] & L \ar[u] } $$
	be a commutative diagram of anima such that the upper square is cartesian and the outer square is liftable. Then also the lower square
	is liftable.
\end{lem}
\begin{proof}
	Using \eqref{puighygbqrjkb} it suffices to show that there exists a commutative diagram
	$$ \xymatrix{& X^\prime \\ K \ar[ru] \ar[rr] & & L \ar@{-->}[lu]} $$
	in $\Spc_{/Y^{\prime}}$ (we drop the structure maps to $Y^{\prime}$). There exists an adjunction
	$$ f_\sharp: \Spc_{/Y^\prime} \leftrightarrows \Spc_{/Y} :f^\ast, $$
	where $f_\sharp$ is given by postcomposition with $f$ and $f^\ast$ is given by pullback along $f$.  We have an equivalence  $f^\ast X   \simeq X'$. By \cref{pqtgvbnuklnkloojh}  it suffices to solve the lifting problem
	$$ \xymatrix{&X \\ f_\sharp (K) \ar[ru] \ar[rr] & & f_\sharp(L), \ar@{-->}[lu]} $$
	and this   is possible by our assumption.
\end{proof}
For the following corollary it is convenient to identify a commutative square
$$ \xymatrix{X_0 \ar[r]^{f_0} & Y_0 \\ X_1 \ar[r]^{f_1} \ar[u] & Y_1 \ar[u]}$$
with a map $f: X \to Y$ in $\Fun(\Delta^{1,\op}, \Spc)$. We call $f$ liftable, if the associated square is liftable.
\begin{kor}\label{siubdsfvioasdnf}\mbox{}
	\begin{enumerate}
		\item The class of morphisms  in $\Fun(\Delta^{1,\op}, \Spc)$ satisfying the slp  is closed under pullbacks.
		\item \label{wekogpwegwerfwerfwerfw}The class of morphisms in $\Fun(\nat^{\op}, \Spc)$ satisfying the slp is closed under pullbacks.
	\end{enumerate}
\end{kor}

We consider  the adjunction
\begin{equation}\label{fqwqwojfpwedqwedqd} \Sigma_+: \Spc \rightleftarrows \Spc_{\ast/}: \Omega\ .\end{equation}  Let $f:(X,x) \to (Y,y)$ be a map of pointed anima.
\begin{lem}
	  The square
	\begin{equation}\label{fqwefwqeojfkqpwdqwdqwd}  \xymatrix{\Omega (X,x) \ar[r]^{\Omega f} & \Omega (Y,y) \\ K \ar[r] \ar[u] & L\ar[u]}  \end{equation}  is liftable if and only if the associated square
	$$ \xymatrix{X \ar[r]^{f} & Y \\ \Sigma_+ K \ar[r] \ar[u] & \Sigma_+ L\ar[u]}$$
	 is liftable.
\end{lem}
\begin{proof}
	We have an adjunction
	$$ (\Sigma_+)_{/{(Y,y)}}: \Spc_{/\Omega(Y,y)} \rightleftarrows (\Spc_{\ast})_{ /(Y,y)}: \Omega_{/(Y,y)}. $$ 	Here the non-obvious functor $ (\Sigma_+)_{/(Y,y) }$ is given by the composite
	$$ \Spc_{/\Omega(Y,y)}\xrightarrow{(\Sigma_+)_{/ \Omega(Y,y)}} (\Spc_{\ast})_{ /\Sigma_+( \Omega(Y,y))}\xrightarrow{counit_\sharp}( \Spc_{\ast})_{/(Y,y)}, $$
	where $counit_\sharp$ is postcomposition with the counit $\Sigma_+( \Omega(Y,y))\to (Y,y)$ of the adjunction \eqref{fqwqwojfpwedqwedqd}. In view of 
	\cref{pqtgvbnuklnkloojh} the square \eqref{fqwefwqeojfkqpwdqwdqwd} is liftable if and only if  the dashed completion 	$$ \xymatrix{& (X,x) \\ \Sigma_+ K \ar[ru] \ar[rr] & & \Sigma_+ L \ar@{-->}[lu]} $$
	exists in $(\Spc_{\ast})_{/(Y,y)}$. Such a completion exists if and only if the underlying diagram in $\Spc_{/Y}$ admits the corresponding dashed lift.
\end{proof}

We consider a  commutative  square \begin{equation}\label{fwqedqwdeqe} \xymatrix{(X_0,x_0) \ar[r] & (Y_0,y_0) \\ (X_1,x_1) \ar[r] \ar[u] & (Y_1,y_1) \ar[u]}   \end{equation} 
		  in $\Spc_{\ast/}$.
\begin{kor}\label{eiogqubiuosdhbafuiopyib}\mbox{}
	\begin{enumerate}
		\item  {If the underlying square of \eqref{fwqedqwdeqe} in $\Spc$ has the}
		  slp, then also
		$$ \xymatrix{\Omega  (X_0,x_{0}) \ar[r] & \Omega ( Y_0 ,y_{0})\\ \Omega ( X_1 ,x_{1})\ar[r] \ar[u] & \Omega ( Y_1 ,y_{1})\ar[u]}  $$
		 {has the} slp.
		\item \label{gjwergoiwerfwerfwerf}	If $f:(X_n)_{n \in \nat^{\op}} \to (Y_n)_{n \in \nat^{\op}}$  {has the} slp, then   for all compatible choices of base points $ x: {(x_{n})_{n\in \nat} }\to (X_n)_{n \in \nat^{\op}}, $ the induced map $$ ( \Omega (X_n,x_{n}))_{n \in \nat^{\op}} \to (\Omega  (Y_n,f(x_{n})))_{n \in \nat^{\op}} $$
		  {also has the} slp.
	\end{enumerate}

\end{kor}

\begin{rem}\label{ojiregofewrgreferwf} In this remark
we  recall the properties of the $\lim^1$-sequence. 
\begin{enumerate}
	\item  For every tower $(X_n)_{n \in \nat^{\op}}$ and a compatible system of base points $x:{(x_{n})_{n\in \nat} } \to (X_n)_{n \in \nat}$ there exists an exact sequence
\begin{equation}\label{eqwfqwedqe}  \ast \to {\lim_{n \in \nat^{\op}}}^1 \pi_{1}(X_n, x_n) \to \pi_0 (\lim_{n \in \nat^{\op}} X_n, {\lim_{n \in \nat^{\op}}x_{n}}) \to \lim_{n \in \nat^{\op}} \pi_0(X_n,x_n) \to \ast  \end{equation}	 
	of pointed sets. 
	\item \label{koerpgerwfqwefqwdweqd99} The functor ${\lim}^1$ admits a factorization
	$$ \xymatrix{\Fun(\nat^{\op}, \Group) \ar[rr]^{{\lim}^1} \ar[rd] && \Set \\ & \Pro(\Group) \ar@{-->}[ru]} \ .$$
	This follows from the formula ${\lim_{n \in \nat^{\op}}}^1 G_n \cong  \pi_0(\lim_{n \in \nat^{\op}} B G_n )$
	{for any system $(G_{n})_{n\in \nat^{\op}}$ in $\Fun(\nat^{\op},  \Group)  $}.
	\item \label{qerkogpfweqedwed}For any tower $(X_n)_{n \in \nat^{\op}}$ {in $\Spc$} the map $\pi_{0}(\lim_{n \in \nat^{\op}} X_n) \to \lim_{n \in \nat^{\op}}\pi_0(X_n) $ is surjective.
\end{enumerate}
\end{rem}
 
	We start with a computation on $\pi_0$. 
	
\begin{lem}\label{wjeitgowgwerfewrfwer99}
	If $(X_n)_{n \in \nat^{\op}} \to \underline{\ast}$ {has the} slp, then $(\pi_0(X_n))_{n \in \nat^{\op}}$ is isomorphic to $ { \underline{\ast}}$ as a pro-system.
	\end{lem}
	\begin{proof} First note that $ \pi_0(X_n)  $ is non-empty for all $n$ in $ \nat$ as the diagram
	$$ \xymatrix{X_n \ar[r]  & \ast \\ X_{n+1} \ar[u] \ar[r] & \ast \ar[u] \\ \emptyset \ar[r] \ar[u] &  \ast \ar[u] \ar@{-->}[luu]}  $$
	admits a dashed lift. Secondly, the map $\pi_0(X_{n+1}) \to \pi_0(X_n)$ is constant since for every map $S^0 \to X_{n+1}$ there exists a dashed lift
	$$ \xymatrix{X_n \ar[r]  & \ast \\ X_{n+1} \ar[u] \ar[r] & \ast \ar[u] \\ S^0 \ar[r] \ar[u] &  \ast. \ar[u] \ar@{-->}[luu]}  $$
	\end{proof}

	We can now start the actual proof of \cref{bidfosdfivosdifhgnbeoiosdf}.
	Let $(X_n)_{n \in \nat^{\op}} \to (Y_n)_{n \in \nat^{\op}}$ {have the} slp. Since slp maps are stable under pullback   by \cref{siubdsfvioasdnf}.\ref{wekogpwegwerfwerfwerfw} and an equivalence of anima is a map with trivial fibres  {at all base points}, we can reduce to the case where $(Y_n)_{n \in \nat^{\op}}  \simeq  \underline{\ast}$. As slp maps are stable under loops with respect to each compatible system of base points  by  {\cref{eiogqubiuosdhbafuiopyib}.\ref{gjwergoiwerfwerfwerf}}, it suffices to check that $\pi_0(\lim_{n \in \nat^{\op}} X_n) \cong  \ast$. Since  $\lim_{n \in \nat^{\op}} \pi_0(X_n) \cong *$  {by  \cref{wjeitgowgwerfewrfwer99}}, we know by  \cref{ojiregofewrgreferwf}.\ref{qerkogpfweqedwed} that $\pi_{0}(\lim_{n \in \nat^{\op}} X_n)$ is non-empty. By the ${\lim}^1$-sequence \eqref{eqwfqwedqe}  it suffices to check that for all compatible families of base points $x:  (x_{n})_{n\in \nat^{\op}} \to (X_n)_{n \in \nat^{\op}}$ we have ${\lim_{n \in \nat^{\op}}}^1 \pi_0( \Omega ( X_n,x_{n})) \cong  \ast$. This is true  {by \cref{ojiregofewrgreferwf}.\ref{koerpgerwfqwefqwdweqd99} since}    $(\Omega  (X_n,x_{n}))_{n \in \nat^{\op}}$  {has the} slp by \cref{eiogqubiuosdhbafuiopyib}.\ref{gjwergoiwerfwerfwerf} and therefore $(\pi_0(\Omega  (X_n,x_{n})))_{n \in \nat^{\op}} \simeq \underline{*}$ in $\Pro(\Group)$,  {again by   \cref{wjeitgowgwerfewrfwer99}}. 
\end{proof}

\subsection{Phantoms}\label{weokgpwegwerfwfrfrefwerf}

%\ben{Zitiere Efimovs Anhang zu Phantomabbildungen. Frage: Hat was Efimov macht interessante Interaktionen, mit diesem Kaptiel? Ich habe gerade kurz das Kaptiel nochmal überflogen. Konnte bis jetzt aber keine interessante Interaktion ausmachen. Eine interessante Sache ist: Efimov nennt Abbildungen $f: X \to Y$, so dass $\Fib f \to X$ phantom ist, auch pure monomorphisms - er gibt auch ältere Referenzen dafür in compactly generated Kategorien, diese sollten wir hier auch einbeziehen. Das ist vllt. für dein crossed product paper interessant.}

In this section we introduce the notion of a strong phantom morphism and analyse some of its properties and how they interact with the condition on the $\infty$-category of being \pcas. 
{For a different treatment of phantom maps see \cite[Appendix E]{Efimov_2024}}.
For the general definitions it suffices to  assume that $\bD$ is a pointed   $\infty$-category which admits all countable filtered colimits. Later we will add the assumption of stability.

\begin{ddd}\label{ijwqfofjewfqedfq9}
A morphism $ g:B\to C$  in $\bD$ is called  a strong  phantom morphism
if the composition $A\xrightarrow{f} B\xrightarrow{g}C$ vanishes for all weakly compact maps $f$.
 
A strong phantom object is  an object whose identity is a  strong phantom morphism.
\end{ddd}

Since the weakly compact maps {form} a two-sided ideal in $\bD$ also the strong phantom maps form   a two-sided ideal.

\begin{rem}
Classically a  morphism  $B\to C$ is called a phantom map if the composition $A\to B\to C$ vanishes for  {any}  map from a compact object $A$. A phantom object is an object whose identity morphism is a phantom map. If $\bD$
is compactly generated, then $\bD$ does  not have non-trivial phantom objects, and being a phantom morphism
is a restrictive condition. In the other extreme, if $\bD$ has only few (or even no) non-trivial compact objects, then
being a phantom morphism or object is not  very (or not at all) restrictive. We use the adjective strong in order to 
distinguish the stronger conditions introduced in \cref{ijwqfofjewfqedfq9} from the classical ones.

In \cref{ijwqfofjewfqedfq9} we defined {the} set of strong phantom morphisms as the  left annihilator of the set of weakly compact morphisms.
In principle one could also define bigger sets of phantoms as  left annihilators of the possibly smaller sets of slp compact  or strongly compact morphisms.  We will not consider these variants further, but note that
in a \pcas{} category all these variants coincide anyway by
\cref{kgoprgfeegferfwef}
\hB
\end{rem}

 \begin{rem}
{We consider a morphism  $f:B\to C$ in $\bD$. Following   \cite{Efimov_2024} it is called a pure monomorphism
if the fibre $A\to B$ of $f$ exists and is a strong phantom morphism.
Similarly, it is called a pure epimorphism, if the cofibre $C\to D$ exists and is a  strong phantom morphism.}

{If $\bD$ is stable, then in \cite{Bunke:2023ab} a fibre sequence $A\to B\to C$
was called a phantom extension if the map $B\to C$ is a pure epimorphism. 
For example, the projection map of a weakly quasi-diagonal extension of $C^{*}$-algebras induces a pure epimorphism in $E$-theory, see   \cref{ogjwerferwferwfwe} for details. Therefore weakly quasi-diagonal extensions of $C^{*}$-algebras become   
phantom extensions in $E$-theory. For further applications of phantoms in $C^{*}$-algebra theory we refer to \cite{Bunke:2023ab}.}

{Still assuming that $\bD$ is a stable, a morphism $A\to B$ is called a phantom equivalence if its cofibre (or equivalently its fibre) is a strong phantom object. For applications of this notion to the   Farrel-Jones conjecture or the coarse Baum-Connes conjecture  see  \cite{Bunke:2021uh}  or
\cite{Bunke:2024aavvvv}.
 \hB} \end{rem}

 In order to check whether a morphism $\phi:B\to C$ is a strong phantom is suffices to consider a weakly compact
 approximation. 
\begin{lem}\label{wekgowpefefwrf} 
If $(B_{n})_{n\in \nat}$ is a {weakly} compact approximation for $B$, then a morphism $\phi:B\to C$ is a strong phantom map if and only if the compositions $B_{n}\to B\xrightarrow{\phi} C $ vanish for all $n$ in $\nat$.
\end{lem}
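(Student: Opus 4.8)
The plan is to prove the two implications separately; both are short once we invoke the reformulation of weak compactness in \cref{jowtogpwgerferfwefewfref}.

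For the ``only if'' direction, suppose $\phi\colon B\to C$ is a strong phantom morphism. By hypothesis the structure maps $B_{n}\to B$ of the approximation are weakly compact for all $n$ in $\nat$, so the definition of a strong phantom morphism in \cref{ijwqfofjewfqedfq9} immediately gives that $B_{n}\to B\xrightarrow{\phi}C$ vanishes for all $n$. No further work is needed here.

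For the ``if'' direction, assume that $B_{n}\to B\xrightarrow{\phi}C$ vanishes for all $n$ in $\nat$, and let $f\colon A\to B$ be an arbitrary weakly compact morphism; we must show that $\phi\circ f$ vanishes. I would apply \cref{jowtogpwgerferfwefewfref} to the filtered system $(B_{n})_{n\in\nat}$, whose colimit is $B$ since $(B_{n})_{n\in\nat}$ is an approximation of $B$, together with the bold part of the square
$$\xymatrix{A\ar[d]^{f}\ar@{..>}[r] & B_{k}\ar[d] \\ B \ar[r]^-{\simeq} & \colim_{n\in\nat} B_{n}}$$
in which the lower map is the canonical equivalence. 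This produces an index $k$ together with a dotted map $A\to B_{k}$ and a homotopy witnessing that the composite $A\to B_{k}\to B$ agrees with $f$, where $B_{k}\to B$ is the structure map of the approximation. Consequently $\phi\circ f$ is homotopic to $\bigl(B_{k}\to B\xrightarrow{\phi}C\bigr)\circ\bigl(A\to B_{k}\bigr)$, and the left factor vanishes by assumption; since precomposing a zero morphism with any morphism yields a zero morphism in the pointed $\infty$-category $\bD$, it follows that $\phi\circ f$ vanishes, as desired.

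I do not expect a genuine obstacle: the content of the lemma is precisely that every weakly compact map into $B$ factors, up to homotopy, through one of the stages $B_{k}$, which is exactly \cref{jowtogpwgerferfwefewfref}. The only minor point of care is that ``vanishes'' is to be read as ``homotopic to the zero morphism through the zero object'' and that this property is stable under precomposition, which is automatic in a pointed $\infty$-category.
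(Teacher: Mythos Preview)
Your proof is correct and follows essentially the same approach as the paper: both directions are handled exactly as you do, with the converse relying on \cref{jowtogpwgerferfwefewfref} to factor an arbitrary weakly compact map $f\colon A\to B$ through some stage $B_{k}$ of the approximation, whence $\phi\circ f$ vanishes.
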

\begin{proof}
If $\phi$ is a strong phantom map, then the compositions  $B_{n}\to B\xrightarrow{\phi} C $  clearly vanish.
For the converse we consider a weakly compact map $f:A\to B$ and must show that $\phi\circ f\simeq 0$. In view of \cref{jowtogpwgerferfwefewfref} it has a factorization
$A\to B_{n}\to B$ for some $n$ in $\nat$. Hence the composition $A\xrightarrow{f} B\xrightarrow{\phi} C$ has a factorization
$A\to B_{n}\to B\xrightarrow{\phi} C$ and therefore vanishes.
\end{proof}

From now one we assume that $\bD$ is stable and countably cocomplete.

\begin{ddd}\label{erijogoewrpgfrefw1}\mbox{}\begin{enumerate}
\item\label{erijogoewrpgfrefw} A sequence $(g_{n})_{n\in \nat}$ of morphisms $g_{n}:B\to C$ is called summable if 
there exists a morphism $\hat g:B\to \bigoplus_{\nat}C$ with the components $(g_{n})_{n\in \nat}$.
\item A morphism $g:B\to C$ is called a summable phantom map if the constant sequence $(g)_{n\in \nat}$  is summable. \end{enumerate}
\end{ddd}
In \cref{erijogoewrpgfrefw} we say that $\hat g$ witnesses the summability of the sequence. 
{\begin{rem} Assume that $(g_{n})_{n\in \nat}$ is a summable sequence of maps $g_{n}:B\to C$
with summability witnessed by $\hat g:B\to \bigoplus_{n\in \nat}C$. Then we can define the sum
 of the family as the composition
\begin{equation}\label{qererqr}{}^{\hat g}\sum_{n\in \nat} g_{n}:B\xrightarrow{\hat g}  \bigoplus_{n\in \nat}C\xrightarrow{\mathrm{fold}}C\ .
\end{equation} 
We added the superscript $\hat g$ in order to indicate the possible dependence of the sum on the choice of the witness.
{One can check that  the sum of a summable sequence is well-defined up to strong phantoms.}
 \hB
\end{rem}}

\begin{lem}\label{regjiewerggre9}
A summable map is a strong phantom map.
\end{lem}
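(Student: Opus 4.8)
The plan is to show that a summable map $g : B \to C$ is a strong phantom map by exploiting the factorization through $\bigoplus_{n \in \nat} C$ together with the characterization of weak compactness in the stable setting given in \cref{iuergoregregwf}. First I would unwind the definitions: a summable map $g$ comes with a witness $\hat g : B \to \bigoplus_{n \in \nat} C$ whose $n$-th component is $g$ for every $n$. To prove $g$ is a strong phantom map, I must show $g \circ f \simeq 0$ for every weakly compact $f : A \to B$.

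The key step is to apply the characterization of weak compactness from \cref{iuergoregregwf}(2): since $f : A \to B$ is weakly compact, for the family $(C)_{n \in \nat}$ (constant with value $C$) and the map $\hat g : B \to \bigoplus_{n \in \nat} C$, the composite $A \xrightarrow{f} B \xrightarrow{\hat g} \bigoplus_{n \in \nat} C$ factors through $\bigoplus_{n=0}^{k} C$ for some integer $k$, i.e.\ there is a dotted arrow $A \to \bigoplus_{n=0}^{k} C$ making the square
$$\xymatrix{A\ar@{..>}[r]\ar[d]_{f} &\bigoplus_{n=0}^{k} C\ar[d] \\ B\ar[r]^-{\hat g} & \bigoplus_{n\in \nat} C}$$
commute. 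Now compose with the projection $\bigoplus_{n \in \nat} C \to C$ onto the $(k+1)$-st factor (index $k+1$, which lies outside $\{0,\dots,k\}$). On the one hand this projection precomposed with $\hat g$ is $g$ by definition of the witness, so the composite around the bottom is $g \circ f$. On the other hand, the projection onto factor $k+1$ restricted to $\bigoplus_{n=0}^{k} C$ is zero, so the composite around the top is $0$. Hence $g \circ f \simeq 0$.

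Since $f$ was an arbitrary weakly compact map, $g$ is a strong phantom map by \cref{ijwqfofjewfqedfq9}. I do not expect a serious obstacle here; the only point requiring a little care is making sure the commuting square supplied by weak compactness is compatible with postcomposition by the projection at an index strictly greater than $k$, which is immediate once one notes that such a projection kills the finite sub-sum $\bigoplus_{n=0}^{k} C$. (Alternatively, one can deduce this more cleanly by noting that summable maps form a two-sided ideal and that the statement for a single $g$ follows from the statement that each $g_n = g$ in a summable sequence is killed after composing with a weakly compact map, using \cref{wekgowpefefwrf} and the fact that a witness $\hat g$ exhibits $B$ itself — via any weakly compact approximation — as mapping into a finite sub-sum.)
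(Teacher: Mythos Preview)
Your proof is correct and follows essentially the same approach as the paper: both factor $\hat g \circ f$ through a finite sub-sum using weak compactness of $f$, then project onto a component with index beyond that finite range to conclude $g \circ f \simeq 0$. The paper invokes the general description of weak compactness from \cref{jowtogpwgerferfwefewfref} applied to the filtered system $(\bigoplus_{n=0}^{k} C)_{k}$, while you cite the equivalent sum-characterization from \cref{iuergoregregwf}; this is a cosmetic difference only.
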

\begin{proof}
Let $g:B\to C$ be a summable map with summability  witnessed by $\hat g$.
We consider any weakly compact map  $f:A\to B$.  By \cref{jowtogpwgerferfwefewfref}  we get a factorization of the composition $A\xrightarrow{f}B\xrightarrow{\hat g}\bigoplus_{\nat}C$
over  a map $A\to \bigoplus_{i=0}^{k}C$ for some $k$. This implies that the composition $g\circ f:A\to B\xrightarrow{\hat g} \bigoplus_{\nat}C\xrightarrow{\pr_{n}}C$ vanishes for $n>k$.    
 \end{proof}
\begin{rem}
Let $(g_{n})_{n\in \nat} $ be a summable sequence of maps from $B$ to $C$.
Then one could ask wether this implies that the maps $g_{n}$ are strong phantoms for almost all $n$.
For any weakly compact map $f:A\to B$ we indeed have a factorization
of $\hat g\circ f$ over $A\to B\to \bigoplus_{n=0}^{k_{f}} C$ which shows that $g_{n}\circ f\simeq 0$ for all $n$ in $\nat$ with $n> k_{f}$. This does not suffice to conclude that the maps $g_{n}$ are phantom for large $n$ since $k_{f}$ may depend {on} $f$. \hB
\end{rem}

 We consider a sequence of maps $(g_{n})_{n\in \nat}$ from $B$ to $C$ in $\bD$.
\begin{prop}\label{iujgofffsdafdfaf} If $B$ admits a weakly compact  approximation and  all but finitely many members of the sequence $(g_{n})_{n\in \nat}$ are strong phantom maps, then the sequence is summable. In particular every strong phantom map out of $B$ is summable.
  \end{prop}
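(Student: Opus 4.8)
The plan is to reduce to the case where \emph{every} member of the sequence is a strong phantom map, and then to build the witnessing map $\hat g : B \to \bigoplus_{\nat} C$ by choosing a weakly compact approximation $(B_m)_{m\in\nat}$ of $B$ and, for each $m$, arranging that all but finitely many of the $g_n$ restrict to $0$ on $B_m$ in a \emph{compatible} way, so that the partial sums stabilize locally. First I would observe that if $(g_n)_{n\in\nat}$ differs from $(g'_n)_{n\in\nat}$ in only finitely many entries, and $(g'_n)$ is summable, then $(g_n)$ is summable: given a witness $\hat g'$ for $(g'_n)$, we can correct it by the finitely many differences $g_n - g'_n$, which factor through $\bigoplus_{n\le N} C \hookrightarrow \bigoplus_{\nat} C$, and add this correction to $\hat g'$. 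Hence it suffices to treat the case where all the $g_n$ are strong phantom maps; the ``in particular'' clause is the special case $g_n = g$ for all $n$.

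So assume every $g_n : B \to C$ is a strong phantom map, and let $(B_m)_{m\in\nat}$ be a weakly compact approximation of $B$, with structure maps $b_m : B_m \to B$ weakly compact and $B \simeq \colim_m B_m$. Since each $b_m$ is weakly compact and each $g_n$ is a strong phantom, the composite $g_n \circ b_m : B_m \to C$ vanishes for every $m$ and every $n$. Now consider the map $B \to \prod_{n\in\nat} C$ with components $(g_n)_{n\in\nat}$; I want to lift it through the inclusion $\bigoplus_{\nat} C \hookrightarrow \prod_{\nat} C$. Using $B \simeq \colim_m B_m$, a map $B \to \prod_{\nat} C$ is the same as a compatible family of maps $B_m \to \prod_{\nat} C$; and for each fixed $m$, the composite $B_m \to B \to \prod_{\nat} C$ has all components $g_n \circ b_m \simeq 0$, hence is (canonically, compatibly) nullhomotopic. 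The key point is that since $\bD$ is stable, $\bigoplus_{\nat} C$ is the cofiber (equivalently, in a stable category, a direct summand-free description: it is $\colim_N \bigoplus_{n\le N} C$), and more importantly the fiber of $\bigoplus_{\nat} C \to \prod_{\nat} C$ sits in a way that lets me promote the componentwise-nullity to an actual lift. Concretely I would argue: the map $B \to \prod_{\nat} C$ factors through the colimit $\colim_m \bigl( \prod_{\nat} C \bigr)$ computed over the constant system, but better — for each $m$ choose (compatibly in $m$) a nullhomotopy of $B_m \to \prod_{\nat} C$; assembling these gives the required factorization $B \to \bigoplus_{\nat} C$, because $\bigoplus_{\nat} C \simeq \colim_m \mathrm{fib}\bigl(\prod_{\nat} C \to \prod_{\nat} C / (\text{first } m \text{ summands})\bigr)$ — here one uses that $B$ is the colimit of the $B_m$ and that each $B_m \to \prod_{\nat} C$ is null.

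Let me describe the cleaner route I would actually write down, which sidesteps the index bookkeeping. Since $B \simeq \colim_m B_m$ and maps out of a colimit are a limit of mapping spaces, $\Map_\bD(B, \bigoplus_{\nat} C) \simeq \lim_m \Map_\bD(B_m, \bigoplus_{\nat} C)$ and similarly for $\prod_{\nat} C$; so the obstruction to lifting the given map $\beta : B \to \prod_{\nat} C$ through $\bigoplus_{\nat} C \to \prod_{\nat} C$ lives in $\lim_m$ (and $\lim^1_m$) of the fibers $\Map_\bD(B_m, \bigoplus_{\nat} C) \to \Map_\bD(B_m, \prod_{\nat} C)$. For fixed $m$, since $b_m$ is weakly compact, every map $B_m \to \prod_{\nat} C$ factors through $\bigoplus_{n \le k} C$ for some $k$ (by \cref{jowtogpwgerferfwefewfref} applied to the system with colimit $\prod_{\nat} C$ — wait, that is a product, not a filtered colimit) — so instead I use that $\beta_m := \beta \circ b_m$ has all components null, hence is \emph{itself} null, so trivially lifts (by $0$) to $\bigoplus_{\nat} C$, and the lift is compatible with the structure maps up to coherent homotopy because the $B_m \to B \to \prod_{\nat} C$ are null compatibly. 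Thus the restriction map $\lim_m \Map(B_m,\bigoplus) \to \lim_m \Map(B_m,\prod)$ hits the class of $\beta$, producing the desired $\hat g : B \to \bigoplus_{\nat} C$ with components $(g_n)_{n\in\nat}$.

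The main obstacle I anticipate is exactly the coherence issue in this last step: ``each $B_m \to \prod_{\nat} C$ is null'' gives a \emph{family} of nullhomotopies, but to get a point of $\lim_m \Map(B_m, \bigoplus_{\nat} C)$ mapping to $\beta$ one needs these nullhomotopies to be chosen compatibly, i.e.\ one needs the whole diagram $(B_m \to \prod_{\nat} C)_{m\in\nat^{\op}}$ to be null \emph{as a diagram}, not just pointwise. This is where stability is essential: $\prod_{\nat} C$ has no higher obstructions beyond a $\lim^1$ term, and the relevant $\lim^1$ of homotopy groups of mapping spectra vanishes because each term is already a \emph{limit} of trivial groups — but I would need to verify this $\lim^1$ vanishing carefully, possibly invoking that the tower $(\Map_\bD(B_m, \mathrm{fib}(\prod \to \prod/\bigoplus_{\le \bullet})))_m$ satisfies a Mittag–Leffler-type condition coming from the weak compactness of the $b_m$. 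I expect the slickest fix is: replace ``$\prod$'' by the filtered colimit $\colim_k \bigoplus_{n\le k} C$ wherever possible, use weak compactness of $b_m$ directly (each $B_m \to \bigoplus_{\nat} C$-to-be factors through a finite stage), and assemble. For the constant-sequence consequence, $\hat g$ with all components equal to the phantom $g$ witnesses summability, completing the proof.
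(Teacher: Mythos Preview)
Your reduction to the case where every $g_n$ is a strong phantom is correct and matches the paper. The core argument, however, has a genuine gap that you flag but do not close. First, $\bD$ is only assumed stable and countably cocomplete, so $\prod_{\nat} C$ need not exist; this is already a symptom that the product is the wrong organizing device here. More seriously, your ``cleaner route'' rests on the claim that since each $\beta_m : B_m \to \prod_{\nat} C$ is null, the tower $(\beta_m)_m$ is null \emph{as a diagram}, i.e.\ that the $\lim^1$ obstruction in the Milnor sequence for $[B,Q]$ (with $Q$ the cofibre of $\bigoplus_{\nat} C \to \prod_{\nat} C$) vanishes on the class of $\beta$. You give no argument for this, and none is available from the hypotheses: in a weakly compact \emph{approximation} only the maps $b_m : B_m \to B$ are weakly compact, not the transition maps $B_m \to B_{m+1}$, so there is no Mittag--Leffler condition in sight. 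Your closing fallback (``each $B_m \to \bigoplus_{\nat} C$-to-be factors through a finite stage, and assemble'') is circular: you cannot invoke properties of maps you have not yet constructed.

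The paper sidesteps the coherence problem by a direct diagonal construction that never forms an infinite product. Since $g_k$ is a strong phantom, it is the colimit of the zero maps $B_i \xrightarrow{0} C$ (the nontrivial content of $g_k$ lives in the fillers of the triangles); equivalently it is the colimit of a map of systems $\hat g_k : (B_i)_i \to (D^{(k)}_i)_i$ where $D^{(k)}_i = 0$ for $i < k$ and $D^{(k)}_i = C$ for $i \ge k$. The shift is the key idea you are missing: at level $i$ only the finitely many indices $k \le i$ contribute nontrivially, so the product over all $k$ of the target systems is $(\bigoplus_{k \le i} C)_i$, a system of \emph{finite} sums with colimit $\bigoplus_{\nat} C$. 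Assembling the $\hat g_k$ into a single map of systems and taking the colimit over $i$ produces the witness $\hat g : B \to \bigoplus_{\nat} C$ directly, with no $\lim^1$ term to control.
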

\begin{proof}
	Without loss of generality we can assume that each member of the sequence $(g_n)_{n \in \nat}$ is a strong phantom. Let $(B_{i})_{i \in \nat}$ be  a weakly compact approximation for $B$. Then for every $k$ in $\nat$, the compositions
	$B_{i}\to B\xrightarrow{g_{k}} C$  vanish for all $i$ in $\nat$ and the map $g_k: B \to C$ is the map induced by a commutative diagram
	$$ \xymatrix{ B_0 \ar[r] \ar[rrd]^0 & B_1 \ar[r] \ar[rd]^0 & B_2 \ar[r] \ar[d]^0  & B_3 \ar[dl]_0 \ar[r] &  \ar[dll]_0 \cdots \\ & & C }. $$
	Note that the main information is hidden in the fillers of the triangles.
	Thus $g_k$ is also the map induced on colimits of a  commutative diagram $\hat g_k$
	$$ \xymatrix{ B_0 \ar[r] \ar[d] & \cdots \ar[r] \ar[d] & B_{k-1} \ar[r] \ar[d]  & B_k \ar[r] \ar[d]^{0} & B_{k+1} \ar[r] \ar[d]^{0} & \cdots \\0 \ar[r]&  \cdots \ar[r] & 0 \ar[r]  & C \ar[r]^{\id_{C}} & C \ar[r]^{\id_{C}} & \cdots }. $$
	Here we set $B_{-1} = 0$. The family of diagrams $(\hat g_k)_{k\in \nat}$ induces a map into the product of the lower horizontal diagrams, which yields the commutative diagram
	$$ \xymatrix{ B_0 \ar[r] \ar[d] & \cdots \ar[r] \ar[d] & B_{k-1} \ar[r] \ar[d]  & B_k \ar[r] \ar[d] & B_{k+1} \ar[r] \ar[d] & \cdots \\\bigoplus_{\{0\}}C \ar[r]&  \cdots \ar[r] & \bigoplus_{\{0, \cdots, k-1\}} C \ar[r] & \bigoplus_{\{0, \cdots, k\}} C \ar[r] & \bigoplus_{\{0, \cdots, k+1\}} C \ar[r] & \cdots }. $$
	The map induced on colimits $B \to \bigoplus_{\nat} C$  by this diagram witnesses the summability of the sequence $(g_n)_{n \in \nat}$.
\end{proof}

\begin{prop}\label{weokrgpwregrefrfw9}
We consider  a composition $A\xrightarrow{f} B\xrightarrow{g}C$ of two strong phantom maps. 
 If $A$   admits a  weakly compact exhaustion, then $g\circ f\simeq 0$.
\end{prop}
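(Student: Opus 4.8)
The plan is to use the identification $\Map_{\bD}(A,-)\simeq\lim_{n\in\nat^{\op}}\Map_{\bD}(A_{n},-)$ coming from a fixed weakly compact exhaustion $(A_{n})_{n\in\nat}$ of $A$ in order to reduce the vanishing of $g\circ f$ to the vanishing of a single class in a ${\lim}^{1}$-term, which is then killed by combining the two phantom hypotheses. First I would observe that, since the structure maps $A_{n}\to A_{n+1}$ are weakly compact and the weakly compact maps form a two-sided ideal, each canonical map $\iota_{n}\colon A_{n}\to A$ factors through $A_{n+1}$ and is hence weakly compact; because $f$ is a strong phantom this yields $f\circ\iota_{n}\simeq 0$ for all $n$. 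Thus the image of $f$ under $\pi_{0}\Map_{\bD}(A,B)\to\lim_{n}\pi_{0}\Map_{\bD}(A_{n},B)$ is the base point, and the ${\lim}^{1}$-sequence \eqref{eqwfqwedqe} (applied with the compatible family of zero maps as base points) exhibits $f$ as the image of a class $\tilde f\in{\lim_{n\in\nat^{\op}}}^{1}\pi_{1}\Map_{\bD}(A_{n},B)$.

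Next I would use the naturality of the ${\lim}^{1}$-sequence in the target variable: post-composition with $g$ induces a map between the two sequences, so the class of $g\circ f$ in $\pi_{0}\Map_{\bD}(A,C)\cong\pi_{0}\lim_{n}\Map_{\bD}(A_{n},C)$ is the image of $g_{\ast}\tilde f\in{\lim_{n\in\nat^{\op}}}^{1}\pi_{1}\Map_{\bD}(A_{n},C)$, so it suffices to show $g_{\ast}\tilde f=0$. For this it is enough that the map of towers of groups
$$ \bigl(\pi_{1}\Map_{\bD}(A_{n},B)\bigr)_{n\in\nat^{\op}}\longrightarrow\bigl(\pi_{1}\Map_{\bD}(A_{n},C)\bigr)_{n\in\nat^{\op}} $$
induced by $g$ becomes trivial after applying ${\lim}^{1}$. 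Since ${\lim}^{1}$ factors through $\Pro(\Group)$ by \cref{ojiregofewrgreferwf}.\ref{koerpgerwfqwefqwdweqd99}, it is enough to check that this map of towers represents the zero morphism in $\Pro(\Group)$, i.e.\ that for every $m$ the composite $\pi_{1}\Map_{\bD}(A_{m+1},B)\to\pi_{1}\Map_{\bD}(A_{m},B)\xrightarrow{g_{\ast}}\pi_{1}\Map_{\bD}(A_{m},C)$ vanishes. Identifying $\pi_{1}\Map_{\bD}(A_{k},X)\cong[\Sigma A_{k},X]$ and using that $\Sigma$ is an autoequivalence of $\bD$ and hence preserves weakly compact maps by \cref{aerpogjkwpergwefffwrfrefw}, for every $\psi\colon\Sigma A_{m+1}\to B$ the map $\psi\circ\Sigma(A_{m}\to A_{m+1})\colon\Sigma A_{m}\to B$ is weakly compact, so $g\circ\psi\circ\Sigma(A_{m}\to A_{m+1})\simeq 0$ because $g$ is a strong phantom. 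This proves the claim, hence $g_{\ast}\tilde f=0$ and therefore $g\circ f\simeq 0$.

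The main obstacle I expect is the ${\lim}^{1}$-bookkeeping, in particular seeing how the two phantom hypotheses have to be used in tandem: the phantom property of $f$ by itself only makes $f$ trivial on the exhaustion and so produces a possibly nonzero class $\tilde f$ in ${\lim}^{1}\pi_{1}\Map_{\bD}(A_{n},B)$, and it is the phantom property of $g$ together with the weak compactness of the structure maps $A_{n}\to A_{n+1}$ (not merely of $A_{n}\to A$) that forces the induced pro-group map, and therefore its effect on ${\lim}^{1}$, to vanish. The auxiliary facts --- the two-sided ideal property of weakly compact maps, the naturality of the ${\lim}^{1}$-sequence, and the identification of the mapping-space limits with $\Map_{\bD}(A,-)$ --- are routine.
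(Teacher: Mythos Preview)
Your proof is correct and follows essentially the same strategy as the paper's. Both arguments use the Milnor/${\lim}^{1}$-sequence for the weakly compact exhaustion to reduce to killing a class in ${\lim}^{1}_{n}[A_{n},\Omega B]$ after pushing forward along $g$, and both ultimately rest on the observation that the composite $A_{m}\xrightarrow{\alpha_{m}}A_{m+1}\to\Omega B\xrightarrow{\Omega g}\Omega C$ (equivalently $\Sigma A_{m}\to B\xrightarrow{g}C$) vanishes because $\alpha_{m}$ is weakly compact and $g$ is a strong phantom. The only difference is presentational: the paper writes ${\lim}^{1}$ as the cokernel of $\id-s$ on $\prod_{n}[A_{n},\Omega B]$, notes that $[x]=[s(x)]$, and computes that $s$ applied to the representative $(g\circ\hat f_{n})_{n}$ is the zero family; you instead invoke the factorization of ${\lim}^{1}$ through $\Pro(\Group)$ and show the induced map of pro-groups is already zero.
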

\begin{proof}
 Let $(A_{n})_{n\in \nat}$ be a weakly compact exhaustion of  $A$ with   structure maps $\alpha_{n}:A_{n}\to A_{n+1}$. We then  have a Milnor sequence of abelian groups
$$0\to {\lim}^{1}_{n\in \nat^{\op}} [A_{n},\Omega B]\to [A,B]\to \lim_{n\in \nat^{\op}} [A_{n},B]\to 0\ .$$
The image of $f$ in $[A,B]$ under the second map vanishes so that
$f$ comes from an element $\hat f$ in $   {\lim}^{1}_{n\in \nat^{\op}} [A_{n},\Omega B]$.
We represent the ${\lim}^{1}$-term of a system $(G_{n})_{n\in \nat^{\op}}$ of abelian groups with structure maps $\gamma_{n}:G_{n+1}\to G_{n}$
as a quotient of $\prod_{n\in \nat} G_{n}$ by the image of the endomorphism $\id-s$, where $s$ sends
 a family $(g_{n})_{n\in \nat}$ to the family $(\gamma_{n}(g_{n+1}))_{n\in \nat}$. In this way $\hat f$ 
  is represented by a family $(\hat f_{n})_{n\in \nat}$ with $\hat f_{n}$ in $[A_{n},\Omega B]$.
We also have a Milnor sequence
$$0\to {\lim}^{1}_{n\in \nat^{\op}} [A_{n},\Omega C]\to [A,C]\to \lim_{n\in \nat^{\op}} [A_{n},C]\to 0\ .$$
The element $g\circ f$ is represented by the element in  ${\lim}^{1}_{n\in \nat^{\op}} [A_{n},C]$
represented by the family $(g\circ \hat f_{n})_{n\in \nat}$.
The family $(g\circ \hat f_{n+1}\circ \alpha_{n})_{n\in \nat}$ represents the same element in ${\lim}^{1}_{n\in \nat^{\op}} [A_{n},C]$.
But this is the zero family since $g$ is a strong phantom map and $\alpha_{n}$ is weakly compact for every $n$ in $\nat$.
\end{proof}

\begin{kor}
If $B$ is a strong phantom object admitting a weakly compact exhaustion, then $B\simeq 0$. In particular, if $\bD$ is  \pcas, then every strong phantom object is trivial.
\end{kor}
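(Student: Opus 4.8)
The plan is to deduce this immediately from \cref{weokrgpwregrefrfw9}. By \cref{ijwqfofjewfqedfq9} the hypothesis that $B$ is a strong phantom object says precisely that $\id_{B}\colon B\to B$ is a strong phantom morphism. I would then apply \cref{weokrgpwregrefrfw9} to the composition $B\xrightarrow{\id_{B}}B\xrightarrow{\id_{B}}B$: its two factors are strong phantom maps, and its source $B$ admits a weakly compact exhaustion by assumption, so the proposition yields $\id_{B}\simeq \id_{B}\circ \id_{B}\simeq 0$.

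It then remains to observe that an object of a pointed $\infty$-category whose identity morphism is null is a zero object. For any $X$ in $\bD$, postcomposition with the zero morphism $0\colon B\to B$ factors through the contractible anima $\Map_{\bD}(X,0)$ and is hence homotopic to a constant self-map of $\Map_{\bD}(X,B)$; since $\id_{B}$ and $0$ lie in the same component of $\Map_{\bD}(B,B)$, the self-map of $\Map_{\bD}(X,B)$ given by postcomposition with $\id_{B}$ --- which is homotopic to the identity --- is likewise homotopic to a constant map. Therefore $\Map_{\bD}(X,B)\simeq \ast$ for every $X$, so $B$ is terminal and hence equivalent to the zero object. (Alternatively, using stability one argues directly that all homotopy groups of the mapping spectra into $B$ vanish.)

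For the final clause: if $\bD$ is \pcas, then by \cref{regijfowergffreferwfw} every object of $\bD$ admits a strongly compact exhaustion, which is in particular a weakly compact one; so the hypothesis of the first part is satisfied by every object, and every strong phantom object is trivial.

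I do not expect a genuine obstacle here: the real content is entirely contained in \cref{weokrgpwregrefrfw9} and its underlying $\lim^{1}$-argument, while the remaining steps are formal. The only point deserving a line of care is the passage from $\id_{B}\simeq 0$ to $B\simeq 0$, and that is a soft consequence of $\bD$ being pointed --- stability is not needed for it.
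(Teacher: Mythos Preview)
Your proof is correct and follows exactly the same approach as the paper: apply \cref{weokrgpwregrefrfw9} to $\id_{B}\circ\id_{B}$ to get $\id_{B}\simeq 0$, and then invoke \cref{regijfowergffreferwfw} for the \pcas{} clause. The only difference is that you spell out why $\id_{B}\simeq 0$ forces $B\simeq 0$, which the paper leaves implicit (recall that at this point $\bD$ is already assumed stable, so this step is immediate anyway).
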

\begin{proof}
We have $\id_{B}\simeq \id_{B}\circ \id_{B}\simeq 0$ by   \cref{weokrgpwregrefrfw9}.
If $\bD$ is \pcas, then we use in addition that every object admits even a strongly compact  exhaustion by \cref{regijfowergffreferwfw}.
\end{proof}

\begin{kor}\label{weijogperfwrefregdghfg}
Let $f:B\to C$ be an equivalence and $\phi:B\to C$ be a strong phantom map. If $B$ admits a weakly compact exhaustion, then $f-\phi$ is an equivalence.
\end{kor}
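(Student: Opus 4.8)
The plan is to reduce the statement to the special case of an endomorphism and to observe that under the present hypotheses a strong phantom endomorphism of $B$ is square-zero, so that the geometric series degenerates to a two-term expression which is an honest inverse.

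First I would set $\psi:=f^{-1}\circ\phi\colon B\to B$, using that $f$ is an equivalence. Since the strong phantom maps form a two-sided ideal in $\bD$, the morphism $\psi$ is again a strong phantom map. Because $B$ admits a weakly compact exhaustion, I would apply \cref{weokrgpwregrefrfw9} to the composition $B\xrightarrow{\psi}B\xrightarrow{\psi}B$ of two strong phantom maps (whose common source $B$ has a weakly compact exhaustion) to conclude $\psi\circ\psi\simeq 0$. In particular $\psi^{k}\simeq 0$ for all $k\geq 2$, so the summable sequence $(\psi^{k})_{k\in\nat}$ from \cref{iujgofffsdafdfaf} realizes the sum $\id_{B}+\psi$.

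Next, using the additive enrichment of the stable $\infty$-category $\bD$ and bilinearity of composition, I would compute
$$(\id_{B}-\psi)\circ(\id_{B}+\psi)\simeq\id_{B}-\psi\circ\psi\simeq\id_{B}\ ,\qquad (\id_{B}+\psi)\circ(\id_{B}-\psi)\simeq\id_{B}-\psi\circ\psi\simeq\id_{B}\ .$$
Hence $\id_{B}+\psi$ is a two-sided inverse of $\id_{B}-\psi$, so $\id_{B}-\psi$ is an equivalence. Finally, since $\phi\simeq f\circ\psi$, bilinearity gives $f-\phi\simeq f\circ(\id_{B}-\psi)$, a composition of two equivalences, hence an equivalence.

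There is no real obstacle here — the statement is a genuine corollary of \cref{weokrgpwregrefrfw9}. The only points requiring a little attention are: the observation that applying \cref{weokrgpwregrefrfw9} with \emph{both} phantom maps equal to $\psi$ yields $\psi\circ\psi\simeq 0$ (this is the substantive input); the fact that the reduction uses only that strong phantoms form a two-sided ideal and that $f^{-1}$ exists because $f$ is assumed to be an equivalence; and the routine but not entirely cosmetic manipulations with the additive structure of $\bD$ (distributivity of composition over sums, $f\circ\id_B\simeq f$, etc.).
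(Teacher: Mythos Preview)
Your proof is correct and follows essentially the same approach as the paper. The paper's proof consists of the single line $(f-\phi)^{-1}\simeq f^{-1}+f^{-1}\circ\phi\circ f^{-1}$; unwinding this with your substitution $\psi=f^{-1}\circ\phi$ gives exactly $(\id_{B}+\psi)\circ f^{-1}$, and both arguments rest on the same key input \cref{weokrgpwregrefrfw9} that $\psi^{2}\simeq 0$. Your invocation of summability via \cref{iujgofffsdafdfaf} is harmless but unnecessary: once $\psi^{2}\simeq 0$, the finite sum $\id_{B}+\psi$ is an inverse by the direct computation you give immediately afterwards.
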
\begin{proof}We have
$$(f-\phi)^{-1}\simeq f^{-1}+f^{-1}\circ \phi\circ f^{-1}\ .$$
 \end{proof}

Let $F:\bC\to \bD$ be a functor between  countably cocomplete stable $\infty$-categories,
and consider a strong phantom map $\phi:B\to C$.

\begin{prop}\label{wiotgjwogirwefwefw}
If   $F$ preserves  countable sums and $B$ admits a weakly compact approximation,  
then $F(\phi)$ is a strong phantom map.
\end{prop}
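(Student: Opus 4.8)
\end{prop}

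The plan is to reduce the claim, via \cref{regjiewerggre9}, to showing that $F(\phi)$ is a \emph{summable} phantom map, i.e.\ that the constant sequence $(F(\phi))_{n\in\nat}$ admits a witness $F(B)\to\bigoplus_{\nat}F(C)$ all of whose components equal $F(\phi)$.

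First I would feed the hypotheses --- that $\phi$ is a strong phantom map out of $B$ and that $B$ admits a weakly compact approximation --- into \cref{iujgofffsdafdfaf}, which produces a morphism $\hat\phi\colon B\to\bigoplus_{\nat}C$ witnessing the summability of the constant sequence $(\phi)_{n\in\nat}$; thus $\pr_{n}\circ\hat\phi\simeq\phi$ for every $n$ in $\nat$, where $\pr_{n}\colon\bigoplus_{\nat}C\to C$ denotes the $n$-th projection (the identity on the $n$-th summand and $0$ on the others).

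Next I would apply $F$ to $\hat\phi$ and transport along the comparison equivalence $e\colon\bigoplus_{\nat}F(C)\xrightarrow{\simeq}F(\bigoplus_{\nat}C)$, which is an equivalence because $F$ preserves countable sums. The point to check is that the resulting morphism $e^{-1}\circ F(\hat\phi)\colon F(B)\to\bigoplus_{\nat}F(C)$ has all components equal to $F(\phi)$. This reduces to identifying $F(\pr_{n})\circ e$ with the $n$-th projection $\pr_{n}^{F}$ of $\bigoplus_{\nat}F(C)$: evaluating on the summand inclusions $\iota_{m}$ gives $F(\pr_{n})\circ e\circ\iota_{m}\simeq F(\pr_{n}\circ\iota_{m})\simeq\delta_{mn}\,\id_{F(C)}$ --- here one uses that $F$ preserves the empty sum, hence the zero object, hence zero morphisms --- and since a morphism out of the coproduct $\bigoplus_{\nat}F(C)$ is detected on its summands, one concludes $F(\pr_{n})\circ e\simeq\pr_{n}^{F}$. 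Consequently $\pr_{n}^{F}\circ e^{-1}\circ F(\hat\phi)\simeq F(\pr_{n})\circ F(\hat\phi)\simeq F(\pr_{n}\circ\hat\phi)\simeq F(\phi)$, so $e^{-1}\circ F(\hat\phi)$ witnesses the summability of $(F(\phi))_{n\in\nat}$. Invoking \cref{regjiewerggre9} then upgrades ``summable'' to ``strong phantom'', which finishes the proof; note in particular that this route never requires $F(B)$ itself to admit a weakly compact approximation.

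The argument is essentially formal; the only point requiring a little care is the middle step, where one must check that $F$ --- assumed only to preserve countable sums, not to be exact --- nevertheless commutes with the projections $\pr_{n}$, and this is exactly the place where preservation of the empty sum (equivalently, of the zero object) is used.
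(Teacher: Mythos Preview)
Your proof is correct and follows exactly the same strategy as the paper: use \cref{iujgofffsdafdfaf} to get a summability witness $\hat\phi$ for $\phi$, transport $F(\hat\phi)$ along the comparison equivalence $\bigoplus_{\nat}F(C)\simeq F(\bigoplus_{\nat}C)$, and then invoke \cref{regjiewerggre9}. You are simply more explicit than the paper in verifying that the components of the transported map are indeed $F(\phi)$, and your observation that this step uses preservation of the empty sum (hence of zero morphisms) is a worthwhile clarification.
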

\begin{proof}
The phantom $\phi$ is summable by \cref{iujgofffsdafdfaf}.
Let $\hat \phi:B\to \bigoplus_{\nat }C$ be a witness of summability.
Then $$ F(B)\stackrel{F(\hat \phi)}{\to }
F( \bigoplus_{\nat }C)\stackrel{\simeq}{\leftarrow} \bigoplus_{\nat }F(C)$$
witnesses summability of $F(\phi)$.  By \cref{regjiewerggre9} we conclude that $F(\phi)$ is a strong phantom map.
\end{proof}

{We finally have the following characterization of phantom maps in  a compactly assembled  {stable} $\bD$.}\begin{prop}%\fben{Meinte, dass es direkt aus Efimovs resultaten folgen könnte. Nach kurzem durchscrollen von Efimovs Resultaten, hab ich nicht gesehen, wie man diese Aussage direket einsehen kann.} 
\label{vbeirjgowjgj}
	  A map $f: D \to E$ is a strong phantom
	   if and only if for all countable colimit preserving functors $ F: \bD \to \Sp $ the map {$\pi_0 (F(f))$ of abelian groups is {the zero map.}}
\end{prop}
\begin{proof}
	If $f$ is strong phantom and $F: \bD \to \Sp$ a countable colimit preserving functor, then by \Cref{wiotgjwogirwefwefw} also $F(f)$ is a strong phantom{. Since the sphere spectrum}    $\mathbb{S}$ is compact in $\Sp$ the map $\pi_{{0}}(F(f)) = \pi_{{0}}(\map( \mathbb{S}, F(f)))$ is $0$.
	
	{For the converse} we consider a compact map $g: C \to D$ and a map $f: D \to E$ satisfying the vanishing condition. Since $g$ is compact, it admits a lift $\hat{g}$ along
	$$ \lvert - \rvert: \map_{\Ind^{\aleph_1}(\bD)}(y(C), S(D)) \to \map_{\bD}(C,D) \ .$$
	{Since} {the functor}  ${F(-):=}\map_{\Ind^{\aleph_1}}(y(C), S(-)): \bD \to \Sp$ preserves countable colimits we have {$\pi_{0}(F(f))(\hat g)=0$}.    This finally implies that $f{\circ} g \simeq \lvert S(f)\circ \hat{g} \rvert \simeq 0$ and therefore $f$ is a strong phantom.
\end{proof}

\section{$C^{*}$-algebras}\label{wrejgiowergerfwfwef}

\subsection{Semi-projectivity and shapes for  $G$-$C^{*}$-algebras}\label{erijgioepgwegerfwef}

   \newcommand{\alg}{\mathrm{alg}}
   
   In this section we recall  the theory of  of semi-projective maps between $C^{*}$-algebras from \cite{zbMATH03996430}. We thereby generalize the classical constructions  in a straightforward manner  from the
   non-equivariant to the equivariant case.  For  notational simplicity {we} will formulate the theory only  for separable $C^{*}$-algebras $G\nCalg_{\sepa}$, but   the material has a straightforward version in the general case with the clear exception of the existence of shape systems.  In order to do  homotopy theory in $G\nCalg_{\sepa}$ 
 we form
 the Dwyer-Kan localization $L_{h}:G\nCalg_{\sepa}\to G\nCalg_{\sepa,h}$   at homotopy equivalences. 
We interpret
 further results of \cite{zbMATH03996430} in this context. We will see that semi-projective maps have  properties that  are analogous to properties of compact maps considered in \cref{werogjwpegorefkmlerwgwergwerg}.  But the $\infty$-category 
 $G\nCalg_{\sepa,h}$
   does not fit precisely into the framework of  \cref{werogjwpegorefkmlerwgwergwerg} since  it it not known  to  admit all countable filtered colimits. Note that the colimits $Z:=\colim_{i\in I}Z_{i}$ below are always interpreted in $G\nCalg_{\sepa}$ and not in $G\nCalg_{\sepa,h}$. In order to resolve the lack of countable filtered colimits we will introduce the functor $\asGc:G\nCalg_{\sepa}\to\AsGc$ in
  \cref{irthjzgjdiogjhjdicnmjksdisjf}.

 Let $Z$ be in $G\nCalg_{\sepa}$ and $(I_{n})_{n\in \nat}$ be an increasing family of invariant ideals in $Z$.  The family $(I_{n})_{n\in \nat}$ is then called a presentation of the invariant direct limit ideal   $$I:=\overline{\bigcup_{n\in \nat}I_{n}}$$     
of $Z$.
The following definition generalizes  \cite[Def. 2.10]{zbMATH03996430} to the equivariant case. 
 
  \begin{ddd}\label{weorjgopwerfrefwref2}
 A morphism  $f:A\to B$   in $G\nCalg_{\sepa}$  is called semi-projective if for any
 $Z$ in $G\nCalg_{\sepa}$,  increasing family of invariant ideals $(I_{n})_{n\in \nat}$ with direct limit ideal $I$, and  the bold part of the diagram
 $$\xymatrix{A\ar@{..>}[r] \ar[d]^{f} &Z/I_{n}  \ar@{->}[d]  \\ B\ar[r] & Z/I } $$  there exists $n$ in $\nat$  and the dotted completion.
 \end{ddd}
 
 The semi-projective maps {form} a two-sided ideal in $G\nCalg_{\sepa}$.

    \begin{ddd}An algebra $A$ in $G\nCalg_{\sepa}$ is called  semi-projective if $\id_{A}$ is  semi-projective.  
    \end{ddd}

 \begin{constr}\label{egjweogrefwerfw}{\em 
 In this construction   we present the typical source of  semi-projective maps.
  Let $B$ be a $G$-$C^{*}$-algebra with norm $\|-\|_{B}$ and $(b_{k})_{k\in K}$ be a finite family of elements in $B$.
 Let furthermore $(p_{l})_{l\in L}$ be a finite family of non-commutative polynomials in variables
 $(x_{k,g})_{(k,g)\in K\times G}\cup (x_{k,g}^{*})_{(k,g)\in K\times G}$. We assume that this family contains the  {monomials} $x_{k,e}$ for all $k$ in $K$ in order to ensure that the set of relations used below is admissible in the sense of \cite[Def. 1.1]{zbMATH03996430}.
 We write $p_{l}(b)$ in $B$ for the evaluation of $p_{l}$ at  $x_{k,g}=gb_{k}$ for all $k$ in $K$ and $ g$ in $G$. 
 Let $(r_{l})_{l\in L}$ be a family of elements of $\R$ such that
 $\|p_{l}(b)\|_{B}<r_{l}$ for all $l$ in $L$. We can then define the universal $C^{*}$-algebra
 $A$ (in the sense of \cite[Def. 1.2]{zbMATH03996430}) generated by a family of elements $(b'_{k,g})_{(k,g)\in K\times G}$ subject to the relations $\|h^{*}p_{l}(b')\|\le r_{l}$ for all $h$ in $G$ and $l$ in $L$, where $h^{*}p(b')$ is the evaluation of $p$ at the family $(b'_{k,hg})_{(k,g)\in K\times G}$.
 The $C^{*}$-algebra $A$ has a canonical  $G$-action which extends the map $(h,b'_{k,g})\mapsto b'_{k,hg}$ on the generators.
 We furthermore have a  natural homomorphism $f:A\to B$ in $G\nCalg$ fixed by the conditions 
 $f(
 b'_{k,g})=gb_{k}$ for all $k$ in $K$ and $g$ in $G$.\hB } \end{constr}
 
 \begin{ddd}
 The ideal of  explicit semi-projective maps is
the ideal in $G\nCalg$  generated by maps 
  arising from the construction  \cref{egjweogrefwerfw}.  
 \end{ddd}

 \begin{prop}[{\cite[Prop. 2.35]{zbMATH03996430}}]\label{tokgprhhehtheth}
 An explicit semi-projective map  is  semi-projective.
 \end{prop}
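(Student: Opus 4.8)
The plan is to reduce to a single generator of the ideal and then solve the lifting problem of \cref{weorjgopwerfrefwref2} directly, using the universal property of the algebra $A$ from \cref{egjweogrefwerfw} together with one analytic fact about quotient norms. Since the semi-projective morphisms form a two-sided ideal and the explicit semi-projective morphisms are by definition the ideal generated by the morphisms $f\colon A\to B$ arising from \cref{egjweogrefwerfw}, it suffices to prove that each such $f$ is semi-projective. First I would spell out the relevant universal property in the equivariant setting: a $G$-equivariant homomorphism $A\to D$ is the same datum as a tuple $(d_{k})_{k\in K}$ in $D$ with $\|p_{l}(d)\|\le r_{l}$ for all $l\in L$, where $p_{l}(d)$ denotes the evaluation of $p_{l}$ at $x_{k,g}=g\cdot d_{k}$; here the reduction of the relations indexed by $(h,l)\in G\times L$ to those indexed by $l$ only uses that the relation set is $G$-invariant and that the $G$-action on any $C^{*}$-algebra is by isometric $*$-automorphisms, together with $d_{k,g}=g\cdot d_{k}$. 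Under this dictionary $f$ corresponds to $(b_{k})_{k}$, and \cref{egjweogrefwerfw} provides the \emph{strict} inequalities $\|p_{l}(b)\|_{B}<r_{l}$.

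Next I would set up the lifting problem. Let $Z$ be in $G\nCalg_{\sepa}$, let $(I_{n})_{n\in\nat}$ be an increasing family of invariant ideals with direct limit ideal $I=\overline{\bigcup_{n}I_{n}}$, and let $\phi\colon B\to Z/I$ be given; I must produce some $n$ and a $G$-equivariant homomorphism $\psi\colon A\to Z/I_{n}$ whose composition with $Z/I_{n}\to Z/I$ equals $\phi\circ f$. Set $\bar z_{k}:=\phi(b_{k})\in Z/I$; then $\phi\circ f$ corresponds under the dictionary to the tuple $(\bar z_{k})_{k}$, and since $\phi$ is norm non-increasing we have $\|p_{l}(\bar z)\|\le\|p_{l}(b)\|_{B}<r_{l}$. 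Lift each $\bar z_{k}$ to an element $w_{k}\in Z$ along the surjection $Z\to Z/I$. The key analytic input is the standard fact that for $I=\overline{\bigcup_{n}I_{n}}$ and any $z\in Z$ the quotient norms $\|z+I_{n}\|$ decrease to $\|z+I\|$. Applying this to $z=p_{l}(w)$ (the polynomial evaluated in $Z$) and using that the quotient maps $Z\to Z/I_{n}$ and $Z\to Z/I$ are $G$-equivariant $*$-homomorphisms, hence commute with formation of $p_{l}(-)$, I get $\|p_{l}(w+I_{n})\|\to\|p_{l}(\bar z)\|<r_{l}$ as $n\to\infty$; since $L$ is finite I may fix one $n$ with $\|p_{l}(w+I_{n})\|\le r_{l}$ for all $l\in L$ at once.

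Finally I would conclude: for this $n$ the tuple $(w_{k}+I_{n})_{k}$ satisfies the defining relations of $A$, so by the universal property it defines a $G$-equivariant homomorphism $\psi\colon A\to Z/I_{n}$ with $\psi(b'_{k,g})=g(w_{k}+I_{n})$. Post-composing with $Z/I_{n}\to Z/I$ sends $b'_{k,g}$ to $g\bar z_{k}=\phi(g b_{k})=\phi(f(b'_{k,g}))$, and since the elements $b'_{k,g}$ generate $A$ this composite agrees with $\phi\circ f$; thus $\psi$ is the required lift, $f$ is semi-projective, and the reduction at the start finishes the proof. The one point that needs care — the ``main obstacle'', such as it is — is exactly the interplay between the strict inequalities $\|p_{l}(b)\|_{B}<r_{l}$ built into \cref{egjweogrefwerfw} and the convergence $\|z+I_{n}\|\downarrow\|z+I\|$: strictness is precisely what lets the decreasing limit be pushed below the threshold $r_{l}$ at a finite stage, which is the structural reason the relations in \cref{egjweogrefwerfw} are phrased as open conditions. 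Everything else is bookkeeping with universal $C^{*}$-algebras and the permutation $G$-action.
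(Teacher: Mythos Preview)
Your proof is correct and follows essentially the same approach as the paper: reduce to the generating maps of \cref{egjweogrefwerfw}, lift the images $\phi(b_k)$ to $Z$, use the convergence of quotient norms $\|z+I_n\|\downarrow\|z+I\|$ together with the strict inequalities $\|p_l(b)\|_B<r_l$ and finiteness of $L$ to find a common $n$, and then invoke the universal property of $A$. Your write-up is slightly more explicit about the equivariance bookkeeping, but the argument is the same.
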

 \begin{proof}
 Since the semi-projective maps form a two-sided ideal it suffices to show that the maps $f:A\to B$ constructed in \cref{egjweogrefwerfw} are semi-projective.
  In this argument we will add an index in order to indicate to which $C^{*}$-algebra a norm belongs.
 Let $(I_{n})_{n\in \nat}$ be an increasing family of  invariant ideals  of a $G$-$C^{*}$-algebra $Z$ with direct limit ideal $I$. We let $\iota :Z\to Z/I$ and $\iota_{j}:Z \to Z/I_{j}$ for $j$ in $\nat$ denote the projections.   We consider a map  $h:B\to Z/I
$ in $G\nCalg$. We can choose elements $\tilde b_{k}$ in $Z $ for all $k$ in $K$ such that
$h(b_{k})=\iota(\tilde b_{k})$.
Since $$\lim_{j\in \nat}\|  \iota_{j}(p_{l}(\tilde b))\|_{Z/I_{j}} =\|\iota(p_{l}(\tilde b))\|_{Z/I}= \|h(p_{l}(b))\|_{Z/I}\le  \| p_{l}(b)\|_{B}< r_{l}$$
(see e.g. \cite[Prop. 7.15]{KKG} for an argument for the first equality)
there exists $n$ in $\nat$  
such that $\|\iota_{n}(p_{l}(\tilde b))\|_{Z/I_{n}}\le r_{l}$ for all $l$ in $L$.
Then also $\|\iota_{n}(p_{l}(g\tilde b))\|_{Z/I_{n}}\le r_{l}$ for all $g$ in $G$, where $g\tilde b:=(g\tilde b_{k})_{k\in K}$.
By the universal property of $A$ there exists an equivariant  
 map $\tilde f:A\to Z/I_{n}$  fixed by $\tilde f(b'_{k,g})=  g\iota_{n}(\tilde b_{k})$.
Hence we get the desired square $$\xymatrix{A\ar@{..>}[r]^{\tilde f} \ar[d]^{f} &Z/I_{n} \ar@{->}[d] \\ B\ar[r]^{{h}} & Z/I } $$  in $G\nCalg_{\sepa}$.
 \end{proof}

 \begin{rem}\label{okprthrtertegtrgrg}
 In this remark we introduce the localization of the category of separable $G$-$C^{*}$-algebras at the homotopy equivalences.
  The category $G\nCalg $ of $G$-$C^{*}$-algebras has a topological enrichment by compactly generated topological spaces such that for any compact topological space $W$ and $A,B$ in $G\nCalg $ the topolgical  hom-space $\uHom_{G\nCalg }(A,B)$ is determined by 
 \begin{equation}\label{cdcadsnckladscasdcadc}
\Hom_{\Top} (W,\uHom_{G\nCalg }(A,B))\cong \Hom_{G\nCalg }(A,C(W, B))\ .
\end{equation} 
 Using the topological enrichment we can talk about paths of morphisms  or  homotopies between homomorphisms. In particular we get a notion of homotopy equivalences.
 
 We {now restrict to separable $G$-$C^{*}$-algebras and} let  \begin{equation}\label{cdcadsnckladscasdcadc13}L_{h}:G\nCalg_{\sepa}\to G\nCalg_{h,\sepa}\end{equation} 
 denote the  Dwyer-Kan localization of $G\nCalg_{\sepa}$ at the set of homotopy equivalences.   The $\infty$-category $G\nCalg_{h,\sepa}$ is the homotopy coherent nerve of   the Kan-enriched simplicial category 
 obtained by replacing the topological {hom}-spaces $\uHom_{G\nCalg_{\sepa}}(A,B)$ by their
 singular complexes. By definition of  
  a Dwyer-Kan localization for any $\infty$-category ${\bC}$ we have an equivalence
\begin{equation}\label{fqwoeijoqwijdqewdqwed} L_{h}^{*}:\Fun(G\nCalg_{\sepa,h},\bC)\stackrel{\simeq}{\to} \Fun^{h}(G\nCalg_{\sepa},\bC)\ ,
\end{equation}	
where the superscript $h$ indicates homotopy invariant functors (see \cref{fjioqwefdqwedewdqwed}.\ref{fiqewpoqdqded}).
 
 Since for every $A$ in $G\nCalg_{\sepa}$ the  tensor product $A\otimes - $ preserves homotopy equivalences the Dwyer-Kan localization \eqref{cdcadsnckladscasdcadc13} is symmetric monoidal.  For any symmetric monoidal $\infty$-category pull-back along the symmetric monoidal refinement of $L_{h}$ induces  an equivalence 
 \begin{equation}\label{fqwoeijoqwijdqewdqwe332323d} L_{h}^{*}:\Fun_{\otimes/\lax}(G\nCalg_{\sepa,h},\bC)\stackrel{\simeq}{\to} \Fun_{\otimes/\lax}^{h}(G\nCalg_{\sepa},\bC)\ ,
\end{equation}	
where the subscript $\otimes /\lax$ indicates symmetric monoidal or lax symmetric monoidal functors.
 
The $\infty$-category $G\nCalg_{\sepa,h}$ is pointed and left-exact,  and the tensor product on this category is bi-left exact. 
A morphism $f:A\to B$ in $G\nCalg_{\sepa}$ is called a Schochet fibration  \cite{zicki}
if the induced map  $$\uHom_{G\nCalg_{\sepa}}(C,A)\to \uHom_{G\nCalg_{\sepa}}(C,B)$$ of topological $\Hom$-spaces   is a Serre fibration
of topological spaces for every $C$ in $G\nCalg_{\sepa}$. A cartesian square $$\xymatrix{A\ar[r]\ar[d]&B\ar[d]\\C\ar[r]&D}$$
in $G\nCalg_{\sepa}$ is called Schochet fibrant if $B\to D$ (and hence also $A\to C$) are Schochet fibrations. 
A functor is called Schochet exact if it sends Schochet fibrant cartesian squares to cartesian squares.
The functor $L_{h}$ is  Schochet exact
and pull-back along $L_{h}$ induces for every left-exact $\infty$-category $\bC$ an equivalence
 \begin{equation}\label{ghiwoergerfefrefw}L_{h}^{*}:\Fun^{\lex}(G\nCalg_{\sepa,h},\bC)\stackrel{\simeq}{\to} 
 \Fun^{h,\mathrm{Sch}}(G\nCalg_{\sepa},\bC) \ ,
\end{equation} 
where the {superscripts $\mathrm{Sch}$ and $\lex$ indicate Schochet exact and finite limit-preserving} functors, {respectivley}. For the arguments see
\cite[Sec. 3]{keb}.  \hB
 \end{rem}

 Let $f:A\to B$ be a  %semi-projective 
 map in $G\nCalg_{\sepa}$, and let 
 $(Z_{n})_{n\in \nat} $ be a   system  in $G\nCalg_{\sepa}$ with $Z:=\colim_{n\in \nat}Z_{i}$.
 Recall the shifted lifting property  (slp) from  \cref{kotgpergoeprtkhgerth}.

     \begin{prop} [{\cite[Thm. 3.1 \& 3.3]{zbMATH03996430}}]\label{wrtkohpgertwfrewfwrf}
   {If $f$ is semi-projective, then  the} square of anima
$$     \xymatrix{\colim_{n\in \nat}\Map_{G\nCalg_{\sepa,h}}(A,Z_{n})\ar[r]  &\Map_{G\nCalg_{\sepa,h}}(A,Z)  \\ \colim_{n\in \nat}\Map_{G\nCalg_{\sepa,h}}(B,Z_{n}) \ar[r] \ar[u]& \Map_{G\nCalg_{\sepa,h}}(B,Z)\ar[u]} $$
has the slp for $\emptyset\to *$ and $S^{0}\to *$.
\end{prop}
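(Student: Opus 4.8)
The plan is to reinterpret the homotopy‑lifting theorems of \cite{zbMATH03996430} — in their relative and (routinely) equivariant form — as slp statements about the square. Throughout I would use the model $\Map_{G\nCalg_{\sepa,h}}(C,D)\simeq\Sing\,\uHom_{G\nCalg_{\sepa}}(C,D)$ from \cref{okprthrtertegtrgrg}, so that a point of a mapping anima is a homomorphism of $G$-$C^{*}$-algebras, a path is a homotopy of homomorphisms, and $\pi_{*}$ and the formation of homotopy fibres commute with the filtered colimits $\colim_{n\in\nat}$. Writing $X_{0}=\colim_{n}\Map(A,Z_{n})$, $Y_{0}=\Map(A,Z)$, $X_{1}=\colim_{n}\Map(B,Z_{n})$, $Y_{1}=\Map(B,Z)$, with structure maps $\psi_{n}\colon Z_{n}\to Z$ and $\phi_{nm}\colon Z_{n}\to Z_{m}$, the first step is to unwind \cref{kotgpergoeprtkhgerth}. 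For a fixed homomorphism $g\colon B\to Z$, the slp for $\emptyset\to *$ against the datum $g$ asks exactly that the homotopy fibre $\mathrm{Fib}_{f^{*}g}(X_{0}\to Y_{0})$ be non‑empty, and — using the usual description of paths in a homotopy fibre, which absorbs the $2$-cell/filler compatibility demanded in \cref{kotgpergoeprtkhgerth} — the slp for $S^{0}\to *$ against $g$ asks exactly that the map
\[
\pi_{0}\,\mathrm{Fib}_{g}(X_{1}\to Y_{1})\ \longrightarrow\ \pi_{0}\,\mathrm{Fib}_{f^{*}g}(X_{0}\to Y_{0})
\]
induced by the square have image of cardinality at most $1$. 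Under the model above a point of $\mathrm{Fib}_{f^{*}g}(X_{0}\to Y_{0})$ is a germ of a pair $(\alpha\colon A\to Z_{n},\ \text{a homotopy }\psi_{n}\alpha\simeq g\circ f)$, a point of $\mathrm{Fib}_{g}(X_{1}\to Y_{1})$ is a germ of a pair $(\beta\colon B\to Z_{n},\ \text{a homotopy }\tau\colon\psi_{n}\beta\simeq g)$, and the square sends the latter to $(\beta f,\tau f)$.

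Next I would feed in the two cited results. An inductive limit $Z=\colim_{n}Z_{n}$ is an instance of the quotient situation of \cref{weorjgopwerfrefwref2} (the reduction is carried out in \cite{zbMATH03996430}, and its equivariant analogue is routine), so semi‑projectivity of $f$ provides: \emph{(existence, \cite[Thm.~3.1]{zbMATH03996430})} for every $g\colon B\to Z$ there are $n$ and a homomorphism $\alpha\colon A\to Z_{n}$ with $\psi_{n}\alpha\simeq g\circ f$; this is exactly non‑emptiness of $\mathrm{Fib}_{f^{*}g}(X_{0}\to Y_{0})$, hence the slp for $\emptyset\to *$ (the analogue of weak compactness, compare \cref{qrjiofoqdedeqw} and \cref{jowtogpwgerferfwefewfref}). \emph{(Uniqueness, \cite[Thm.~3.3]{zbMATH03996430})} for $g\colon B\to Z$ and any two homomorphisms $\beta_{0},\beta_{1}\colon B\to Z_{n}$ together with homotopies $\tau_{i}\colon\psi_{n}\beta_{i}\simeq g$, there are $m\ge n$ and a homotopy $\mu\colon\phi_{nm}(\beta_{0}f)\simeq\phi_{nm}(\beta_{1}f)$ in $\Map(A,Z_{m})$ whose image under $\psi_{m}$ is compatible, via a $2$-homotopy, with $\tau_{0}f$ and $\tau_{1}f$. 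Such a pair $(m,\mu)$ is precisely a path in $\mathrm{Fib}_{f^{*}g}(X_{0}\to Y_{0})$ joining the images of $(\beta_{0},\tau_{0})$ and $(\beta_{1},\tau_{1})$; since these two points represent arbitrary elements of $\pi_{0}\,\mathrm{Fib}_{g}(X_{1}\to Y_{1})$, the displayed map is constant, which is the slp for $S^{0}\to *$.

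I expect the main obstacle to be the bookkeeping in the first step: matching the coherent lifting datum required by \cref{kotgpergoeprtkhgerth} — in particular the condition that the composite of the two triangle‑fillers agree with the square‑filler — with the standard description of paths in a homotopy fibre, and, dually, checking that the relative equivariant forms of \cite[Thm.~3.1 and 3.3]{zbMATH03996430} really produce the homotopies with the stated compatibilities. The proofs of these two theorems are a lifting argument for the path algebra $C([0,1],-)$ and a double‑mapping‑cylinder argument respectively and go through in the equivariant setting essentially verbatim; the delicate point is that the uniqueness statement must be used in the form that keeps track of the homotopies $\tau_{i}$, so that the resulting $\mu$ lands in the correct path component of the homotopy fibre and not merely in $\pi_{0}\Map(A,Z_{m})$.
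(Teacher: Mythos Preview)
Your approach is essentially the paper's: both reduce the two slp conditions to the equivariant analogues of Blackadar's Theorems~3.1 and~3.3, observing that the constructions in those proofs are naturally equivariant. The paper is terser---for $\emptyset\to *$ it states the homotopy-lifting problem directly (given $g:B\to Z$, produce $n$ and $A\to Z_n$ with a homotopy-commutative square), and for $S^0\to *$ it asks only that two lifts $\tilde h,\tilde h':B\to Z_n$ with $\alpha_n\tilde h\simeq\alpha_n\tilde h'$ satisfy $\alpha_{m,n}\tilde h f\simeq\alpha_{m,n}\tilde h' f$ for some $m$; in each case it then defers to Blackadar verbatim.

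Your homotopy-fiber reformulation is correct and more careful than the paper's. You rightly observe that the slp for $S^0\to *$ requires the two images to lie in the same component of $\mathrm{Fib}_{f^*g}(X_0\to Y_0)$, not merely of $X_0$---the lifted homotopy must be compatible with the given $\tau_i$. The paper's phrasing elides this coherence (it never mentions $g$ or the $\tau_i$ separately), although Blackadar's actual argument, which lifts the specific homotopy via semi-projectivity applied to a cylinder, does furnish it. So the ``delicate point'' you flag is genuine, and your treatment is the more precise of the two.
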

   \begin{proof}
   We first show the slp for $*\to 0$.  We must show that 
 for any map 
 $B\to Z$  in   $G\nCalg_{\sepa}$ there exists $j$ in $\nat$ and a homotopy commutative diagram $$\xymatrix{A\ar@{..>}[r] \ar[d]^{f} &Z_{j} \ar@{->}[d] \\ B\ar[r] &\colim_{n\in \nat}Z_{n} } $$ 
 in $G\nCalg_{\sepa}$. We can copy the proof of \cite[Thm. 3.1]{zbMATH03996430}. We simply  observe that all the algebras constructed in this proof  carry canonical $G$-actions, and that the morphisms are equivariant.

We now show the slp for   $S^{0}\to *$. 
We let $\alpha_{n}:Z_{n}\to Z$  and $\alpha_{m,n}:Z_{n}\to Z_{m}$ for $m\ge n$  denote the structure maps of the system.
We consider 
two maps  $$\tilde h,\tilde h:B\to Z_{n}$$
 such that $\alpha_{n}\circ \tilde h:B\to Z$ is homotopic to $ \alpha_{n}\circ \tilde h':B\to Z$.
  We must show that there exists $m$ in $\nat$ with $m\ge n$
 such that $A\xrightarrow{f}B\xrightarrow{\tilde h} Z_{n}\xrightarrow{\alpha_{m,n}} Z_{m}$ and
 $A\xrightarrow{f}B\xrightarrow{\tilde h'} Z_{n}\xrightarrow{\alpha_{m,n}}  Z_{m}$ are homotopic.
 To this end we can again copy the argument for 
   \cite[Thm. 3.3]{zbMATH03996430} word by word.
\end{proof}

For explicit semi-projective maps we can show a much stronger lifting property.
Let $(Z_{i})_{i\in I}$ be a countable filtered system in $G\nCalg_{\sepa}$  with $Z:=\colim_{i\in I}Z_{i}$.

\begin{prop}\label{xjnbcuiohwedfiohgeriosdfylk}  If  $f:A\to B$ is an explicit semi-projective homomorphism of $G$-$C^{*}$-algebras, then the square of anima $$\xymatrix{\colim_{i\in I}\Map_{G\nCalg_{\sepa, h}}(A,Z_{i})\ar[r] &\Map_{G\nCalg_{ \sepa,h}}(A,Z)  \\ \colim_{i\in I}\Map_{G\nCalg_{\sepa, h}}(B,Z_{i})  \ar[r] \ar[u]_{f^{*}}&\Map_{G\nCalg_{ \sepa,h}}(B,Z)\ar[u]_{f^{*}}  } \ .$$  has the slp.\end{prop}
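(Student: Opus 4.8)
The plan is to unwind the slp of \cref{kotgpergoeprtkhgerth} into a family of lifting problems indexed by the generating cofibrations, to translate each of these into a homotopy-lifting problem for homomorphisms of $G$-$C^{*}$-algebras along the tower $(Z_i)_{i\in I}$ by means of the topological enrichment, and then to solve it using the equivariant form of the homotopy-lifting theorems of Blackadar. First I would reduce the class of test maps $K\to L$. For a fixed cospan of anima — here the map $f^{*}$ from $(\colim_{i}\Map_{G\nCalg_{\sepa,h}}(B,Z_i)\to\Map_{G\nCalg_{\sepa,h}}(B,Z))$ to $(\colim_{i}\Map_{G\nCalg_{\sepa,h}}(A,Z_i)\to\Map_{G\nCalg_{\sepa,h}}(A,Z))$ in the arrow category — the collection of maps $K\to L$ of finite anima for which the associated outer square is liftable is closed under coproducts, pushouts, retracts and finite composition in the arrow category; this is the usual formal stability of lifting properties, the coherence built into ``liftable'' being automatic once one models everything in simplicial sets and replaces the two vertical maps by Kan fibrations. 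Since every map of finite anima is, up to equivalence, a finite iterated pushout of boundary inclusions $\partial\Delta^{n}\hookrightarrow\Delta^{n}$ ($n\geq 0$), whose realizations are the sphere inclusions $S^{n-1}\hookrightarrow D^{n}$ (with $S^{-1}=\emptyset$), it suffices to verify liftability against each of these. Moreover, the homomorphisms $f$ for which the square of the proposition has the slp form a two-sided ideal of $G\nCalg_{\sepa}$: pre- or post-composing a liftability datum with an arbitrary homomorphism produces one for the composite, exactly as in the proof of \cref{aerpogjkwpergwefffwrfrefw}. As the explicit semi-projective maps are by definition the two-sided ideal generated by the maps of \cref{egjweogrefwerfw}, we may thus assume that $f\colon A\to B$ is one of those universal maps.

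Next I would translate. By \cref{okprthrtertegtrgrg} the mapping anima in $G\nCalg_{\sepa,h}$ are the singular complexes of the topological hom-spaces of $G\nCalg_{\sepa}$, and finite anima are compact objects of $\Spc$, so a map from $S^{n-1}$ or $D^{n}$ into a filtered colimit $\colim_{i}\Map(-,Z_i)$ factors through a single stage $\Map(-,Z_i)$; such a map in turn corresponds, via the defining adjunction \eqref{cdcadsnckladscasdcadc} of the enrichment (applied with the compact space $|\partial\Delta^{n}|$ resp.\ $|\Delta^{n}|$), to a homomorphism into $C(S^{n-1},Z_i)$ resp.\ $C(D^{n},Z_i)$. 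Unwinding, liftability of the outer square against $S^{n-1}\hookrightarrow D^{n}$ becomes: given $i\in I$, homomorphisms $\tilde h\colon B\to C(S^{n-1},Z_i)$ and $g\colon B\to C(D^{n},Z)$ together with a homotopy exhibiting the restriction of $g$ to $C(S^{n-1},Z)$ as homotopic to $\tilde h$ pushed to $Z$, produce some $j\geq i$ and a homomorphism $\gamma\colon A\to C(D^{n},Z_j)$ together with compatible homotopies exhibiting the restriction of $\gamma$ to $C(S^{n-1},Z_j)$ as homotopic to $f^{*}\tilde h$ pushed to $Z_j$, and exhibiting $\gamma$ pushed to $Z$ as homotopic to $g\circ f$. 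In other words, semi-projectivity of $f$ has to be promoted to a statement about lifting, through the tower, a homomorphism with $D^{n}$-coefficients relative to a prescribed value on the boundary $S^{n-1}$.

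Finally I would solve this problem for the universal maps $f$ of \cref{egjweogrefwerfw}. The cases $n=0$ and $n=1$ are precisely \cref{wrtkohpgertwfrewfwrf}: the slp for $\emptyset\to *$ handles $n=0$, and the slp for $S^{0}\to *$ together with the path-lifting statement handles $n=1$. For general $n$ I would run the proofs of \cite[Thm.~3.1 \& 3.3]{zbMATH03996430} with the tower $(Z_i)_i$ replaced by $(C(D^{n},Z_i))_i$ and with $\tilde h$ imposed as a boundary constraint: since $A$ is presented by generators subject to the closed norm relations $\|h^{*}p_{l}(b')\|\le r_{l}$, one lifts the images of these generators under $g\circ f$ (matching $f^{*}\tilde h$ on $S^{n-1}$) to elements of $C(D^{n},Z_j)$, and the defining inequalities, being closed conditions that persist under the structure maps of the tower, continue to hold at a sufficiently large stage $Z_j$ — which is exactly Blackadar's computation. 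Equivariance is automatic, since every auxiliary algebra ($C(D^{n},Z_i)$, mapping cylinders and cones, the universal algebra $A$ itself) carries a canonical $G$-action and all maps in sight are equivariant, just as in the proof of \cref{wrtkohpgertwfrewfwrf}. Reassembling the liftings over all $\partial\Delta^{n}\hookrightarrow\Delta^{n}$ by the closure properties from the first step yields the slp.

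The step I expect to be the main obstacle is the passage carried out in the second and third paragraphs: making precise the translation from the homotopy-coherent liftability of a square of anima to a \emph{strict} $C^{*}$-algebraic lifting problem — in particular tracking the homotopies and their compatibilities through the adjunction \eqref{cdcadsnckladscasdcadc} — and checking that the arguments of \cite{zbMATH03996430}, written there for lifting a single homomorphism and a single homotopy, genuinely extend to the $D^{n}$-parametrized setting with a prescribed boundary value, uniformly in $n$. Once the cases $n=0,1$ of \cref{wrtkohpgertwfrewfwrf} are in place, this should be a matter of bookkeeping rather than of new $C^{*}$-algebraic input.
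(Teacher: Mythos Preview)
Your overall strategy — reduce to the generating maps of \cref{egjweogrefwerfw} via the two-sided ideal property, pass through the topological enrichment \eqref{cdcadsnckladscasdcadc}, and solve a relative lifting problem along the tower $(Z_i)_i$ — is the same as the paper's. The paper does not reduce to the generating cofibrations $S^{n-1}\hookrightarrow D^{n}$; it works directly with an arbitrary inclusion $g:X\hookrightarrow Y$ of finite simplicial complexes, models the slp datum as a strict diagram \eqref{dascasdcadscadscacdcdscrrrrr} in $\Top$, and then rewrites it as the $C^{*}$-algebraic lifting problem \eqref{vfdvfvrwgwrewgrg}: find $k\ge j$ and $A\to C(Y)\otimes Z_{k}$ whose restriction to $C(X)\otimes Z_{k}$ equals the already-given lift $l$ and whose push-forward to $C(Y)\otimes Z$ is homotopic to $h\circ f$ by a homotopy $\beta$ with $r_{Z,*}\beta$ constant (this last condition is exactly what guarantees the two triangle fillers compose to the original square filler).

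The gap in your proposal is the claim that this relative lifting is ``a matter of bookkeeping rather than of new $C^{*}$-algebraic input.'' It is not. Blackadar's arguments produce, for each generator $b$, an approximate lift $\phi'\in C(Y)\otimes Z_{k}$ of $h(b)$, but they give no control whatsoever over the restriction $r_{Z_{k}}(\phi')\in C(X)\otimes Z_{k}$; matching it to the prescribed $\alpha_{X,k,j}(l(b))$ is precisely the new difficulty. The paper's solution is to invoke the Choi--Effros theorem to obtain a completely positive contractive split $s:C(X)\to C(Y)$ of the restriction, and then to correct the naive lift via
\[
\phi:=\phi'-s_{Z_{k}}(r_{Z_{k}}(\phi'))+s_{Z_{k}}(\alpha_{X,k,j}(l(b)))\ .
\]
This forces $r_{Z_{k}}(\phi)=\alpha_{X,k,j}(l(b))$ exactly, and contractivity of $s$ keeps $\phi$ within $\epsilon$ of $\phi'$ (after possibly enlarging $k$), so the norm relations defining $A$ are still satisfied and the universal property produces the dotted homomorphism. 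Without such a splitting device there is no mechanism to adjust boundary values while preserving the norm estimates; your cell-by-cell reduction does not circumvent this, it only specialises it to $X=S^{n-1}$, $Y=D^{n}$, where one still needs a contractive linear section of $C(D^{n})\to C(S^{n-1})$ to run the same correction.
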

\begin{proof}
It is easy to see that  the class of maps $f:A\to B$ for which the statement holds true is a two-sided ideal in $G\nCalg_{\sepa}$. It therefore suffices to consider the case that $f:A\to B$ arises from \cref{egjweogrefwerfw}.
We consider a map  $K\to L$  of finite anima and the bold part of 
$$\xymatrix{\colim_{i\in I}\Map_{G\nCalg_{\sepa,h}}(A,Z_{i})\ar[r] &\Map_{G\nCalg_{\sepa,h}}(A,Z)  \\\colim_{i\in I}\Map_{G\nCalg_{\sepa,h}}(B,Z_{i})  \ar[r] \ar[u]_{f^{*}}&\Map_{G\nCalg_{\sepa,h}}(B,Z)\ar[u]_{f^{*}}\\K\ar[u]\ar[r]&L\ar[u] \ar@{-->}[uul] } \ .$$
We must find a lift of the outer square indicated  by the dashed arrow.
In view of the description of the mapping spaces in $G\nCalg_{h}$ given in \cref{okprthrtertegtrgrg}
this problem can be modelled by a   diagram in $\Top$ 
\begin{equation}\label{dascasdcadscadscacdcdscrrrrr}\xymatrix{\colim_{i\in I} {\uHom}_{G\nCalg}  (A,Z_{i})\ar[r] & {\uHom}_{G\nCalg} (A,Z)  \\\colim_{i\in I}{\uHom}_{G\nCalg} (B,Z_{i})  \ar[r] \ar[u]_{f^{*}}& {\uHom}_{G\nCalg}  (B,Z)\ar[u]_{f^{*}}\\X\ar[u]^{(1)}\ar[r]^{g}&Y\ar[u]^{h} \ar@{-->}[uul] }
\end{equation}
upon applying the localization $\Top\to \Spc$, where $X \to Y$ is an inclusion of  finite simplicial complexes. The bold part is strictly commutative, and we are looking for a dashed arrow such that
the lower left triangle commutes and the upper right triangle commutes up to a homotopy $\beta $ with the special property that $g^{*}\beta$ is constant.  This implies after going to $\Spc$ that
the  fillers of the two triangles compose to the filler of the square.

More generally, let  $g:X\to Y$ be an inclusion of second countable compact Hausdorff spaces.  
Using \eqref{cdcadsnckladscasdcadc} we can rewrite the lifting problem  \eqref{dascasdcadscadscacdcdscrrrrr}  in  the form  of the commutative diagram
\begin{equation}\label{vfdvfvrwgwrewgrg}\xymatrix{ A\ar[dd]^{f}\ar@{..>}[r]&\ar[dd]^(0.3){\alpha_{Y,k}}C(Y)\otimes Z_{k}\ar[dr]^(0.4){r_{Z_{k}}}  & C(X)\otimes Z_{j}\ar[d]^{\alpha_{X,k,j}}\\ &&C(X)\otimes Z_{k}\ar[d]^{\alpha_{X,k}}\\B\ar[uurr]^(0.3){l}\ar[r]^-{h}&C(Y)\otimes Z\ar[r]^{r_{Z}}&C(X)\otimes Z&}\ .
\end{equation}  
  Since $X$ is compact the map  marked by $(1)$ factorizes over a finite stage of the colimit
and gives rise to the map $l$.
The maps denoted by $r$ with decorations are induced by restriction along $g$,
and the maps $\alpha$ with decorations are induced by structure maps of the system $(Z_{i})_{i\in I}$ and its colimit.  The value of $k$ is still open. 

The following lemma provides the diagram \eqref{vfdvfvrwgwrewgrg}  together with the homotoypy $\beta$ as desired.

\begin{lem}
There exists $k$ in $I$ with $j\le k$ and a dotted arrow in \eqref{vfdvfvrwgwrewgrg} such that the left square commutes up to homotopy $\beta$, 
the other square with left upper corner commutes strictly, and $r_{Z,*}\beta$ is constant.
\end{lem}
\begin{proof}

Let $b$ be in $B$. We claim that for every $\epsilon$ in $(0,\infty)$ there exists $k$ in $I$ with $j\le k$
and an element $\phi$ in $C(Y)\otimes Z_{k}$ such that
$\|\alpha_{Y,k}(\phi)-h(b)\| \le \epsilon$ and $r_{Z}(h(b))= \alpha_{X,k}(r_{Z_{k}}(\phi))$.
By the Choi-Effros theorem \cite{choi-effros} we can choose a completely positive {contractive} split $s:C(X)\to C(Y)$ such that $r\circ s=\id_{C(X)}$.
We extend $s$ as $s_{Z}:=s\otimes \id_{Z}$.
 Since $C(Y)\otimes -$ preserves filtered colimits we can first choose
 $k$  in $I$ with $j\le k$ and $\phi'$ in $C(Y)\otimes Z_{k}$  such that  $\|\alpha_{Y,k}(\phi')-h(b)\| \le \epsilon/3$. 
 Then we set $$\phi:=\phi'-s_{Z_{k}}(r_{Z_{k}}(\phi'))+s_{Z_{k}}( \alpha_{X,k,j} (l(b)))$$
 in $C(Y)\otimes Z_{k}$.
We then have \begin{eqnarray*}
r_{Z_{k}} (\phi)&=&r_{Z_{k}}( \phi')-r_{Z_{k}}( s_{Z_{k}}(r_{Z_{k}}(\phi')))+ r_{Z_{k}} (s_{Z_{k}} (\alpha_{X,k,j} (l(b))))\\&=&\alpha_{X,k,j} (l(b))
\end{eqnarray*}

 We further have
 \begin{eqnarray*}
\alpha_{X,k} (\alpha_{X,k,j}(l(b)))-\alpha_{X,k}(r_{Z_{k}}(\phi'))&=& r_{Z}(h(b))-\alpha_{X,k}(r_{Z_{k}}(\phi'))
\\&=& r_{Z}(\alpha_{Y,k}(\phi'))-\alpha_{X,k}(r_{Z_{k}}(\phi'))+ r_{Z}(h(b)-\alpha_{Y,k}(\phi'))\\&=&r_{Z}(h(b)-\alpha_{Y,k}(\phi'))
\end{eqnarray*}
and therefore
$$\|\alpha_{X,k} (\alpha_{X,k,j}(l(b))- r_{Z_{k}}(\phi'))\|\le \epsilon/3\ .$$
Hence we can choose $k$ sufficiently large such that
$$\| \alpha_{X,k,j}(l(b))- r_{Z_{k}}(\phi'))\|\le 2\epsilon/3\ .$$
Then 
$\|\phi-\phi'\|\le 2\epsilon/3$ and therefore $\|\alpha_{Y,k}(\phi)-h(b)\|\le \epsilon$.

 We now construct the dotted arrow. We fix $\epsilon$ in $(0,\infty)$ dependent on $A$ such that the following works. 
 For every $G$-orbits of generators (with representative $a_{i}$)
 we choose $\phi_{i}$ for $f(a_{i})$ as above. Then we get the dotted arrow. 
 We let $\psi_{i,t}$ be the straight interpolation between $h(f(a_{i}))$ and $\alpha_{Y,k}(\phi_{i})$.
 Then we get a homotopy $\beta:A\to C(Y\times I)\otimes Z$ whose
 value on $a_{i}$ is $\psi_{i,t}$.
 Since $      r_{Z}(\alpha_{Y,k}(\phi_{i}))= \alpha_{X,k}(r_{Z_{k}}(\phi))=\alpha_{X,j}(l(f(a_{i})))=r_{Z}(f(h(a_{i})))$
 we see that $r_{Z,*}\beta$ is the constant homotopy.
  \end{proof}
  This completes the proof of \cref{xjnbcuiohwedfiohgeriosdfylk}
  \end{proof}

 Let $A$ be in $G\nCalg$.  
 \begin{ddd}[{\cite[Def. 4.1]{zbMATH03996430}}]\label{rwogpwegrewfrefrfwf}
 A  shape system for $A$ is a  system $(A_{n})_{n\in \nat}$ in $G\nCalg$
 such that $A\cong \colim_{n\in \nat} A_{n}$ and $A_{n}\to A_{n+1}$ is  semi-projective for every $n$ in $\nat$.
 If all these  maps are explicit  semi-projectives  then  this {is called} an explicit  shape system.
 \end{ddd}
 
 By \cref{tokgprhhehtheth} an explicit shape system is in particular a shape system.
 Our main motivation for introducing the notion of explicit shape systems is the following consequence of \cref{xjnbcuiohwedfiohgeriosdfylk}.

 \begin{kor}\label{jgriowergwrefrefrefwfrfwf} If $(A_{n})_{n\in \nat}$ is an explicit shape system for $A$ and $(Z_{i})_{i\in I}$ is a  system in $G\nCalg$ with $\colim_{i\in I} Z_{i}\cong Z$, then
 the map of systems
 $$(\colim_{i\in I }\Map_{G\nCalg_{h}}(A_{n},Z_{i}))_{n\in \nat^{\op}}\to  (\Map_{G\nCalg_{h}}(A_{n},Z))_{n\in \nat^{\op}}$$
 has the slp in the sense of \cref{gwjeiorrfrefrfw}.
  \end{kor}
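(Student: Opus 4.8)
The plan is to deduce the statement directly from \cref{xjnbcuiohwedfiohgeriosdfylk}. By \cref{gwjeiorrfrefrfw}, the map of towers
$$(\colim_{i\in I}\Map_{G\nCalg_{h}}(A_{n},Z_{i}))_{n\in\nat^{\op}}\longrightarrow(\Map_{G\nCalg_{h}}(A_{n},Z))_{n\in\nat^{\op}}$$
has the slp if and only if, for every $n$ in $\nat$, the commutative square
$$\xymatrix{\colim_{i\in I}\Map_{G\nCalg_{h}}(A_{n},Z_{i})\ar[r] & \Map_{G\nCalg_{h}}(A_{n},Z)\\ \colim_{i\in I}\Map_{G\nCalg_{h}}(A_{n+1},Z_{i})\ar[r]\ar[u] & \Map_{G\nCalg_{h}}(A_{n+1},Z)\ar[u]}$$
satisfies the slp in the sense of \cref{kotgpergoeprtkhgerth}, where the vertical maps are induced by precomposition with the structure morphism $A_{n}\to A_{n+1}$ of the shape system.

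First I would observe that, since $(A_{n})_{n\in\nat}$ is an \emph{explicit} shape system, each structure morphism $A_{n}\to A_{n+1}$ is explicit semi-projective (by \cref{rwogpwegrewfrefrfwf}). Then I would apply \cref{xjnbcuiohwedfiohgeriosdfylk} to the explicit semi-projective homomorphism $f\colon A_{n}\to A_{n+1}$ (taking ``$A$'' to be $A_{n}$ and ``$B$'' to be $A_{n+1}$): its conclusion is exactly that the displayed square has the slp. Since $n$ was arbitrary, \cref{gwjeiorrfrefrfw} yields the assertion.

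I do not expect a genuine obstacle here; the content of the corollary is already contained in \cref{xjnbcuiohwedfiohgeriosdfylk} (which itself rests on the homotopy-lifting results of \cite{zbMATH03996430} together with the Choi--Effros lifting theorem). The only point needing care is the bookkeeping that the level-$n$ square of \cref{gwjeiorrfrefrfw} is literally the square of \cref{xjnbcuiohwedfiohgeriosdfylk}, with the correct orientation of the vertical arrows: this is immediate once one notes that the transition maps of the tower $(\Map_{G\nCalg_{h}}(A_{\bullet},-))_{\bullet\in\nat^{\op}}$ are precisely the maps $f^{\ast}$ for $f$ the structure morphisms of $(A_{n})_{n\in\nat}$.
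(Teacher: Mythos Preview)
Your proposal is correct and takes essentially the same approach as the paper, which presents this corollary as an immediate consequence of \cref{xjnbcuiohwedfiohgeriosdfylk} without further argument. Your unpacking of \cref{gwjeiorrfrefrfw} and identification of the level-$n$ square with the square in \cref{xjnbcuiohwedfiohgeriosdfylk} (applied to the explicit semi-projective structure map $A_n\to A_{n+1}$) is exactly what is intended.
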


 We consider   the full subcategory   $G\nCalg_{\sepa}$ of $G\nCalg$ of separable $G$-$C^{*}$-algebras.

 \begin{prop}[{\cite[Thm. 4.3]{zbMATH03996430}}]\label{ojkergperwerge}
Every separable $G$-$C^{*}$-algebra admits an explicit shape system.
  \end{prop}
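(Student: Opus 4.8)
The plan is to reduce the statement to the non-equivariant theorem of Blackadar \cite[Thm. 4.3]{zbMATH03996430}, carefully tracking equivariance throughout. Let $A$ be a separable $G$-$C^{*}$-algebra. First I would fix a countable dense subset $\{a_1, a_2, \dots\}$ of $A$, and for each $n$ select a finite set $S_n \subseteq \{a_1,\dots,a_n\}$ together with a finite set of non-commutative $*$-polynomials $p_{l}$ in variables indexed by $S_n \times G$ (always including the monomials $x_{a,e}$ so that the relation set is admissible), and real bounds $r_l > \|p_l(a)\|_A$. By \cref{egjweogrefwerfw} this data determines a universal $G$-$C^{*}$-algebra $A_n$ with canonical $G$-action and a canonical equivariant homomorphism $f_n: A_n \to A$. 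The first key step is to choose this data \emph{coherently in $n$}: one arranges the polynomial systems to be increasing (the relations for $A_n$ are among those imposed in $A_{n+1}$), so that the universal property produces equivariant structure maps $A_n \to A_{n+1}$ commuting with the $f_n$. Each such map arises from \cref{egjweogrefwerfw} applied inside $A_{n+1}$, hence is an explicit semi-projective map by \cref{tokgprhhehtheth}.

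The second key step is to show $A \cong \colim_{n\in\nat} A_n$ in $G\nCalg_{\sepa}$. This is where Blackadar's argument is used essentially. The $f_n$ induce a map $\colim_n A_n \to A$; surjectivity is clear since the generators of $A_n$ map onto a set that becomes dense as $n \to \infty$. Injectivity, equivalently that the generators of $A_n$ are not forced to satisfy extra relations in the colimit beyond those imposed, is the content of the quantitative estimates in \cite[Thm. 4.3]{zbMATH03996430}: one must choose the finite polynomial families and the bounds $r_l$ large enough, and exhaustive enough, that no relation valid in $A$ is ``missed'' in the limit. The point is that all constructions there — free products, quotients by relation ideals, the norm estimates — are natural in the presence of a group action, since $G$ acts on the generating sets by the bijection $(g, x_{a,h}) \mapsto x_{a,gh}$ and this is compatible with every step. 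Thus the non-equivariant proof applies verbatim on underlying algebras, and the comparison isomorphism $\colim_n A_n \xrightarrow{\sim} A$ is automatically $G$-equivariant because it is built from $G$-equivariant maps. Finally, separability of each $A_n$: this holds because $A_n$ is generated as a $C^{*}$-algebra by the orbit of a finite set under the countable group $G$, hence is separable.

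I would then conclude that $(A_n)_{n\in\nat}$ together with the equivariant structure maps and the isomorphism $\colim_n A_n \cong A$ is an explicit shape system for $A$ in the sense of \cref{rwogpwegrewfrefrfwf}. The main obstacle is the second step: verifying that the quantitative norm estimates of \cite[Thm. 4.3]{zbMATH03996430} go through equivariantly — that is, checking that imposing the relations $\|h^{*} p_l(b')\| \le r_l$ for \emph{all} $h \in G$ (rather than just $h = e$) does not obstruct the colimit identification. Since $G$ is countable and each $p_l$ involves only finitely many variables, for a fixed finite generating set the full $G$-orbit of relations is a countable set, and one absorbs it into the exhaustion over $n$ exactly as one absorbs the countably many elements $a_k$ and the countably many defining relations of $A$; this is the technical heart but is a routine elaboration of Blackadar's bookkeeping. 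Everything else — equivariance of the structure maps, semi-projectivity via \cref{tokgprhhehtheth}, separability — is formal.
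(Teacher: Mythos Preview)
Your proposal is correct and follows essentially the same route as the paper's proof: both build the stages $A_n$ as universal $G$-$C^{*}$-algebras via \cref{egjweogrefwerfw} on an exhausting system of finitely many generators and polynomial norm-bounds, obtain explicit semi-projective structure maps from that construction, and defer the identification $\colim_n A_n \cong A$ to Blackadar's non-equivariant argument while observing that every step is $G$-equivariant. The only detail the paper makes slightly more explicit is the choice of bounds $r_l = \|\ev(p_l(a))\|_A + 1/n$, which guarantees the strict inequality needed for the map $A_n \to A_{n+1}$ to fall under \cref{egjweogrefwerfw}; you gesture at this when invoking Blackadar's ``quantitative estimates,'' so there is no real gap.
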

  \begin{proof}
  One can copy the proof of  \cite[Thm. 4.3]{zbMATH03996430} which is based on  \cref{egjweogrefwerfw}.     Let us nevertheless provide a sketch.  Let $A$ be  in $G\nCalg_{\sepa}$.
  We consider a countable  subset $(a_{k})_{k\in \nat}$  of $A$ such that $(ga_{k})_{(k,g)\in \nat\times G}$ is dense in $A$. 
  We then consider the set of generators $(a_{k,g})_{(k,g)\in \nat\times G}$.
   Every non-commutative  polynomial  $p$ over $\Q[i]$  in variables $(x_{k,g})_{(k,g)\in \nat\times G}\cup (x^{*}_{k,g})_{(k,g)\in \nat\times G} $   gives rise to a  relation $\|  p(a)\|\le \|\ev(p(a))\|_{A}$, where $\ev(p(a))$ is the  image of the evaluation of $p$ at $(a_{k,g})_{(k,g)\in \nat\times G}$ in $A$ under the map which sends $a_{k,g}$ to $ga_{k}$.       The set  $R$ of these polynomials is countable and $A$ is isomorphic to the $G$-$C^{*}$-algebra generated by the set $(a_{k,g})_{(k,g)\in \nat\times G}$ with respect to these relations and the $G$-action extending
   $h a_{k,g}=a_{g,hg}$. We let $ (p_{n})_{n\in \nat}$ be an indexing of  
$R$. We let $R_{n}$ be the {union of the} subset of those relations in the segment of the sequence for $ k=0,\dots,n$ which only involve the variables
   $a_{k,g}$ for $k=0,\dots,n$ and $g$ in $G$ {with the set of monomials $\{x_{k,e}\mid k=0,\dots,n\}$.}  We then define $A_{n}$ as the universal $G$-$C^{*}$-algebra
   generated by  $(a_{k,g})_{(k,g)\in \{0,\dots,n\}\times G}$ subject to the relations 
   $\|h^{*}p_{l}(a)\|\le \|\ev(p_{l}(a))\|_{A}+1/n$ for all $l$ in $R_{n}$ and $h$ in $H$. The connecting maps $A_{n}\to A_{n+1}$ are explicit semi-projective since they arise from \cref{egjweogrefwerfw}. Furthermore, 
 we have an isomorphism $A\cong \colim_{n\in \nat}A_{n}$.
    \end{proof}
Explicit shape systems can be chosen functorially for $\nat$-indexed systems.  Let $(A_{n})_{n\in \nat}$ be a system in $G\nCalg_{\sepa}$
\begin{prop}\label{gjiowergrwfwerfrf}
There exists  commutative
diagram 
 $$\xymatrix{A_{0,0}\ar[d]\ar[r] &A_{1,0} \ar[d]\ar[r]&A _{2,0}\ar[r] \ar[d]&A_{3,0}\ar[r]\ar[d]&\dots\\
 A_{0,1}\ar[d]\ar[r] &A_{1,1} \ar[d]\ar[r]&A_{2,1}\ar[r] \ar[d]&A_{3,1}\ar[r]\ar[d]&\dots  \\
 A_{0,2}\ar[d]\ar[r] &A_{1,2} \ar[d]\ar[r]&A_{2,2}\ar[r] \ar[d]&A_{3,2} \ar[r]\ar[d]&\dots\\\vdots&\vdots&\vdots&\vdots& & \\A_{0}\ar[r]&A_{1}\ar[r]&A_{2} \ar[r]&A_{3}\ar[r]&&\dots}$$  in $G\nCalg_{\sepa}$ such that the lower line is the colimit of the diagram taken vertically  and $(A_{n,k})_{k\in \nat}$ is an explicit shape system for  $A_{n}$ every $n$ in $\nat$.  \end{prop}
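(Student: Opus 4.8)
The plan is to run the construction underlying the proof of \cref{ojkergperwerge} simultaneously for all the algebras $A_{n}$, in a way compatible with the structure maps $f_{n}\colon A_{n}\to A_{n+1}$ of the given system, and then to reindex so that the resulting double system has the shape drawn in the statement. Recall from the proof of \cref{ojkergperwerge} that an explicit shape system for a separable $G$-$C^{*}$-algebra $B$ is manufactured from the choice of a countable subset $S\subseteq B$ whose $G$-translates are dense, an enumeration of an admissible countable set of relations of the form $\|h^{*}p(a)\|\le\|\ev_{B}(p(a))\|+\tfrac1k$, and the truncation data; the $k$-th member of the shape system is the universal $G$-$C^{*}$-algebra on the first $k$ generators of $S$ subject to the finitely many such relations involving only those generators and lying among the first $k$ in the enumeration (with the monomial relations added), and $B\cong\colim_{k}B_{k}$.

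First I would construct, by induction on $n$, a commutative diagram $(B_{n,k})_{n,k\in\nat}$ with vertical maps $B_{n,k}\to B_{n,k+1}$ and ``level-shifting'' horizontal maps $B_{n,k}\to B_{n+1,\psi_{n}(k)}$, where $\psi_{n}\colon\nat\to\nat$ is strictly increasing and dominates the identity, such that $(B_{n,k})_{k}$ is an explicit shape system for $A_{n}$. For $n=0$ take any explicit shape system for $A_{0}$, coming from a choice of data $S_{0}=(a^{0}_{j})_{j}$. Given $(B_{n,k})_{k}$ coming from data $S_{n}=(a^{n}_{j})_{j}$, I would choose the generating set for $A_{n+1}$ to be $S_{n+1}:=f_{n}(S_{n})\cup T_{n+1}$, where $T_{n+1}$ is an auxiliary countable set making the $G$-translates of $S_{n+1}$ dense in $A_{n+1}$ (a supplement is genuinely needed because $f_{n}$ need not have dense image), and I would enumerate the relations for $A_{n+1}$ so that, for every $k$, the images under $f_{n}$ of the finitely many relations used in $B_{n,k}$ occur among the first $\psi_{n}(k)$ relations and involve only the first $\psi_{n}(k)$ generators of $S_{n+1}$, for a suitable strictly increasing $\psi_{n}$. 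Running the construction with these choices yields the explicit shape system $(B_{n+1,k})_{k}$.

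The horizontal map $B_{n,k}\to B_{n+1,\psi_{n}(k)}$ is then obtained from the universal property of $B_{n,k}$ by sending the generator indexed by $a^{n}_{j}$ to the one indexed by $f_{n}(a^{n}_{j})\in S_{n+1}$. Writing $f_{n}\cdot p$ for the relation-polynomial $p$ read in the generators $f_{n}(a^{n}_{j})$, the defining relations of $B_{n,k}$ are respected because, using $\ev_{A_{n+1}}(f_{n}\cdot p)=f_{n}(\ev_{A_{n}}(p))$ and $\|f_{n}(x)\|\le\|x\|$, one has in $B_{n+1,\psi_{n}(k)}$ the estimate $\|h^{*}(f_{n}\cdot p)(a)\|\le\|\ev_{A_{n+1}}((f_{n}\cdot p)(a))\|+\tfrac{1}{\psi_{n}(k)}=\|f_{n}(\ev_{A_{n}}(p(a)))\|+\tfrac{1}{\psi_{n}(k)}\le\|\ev_{A_{n}}(p(a))\|+\tfrac1k$. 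On generators this map is ``apply $f_{n}$'', so it commutes with the vertical maps, and on colimits over $k$ it agrees with $f_{n}$ on the dense set of $G$-translates of $S_{n}$, hence equals $f_{n}\colon A_{n}\cong\colim_{k}B_{n,k}\to A_{n+1}\cong\colim_{k}B_{n+1,k}$.

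Finally I would reindex. Choosing strictly increasing $\theta_{n}\colon\nat\to\nat$ recursively with $\theta_{0}=\id$ and $\theta_{n+1}(k)\ge\psi_{n}(\theta_{n}(k))$, put $A_{n,k}:=B_{n,\theta_{n}(k)}$. The vertical structure maps $A_{n,k}\to A_{n,k+1}$ are then finite composites of vertical $B$-maps, hence explicit semi-projective (these form a two-sided ideal, so in particular are closed under post-composition), so $(A_{n,k})_{k}$ is again an explicit shape system for $A_{n}\cong\colim_{k}A_{n,k}$ since the $\theta_{n}$ are cofinal; the horizontal maps $A_{n,k}\to A_{n+1,k}$ are the composites $B_{n,\theta_{n}(k)}\to B_{n+1,\psi_{n}(\theta_{n}(k))}\to B_{n+1,\theta_{n+1}(k)}$, the squares commute because the $B$-diagram does, and the bottom row together with the maps $f_{n}$ is unchanged. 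The one real difficulty, which I expect to be the main obstacle, is the combinatorial bookkeeping in the inductive step: organizing the enumerations of generators and relations of all the $A_{n}$ at once so that the $f_{n}$-images of the data used at level $k$ of column $n$ land at a controlled finite level of column $n+1$. Everything else — the norm estimate producing the horizontal maps, the identification of colimits, and the reindexing — is routine.
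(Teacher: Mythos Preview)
Your proposal is correct and follows essentially the same approach as the paper: both construct the shape systems column by column, at each step enlarging the generating set of $A_{n+1}$ to contain the $f_n$-images of the previous column's generators, and both reindex so that the horizontal maps land at the same level. The only organizational difference is that the paper performs the reindexing during the inductive step (defining $A_{n_0,i}:=A'_{k(i)}$ for a suitable increasing $k(i)$ as each new column is built), whereas you first build the entire level-shifting diagram $(B_{n,k})$ and then reindex globally via the $\theta_n$ at the end; your version is also more explicit about the norm estimate guaranteeing the horizontal maps exist, which the paper compresses into the single inequality $\|x'\|_{A'_{k(i)}}<\|x\|_{A_{n_0-1,i}}$.
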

\begin{proof}
We construct the diagram inductively using the argument for \cref{ojkergperwerge}.
Assume that we have constructed $A_{n,k}$ and the maps  for $n<n_{0}$ and all $k$ in $\nat$.
We then construct the explicit shape system $(A_{i}')_{i\in \nat} $ of $A_{n_{0}}$ as in the proof of  \cref{ojkergperwerge}.
We thereby make sure that the countable subset  of $A_{n_{0}}$ used in the construction
contains the images of  the generators  {of the} $A_{n_{0}-1,i}$ for all $i$ in $\nat$.
In order to define the diagram
$$\xymatrix{A_{n_{0}-1,0}\ar[r]\ar[d]&A_{n_{0},0}\ar[d]\\A_{n_{0}-1,1}\ar[r]\ar[d]&A_{n_{0},1}\ar[d]\\A_{n_{0}-1,2}\ar[r]\ar[d]&A_{n_{0},2}\ar[d]\\\vdots&\vdots
}$$ we choose an increasing  sequence of integers $(k(i))_{i\in \nat}$
and set $A_{n_{0},i}:=A'_{k(i)}$. We thereby choose $k(i)$ so large such that
the image of  every  generator  $x$ of $A_{n_{0}-1,i}$ in $A_{n_{0}}$ is a generator $x'$  of $A'_{k(i)}$  such that 
 $\|x'\|_{A'_{n_{0},k(i)}}<\|x\|_{A_{n_{0}-1,i}}$. We can then define $A_{n_{0}-1,i}\to A_{n_{0},i}$ such that it sends $x$ to $x'$.
 \end{proof}

\subsection{Axiomatic characterization of $\AsGc$ and $\EsG$}\label{gjkowperefwfrefrfrefw}
The classical equivariant $E$-theory functor $\esGn$ in \eqref{fqwefewdqwedeerererwefqf}
is a functor from separable $G$-$C^{*}$-algebras to an additive category constructed in  \cite[Def. 6.8]{Guentner_2000}. The functor   $\asG$ in \eqref{fqwefewdqwedeerererwefqf}  to the ordinary asymptotic morphism category  \cite[Def. 2.13]{Guentner_2000} constitutes an intermediate step in the construction of $E$-theory.
In the present paper we will consider  refinements of both functors 
to functors with values in $\infty$-categories. We think of these refinements as means to do homotopy theory with separable $G$-$C^{*}$-algebras reflecting some additional invariance properties. 
While $L_{h}:G\nCalg_{\sepa}\to G\nCalg_{\sepa,h}$ from \cref{okprthrtertegtrgrg} is the correct framework for just doing homotopy theory, the functor  $\asGc:G\nCalg_{\sepa}\to \AsGc$ provides the correct version of homotopy theory
 which is also compatible with filtered colimits. If one adds Morita invariance and exactness to the picture, then
$\esG:G\nCalg_{\sepa}\to \EsG$ will be the appropriate home. 
This point of view suggests to first characterize these functors axiomatically
in terms of universal properties. We then prove existence  by providing constructions. 
Note that the constructions yield additional information about these functors which are not obvious from the axiomatic characterization alone, e.g. the property of being \pcas{} asserted by our main   \cref{jifofqweewf9}.

Before we state the characterizations  \cref{wrejgioweferfw} and  \cref{wrejgioweferfw1} we recall in \cref{fjioqwefdqwedewdqwed} all the $C^{*}$-algebraic notions
necessary to formulate the universal properties. 
We will show that an   {exact and countable sum-preserving functor is automatically   filtered colimit preserving and apply this   to $\esG$.}
{At the end of this section  we} explain   how the functors {$\asGc$ and $\esG$} can be extended {as $s$-finitary functors} to all $C^{*}$-algebras in a formal way.

 In  \cref{twhkogprthtrerge},  \cref{twhkogprthtrerge1} and \cref{irthjzgjdiogjhjdicnmjksdisjf} we
present  three independent  constructions of an equivariant $E$-theory functor $\esG$.  The verification that all three constructions yield   equivalent functors which  satisfy the desired universal properties will be completed in
  \cref{xvbnfiohregiohsdfpojsdf}. The functor $\asGc$ will be constructed in \cref{irthjzgjdiogjhjdicnmjksdisjf} 
 and again constitutes an intermediate {step} towards $E$-theory.

Since in this section we also want to formulate  the universal properties for the extension of the functors to all $G$-$C^{*}$-algebras  $G\nCalg$ {(as opposed to separable algebras $G\nCalg_{\sepa}$)}  we will describe the necessary $C^{*}$-algebraic notions in this generality.
 
 We equip $G\nCalg$ with the symmetric monoidal structure $\otimes$ induced by the maximal tensor product.
 The choice of the maximal tensor product (in contrast  e.g. to the minimal one) is important since we need that the tensor product preserves exact sequences and 
 filtered colimits in each argument.
 The tensor product of two separable $C^{*}$-algebras is again separable so that $\otimes $ restricts to $G\nCalg_{\sepa}$. We furthermore note that $\otimes$ is bi-continuous for the topological enrichment of $G\nCalg$.

 If $(A_{i})_{i\in I}$ is  a family  in $G\nCalg$, then we can form the categorical product $\prod_{i\in I}A_{i}$ in $G\nCalg$. The elements of this product are uniformly bounded families $(a_{i})_{i\in I}$ with $a_{i}$ in $A_{i}$\footnote{Note that this description is only correct for discrete groups $G$ which is our standing assumption.}. The sum $\bigoplus_{i\in I}A_{i}$ is the $G$-$C^{*}$-subalgebra
 of the product generated by families $ (a_{i})_{i\in I}$ with finitely many non-zero entries.
 We have canonical inclusions $\epsilon_{i}:A_{i}\to \bigoplus_{i\in I}A_{i}$ for all $i$ in $I$.
 But note that the sum is not the categorical  coproduct of the family. The latter   would be represented 
 by the free product $\ast_{i\in I} A_{i}$.
 A  sum or coproduct of a countable  family of separable $G$-$C^{*}$-algebras is again a separable $G$-$C^{*}$-algebra. In contrast, an infinite  product of non-trivial $C^{*}$-algebras is never separable.

\begin{rem}\label{fjioqwefdqwedewdqwed} 
 We start with a list of properties which may be satisfied by   functors defined on $G\nCalg$   or at least on its subcategory  $G\nCalg_{\sepa}$ of separable $G$-$C^{*}$-algebras.
\begin{enumerate}
 \item \label{fiqewpoqdqded}homotopy invariance: The enrichment of $G\nCalg$ in topological spaces described  in \cref{okprthrtertegtrgrg} yields a notion of homotopy {equivalence}.  A morphism $f:A\to B$ is a homotopy equivalence if there exists a morphism $g:B\to A$ such that the compositions 
 $f\circ g$ and $g\circ f$ are homotopic to the identities of $A$ and $B$, respectively. A functor defined on $G\nCalg$ or $G\nCalg_{\sepa}$ is called homotopy invariant if it sends homotopy equivalences to equivalences. 
\item\label{fiqewpoqdqded1} $K_{G}$-stability: We define the separable $G$-$C^{*}$-algebra $K_{G}:=K(L^{2}(G)\otimes \ell^{2})$ with the 
  $G$-action induced by the left-regular representation on $L^{2}(G)$. A morphism $f:A\to B$ in $G\nCalg$ is called a $K_{G}$-equivalence   if $f\otimes \id_{K_{G}}:A\otimes K_{G}\to B\otimes K_{G}$ is a homotopy equivalence. 
A functor   is called  $K_{G}$-stable,  if it sends $K_{G}$-equivalences   to equivalences.
Since $K_{G}$ is separable the notion of $K_{G}$-stablility  is also  well-defined for functors only defined on separable algebras.
\item \label{fiqewpoqdqded2} exactness:    A   functor $F$ to a pointed category  is called  exact, if {it} sends the zero algebra to the zero object and 
 exact sequences of $G$-$C^{*}$-algebras to fibre sequences.
More precisely, writing an exact sequence of $G$-$C^{*}$-algebras as a commutative  square
$$\xymatrix{A\ar[r]\ar[d]&B\ar[d]\\0\ar[r]&C} $$ which is cartesian and cocartesian, we require
that $F(0)\simeq 0$ and that  the resulting commutative square
$$\xymatrix{F(A)\ar[r]\ar[d]&F(B)\ar[d]\\0\ar[r]&F(C)} $$ 
is cartesian.
\item countable sum-preserving:     A functor $F$  is called countable sum-preserving
if for any countable family $(A_{i})_{i\in I}$ in $G\nCalg$ the coproduct $\coprod_{i\in I} F(A_{i})$ in the target of $F$  exists and  the canonical map
$$\sqcup_{i\in I} F(\epsilon_{i}):\coprod_{i\in I} F(A_{i})\to F(\bigoplus_{i\in I}A_{i})$$   is an equivalence.
Since a countable sum of separable $G$-$C^{*}$-algebras is again separable, the condition of being countable sum preserving is also well-defined for functors only defined on separable $G$-$C^{*}$-algebras.
\item \label{qijfoqrfqwfefqewqef} countable filtered colimit preserving: For every system $(A_{n})_{n\in \nat}$ in $G\nCalg$  the colimit $\colim_{n\in \nat} F(A_{n})$ in the target of $F$ exists and the canonical map
$$\colim_{n\in \nat} F(A_{n})\to F(\colim_{n\in \nat} A_{n})$$ is an equivalence. Since the colimit  of a  system $(A_{n})_{n\in \nat}$ of separable $G$-$C^{*}$-algebras is again separable this property makes also sense for functors only defined on  separable $G$-$C^{*}$-algebras.
\item\label{wergoijoergferfegwegwerg} $s$-finitariness: This property is only defined for functors $F$ defined on all of $G\nCalg$. 
  A functor $F$ is called  s-finitary if    for every $A$ in $G\nCalg$  the colimit   $\colim_{A'\subseteq_{\sepa} A} F(A')$ exists in the target of $F$ and  the canonical morphism $$\colim_{A'\subseteq_{\sepa} A} F(A')\to F(A)$$ is an equivalence. Here $A'\subseteq_{\sepa} A$ is the poset of $G$-invariant separable subalgebras $A'$ of $A$. {The functor  $F$} is s-finitary  if and only if $F$ preserves $\aleph_1$-filtered colimits. 
   {In order to see this note} that  the realization functor $$\Ind_{\aleph_1}(G\nCalg_{{\sepa}}) \to G\nCalg  $$
   {defined as the left Kan-extension of the inclusion $G\nCalg_{\sepa}\to G\nCalg$ along $y:G\nCalg_{\sepa}\to\Ind_{\aleph_1}(G\nCalg_{{\sepa}})$}
    is an equivalence  with inverse
  \[ 
  	G\nCalg \to\Ind_{\aleph_1}(G\nCalg_{{\sepa}})\ , \quad A \mapsto \colim_{ A^\prime \subseteq_{\sepa} A} y(A^\prime)\ .
  \]

\end{enumerate}
 \end{rem}

 \begin{ddd}\label{wrejgioweferfw1}
The    functor  $\asGc$  is the initial
functor $$\asGc:{G}\nCalg_{\sepa}\to \AsGc$$ to an $\infty$-category admitting countable filtered colimits   
which is homotopy invariant and preserves filtered colimits.
\end{ddd}
The universal property required in \cref{wrejgioweferfw1} {asserts} that precomposition
with $\asGc$ induces for every $\infty$-category {$\bC$} admitting countable filtered colimits an equivalence 
$$(\asGc)^{*} :\Fun^{\cfil}( \AsGc ,\bC)\stackrel{\simeq}{\to} \Fun^{h, \cfil}( G\nCalg_{\sepa},\bC)\ ,$$
where the superscript $\cfil$ indicates countable filtered colimit preserving functors {(for $h$ see \cref{okprthrtertegtrgrg})}. 
 It is clear that \cref{wrejgioweferfw1} characterizes  the functor $\asGc$ essentially uniquely.  The construction of  this functor will be completed with \cref{uqwighfuihvsdiohjfhjkbvdxc}.

\begin{ddd}\label{wrejgioweferfw}
The   equivariant $E$-theory functor for separable $C^{*}$-algebras is the initial
functor $$\esG:\nCalg_{\sepa}\to \EsG$$ to a countably cocomplete stable $\infty$-category
which is homotopy invariant, $K_{G}$-stable, exact and countable sum-preserving.
\end{ddd}
The universal property required in \cref{wrejgioweferfw} {asserts} that precomposition with $\esG$ induces for every countably cocomplete stable $\infty$-category $\bC$ an equivalence
 $$(\esG)^{*} :\Fun^{\colim_{\omega}}( \EsG ,\bC)\stackrel{\simeq}{\to} \Fun^{ h,K_{G},\exa, \oplus}( G\nCalg_{\sepa},\bC)\ ,$$
 where the superscripts $\colim_{\omega}$, $ h$, $K_{G}$, $\exa$,
and $\oplus$ indicate the corresponding property of functors: countable colimit preserving, homotopy invariant, $K_{G}$-stable, exact and countable sum preserving.
It is  again clear that \cref{wrejgioweferfw} characterizes the  functor $\esG$  essentially uniquely. 
The statements in \cref{xvbnfiohregiohsdfpojsdf} complete the proof of existence.

{Note that the $\infty$-categories $\EsG$ and $\AsGc$ have canonical symmetric monoidal structures  
such that the functors $\esG$ and $\asGc$ have canonical symmetric monoidal refinements
which are characterized by analogous universal properties. For details we refer to the  subsequent subsections.}

Note that in \cref{wrejgioweferfw} we  require the condition of being countable sum preserving since
this property can also be formulated for the classical functor  $\esGn$.
The more natural property of being  countable filtered colimit preserving  is not expected for 
 the classical functors since  the functor {$\EsG\to \ho(\EsG)\simeq \EsGn$}
does not preserve filtered colimits. For the same reason in the case of $\asGc$  {there} is no nice characterization of the classical functor $\asG$ by a universal property.

The following general observation will ensure that $\esG$ nevertheless preserves all countable filtered colimits.
 \begin{prop}\label{weijgwoefrewfwdffsfd}If $F:G\nCalg_{\sepa}\to \bC$ is a homotopy invariant, exact and  countable sum preserving functor to a countably cocomplete stable $\infty$-category $\bC$, then
$F$ preserves countable filtered colimits. \end{prop}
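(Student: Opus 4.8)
The plan is to reduce the statement to the standard fact that in a stable $\infty$-category a countable sequential colimit is the cofiber of a ``$\id - \mathrm{shift}$'' endomorphism of the countable coproduct, and that such a cofiber sequence is already realized on the level of $C^{*}$-algebras by a mapping telescope. By the form of the property in \cref{fjioqwefdqwedewdqwed}.\ref{qijfoqrfqwfefqewqef} it suffices to consider a system $(A_{n})_{n\in \nat}$ in $G\nCalg_{\sepa}$ with structure maps $f_{n}\colon A_{n}\to A_{n+1}$ and colimit $A:=\colim_{n\in\nat}A_{n}$ formed in $G\nCalg_{\sepa}$, and to show that the canonical map $\colim_{n\in\nat}F(A_{n})\to F(A)$ is an equivalence.

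First I would invoke the mapping telescope short exact sequence in $G\nCalg_{\sepa}$, namely the equivariant analogue of the corresponding construction in \cite{keb} (all $C^{*}$-algebras and homomorphisms carry the evident $G$-actions): there is a separable $G$-$C^{*}$-algebra $\mathbf{T}(A_{\bullet})$, a short exact sequence
\begin{equation*}
0\longrightarrow \bigoplus_{n\in\nat}A_{n}\longrightarrow \mathbf{T}(A_{\bullet})\longrightarrow A\longrightarrow 0\ ,
\end{equation*}
and a homotopy equivalence $\mathbf{T}(A_{\bullet})\simeq \bigoplus_{n\in\nat}A_{n}$ under which the ideal inclusion becomes homotopic to $\id-\sigma$, where $\sigma\colon \bigoplus_{n}A_{n}\to\bigoplus_{n}A_{n}$ is the endomorphism that sends the $n$-th summand into the $(n+1)$-st via $f_{n}$. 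Applying $F$ and using exactness (\cref{fjioqwefdqwedewdqwed}.\ref{fiqewpoqdqded2}) together with stability of $\bC$, this short exact sequence becomes a cofiber sequence in $\bC$. Homotopy invariance (\cref{fjioqwefdqwedewdqwed}.\ref{fiqewpoqdqded}) identifies $F(\mathbf{T}(A_{\bullet}))$ with $F(\bigoplus_{n}A_{n})$ compatibly, and the countable sum preservation property identifies $F(\bigoplus_{n}A_{n})$ with $\bigoplus_{n}F(A_{n})$ and $F(\sigma)$ with the corresponding shift endomorphism $\sigma'$ of $\bigoplus_{n}F(A_{n})$ built from the maps $F(f_{n})$. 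Hence $F(A)\simeq \cofib\bigl(\id-\sigma'\colon \bigoplus_{n}F(A_{n})\to\bigoplus_{n}F(A_{n})\bigr)$, and since $\bC$ is stable and admits countable coproducts this cofiber is canonically $\colim_{n\in\nat}F(A_{n})$ by the usual telescope formula for sequential colimits in stable $\infty$-categories. Choosing the telescope sequence naturally in $(A_{n})_{n\in\nat}$ — which is routine — one checks that the resulting equivalence is the canonical comparison morphism, which is the claim.

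The main obstacle is the telescope short exact sequence itself: one has to set up the telescope $C^{*}$-algebra $\mathbf{T}(A_{\bullet})$ carefully enough to simultaneously obtain an honest short exact sequence with quotient $A$ (so that exactness of $F$ applies) and to identify the ideal inclusion, up to homotopy, with $\id-\sigma$ (so that after applying $F$ one recognizes the telescope diagram for $(F(A_{n}))_{n\in\nat}$). This is a purely $C^{*}$-algebraic and $G$-independent point, so the non-equivariant argument of \cite{keb} transfers essentially verbatim; the remaining steps are formal consequences of exactness, countable sum preservation, and stability of $\bC$.
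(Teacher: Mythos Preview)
Your overall strategy—reducing to the telescope formula $\colim_{n}F(A_{n})\simeq \cofib(\id-\sigma')$ in a stable $\infty$-category via a $C^{*}$-algebraic mapping telescope—is the same as the paper's. However, the single short exact sequence you invoke does not exist with the stated properties, and this is not a detail that can be patched: the expression $\id-\sigma$ is not a $*$-homomorphism, so an ideal inclusion cannot be ``homotopic to $\id-\sigma$'' in $G\nCalg_{\sepa}$. That identification only makes sense after applying $F$, which is exactly what must be \emph{proven} rather than packaged into the hypothesis on the telescope. Moreover, there is no obvious telescope $\mathbf{T}(A_{\bullet})$ that is simultaneously homotopy equivalent to $\bigoplus_{n}A_{n}$ and has $A$ as a quotient with kernel $\bigoplus_{n}A_{n}$.

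The paper's argument (adapting \cite[Prop.~2.6]{MR2193334}, not \cite{keb}) instead uses \emph{two} exact sequences. The telescope $T$ of functions vanishing at both $0$ and $\infty$ sits in
\[
0\to \bigoplus_{n} S(A_{n})\to T\xrightarrow{e} \bigoplus_{n} A_{n}\to 0\ ,
\]
and after applying $F$ the resulting boundary map is identified with $1-s$ via an explicit mapping-cone computation; this is the genuinely delicate step and it yields $\Sigma F(T)\simeq \colim_{n}F(A_{n})$. A second, larger telescope $\tilde T$ (relaxing the vanishing at $\infty$ to a Cauchy condition) is contractible and fits into $0\to T\to \tilde T\to A\to 0$, giving $\Sigma F(T)\simeq F(A)$. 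Finally, a map of exact sequences from $(T\hookrightarrow\tilde T\twoheadrightarrow A)$ to $(S(A)\hookrightarrow C_{0}((0,\infty],A)\twoheadrightarrow A)$ shows that the composite equivalence is the canonical comparison map. Your single-sequence formulation collapses these two steps into one that is not realizable at the $C^{*}$-level.
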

\begin{proof}
We  adapt   the proof given in  \cite[Prop. 2.6]{MR2193334} to the $\infty$-categorical context and add some details.
It suffices to
show that the functor $F$  preserves $\nat$-indexed colimits. 

Let $(A_{n})_{n\in \nat}$ be a system in $G\nCalg_{\sepa}$ with structure maps $\alpha_{n}:A_{n} 
 \to A_{n+1}$ for all $n$ in $\nat$ and $\colim_{n\in \nat}A_{n}\simeq A$ with structure maps $\iota_{n}:A_{n}\to A$.
 Then we define the telescope
 $T$ in $G\nCalg_{\sepa}$ as the subalgebra of
 families $(f_{n})_{n\in \nat}$ in $\prod_{n\in \nat} C([n,n+1],A_{n})$ which satisfy the following conditions:
  \setlist[enumerate]{itemsep=-0.4cm}\begin{enumerate} \item \label{ewijgowgfrfwfwrrf9}$f(0)=0$
 \item  \label{ewijgowgfrfwfwrrf91}
 $\alpha_{n}(f_{n}(n+1))=f_{n+1}(n+1)$ for all $n$ in $\nat$ \item  \label{ewijgowgfrfwfwrrf92}
 $\lim_{t\to \infty} \|f_{n(t)}(t)\|=0$, 
 where $n(t)$ in $\nat$  is such that $t\in (n(t),n(t)+1]$ for every $t$ in $[0,\infty)$. 
\end{enumerate}

The evaluation    $e:(f_{n})_{n\in \nat}\mapsto (f_{n}(n+1))_{n\in \nat}$ fits into 
 an exact sequence \begin{equation}\label{dvacdscweqcqwcsdac}
 0\to \bigoplus_{n\in \nat} S( A_{n})\to T\xrightarrow{e} \bigoplus_{n\in \nat} A_{n}\to 0   \end{equation}
in $G\nCalg_{\sepa}${, where $S(B):=C_{0}((0,1), B)$ for any $B$ in $G\nCalg$.}
We apply $F$ and use that it is exact and preserves {countable} sums in order to get a fibre sequence
\begin{equation}\label{vwepojpocdcsdcs}F(T)\xrightarrow{F(e)} \bigoplus_{n\in \nat}  F(A_{n}) \xrightarrow{1-s} \bigoplus_{n\in \nat}  F(A_{n})\xrightarrow{
} \Sigma F(T)\ , 
\end{equation}  
where $s$ sends a family $(x_{n})_{n\in \nat}$ to $(F(\alpha_{n-1})(x_{n-1}))_{n\in \nat}$, and where we set $F(\alpha_{-1})(x_{-1}):=0$. 
The fibre sequence  exhibits $\Sigma  F(T)$    as the colimit of the system $(F(A_{n}))_{n\in \nat}$.

\begin{rem}In order to see the formula for the map $1-s$  in the middle one can consider the map of exact sequences $$\xymatrix{0\ar[r]&S(A)\ar[r]\ar@{=}[d]&C_{0}((0,1],A)\ar[r]^-{\ev_{1}}\ar[d]&A\ar[d]^{i_{1}}\ar[r]&0\\0\ar[r]&S(A)\ar[r]&C([0,1],A)\ar[r]^-{\ev_{0}\oplus \ev_{1}}&A\oplus A\ar[r]&0}\ ,$$ where $i_{1}$ is the inclusion of the second summand.
Applying $F$  and using the contractability of $C_{0}((0,1],A)$ we get the map of fibre sequences
$$\xymatrix{  0\ar[r]\ar[d]&F(A)\ar[d]^{F(i_{1})}\ar[r]^{-\id_{F(A)}}&F(A)\ar@{=}[d] \\F(C([0,1],A))\ar[r]&F(A)\oplus F(A)\ar[r]^-{\delta}&F(A)}\ ,$$ from which we read off that the map  $\delta\circ F(i_{1}) =- \id_{F(A)}$.  If we replace $C_{0}((0,1],A)$ by $C_{0}([0,1),A)$  and $i_{1}$ by the embedding $i_{0}$ of the first summand, then a similar argument shows  that $\delta\circ F(i_{0})=\id_{F(A)}$. 
Hence $\delta=\pr_{0}-\pr_{1}$. 

We could interpret $C([0,1],A)$ as the analogue of a factor $ C([n,n+1],A_{n})$ of $T$.
Applying an analogous reasoning to \eqref{dvacdscweqcqwcsdac} we can  read off the map $1-s$ as claimed.
\hB
\end{rem}
The fibre sequence \eqref{vwepojpocdcsdcs} also identifies   $F(T)$ with $\colim_{n\in \nat} \Omega F(A_{n})$.

We have a homomorphism $c:T\to S(A){\cong}  C_{0}((0,\infty),A)$ which sends $(f_{n})_{n\in \nat}$ to the function
$t\mapsto \iota_{n(t)}(f_{n}(t))$. It is easy to see that the following diagram commutes:
$$\xymatrix{\bigoplus_{n\in \nat} \Omega F(A_{n})\ar[r]^{1-s}\ar@{..>}[dr]_{0}&\bigoplus_{n\in \nat}  \Omega F(A_{n})\ar[d]^{\sum_{n\in \nat}\Omega F(\iota_{n})}\ar[r]& F(T)\ar[d]^{F(c)}\\& \Omega F(A)&\ar[l]^{\simeq } F(S(A)) }\ .$$     The lower horizontal map is an equivalence since
 $F$ is homotopy invariant and reduced, see the argument leading to \eqref{adsvacadqwe}.
 The diagram   identifies $F(c)$ with the desuspension of the canonical map $$\colim_{n\in \nat} F(A_{n})\to  F(\colim_{n\in \nat} A_{n})\ .$$ In order to show that it is an equivalence we  
 let $\tilde T$ be the subalgebra  of  $\prod_{n\in \nat} C([n,n+1],A_{n})$  consisting of the families 
satisfying the conditions \cref{ewijgowgfrfwfwrrf9} and \cref{ewijgowgfrfwfwrrf91}  from above and the weakening    of \cref{ewijgowgfrfwfwrrf92} requireing $$\lim_{s,t\to \infty,s\le t} \|\alpha_{n(t),n(s)}( f_{n(s)}(s))-\alpha_{n(t)}(f_{n(t)}(t))\|=0$$ with $\alpha_{n,m}:=\alpha_{n-1}\circ\dots\circ \alpha_{m}:A_{m}\to A_{n}$  for integers $m,n$ with $m\le n$.  We have a map
 $$\tilde T\to A\ ,  \quad (f_{n})_{n\in \nat}\mapsto \lim_{t\to \infty } \iota_{n(t)}(f_{n(t)}(t))\ ,$$
We now consider
 the map of exact sequences
 $$
\xymatrix{
0\ar[r]&T\ar[d]^{c}\ar[r]&\tilde T\ar[r]\ar[d]^{\tilde c}&A\ar[r]\ar@{=}[d]&0\\
0\ar[r]&C_{0}((0,\infty),A)\ar[r]&C_{0}((0,\infty],A)\ar[r]^-{\ev_{\infty}}&A\ar[r]&0}
\ ,$$
where $\tilde c$ is given by the same formula as $c$.
Since $\tilde T$ and $C_{0}((0,\infty],A)$ are contractible, the map of fibre sequences obtained by applying $F$ reduces to the square
$$\xymatrix{F(A)\ar[r]^-{\simeq}\ar@{=}[d] & \Sigma F(T)\ar[d]^-{\Sigma F(c)} \\ F(A) \ar[r]^-{\simeq} & \Sigma F(S(A))} $$
 which shows that $F(c)$ is an equivalence.
\end{proof}

\begin{rem}  
The analogue of \cref{weijgwoefrewfwdffsfd} for functors $F:G\nCalg\to \bC$ 
is also true with the same proof. \hB
\end{rem}

The following is a consequence of 
 \cref{weijgwoefrewfwdffsfd}. 
 \begin{kor} \label{cgrgregeffweeeee}
 The equivariant $E$-theory functor $\esG:G\nCalg_{\sepa}\to \EsG$   preserves all countable filtered colimits.
\end{kor}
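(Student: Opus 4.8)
The plan is to invoke \cref{weijgwoefrewfwdffsfd} directly. First I would recall from \cref{wrejgioweferfw} that the equivariant $E$-theory functor $\esG\colon G\nCalg_{\sepa}\to\EsG$ takes values in a countably cocomplete stable $\infty$-category and is, by construction, homotopy invariant, $K_{G}$-stable, exact, and countable sum-preserving. Discarding the $K_{G}$-stability, which is not needed here, this is precisely the list of hypotheses appearing in \cref{weijgwoefrewfwdffsfd}. Hence that proposition applies and yields that $\esG$ preserves all countable filtered colimits.

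There is no genuine obstacle here: all the substantial work has already been carried out in the telescope argument proving \cref{weijgwoefrewfwdffsfd}. Were one to want a self-contained proof, one would simply repeat that argument for $F=\esG$: given a sequential system $(A_{n})_{n\in\nat}$ in $G\nCalg_{\sepa}$ with colimit $A$, form the telescope $T$, use the exact sequence $0\to\bigoplus_{n\in\nat}S(A_{n})\to T\xrightarrow{e}\bigoplus_{n\in\nat}A_{n}\to 0$, apply exactness and countable sum-preservation of $\esG$ to obtain the fibre sequence exhibiting $\Sigma\esG(T)$ as $\colim_{n\in\nat}\esG(A_{n})$, and then compare with $\esG(S(A))$ via the maps $c\colon T\to S(A)$ and $\tilde c\colon\tilde T\to C_{0}((0,\infty],A)$ in order to conclude that the canonical map $\colim_{n\in\nat}\esG(A_{n})\to\esG(A)$ is an equivalence. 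Since, however, \cref{weijgwoefrewfwdffsfd} is already stated in exactly the generality required, the corollary follows with no further computation.
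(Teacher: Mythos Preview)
Your proposal is correct and matches the paper's approach exactly: the corollary is an immediate consequence of \cref{weijgwoefrewfwdffsfd}, applied to $F=\esG$ using the properties recorded in \cref{wrejgioweferfw}. The additional sketch you give of the telescope argument is unnecessary here but accurate.
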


  {In order to complete the discussion of the equivariant $E$-theory    we will  discuss   induction and restriction functors and crossed products and adjunctions relating them in \cref{kogregwergwerg} and  \cref{kogregwergwerg1}.}

{In the following we introduce the %The following  \cref{wreokgpewrgwerfwfwerfwerf1} 
 s-finitary extension of the functors  $\asGc$  and  $\esG$ to all $G$-$C^{*}$-algebras and state their universal properties.} 
 % {is} added
%for completeness of the presentation and
%for later reference. {This result is} not needed for the main purpose of the present paper. 

% \begin{rem}[The s-finitary extension]\label{wreokgpewrgwerfwfwerfwerf1}

%In this remark we consider the $s$-finitary extensions of the functors $\asGc$  and  $\esG$ to all $G$-$C^{*}$-algebras.
 
 \begin{ddd}\label{iuhiowerfwefewf}We define the presentable stable $\infty$-categories \begin{equation}\label{feqwewdwedqedewdqwd} \AGc:=\Ind_{\aleph_{1}}(\AsGc)\ , \qquad \EG:=\Ind_{\aleph_{1}}(\EsG)\ .
\end{equation}\end{ddd}  
Note that here $\aleph_{1}$ is a subscript and $\Ind_{\aleph_{1}}(\bC)$ indicates the $\aleph_{1}$-ind completion of $\bC$, i.e., the $\infty$-category obtained from $\bC$ by adding freely all $\aleph_{1}$-filtered colimits.
\footnote{This ind completion should not be confused with $\Ind^{\aleph_{1}}(\bC)$ which stands for freely adding   countable (not countably!) filtered colimits.} We let $y':\bC\to \Ind_{\aleph_{1}}(\bC)$ the 
 canonical embedding which preserves countable  filtered colimits.
 %Note that in the construction from \cref{twhkogprthtrerge1} we have
% $\EG\simeq \hEGs$, and   under this equivalence the functor $y'$ is the functor in \eqref{fqwoiejfioqwedeqwdewqdqewd}.
 \begin{ddd}\label{wokjgwpegregfwrefrewfgdgh} We define the functors
\begin{equation}\label{oqjofpqwefqwefqwdq}\aGc:  G\nCalg\to \AGc\ , \qquad \eGG:G\nCalg\to \EG
\end{equation} {as}  left Kan-extensions 
$$\xymatrix{G\nCalg_{\sepa}\ar[dr]_{\incl}\ar[r]^{\asGc}&\AsGc\ar[r]^{y'}&\AGc\\&G\nCalg\ar@{..>}[ur]_{\aGc}&}\ , \qquad \xymatrix{G\nCalg_{\sepa}\ar[dr]_{\incl}\ar[r]^{\esG}&\EsG\ar[r]^{y'}&\EG\\&G\nCalg\ar@{..>}[ur]_{\eG}&}\ .$$
\end{ddd}

In the statement below  the superscript $\mathrm{sfin}$ stands for  the condition of being $s$-finitary, see \cref{fjioqwefdqwedewdqwed}.\ref{wergoijoergferfegwegwerg}. The following proposition states the universal propertes of {the functors described in \cref{wokjgwpegregfwrefrewfgdgh}.}
\begin{prop}\label{oopwerfewrsfg} \mbox{} \begin{enumerate}\item Pull-back along $\aGc$ induces for every   $\infty$-category $\bC$  {with filtered colimits} an equivalence 
\begin{equation}\label{fasdfoiuqiofafds2}(\aGc)^{*} :\Fun^{\mathrm{fil}}(\AGc,\bC)\stackrel{\simeq}{\to}\Fun^{h,\cfil ,\mathrm{sfin}}(G\nCalg,\bC)\ .
\end{equation}
\item \label{oigopregpwerferwf9} Pull-back along $\eGG$ induces for every cocomplete stable $\infty$-category $\bC$ an equivalence
\begin{equation}\label{fasdfoiuqiofafds}(\eGG)^{*} :\Fun^{\colim}(\EG,\bC)\stackrel{\simeq}{\to}\Fun^{h,K_{G},\exa,\oplus,\mathrm{sfin}}(G\nCalg,\bC)\ .
\end{equation}  \end{enumerate}
\end{prop}
\begin{proof} This follows immediately from \cref{wrejgioweferfw1} or \cref{wrejgioweferfw} {and} the universal property of $\Ind_{\aleph_{1}}$. See also \cite[Sec. 3]{KKG} or \cite[Sec. 8]{keb} for more detailed arguments.
\end{proof}
The symmetric monoidal structure on $\AsGc$ or $\EsG$ (note that it preserves countable colimits in each argument) induces an entry-wise filtered colimit preserving symmetric monoidal structure on the $\aleph_{1}$-$\Ind$-completions $\AGc$ or  $\EG$.

\begin{prop} \label{9gwergwerfw} \mbox{} \begin{enumerate}\item 
The functor $\aGc$ has a symmetric monoidal refinement such that
the pull-back along $\aGc$ induces for every symmetric monoidal   $\infty$-category $\bC$ {admitting filtered colimits}  {{with} an entry-wise filtered colimit preserving tensor product}
an equivalence
\begin{equation}\label{fasdfoiuqioewwewefafds}(\aGc)^{*} :\Fun_{\otimes/\mathrm{lax}}^{{\mathrm{fil}}}(\AGc,\bC)\stackrel{\simeq}{\to} \Fun_{\otimes/\mathrm{lax}}^{ h, \cfil,\mathrm{sfin}}(G\nCalg,\bC)\ .
\end{equation} 

\item \label{giigopwerfwergweg}The functor $\eGG$ has a symmetric monoidal refinement such that
the pull-back along $\eGG$ induces for every  cocomplete symmetric monoidal  stable $\infty$-category $\bC$ with bi-cocontinuous tensor product 
an equivalence
\begin{equation}\label{fasdfoiuqiofafds4}(\eGG)^{*} :\Fun_{\otimes/\mathrm{lax}}^{\colim}(\EG,\bC)\stackrel{\simeq}{\to} \Fun_{\otimes/\mathrm{lax}}^{h,K_{G},\exa,\oplus,\mathrm{sfin}}(G\nCalg,\bC)\ .
\end{equation}   \end{enumerate}

\end{prop}
\begin{proof}[Sketch]
The arguments are similar as the corresponding parts of \cite[Sec. 8]{keb} or the proof of \cite[Prop. 3.8]{KKG}.
\end{proof}

 \subsection{Construction of ${\EsGght}$ as a localization at $E$-equivalences}\label{twhkogprthtrerge}
In this section we construct a functor $$\esGght:G\nCalg_{\sepa}\to \EsGght$$ in analogy to the constructions  of the $\infty$-categorical version of $KK$-theory  in \cite{LN}  and equivariant $KK$-theory 
in \cite{KKG} as the Dwyer-Kan localization of $G\nCalg_{\sepa}$ at the classical $E$-theory equivalences.
 In order to distinguish the functor constructed in the present section from the functors constructed in the subsequent sections \cref{twhkogprthtrerge1} and \cref{irthjzgjdiogjhjdicnmjksdisjf} we will momentarily  add the subscript $GHT$ in order to make clear that its construction is based on the work of Guentner, Higson and Trout  \cite{Guentner_2000}.

Recall that $$\esGn:G\nCalg_{\sepa}\to  \EsGn $$ denotes the classical equivariant $E$-theory functor introduced in \cite[Def. 6.8]{Guentner_2000}.  A morphism $f:A\to B$ in $G\nCalg_{\sepa}$  is called a classical $E$-theory equivalence if $\esGn (f):\esGn(A)\to  \esGn(B)$ is an isomorphism in $\EsGn$.

\begin{ddd}\label{wetrokgpwergfrefwerfwrefgsf}
We define $$\esGght:G\nCalg_{\sepa}\to  \EsGght$$ as the Dwyer-Kan localization
of $G\nCalg_{\sepa}$ at the classical $E$-theory equivalences.
\end{ddd}

{By definition, pull-back along $\esGght$ induces for any $\infty$-category $\bC$ an equivalence
 \begin{equation}\label{gqjiofwefqwefqwefqew}(\esGght)^{*}:\Fun(\EsGght,\bC)\stackrel{\simeq}{\to} \Fun^{W_{\mathrm{GHT}}}(G\nCalg_{\sepa},\bC)\ ,
\end{equation}
where $W_{\mathrm{GHT}}$ stands for the full subcategory of functors which send classical $E$-theory equivalences to equivalences.}
In \cref{gjerogrgesrgsg} below we will show that   $\esGght$ {also} has the universal property required by \cref{wrejgioweferfw}.
 
Note that the classical $E$-theory functor $\esGn$ is homotopy invariant  (by construction) and $K_{G}$-stable
(see  \cite[Prop. 6.10]{Guentner_2000}).
\begin{kor}
The functor $\esGght
$ is homotopy invariant and  $K_{G}$-stable. \end{kor}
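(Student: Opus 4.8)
The plan is to observe that this corollary is a formal consequence of the construction of $\esGght$ as a Dwyer–Kan localization, combined with the corresponding invariance properties of the classical functor $\esGn$. First I would recall from \cref{wetrokgpwergfrefwerfwrefgsf} that $\esGght$ is, by definition, the Dwyer–Kan localization of $G\nCalg_{\sepa}$ at the class $W$ of classical $E$-theory equivalences, so that in particular $\esGght$ sends every morphism lying in $W$ to an equivalence in $\EsGght$.

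Next I would check that $W$ contains all homotopy equivalences. If $f\colon A\to B$ is a homotopy equivalence in $G\nCalg_{\sepa}$, then, since $\esGn$ is homotopy invariant by construction, $\esGn(f)$ is an isomorphism in $\EsGn$; by definition this says precisely that $f$ is a classical $E$-theory equivalence, i.e.\ $f\in W$. Hence $\esGght(f)$ is an equivalence, which is the assertion that $\esGght$ is homotopy invariant.

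Finally I would run the analogous argument for $K_{G}$-stability, using that the classical functor $\esGn$ is $K_{G}$-stable by \cite[Prop. 6.10]{Guentner_2000}: if $f\colon A\to B$ is a $K_{G}$-equivalence, then $\esGn(f)$ is an isomorphism, so again $f\in W$, and therefore $\esGght(f)$ is an equivalence in $\EsGght$. This gives $K_{G}$-stability of $\esGght$.

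I do not expect any real obstacle here; the only thing to verify is the (immediate) compatibility between the definition of a classical $E$-theory equivalence and the two classes of homotopy equivalences and $K_{G}$-equivalences, which is exactly what the cited properties of $\esGn$ supply. The same reasoning in fact shows the stronger statement that $\esGght$ inverts every morphism which is inverted by $\esGn$, a remark that will be useful when comparing $\esGght$ with the other models of equivariant $E$-theory in \cref{xvbnfiohregiohsdfpojsdf}.
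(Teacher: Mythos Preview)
Your proposal is correct and matches the paper's approach exactly: the corollary is stated immediately after the observation that $\esGn$ is homotopy invariant and $K_{G}$-stable, and the (implicit) argument is precisely that these two classes of morphisms are therefore contained in the class $W$ of classical $E$-theory equivalences at which $\esGght$ localizes.
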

By the universal property of the functor $\esGght$ we get the dashed arrow  in the following square
  \begin{equation}\label{fweddqwedqed}
 \xymatrix{G\nCalg\ar[r]^-{\esGn}\ar[d]^{\esGght}&\EsGn \\\EsGght\ar[r]^{\ho}\ar@{-->}[ur]&\ho \EsGght\ar@{..>}[u]_{\phi} }
\end{equation} such the corresponding upper triangle commutes.      Since $\EsGn$ is an  ordinary category, by the universal property of  $\ho$ we further get the dotted arrow  $\phi$ and the other commutative triangle. Note that $\phi$ is conservative {by construction}.
We will show in \cref{jwegoeferfrefwerf} below that $\phi$ is an equivalence.  

In order to discuss  finite  limits in $\EsGght$  we closely follow \cite[Sec. 2]{KKG} or \cite[Sec. 3]{keb}.  First of all, the category $G\nCalg_{\sepa}$ is pointed and admits all finite limits. In order to descend these limits to the localization $\EsGght$ we use that $G\nCalg_{\sepa}$
 has  the structure of a category of fibrant objects \cite{Brown_1973}, see also \cite[Def. 1.1]{Uuye:2010aa}, \cite[Def. 2.7]{KKG}, or  \cite[Def 7.4.12  and 7.5.7]{Cisinski:2017} for an $\infty$-categorical version) such that:
\begin{enumerate}
\item The fibrations are the surjective homomorphisms.
\item The weak equivalences are the classical $E$-theory equivalences. 
\end{enumerate}
A similar structure for the classical $KK$-theory equivalences has first been considered in \cite{Uuye:2010aa}.
The existence of the  structure of a category of fibrant objects  implies that $\esGght$ precisely inverts the classical $E$-equivalences.   
Using  \cite[Prop. 7.5.6]{Cisinski:2017}  we can further conclude  (see also \cite[Prop. 3.16]{keb} for similar arguments):
 \begin{kor}\label{wergjiowerferfwefwref}
The category $\EsGght$ is pointed, left-exact and the functor $\esGght$ is exact. Moreover, every fibre sequence in $\EsGght$ is  induced up to equivalence by a short exact sequence in $G\nCalg_{\sepa}$.
\end{kor}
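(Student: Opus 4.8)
The plan is to exhibit $G\nCalg_{\sepa}$ as a category of fibrant objects with weak equivalences the classical $E$-equivalences, and then to transport finite limits and their exactness to the Dwyer-Kan localization $\EsGght$ using the theory developed by Cisinski \cite{Cisinski:2017}. First I would recall that $G\nCalg_{\sepa}$ is pointed and admits all finite limits: the zero algebra is a zero object and pullbacks of $C^{*}$-algebras exist. Next I would declare the fibrations to be the surjective $G$-equivariant homomorphisms and the weak equivalences to be the classical $E$-theory equivalences, and verify the axioms of a category of fibrant objects in the sense of Brown \cite{Brown_1973} (equivalently \cite[Def 7.4.12, 7.5.7]{Cisinski:2017}). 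The nontrivial axioms are that surjections are stable under pullback (clear, since a pullback of $C^{*}$-algebras along a surjection is again surjective onto its image and the relevant map stays surjective), that weak equivalences satisfy two-out-of-three and compose, that pullbacks of acyclic fibrations are acyclic fibrations (using $K_{G}$-stability and exactness of $\esGn$ from \cite{Guentner_2000}), and that every object admits a path object — here one uses $C([0,1],A)$ together with the fact that $\esGn$ is homotopy invariant, so $A \to C([0,1],A)$ is a weak equivalence and $C([0,1],A) \to A \times A$ is a surjection. This is essentially the same verification as for $KK$-theory in \cite{Uuye:2010aa} and \cite[Sec. 2]{KKG}.

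Once the category of fibrant objects structure is in place, I would invoke \cite[Prop. 7.5.6]{Cisinski:2017}, which says that for a category of fibrant objects $\mathcal{C}$ the Dwyer-Kan localization at the weak equivalences is a pointed, finitely complete $\infty$-category, that the localization functor preserves the terminal object and pullbacks of fibrations (hence is left-exact in the relevant sense), and — crucially — that every cartesian square in the localization is equivalent to the image of a homotopy cartesian square of fibrant objects, which after fibrant replacement may be taken to be a genuine pullback square with one leg a fibration. Applying this to our situation: $\EsGght$ is pointed and left-exact, and $\esGght$ is exact in the sense of \cref{fjioqwefdqwedewdqwed}.\ref{fiqewpoqdqded2} — it sends the zero algebra to $0$ and short exact sequences (which are pullback squares along a surjection, i.e. along a fibration) to cartesian squares. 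For the last clause, given any fibre sequence $X \to Y \to Z$ in $\EsGght$, Cisinski's result produces a homotopy pullback square of fibrant objects realizing it; replacing the relevant map by a fibration gives an honest short exact sequence of $G$-$C^{*}$-algebras (kernel–quotient sequence) whose image under $\esGght$ recovers the fibre sequence up to equivalence.

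The main obstacle I anticipate is the verification of the category of fibrant objects axioms — specifically, that pullbacks of acyclic fibrations along arbitrary maps are again acyclic fibrations. This is where one genuinely needs input from classical $E$-theory: one must know that $\esGn$ sends the pullback square to a cartesian square, which follows from the exactness and half-exactness properties of $E$-theory established in \cite{Guentner_2000} (the kernel of the pulled-back surjection is canonically isomorphic to the kernel of the original surjection, and $\esGn$ of the original kernel is zero since the original surjection is acyclic, then one uses the six-term exact sequence). The path object axiom also requires a small argument that $C([0,1],A) \to A \times A$ can be used; this is standard and parallel to \cite[Prop. 3.16]{keb}. Everything else — stability of surjections under pullback, two-out-of-three for $E$-equivalences — is formal or immediate. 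Once these are checked, the rest of the proof is a direct citation of \cite[Prop. 7.5.6]{Cisinski:2017}, exactly as indicated in the statement, so I would keep the write-up short and point to the analogous treatments in \cite{keb} and \cite{KKG} rather than reproduce the full verification.
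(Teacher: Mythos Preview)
Your proposal is correct and follows essentially the same approach as the paper: the paper also sets up a category of fibrant objects structure on $G\nCalg_{\sepa}$ with surjections as fibrations and classical $E$-theory equivalences as weak equivalences (citing \cite{Brown_1973}, \cite{Uuye:2010aa}, \cite[Def. 2.7]{KKG}, and \cite[Def 7.4.12, 7.5.7]{Cisinski:2017}), and then invokes \cite[Prop. 7.5.6]{Cisinski:2017} together with the analogous arguments in \cite[Prop. 3.16]{keb} to obtain the conclusion. Your sketch in fact supplies more detail on the verification of the axioms than the paper does.
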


\begin{prop}\label{jwegoeferfrefwerfst}
The $\infty$-category $\EsGght$ is stable.   \end{prop}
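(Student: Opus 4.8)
The plan is to deduce stability from the criterion that a pointed $\infty$-category which admits finite limits is stable as soon as its loop functor is an equivalence: for such a category the tower $\cdots\to\EsGght\xrightarrow{\Omega}\EsGght$ has a limit which is stable (its $\infty$-category of spectrum objects, by \cite[Cor. 1.4.2.17]{ha}), and if $\Omega$ is an equivalence this limit is identified with $\EsGght$ itself. By \cref{wergjiowerferfwefwref} the $\infty$-category $\EsGght$ is already pointed and left exact, so everything reduces to showing that $\Omega\colon\EsGght\to\EsGght$ is an equivalence; for this it suffices to exhibit a natural equivalence $\Omega^{2}\simeq\id_{\EsGght}$, since then $\Omega$ becomes its own quasi-inverse.

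I would obtain $\Omega^{2}\simeq\id$ as an incarnation of Bott periodicity for classical equivariant $E$-theory. First, for $A$ in $G\nCalg_{\sepa}$ the mapping cone sequence $0\to C_{0}(\IR,A)\to C_{0}((0,1],A)\to A\to 0$, together with the exactness of $\esGght$ and the contractibility of $C_{0}((0,1],A)$, gives a natural equivalence $\Omega\esGght(A)\simeq\esGght(C_{0}(\IR,A))$ and hence $\Omega^{2}\esGght(A)\simeq\esGght(C_{0}(\IR^{2},A))$. Second, applying $\esGght$ to the exact sequence $0\to C_{0}([1,\infty),B)\to C_{b}([1,\infty),B)\to\mathfrak{A}B\to 0$ defining the asymptotic algebra $\mathfrak{A}B$, and using that $C_{0}([1,\infty),B)$ is contractible while $B\hookrightarrow C_{b}([1,\infty),B)$ is a homotopy equivalence, yields a natural equivalence $\esGght(B)\simeq\esGght(\mathfrak{A}B)$; tensoring the relevant zig-zag of $C^{*}$-algebras with a fixed $A$ (the maximal tensor product preserves exact sequences and homotopies) upgrades this to $\esGght(B\otimes A)\simeq\esGght(\mathfrak{A}B\otimes A)$, naturally in $A$. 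Feeding in the Bott asymptotic morphism $\beta\colon\IC\to\mathfrak{A}(C_{0}(\IR^{2}))$ of \cite{Guentner_2000} and tensoring with $A$ then produces a natural transformation of functors $G\nCalg_{\sepa}\to\EsGght$,
$$\esGght(A)\xrightarrow{\ \esGght(\beta\otimes\id_{A})\ }\esGght(\mathfrak{A}(C_{0}(\IR^{2}))\otimes A)\simeq\esGght(C_{0}(\IR^{2},A))\simeq\Omega^{2}\esGght(A),$$
and the Dirac asymptotic morphism produces in the same way a natural transformation $\Omega^{2}\esGght(A)\to\esGght(A)$.

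To see that these two transformations are mutually inverse equivalences I would pass to the homotopy category: under the conservative functor $\phi\colon\ho(\EsGght)\to\EsGn$ of \eqref{fweddqwedqed} the two composites become the Bott--Dirac composites $\alpha_{A}\circ\beta_{A}$ and $\beta_{A}\circ\alpha_{A}$, which are the respective identities by the Bott periodicity theorem of \cite{Guentner_2000}. A morphism of $\EsGght$ is an equivalence precisely when its image in $\ho(\EsGght)$ is invertible, and $\phi$ is conservative, so both composites are equivalences in $\EsGght$ and hence each transformation is an objectwise equivalence. Since every object of $\EsGght$ lies in the essential image of $\esGght$ and, by the universal property of the Dwyer--Kan localization, natural transformations between $E$-inverting functors $G\nCalg_{\sepa}\to\EsGght$ correspond to those between their factorizations through $\esGght$, this produces the desired equivalence $\id_{\EsGght}\simeq\Omega^{2}$; thus $\Omega$ is an equivalence and $\EsGght$ is stable.

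The step I expect to be the main obstacle is the compatibility hidden in the second paragraph: checking that applying $\esGght$ to the homomorphism $\IC\to\mathfrak{A}(C_{0}(\IR^{2}))$ and then trivialising $\esGght(\mathfrak{A}(-))\simeq\esGght(-)$ really recovers the morphism in $\EsGn$ that \cite{Guentner_2000} associates to the Bott asymptotic morphism, so that the composite in $\ho(\EsGght)$ is genuinely the classical $\alpha_{A}\circ\beta_{A}$. Once this is pinned down, the remaining steps are formal consequences of exactness, homotopy invariance and the universal property of the localization.
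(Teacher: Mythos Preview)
Your high-level strategy agrees with the paper's: show that $\Omega^{2}\simeq\id_{\EsGght}$ by importing Bott periodicity from classical equivariant $E$-theory and then invoke the conservativity of $\phi$. However, your implementation has a genuine gap that is not merely the compatibility check you flag at the end.

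The functor $\esGght$ is defined on $G\nCalg_{\sepa}$, the category of \emph{separable} $G$-$C^{*}$-algebras. The algebras $C_{b}([1,\infty),B)$ and $\mathfrak{A}B=C_{b}([1,\infty),B)/C_{0}([1,\infty),B)$ are not separable (already $C_{b}([1,\infty))$ surjects onto $\ell^{\infty}$), so you cannot apply $\esGght$ to the exact sequence $0\to C_{0}([1,\infty),B)\to C_{b}([1,\infty),B)\to\mathfrak{A}B\to 0$, nor to $\mathfrak{A}(C_{0}(\R^{2}))\otimes A$. The map $\esGght(\beta\otimes\id_{A})$ in your display therefore does not exist, and the identification $\esGght(\mathfrak{A}B\otimes A)\simeq\esGght(B\otimes A)$ is meaningless in this setting. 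The whole point of the asymptotic-morphism formalism is that it produces morphisms in $\EsGn$ which are \emph{not} induced by genuine $*$-homomorphisms between separable algebras; you cannot simply pull them back through the Dwyer--Kan localization.

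The paper circumvents this entirely by using the Toeplitz extension $0\to K\to\cT_{0}\to S(\C)\to 0$, which consists of separable algebras, tensoring it with $A$, and taking the boundary map $\beta_{A}:=d_{\id_{A}\otimes\pi}:\Omega^{2}\esGght(A)\to\esGght(A)$ of the resulting fibre sequence in $\EsGght$ (using exactness and $K_{G}$-stability to simplify the terms). The identification of $\phi(\beta_{A})$ with the classical Bott isomorphism then goes through Cuntz's picture of Bott periodicity via the Toeplitz extension, which \cite{Guentner_2000} also uses. This keeps everything inside $G\nCalg_{\sepa}$ and avoids any reference to asymptotic algebras.
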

\begin{proof}
For the moment let $F:G\nCalg_{\sepa}\to \bC$ be any exact and homotopy invariant functor.
For $A$ in $G\nCalg_{\sepa}$   the suspension $S(A)$  fits into an  exact sequence
$$\xymatrix{S(A)\ar[r]\ar[d]&C_{0}([0,1),A)\ar[d]^{\ev_{0}}\\ 0\ar[r]&A}\ .$$
Since $\ev_{0}$ is surjective, using the  exactness of $F$  we get the  cartesian square
 $$\xymatrix{F(S(A))\ar[r]\ar[d]&F(C_{0}([0,1),A)) \ar[d]^{\ev_{0}}\\ 0\ar[r]&F(A)}\ .$$
 By the homotopy invariance of $F$ we have an equivalence  $F(C_{0}([0,1),A))\simeq 0$ and the square  above provides an equivalence
 \begin{equation}\label{adsvacadqwe}
 F(S(A))\simeq \Omega F(A)\ .
\end{equation} 
  Let \begin{equation}\label{cjdsiocjoiejfoidsaj}
0\to I\to A\stackrel{f}{\to} B\to 0
\end{equation}   be an exact sequence in $G\nCalg_{\sepa}$. Then using the mapping cone construction we get a homomorphism of short exact sequences   \begin{equation}\label{fqwehwiedqwedwoediwqodi}
\xymatrix{0\ar[r]&I\ar[d]^{\iota_{f}}\ar[r]&A\ar[d]^{\sim}_{h}\ar[r]^{f}&B\ar@{=}[d]\ar[r]&0\\0\ar[r]&C(f)\ar[r]&Z(f)\ar[r]^{\tilde f}&B\ar[r]&0\\&S(B)\ar[u]^{\partial_{f}}&&&}\ ,
\end{equation}
where  $h$ is a homotopy equivalence and we added the map $\partial_{f}$ which is part of the Puppe sequence associated to $f$  (see e.g.  \cite[(5.1)]{keb} for notation and more details).
We apply $F$ and get the map
\begin{equation}\label{fqwefqwdsafsdfdsf}
\xymatrix{ \Omega F(B) \ar@{=}[d]\ar[r]^{d_{f}}&F(I)\ar[d]_{\simeq}^{F(\iota_{f})}\ar[r]&F(A)\ar[d]^{\simeq}_{F(h)}\ar[r]^{F(f)}&F(B)\ar@{=}[d]\\\ar@{-}[dr]_{\eqref{adsvacadqwe}}^{\simeq}\Omega F(B)\ar[r]  &F(C(f))\ar[r]&F(Z(f))\ar[r]^{F(\tilde f)}&F(B)\\&F(S(B))\ar[u]_{F(\partial_{f})}&&}\ .
\end{equation} 
of  horizontal  fibre sequences. The   commutativity of the left  triangle is given by the usual Puppe sequence argument showing
that  $F (\partial_{f})$  represents the fibre
of $F(C(f))\to F(Z(f))$.

We now specialize to the homotopy invariant and exact functor $F=\esGght$.
  We can conclude that $\phi(d_{f})$ (with $\phi$ as in \eqref{fweddqwedqed}) induces the boundary  maps in the long exact sequences in classical $E$-theory \cite[Thm. 6.20]{Guentner_2000}.

Following \cite[Sec. 4]{MR750677} we use the  Toeplitz extension 
\begin{equation}\label{eqwdqwewefqewfewfq}
0\to K\to \cT_{0}\stackrel{\pi}{\to} S(\C)\to 0
\end{equation}  in order to  introduce Bott periodicity  (see e.g. \cite[(6.1), (6.3) and  (6.4)]{keb} for details).
We  form tensor product of the Toeplitz extension  with $A$ in $G\nCalg$ and apply $\esGght$ in order to get a fibre sequence \begin{equation}\label{ewffdfsvgdfgser}
\Omega^{2} \esGght(A)\stackrel{\beta_{A}}{\to} \esGght(A)\to \esGght(A\otimes \cT_{0})\xrightarrow{\esGght(\id_{A}\otimes \pi)} \Omega \esGght(A)\ ,
\end{equation}
where $\beta_{A}:=d_{\id_{A}\otimes \pi}$ is an instance of $d_{f}$ in \eqref{fqwefqwdsafsdfdsf}.
Here we used   $K_{G}$-stability in order to remove tensor factors $K$, the isomorphism $\C\otimes A\cong A$ and the equivalences $\Omega^{2}\esGght(A)\simeq \Omega \esGght(S(A))$ and \eqref{adsvacadqwe} in order to rewrite the terms appropriately.  By  \eqref{fqwefqwdsafsdfdsf} we know that $\phi(\beta_{A})\simeq \esGn(\partial_{\id_{A}\otimes \pi})$ and therefore coincides with the Bott periodicity isomorphism  from \cite[Prop. 6.16]{Guentner_2000} (which is based on \cite[Sec. 4]{MR750677}). Since $\phi$ is conservative we conclude that   $\beta_{A}$ is an equivalence.
The family $(\beta_{A})_{A\in G\nCalg_{\sepa}}$ implements an equivalence of endofunctors $\beta:\Omega^{2}\stackrel{\simeq}{\to} \id_{\EsGght}$ of the left-exact $\infty$-category $\EsGght$
which shows that $\EsGght$ is a stable $\infty$-category.
\end{proof}

\begin{prop}\label{jwegoeferfrefwerf}
The functor  $\phi:\ho\EsGght\to \EsGn$ in \eqref{fweddqwedqed} is an equivalence.
\end{prop}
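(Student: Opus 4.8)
The plan is to show that the functor $\phi:\ho\EsGght\to\EsGn$ constructed in \eqref{fweddqwedqed} is essentially surjective, full, and faithful. Essential surjectivity is immediate: both categories have the objects of $G\nCalg_{\sepa}$ as a generating set (every object of $\ho\EsGght$ is equivalent to $\esGght(A)$ for some $A$ since $\esGght$ is a localization, and $\phi(\esGght(A))\simeq\esGn(A)$ by the commuting triangle), and $\esGn$ is surjective on objects by construction in \cite{Guentner_2000}. For fullness and faithfulness, I would first reduce via the universal property of the Dwyer--Kan localization: a morphism $\esGght(A)\to\esGght(B)$ in $\ho\EsGght$ is, by definition, a morphism in the localization, and the key point is to identify $\pi_0\Map_{\EsGght}(\esGght(A),\esGght(B))$ with the classical $E$-theory group $[A,B]_{E}:=\Hom_{\EsGn}(\esGn(A),\esGn(B))$ compatibly with composition.

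The main step is therefore to produce a concrete model for the mapping spaces of $\EsGght$. Here I would follow the strategy of \cite{LN} and \cite[Sec. 2]{KKG}: using that $G\nCalg_{\sepa}$ is a category of fibrant objects with weak equivalences the classical $E$-equivalences (as recorded just before \cref{wergjiowerferfwefwref}), the mapping space $\Map_{\EsGght}(\esGght(A),\esGght(B))$ is computed as a filtered colimit (hammock/zig-zag localization, or equivalently \cite[Prop. 7.5.6]{Cisinski:2017}) of homotopy classes of spans $A\xleftarrow{\sim}\tilde A\to B$ where the left leg is an $E$-equivalence. One then feeds in the explicit classical model of $E$-theory: $[A,B]_{E}$ is presented by homotopy classes of asymptotic morphisms $\mathfrak{S}^2\otimes A\otimes\mathcal K_G\dashrightarrow \mathfrak{S}^2\otimes B\otimes\mathcal K_G$ (or whatever normalization \cite{Guentner_2000} uses), and asymptotic morphisms are themselves represented by genuine $\ast$-homomorphisms out of a mapping-cone/asymptotic algebra, hence by honest spans. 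The content is to check that the comparison map $\pi_0\Map_{\EsGght}(\esGght(A),\esGght(B))\to[A,B]_{E}$ induced by $\phi$ is a bijection: surjectivity because every asymptotic morphism gives a span, injectivity because two spans inducing the same class in $[A,B]_{E}$ are connected by a zig-zag of $E$-equivalences.

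An alternative and cleaner route — which I would actually prefer to present — uses the stability of $\EsGght$ (already proved in \cref{jwegoeferfrefwerfst}) together with the observation that $\phi$ is conservative and exact. Since $\EsGght$ is stable, it suffices to compute the groups $[S^n\esGght(A),\esGght(B)]_{\ho\EsGght}=\pi_n\Map_{\EsGght}(\esGght(A),\esGght(B))$ for all $n\in\IZ$ and show $\phi$ identifies the $n=0$ group with $[A,B]_E$ and kills the higher ones, i.e.\ the mapping spectra are Eilenberg--MacLane. For the latter, one exhibits enough fibre sequences: using the Puppe sequence machinery of \eqref{fqwefqwdsafsdfdsf} and the long exact sequence in classical $E$-theory \cite[Thm. 6.20]{Guentner_2000}, which $\phi$ is already known to match up with (shown in the proof of \cref{jwegoeferfrefwerfst}), one reduces computing $\pi_*\Map_{\EsGght}$ to a five-lemma argument over the comparison of long exact sequences, bootstrapping from the case $A=\C$ where classical Bott periodicity and the known value $[\C,B]_E\cong K_0^G(B)$ pin everything down.

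\textbf{Expected main obstacle.} The genuinely hard part is the honest computation that $\pi_0\Map_{\EsGght}(\esGght(A),\esGght(B))\cong[A,B]_E$: one must either unwind the hammock localization of the category-of-fibrant-objects structure and match spans with asymptotic morphisms, or set up a robust enough comparison of the two long exact sequences (in $\ho\EsGght$ and in $\EsGn$) to run the five lemma — and to start that induction one still needs one nontrivial computation, namely $\pi_*\Map_{\EsGght}(\esGght(\C),\esGght(B))$, which ultimately rests on the classical identification of $E$-theory with $K$-theory in the relevant variable. Everything else (essential surjectivity, exactness of $\phi$, compatibility with suspension, conservativity) is formal given the results already established in the excerpt.
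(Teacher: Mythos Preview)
The paper's argument is a two-line formal trick that avoids all the work you anticipate. Since $\esGght$ is exact and $\EsGght$ is stable (\cref{jwegoeferfrefwerfst}), the composite $\ho\circ\esGght:G\nCalg_{\sepa}\to\ho\EsGght$ is a homotopy invariant, $K_G$-stable, half-exact functor to an additive category. The classical universal property of $\esGn$ (stated in the introduction of \cite{Guentner_2000}: $\esGn$ is initial among such functors) then produces a functor $\psi:\EsGn\to\ho\EsGght$ under $G\nCalg_{\sepa}$. Now $\phi\circ\psi$ and $\psi\circ\phi$ are endofunctors under $G\nCalg_{\sepa}$, hence the identities by the same universal properties. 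No mapping space is ever computed, and the ``expected main obstacle'' you correctly isolate is precisely what the paper sidesteps by building an explicit inverse rather than comparing Hom-sets directly.

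Your routes would in principle succeed but are substantially harder. One confusion in your second approach: $\phi$ is a functor between ordinary categories, so there are no higher homotopy groups for it to ``kill''; you only need the bijection on $\pi_0$ of mapping spaces, i.e.\ on Hom-sets of $\ho\EsGght$. The accompanying claim that the mapping spectra of $\EsGght$ are Eilenberg--MacLane is both irrelevant to the statement and false (e.g.\ $\map_{\EsGght}(\esGght(\C),\esGght(\C))$ is $2$-periodic). Finally, note that the paper itself flags in \cref{jiogpgergrwgw9} that the classical universal property of $\esGn$ is asserted but not fully verified in \cite{Guentner_2000}, and provides an independent argument in \cref{xvbnfiohregiohsdfpojsdf}; if you are uneasy relying on it, that alternative route (via $\esGs$ and $\esGf$) is the honest replacement, not a direct computation of $\pi_0\Map_{\EsGght}$.
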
\begin{proof}
The functor $\ho\circ \esG:G\nCalg_{\sepa}\to \ho\EsGght$ is a  homotopy invariant and $K_{G}$-stable.
Since $\esGght$ is exact and $\EsGght$ is stable by \cref{jwegoeferfrefwerfst} the composition $\ho\circ \esGght$
 is a half-exact functor to an additive category. In view of the universal property of $\esGn$
formulated in the introduction of \cite{Guentner_2000}  we get an inverse functor $\psi:\EsGn\to \ho\EsGght$ such that
$$\xymatrix{&G\nCalg_{\sepa}\ar[dl]_{\ho\circ \esGght}\ar[dr]^{\esGn}\\\EsGn\ar[rr]^{\psi}&&\EsGght}$$
commutes. Using the universal properties of both functors $\esGn$ and $\ho\circ \esGght$ we can then conclude that
$\phi$ and $\psi$ are inverse to each other.
 \end{proof}

\begin{lem}\label{rokgpgwergwerg9}
The stable $\infty$-category $\EsGght$ is countably cocomplete and the functor $\esGght$  is countable sum preserving.
\end{lem}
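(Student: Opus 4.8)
The plan is to establish the statement by showing that $\EsGght$ admits all countable coproducts, that the canonical map $\coprod_{i\in I}\esGght(A_{i})\to\esGght(\bigoplus_{i\in I}A_{i})$ is an equivalence for every countable family $(A_{i})_{i\in I}$ in $G\nCalg_{\sepa}$, and then bootstrapping from countable coproducts to countable cocompleteness using stability. Throughout I would use that, being a Dwyer--Kan localization (\cref{wetrokgpwergfrefwerfwrefgsf}), the functor $\esGght$ is essentially surjective, so that every object of $\EsGght$ is equivalent to $\esGght(B)$ for some separable $G$-$C^{*}$-algebra $B$.

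First I would compute the homotopy groups of the mapping spaces of $\EsGght$. Using the equivalence $\ho\EsGght\simeq\EsGn$ from \cref{jwegoeferfrefwerf}, the iterated natural equivalence $\esGght(S(B))\simeq\Omega\esGght(B)$ from the proof of \cref{jwegoeferfrefwerfst} (see \eqref{adsvacadqwe}), and Bott periodicity $\beta\colon\Omega^{2}\xrightarrow{\simeq}\id_{\EsGght}$, one obtains for every $n\in\IZ$ a natural isomorphism
$$\pi_{n}\,\mathrm{map}_{\EsGght}\bigl(\esGght(A),\esGght(B)\bigr)\;\cong\;E^{G}\bigl(S^{k}A,B\bigr),$$
where $\mathrm{map}_{\EsGght}(-,-)$ is the mapping spectrum, $S^{k}A=C_{0}(\IR^{k},A)$, and $k=k(n)\in\{0,1\}$; the key point is that all suspensions can be transported into the first variable by $2$-periodicity. (Equivalently, one checks the corresponding statement for mapping spaces by computing $\pi_{0}$ and the higher homotopy groups at the canonical basepoint.)

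Next I would invoke the countable additivity of classical equivariant $E$-theory in the first variable \cite{Guentner_2000} (which may alternatively be re-derived directly from the asymptotic-morphism description): for every countable family $(A_{i})_{i\in I}$ and every separable $G$-$C^{*}$-algebra $C$ the canonical map $E^{G}(\bigoplus_{i\in I}A_{i},C)\to\prod_{i\in I}E^{G}(A_{i},C)$ is an isomorphism. Since $S^{k}(\bigoplus_{i\in I}A_{i})=\bigoplus_{i\in I}S^{k}A_{i}$, the previous step then shows that the canonical comparison map of mapping spectra
$$\mathrm{map}_{\EsGght}\bigl(\esGght(\textstyle\bigoplus_{i\in I}A_{i}),\esGght(B)\bigr)\longrightarrow\prod_{i\in I}\mathrm{map}_{\EsGght}\bigl(\esGght(A_{i}),\esGght(B)\bigr)$$
induces isomorphisms on all homotopy groups, hence is an equivalence; applying $\Omega^{\infty}$ gives the analogous statement for mapping spaces. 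As every object of $\EsGght$ is of the form $\esGght(B)$, this precisely says that $\esGght(\bigoplus_{i\in I}A_{i})$, equipped with the maps $\esGght(\epsilon_{i})$, is a coproduct of $(\esGght(A_{i}))_{i\in I}$ in $\EsGght$. In particular $\EsGght$ admits countable coproducts and $\esGght$ is countable sum preserving.

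Finally, since $\EsGght$ is stable (\cref{jwegoeferfrefwerfst}) it admits all finite colimits, in particular pushouts; together with the countable coproducts just produced this yields all countable colimits (e.g.\ sequential colimits via the telescope $\colim_{n}A_{n}\simeq\cofib\bigl(\bigoplus_{n}A_{n}\xrightarrow{\id-\mathrm{shift}}\bigoplus_{n}A_{n}\bigr)$, and then arbitrary countable colimits from pushouts and countable coproducts), so $\EsGght$ is countably cocomplete. I expect the main obstacle to be the bookkeeping in the mapping-spectrum computation of the second paragraph --- ensuring, via Bott periodicity and \eqref{adsvacadqwe}, that every suspension can be pushed uniformly into the first variable so that classical additivity applies in every degree --- together with pinning down the correct reference for (or a clean direct proof of) the countable additivity of classical equivariant $E$-theory.
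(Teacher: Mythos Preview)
Your proposal is correct and takes essentially the same approach as the paper: both reduce to the classical additivity result \cite[Prop.~7.1]{Guentner_2000} via the equivalence $\ho\EsGght\simeq\EsGn$ of \cref{jwegoeferfrefwerf}, and then use stability to pass from countable coproducts to countable cocompleteness. The only difference is presentational: the paper compresses your mapping-spectrum computation into the one-line observation that in a stable $\infty$-category the functor $\ho$ detects coproduct cones (phrased there as ``$\ho$ is conservative and preserves coproducts''), whereas you unfold this by computing $\pi_n$ of the mapping spectra explicitly via \eqref{adsvacadqwe} and Bott periodicity.
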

\begin{proof}
\cite[Prop. 7.1]{Guentner_2000} states that the functor 
$$\esGn :G\nCalg\to \EsGn$$ sends countable sums to coproducts. 
Hence by \cref{jwegoeferfrefwerf}  the same applies to $\ho\circ \esGght$. Since $\ho$ is conservative and preserves  coproducts
we conclude that $\esGght$ also sends countable sums to coproducts.
In particular, $\esGght$ admits countable coproducts. In fact, they   are represented by countable sums  in $G\nCalg_{\sepa}$. Since $\EsGght$ is stable  the existence of countable coproducts  implies that $\EsGght$ is countably cocomplete.
 \end{proof}

\begin{prop} \label{gjerogrgesrgsg}The precomposition by $\esGght$ induces for every countably cocomplete stable $\infty$-category $\bC$ an equivalence
\begin{equation}\label{fqewiufhiuefhiuewfqwefqwefqwddqdwd}(\esGght)^{*} :\Fun^{\colim_{\omega}}( \EsGght ,\bC)\stackrel{\simeq}{\to} \Fun^{h,K_{G},\exa, \oplus}( G\nCalg_{\sepa},\bC)\ .
\end{equation} 
\end{prop}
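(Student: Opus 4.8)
The plan is to show that $\esGght$ satisfies the universal property by reducing it to the universal property of the Dwyer-Kan localization $L_h$ from \cref{okprthrtertegtrgrg}, combined with the fact (established in \cite{Guentner_2000}) that the classical functor $\esGn$ is initial among half-exact, homotopy invariant, $K_G$-stable, countable-sum-preserving functors to additive categories. First I would observe that by \cref{wetrokgpwergfrefwerfwrefgsf} the functor $\esGght$ is a Dwyer-Kan localization of $G\nCalg_\sepa$ at the classical $E$-equivalences, so precomposition gives a fully faithful embedding
$$(\esGght)^*\colon \Fun^{\colim_\omega}(\EsGght,\bC)\hookrightarrow \Fun^{h,K_G,\exa,\oplus}(G\nCalg_\sepa,\bC)$$
whose essential image consists of those functors $F\colon G\nCalg_\sepa\to\bC$ inverting all classical $E$-equivalences. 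The content of the proposition is therefore the claim that \emph{every} homotopy invariant, $K_G$-stable, exact, countable-sum-preserving functor $F\colon G\nCalg_\sepa\to\bC$ to a countably cocomplete stable $\infty$-category automatically inverts all classical $E$-equivalences; and that the induced functor $\EsGght\to\bC$ preserves countable colimits.

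For the first (main) point, I would argue as follows. Given such an $F$, I first note that $F$ factors through $L_h$ by the homotopy invariance and the universal property \eqref{fqwoeijoqwijdqewdqewd}, and moreover by \cref{weijgwoefrewfwdffsfd} the functor $F$ preserves countable filtered colimits. Passing to homotopy categories, $\ho\circ F\colon G\nCalg_\sepa\to\ho(\bC)$ is then a homotopy invariant, $K_G$-stable functor to an additive category which is half-exact: half-exactness follows because $F$ is exact and $\bC$ is stable, so $F$ sends a short exact sequence to a fibre sequence, and applying $\ho$ to a fibre sequence in a stable $\infty$-category yields a long exact sequence, in particular a six-term exact sequence after invoking the suspension equivalence \eqref{adsvacadqwe} and Bott periodicity $\beta$ from the proof of \cref{jwegoeferfrefwerfst}. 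Now I invoke the universal property of classical equivariant $E$-theory from \cite{Guentner_2000}: there is a (unique) functor $\bar F\colon \EsGn\to\ho(\bC)$ with $\bar F\circ\esGn\simeq \ho\circ F$. In particular $\ho\circ F$ inverts all classical $E$-equivalences, hence so does $F$ (since $\ho$ is conservative). This places $F$ in the essential image of $(\esGght)^*$, establishing essential surjectivity.

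It remains to check that the functor $\tilde F\colon\EsGght\to\bC$ corresponding to $F$ under the localization preserves countable colimits. Since $\EsGght$ is stable and countably cocomplete by \cref{rokgpgwergwerf9}, and $\tilde F$ is exact (it corresponds to an exact $F$, and exactness is detected on the level of the defining fibre sequences which all come from short exact sequences of $C^*$-algebras by \cref{wergjiowerferfwefwref}), it suffices to show that $\tilde F$ preserves countable coproducts. But in $\EsGght$ countable coproducts are represented by countable sums in $G\nCalg_\sepa$ via $\esGght$ (this was noted in the proof of \cref{rokgpgwergwerf9}), and $F=\tilde F\circ\esGght$ preserves countable sums by hypothesis; combining this with the fact that $\esGght$ is countable-sum-preserving (\cref{rokgpgwergwerf9}) gives that $\tilde F$ preserves countable coproducts on objects in the image of $\esGght$, and since these generate $\EsGght$ under countable colimits and $\tilde F$ is exact, $\tilde F$ preserves all countable colimits. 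Finally, conservativity and fully-faithfulness of $(\esGght)^*$ together with essential surjectivity give that it is an equivalence.

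The step I expect to be the main obstacle is the careful bookkeeping around half-exactness: one must verify that $\ho\circ F$ really satisfies exactly the axioms required by the universal property stated in the introduction of \cite{Guentner_2000} — in particular producing the natural six-term exact sequences from the fibre sequences $F$ produces, which requires the suspension identification \eqref{adsvacadqwe} and the Bott equivalence $\beta$ (hence implicitly re-uses the argument of \cref{jwegoeferfrefwerfst}). A secondary subtlety is ensuring that the factorization $\bar F$ provided by \cite{Guentner_2000} is compatible with the one induced by the Dwyer-Kan localization, i.e.\ that no ambiguity arises at the level of natural transformations; this is handled by the fact that $(\esGght)^*$ is fully faithful so any two lifts agree.
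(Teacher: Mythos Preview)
Your approach is essentially the same as the paper's: both use the Dwyer--Kan localization property to reduce to showing that every homotopy invariant, $K_G$-stable, exact functor to a stable $\infty$-category inverts classical $E$-equivalences, and both verify this by invoking the universal property of $\esGn$ from \cite{Guentner_2000} applied to $\ho\circ F$. The paper organizes the argument slightly more cleanly via the diagram \eqref{gegergsfgfdgsdfgsdfgsfdg}, separating the exact step from the $\oplus$ step, but the content is identical.

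One correction to your expected obstacle: your worry about Bott periodicity and six-term exact sequences is misplaced. Half-exactness in the sense of \cite{Guentner_2000} only requires that a short exact sequence be sent to a sequence which is exact at the middle term in the additive target. This follows immediately from the fact that $F$ sends short exact sequences to fibre sequences in the stable $\infty$-category $\bC$, and $\ho$ applied to a fibre sequence yields a sequence exact in the middle. No suspension identification or Bott periodicity is needed for this step; the paper simply states that $\ho\circ F$ is half-exact without further justification. Also note that the countable-sum-preserving hypothesis is not used in the inversion argument (only $h$, $K_G$, $\exa$ are needed there); it enters only when matching $\colim_\omega$-preserving functors on $\EsGght$ with $\oplus$-preserving functors on $G\nCalg_\sepa$, which you handle correctly using essential surjectivity of $\esGght$.
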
 
\begin{proof} The argument is similar to the proof of \cite[Prop. 2.23]{KKG}.
By definition the precomposition by $\esGght$ induces for any $\infty$-category $\bC$ an equivalence
$$\Fun( \EsGght ,\bC)\stackrel{\simeq}{\to} \Fun^{W_{{\mathrm{GHT}}}}( G\nCalg_{\sepa},\bC)\ ,$$
where the superscript $W_{{\mathrm{GHT}}}$ stands for functors which invert the classical $E$-theory equivalences.
We now assume that $\bC$ is stable. We claim that there is an inclusion 
\begin{equation}\label{rtokpgpergertgetrg9}
\Fun^{\mathrm{h},K_{G},\exa}( G\nCalg_{\sepa},\bC)\subseteq  \Fun^{{\mathrm{GHT}}}( G\nCalg_{\sepa},\bC)\ .
\end{equation}
 So assume that $F:G\nCalg_{\sepa}\to \bC$ is homotopy invariant, $K_{G}$-stable and exact. Then $\ho\circ F$ is a homotopy invariant, $K_{G}$-stable and half-exact functor  to the additive category $\ho\bC$. In view of the universal property of $\esGn:G\nCalg_{\sepa}\to \EsGn$  formulated in the introduction of \cite{Guentner_2000} 
 we have a factorization 
 $$\xymatrix{G\nCalg_{\sepa}\ar[dr]_{\esGn}\ar[r]^-{F}&\bC\ar[r]^-{\ho}&\ho\bC\\&\EsGn\ar@{..>}[ur]}\ .$$
In particular, $\ho\circ F$ sends classical $E$-theory equivalences to isomorphisms. Since $\ho$ is conservative, the functor $F$ sends classical $E$-theory equivalences to  equivalences. This shows the claim.

We now build the following diagram
\begin{equation}\label{gegergsfgfdgsdfgsdfgsfdg}
\xymatrix{\Fun^{\colim_{\omega}}( \EsGght,\bC)\ar@{..>}[r]_-{\simeq}^-{(\esGght)^{*}}\ar[d]&\Fun^{\mathrm{h},K_{G},\exa,\oplus}( G\nCalg_{\sepa},\bC) \ar[d]\\\Fun^{\mathrm{ex}}( \EsGght,\bC) \ar[d]\ar@{-->}[r]_-{\simeq}^-{(\esGght)^{*}}&\Fun^{\mathrm{h},K_{G},\exa}( G\nCalg_{\sepa},\bC) \ar[d]^{\eqref{rtokpgpergertgetrg9}}\\\Fun( \EsGght ,\bC)\ar[r]_-{\simeq}^-{(\esGght)^{*}}&\Fun^{{\mathrm{GHT}}}( G\nCalg_{\sepa},\bC)}\ .
\end{equation}
For the lower square we only need that $\bC$ is a stable $\infty$-category.
Since  $\esG$ is exact the restriction of $(\esGght)^{*}$ to exact functors induces the dashed  functor which is fully faithful by construction. Using 
the last statement of  \cref{wergjiowerferfwefwref} one checks that it is essentially surjective.

If $\bC$ is in addition 
countably cocomplete, then  we also get the upper square.
Since $\esGght$ is countable sum {preserving}    we get the further restriction to a fully faithful functor  represented by  the dotted arrow. Since countable coproducts in $\EsGght$ are represented by countable sums in $G\nCalg_{\sepa}$ we conclude that the dotted arrow is also essentially surjective.
\end{proof}

 {
\begin{prop}
The stable $\infty$-category $\EsGght$ has a canonical  bi-exact symmetric monoidal structure
 and  $\esGght:G\nCalg\to \EsGght$ canonically  refines to a   symmetric monoidal functor.
The tensor product on $\EsGght$ preserves countable colimits in each variable.
Pull-back along the symmetric monoidal refinement of $\esGght$ induces an equivalence 
\begin{equation}\label{fasdfoiuqiofafdfffs2}(\esGght)^{*} :\Fun_{\otimes/\mathrm{lax}}^{\colim_{\omega}}(\EsGght,\bC)\stackrel{\simeq}{\to} \Fun_{\otimes/\mathrm{lax}}^{h,K_{G},\exa,\oplus }(G\nCalg_{\sepa},\bC)\ ,
\end{equation}  
\end{prop}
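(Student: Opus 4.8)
The plan is to bootstrap the monoidal refinement from the already-established equivalence \eqref{fqewiufhiuefhiuefhiuewfqwefqwefqwddqdwd} of \cref{gjerogrgesrgsg} together with the symmetric monoidal Dwyer-Kan localization \eqref{fqwoeijoqwijdqewdqwe332323d} from \cref{okprthrtertegtrgrg}. First I would observe that $G\nCalg_{\sepa}$ carries the symmetric monoidal structure induced by the maximal tensor product, which is bi-exact (preserves short exact sequences in each variable) and preserves filtered colimits and countable sums in each variable, and that $L_{h}$ refines to a symmetric monoidal localization. The key structural point is that $\EsGght$ was obtained from $G\nCalg_{\sepa,h}$ (equivalently from $G\nCalg_{\sepa}$) by a sequence of localizations, and one must check that each localization is \emph{compatible} with the tensor product in the sense of \cite[Sec. 4.1.3]{ha}, so that the localized $\infty$-category inherits a symmetric monoidal structure and the localization functor refines to a symmetric monoidal functor. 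Concretely: the class of classical $E$-theory equivalences is closed under tensoring with an arbitrary object of $G\nCalg_{\sepa}$ — this is immediate because $\esGn$ is monoidal (or can be checked directly: if $f$ is a classical $E$-equivalence then so is $f\otimes\id_{D}$, since the maximal tensor product is exact and $E$-theory equivalences are detected by exact, homotopy invariant, $K_G$-stable functors, all properties stable under $(-)\otimes D$). Hence by \cite[Prop. 4.1.7.4]{ha} the Dwyer-Kan localization $\esGght$ inherits a symmetric monoidal structure and refines canonically to a symmetric monoidal functor, and this tensor product preserves countable colimits in each variable because it does so on $G\nCalg_{\sepa}$ and the localization and countable colimits are computed compatibly.

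Next I would establish bi-exactness of the tensor product on $\EsGght$. Since $\EsGght$ is stable by \cref{jwegoeferfrefwerfst} and the tensor product preserves countable colimits in each variable, it in particular preserves cofibre sequences in each variable, i.e.\ it is bi-exact; this also follows from the fact that every fibre sequence in $\EsGght$ is induced by a short exact sequence in $G\nCalg_{\sepa}$ (the last statement of \cref{wergjiowerferfwefwref}) together with bi-exactness of the maximal tensor product on $G\nCalg_{\sepa}$.

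Finally, for the universal property \eqref{fasdfoiuqiofafdfffs2} I would run the standard two-step argument. By the universal property of symmetric monoidal localization \eqref{fqwoeijoqwijdqewdqwe332323d} — or rather its version for the further localization producing $\EsGght$ — pull-back along the symmetric monoidal refinement of $\esGght$ induces a fully faithful functor $\Fun_{\otimes/\lax}(\EsGght,\bC)\to\Fun_{\otimes/\lax}(G\nCalg_{\sepa},\bC)$ whose essential image consists of those (lax) symmetric monoidal functors whose underlying functor inverts classical $E$-theory equivalences; restricting to colimit-preserving such functors and using \cref{gjerogrgesrgsg} at the level of underlying functors identifies the essential image with $\Fun_{\otimes/\lax}^{h,K_G,\exa,\oplus}(G\nCalg_{\sepa},\bC)$. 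The one point requiring care is that a lax symmetric monoidal functor $F$ on $G\nCalg_{\sepa}$ which is homotopy invariant, $K_G$-stable, exact and countable sum-preserving automatically inverts the classical $E$-equivalences — but this is exactly the inclusion \eqref{rtokpgpergertgetrg9} applied to the underlying functor of $F$, which holds by the universal property of $\esGn$ as in the proof of \cref{gjerogrgesrgsg}. The main obstacle is bookkeeping: $\EsGght$ is not literally a single localization of $G\nCalg_{\sepa}$ but of $G\nCalg_{\sepa,h}$ (and the stability is proved a posteriori), so one must argue that the composite localization is compatible with the tensor product and that colimit-preservation and the monoidal structure interact correctly through all the intermediate steps; I would handle this by appealing to \cite[Sec. 4.1.3, Prop. 4.1.7.4]{ha} once for the whole localization at classical $E$-equivalences rather than step by step.
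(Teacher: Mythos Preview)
Your proposal is correct and follows essentially the same route as the paper: both arguments hinge on showing that $A\otimes -$ preserves classical $E$-theory equivalences (the paper cites \cite[Thm.~6.21]{Guentner_2000} for this, you argue it directly), so that $\esGght$ is a symmetric monoidal Dwyer--Kan localization, and then deduce the universal property from the non-monoidal version in \cref{gjerogrgesrgsg}. One minor streamlining in the paper's version: rather than arguing separately for bi-exactness and colimit preservation, it simply observes that $\esGght\circ(A\otimes -)$ is homotopy invariant, $K_G$-stable, exact and countable sum preserving, whence the induced functor $\esGght(A)\otimes -$ on $\EsGght$ is exact and preserves countable coproducts (hence all countable colimits); also note that your ``bookkeeping obstacle'' is not really there, since by \cref{wetrokgpwergfrefwerfwrefgsf} the functor $\esGght$ is already defined as a single Dwyer--Kan localization of $G\nCalg_{\sepa}$.
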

\begin{proof}
  One checks that $\esGght$ from \cref{wetrokgpwergfrefwerfwrefgsf}
 is a symmetric monoidal Dwyer-Kan localization. 
 To this end we  observe  that for every $A$ in $G\nCalg_{\sepa}$ the functor $A\otimes- :G\nCalg_{\sepa}\to G\nCalg_{\sepa}$ preserves classical $E$-theory equivalences.
 This follows from \cite[Thm. 6.21 ]{Guentner_2000} stating that the maximal tensor product
 descends to the classical equivariant $E$-theory. 
 Since $\esGght\circ (A\otimes -):G\nCalg_{\sepa}\to \EsGght$ is homotopy invariant, $K_{G}$-stable,
 exact and  countable sum preserving  we can conclude that the tensor product on $\EsGght$ is bi-exact and preserves countable coproducts. It therefore preserves all countable colimits.
  This argument is similar to the corressponding argument for  
  equivariant $KK$-theory \cite[Prop. 2.20]{KKG}. 
   \end{proof}}

\begin{rem}\label{jiogpgergrwgw9}
In \cref{xvbnfiohregiohsdfpojsdf} we provide an alternative arguments for \cref{jwegoeferfrefwerf} and \cref{gjerogrgesrgsg}
which do not rely on the universal property of $\esGn$ as stated in  \cite{Guentner_2000}. The careful reader may notice that the latter is stated in the introduction of  \cite{Guentner_2000} but  the details of its verification are    left    to the reader.
\hB
\end{rem}

 \subsection{Construction of ${\EsGs}$  by forcing the universal properties}\label{twhkogprthtrerge1}
 %\ben{Wir könnten diese section glaube ich ein wenig dadurch abkürzen, in dem wir alle Klassen von schwachen Äquivalenzen unter abzählbaren Summen abschließen. Dazu wäre es gut zu benutzen, dass wir bereits wissen, dass in der $  S_{K_G} $ direkte Summen mit dem Coprodukt übereinstimmt. Damit wäre auch klar, dass $\esGght$ eine Lokalisierung von C*-Algebren ist. Was das Argument für \Cref{gregijweoirjferffwrf} vereinfachen würde. }
%\fuli{bin bei der alten Version geblieben. Die Änderung einzubauen, wäre doch ein größeres Projekt.}
In this section we describe {the} construction of {a} functor \begin{equation}\label{gerfwerfsfvf}\esGs:G\nCalg_{\sepa}\to \EsGs
\end{equation}
by forcing the universal property required in \cref{wrejgioweferfw}. We  follow the line of argument of \cite{keb}  in which the   non-equivariant case is worked out. Our main motivation for this second construction is to provide a purely homotopy theoretic  description of equivariant $E$-theory which is independent of \cite{Guentner_2000}.

{We define  the functor in \eqref{gerfwerfsfvf} by}   the following chain of symmetric monoidal Dwyer-Kan localizations
\begin{align}
\hesG:G\nCalg_{\sepa}&\xrightarrow{L_{h}}  G\nCalg_{\sepa,h}\xrightarrow{R_{K_{G}}  L_{\hat K_{G}}} S_{K_{G}}G\nCalg_{\sepa,h}
\label{weokgpwerfferwfwerf}\\ & \hspace{2cm}\xrightarrow{L_{\exa{,\oplus}}}  S_{K_{G}}G\nCalg_{\sepa,h,\exa{,\oplus}} \xrightarrow{\Omega^{2}}  { \EsGs}
\nonumber \end{align}
 \setlist[enumerate]{itemsep=-0cm} \begin{enumerate}
 \item \label{rgiwejogerfwef} The functor $L_{h}$ is the symmetric monoidal  Dwyer-Kan localization at the homotopy equivalences, see \cref{okprthrtertegtrgrg}. The resulting $\infty$-category $G\nCalg_{\sepa,h}$ is left-exact, pointed and has countable coproducts which are represented by countable free products  of  separable $G$-$C^{*}$-algebras. The functor $L_{h}$ sends Schochet fibrant squares  to cartesian squares {and preserves countable coproducts.} The arguments are the same as in the non-equivariant case discussed in  \cite[Sec. 3]{keb}.
 \item The functor $R_{K_{G}}L_{\hat K_{G}}$ is the symmetric monoidal  Dwyer-Kan localization at the $K_{G}$-equivalences, see \cref{fjioqwefdqwedewdqwed}.\ref{fiqewpoqdqded1}.  It is the composition of a left and a right Bousfield localization which we will discuss in detail in  \cref{tgokptegerferfwefwf}. The resulting $\infty$-category $ S_{K_{G}}G\nCalg_{\sepa,h}
$ is left-exact and semi-additive. {It furthermore   admits countable coproducts which are represented by countable free products  of algebras, but also by countable sums. Since the tensor product with a separable algebra
preserves countable sums,
  $R_{K_{G}}L_{\hat K_{G}}$ preserves countable coproducts.}
\item\label{wgkowperfewrfwer} The functor $L_{\exa{, \oplus}}$ is the symmetric monoidal  left exact {and countable sum-preserving} Dwyer-Kan localization (see \cref{gferopfwerfwerfsdf}) {generated by} the collection of morphisms
the functor $R_{K_{G}}L_{\hat K_{G}}(L_{h}(\iota_{f}))$ (see  \eqref{fqwehwiedqwedwoediwqodi} for $\iota_{f}$) for all exact sequences  
\eqref{cjdsiocjoiejfoidsaj}. The resulting $\infty$-category $ S_{K_{G}}G\nCalg_{\sepa,h,\exa}$
is left-exact, semi-additive {and admits countable coproducts, and the functor $L_{\exa,\oplus}$ preserves countable coproducts}. The composition
$$G\nCalg_{\sepa}\xrightarrow{L_{\exa,{\oplus}}\circ R_{K_{G}} \circ L_{\hat K_{G}}\circ L_{h}}  S_{K_{G}}G\nCalg_{\sepa,h,\exa{,\oplus}}$$
is homotopy invariant, $K_{G}$-stable, exact  and {sends countable sums to countable coproducts}, see  \cref{fjioqwefdqwedewdqwed}.\ref{fiqewpoqdqded2}.
The arguments are  {similar} as in the non-equivariant case disussed in \cite[Sec. 5]{keb}.
\item The left exact functor $\Omega^{2}$ is the right-adjoint of a symmetric monoidal  right Bousfield localization 
\begin{equation}\label{vefhwiuvecwe}\incl:  {\EsGs} \leftrightarrows S_{K_{G}} G\nCalg_{\sepa,h,\exa{,\oplus}} :\Omega^{2}\ ,
\end{equation} 
where ${ \EsGs}$ is the full subcategory of $S_{K_{G}} G\nCalg_{\sepa,h,\exa{,\oplus}}$  of group objects (see  \cref{hfifsfgsdfg}). The   $\infty$-category {$ \EsGs$} is stable
and the  composition
\begin{equation}\label{vasdvadscdscasdcasdcasc}
 {\esGs}: G\nCalg_{\sepa}\xrightarrow{\Omega^{2}\circ L_{\exa{,\oplus}}\circ R_{K_{G}}\circ L_{\hat K_{G}}\circ L_{h}} {\EsGs}
\end{equation} 
is homotopy invariant, $K_{G}$-stable, {exact, and countable sum-preserving.}
The arguments are the same as in the non-equivariant case \cite[Sec. 7]{keb}, and the functor \eqref{vasdvadscdscasdcasdcasc} is the {countable sum-preserving} analogue of the $E$-theory functor $\es :\nCalg_{\sepa}\to \EE_{\sepa}$ from \cite[Def. 7.9]{keb}. 
  \end{enumerate}

\begin{rem}\label{tgokptegerferfwefwf}
The details of the localization $R_{K_{G}}L_{\hat K_{G}}:G\nCalg_{\sepa,h}\to S_{K_{G}}\nCalg_{\sepa,h}$  at the $K_{G}$-equivalences differs slightly from
the non-equivariant case considered in \cite[Sec. 4]{keb}.      
The arguments rely on the following analytic fact. If $H\to H'$ is an isometric inclusion of $\infty$-dimensional separable Hilbert spaces, then the induced corner inclusion $K(H)\to K(H')$ is a homotopy equivalence.
If $H$ is a Hilbert space with a unitary $G$-action $\rho$, then we have an equivariant unitary  isomorphism
$L^{2}(G)\otimes H\cong L^{2}(G)\otimes \underline{H}$ sending $\delta_{g}\otimes h$ to
$\delta_{g}\otimes \rho(g^{-1})h$, where $\delta_{g}$ in $L^{2}(G)$ is the basis element corresponding to $g$ in $G$ and $\underline{H}$ denotes $H$ with the trivial $G$-action. 

Assume now that $H\to H'$ is an equivariant isometric inclusion of   separable Hilbert spaces with unitary  $G$-action. The isomorphisms above and the isomorphism $K_{G}\cong K(L^{2}(G))\otimes K(\ell^{2})$ induce the vertical isomorphisms
in $$\xymatrix{K_{G}\otimes K(H)\ar[r]\ar[d]^{\cong} &K_{G}\otimes K(H') \ar[d]^{\cong} \\K(L^{2}(G))\otimes K(\ell^{2}\otimes \underline{H}) \ar[r] &K(L^{2}(G))\otimes K(\ell^{2}\otimes \underline{H'}) } \ .$$ Since the corner inclusion $K(\ell^{2}\otimes \underline{H})\to K(\ell^{2}\otimes \underline{H'})$ is a homotopy equivalence we see that 
   $K_{G}\otimes K(H)\to K_{G}\otimes K(H')$ is a homotopy equivalence
and $K(H)\to K(H')$ is a $K_{G}$-equivalence. Consequently, $R_{K_{G}}L_{\hat K_{G}}$ inverts the {maps} 
$K(H)\otimes A\to K(H')\otimes A$ for any $A$ in $G\nCalg_{\sepa}$.

In the non-equivariant case we have a left upper corner inclusion $\epsilon: \C\to K:=K(\ell^{2})$ sending $1$ in $\C$ to a one-dimensional projection in $K$.   Then the stabilized corner inclusion
$\epsilon\otimes \id_{K}:K\cong \C\otimes K\to K\otimes K$ is a homotopy equivalence. This turns $K$ into an idempotent algebra in $\nCalg_{\sepa,h}$. By the general theory \cite[Sec, 4.8.2]{HA}, the idempotent algebra $K$ gives rise to a symmetric monoidal  left Bousfield localization  $$ L_{K}:\nCalg_{\sepa,h}\leftrightarrows L_{K} \nCalg_{\sepa,h}:\incl$$  {with} left-adjoint $L_{K}:=K\otimes -$.   In the equivariant case for compact  $G$,  using the subspace of $G$-invariant functions on $G$, there still  exists a left upper corner inclusion
$\C\to K_{G}$ and we again get a left Bousfield localization
$$ L_{K_{G}}:G\nCalg_{\sepa,h}\leftrightarrows L_{K_{G}} G\nCalg_{\sepa,h}:\incl\ .$$ 
If $G$ is non-compact, then we procced in two steps. 
We first consider the corner inclusion \begin{equation}\label{sdfvewrvsfdvsfdvsfv}\epsilon:\C\to \hat K_{G}:=K((L^{2}(G)\oplus \C)\otimes \ell^{2} )
\end{equation} 
sending $1$ to the one-dimensional $G$-invariant projection in $\hat K_{G}$ onto the subspace generated by $
(0\oplus 1)\otimes e_{0}$, where $e_{0}$ is the first basis vector of $\ell^{2}$.  
One can find a commutative diagram of horizontal equivariant unitary inclusions of Hilbert spaces
$$\xymatrix{(L^{2}(G)\oplus \C)\otimes \ell^{2} \ar[rrr]^-{x\mapsto  ((0{\oplus }1)\otimes e_{0})\otimes  x }\ar@{=}[d] &&&((L^{2}(G)\oplus \C)\otimes \ell^{2} ) \otimes ((L^{2}(G)\oplus \C)\otimes \ell^{2} ) \ar[d]^{\cong} \\(L^{2}(G)\oplus \C)\otimes \ell^{2} \ar[rrr] &&&(L^{2}(G)\oplus \C)\otimes H } $$
where the lower horizontal map is induced by an isometric inclusion $\ell^{2}\to H$ of separable Hilbert spaces with trivial $G$-action. We can conclude that the map
 $\epsilon \otimes \id_{\hat K_{G}}:\hat K_{G}\cong \C\otimes \hat K_{G}\to \hat K_{G}\otimes \hat K_{G}$   is
 isomorphic to the induced map $K(L^{2}(G)\oplus \C)\otimes K(\ell^{2})\to K(L^{2}(G)\oplus \C)\otimes K(H)$
  and therefore a homotopy equivalence. It follows that $\hat K_{G}$  is an idempotent algebra in $G\nCalg_{\sepa,h}$ and {that} we have a symmetric monoidal left Bousfield localiztio
\begin{equation}\label{vwervwevdfvsfvwevfvs}L_{\hat K_{G}}:G\nCalg_{\sepa,h}\leftrightarrows L_{\hat K_{G}}G\nCalg_{\sepa,h}:\incl
\end{equation} 
with $L_{\hat K_{G}}=\hat K_{G} \otimes -$.
Note that $\hat K_{G}$ represents the tensor unit of    $L_{\hat K_{G}}G\nCalg_{\sepa,h}$.

We now consider the corner inclusion $\alpha:K_{G}\to \hat K_{G}$ induced by the inclusion $L^{2}(G)\to L^{2}(G)\oplus \C$.  We know that $\alpha\otimes \id_{K_{G}}:K_{G}\otimes K_{G}\to \hat K_{G}\otimes K_{G}$ is a homotopy equivalence and therefore an equivalence in $L_{\hat K_{G}}G\nCalg_{\sepa,h}$. Thus
$K_{G}$ becomes an idempotent coalgebra  in $L_{\hat K_{G}}G\nCalg_{\sepa,h}$ and gives rise to a symmetric monoidal right Bousfield localization
$$\incl:  S_{  K_{G}}G\nCalg_{\sepa,h}\leftrightarrows   L_{\hat K_{G}}G\nCalg_{\sepa,h}:R_{K_{G}}$$
with right-adjoint $R_{K_{G}}:=K_{G}\otimes  -$.

We now consider the composition
of Dwyer-Kan localizations
\begin{equation}\label{sfdvfsdvwerv}R_{K_{G}} L_{\hat K_{G}}:G\nCalg_{\sepa,h}\to   S_{K_{G}}  G\nCalg_{\sepa,h}\ .
\end{equation} 
We   check that this composition precisely inverts the $K_{G}$-equivalences. 
If $f:A\to B$  is   $K_{G}$-equivalence in $G\nCalg_{h}$, then by definition $K_{G}\otimes  f$  is a homotopy equivalence.
But then  also 
$\hat K_{G}\otimes K_{G}\otimes f$  is a homotopy equivalence and $R_{K_{G}}L_{\hat K_{G}}(f)$ is an equivalence. 

If $ R_{K_{G}}L_{\hat K_{G}}(f)$ is an equivalence, then $\hat K_{G}\otimes K_{G}\otimes f$
is a homotopy equivalence. Since also
$ \epsilon\otimes\id_{K_{G}}$ is a homotopy equivalence we conclude from 
$$\xymatrix{K_{G}\otimes A\ar[rr]^{\epsilon\otimes \id_{K_{G}\otimes A}}\ar[d]^{K_{G}\otimes f} &&\hat K_{G}\otimes K_{G}\otimes A \ar[d]^{\hat K_{G}\otimes K_{G}\otimes f} \\ K_{G}\otimes A\ar[rr]^{\epsilon\otimes \id_{K_{G}\otimes B}} && \hat K_{G}\otimes K_{G}\otimes B} $$
 that
$K_{G}\otimes f$ is a homotopy equivalence.
Consequently, $f$ is a $K_{G}$-equivalence.

{The composition $$ {G}\nCalg_{\sepa}\xrightarrow{L_{h}}   G\nCalg_{\sepa,h}\xrightarrow{R_{K_{G}}L_{\hat K_{G}}}S_{K_{G}}G\nCalg_{\sepa,h}$$ in particular inverts left-upper corner inclusion $A\to A\otimes K$. This implies as in \cite[Sec. 4]{keb} that
it sends the canonical maps $*_{i\in I}A_{i} \to \oplus_{i\in I} A_{i}$ for countable families
$(A_{i})_{i\in I}$ in $G\nCalg_{\sepa}$
to equivalences. This fact for finite $I$ implies the
semi-additivity of $S_{K_{G}}G\nCalg_{\sepa,h}$. It furthermore implies that 
this category admits countable coproducts which are represented by 
countable sums in  $G\nCalg_{\sepa}$, and that the functor 
$R_{K_{G}}L_{\hat K_{G}}\circ L_{h}$ sends countable sums to countable coproducts.}
 \hB
\end{rem}
 {
\begin{rem}\label{gferopfwerfwerfsdf}
	In order to obtain {the} left exact {and countable sum-preserving Dwyer-Kan} localization {$L_{\exa,\oplus}$} we must form the closure  of the generating set $W$ of morphisms $R_{K_{G}}L_{\hat K_{G}}(L_{h}(\iota_{f}))$  mentioned in \cref{wgkowperfewrfwer}  under 2-out-of-3, pull-backs{, and countable sums.} %Within this process we lose control over infinite sums,  or equivalently,  coproducts.  
	Then for every left-exact {semi-additive} $\infty$-category $\bC$ admitting countable  coproducts we have by construction the equivalences
	\begin{align}\label{ewfqweijdoqfsfg} \Fun^{\mathrm{lex},{\coprod_{\omega}}}(S_{K_{G}}G\nCalg_{\sepa,h,\exa{,\oplus} },\bC)\xrightarrow[\simeq]{{L_{\exa, \oplus}}^{*}}
	\Fun^{\mathrm{lex},{\coprod_{\omega}}, W}(S_{K_{G}}G\nCalg_{\sepa,h }, \bC) \\ \xrightarrow[\simeq]{(R_{K_G} L_{\hat K_{G}} L_h)^{\ast}} \Fun^{h,K_{G},\exa+\mathrm{Sch}{, \oplus}}(G\nCalg_{\sepa}, \bC)
	\ , 		
	\end{align} 
	where the superscript $\mathrm{Sch}$ stands for Schochet
	exactness  (see \cref{okprthrtertegtrgrg}), the symbol $W$ {for} functors inverting maps in $W$, and $\coprod_{\omega}$  and $\oplus$ indicate functors  preserving countable coproducts, or sending countable sums to countable coproducts, respectively. 
	\hB
\end{rem}
}

\begin{rem}
\label{hfifsfgsdfg}
Since  $ S_{K_{G}}G\nCalg_{\sepa,h,\exa{,\oplus}}$ is a semi-additive $\infty$-category,
 every  object $A$  in this category  is   a commutative monoid. The object $A$ is a  group objects $A$   if  the shear map
$$A\times A\xrightarrow{(a,a')\mapsto (a+a',a)} A\times A$$ is an equivalence. 

 The counit 
$\Omega^{2}\to \id$ of the right Bousfield localization \eqref{vefhwiuvecwe} is induced for every   $A$ in $G\nCalg_{\sepa}$ by the boundary maps $ { \beta_{A}}$ of the fibre sequences (compare with \eqref{ewffdfsvgdfgser})
$$  \Omega^{2}({\esGs}(A))\stackrel{{ \beta_{A}}}{\to} {\esGs}(A) $$   associated to the exact sequences obtained by forming the tensor product of the Toeplitz sequence \eqref{eqwdqwewefqewfewfq} with $A$.
\hB
\end{rem}

{The first assertion in the following  corollary is the equivariant analogue of  \cite[(7.8)]{keb} and an immediate consequence of the construction  and \eqref{ewfqweijdoqfsfg} (also using \cite[Lem. 5.2]{keb} in order to get rid of the Schochet exactness condition). 
The second follows from \cref{weijgwoefrewfwdffsfd}.
For the remaining two assertions
we use that
$\esGs$ is a composition of symmetric monoidal Dwyer-Kan localizations.}

{
 \begin{kor} \label{gjerogrgesrgseg}\mbox{}
 \setlist[enumerate]{itemsep=-0.4cm}\begin{enumerate}
%\item \label{wokegwpergerwgwreg}
%The image of $\heGs:G\nCalg_{\sepa}\to \hEGs$ consists of $\aleph_{1}$-compact objects.
%\item \label{wokegwpergerwgwreg1}
%The $\infty$-category $\EsGs$ is stable and countably cocomplete.  \item \label{wokegwpergerwgwreg2} The functor $\esGs$ is homotopy invariant, $K_{G}$-stable, exact and   countable sum preserving.
\item \label{wokegwpergerwgwreg3}
The precomposition by $\esGs$ induces for every countably cocomplete stable $\infty$-category $\bC$ an equivalence
\begin{equation}\label{rferwiufherufihweriufhwreiufhi}(\esGs)^{*} :\Fun^{\colim_{\omega}}( \EsGs  ,\bC)\stackrel{\simeq}{\to} \Fun^{h,K_{G},\exa, \oplus}( G\nCalg_{\sepa},\bC)\ .
\end{equation}  
\item\label{kophertherth9} $\EsGs$ admits countable colimits and $\esGs$ preserves countably filtered colimits.
\item The stable $\infty$-category $\EsGs$ has a canonical  bi-exact symmetric monoidal structure preserving countable colimits in each variable and  $\esGs:G\nCalg\to \EsGs$ canonically  refines to a   symmetric monoidal functor.
%Furthermore,  the functor $\esGs$ in \eqref{gerfwerfsfvf} has a canonical is a symmetric monoidal  
%the tensor product on $\EsGs$ preserves countable colimits in each variable.
 \item\label{orkewgpoerfwerfwref} {
%The stable $\infty$-category $\EsGs$ has a canonical  bi-exact symmetric monoidal structure
% and  $\esGs:G\nCalg\to \EsGs$ canonically  refines to a   symmetric monoidal functor.
 Pull-back along the symmetric monoidal refinement of $\esGs$ induces an equivalence 
\begin{equation}\label{fasdfoiuqiofafdfffs1}(\esGs)^{*} :\Fun_{\otimes/\mathrm{lax}}^{\colim_{\omega}}(\EsGs,\bC)\stackrel{\simeq}{\to} \Fun_{\otimes/\mathrm{lax}}^{h,K_{G},\exa,\oplus }(G\nCalg_{\sepa},\bC)\ .
\end{equation}  }
\end{enumerate}
\end{kor}}

We next prepare the argument showing that $\esGs$ is   itself  a Dwyer-Kan localization.
Let $$\bA\stackrel{\Phi}{\to} \bB\stackrel{\Psi}{\to} \bC$$ be a sequence of Dwyer-Kan localizations of $\infty$-categories.
In general we do not expect that the composition $\Psi\circ \Phi:\bA\to \bC$ is a Dywer-Kan localization. But the following simple Lemma provides a sufficient condition for this to be the   case.
\begin{lem}\label{okpgwerwreg9}
If $\Phi$ is essentially surjective on morphisms, then
$\Psi\circ \Phi:\bA\to \bC$ is a Dwyer-Kan localizations.
\end{lem}
\begin{proof}
Let $W$ be the class of morphisms which are inverted by $\Psi\circ \Phi$.
Then we want to show that this composition is the Dwyer-Kan localization at $W$. Let $\Theta:\bA\to \bD$ 
be the Dwyer-Kan localization at $W$. Then by the universal property
of $\Theta$ we have a triangle
$$\xymatrix{&\bA\ar[ddl]_{\Theta}\ar[dr]^{\Phi}&&\\&&\bB\ar[dr]^{\Psi}&\\ \bD\ar@{..>}[rrr]^{\Delta}&&&\bC}\ .$$
In order to construct an inverse of $\Delta$ we consider 
$$\xymatrix{&\bA\ar[ddl]_{\Theta}\ar[dr]^{\Phi}&&\\&&\ar@{-->}[dll]^{\Sigma}\bB\ar[dr]^{\Psi}&\\ \bD &&&\ar@{..>}[lll]_{\Gamma}\bC}\ .$$
Since $\Theta$ inverts all morphisms which are inverted by $\Phi$ we get the dashed arrow $\Sigma$ from the universal property of $\Phi$. We now use the universal property of $\Psi$ in order to get the dotted arrow. 
Let $f$ be a morphism in $\bB$ which is inverted by $\Psi$. 
We must show that $\Sigma(f)$ is an equivalence.
 By our assumption $f\simeq \Phi(\tilde f)$ for some morphism in
$\bA$. Since $\Psi(f)\simeq\Psi(\Phi(\tilde f))$ is an equivalence, so is $\Theta(\tilde f)\simeq \Sigma(f)$.
We therefore get the dotted arrow. In order to show $\Delta$ and $\Gamma$ are inverse to each other we 
argue with the following diagrams:
$$\xymatrix{&\bA\ar[ddl]_{\Theta}\ar[ddr]^{\Theta}\ar[d]^{\Phi}&\\&\bB\ar[d]^{\Psi}&\\\bD\ar[r]^{\Gamma}&\bC\ar[r]^{\Delta}&\bD}\ , \quad  
\xymatrix{&\bA\ar[dd]_{\Theta}\ar[dr]^{\Phi}\ar[dl]_{\Phi}&\\\bB\ar[d]^{\Psi}\ar@{-->}[rd]^{\Sigma}&&\bB\ar[d]_{\Psi}\\\bC\ar[r]^{\Gamma}&\bD\ar[r]^{\Delta}&\bC}\ .
$$
The left diagram implies   by the universal property of $\Theta$ that 
the composition $\bD\to \bC\to \bD$ is equivalent to the identity of $\bD$.
In the right diagram we first use the universal property of $\Phi$ in order to see that 
  $\Psi\simeq \Delta\circ \Sigma$. We then use the universal property of
  $\Psi$ in order to conclude that $\Delta\circ \Gamma$ is 
 is equivalent to the identity of $\bC$.
 \end{proof}

The following results helps to check the
assumption of \cref{okpgwerwreg9}

\begin{lem}\label{thrkoprthetrherg}
If $\Phi:\bA\to \bB$ is a left-exact Dwyer-Kan localization between left-exact $\infty$-categories, then
$\Phi$ is essentially surjective on morphisms.
\end{lem}
\begin{proof}
The class $W$ of morphisms inverted by $\Phi$ satisfies a calculus of fractions.
\end{proof}

\begin{prop}\label{kopghrthrhertegtrg}
The functor $\esGs:G\nCalg_{\sepa}\to  \EsGs$ is a symmetric monoidal Dwyer-Kan localization.
\end{prop}
\begin{proof}
By construction
$\esGs$ in \eqref{weokgpwerfferwfwerf} is a composition of the Dwyer-Kan localization $L_{h}$ and a sequence of 
left-exact Dwyer-Kan localizations. Furthermore $L_{h}$ is obviously 
essentially surjective on morphisms.
Using   \cref{okpgwerwreg9}
 and \cref{thrkoprthetrherg}
 finitely many times we conclude that $\esGs$
 is a Dwyer-Kan localization.
 Since we already know that the tensor product on $\nCalg_{\sepa}$ descends through
 $\esGs$ it is clearly a symmetric monoidal Dwyer-Kan localization. 
\end{proof}
\color{black}

\subsection{Construction of ${\EsGf}$ as the universal finitary invariant} \label{irthjzgjdiogjhjdicnmjksdisjf}
 In this section we  construct {the} functor $$\esGf:G\nCalg_{\sepa}\to \EsGf$$ as a composition
 \begin{equation}\label{wrgwgrefrefrewff} G\nCalg_{\sepa}\xrightarrow{\asGc}  \AsGc \xrightarrow{} \EsGf\end{equation}
 The first  is the universal homotopy invariant and countable filtered colimit preserving  functor (see \cref{uqwighfuihvsdiohjfhjkbvdxc}), and the 
 second is a Dwyer-Kan  localization forcing $K_{G}$-stability and stability {using functors $R_{K_{G}}L_{\hat K_{G}}$ and $\Omega^{2}$  similarly as in} \eqref{weokgpwerfferwfwerf}. But note that we do not enforce exactness. {In fact, exactness   turns}    out to be  an automatic property by \cref{ifqerwfqewdfewdewdqedf}. In \cref{ewrogjpwergwerfwerfwrfw}, using the shape  theory  \cite{Dadarlat_1994},  we  observe   that $ \AsGc$ provides an $\infty$-categorical enhancement of the asymptotic morphism category $\AsG$ introduced in \cite{Guentner_2000}.
 This comparison is relevant for our construction since we want to import the Bott map represented by an asymptotic morphism in $\AsG$ and the verification of its properties  from the classical theory.

We construct the functor $\asGc$ as the composition \begin{equation}\label{543t445ts} \asGc: G\nCalg_{\sepa}\xrightarrow{L_{h}}  G\nCalg_{\sepa, h} \xrightarrow{y} \Ind^{\aleph_{1}}(G\nCalg_{\sepa, h}) \xrightarrow{R} \AsGc\ ,
\end{equation}
 where $L_{h}$ is the Dwyer-Kan localization at the homotopy equivalences (explained in some detail in  \cref{okprthrtertegtrgrg}),
 $y$ has the usual meaning   as  introduced at the beginning of \cref{ergkoperwgwerfwefref}, and $R$ is a Dwyer-Kan localization {which} we  will describe in detail in the following. 
 
 We abbreviate the composition of the first two maps in  \eqref{543t445ts} by     $yL_{h}:=y\circ L_{h}$. For any system $(Z_{n})_{n\in \nat}$ in $G\nCalg_{\sepa}$ we  have a  canonical map
 \begin{equation}\label{wefwefwefqewf}\colim_{n\in \nat} yL_{h}(Z_{n})\to  yL_{h}(\colim_{n\in \nat} Z_{n})
\end{equation}  in $\Ind^{\aleph_{1}}(G\nCalg_{\sepa, h})$. We   denote by
  $W_{R}$  the set of the maps \eqref{wefwefwefqewf} for all  systems $(Z_{n})_{n\in \nat}$ in $G\nCalg_{\sepa}$. Then we define the functor $$R:\Ind^{\aleph_{1}}(G\nCalg_{\sepa, h})\to \AsGc$$ as the Dwyer-Kan localization  
  at $W_{R}$.
 
 We have already seen in \cref{okprthrtertegtrgrg} that $L_{h}$ is a symmetric monoidal Dwyer-Kan localization for the maximal tensor product on $G\nCalg_{\sepa}$. Its target  is a pointed and left-exact symmetric monoidal $\infty$-category 
  with a bi-left exact tensor product.  The countable  ind-completion  $\Ind^{\aleph_{1}}(G\nCalg_{\sepa, h})$
 of a pointed left exact $\infty$-category  is again pointed and left-exact. 
It has countable filtered colimits and these commute with finite limits. Furthermore  the symmetric monoidal structure of $G\nCalg_{\sepa, h}$ induces a bi-left exact  symmetric monoidal structure  on $\Ind^{\aleph_{1}}(G\nCalg_{\sepa, h})$ and a canonical symmetric monoidal refinement
of $y$. This tensor product preserves {countable} filtered colimits in each argument.
These structures are used to interpret \cref{wreig90wregfwerf}.\ref{regkopwergerferfrewfw} below.

 \begin{theorem}\label{wreig90wregfwerf}\mbox{}\begin{enumerate}
  \item \label{gokpwergrwefwrfwref} $R$ is  the right-adjoint of a right Bousfield localization
  \begin{equation}\label{erfwerfewrferwfw}L: \AsGc\leftrightarrows\Ind^{\aleph_{1}}(G\nCalg_{\sepa, h}):R\ .
\end{equation}  \item  \label{gokpwergrwefwrfwref111} The essential image of $L$ is spanned by the objects   $\colim_{n \in \nat} L_{h}(yA_n)$ for all explicit shape systems $(A_n)_{n \in \nat}$.
\item\label{regkopwergerferfrewfw2} The $\infty$-category $\AsGc$ is pointed, left-exact and    has all  countable filtered colimits. {In  $\AsGc$  countable filtered colimits  commute} {with} finite limits. The functor   $R$ preserves countable filtered colimits. 
 \item\label{regkopwergerferfrewfw} $R$ is a symmetric monoidal Dwyer-Kan localization and the induced tensor product on $\AsGc$ is bi-left exact and preserves countable filtered colimits in each argument.
 \end{enumerate}
\end{theorem}
\begin{proof}

In order to show Assertion \ref{gokpwergrwefwrfwref} we use the following well-known criterion to detect Bousfield localizations.

\begin{rem}\label{ijewgowegwergerfref}
Let $R:\bC\to \bD$ be a Dwyer-Kan localization generated by a set $W$ of morphisms   in $\bC$. 
We say that an object $A$ in $\bC$ is $W$-local if the functor $\Map_{\bC}(A,-):\bC\to \Spc$ sends the  morphisms in $W$ to equivalences.  A morphism $\epsilon :A'\to A$ in $\bC$  such that
$R(\epsilon)$ is an equivalence and  $A'$  is $W$-local is called a $W$-local replacement of $A$.

If every object of   $\bC$  admits a $W$-local replacement, then   $R$ is a right Bousfield localization.   Furthermore, for any $W$-local $A$ and any $B$ in $\bC$ the map
\begin{equation}\label{wefqdqsdvdvsdfv}R:\Map_{\bC}(A,B)\stackrel{\simeq}{\to} \Map_{\bD}(R(A),R(B) ) \end{equation} 
is an equivalence. 
If $\bC$ is left exact, then $R$ is automatically a left-exact Dwyer-Kan localization and $\bD$ is again left exact.
Similarly, if $\bC$ is pointed, then so is $\bD$.

Assume that $f:A\to B$ is a morphism in $\bC$ such that $R(f)$ is an equivalence and $\epsilon:B'\to B$ is a $W$-local replacement. Then $\delta:B'\to A$ is a $W$-local replacement 
of $A$, where $\delta$ is a preimage of $\epsilon$ under the equivalence
$f_{*}:\Map_{\bC}(B',A)\stackrel{\simeq}{\to}\Map_{\bC}(B',B)$. \hB
\end{rem}

We first claim that all objects of the form $yL_{h}(A)$ for  some $A$  in $G\nCalg_{\sepa}$  admit $W_{R}$-local replacements. By \cref{ojkergperwerge} we can choose an explicit shape system $(A_{n})_{n\in \nat}$ for $A$. We claim that then  $$\epsilon:\colim_{n\in \nat} yL_{h}(A_{n})\to yL_{h}(A)$$ is 
a $W_{R}$-local replacement. First of all the morphism $\epsilon$ is of the form \eqref{wefwefwefqewf} and therefore belongs to $W_{R}$ and is inverted by $R$. It remains to show that
$\colim_{n\in \nat} yL_{h}(A_{n})$ is $W_{R}$-local. To this end we consider a countable filtered system $(Z_{i})_{i\in I}$
in $G\nCalg_{\sepa}$  and set  $Z:=\colim_{i\in I} Z_{i}$. Then we must show that the map \eqref{wefwefwefqewf}  induces an equivalence 
$$\Map (\colim_{n\in \nat} yL_{h}(A_{n}),\colim_{i\in I} yL_{h}(Z_{i}))\stackrel{\simeq}{\to} \Map (\colim_{n\in \nat} yL_{h}(A_{n}),yL_{h}(Z))\ ,$$
 where we abbreviated $\Map:=\Map_{ \Ind^{\aleph_{1}}(G\nCalg_{\sepa, h})}$.
Expanding definitions this map is equivalent to the map
\begin{equation}\label{fewdwedeqwdqewdqwed}\lim_{n\in \nat^{\op}}\colim_{i\in I} \Map_{G\nCalg_{\sepa,h}} (  A_{n} , Z_{i})\to \lim_{n\in \nat^{\op}}\Map_{G\nCalg_{\sepa,h}}  ( A_{n} ,Z)\ .
\end{equation}
We now use that the map of systems
$$(\colim_{i\in I}  \Map_{G\nCalg_{\sepa,h}} (  A_{n} , Z_{i}))_{n\in \nat^{\op}}\to (\Map_{G\nCalg_{\sepa,h}}  ( A_{n},  Z))_{n\in \nat^{\op}}$$
has the slp by \cref{jgriowergwrefrefrefwfrfwf}.  Therefore  the map \eqref{fewdwedeqwdqewdqwed}
 is an equivalence  by  \cref{bidfosdfivosdifhgnbeoiosdf}. This finishes the proof of the claim. 

A general object of $\Ind^{\aleph_{1}}(G\nCalg_{\sepa, h})$ is equivalent to one of the form
$\colim_{n\in \nat} yL(A_{n})$ for some system $(A_{n})_{n\in \nat}$ in $G\nCalg_{\sepa}$.
Since $\colim_{n\in \nat} yL_{h}(A_{n})\to yL_{h}(\colim_{n\in \nat }A_{n})$ is of the form \eqref{wefwefwefqewf} and is therefore  sent to an equivalence  by $R$, and since $yL_{h}(\colim_{n\in \nat }A_{n})$ has a $W_{R}$-local replacement  by the claim, also $\colim_{n\in \nat} yL_{h}(A_{n})$  has a $W_{R}$-local replacement as explained at the end of  \cref{ijewgowegwergerfref}.
Thus every object of $\Ind^{\aleph_{1}}(G\nCalg_{\sepa, h})$ has a $W_{R}$-local replacement and we can conclude by  \cref{ijewgowegwergerfref} that $R$ is a right Bousfield localization. 
The argument also provides Assertion \ref{gokpwergrwefwrfwref111} by inspection.

We now show Assertion \ref{regkopwergerferfrewfw2}. 
First of all, $\AsGc$ is a right Bousfield localization of the   pointed left exact $\infty$-category $\Ind^{\aleph_{1}}(G\nCalg_{\sepa, h})$  and therefore  {is} again pointed and left-exact.
Since the latter
 has all countable filtered colimits  
we also  know that $\AsGc$ has all countable filtered colimits.
 In order to show that $R$ preserves countable filtered colimits we use that there exists a best countable filtered colimit preserving approximation ${\epsilon}: y_{!}y^{*} R\to R$ of $R$, where $y_{!}$ is the left Kan-extension functor along
$y$ in \eqref{543t445ts}. S
 It suffices to show that this transformation is an equivalence on all objects. Any object of $\Ind^{\aleph_{1}}(G\nCalg_{\sepa,h})$ can be presented in the form
$\colim_{n\in \nat} yL_{h}(A_{n})$ for some system $ (A_{n})_{n\in \nat}$ in $G\nCalg_{\sepa}$.
We then calculate
$$\hspace{-0.6cm}(y_{!}y^{*} R)(\colim_{n\in \nat} yL_{h}(A_{n}))\simeq \colim_{n\in \nat} (y_{!}y^{*} R)(  yL_{h}(A_{n}))
\stackrel{!}{\simeq}  \colim_{n\in \nat}  R(  yL_{h}(A_{n}))\stackrel{?}{\to}   R(\colim_{n\in \nat}  yL_{h}( A_{n}))\ ,$$
where for the equivalence $!$  we use that {$\epsilon_y: (y_! y^\ast R) \circ y \to R \circ y$ is an equivalence, which is true as $y$ is fully faithful.}
%\ben{(Alternative zu:) where for the equivalence $!$  we use that $y$ is fully faithful {which implies
%$y^{*}y_{!}\simeq \id$ and hence $$(y_{!}y^{*}R)(y\cdots)\simeq (y^{*}y_{!}y^{*}R)(\cdots)\simeq (y^{*}R)(\cdots)\simeq R(y\cdots)\ .$$}}
It remains to show that the map $?$
is an equivalence. 
By \cref{gjiowergrwfwerfrf} there exists a system $(A_{n,k})_{(n,k)\in \nat\times \nat}$ in $ G\nCalg_{\sepa}$
such that
$(A_{n,k})_{k\in \nat}$ is an explicit shape system for $A_{n}$ for all $n$ in $\nat$.
We then  consider the commutative diagram
 $$\hspace{-1cm}\xymatrix{\colim_{n \in \nat} R(yL_{h}(A_n)) \ar[r]^{?} & R(\colim_{n \in \nat} yL_{h}(A_n)) \ar[r]_{(2)}^{\simeq} & R(yL_{h}( \colim_{n \in \nat} A_n)) \\ \colim_{n \in \nat} R(\colim_{k \in \nat}yL_{h}(A_{n,k})) \ar[u]^{(1)}_{\simeq} \ar[r]^-{\simeq}_-{(5)} & R(\colim_{(n,k) \in \nat^2} yL_{h}(A_{n,k})) \ar[r]_{(4)}^{\simeq} \ar[u] & R(yL_{h}(\colim_{(n,k) \in \nat^2} A_{n,k})) \ar[u]^{(3)}_{\simeq}}  \ .$$
   We must show that the morphism $?$ is an equivalence, but this follows from the diagram once the other equivalences have been justified.
 The morphism (1) is an equivalence since  it is induced by the maps $\colim_{k \in \nat}yL_{h}(A_{n,k})\to
 yL_{h}(A_{n})$ which belong to $W_{R}$ for all $n$ in $\nat$ and {are} therefore sent to equivalences by $R$. A similar reasoning applies to the maps $(2)$ and $(4)$. For $(3)$ we observe that $\colim_{(n,k) \in \nat^2} A_{n,k}\cong  \colim_{n\in \nat} A_{n,n}\cong \colim_{n\in \nat}A_{n}$ in $G\nCalg_{\sepa}$ by a cofinality argument. In order to see that $(5)$ is an equivalence we use the following remark. 
 
  	\begin{rem}\label{rgokpergwefrfw}
  		In the context of a right Bousfield localization as in  \cref{ijewgowegwergerfref}, assume that $(A_{n})_{n\in \nat}$ is a system consisting of $W$-local objects 
  		such that $\colim_{n\in \nat } A_{n}$ exists. Let $ s:(A_n)_{n \in \nat} \to (\colim_{k \in \nat} A_k)_{n \in \nat} $ be the structure map of the colimit. Then the map $R(s):(R(A_n))_{n \in \nat} \to (R(\colim_{k \in \nat} A_k))_{n \in \nat}$ is the structure map for the colimit of $(R(A_n))_{n \in \nat}$. This means that the canonical map
  		$$\colim_{n\in \nat} R(A_{n})\stackrel{\simeq}{\to} R(\colim_{n\in \nat} A_{n})$$
  		exists and is an equivalence. \hB
  	\end{rem}

In order to apply this remark for the equivalence $(5)$
we observe 
that 
 $\colim_{k\in \nat}yL_{h}(A_{n,k})$ is $ W_{R}$-local for every $n$ in $\nat$.
 
 In order to see that  countable filtered colimits in $\AsGc$ commute with finite limits {we employ the following general {fact.} Let $\bD$ be {an $\infty$-}category admitting finite limits and countable filtered colimits. 
	\begin{lem} \label{vnejieogfjeohjfnaff}{The realization functor $ \lvert - \rvert: \Ind^{\aleph_1}(\bD) \to \bD$ commutes with finite limits if and only if  
	  countable filtered colimits commute with finite limits in $\bD$.} \end{lem} \begin{proof} {We first  assume that the realization functor commutes with finite limits.}
  Given a diagram $D: I \times K \to \bD$ with $ I $ countable filtered and $K$ finite, we have to verify that the canonical map 
 	\begin{equation} \label{treijehgirf}
 		\colim_{i \in I} \lim_{k \in K} D(i,k) \to \lim_{k \in K} \colim_{ i \in I} D(i,k) 
 	\end{equation}
 	is an equivalence. Since filtered colimits commute with finite limits in anima, the canonical map
 	\begin{equation}  \label{bhjwaejrofgian}
 		\colim_{i \in I} y(\lim_{k \in K} D(i,k)) \simeq \colim_{i \in I} \lim_{k \in K} y(D(i,k)) \to \lim_{k \in K} \colim_{i \in I} y(D(i,k))
 	\end{equation}
 	in $\Ind^{\aleph_1}(\bD)$ is an equivalence. Applying the left exact functor $\lvert - \rvert: \Ind^{\aleph_1}(\bD) \to \bD$ to the equivalence \eqref{bhjwaejrofgian} we {obtain \eqref{treijehgirf} which is therefore also an equivalence.}
		
	{The converse direction is obvious.} \end{proof}
 
 To verify the condition \Cref{vnejieogfjeohjfnaff} for $\AsGc$ we consider the diagram
 \begin{equation}\label{ertgrogkeroptgetrg}\xymatrix{\Ind^{\aleph_{1}}(\Ind^{\aleph_{1}}(G\nCalg_{\sepa,h}))\ar[rr]^-{\Ind^{\aleph_{1}}(R)}\ar[d]^{|-|'}&&\Ind^{\aleph_{1}}(\AsGc)\ar[d]^{|-|''}\\\Ind^{\aleph_{1}}(G\nCalg_{\sepa,h}) \ar[rr]^-{R}&&\AsGc} \ ,
\end{equation}
which commutes, because $R$ preserves countable filtered colimits. We want to show that $\lvert - \rvert''$ preserves finite limits. Since $\Ind^{\aleph_1}(R) $ is a left exact localization (it is even a right Bousfield localization) and therefore admits an apropriate calculus of fractions, the functor $\lvert - \rvert''$ is left exact if and only if the composite $\lvert - \rvert'' {\circ} \Ind^{\aleph_1}(R) $ is left exact. {By the commutativity of  \eqref{ertgrogkeroptgetrg}  this} is equivalent to $R{\circ}  \lvert - \rvert'$ being left exact. Finally, both $R$ and $ \lvert - \rvert' $ are left exact as right Bousfield localizations. {For}  $\lvert - \rvert'$ this follows from the fact that $\Ind^{\aleph_1}(G\nCalg_{ \sepa,h})$ is compactly assembled by \Cref{neivmsdhjghdikvhjf}. }

\begin{rem}
	We will show in \cref{vsiowerfsdfuiohsdfoijasdf1} below that $\AsGc$ is compactly assembled, {for which the main {ingredient} is the right Bousfiled localization $R$. The above argument can actually be directly adapted to show that in every compactly assembled category $\bD$ admitting finite limits, {countable} filtered colimits commute with finite limits. The crucial point is to replace $R$ in the above argument by the right Bousfield localization $\lvert-\rvert: \Ind^{\aleph_1}(\bD) \to \bD$.}  \hB
\end{rem}

For Assertion \ref{regkopwergerferfrewfw} we must first show for any $\hat A$ in $\Ind^{\aleph_{1}}(G\nCalg_{\sepa,h})$ that $\hat A\otimes -$ sends the morphisms in $W_{R}$ to morphisms which are inverted by $R$. Thus let $(Z_{n})_{n\in \nat}$ be a system in $G\nCalg_{\sepa}$ and $f$ be the canonical map \eqref{wefwefwefqewf}. We must show that  the morphism 
$R(\hat A\otimes f )$ is an equivalence.  
   Since $R$ (by Assertion \ref{regkopwergerferfrewfw2}) and the tensor product in $ \Ind^{\aleph_{1}}(G\nCalg_{\sepa,h})$ preserve countable  filtered colimits  
  and  this category is generated under countable filtered colimits by the image of $y$
  we can assume that $\hat A\simeq y(A)$ for some $A$ in $G\nCalg_{\sepa,h}$. We now argue using the following commutative diagram:
$$ \xymatrix{  y(A)\otimes \colim_{ n \in \nat} y(Z_n)   \ar[r]^{ y(A)\otimes  f } \ar[d]^{\simeq}& \ar[d]^{\simeq}   y(A) \otimes y(\colim_{n \in \nat}  Z_n)  \\ \colim_{ n \in \nat} y(A\otimes Z_n  )  \ar[r] \ar[dr]^{!!} & y (A \otimes \colim_{ n \in \nat} Z_n )\\   &  y (\colim_{ n \in \nat}( A\otimes Z_n )) \ar[u]^{!}_{\simeq}}\ .  $$
The two upper vertical equivalences reflect the fact that $y$ is symmetric monoidal (and also that $\otimes$ preserves filtered colimits for the left one), the morphism $!!$ belongs to $W_{R}$ and is therefore  inverted by $R$, and the equivalence $!$ follows from the fact that  the maximal tensor product in $G\nCalg_{\sepa}$  preserves countable filtered colimits. We can conclude that
$R$ also inverts the upper horizontal {maps}.

The bi-left exactness of the tensor product on 
$ \Ind^{\aleph_{1}}(G\nCalg_{\sepa,h})$ implies the  bi-left exactness of the tensor product on  $\AsGc$. Indeed, using that $R$ is a left-exact Dwyer-Kan localization, for every $A$ in  $\Ind^{\aleph_{1}}(G\nCalg_{\sepa,h})$,  the functor
$R(A)\otimes -: \AsGc\to  \AsGc$ is left exact if and only if its pull-back  
  $$R(A)\otimes R(-)\simeq R(A\otimes -):\Ind^{\aleph_{1}}(G\nCalg_{\sepa,h})\to  \AsGc$$ along $R$  is left-exact. But this is the case since
the tensor product on  $\Ind^{\aleph_{1}}(G\nCalg_{\sepa,h})$ is bi-left exact and $R$ is left-exact.
Also note that it suffices to consider the functors $R(A)\otimes - $ for all $A$ in $\Ind^{\aleph_{1}}(G\nCalg_{\sepa,h})$ since $R$ is essentially surjective.

Finally the tensor product in $ \AsGc$ preserves countable filtered colimits in each variable since it is given by the formula $(A,B)\mapsto R(L(A)\otimes L(B))$ and all functors in this formula (note that $\otimes$ on the right hand side is the tensor product in $\Ind^{\aleph_{1}}(G\nCalg_{\sepa,h})$ {and $L$ is the left-adjoint of $R$ as in \eqref{erfwerfewrferwfw}})
{preserve} countable filtered colimits. 

This finishes the proof of \cref{wreig90wregfwerf}.
	 \end{proof}

We now  show that the functor $\asGc$ from \eqref{543t445ts} satisfies the universal property required in \cref{wrejgioweferfw1}. The superscript $\cfil$ indicates countable filtered colimit preserving functors.  
 \begin{prop}\label{uqwighfuihvsdiohjfhjkbvdxc} The functor
 $\asGc:G\nCalg_{\sepa}\to \AsGc$ is homotopy invariant and preserves countable  filtered colimits.
 Moreover, for any $\infty$-category $\bC$ admitting countable filtered colimits the pull-back along {$\asGc$} induces an equivalence
\begin{equation}\label{fewqfwedewqdewdqd0}(\asGc)^{*}:\Fun^{\cfil}(\AsGc,\bC)\stackrel{\simeq}{\to} \Fun^{h,\cfil} (G\nCalg_{\sepa},\bC)\ . \end{equation}\end{prop}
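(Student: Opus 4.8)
The plan is to exploit the factorization $\asGc = R\circ y\circ L_h$ from \eqref{543t445ts} and to treat the three functors separately, using that each is a localization whose universal property is already available: $L_h$ is the Dwyer--Kan localization at homotopy equivalences (\cref{okprthrtertegtrgrg}), $y$ is the canonical embedding into the countable $\Ind$-completion, and $R$ is the Dwyer--Kan localization at $W_R$, which by \cref{wreig90wregfwerf}.\ref{gokpwergrwefwrfwref} is the right adjoint of a right Bousfield localization $L\colon\AsGc\leftrightarrows\Ind^{\aleph_1}(G\nCalg_{\sepa,h})\colon R$ and by \cref{wreig90wregfwerf}.\ref{regkopwergerferfrewfw2} preserves countable filtered colimits. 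Homotopy invariance of $\asGc$ is then immediate, since $L_h$ inverts homotopy equivalences by construction and $y$, $R$ are functors. For the preservation of countable filtered colimits it suffices, by a standard cofinality argument (as in the reduction used in the proof of \cref{weijgwoefrewfwdffsfd}), to treat $\nat$-indexed systems: given $(Z_n)_{n\in\nat}$ in $G\nCalg_{\sepa}$, the canonical map \eqref{wefwefwefqewf} lies in $W_R$, hence is inverted by $R$, and since $R$ preserves countable filtered colimits we obtain
$$\colim_{n\in\nat}\asGc(Z_n)\;\simeq\; R\Bigl(\colim_{n\in\nat} yL_h(Z_n)\Bigr)\;\simeq\; R\bigl(yL_h(\colim_{n\in\nat}Z_n)\bigr)\;=\;\asGc\bigl(\colim_{n\in\nat}Z_n\bigr)\ .$$

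For the universal property I would fix $\bC$ admitting countable filtered colimits and compose three equivalences, using $(\asGc)^{*}=L_h^{*}\circ y^{*}\circ R^{*}$. First, the Dwyer--Kan localization $R$ gives $R^{*}\colon\Fun(\AsGc,\bC)\xrightarrow{\simeq}\Fun^{W_R}(\Ind^{\aleph_1}(G\nCalg_{\sepa,h}),\bC)$, and this restricts to an equivalence on the $\cfil$ subcategories: pulling back a $\cfil$ functor along the $\cfil$ functor $R$ is again $\cfil$, while conversely, if $G\circ R$ is $\cfil$ then $G\simeq (G\circ R)\circ L$ is $\cfil$ because $L$, being a left adjoint, preserves all colimits and $R\circ L\simeq\id_{\AsGc}$. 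Second, the universal property of the countable $\Ind$-completion gives $y^{*}\colon\Fun^{\cfil}(\Ind^{\aleph_1}(G\nCalg_{\sepa,h}),\bC)\xrightarrow{\simeq}\Fun(G\nCalg_{\sepa,h},\bC)$, with inverse the $\cfil$ left Kan extension along $y$; since $\Ind^{\aleph_1}(G\nCalg_{\sepa,h})$ is generated under countable filtered colimits by the image of $y$, a $\cfil$ functor $\hat F$ is computed on the objects $\colim_{n}yL_h(Z_n)$ by $\colim_{n}(\hat F\circ y)(L_h Z_n)$, so that $\hat F$ inverts $W_R$ exactly when $(\hat F\circ y)\circ L_h$ preserves $\nat$-indexed colimits. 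Hence $y^{*}$ restricts to an equivalence onto the full subcategory of those $F\in\Fun(G\nCalg_{\sepa,h},\bC)$ with $F\circ L_h$ preserving $\nat$-indexed colimits. Third, the Dwyer--Kan localization equivalence $L_h^{*}$ from \eqref{fqwoeijoqwijdqewdqwed} carries this subcategory precisely onto the homotopy-invariant functors $G\nCalg_{\sepa}\to\bC$ preserving $\nat$-indexed, equivalently countable filtered, colimits, that is, onto $\Fun^{h,\cfil}(G\nCalg_{\sepa},\bC)$. Stacking the three restricted equivalences yields \eqref{fewqfwedewqdewdqd0}.

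The bookkeeping requiring some care is the repeated identification of ``countable filtered'' with ``$\nat$-indexed'' via cofinality, together with the computation of the $\cfil$ left Kan extension along $y$ on the objects $\colim_{n}yL_h(Z_n)$ by the expected formula, which is exactly what converts the condition ``inverts $W_R$'' into ``becomes $\cfil$ after precomposition with $L_h$''. I do not expect a genuine obstacle here: all the structural inputs ($R$ a right Bousfield localization with $\cfil$ right adjoint, the universal property of $\Ind^{\aleph_1}$, and $L_h$ a Dwyer--Kan localization) are already established in \cref{wreig90wregfwerf} and \cref{okprthrtertegtrgrg}, and the remaining argument is a formal composition of localizations.
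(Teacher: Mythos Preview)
Your proposal is correct and follows essentially the same route as the paper's proof: both factor $\asGc=R\circ y\circ L_h$, deduce homotopy invariance from $L_h$, preservation of countable filtered colimits from $R$ inverting $W_R$ and being $\cfil$, and then compose the three restricted equivalences $R^{*}$, $y^{*}$, $L_h^{*}$ (the paper names your intermediate condition ``$F\circ L_h$ is $\cfil$'' the \emph{finitary} condition). The only cosmetic difference is that you make the cofinality reduction to $\nat$-indexed systems explicit, whereas the paper works directly with $\nat$-indexed systems throughout.
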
\begin{proof}
The functor {$\asGc$} is homotopy invariant since the first functor $L_{h}$ in  \eqref{543t445ts} is so.
{In order to show that {$\asGc$} preserves countable filtered colimits we}  consider a  system $(A_n)_{n \in \nat}$ in $G\nCalg_{\sepa}$. We  {then observe}   that    the comparison map
	$$\hspace{-0.5cm}{\colim_{n\in \nat} \asGc(A_{n})\stackrel{def}{\simeq}}  \colim_{n \in \nat} Ry(L_{h}(A_n)) {\stackrel{!}{\to}} R(  \colim_{n \in \nat} yL_{h}(A_n)) \stackrel{!!}{\to} Ry(L_{h}(\colim_{n \in \nat} A_n)) { \stackrel{def}{\simeq}  \asGc(\colim_{n\in \nat} A_{n}))}   $$
	is an equivalence. {In fact, the map marked by $!$ is an}  equivalence  since $R$ commutes with countable filtered colimits by \cref{wreig90wregfwerf}.\ref{regkopwergerferfrewfw2}. {Furthermore, the   map  marked by $!!$}  is an equivalence since it is obtained by applying $R$ to a morphism of the form \eqref{wefwefwefqewf} from the set $W_{R}$.
	
	In order to verify the universal properties we analyse the corresponding universal properties of all three functors appearing in \eqref{543t445ts}. Since $R$ is a  {Dwyer}-Kan localization we have an equivalence
	$$R^{*}:\Fun(\AsGc,\bC)\stackrel{\simeq}{\to}\Fun^{W_{R}}(\Ind^{\aleph_{1}}(G\nCalg_{\sepa,h}),\bC)\ .$$ Since $R$ preserves countable filtered colimits this equivalence restricts to an equivalence 
\begin{equation}\label{fewqfwedewqdewdqd}R^{*}:\Fun^{\cfil}({\AsGc},\bC)\stackrel{\simeq}{\to}\Fun^{W_{R},\cfil}(\Ind^{\aleph_{1}}(G\nCalg_{\sepa,h}),\bC)\ .
\end{equation}
Thereby, in order to see essential surjectivity note that if $R^{*}F$ preserves countable filtered  {colimits}, then 
$  L^{*}R^{*}F\simeq F$ also preserves countable  filtered colimits.
	
	It is the  universal property of $y$  that 	
	$$y^{*}:\Fun^{\cfil}(\Ind^{\aleph_{1}}(G\nCalg_{\sepa,h}),\bC)\stackrel{\simeq}{\to} \Fun(G\nCalg_{\sepa,h},\bC)$$ 	
	is an equivalence. It restricts to an equivalence
	\begin{equation}\label{fewqfwedewqdewdqd1}y^{*}:\Fun^{W_{R},\cfil}(\Ind^{\aleph_{1}}(G\nCalg_{\sepa,h}),\bC)\stackrel{\simeq}{\to}  \Fun^{\fin}(G\nCalg_{\sepa,h},\bC)\ ,\end{equation} 	where the superscript {$\fin$} stands for finitary functors{, {by which we} mean functors $F$ such that $FL_h$ preserves countable filtered colimits.}  To see essential surjectivity we  consider a functor $F$  in $\Fun^{\cfil}(\Ind^{\aleph_{1}}(G\nCalg_{\sepa,h}),\bC)$  such that  $y^{*}F$ is finitary. Then $F$ inverts the morphisms \eqref{wefwefwefqewf} in $W_{R}$.
	Indeed, we have the equivalences 
	$$F(\colim_{n\in \nat} yL_{h}(Z_{n}))\simeq \colim_{n\in \nat} F(  yL_{h}(Z_{n}))\simeq
	 F(    yL_{h}(\colim_{n\in \nat}  Z_{n}))$$
	 since $F$ preserves countable filtered colimits and $y^{*}F$ is finitary.

	 Finally, the equivalence \eqref{fqwoeijoqwijdqewdqwed}   restricts to an equivalence 
	 \begin{equation}\label{fewqfwedewqdewdqd2}L_{h}^{*}:\Fun^{\fin}(G\nCalg_{\sepa,h},\bC)\stackrel{\simeq}{\to} \Fun^{h,\cfil}(\Fun(G\nCalg_{\sepa},\bC)\end{equation}
by definition of the property of being finitary. We obtain the equivalence \eqref{fewqfwedewqdewdqd0} by composing the equivalences \eqref{fewqfwedewqdewdqd}, \eqref{fewqfwedewqdewdqd1}, and \eqref{fewqfwedewqdewdqd2}.
	 \end{proof}

In the following   we state two more universal properties of the functor $\asGc$ involving  the symmetric monoidal structures and left-exactness. 
We first note that $\asGc$ is by \eqref{543t445ts} a composition of functors which have symmetric monoidal  refinements. In the following  the subscript $\otimes/\lax
$ refers to symmetric monoidal or lax symmetric monoidal functors.  
\begin{prop}\label{gkowpergreffwer9}
 The functor $\asGc$ has a symmetric monoidal refinement such that  for any symmetric monoidal $\infty$-category $\bC$ with countable filtered colimits we have equivalences
\begin{equation}\label{fewqfwedewqdewdqd0rr}(\asGc)^{*}:\Fun_{\otimes/\lax}^{\cfil}(\AsGc,\bC)\stackrel{\simeq}{\to} \Fun_{\otimes/\lax}^{h,\cfil} (G\nCalg_{\sepa},\bC)\ . \end{equation}\end{prop}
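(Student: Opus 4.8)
The plan is to run the argument of \cref{uqwighfuihvsdiohjfhjkbvdxc} once more, but now keeping track of the symmetric monoidal structures. First I would recall that $\asGc$ is by \eqref{543t445ts} the composition $G\nCalg_{\sepa}\xrightarrow{L_{h}}G\nCalg_{\sepa,h}\xrightarrow{y}\Ind^{\aleph_{1}}(G\nCalg_{\sepa,h})\xrightarrow{R}\AsGc$, and that each of the three functors carries a canonical symmetric monoidal refinement: $L_{h}$ by \cref{okprthrtertegtrgrg} (see \eqref{fqwoeijoqwijdqewdqwe332323d}), $y$ by the symmetric monoidal refinement of the $\Ind^{\aleph_{1}}$-completion discussed just before \cref{wreig90wregfwerf} (whose induced tensor product is bi-left exact and preserves countable filtered colimits in each variable), and $R$ by \cref{wreig90wregfwerf}.\ref{regkopwergerferfrewfw}. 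Composing these three symmetric monoidal functors produces the symmetric monoidal refinement of $\asGc$, and correspondingly a bi-left exact symmetric monoidal structure on $\AsGc$ for which the tensor product preserves countable filtered colimits in each argument.

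For the universal property I would factor the claimed equivalence \eqref{fewqfwedewqdewdqd0rr} through the same three steps as in the non-monoidal proof. Since $R$ is a symmetric monoidal Dwyer--Kan localization, precomposition by $R$ induces an equivalence $\Fun_{\otimes/\lax}(\AsGc,\bC)\xrightarrow{\simeq}\Fun_{\otimes/\lax}^{W_{R}}(\Ind^{\aleph_{1}}(G\nCalg_{\sepa,h}),\bC)$, and, exactly as for \eqref{fewqfwedewqdewdqd}, this restricts to an equivalence between the full subcategories of countable filtered colimit preserving lax symmetric monoidal functors, because a lax symmetric monoidal $F$ with $R^{*}F$ preserving countable filtered colimits automatically preserves them, as $F\simeq L^{*}R^{*}F$ and $R$ preserves countable filtered colimits by \cref{wreig90wregfwerf}.\ref{regkopwergerferfrewfw2}. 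Next, the universal property of the symmetric monoidal $\Ind^{\aleph_{1}}$-completion gives an equivalence $y^{*}\colon\Fun_{\otimes/\lax}^{\cfil}(\Ind^{\aleph_{1}}(G\nCalg_{\sepa,h}),\bC)\xrightarrow{\simeq}\Fun_{\otimes/\lax}(G\nCalg_{\sepa,h},\bC)$, which restricts to $W_{R}$-inverting lax symmetric monoidal functors on the source and to finitary lax symmetric monoidal functors on the target, by the same computation on the generating morphisms \eqref{wefwefwefqewf} used for \eqref{fewqfwedewqdewdqd1}. Finally, \eqref{fqwoeijoqwijdqewdqwe332323d} restricts to an equivalence $L_{h}^{*}\colon\Fun_{\otimes/\lax}^{\fin}(G\nCalg_{\sepa,h},\bC)\xrightarrow{\simeq}\Fun_{\otimes/\lax}^{h,\cfil}(G\nCalg_{\sepa},\bC)$ by definition of finitariness. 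Composing the three equivalences yields \eqref{fewqfwedewqdewdqd0rr}.

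The main obstacle I anticipate is the middle step: one needs the precise universal property of $\Ind^{\aleph_{1}}$ as a symmetric monoidal completion in the setting where the ambient tensor product is only bi-left exact rather than bicocontinuous, namely that a lax symmetric monoidal functor out of $G\nCalg_{\sepa,h}$ extends essentially uniquely to a lax symmetric monoidal, countable filtered colimit preserving functor out of $\Ind^{\aleph_{1}}(G\nCalg_{\sepa,h})$, and that this extension is compatible with passing to the full subcategories cut out by $W_{R}$ and finitariness. This is handled via Day convolution together with the universal property of $\Ind$-completions for (lax) symmetric monoidal $\infty$-categories; the required statements are exactly the analogues of those used in \cite[Sec.~8]{keb} and in the proof of \cite[Prop.~3.8]{KKG}, and I would import them from there, checking only that the countable (rather than arbitrary) filtered colimit variant goes through unchanged. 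The remaining verifications — that $L$ is still symmetric monoidal, that $R^{*}F\simeq L^{*}R^{*}F$ lax symmetric monoidally, and the computation that a finitary lax symmetric monoidal functor inverts the morphisms in $W_{R}$ — are routine repetitions of the corresponding arguments in \cref{uqwighfuihvsdiohjfhjkbvdxc} and \cref{wreig90wregfwerf}.
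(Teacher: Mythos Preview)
Your proposal is correct and follows essentially the same approach as the paper: the paper's proof is a terse two-sentence remark that the argument of \cref{uqwighfuihvsdiohjfhjkbvdxc} goes through symmetric monoidally using that $L_{h}$ (via \eqref{fqwoeijoqwijdqewdqwe332323d}) and $R$ (via \cref{wreig90wregfwerf}.\ref{regkopwergerferfrewfw}) are symmetric monoidal Dwyer--Kan localizations together with the symmetric monoidal universal property of $y$, which is exactly what you spell out. Your explicit flagging of the middle step (the lax symmetric monoidal universal property of $\Ind^{\aleph_{1}}$) is a point the paper passes over in silence, so your version is if anything more careful.
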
\begin{proof}
 The proof  is  completely analogous to the one   of \cref{uqwighfuihvsdiohjfhjkbvdxc}. It uses that $L_{h}$ (see  \eqref{fqwoeijoqwijdqewdqwe332323d}) and $R$ (by \cref{wreig90wregfwerf}.\ref{regkopwergerferfrewfw}) are symmetric monoidal Dwyer-Kan localizations, and the universal property of {$y$}.
\end{proof}

In the following $\lex$ stands for left-exact functors and the superscript $\mathrm{Sch}$ means Schochet exactness as  explained in  \cref{okprthrtertegtrgrg}.
\begin{prop} The functor $\asGc$ is Schochet exact and for any  left-exact $\infty$-category $\bC$ with  countable filtered colimits which commute with finite limits we have  an equivalence 
\begin{equation}\label{fewqfwedewqdferferfwefefewdqd0}(\asGc)^{*}:\Fun^{\cfil,\lex}(\AsGc,\bC)\stackrel{\simeq}{\to} \Fun^{h,\cfil,\mathrm{Sch}} (G\nCalg_{\sepa},\bC)\ . \end{equation} \end{prop}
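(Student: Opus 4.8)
The plan is to imitate the proofs of \cref{uqwighfuihvsdiohjfhjkbvdxc} and \cref{gkowpergreffwer9}, now carrying the left-exactness and Schochet-exactness conditions through the factorization $\asGc = R\circ y\circ L_{h}$ of \eqref{543t445ts}. For Schochet exactness of $\asGc$ itself: by \cref{okprthrtertegtrgrg} the functor $L_{h}$ sends a Schochet fibrant cartesian square in $G\nCalg_{\sepa}$ to a cartesian square, the functor $y$ preserves all limits (in particular cartesian squares), and by \cref{wreig90wregfwerf}.\ref{gokpwergrwefwrfwref} the Dwyer-Kan localization $R$ is left exact and hence preserves cartesian squares; composing, $\asGc$ sends Schochet fibrant cartesian squares to cartesian squares.

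For the universal property I would factor $(\asGc)^{*} = L_{h}^{*}\circ y^{*}\circ R^{*}$ and restrict each factor to the appropriate full subcategory of functors. Step one: since $R$ is a Dwyer-Kan localization at $W_{R}$, precomposition is an equivalence $R^{*}\colon\Fun(\AsGc,\bC)\xrightarrow{\ \simeq\ }\Fun^{W_{R}}(\Ind^{\aleph_{1}}(G\nCalg_{\sepa,h}),\bC)$; because $R$ preserves countable filtered colimits (\cref{wreig90wregfwerf}.\ref{regkopwergerferfrewfw2}), is left exact, and is essentially surjective with $R\circ L\simeq\id_{\AsGc}$, this restricts to an equivalence $R^{*}\colon\Fun^{\cfil,\lex}(\AsGc,\bC)\xrightarrow{\ \simeq\ }\Fun^{W_{R},\cfil,\lex}(\Ind^{\aleph_{1}}(G\nCalg_{\sepa,h}),\bC)$, where essential surjectivity is treated exactly as in \cref{uqwighfuihvsdiohjfhjkbvdxc} using $L^{*}R^{*}F\simeq F$. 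Step two: the defining universal property of $\Ind^{\aleph_{1}}$ gives $y^{*}\colon\Fun^{\cfil}(\Ind^{\aleph_{1}}(G\nCalg_{\sepa,h}),\bC)\xrightarrow{\ \simeq\ }\Fun(G\nCalg_{\sepa,h},\bC)$; since $y$ preserves finite limits and the left Kan extension $y_{!}$ of a finitary left-exact functor is again left exact — using that $G\nCalg_{\sepa,h}$ is left exact and that countable filtered colimits commute with finite limits in $\Ind^{\aleph_{1}}(G\nCalg_{\sepa,h})$ and in $\bC$ — this restricts to an equivalence onto $\Fun^{\fin,\lex}(G\nCalg_{\sepa,h},\bC)$ with $\fin$ the finitary condition of \cref{uqwighfuihvsdiohjfhjkbvdxc}, and (as there) any countable filtered colimit preserving functor on $\Ind^{\aleph_{1}}(G\nCalg_{\sepa,h})$ whose restriction along $y$ is finitary automatically inverts the maps \eqref{wefwefwefqewf}, so the source of step two matches the target of step one. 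Step three: \eqref{ghiwoergerfefrefw} gives $L_{h}^{*}\colon\Fun^{\lex}(G\nCalg_{\sepa,h},\bC)\xrightarrow{\ \simeq\ }\Fun^{h,\mathrm{Sch}}(G\nCalg_{\sepa},\bC)$, and by the definition of the finitary condition together with \eqref{fqwoeijoqwijdqewdqwed} it restricts to $L_{h}^{*}\colon\Fun^{\fin,\lex}(G\nCalg_{\sepa,h},\bC)\xrightarrow{\ \simeq\ }\Fun^{h,\cfil,\mathrm{Sch}}(G\nCalg_{\sepa},\bC)$. Composing the three equivalences yields \eqref{fewqfwedewqdferferfwefefewdqd0}.

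The only genuinely delicate points, and hence the main obstacle, are the left-exactness bookkeeping in steps one and two. In step one one must justify that a functor $F$ on $\AsGc$ is left exact precisely when $F\circ R$ is; this uses that $R$ is a left-exact localization, so every finite limit cone in $\AsGc$ is, up to equivalence, the image under $R$ of a finite limit cone in $\Ind^{\aleph_{1}}(G\nCalg_{\sepa,h})$, together with essential surjectivity of $R$. In step two one must invoke the standard fact that the countable-filtered cocompletion of a left-exact small category is again left exact and that Kan extension along the inclusion preserves left-exact functors, which rests on the commutation of filtered colimits with finite limits. Beyond these, everything is the same formal manipulation of Dwyer-Kan localizations already carried out for \cref{uqwighfuihvsdiohjfhjkbvdxc}.
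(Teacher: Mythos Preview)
Your proposal is correct and follows essentially the same three-step decomposition as the paper's proof: restricting the equivalences for $R^{*}$, $y^{*}$, and $L_{h}^{*}$ from \cref{uqwighfuihvsdiohjfhjkbvdxc} to the left-exact/Schochet-exact subcategories, with the key inputs being that $R$ is a left-exact Dwyer--Kan localization (right Bousfield) and that the left Kan extension along $y$ of a left-exact functor remains left exact via the factorization $F\simeq |-|_{\bC}\circ \widehat{y^{*}F}$ together with the assumed commutation of filtered colimits with finite limits in $\bC$. The only addition is your explicit verification of Schochet exactness of $\asGc$ itself, which the paper leaves implicit.
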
\begin{proof}  The   equivalence \eqref{ghiwoergerfefrefw} 
restricts to an equivalence  \begin{equation}\label{fewqfwedewqdewdqdrfdferwfreffrrr}L_{h}^{*}:\Fun^{\fin,\lex}(G\nCalg_{\sepa,h},\bC)\stackrel{\simeq}{\to} \Fun^{h,\cfil,\mathrm{Sch}}(G\nCalg_{\sepa},\bC)\end{equation}
by the definition of the notion of being  finitary {(see the proof of \cref{uqwighfuihvsdiohjfhjkbvdxc} for this notion)}. 
 We now argue that the equivalence \eqref{fewqfwedewqdewdqd1} restricts to an equivalence
 \begin{equation}\label{frewfrwevfdvsvfv}y^{*}:\Fun^{W_{R},\cfil,\lex}(\Ind^{\aleph_{1}}(G\nCalg_{\sepa,h}),\bC)\to \Fun^{\fin,\lex}(G\nCalg_{\sepa,h},\bC)
\end{equation} 
   provided {that} $\bC$ {is} left exact and has countable filtered colimits which are exact. 
 First of all $y^{*}$ preserves left-exact functors since $y$ preserves finite limits.
 In order to see essential surjectivity assume  that $F$ is in $\Fun^{W_{R},\cfil}(\Ind^{\aleph_{1}}(G\nCalg_{\sepa,h}),\bC)$
  such that $y^{*}F$ is left-exact.  Then $F$ is the left Kan-extension of the left exact functor  $y^{*}F$.
  In view of  $F \simeq  {y_{!}y^{*}F\simeq}  \lvert - \rvert_{\bC} \circ {\Ind^{\aleph_{1}}( y^{*}F)}$ it is left-exact since
  $ \lvert - \rvert_{\bC}$ is left-exact by the assumption on $\bC$ and ${\Ind^{\aleph_{1}}( y^{*}F)}$ is  clearly left-exact. 
     
   Finally, the functor
  $R$, being a right Bousfield localization,  is a left-exact Dwyer-Kan localization so that the equivalence
  \eqref{fewqfwedewqdewdqd} restricts to an equivalence
     \begin{equation}\label{fewqfwedewqdewdqdrrrr}R^{*}:\Fun^{\cfil,\lex}(\AsGc,\bC)\stackrel{\simeq}{\to}\Fun^{W_{R},\cfil,\lex}(\Ind^{\aleph_{1}}(G\nCalg_{\sepa,h}),\bC)
\end{equation}
for every left-exact $\infty$-category $\bC$ with countable filtered colimits.
The equivalence \eqref{fewqfwedewqdferferfwefefewdqd0} is given by the 
 composition of the equivalences
\eqref{fewqfwedewqdewdqdrrrr},\eqref{frewfrwevfdvsvfv}, and \eqref{fewqfwedewqdewdqdrfdferwfreffrrr}.\end{proof}

We now recall the results of  \cite{Dadarlat_1994} and relate them with our theory. This reference deals with the case of the trivial group $G$. But using the equivariant generalization of Blackadar's theory described in \cref{erijgioepgwegerfwef}
and the equivariant theory of asymptotic morphisms developed in  \cite{Guentner_2000} the results of   \cite{Dadarlat_1994} have straightforward equivariant generalizations.
In particular, in order to  apply \cite{Dadarlat_1994} to the equivariant case we
just consider all $C^{*}$-algebras appearing in this paper with their natural $G$-action
and all homomorphisms are equivariant.

{Let} $\AsG$ {be the category} of separable $G$-$C^\ast$-algebras and homotopy classes of asymptotic morphisms  \cite{Guentner_2000}.  It comes with  a canonical functor
$$\asG:G\nCalg_{\sepa}\to \AsG\ .$$
The maximal tensor product on $G\nCalg$ descends to a symmetric monoidal structure on $\AsG$ such that $\asG$ has a canonical symmetric monoidal refinement. 
 The combination of  \cite[Thm 3.5]{Dadarlat_1994} and  \cite[Thm 3.7]{Dadarlat_1994} can be stated as follows:
\begin{theorem}\label{ergkopwregwerffwrf}
There exists a right Bousfield localization $$L_{0}:  \AsG    \leftrightarrows  \ho(\Ind^{\aleph_{1}}(G\nCalg_{\sepa,h}))  :R_{0}$$
such that the essential image of $L_{0}$ consists of objects of the form $\colim_{n\in \nat} yL_{h}(A_{n})$ for shape systems
$(A_{n})_{n\in \nat}$ in $G\nCalg_{\sepa}$.\footnote{The right-adjoint $R_{0}$ is denoted by $L$ in the reference \cite{Dadarlat_1994} since it is a version of a colimit functor.}  
\end{theorem}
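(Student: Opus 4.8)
The plan is to deduce \cref{ergkopwregwerffwrf} from the construction of $\asGc$ in \eqref{543t445ts} together with \cref{wreig90wregfwerf} by applying the homotopy category functor $\ho$ everywhere. First I would recall from \cref{wreig90wregfwerf}.\ref{gokpwergrwefwrfwref} that we already have a right Bousfield localization $L:\AsGc\leftrightarrows\Ind^{\aleph_{1}}(G\nCalg_{\sepa,h}):R$, and from \cref{wreig90wregfwerf}.\ref{gokpwergrwefwrfwref111} that the essential image of $L$ is spanned by the objects $\colim_{n\in\nat}yL_{h}(A_{n})$ for explicit shape systems $(A_{n})_{n\in\nat}$. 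By \cref{ojkergperwerge} every separable $G$-$C^{*}$-algebra admits an explicit shape system, so these objects already exhaust (up to the right Bousfield equivalences) the full subcategory of $W_R$-local objects. The key extra input needed is the comparison $\ho(\AsGc)\simeq\AsG$, i.e.\ that $\ho\circ\asGc$ agrees with the classical functor $\asG:G\nCalg_{\sepa}\to\AsG$ to the homotopy-asymptotic-morphism category of \cite{Guentner_2000}; this is exactly where the equivariant generalization of \cite{Dadarlat_1994} enters, and I would cite \cite[Thm.\ 3.5, Thm.\ 3.7]{Dadarlat_1994} for it (noting, as the surrounding text does, that these results generalize verbatim to the equivariant setting using the equivariant shape theory of \cref{erijgioepgwegerfwef} and the equivariant asymptotic morphisms of \cite{Guentner_2000}).

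With that identification in hand, I would argue as follows. Since the homotopy category functor $\ho:\Cat_\infty\to\Cat$ preserves adjunctions and sends fully faithful functors to fully faithful functors, applying $\ho$ to the right Bousfield localization $L\dashv R$ of \cref{wreig90wregfwerf}.\ref{gokpwergrwefwrfwref} yields an adjunction
$$
\ho(L):\ho(\AsGc)\leftrightarrows\ho(\Ind^{\aleph_{1}}(G\nCalg_{\sepa,h})):\ho(R)
$$
in which $\ho(L)$ is still fully faithful, hence a right Bousfield localization of ordinary categories. Under the equivalence $\ho(\AsGc)\simeq\AsG$ this becomes the desired localization $L_{0}\dashv R_{0}$, with $R_{0}=\ho(R)$ the "colimit-type" functor denoted $L$ in \cite{Dadarlat_1994}. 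The statement about the essential image of $L_{0}$ then follows from \cref{wreig90wregfwerf}.\ref{gokpwergrwefwrfwref111}: the essential image of $\ho(L)$ is the image of the essential image of $L$ under $\ho$, and $\ho$ sends the objects $\colim_{n\in\nat}yL_{h}(A_{n})$ to the objects $\colim_{n\in\nat}yL_{h}(A_{n})$ viewed in $\ho(\Ind^{\aleph_{1}}(G\nCalg_{\sepa,h}))$. Replacing "explicit shape system" by "shape system" is harmless here since by \cref{tokgprhhehtheth} every explicit shape system is a shape system, and conversely one checks that for any shape system $(A_{n})_{n\in\nat}$ the object $\colim_{n\in\nat}yL_{h}(A_{n})$ is $W_{R}$-local by the same application of \cref{wrtkohpgertwfrewfwrf} and \cref{bidfosdfivosdifhgnbeoiosdf} that was used in the proof of \cref{wreig90wregfwerf} (the argument there only used the slp for $\emptyset\to *$ and $S^{0}\to *$ at the level of homotopy categories, which is precisely what \cref{wrtkohpgertwfrewfwrf} provides for general semi-projective maps).

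The main obstacle I anticipate is not a deep one but a bookkeeping one: making precise the identification $\ho(\AsGc)\simeq\AsG$ and checking that it intertwines $\ho\circ R$ with the functor $R_0$ of \cite{Dadarlat_1994}. Concretely, one must verify that the classical functor $\asG$ inverts exactly the homotopy equivalences and is initial among homotopy-invariant functors to ordinary categories that preserve sequential colimits — this is the content of \cite[Thm.\ 3.5]{Dadarlat_1994} — and then invoke the universal property \eqref{fewqfwedewqdewdqd0} of $\asGc$ (passing to homotopy categories) to produce the comparison equivalence and its compatibility with the localization functors. Since \cite{Dadarlat_1994} already formulates its results in the language of Bousfield localization of the category of pro-$C^{*}$-algebras (equivalently, $\ho(\Ind^{\aleph_{1}}(G\nCalg_{\sepa,h}))$), the translation should be essentially a matter of matching definitions rather than new mathematics. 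I would therefore present this proof as a short deduction, referring to \cite{Dadarlat_1994} for the classical inputs and to \cref{wreig90wregfwerf} for the $\infty$-categorical refinement, and spell out only the step "apply $\ho$ to a right Bousfield localization and it stays a right Bousfield localization" in any detail.
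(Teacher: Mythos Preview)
Your approach has a circularity problem relative to the paper's logical structure. In the paper, \cref{ergkopwregwerffwrf} is not proved at all---it is simply stated as a reformulation of \cite[Thm.~3.5 and Thm.~3.7]{Dadarlat_1994}, i.e.\ it is a classical input about the ordinary category $\AsG$ and the homotopy category $\ho(\Ind^{\aleph_{1}}(G\nCalg_{\sepa,h}))$. The identification $\ho(\AsGc)\simeq\AsG$ that you want to invoke is \cref{ewrogjpwergwerfwerfwrfw}, and that proposition is \emph{deduced from} \cref{ergkopwregwerffwrf}: its proof works by comparing the essential images of $\ho(L)$ (from \cref{wreig90wregfwerf}) and $L_{0}$ (from \cref{ergkopwregwerffwrf}), and then concluding that the two right Bousfield localizations of $\ho(\Ind^{\aleph_{1}}(G\nCalg_{\sepa,h}))$ coincide. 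So using $\ho(\AsGc)\simeq\AsG$ to prove \cref{ergkopwregwerffwrf} reverses the dependency.

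There is also a technical error in your last paragraph. You claim that for a general (not explicit) shape system $(A_{n})_{n\in\nat}$ one can show $\colim_{n\in\nat}yL_{h}(A_{n})$ is $W_{R}$-local using \cref{wrtkohpgertwfrewfwrf} and \cref{bidfosdfivosdifhgnbeoiosdf}, because ``the argument only used the slp for $\emptyset\to *$ and $S^{0}\to *$''. This is not what the proof of \cref{wreig90wregfwerf} does: it invokes \cref{jgriowergwrefrefrefwfrfwf} (hence \cref{xjnbcuiohwedfiohgeriosdfylk}), which supplies the \emph{full} slp and is only available for explicit semi-projective maps, and then applies \cref{bidfosdfivosdifhgnbeoiosdf}, whose hypothesis is the full slp of \cref{gwjeiorrfrefrfw}, not merely the cases $\emptyset\to *$ and $S^{0}\to *$. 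The weaker lifting from \cref{wrtkohpgertwfrewfwrf} suffices only at the level of homotopy categories---which is precisely why the paper imports \cref{ergkopwregwerffwrf} from \cite{Dadarlat_1994} rather than deriving it from the $\infty$-categorical statement.
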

 
The following proposition shows that $\AsGc$ provides  an $\infty$-categorical enhancement of $\AsG$.
 \begin{prop} \label{ewrogjpwergwerfwerfwrfw}
 There exists an essentially unique equivalence
 filling the triangle
 \begin{equation}\label{fweqedqwedqededq}  \xymatrix{&   \ho(\Ind^{\aleph_{1}}(G\nCalg_{\sepa,h}))\ar[dl]_{\ho(R)}\ar[dr]^{R_{0}}& \\ \ho(\AsGc)\ar@{-->}[rr]^{\simeq}&&\AsG}\ . \end{equation}  Moreover, $\ho(R)$ and $R_{0}$ are   symmetric monoidal  Bousfield localizations and  the  commutative  triangle  above refines to one   in symmetric monoidal categories.
 \end{prop}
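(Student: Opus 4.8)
The plan is to exploit the universal property of the Dwyer--Kan localization $R$ from \cref{wreig90wregfwerf}. Both $\ho(R)$ and $R_0$ are right Bousfield localizations of the \emph{same} $\infty$-category (resp.\ its homotopy category), namely $\ho(\Ind^{\aleph_1}(G\nCalg_{\sepa,h}))$, and by \cref{wreig90wregfwerf}.\ref{gokpwergrwefwrfwref111} and \cref{ergkopwregwerffwrf} they have the \emph{same} essential image inside $\ho(\Ind^{\aleph_1}(G\nCalg_{\sepa,h}))$: objects of the form $\colim_{n\in\nat} yL_h(A_n)$ for (explicit, resp.\ arbitrary) shape systems. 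Here one uses \cref{tokgprhhehtheth} to see that every explicit shape system is a shape system, and conversely \cref{ojkergperwerge} to see that every object admitting a shape system admits an explicit one with the same colimit up to equivalence in $\ho(\Ind^{\aleph_1}(G\nCalg_{\sepa,h}))$; so the two essential images literally coincide as full subcategories.

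First I would recall the general fact that a right Bousfield localization $R\colon \bC\leftrightarrows \bD\colon\!i$ is, up to equivalence, determined by its essential image: the right adjoint $i$ identifies $\bD$ with the full subcategory $\EssIm(i)\subseteq \bC$ on the local objects, and $R$ is the reflection onto it. Concretely, from the equivalence $i_R i_R^{R}\simeq\id$ one gets that the composite $\bD \xrightarrow{i} \bC \xrightarrow{R_0} \bD'$ is an equivalence whenever $i$ and the right adjoint $i'$ of $R_0$ have the same essential image. Applying this with $R$ replaced by $\ho(R)$ and $R_0$ gives the dashed equivalence in \eqref{fweqedqwedqededq} together with a canonical invertible $2$-cell making the triangle commute. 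Essential uniqueness follows because any two fillers restrict to the identity on the common essential image and are both determined by the reflections, or alternatively because $\ho(R)$ is a localization and hence an epimorphism in $\Cat$, so a filler is unique once it exists.

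For the symmetric monoidal refinement I would argue as follows. By \cref{wreig90wregfwerf}.\ref{regkopwergerferfrewfw} the functor $R$ is a symmetric monoidal Dwyer--Kan localization, hence so is $\ho(R)$ (passing to homotopy categories preserves symmetric monoidal structures and localizations). On the other side, the maximal tensor product descends to $\AsG$ with $\asG$ symmetric monoidal, and by the construction of $L_0,R_0$ in \cite{Dadarlat_1994} the localization is compatible with $\otimes$; equivalently, the tensor product of two objects in $\EssIm(R_0)$ again lies in $\EssIm(R_0)$ because a shape system for $A\otimes B$ can be built as $(A_n\otimes B_n)_{n}$ from shape systems of $A$ and $B$ — tensoring explicit semi-projective maps yields explicit semi-projective maps since the universal algebras of \cref{egjweogrefwerfw} are stable under $\otimes$. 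Both symmetric monoidal structures on the respective localizations are therefore the unique ones making the reflection functors symmetric monoidal, so the dashed equivalence upgrades uniquely to a symmetric monoidal equivalence and the triangle refines to one of symmetric monoidal $\infty$-categories.

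The main obstacle is the bookkeeping around the two notions of shape system (explicit vs.\ general): one must check carefully that $\EssIm(\ho(R))$ and $\EssIm(R_0)$ really agree \emph{as subcategories}, not merely that each object of one is equivalent to an object of the other. The cleanest route is to observe that $\colim_{n} yL_h(A_n)$ is $W_R$-local for \emph{every} shape system, not only explicit ones — this is exactly the content of \cref{wrtkohpgertwfrewfwrf} combined with \cref{bidfosdfivosdifhgnbeoiosdf}, using that $\emptyset\to\ast$ and $S^0\to\ast$ suffice to detect the relevant $\lim^1$-vanishing on the level of homotopy categories — so the local objects for $W_R$ are precisely the colimits of shape systems, matching $\EssIm(R_0)$ on the nose. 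Once this identification is in place, the rest is the formal localization argument sketched above; a secondary but routine point is verifying the $2$-cell in \eqref{fweqedqwedqededq} is coherent with the symmetric monoidal structures, which follows from uniqueness of symmetric monoidal reflections.
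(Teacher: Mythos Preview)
Your overall strategy matches the paper's: both $\ho(R)$ and $R_0$ are right Bousfield localizations of the same category, so comparing the essential images of their left adjoints yields the dashed equivalence by the formal argument in your second paragraph. The inclusion $\EssIm(\ho(L))\subseteq \EssIm(L_0)$ is immediate since explicit shape systems are shape systems.

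The gap is in the reverse inclusion. Your ``cleanest route'' proposes to show that $\colim_n yL_h(A_n)$ is $W_R$-local for an \emph{arbitrary} shape system by combining \cref{wrtkohpgertwfrewfwrf} with \cref{bidfosdfivosdifhgnbeoiosdf}. But \cref{bidfosdfivosdifhgnbeoiosdf} requires the full slp (for all maps of finite anima), whereas \cref{wrtkohpgertwfrewfwrf} only gives the slp for $\emptyset\to *$ and $S^0\to *$. These two cases force the map of pro-sets $(\pi_0 X_n)\to(\pi_0 Y_n)$ to be a pro-isomorphism and hence control $\lim_n \pi_0$, but they say nothing about $\pi_1$ and therefore nothing about the $\lim^1\pi_1$ contribution in the Milnor sequence for $\pi_0\lim_n$. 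So you cannot conclude that the map on $\pi_0$ of the limits is a bijection, which is precisely what $W_R$-locality in $\ho(\Ind^{\aleph_1}(G\nCalg_{\sepa,h}))$ demands. (Your phrasing suggests you may be computing in $\Ind^{\aleph_1}(\ho(G\nCalg_{\sepa,h}))$, where the argument would go through; but that is not the category in question.) The paper sidesteps this entirely: rather than proving locality for general shape systems, it uses \cref{ergkopwregwerffwrf} more substantively. Dadarlat's result says that for \emph{any} shape system $(A_n)$ of $A$ one has $\colim_n yL_h(A_n)\cong L_0(\asG(A))$ in $\ho(\Ind^{\aleph_1}(G\nCalg_{\sepa,h}))$; applying this to a general shape system and to an explicit one (which exists by \cref{ojkergperwerge}) shows the two ind-objects are isomorphic, placing the former in $\EssIm(\ho(L))$. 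For the symmetric monoidal refinement the paper also takes a shorter path than your closure-under-tensor argument: since $\ho(R)$ is already symmetric monoidal, it suffices to exhibit a symmetric monoidal structure on $R_0$, which one reads off from Dadarlat's explicit formula.
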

\begin{proof}
Since $R$ is a right Bousfield localization, also $\ho(R)$ is a right Bousfield localization whose left adjoint we will denote by $\ho(L)$. The functor $R_{0}$ is a right Bousfield localization by \cref{ergkopwregwerffwrf}.
 In order to compare these right Bousfield localizations it suffices to compare their classes of local objects, i.e., the essential images of their left-adjoints. Comparing  
 \cref{wreig90wregfwerf}.\ref{gokpwergrwefwrfwref111} with \cref{ergkopwregwerffwrf} we see that
 $$\EssIm(\ho(L))\subseteq \EssIm(L_{0})$$
   since explicit shape systems are  in particular  shape systems. 
 We will show that $$   \EssIm(L_{0})\subseteq \EssIm(\ho(L))\ .$$
To this end let $(A_{n})_{n\in \nat}$ be any shape system for $A$. Then  we have an isomorphism  $\colim_{n\in \nat} yL_{h}(A_{n})\cong L_{0}(A)$ in $ \EssIm(L_{0})$. By \cref{ojkergperwerge} we can   choose an explicit shape system $(A_{n}')_{n\in \nat}$ for $A$.  
    Then we have an equivalence $  \colim_{n\in \nat} yL_{h}(A_{n})\simeq \colim_{n\in \nat} yL_{h}(A'_{n})$ in 
  $\ho(\Ind^{\aleph_{1}}(G\nCalg_{\sepa,h}))$ since both objects represent $L_{0}(A)$.
   We conclude that  the object  $ \colim_{n\in \nat} yL_{h}(A_{n})$ is   in $\EssIm(\ho(L))$.

   {
   Providing a symmetric monoidal refinement of the dashed functor is the same as specifying a symmetric monoidal refinement of $R_0$, because $\ho(R)$ is a symmetric monoidal Bousfield loclization. The tensor product in $ \ho(\Ind^{\aleph_{1}}(G\nCalg_{\sepa,h}))$ is given by 
   $$\colim_{n\in \nat}y(A_{n})\otimes \colim_{m\in \nat}y(B_{m}):= \colim_{n\in \nat} y(A_{n}\otimes B_{n})$$ for any pair of systems $(A_{n})_{n\in \nat}$, $(B_{n})_{n\in \nat}$ in $G\nCalg_{h}$. 
   Using the explicit formula for $R_{0}$ given in \cite[2.2]{Dadarlat_1994}
   one checks that $R_0$ has a canonical symmetric monoidal refinement.}
 \end{proof}

\begin{rem}
We  have a commutative square
\begin{equation}\label{} \xymatrix{G\nCalg_{\sepa}\ar[r]^-{\asGc}\ar[d]^{\asG}&\AsGc\ar[d]^{\ho}\\ \AsG     &\ho(\AsGc )\ar[l]_-{\simeq}}\ .\end{equation}
While the functor $\asGc$ has nice characterizations in terms  universal properties (see \cref{uqwighfuihvsdiohjfhjkbvdxc} and \cref{gkowpergreffwer9}) we can not characterize $\asG$ in a similar manner. The reason is that the universal properties for $\AsGc$ involve countable filtered colimits, but $\ho$ does not preserve such colimits. \hB\end{rem}
 
We now consider the second map in \eqref{wrgwgrefrefrewff} which we decompose as a composition
\begin{equation}\label{fqwfqwedwedqqwededq} \AsGc \xrightarrow{L_{ K_G}} L_{\hat K_G}\AsGc \xrightarrow{R_{K_G}}S_{K_{G}}\AsGc\xrightarrow{R_{S^{2}}} \EsGf\ . 
\end{equation}
Thereby $L_{\hat K_{G}}$ is a  left Bousfield localization and $R_{K_{G}}$ and $R_{S^{2}}$ are right Bousfield localizations.  We call $S_{K_{G}}\AsGc$ the $K_{G}$-stabilization of $\AsGc$. All localizations are in addition symmetric monoidal, and the  right-adjoints preserve countable filtered colimits.  In the following we describe the functors in detail.

The first two localizations are analogous to the ones discussed in  \Cref{tgokptegerferfwefwf}. Recall that we have a map
  $\epsilon:\C\to \hat K_{G}$ which presents $\hat K_{G}$ as an idempotent algebra in $G\nCalg_{\sepa,h}$.
 Since $ R y:G\nCalg_{\sepa,h}\to \AsGc$ is symmetric monoidal we get an idempotent algebra $Ry(\hat K_{G})$ in $\AsGc$. It induces the symmetric monoidal left Bousfield localization
 $$L_{\hat K_{G}}:=Ry(\hat \K_{G})\otimes -:\AsGc\leftrightarrows L_{\hat K_{G}}\AsGc:\incl\ .$$ 
 The following statements are general facts about left Bousfield localizations induced by an idempotent algebra
 in a situation where the tensor product is bi-left exact and preserves countable filtered colimits in each argument.
The right adjoint of the localization $L_{\hat K_{G}}$ preserves countable filtered colimits, and the tensor product in $L_{\hat K_{G}}\AsGc$  preserves countable filtered colimits in each argument.  Furthermore,  the functor $L_{ \hat K_{G}}$ is a left-exact Dwyer-Kan localization, $L_{ \hat K_{G}}\AsGc$ is pointed and left exact,
  the tensor product in $L_{ \hat K_{G}}\AsGc$ is also bi-left exact, and countable  filtered colimits in this category  commute with finite limits.

It follows from the universal property of the left adjoint $L_{\hat K_{G}}$ in \eqref{vwervwevdfvsfvwevfvs} that we have a symmetric monoidal factorization
$$\xymatrix{G\nCalg_{\sepa,h}\ar[rr]^{L_{\hat K_{G}}Ry}\ar[dr]_{L_{\hat K_{G}}}&&L_{\hat K_{G}}\AsGc\\&L_{\hat K_{G}}G\nCalg_{\sepa,h}\ar@{..>}[ur]_{\bar L_{\hat K_{G}}}&}$$
as indicated by the dotted arrow.
 The inclusion $\alpha:K_{G}\to \hat K_{G}$ presents $ L_{\hat K_{G}}(K_{G})$ as an idempotent coalgebra in 
$ L_{\hat K_{G}}G\nCalg_{\sepa,h}$.  Hence $\bar L_{\hat K_{G}}( L_{\hat K_{G}}(K_{G}))$ has the structure of an idempotent coalgebra in $L_{\hat K_{G}}\AsGc$.
We therefore get a symmetric monoidal right  Bousfield localization
$$ \incl: S_{K_{G}}\AsGc \leftrightarrows L_{\hat K_{G}}\AsGc:R_{K_{G}}:= \bar L_{\hat K_{G}}( L_{\hat K_{G}}(K_{G}))    \otimes -\ .$$
The following statements are again general facts about a right Bousfield localization induced by an idempotent coalgebra
in a situation where the tensor product is bi-left exact and preserves countable filtered colimits in each argument.
The functor  $R_{K_{G}}$ preserves countable  filtered colimits.  The tensor product    in $S_{K_{G}}\AsGc$ preserves countable filtered colimits in each argument. 
The localization $R_{ K_{G}}$ is a left-exact Dwyer Kan localization and 
$S_{K_{G}}\AsGc$ is left exact and pointed. In this category
 countable filtered colimits commute with finite limits and  the tensor product   is bi-left exact.
 
 We next state the universal property of the composition
 $$ \asGck:= R_{K_{G}}\circ L_{\hat K_{G}}\circ \asGc: G\nCalg_{\sepa}\to  \AsGc\to  S_{K_{G}}\AsGc\ .$$
  \begin{prop}\label{werkogpergerffwef}  
  The functor $ \asGck$ is homotopy invariant,  Schochet exact, preserves countable filtered colimits, and is $K_{G}$-stable. Moreover, pull-back along this functor induces for any $\infty$-category $\bC$ admitting  countable filtered colimits an equivalence
\begin{equation}\label{friqhiwe9fowedweqdwedq} (\asGck)^{*}:\Fun^{\cfil}( S_{K_{G}}\AsGc,\bC)\stackrel{\simeq}{\to} \Fun^{h,\cfil,K_{G}}(G\nCalg_{\sepa},\bC)\ .
\end{equation} 
 \end{prop}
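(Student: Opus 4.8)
The plan is to deduce the four properties of $\asGck$ from those of its constituents along the factorization $G\nCalg_{\sepa}\xrightarrow{\asGc}\AsGc\xrightarrow{L_{\hat K_{G}}}L_{\hat K_{G}}\AsGc\xrightarrow{R_{K_{G}}}S_{K_{G}}\AsGc$, and then to obtain the universal property by composing the corresponding universal properties of the three functors. Homotopy invariance is immediate from that of $\asGc$ (\cref{uqwighfuihvsdiohjfhjkbvdxc}). Preservation of countable filtered colimits holds because $\asGc$ preserves them, $L_{\hat K_{G}}$ is a left adjoint, and $R_{K_{G}}$ preserves countable filtered colimits as recorded in the preceding discussion. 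Schochet exactness follows since $\asGc$ is Schochet exact and both $L_{\hat K_{G}}$ and $R_{K_{G}}$ are left-exact Dwyer--Kan localizations, hence preserve cartesian squares. For $K_{G}$-stability we use that $\asGc=RyL_{h}$ is symmetric monoidal: for a $K_{G}$-equivalence $f$ in $G\nCalg_{\sepa}$, symmetric monoidality together with the idempotency $\hat K_{G}\otimes\hat K_{G}\simeq\hat K_{G}$ in $G\nCalg_{\sepa,h}$ from \cref{tgokptegerferfwefwf} identifies $R_{K_{G}}L_{\hat K_{G}}\asGc(f)$ up to equivalence with the image under $RyL_{h}$ of $\hat K_{G}\otimes K_{G}\otimes f$; the latter is a homotopy equivalence because $K_{G}\otimes f$ is, so its image in $S_{K_{G}}\AsGc$ is an equivalence.

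For the universal property I would chain three equivalences. First, \cref{uqwighfuihvsdiohjfhjkbvdxc} gives $(\asGc)^{*}\colon\Fun^{\cfil}(\AsGc,\bC)\xrightarrow{\simeq}\Fun^{h,\cfil}(G\nCalg_{\sepa},\bC)$. Next, since $L_{\hat K_{G}}$ is a left Bousfield localization induced by the idempotent algebra $Ry(\hat K_{G})$ and its right adjoint preserves countable filtered colimits, pullback along $L_{\hat K_{G}}$ restricts to an equivalence between $\Fun^{\cfil}(L_{\hat K_{G}}\AsGc,\bC)$ and the full subcategory of $\Fun^{\cfil}(\AsGc,\bC)$ on functors inverting the $\hat K_{G}$-localization equivalences; essential surjectivity uses that colimits in $L_{\hat K_{G}}\AsGc$ are computed by applying $L_{\hat K_{G}}$ to colimits in $\AsGc$, exactly as in the arguments of \cref{twhkogprthtrerge1}. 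Likewise, since $R_{K_{G}}$ is a Dwyer--Kan localization whose right adjoint preserves countable filtered colimits, pullback along $R_{K_{G}}$ identifies $\Fun^{\cfil}(S_{K_{G}}\AsGc,\bC)$ with the full subcategory of $\Fun^{\cfil}(L_{\hat K_{G}}\AsGc,\bC)$ on functors inverting the class of maps inverted by $R_{K_{G}}$. Composing, $\Fun^{\cfil}(S_{K_{G}}\AsGc,\bC)$ becomes the category of countable-filtered-colimit-preserving functors on $\AsGc$ inverting both families of maps, and the key point is that these two families together generate precisely the $K_{G}$-equivalences in $\AsGc$ --- the $\AsGc$-analogue of the ``precisely inverts the $K_{G}$-equivalences'' statement of \cref{tgokptegerferfwefwf}, obtained by transporting that statement through the symmetric monoidal functor $Ry$ and using that $\AsGc$ is generated under countable filtered colimits by the image of $G\nCalg_{\sepa,h}$. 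Finally, pulling back along $(\asGc)^{*}$ and using that $\asGc$ sends $K_{G}$-equivalences to $K_{G}$-equivalences produces the equivalence \eqref{friqhiwe9fowedweqdwedq}.

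I expect the last step --- the identification of the combined local condition of the two Bousfield localizations on $\AsGc$ with plain $K_{G}$-stability, together with the verification that the successively induced functors inherit preservation of countable filtered colimits --- to be the main technical obstacle; the remainder is formal once the symmetric monoidal factorizations through $L_{\hat K_{G}}G\nCalg_{\sepa,h}$ from the preceding discussion and the left-exactness and colimit properties of $L_{\hat K_{G}}$ and $R_{K_{G}}$ are in place.
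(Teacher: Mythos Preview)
Your proposal is correct and follows essentially the same approach as the paper. The paper streamlines the last step slightly: rather than transporting the full ``precisely inverts the $K_{G}$-equivalences'' statement from \cref{tgokptegerferfwefwf} to $\AsGc$, it observes directly that since the two Bousfield localizations are generated by an idempotent algebra and coalgebra, a functor $F$ on $\AsGc$ lands in the joint localization if and only if it inverts the unit and counit maps $\hat A\to\asGc(\hat K_{G})\otimes\hat A$ and $\asGc(K_{G})\otimes\hat A\to\asGc(\hat K_{G})\otimes\hat A$ for every $\hat A$; then, using that $\asGc$ is \emph{essentially surjective} (not merely generating under colimits), one writes $\hat A\simeq\asGc(A)$ and recognizes these as images of $K_{G}$-equivalences in $G\nCalg_{\sepa}$, which are inverted by assumption on $(\asGc)^{*}F$. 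This avoids having to characterize the full saturated class of inverted maps in $\AsGc$ and is a bit quicker than your colimit-generation argument, but the content is the same.
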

 \begin{proof}
 It is clear by construction that $\asGck$ has the asserted properties. It remains to verify the equivalence 
 \eqref{friqhiwe9fowedweqdwedq}. A map $f$ in $\AsGc$ will be called $K_{G}$-equivalence if $\asGc(K_{G})\otimes f$ is an equivalence in $\AsGc$. Then 
 by construction we have an equivalence \begin{equation}\label{efwqffeqwfdafdsf}
  L_{\hat K_{G}}^{*}R^{*}_{K_{G}}:\Fun^{\cfil}( S_{K_{G}}\AsGc,\bC)\stackrel{\simeq}{\to} \Fun^{\cfil,K_{G}}(\AsGc,\bC)\ , \end{equation}
 where the superscript $K_{G}$ in the right-hand side   means functors which send $K_{G}$-equivalences in the sense above to equivalences. It remains to show that the equivalence \eqref{fewqfwedewqdewdqd0} restricts to an equivalence
 \begin{equation}\label{fewqfwedewqdewdefeffeqd0}(\asGc)^{*}:\Fun^{\cfil,K_{G}}(\AsGc,\bC)\stackrel{\simeq}{\to} \Fun^{h,\cfil,K_{G}} (G\nCalg_{\sepa},\bC)\ . \end{equation}
 The open part is essential surjectivity. Let $F$  in $\Fun^{\cfil}(\AsGc,\bC)$ be such that
 $(\asGc)^{*}F$ is $K_{G}$-stable   as defined  in \cref{fjioqwefdqwedewdqwed}.\ref{fiqewpoqdqded1}. By definition this means that $(\asGc)^{*}F$
 sends {all $K_{G}$-equivalences  in $G\nCalg_{\sepa}$  to 
  equivalences.} %, i.e., becomes an equivalence 
%in $G\nCalg_{\sepa,h}$.     
But since    the localizations $L_{\hat \K_{G}}$ and $R_{K_{G}}$ are generated by   idempotent algebras or coalgebras
 we must  only show that $F$ inverts the maps  $\hat A\to \asGc(\hat K_{G})\otimes \hat A$ and $\asGc(K_{G})\otimes \hat A\to  \asGc( \hat K_{G})\otimes \hat A$  for all objects $\hat A$ of $\AsGc$.  {Since
$\asGc$ is essentially surjective  we can choose an equivalence $\hat A \simeq \asGc (A)$ for some $A$ in $G\nCalg_{\sepa}$.}  Then these maps are images under $\asGc$ of   {the} $K_{G}$-equivalences
$A\to \hat K_{G}\otimes A$ and $K_{G}\otimes A\to \hat K_{G}\otimes A$
 in $G\nCalg_{\sepa}$ and therefore
inverted by $(\asGc)^{*}F$ by assumption.
  \end{proof}

We now state the universal properties of $\asGck$ involving symmetric monoidal structures and finite limits.
\begin{prop}\label{rijogwergergerwfw} The functor $\asGck$ is symmetric monoidal and for every symmetric monoidal $\infty$-category $\bC$ admitting countable filtered colimits  we have an equivalence
    \begin{equation}\label{friqhiwe9fowedweqdwerr4r4r44dq}(\asGck)^{*} :\Fun_{\otimes/\lax}^{\cfil}( S_{K_{G}}\AsGc,\bC)\stackrel{\simeq}{\to} \Fun_{\otimes/\lax}^{h,\cfil,K_{G}}(G\nCalg_{\sepa},\bC)\ .
\end{equation} 
\end{prop}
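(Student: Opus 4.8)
The plan is to run the proof of \cref{werkogpergerffwef} again, upgrading each Dwyer--Kan localization to its symmetric monoidal refinement and each $\Fun^{?}$ to $\Fun_{\otimes/\lax}^{?}$. First I would record that $\asGck = R_{K_{G}}\circ L_{\hat K_{G}}\circ \asGc$ is a composite of symmetric monoidal functors: $\asGc$ is symmetric monoidal by \cref{gkowpergreffwer9} (being the composite of the symmetric monoidal Dwyer--Kan localization $L_{h}$, the symmetric monoidal functor $y$, and the symmetric monoidal Dwyer--Kan localization $R$ of \cref{wreig90wregfwerf}.\ref{regkopwergerferfrewfw}); $L_{\hat K_{G}}$ is the symmetric monoidal left Bousfield localization at the idempotent algebra $Ry(\hat K_{G})$; and $R_{K_{G}}$ is the symmetric monoidal right Bousfield localization at the idempotent coalgebra $\bar L_{\hat K_{G}}(L_{\hat K_{G}}(K_{G}))$, both in a setting where the tensor product is bi-left exact and preserves countable filtered colimits in each variable. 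Hence $\asGck$ inherits a canonical symmetric monoidal refinement.

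For the equivalence \eqref{friqhiwe9fowedweqdwerr4r4r44dq}, I would first use that $L_{\hat K_{G}}$ and $R_{K_{G}}$ are symmetric monoidal Dwyer--Kan localizations whose right adjoints preserve countable filtered colimits in order to obtain, as the lax-monoidal mirror of \eqref{efwqffeqwfdafdsf}, an equivalence
$$L_{\hat K_{G}}^{*}R_{K_{G}}^{*} : \Fun_{\otimes/\lax}^{\cfil}(S_{K_{G}}\AsGc,\bC)\xrightarrow{\ \simeq\ }\Fun_{\otimes/\lax}^{\cfil,K_{G}}(\AsGc,\bC),$$
where the superscript $K_{G}$ on the right denotes lax symmetric monoidal $\cfil$-preserving functors inverting those maps $f$ in $\AsGc$ with $\asGc(K_{G})\otimes f$ an equivalence. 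Then I would check that the equivalence \eqref{fewqfwedewqdewdqd0rr} of \cref{gkowpergreffwer9} restricts to an equivalence
$$(\asGc)^{*} : \Fun_{\otimes/\lax}^{\cfil,K_{G}}(\AsGc,\bC)\xrightarrow{\ \simeq\ }\Fun_{\otimes/\lax}^{h,\cfil,K_{G}}(G\nCalg_{\sepa},\bC).$$
Composing these two equivalences gives \eqref{friqhiwe9fowedweqdwerr4r4r44dq}, since the composite of pull-backs along the factorization \eqref{fqwfqwedwedqqwededq} is pull-back along $\asGck$.

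The real content of the second restriction is essential surjectivity, which I would handle exactly as in \cref{werkogpergerffwef}: given a lax symmetric monoidal, homotopy invariant, $\cfil$-preserving and $K_{G}$-stable functor $F\colon G\nCalg_{\sepa}\to\bC$, let $\tilde F\colon \AsGc\to\bC$ be the essentially unique lax symmetric monoidal $\cfil$-preserving extension with $\tilde F\circ\asGc\simeq F$ provided by \eqref{fewqfwedewqdewdqd0rr}. Since the localizations are generated by an idempotent algebra and an idempotent coalgebra, it suffices to show that $\tilde F$ inverts the maps $\hat A\to\asGc(\hat K_{G})\otimes\hat A$ and $\asGc(K_{G})\otimes\hat A\to\asGc(\hat K_{G})\otimes\hat A$ for every $\hat A$ in $\AsGc$. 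Using that $\asGc$ is essentially surjective we may take $\hat A\simeq\asGc(A)$ for some $A$ in $G\nCalg_{\sepa}$, and then, since $\asGc$ is symmetric monoidal, these are the images under $\asGc$ of the $K_{G}$-equivalences $A\to\hat K_{G}\otimes A$ and $K_{G}\otimes A\to\hat K_{G}\otimes A$ in $G\nCalg_{\sepa}$, hence inverted by $F\simeq\tilde F\circ\asGc$.

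The main obstacle is purely organizational: one has to check that the symmetric monoidal refinements of $\asGc$, $L_{\hat K_{G}}$ and $R_{K_{G}}$ are mutually compatible along the factorizations used above, and that restricting the universal-property equivalences of \cref{uqwighfuihvsdiohjfhjkbvdxc}, \cref{gkowpergreffwer9} and \cref{werkogpergerffwef} to the sub-$\infty$-categories of lax symmetric monoidal functors does not spoil essential surjectivity. No analytic or homotopy-theoretic input beyond what already enters the proof of the non-monoidal \cref{werkogpergerffwef} is needed.
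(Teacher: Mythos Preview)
Your proposal is correct and follows essentially the same approach as the paper's own proof: compose the symmetric monoidal version of \eqref{efwqffeqwfdafdsf} (justified because $L_{\hat K_{G}}$ and $R_{K_{G}}$ are symmetric monoidal localizations) with the restriction of \eqref{fewqfwedewqdewdqd0rr} to $K_{G}$-stable functors. The paper's proof is extremely terse (two sentences), whereas you spell out the essential surjectivity argument and the compatibility of the symmetric monoidal refinements in detail, but the underlying strategy is identical.
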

\begin{proof}
In order to see this we compose the restriction of  \eqref{fewqfwedewqdewdqd0rr} to $K_{G}$-stable functors with the symmetric monoidal version 
$$ L_{\hat K_{G}}^{*}R^{*}_{K_{G}}:\Fun_{\otimes/\lax}^{\cfil}( S_{K_{G}}\AsGc,\bC)\stackrel{\simeq}{\to}
\Fun_{\otimes/\lax }^{\cfil,K_{G}}(  \AsGc,\bC)$$
of \eqref{efwqffeqwfdafdsf} which ist justified by the fact that $R_{K_{G}}$ and $L_{ \hat K_{G}}$ are symmetric monoidal localizations. 
\end{proof}

\begin{prop} \label{ertwigogijregf8zt28359terwgwreg} The functor $\asGck$ is  Schochet exact and
 for every $\infty$-category $\bC$ with finite limits and exact  countable filtered colimits we have an equivalence \begin{equation}\label{efwqedqwdqwdqwed}(\asGck)^{*}:\Fun^{\cfil,\lex}( S_{K_{G}}\AsGc,\bC)\stackrel{\simeq}{\to}
\Fun^{h,\cfil,K_{G},\mathrm{Sch}}(  G\nCalg_{\sepa},\bC)\ .
\end{equation}\end{prop}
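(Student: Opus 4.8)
The strategy is to realise $\asGck$ as the composite $R_{K_{G}}\circ L_{\hat K_{G}}\circ \asGc$ and to assemble the asserted equivalence from the three corresponding equivalences, each already available, restricted to left-exact functors.

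First I would check Schochet exactness. The functor $\asGc$ is Schochet exact, $L_{\hat K_{G}}$ preserves finite limits since it is a left-exact Dwyer--Kan localization, and $R_{K_{G}}$ preserves finite limits since it is a right adjoint. Hence the composite $\asGck$ sends a Schochet fibrant cartesian square to a cartesian square, so $\asGck$ is Schochet exact.

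Next, the equivalence \eqref{efwqffeqwfdafdsf} has to be restricted to left-exact functors, yielding
$$L_{\hat K_{G}}^{*}R_{K_{G}}^{*}:\Fun^{\cfil,\lex}(S_{K_{G}}\AsGc,\bC)\stackrel{\simeq}{\to}\Fun^{\cfil,\lex,K_{G}}(\AsGc,\bC)\ .$$
The justification is the same one used for \eqref{fewqfwedewqdewdqdrrrr}: since $L_{\hat K_{G}}$ and $R_{K_{G}}$ are left-exact Dwyer--Kan localizations between pointed left-exact $\infty$-categories, pulling back along them preserves left-exactness of functors (the localizations preserve finite limits), and conversely a $\cfil$, $K_{G}$-stable, left-exact functor out of $\AsGc$ descends to a left-exact functor on $S_{K_{G}}\AsGc$, because every finite limit in the localized category is the localization of the finite limit of the diagram included back along the right adjoints. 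Then I would restrict the left-exact universal property \eqref{fewqfwedewqdferferfwefefewdqd0} of $\asGc$ by the $K_{G}$-stability condition: the argument in the proof of \cref{werkogpergerffwef} showing, inside $\Fun^{\cfil}(\AsGc,\bC)$, that $F$ is $K_{G}$-stable if and only if $(\asGc)^{*}F$ is $K_{G}$-stable in the sense of \cref{fjioqwefdqwedewdqwed}.\ref{fiqewpoqdqded1} (using essential surjectivity of $\asGc$ and the generation of the two localizations by an idempotent algebra resp.\ coalgebra) applies verbatim inside the full subcategory $\Fun^{\cfil,\lex}(\AsGc,\bC)$. This gives
$$(\asGc)^{*}:\Fun^{\cfil,\lex,K_{G}}(\AsGc,\bC)\stackrel{\simeq}{\to}\Fun^{h,\cfil,K_{G},\mathrm{Sch}}(G\nCalg_{\sepa},\bC)\ ,$$
and composing the two displayed equivalences produces \eqref{efwqedqwdqwdqwed}.

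The only point I expect to require real care, and the one I would spell out in full, is the stability of left-exactness under the two localizations $L_{\hat K_{G}}$ and $R_{K_{G}}$, together with the verification that the hypothesis on $\bC$ (finite limits and exact countable filtered colimits) is exactly what is needed to invoke \eqref{fewqfwedewqdferferfwefefewdqd0}. All of this is already provided by the construction of $\asGc$ and by the facts recorded above — that $S_{K_{G}}\AsGc$ and $L_{\hat K_{G}}\AsGc$ are pointed and left-exact with countable filtered colimits commuting with finite limits, and that the relevant tensor products are bi-left exact — so no new input is required.
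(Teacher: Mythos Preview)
Your proposal is correct and follows essentially the same approach as the paper: both restrict the equivalence $L_{\hat K_{G}}^{*}R_{K_{G}}^{*}$ of \eqref{efwqffeqwfdafdsf} to left-exact functors (using that $L_{\hat K_{G}}$ and $R_{K_{G}}$ preserve finite limits), then compose with the restriction of \eqref{fewqfwedewqdferferfwefefewdqd0} to $K_{G}$-stable functors, invoking the argument from \cref{werkogpergerffwef} for essential surjectivity. Your explicit verification of Schochet exactness is a harmless addition (the paper already records it in \cref{werkogpergerffwef}).
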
\begin{proof} We compose the restriction 
of \eqref{fewqfwedewqdferferfwefefewdqd0} to $K_{G}$-stable functors 
with the functor
$$L_{\hat K_{G}}^{*}R^{*}_{K_{G}}:\Fun^{\cfil,\lex}( S_{K_{G}}\AsGc,\bC)\stackrel{\simeq}{\to}
\Fun^{\cfil,\lex,K_{G} }(  \AsGc,\bC)\ .$$ 
First of all $L_{\hat K_{G}}^{*},R^{*}_{K_{G}}$ preserves left-exact functors since $L_{\hat K_{G}}$ and $R_{K_{G}}$ preserve finite limits.
 For essential surjectivity we argue   as in the justification of the equivalence \eqref{fewqfwedewqdewdefeffeqd0} above. Alternatively one could use  \eqref{fewqfwedewqdewdefeffeqd0} and \eqref{fewqfwedewqdferferfwefefewdqd0}  and   that  $L_{\hat K_{G}}^{*},R^{*}_{K_{G}}$ are left exact localizations.
  \end{proof}

In the last step we construct the localization denoted by $R_{S^{2}}$ in \eqref{fqwfqwedwedqqwededq}.
To this end we import the Bott map  $$ \beta: \asG(S^2(\C) \otimes K_G) \to \asG(K_G)$$ in $\AsG$ from  \cite[Prop 6.16]{Guentner_2000}. Recall the definition of  $\Hom$-sets in the classical equivariant $E$-theory
\begin{equation}\label{gwerguweoifjorefwrfweff}\Hom_{\EsGn}(\esGn(A),\esGn(B)):=\Hom_{\AsG}(\asG(S^{2}(A)\otimes K_{G}),\asG(S^{2}(B)\otimes K_{G}))\ .
\end{equation} 
The statement  \cite[Prop 6.16]{Guentner_2000}  that $\asG(S^{2}(\C))\otimes \beta$ is an equivalence in $\EsGn$ boils down to 
  the statement   that $$\hspace{-0.5cm}\asG(S^{2}(\C)\otimes K_{G})\otimes \asG(S^{2}(\C)\otimes K_{G}) \xrightarrow{\id\otimes \beta} \asG(S^{2}(\C)\otimes K_{G})\otimes \asG(K_{G})\cong \asG(S^{2}(\C)\otimes K_{G})$$ 
is an isomorphism  in $\AsG$. We now consider the symmetric monoidal localization $\ho(S_{K_{G}}\AsGc)\simeq 
R_{0,K_{G}}L_{0,\hat K_{G}}\AsG$ of $\AsG$ at the $K_{G}$-equivalences 
 (the equivalence follows from   \cref{ewrogjpwergwerfwerfwrfw}).  We let $S^{2}$ in $\ho(S_{K_{G}}\AsGc)$ denote the object represented by  $\asG(S^{2}(\C)\otimes K_{G})$ under the equivalence above.
Since $K_{G}$ represents the tensor unit of $\ho(S_{K_{G}}\AsGc)$ we can consider the pair 
$(S^{2},\beta )$  
as an    idempotent coalgebra in $\ho(S_{K_{G}}\AsGc)$. Since the structure of 
an  idempotent coalgebra
can be lifted along the functor $\ho$, we get an idempotent coalgebra $(S^{2},\beta )$ in $S_{K_{G}}\AsGc$.  Then $R_{S^{2}}$ in  \eqref{fqwfqwedwedqqwededq}
is defined as the right Bousfield localization induced by this coalgebra.

\begin{prop}\label{gojwprgewrfwferfw} \mbox{}
	\begin{enumerate}
	\item\label{egjkwerrewf} The category $\EsGf$ is pointed, left-exact and admits countable filtered colimits which commute with finite limits.
	\item \label{egjkwerrewf1} $R_{S^{2}}$  is a left-exact Dwyer-Kan localization and preserves countable filtered colimits.
	\item \label{egjkwerrewf2} $R_{S^{2}}$ is a symmetric monoidal Dwyer-Kan localization and the  induced tensor product on $\EsGf$ is bi-left exact and preserves countable filtered colimits in each argument. 
		\item\label{egjkwerrewf3}  The category $\EsGf$ is stable.  	\item \label{egjkwerrewf5} The functor $\esGf:G\nCalg_{\sepa}\to \EsGf$ is homotopy invariant Schochet exact, $K_{G}$-stable and preserves countable filtered colimits.  
		\item \label{egjkwerrewf4} Pull-back along $\esGf$ induces  for every stable countably cocomplete $\infty$-category $\bC$ an equivalence 
		 \begin{equation}\label{rqwfwfw}   (\esGf)^\ast: \Fun^{\cfil, \lex}(\EsGf, \bC) \to \Fun^{h,K_{G},\cfil,\mathrm{Sch}}(G\nCalg_{\sepa, h}, \bC) \ .\end{equation} 
		 \item \label{egjkwerrewf45} Pull-back along the symmetric monoidal refinement of  $\esGf$ induces  for every   countably cocomplete symmetric monoidal stable  $\infty$-category $\bC$ an equivalence 
		 \begin{equation}\label{ewfasdfqweqfg}   (\esGf)^\ast: \Fun_{\otimes/\lax}^{\cfil, \lex}(\EsGf, \bC) \to \Fun_{\otimes/\lax}^{h,K_{G},\cfil,\mathrm{Sch}}(G\nCalg_{\sepa, h}, \bC) \ .\end{equation} 

	\end{enumerate}
\end{prop}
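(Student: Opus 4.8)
The plan is to deduce everything from the factorization $\esGf=R_{S^{2}}\circ\asGck$ underlying \eqref{fqwfqwedwedqqwededq}, where the input functor $\asGck\colon G\nCalg_{\sepa}\to S_{K_{G}}\AsGc$ and its target have already been analysed in \cref{werkogpergerffwef}, \cref{rijogwergergerwfw} and \cref{ertwigogijregf8zt28359terwgwreg}: $\asGck$ is homotopy invariant, Schochet exact, $K_{G}$-stable, countable filtered colimit preserving and symmetric monoidal, while $S_{K_{G}}\AsGc$ is pointed and left exact, has countable filtered colimits that commute with finite limits, and carries a bi-left exact symmetric monoidal structure whose tensor product preserves countable filtered colimits in each variable. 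Assertions (\ref{egjkwerrewf}), (\ref{egjkwerrewf1}) and (\ref{egjkwerrewf2}) then become instances of the standard formalism of a right Bousfield localization induced by an idempotent coalgebra in such a setting: tensoring with any object preserves $W_{S^{2}}$ since $(S^{2},\beta)$ is an idempotent coalgebra and $\otimes$ is bi-left exact and continuous, so $R_{S^{2}}$ is a symmetric monoidal left-exact Dwyer--Kan localization whose coreflection preserves countable filtered colimits, and all of these structural properties descend to $\EsGf$; this is literally the argument already run for $L_{\hat K_{G}}$ and $R_{K_{G}}$ and for the non-equivariant analogue in \cite{keb}. Assertion (\ref{egjkwerrewf5}) is then formal, since $\esGf$ is the composite of $\asGck$ with the equivalence-preserving, left-exact, countable filtered colimit preserving functor $R_{S^{2}}$.

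For stability, assertion (\ref{egjkwerrewf3}), I would use that $\EsGf$ is pointed with finite limits by (\ref{egjkwerrewf}), so by the standard criterion it suffices to prove that $\Omega\colon\EsGf\to\EsGf$ is an equivalence. Feeding the cartesian, Schochet fibrant square whose terms are $S(A)$, the contractible algebra $C_{0}([0,1),A)$ and $A$ into the homotopy invariant, Schochet exact functor $\esGf$ yields a natural equivalence $\esGf(S(A))\simeq\Omega\esGf(A)$, hence $\esGf(S^{2}(A))\simeq\Omega^{2}\esGf(A)$. Now $S^{2}(A)\cong S^{2}(\C)\otimes A$; since $\asGck$ is symmetric monoidal, the underlying object of the idempotent coalgebra $(S^{2},\beta)$ is $\asGck(S^{2}(\C))$ and the counit $\beta$ has target the tensor unit $\asGck(\C)\simeq\mathbf{1}_{S_{K_{G}}\AsGc}$; applying the symmetric monoidal coreflection $R_{S^{2}}$ then gives $\esGf(S^{2}(A))\simeq\esGf(S^{2}(\C))\otimes\esGf(A)$ and an equivalence $\esGf(S^{2}(\C))\simeq\mathbf{1}_{\EsGf}$ out of $R_{S^{2}}(\beta)$, which is an equivalence by idempotency. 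So $\Omega^{2}\esGf(A)\simeq\esGf(A)$ naturally in $A$. As both $\Omega^{2}$ and $\id_{\EsGf}$ preserve countable filtered colimits --- $\Omega$ being a finite limit, and by (\ref{egjkwerrewf}) --- and agree up to coherent equivalence on the essential image of $\esGf$, which generates $\EsGf$ under countable filtered colimits (each of $\asGc$, $L_{\hat K_{G}}$, $R_{K_{G}}$, $R_{S^{2}}$ being essentially surjective and compatible with such colimits), the equivalence extends to $\Omega^{2}\simeq\id_{\EsGf}$. Hence $\Omega$ is invertible and $\EsGf$ is stable.

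For the universal property (\ref{egjkwerrewf4}) I would compose the localization equivalence $R_{S^{2}}^{\ast}\colon\Fun^{\cfil,\lex}(\EsGf,\bC)\xrightarrow{\simeq}\Fun^{W_{S^{2}},\cfil,\lex}(S_{K_{G}}\AsGc,\bC)$ --- available because $R_{S^{2}}$ is a left-exact, countable filtered colimit preserving Dwyer--Kan localization --- with the equivalence \eqref{efwqedqwdqwdqwed} of \cref{ertwigogijregf8zt28359terwgwreg}. It then remains to see that for stable $\bC$ every $F$ in $\Fun^{\cfil,\lex}(S_{K_{G}}\AsGc,\bC)$ automatically lies in $\Fun^{W_{S^{2}},\cfil,\lex}$, i.e.\ inverts $\beta$ and its $\otimes$-translates, and here I would run the conservativity argument from \cref{jwegoeferfrefwerfst}: the composite $F\circ\asGck$ is homotopy invariant, $K_{G}$-stable and Schochet exact, hence exact on short exact sequences, so $\ho(F\circ\asGck)$ is a half-exact homotopy functor to the additive category $\ho(\bC)$ and therefore factors through $\esGn$ by the universal property of classical equivariant $E$-theory; it thus inverts all classical $E$-equivalences, among them the Bott map, and since $\ho$ is conservative $F\circ\asGck$ inverts the Bott map. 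Because $\asGck$ is essentially surjective this forces $F$ to invert all of $W_{S^{2}}$. Assertion (\ref{egjkwerrewf45}) follows by repeating the same composition in the symmetric monoidal setting, using that $R_{S^{2}}^{\ast}$ is a symmetric monoidal localization by (\ref{egjkwerrewf2}) and composing with the symmetric monoidal universal property \eqref{friqhiwe9fowedweqdwerr4r4r44dq} of \cref{rijogwergergerwfw}. I expect the stability step to be the main obstacle: one has to keep careful track that the object being localized at really is $\esGf$ of the double suspension and that the localized Bott counit becomes an equivalence of tensor units; once this is done, the universal-property steps are essentially formal, modulo importing classical Bott periodicity through the conservative functor $\ho$.
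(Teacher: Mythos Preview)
Your treatment of Assertions \ref{egjkwerrewf}, \ref{egjkwerrewf1}, \ref{egjkwerrewf2}, \ref{egjkwerrewf3}, \ref{egjkwerrewf5} and \ref{egjkwerrewf45} matches the paper's approach essentially verbatim: the structural assertions are inherited from the idempotent-coalgebra formalism already run for $R_{K_{G}}$, and stability comes from the equivalence $S^{2}\otimes-\simeq\Omega^{2}$ together with the fact that $R_{S^{2}}(\beta)$ is invertible. (Your extension-via-colimits step in \ref{egjkwerrewf3} is unnecessary, since $\beta$ already gives a natural transformation $S^{2}\otimes-\to\id$ of endofunctors and applying $R_{S^{2}}$ turns it into a natural equivalence $\Omega^{2}\simeq\id$ on all of $\EsGf$ at once; but nothing is wrong.)

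For Assertion \ref{egjkwerrewf4}, however, you take a genuinely different route and it has a gap. You argue that $\ho(F\circ\asGck)$ factors through $\esGn$ by the classical universal property and conclude that $F$ inverts the Bott map. But the factorization only controls $F$ on morphisms in the image of $\asGck$, i.e.\ morphisms coming from honest $G$-$C^{*}$-algebra homomorphisms. The Bott element $\beta$ is an asymptotic morphism, a map in $\ho(S_{K_{G}}\AsGc)$ that is \emph{not} of this form, so essential surjectivity of $\asGck$ on objects buys you nothing here. (There is also the secondary issue, flagged in \cref{jiogpgergrwgw9}, that the paper deliberately avoids relying on the universal property of $\esGn$ as stated in \cite{Guentner_2000}.)

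The paper's argument is internal and avoids all of this. For $F\in\Fun^{\cfil,\lex}(S_{K_{G}}\AsGc,\bC)$ one computes
\[
\Omega^{2}F(\beta\otimes A)\;\simeq\;F(\Omega^{2}(\beta\otimes A))\;\simeq\;F(S^{2}\otimes\beta\otimes A),
\]
the first equivalence by left-exactness of $F$, the second by $\Omega^{2}\simeq S^{2}\otimes-$ in $S_{K_{G}}\AsGc$ (homotopy invariance of $\asGck$). Since $(S^{2},\beta)$ is an idempotent coalgebra, $S^{2}\otimes\beta$ is already an equivalence, hence so is $\Omega^{2}F(\beta\otimes A)$; stability of $\bC$ makes $\Omega^{2}$ conservative, so $F(\beta\otimes A)$ is an equivalence. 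This uses only the ingredients you have already assembled and does not touch classical $E$-theory at all.
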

\begin{proof}
The Assertions \ref{egjkwerrewf}, \ref{egjkwerrewf1} and \ref{egjkwerrewf2} follow from the corresponding properties of $S_{K_{G}}\AsGc$ and its symmetric monoidal structure and the fact that $R_{S^{2}}$ is a right Bousfield localization induced by a idempotent coalgebra. The arguments are analogous to the case of $R_{K_{G}}$ above.

In order to see Assertion \ref{egjkwerrewf3} we note that 
 the functor $S^{2}\otimes -$ is equivalent to the loop endofunctor $\Omega^{2}$, see the proof of \cref{jwegoeferfrefwerfst} for a related argument. The map
$\beta:S^{2}\to \beins$ in  $ S_{K_{G}}\AsGc$ is inverted by $R_{S^{2}}$ and induces an equivalence $S^{2}\stackrel{\simeq}{\to} \id$  on $\EsGf$. We thus get an equivalence of functors   $\Omega^{2}\stackrel{\simeq}{\to} \id$ which  implies stability.

In order to show Assertion  \ref{egjkwerrewf5} we use $\esGf \simeq R_{S^{2}}\circ \asGck$, \cref{werkogpergerffwef} stating that $\asGck$ has these properties, and  Assertion \ref{egjkwerrewf1}  ensuring  that the stated  properties are preserved upon composition with $R_{S^{2}}$.

We  now show Assertion \ref{egjkwerrewf4}.
Let $W_{R_{2}} $ be the set of morphisms $\beta \otimes A: S^2 \otimes A \to A$  for all $A$ in $ S_{K_{G}}\AsGc$. Then by definition of $R_{S^{2}}$, for every $\infty$-category $\bC$ we have an equivalence 
$$R_{S^{2}}^{*}:\Fun(\EsGf, \bC)\stackrel{ \simeq}{\to}  \Fun^{W_{S^{2}}}(S_{K_{G}}\AsGc, \bC) \ .$$
We now assume that $\bC$ is stable and countably cocomplete. Then it 
 restricts to the first equivalence in the composition
\begin{align}\label{wefqwdqwedewdq} 
\Fun^{\cfil, \lex}(\EsGf, \bC)\stackrel{R_{S^{2}}^{*},\simeq}{\to} &\: \Fun^{W_{S^{2}},\cfil, \lex }(S_{K_{G}}\AsGc, \bC)\\ \stackrel{\simeq}{\to}&\:\:\Fun^{\cfil, \lex }(S_{K_{G}}\AsGc, \bC)\ .\nonumber
\end{align}
We must show that the second functor is an equivalence, too.
It is here we use that $\bC$ is stable. Let $F$ be in $ \Fun^{\cfil, \lex }(S_{K_{G}}\AsGc, \bC)$. We then must show that
$F$ inverts the morphisms in $W_{S^{2}}$. But this follows from
$$\Omega^2 F(\beta \otimes A) \simeq F(\Omega^2 \beta \otimes A) \simeq F(S^2 \otimes \beta \otimes A) $$
and the fact that $S^{2}\otimes \beta$ is an equivalence. Here we used 
 the  left-exactness of $F$ for the first equivalence and
the fact that $\asGck$ is homotopy invariant and reduced for the second (see the argument for Assertion   \ref{egjkwerrewf3}).
The composition of \eqref{wefqwdqwedewdq} and \eqref{efwqedqwdqwdqwed} yields the desired equivalence.

For Assertion \ref{egjkwerrewf45} we use that $R_{S^{2}}$ is a symmetric monoidal localization which implies the symmetric monoidal analogue 
 \begin{equation}\label{wefqwdqererwedewdq} 
R_{S^{2}}^{*}:\Fun_{\otimes/\lax}^{\cfil, \lex}(\EsGf, \bC)\stackrel{ \simeq}{\to}   \Fun_{\otimes/\lax}^{\cfil, \lex }(S_{K_{G}}\AsGc, \bC)
\end{equation} 
of \eqref{wefqwdqwedewdq}. 
We get \eqref{ewfasdfqweqfg} by composing \eqref{wefqwdqererwedewdq} with 
the restriction of \eqref{friqhiwe9fowedweqdwerr4r4r44dq} to left-exact, respectively Schochet exact functors (combine \cref{rijogwergergerwfw} and \cref{ertwigogijregf8zt28359terwgwreg}).
\end{proof}

 The following proposition shows that the homotopy category of $\EsGf$ is canonically equivalent to the classical equivariant $E$-theory category.
\begin{prop}\label{mnxcviohsdfioasdojkloeqf}
There exists a canonical commutative square
\begin{equation}\label{fwqwfewdewedwdwqdqwddd}\xymatrix{ G\nCalg_{\sepa}\ar[r]^{\esGn}\ar[d]^{\esGf} &\EsGn   \\ \EsGf\ar[r]^{\ho} &\ho(\EsGf)\ar@{-->}[u]_{\simeq}^{\phi} }
\end{equation} 
of symmetric monoidal functors   where $\phi$ is an equivalence
\end{prop}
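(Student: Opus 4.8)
The plan is to produce the dashed functor $\phi$ by the universal property of the homotopy category and then identify it with the classical $E$-theory functor using the universal property of $\esGn$ stated in \cite{Guentner_2000}. First I would note that $\ho \circ \esGf : G\nCalg_{\sepa} \to \ho(\EsGf)$ is homotopy invariant (since $\esGf$ is), $K_{G}$-stable (since $\esGf$ is $K_{G}$-stable by \cref{gojwprgewrfwferfw}.\ref{egjkwerrewf5}), and half-exact with values in an additive category: indeed $\EsGf$ is stable by \cref{gojwprgewrfwferfw}.\ref{egjkwerrewf3}, hence $\ho(\EsGf)$ is additive, and $\esGf$ is Schochet exact so that short exact sequences of $G$-$C^{*}$-algebras are sent to fibre sequences in $\EsGf$, which descend to exact triangles, hence to long exact sequences on $\ho(\EsGf)$. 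By the universal property of the classical equivariant $E$-theory functor $\esGn$ (as stated in the introduction of \cite{Guentner_2000}), any homotopy invariant, $K_{G}$-stable, half-exact functor to an additive category factors essentially uniquely through $\esGn$; applied to $\ho \circ \esGf$ this yields the functor $\phi : \EsGn \to \ho(\EsGf)$ making the upper triangle of \eqref{fwqwfewdewedwdwqdqwddd} commute.

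Next I would construct an inverse. The composition $\esGf : G\nCalg_{\sepa} \to \EsGf$ is homotopy invariant, $K_{G}$-stable, exact and countable sum preserving — exactness following from Schochet exactness together with \cref{ifqerwfqewdfewdewdqedf} (the statement that exactness is automatic), and countable sum preservation being part of the construction — so by the universal property \cref{wrejgioweferfw} it is the initial such functor, or one can instead invoke the universal property \cref{gojwprgewrfwferfw}.\ref{egjkwerrewf4} of $\esGf$. In particular $\ho \circ \esGf$ sends classical $E$-theory equivalences to isomorphisms, since any such equivalence $f$ has $\esGn(f)$ invertible and $\phi$ takes it to an invertible morphism equivalent to $\ho(\esGf(f))$, and conversely; more directly, $\esGf$ being homotopy invariant, $K_{G}$-stable and half-exact means $\ho \circ \esGf$ inverts classical $E$-equivalences by the same universal property argument used in \cref{jwegoeferfrefwerf}. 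Hence there is an essentially unique $\psi : \EsGn \to \ho(\EsGf)$ with $\psi \circ \esGn \simeq \ho \circ \esGf$. A formal diagram chase using the universal properties of both $\esGn$ and $\ho \circ \esGf$ (exactly as in the proof of \cref{jwegoeferfrefwerf}) shows $\phi$ and $\psi$ are mutually inverse, so $\phi$ is an equivalence.

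Finally, for the symmetric monoidal refinement I would use that $\esGf$ is symmetric monoidal by \cref{gojwprgewrfwferfw}.\ref{egjkwerrewf45}, so $\ho(\EsGf)$ inherits a symmetric monoidal structure and $\ho \circ \esGf$ is symmetric monoidal; and that the maximal tensor product descends to the classical $E$-theory category $\EsGn$ by \cite[Thm.~6.21]{Guentner_2000}, making $\esGn$ symmetric monoidal. The factorization functor $\phi$ then acquires a symmetric monoidal refinement because the relevant universal property of $\esGn$ holds at the level of (lax) symmetric monoidal functors, and the inverse $\psi$ is likewise symmetric monoidal; an equivalence of underlying functors which is compatible with the monoidal data is a symmetric monoidal equivalence. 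The main obstacle I expect is not any single step but the bookkeeping needed to check that the universal property of $\esGn$ is being applied to genuinely half-exact functors — i.e.\ verifying carefully that Schochet exactness of $\esGf$ plus stability of $\EsGf$ really does yield the long exact sequences that the cited universal property of $\esGn$ requires — together with promoting the comparison to the symmetric monoidal setting; as \cref{jiogpgergrwgw9} remarks, the precise form of the universal property of $\esGn$ in \cite{Guentner_2000} is stated but not fully proved there, so in \cref{xvbnfiohregiohsdfpojsdf} one gives an alternative argument identifying $\ho(\EsGf)$ with $\ho(\EsGght) \simeq \EsGn$ via the already-established comparisons, which sidesteps this issue.
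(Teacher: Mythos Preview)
Your approach has a genuine circularity. To invoke the universal property of $\esGn$ from \cite{Guentner_2000} you need $\ho\circ\esGf$ to be half-exact for \emph{all} short exact sequences of $G$-$C^{*}$-algebras, but at this point in the paper $\esGf$ is only known to be \emph{Schochet} exact (\cref{gojwprgewrfwferfw}.\ref{egjkwerrewf5}). Schochet exactness concerns Schochet fibrant squares, and not every surjection of $C^{*}$-algebras is a Schochet fibration, so your sentence ``$\esGf$ is Schochet exact so that short exact sequences \ldots\ are sent to fibre sequences'' is simply false. Full exactness of $\esGf$ is \cref{ifqerwfqewdfewdewdqedf}, whose proof \emph{uses} the square \eqref{fwqwfewdewedwdwqdqwddd} you are trying to construct. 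Your fallback suggestion---appealing to the comparisons in \cref{xvbnfiohregiohsdfpojsdf}---fails for the same reason: \cref{iouheqwrfiotuiojfsjkbna1} uses \cref{ifqerwfqewdfewdewdqedf}, which uses \cref{mnxcviohsdfioasdojkloeqf}. (There is also a muddle in your exposition: the functor you call $\phi$ and the functor you call $\psi$ are defined identically, both as factorizations of $\ho\circ\esGf$ through $\esGn$; you never actually build a map in the direction $\ho(\EsGf)\to\EsGn$ indicated in the diagram.)

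The paper's proof avoids the universal property of $\esGn$ entirely. It instead uses the \emph{construction} of $\EsGn$ in \cite{Guentner_2000} as a specific localization of the asymptotic morphism category $\AsG$: one expands the square through $\AsGc$ and $\AsG$, observes that $\esGf$ is obtained from $\asGc$ by the chain of Bousfield localizations $R_{S^{2}}R_{K_{G}}L_{\hat K_{G}}$ of \eqref{fqwfqwedwedqqwededq}, and exhibits $\esGn$ as the parallel chain of one-categorical localizations $R_{0,S^{2}}R_{0,K_{G}}L_{0,\hat K_{G}}$ of $\AsG$ (checking via the Hom-set formula \eqref{gwerguweoifjorefwrfweff} that this really is classical $E$-theory). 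The already-established equivalence $\ho(\AsGc)\simeq\AsG$ of \cref{ewrogjpwergwerfwerfwrfw} then descends through these matching localizations to give $\phi$, with the symmetric monoidal structure coming along because all the localizations are symmetric monoidal. This route requires no exactness of $\esGf$ and no appeal to the (not fully verified) universal property of $\esGn$.
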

\begin{proof}We expand the diagram as
 \begin{equation}\label{fqwfknflvdfer}
 \xymatrix{&&G\nCalg_{\sepa}\ar[drr]^{\asG}\ar[dll]_{\asGc} &&\\\AsGc\ar[rr]^-{\ho}\ar[d]^{R_{S^{2}}R_{K_{G}}L_{\hat K_{G}}}&&\ho(\AsGc) \ar[d]^{\ho(R_{S^{2}}R_{K_{G}}L_{\hat K_{G}}  )}\ar[rr]^-{\simeq}_-{\eqref{fweqedqwedqededq}} \ar[d]&&\AsG \ar[d]^{R_{0,S^{2}}R_{0,K_{G}}L_{0,\hat K_{G}} }  \\ \EsGf \ar[rr]^{\ho}&&\ho(\EsGf)\ar@{-->}[rr]^{\phi}_{\simeq}&&\EsGn}\ ,\end{equation}
where the right vertical arrow is the composition of the one-categorical left-and right Bousfield  localizations
$$R_{0,S^{2},K_{G}}:\AsG\xrightarrow{L_{0,\hat K_{G}}} L_{0,\hat K_{G}} \AsG\xrightarrow{R_{0,K_{G}}} S_{0,K_{G}}\AsG\xrightarrow{R_{0,S^{2}}} \EsGn$$  induced by  the idempotent algebra $\asG(\hat K_{G})$ in $\AsG$
and the coalgebras $L_{0,\hat K_{G}}(\asG(K_{G}))$ in $ L_{0,\hat K_{G}} \AsG$ and
$R_{K_{G}}L_{0,\hat K_{G}}(\asG(S^{2}((\C)\otimes K_{G})))$ in $S_{0,K_{G}}\AsG$.
In order to justify that this gives the classical $E$-theory we first observe that for $A,B$ in $G\nCalg_{\sepa}$ the mapping set in the  localization 
are given by
$$\Hom_{\AsG}(\asG(A\otimes S^{2}(\C)\otimes \hat K_{G}\otimes K_{G}),\asG(S\otimes S^{2}(\C)\otimes \hat K_{G}\otimes K_{G}))\ .$$
The map $\epsilon:\C\to \hat K_{G}$ from \eqref{sdfvewrvsfdvsfdvsfv} induces a homotopy equivalence $K_{G}\to \hat K_{G}\otimes K_{G}$.
Using this homotopy equivalence and the homotopy invariance of $\asG$ we can replace the tensor factor $\hat K_{G}\otimes K_{G}$ by $K_{G}$ and get the usual formula \eqref{gwerguweoifjorefwrfweff}. The functor
$\ho(R_{S^{2}} R_{K_{G}} L_{\hat K_{G}})$ simillarly decomposes as a sequence of localizations
$$   \ho(\AsGc)\xrightarrow{\ho(L_{\hat K_{G}})} \ho(L_{\hat K_{G}} \AsGc)\xrightarrow{\ho(R_{K_{G}})} \ho(S_{ K_{G}} \AsGc)\xrightarrow{\ho(R_{S^{2}})} \ho(\EsGf)\ .$$
This implies that the equivalence $\ho(\AsGc)\stackrel{\simeq}{\to} \AsG$ obtained in \cref{ewrogjpwergwerfwerfwrfw} descends to the desired equivalence $\phi$  in  \eqref{fqwfknflvdfer}.
\end{proof}

\subsection{Comparison of the models for $\mathrm{E}$-theory} \label{xvbnfiohregiohsdfpojsdf}

In the preceding three sections we have constructed three  functors
$$ \hspace{-0.5cm}\esGght:G\nCalg_{\sepa}\to  \EsGght\ , \quad  {\esGs}:G\nCalg_{\sepa}\to   {\EsGs}\ , \quad  \esGf:G\nCalg_{\sepa}\to  \EsGf $$ from separable $G$-$C^{*}$-algebras to
   stable $\infty$-categories which are characterized by slightly different 
 universal properties {\eqref{gqjiofwefqwefqwefqew}} for $\esGght$, 
\eqref{rferwiufherufihweriufhwreiufhi} for {$\esGs$}, and \eqref{rqwfwfw} for $\esGf$.
 In order to recall,  $\esGght$ is by {definition} the universal functor from $G\nCalg_{\sepa}$ to an $\infty$-category which inverts the 
classical $E$-theory equivalences. 
The functor {$\esGs$} is  by construction the universal homotopy invariant, $K_{G}$-stable, {exact  and countable sum preserving} functor to a {countably cocomplete} stable $\infty$-category. Finally, $\esGf$ is by construction the universal homotopy invariant, countable filtered colimit preserving, $K_{G}$-stable and Schochet exact functor
 to a countably cocomplete stable $\infty$-category.

By an inspection of the universal properties above we observe that $\esGf$ misses the  exactness property of  the other two functors. 
 {But   \cref{ifqerwfqewdfewdewdqedf} shows} that
$\esGf$ is automatically exact.   {This will be used in \cref{iouheqwrfiotuiojfsjkbna1} in order to verify that $\esGs$  is  equivalent to   $\esGf$. {In  \cref{kopghrthrhertegtrg} we have seen that  $\esGs$
 is also a Dwyer-Kan localization.} We use this information in \cref{iouheqwrfiotuiojfsjkbna2} in order to show that $\esGs$ and $\esGght$ are equivalent as well.}

 {\begin{rem}
 This argumentation is independent of the verification of the universal property of the classical equivariant $E$-theory functor $\esGn$ stated in  \cite{Guentner_2000}. If one is willing to use it one could immediately conclude that
 $\esGs$ and $\esGght$ are equivalent as symmetric monoidal functors in an essentially unique manner by comparing  
  \eqref{fasdfoiuqiofafdfffs1} and \eqref{fasdfoiuqiofafdfffs2}. See also \cref{jiogpgergrwgw9}. \hB
 \end{rem}}

 \begin{prop} \label{ifqerwfqewdfewdewdqedf}
	The functor $\esGf: G\nCalg_{\sepa} \to \EsGf$ is exact.
\end{prop}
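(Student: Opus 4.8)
The plan is to reduce the claimed exactness of $\esGf$ to the already-established exactness properties of the other two models of equivariant $E$-theory. We know from \cref{mnxcviohsdfioasdojkloeqf} that $\ho(\EsGf)$ is equivalent (as a symmetric monoidal category) to the classical equivariant $E$-theory category $\EsGn$, and the latter has long exact sequences by \cite[Thm. 6.20]{Guentner_2000}. So on the level of homotopy categories the functor $\ho\circ\esGf$ sends short exact sequences of $G$-$C^{*}$-algebras to (co)fibre sequences in $\ho(\EsGf)\simeq\EsGn$. What is not immediate is that $\esGf$ itself (not just its homotopy category) sends a short exact sequence $0\to I\to A\to B\to 0$ to a fibre sequence in the stable $\infty$-category $\EsGf$.

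The key step is a standard mapping-cone/Puppe argument, carried out entirely inside $\EsGf$, which is available because $\esGf$ is homotopy invariant and Schochet exact by \cref{gojwprgewrfwferfw}.\ref{egjkwerrewf5} and because $\EsGf$ is stable by \cref{gojwprgewrfwferfw}.\ref{egjkwerrewf3}. Concretely, for a short exact sequence \eqref{cjdsiocjoiejfoidsaj} one forms the mapping cone diagram \eqref{fqwehwiedqwedwoediwqodi}, applies $\esGf$ and uses homotopy invariance (which gives $\esGf(Z(f))\simeq\esGf(A)$ via $h$) and Schochet exactness (which applies to the genuinely fibrant square defining $C(f)$ as the kernel of the surjection $Z(f)\to B$) to obtain, exactly as in \eqref{fqwefqwdsafsdfdsf}, that $\esGf(I)\to\esGf(A)\to\esGf(B)$ is the fibre sequence induced by $\esGf(\partial_f)$. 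Hence $\esGf$ sends \eqref{cjdsiocjoiejfoidsaj} to a fibre sequence, and since $\EsGf$ is stable this is the same as the cocartesian square demanded in \cref{fjioqwefdqwedewdqwed}.\ref{fiqewpoqdqded2}. One also checks $\esGf(0)\simeq 0$, which follows since $0$ is a zero object and $\esGf$ is a functor of pointed $\infty$-categories (or directly from homotopy invariance applied to the contractible algebra).

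Alternatively — and this may actually be the cleaner route to write up — one can use \cref{weijgwoefrewfwdffsfd} in reverse: $\esGf$ is homotopy invariant and countable sum preserving and countable filtered colimit preserving, and by \cref{mnxcviohsdfioasdojkloeqf} its homotopy-category incarnation is half-exact; combined with stability of $\EsGf$ this forces exactness. I would spell this out by noting that for a surjection $A\to B$ with kernel $I$, the comparison map $\esGf(I)\to\mathrm{fib}(\esGf(A)\to\esGf(B))$ becomes an equivalence after applying $\ho$ (since $\ho(\EsGf)\simeq\EsGn$ has the correct long exact sequence and $\ho$ commutes with the finite limits and colimits involved in forming the fibre in a stable $\infty$-category), and $\ho$ is conservative; hence the comparison map is already an equivalence in $\EsGf$.

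The main obstacle is purely bookkeeping: one must be careful that the square defining the mapping-cone fibre is \emph{Schochet} fibrant, i.e.\ that $Z(f)\to B$ is a Schochet fibration, so that Schochet exactness of $\esGf$ (rather than full exactness, which is what we are trying to prove) actually applies; this is precisely why the mapping-cone replacement $h:A\to Z(f)$ is introduced in \eqref{fqwehwiedqwedwoediwqodi}. A secondary point to verify is that the equivalence $\phi$ of \cref{mnxcviohsdfioasdojkloeqf} genuinely identifies the boundary map $\ho(\esGf(\partial_f))$ with the classical $E$-theory boundary map, so that one may legitimately invoke \cite[Thm. 6.20]{Guentner_2000}; this is the same compatibility that was checked in the proof of \cref{jwegoeferfrefwerfst} for $\esGght$ and goes through verbatim here.
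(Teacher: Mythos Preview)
Your second route is the correct one and is essentially what the paper does. The paper's proof forms the comparison map $c:L_h(A)\to\Fib(L_h(B)\to L_h(C))$ in $G\nCalg_{\sepa,h}$, observes that $\esGnh$ inverts $c$ (they cite \cite[Prop.~5.14]{Guentner_2000} rather than Thm.~6.20), and concludes via the square \eqref{fwqwfewdewedwdwqdqwddd} and conservativity of $\ho$ that $\esGfh$ inverts $c$ as well. Your formulation ``the comparison map $\esGf(I)\to\mathrm{fib}(\esGf(A)\to\esGf(B))$ becomes an equivalence after $\ho$, and $\ho$ is conservative'' is the same idea; just drop the phrase about $\ho$ commuting with finite limits (it does not, and you don't need it --- conservativity alone is what you use).

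Your first route, however, has a gap as written. Schochet exactness applied to the fibrant square for $Z(f)\to B$ gives you only the fibre sequence $\esGf(C(f))\to\esGf(Z(f))\to\esGf(B)$, i.e.\ the \emph{bottom} row of \eqref{fqwefqwdsafsdfdsf}. Combined with homotopy invariance this yields $\esGf(C(f))\to\esGf(A)\to\esGf(B)$, not $\esGf(I)\to\esGf(A)\to\esGf(B)$. The equivalence $F(\iota_f):F(I)\stackrel{\simeq}{\to}F(C(f))$ marked in \eqref{fqwefqwdsafsdfdsf} is an \emph{input} to that diagram (valid there because $F=\esGght$ is already known to be exact by \cref{wergjiowerferfwefwref}), not a consequence of the Puppe construction. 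Showing $\esGf(\iota_f)$ is an equivalence is precisely the content of exactness, and for that you need the conservativity argument --- so your two routes are not alternatives but rather the first reduces to the second.

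Two minor points: the reference to ``\cref{weijgwoefrewfwdffsfd} in reverse'' does not do what you want (that proposition derives colimit-preservation from exactness, not the other way around), and the verification of $\esGf(0)\simeq 0$ is fine as you state it.
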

\begin{proof} In view of the universal property of $L_{h}$ from \eqref{cdcadsnckladscasdcadc13} and the homotopy invariance of $\esGf$ and $\esGn$
the commutative square \eqref{fwqwfewdewedwdwqdqwddd} induces a commutative square
\begin{equation}\label{fwqwfewdewedwderererwqdqwddd}\xymatrix{ G\nCalg_{\sepa,h}\ar[r]^{\esGnh}\ar[d]^{\esGfh} &\EsGn   \\ \EsGf\ar[r]^{\ho} &\ho(\EsGf)\ar@{-->}[u]_{\simeq}^{\phi} } \ .
\end{equation} 
	We consider a short exact sequence $0 \to  A \to B \to C \to 0$ in $\nCalg_{\sepa}$ and let  $c: L_{h}(A) \to \Fib(L_{h}(B) \to L_{h}(C))$ be the induced comparison map in $G\nCalg_{\sepa,h}$.
	We must show that this map gets inverted by  the canonical functor $ \esGfh$.
The functor $ \esGnh$	inverts $c$ by \cite[Prop. 5.14]{Guentner_2000}. Since $\ho$ is conservative 
  this implies that also $\esGfh$ inverts $c$. 
\end{proof}

 \begin{prop}\label{iouheqwrfiotuiojfsjkbna1} There exists an essentially unique commutative triangle
$$ \xymatrix{& G\nCalg_{\sepa} \ar[ld]_{\esGs} \ar[rd]^{\esGf} \\ \EsGs \ar@{-->}[rr]^{\simeq} && \EsGf} $$
  of symmetric monoidal $\infty$-categories.\end{prop}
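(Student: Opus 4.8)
The plan is to match universal properties. By \cref{wrejgioweferfw}, the functor $\esGs$ is the initial homotopy invariant, $K_{G}$-stable, exact and countable sum-preserving functor from $G\nCalg_{\sepa}$ to a countably cocomplete stable $\infty$-category, with the universal property \eqref{rferwiufherufihweriufhwreiufhi}. I would like to apply this universal property with target $\bC = \EsGf$ and the candidate functor $\esGf: G\nCalg_{\sepa}\to \EsGf$. For this I need to know that $\esGf$ is a homotopy invariant, $K_{G}$-stable, exact, countable sum-preserving functor to a countably cocomplete stable $\infty$-category. Homotopy invariance, $K_{G}$-stability and stability of $\EsGf$ are recorded in \cref{gojwprgewrfwferfw}.\ref{egjkwerrewf3} and \ref{egjkwerrewf5}. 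Exactness is exactly the content of \cref{ifqerwfqewdfewdewdqedf}, which we just proved. It remains to check that $\EsGf$ is countably cocomplete and that $\esGf$ preserves countable sums: since $\esGf$ preserves countable filtered colimits (\cref{gojwprgewrfwferfw}.\ref{egjkwerrewf5}) and is exact, \cref{weijgwoefrewfwdffsfd} does not directly help, but one observes that a countable sum $\bigoplus_{i\in I}A_{i}$ is the countable filtered colimit of the finite partial sums $\bigoplus_{i\in F}A_{i}$ over finite $F\subseteq I$, each of which is a finite colimit in the stable category $\EsGf$; hence $\esGf$ preserves countable sums and $\EsGf$, being stable with countable filtered colimits, is countably cocomplete.

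Granting this, the universal property \eqref{rferwiufherufihweriufhwreiufhi} applied to $\bC=\EsGf$ yields an essentially unique countable colimit preserving functor $\Phi:\EsGs\to \EsGf$ together with an equivalence $\Phi\circ \esGs\simeq \esGf$. Conversely, I want a functor back. Here I would use the universal property \eqref{rqwfwfw} of $\esGf$, which characterizes $\esGf$ as initial among homotopy invariant, $K_{G}$-stable, Schochet exact, countable filtered colimit preserving functors to a countably cocomplete stable $\infty$-category. Applying this with target $\bC=\EsGs$ and candidate $\esGs$: by \cref{gjerogrgesrgseg}.\ref{wokegwpergerwgwreg2} the functor $\esGs$ is homotopy invariant, $K_{G}$-stable and exact, hence in particular Schochet exact; by \cref{cgrgregeffweeeee} (which is \cref{weijgwoefrewfwdffsfd} applied to $\esGs$) it preserves countable filtered colimits; and $\EsGs$ is countably cocomplete stable by \cref{gjerogrgesrgseg}.\ref{wokegwpergerwgwreg1}. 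Thus \eqref{rqwfwfw} gives an essentially unique $\Psi:\EsGf\to \EsGs$ with $\Psi\circ \esGf\simeq \esGs$.

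To conclude that $\Phi$ and $\Psi$ are mutually inverse, I would play the two universal properties against each other. The composite $\Psi\circ\Phi:\EsGs\to\EsGs$ satisfies $(\Psi\circ\Phi)\circ\esGs\simeq \esGs$ and is countable colimit preserving; by the essential uniqueness clause in \eqref{rferwiufherufihweriufhwreiufhi} it is equivalent to $\id_{\EsGs}$. Symmetrically, $\Phi\circ\Psi\circ\esGf\simeq \esGf$, and since $\Phi\circ\Psi$ is homotopy invariant, $K_{G}$-stable, Schochet exact and countable filtered colimit preserving (being a composite of countable colimit preserving functors), the uniqueness clause in \eqref{rqwfwfw} identifies it with $\id_{\EsGf}$. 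Hence the dashed functor $\EsGs\to\EsGf$ is an equivalence filling the triangle. For the symmetric monoidal refinement I would run the same argument with the symmetric monoidal universal properties \eqref{fasdfoiuqiofafdfffs1} and \eqref{ewfasdfqweqfg} in place of \eqref{rferwiufherufihweriufhwreiufhi} and \eqref{rqwfwfw}; since $\esGs$ and $\esGf$ are symmetric monoidal functors preserving the relevant structure, both $\Phi$ and $\Psi$ acquire symmetric monoidal refinements compatible with the triangle, and the uniqueness statements upgrade the equivalences $\Psi\circ\Phi\simeq\id$, $\Phi\circ\Psi\simeq\id$ to equivalences of symmetric monoidal functors.

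The only slightly delicate point is bookkeeping about which universal property needs which exactness flavour: \eqref{rqwfwfw} only demands Schochet exactness, which is weaker than exactness, so feeding $\esGs$ into it is harmless; but feeding $\esGf$ into \eqref{rferwiufherufihweriufhwreiufhi} genuinely requires full exactness and countable sum preservation of $\esGf$, which is why \cref{ifqerwfqewdfewdewdqedf} is invoked first and the countable-sum argument above is needed. I expect no real obstacle beyond carefully threading these hypotheses; the argument is a standard two-sided Yoneda-type comparison of corepresenting objects.
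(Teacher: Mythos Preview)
Your proposal is correct and takes essentially the same approach as the paper: construct $\Phi$ and $\Psi$ from the two universal properties and verify they are mutually inverse, then upgrade via the symmetric monoidal versions \eqref{fasdfoiuqiofafdfffs1} and \eqref{ewfasdfqweqfg}. The paper's proof is terser because it packages your countable-sum and countable-cocompleteness verifications into the single remark that a left exact, countable filtered colimit preserving functor between countably cocomplete stable $\infty$-categories is the same as a countable colimit preserving functor; your explicit argument (countable sums as filtered colimits of finite sums, the latter preserved by exactness) is precisely what underlies that remark.
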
\begin{proof}
  We use that a left exact and countable filtered colimit preserving functor between countably cocomplete stable $\infty$-categories is the same as a countable colimit preserving functor.
  Since  $\esGf$ is exact by \cref{ifqerwfqewdfewdewdqedf} we obtain the symmetric monoidal and countable colimit preserving dashed arrow from the universal property \eqref{fasdfoiuqiofafdfffs1}. Since an exact functor is Schochet exact,
  {$\EsGs$ admits countable colimits and}
   $\esGs$ preserves filtered colimits by {\cref{gjerogrgesrgseg}.\ref{kophertherth9}} we get a potential inverse symmetric monoidal and countable colimit preserving arrow from \eqref{ewfasdfqweqfg}. Using the universal properties of $\esGs$ and $\esGf$ again we see that the these arrows are indeed mutually inverse to each other. 
  \end{proof}

 {\begin{prop} \label{iouheqwrfiotuiojfsjkbna2}
	There exists an essentially unique commutative triangle 	$$ \xymatrix{& G\nCalg_{\sepa} \ar[ld]_{\esGght} \ar[rd]^{\esGs} \\ \EsGght \ar@{-->}[rr]^{\simeq} && \EsGs} $$
	 of symmetric monoidal $\infty$-categories.\end{prop}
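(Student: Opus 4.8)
The plan is to exhibit both $\esGght$ and $\esGs$ as symmetric monoidal Dwyer--Kan localizations of $G\nCalg_{\sepa}$ at one and the same class of morphisms, and then to conclude by the universal property of such localizations. By \cref{wetrokgpwergfrefwerfwrefgsf} the functor $\esGght$ is, by definition, the symmetric monoidal Dwyer--Kan localization of $G\nCalg_{\sepa}$ at the class $W$ of classical $E$-theory equivalences, that is, the morphisms $f$ with $\esGn(f)$ an isomorphism in $\EsGn$. On the other hand, by \cref{gregijweoirjferffwrf} the functor $c\colon \KKGs\to \EsGs$ presents its target as a Verdier quotient, hence is a Dwyer--Kan localization; combined with the fact, from \cite{KKG}, that $\kkGs\colon G\nCalg_{\sepa}\to \KKGs$ is a symmetric monoidal Dwyer--Kan localization, the composition $\esGs\simeq c\circ \kkGs$ is again a symmetric monoidal Dwyer--Kan localization of $G\nCalg_{\sepa}$, say at the class $W'$ of morphisms it inverts.

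First I would identify $W'$ with $W$. By \cref{iouheqwrfiotuiojfsjkbna1} the triangle relating $\esGs$ and $\esGf$ commutes, so $W'$ equals the class of morphisms inverted by $\esGf$. Now a morphism $f$ in $G\nCalg_{\sepa}$ has $\esGf(f)$ an equivalence if and only if $\ho(\esGf(f))$ is an isomorphism, since $\ho\colon \EsGf\to \ho(\EsGf)$ is conservative. Using the commutative square \eqref{fwqwfewdewedwdwqdqwddd} of \cref{mnxcviohsdfioasdojkloeqf}, together with the facts that $\phi$ is an equivalence and $\phi\circ\ho\circ\esGf\simeq \esGn$, this holds if and only if $\esGn(f)$ is an isomorphism in $\EsGn$, i.e.\ $f\in W$. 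Hence $W'=W$.

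It remains to deduce the statement from the coincidence $W'=W$. Both $\esGght$ and $\esGs$ being symmetric monoidal Dwyer--Kan localizations of $G\nCalg_{\sepa}$ at $W$, the universal property of a symmetric monoidal Dwyer--Kan localization (applied once with target $\EsGs$ and once with target $\EsGght$) produces symmetric monoidal functors $\EsGght\to \EsGs$ and $\EsGs\to \EsGght$ compatible with the structure functors from $G\nCalg_{\sepa}$, and the uniqueness part of the same universal property shows that they are mutually inverse and that the resulting symmetric monoidal equivalence filling the triangle is essentially unique.

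The step I expect to be the main obstacle --- and the reason for this somewhat indirect route --- is the identification $W'=W$ carried out so as to remain independent of the universal property of the classical functor $\esGn$ asserted, but not fully proved, in \cite{Guentner_2000}; compare \cref{jiogpgergrwgw9}. This independence is exactly what is gained by factoring through $\esGf$ and appealing to \cref{mnxcviohsdfioasdojkloeqf}, whose proof rests only on the (verified) Blackadar--Dardarlat shape theory and on the explicit description of the morphism sets of $\EsGn$. A secondary point needing care is that the composite $c\circ\kkGs$ is genuinely a (symmetric monoidal) Dwyer--Kan localization; this follows because a composition of Dwyer--Kan localizations --- in particular of a Dwyer--Kan localization with a Verdier quotient --- is again one, and because $\kkGs$ is a symmetric monoidal Dwyer--Kan localization by the construction in \cite{KKG}.
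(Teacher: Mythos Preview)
Your proof is correct and follows essentially the same route as the paper's own argument: both use \cref{gregijweoirjferffwrf} to see that $\esGs$ is a symmetric monoidal Dwyer--Kan localization, then invoke \cref{iouheqwrfiotuiojfsjkbna1} and \cref{mnxcviohsdfioasdojkloeqf} to identify the inverted class with the classical $E$-theory equivalences, and conclude by the defining universal property of $\esGght$. You have simply spelled out the steps in more detail, in particular the use of conservativity of $\ho$ in the identification $W'=W$.
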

\begin{proof} {By \cref{kopghrthrhertegtrg} we know that
 $\esGs$ is a  symmetric monoidal} Dwyer-Kan localization.  By  \cref{iouheqwrfiotuiojfsjkbna1} and  \cref{mnxcviohsdfioasdojkloeqf}     this functor inverts  precisely the classical $E$-theory equivalences. Since this  is the defining property of {{the symmetric monoidal} Dwyer-Kan localization} $\esGght$ 
  we get the assertion.
	 \end{proof}}

 {From now on we can safely omit the subscripts indicating the particular constructions of the equivariant $E$-theory functors and just use the notation
 $$\esG:G\nCalg_{\sepa}\to \EsG$$
 for any of them.}

  {Recall from \cite[Thm. 1.5]{KKG} that $$\kkGs:G\nCalg_{\sepa}\to \KKGs$$ is the universal homotopy invariant, $K_{G}$-stable and semi-exact functor to a stable $\infty$-category, where semi-exactness means that it sends exact sequences in $G\nCalg_{\sepa}$ with an equivariant  completely positive contractive split to a fibre sequence. 
 In addition we know that $\KKGs$ admits countable colimits, and that the functor
 $\kkGs$ preserves countable sums. Pull-back along $\kkGs$ therefore induces for every countably cocomplete stable $\infty$-category $\bC$ an equivalence
 $$(\kkGs)^{*}:\Fun^{\colim_{\omega}}(\KKGs,\bC)\stackrel{\simeq}{\to} \Fun^{h,K_{G},\mathrm{se},\oplus}(\nCalg_{{\sepa}},\bC)\ ,$$ where the superscript $\mathrm{se}$ indicates semi-exact functors.
  The only difference to $\esG$ is that the latter sends all exact sequences to fibre sequences. We therefore have a natural  commutative diagram: % functors
	%\begin{equation}\label{fqwefqwedqwedqwdq}\xymatrix{&G\nCalg_{\sepa}\ar[ddr]^{\esGs}\ar[dl]_{\kkGs}\ar[d]_{\hesG}&\\\KKGs\ar[r]^{\hat c}\ar[drr]^{c}&\hEsG\ar[dr]^{y}&\\&&\EsGs}\ .
%\end{equation} 
}

\begin{equation}\label{herjthiertogjertioger}\xymatrix{&G\nCalg_{\sepa}\ar[dl]_{\kkGs}\ar[dr]^{\esG}&\\\KKGs\ar@{..>}[rr]^{c}&&\EsG}
\end{equation}

\begin{prop} \label{gregijweoirjferffwrf}
 {The functor {$c$ presents} its targets as a Verdier quotient.}
\end{prop}
\begin{proof} 
	  We let $N$ {denote} the kernel of $c$. {Then the functor}  
	\begin{equation}\label{vklnmweaiofndfaodfj}
		G\nCalg_{{\sepa}} \xrightarrow{\kkGs} \KKGs \to \KKGs/N
	\end{equation}
	is exact   {since by the exactness of $\esG$} for every exact sequence $0 \to A \to B \to C \to 0$ in $G\nCalg_{{\sepa}}$ the object
	$$ \Fib(\kkGs(A) \to \Fib(\kkGs(B) \to \kkGs(C)))$$
	{belongs to}  $N$. {In addition, the functor in \eqref{vklnmweaiofndfaodfj}   is homotopy invariant, $K_{G}$-stable and countable sum-preserving since $\kkGs$ has these properties.} 
		 {By the universal property of $\esG$ and the properties of}  \eqref{vklnmweaiofndfaodfj} {discussed above} we obtain an inverse $\EsG \to \KKGs/N$ of the canonical {functor} $ \KKGs/N \to \EsG $ induced by $c$.  
   \end{proof}
\color{black}

We finally use \cref{gregijweoirjferffwrf} in order to descend some group-change adjunctions from equivariant  $KK$-theory to $E$-theory.
Let $H$ be a subgroup of $G$ and let $\Ind^{G}_{H}$ and $\Res^{G}_{H}$ {be} the functors induced by the induction and restriction functors on the level of separable $C^{*}$-algebras.
\begin{prop}\label{kogregwergwerg}
 We have an adjunction
\begin{equation}\label{qwefqwdqqeqed1123} \Ind_{H}^{G}:\EsH {\rightleftarrows} \EsG:\Res^{G}_{H}\end{equation} 
where $\Res^{G}_{H}$ preserves countable colimits.
\end{prop}
 \begin{proof}
 We start from the adjunction
\begin{equation}\label{qwefqwdqqeqed11231} \Ind_{H}^{G}:\KK_{\sepa}^{H}{\rightleftarrows} \KK^{G}_{\sepa}:\Res^{G}_{H} \end{equation}  obtained by restricting the adjunction in \cite[Thm. 1.23.1]{KKG} to the subcategories of compact objects.
 Since the induction and restriction functors on the level of separable $C^{*}$-algebras preserve exact sequences and countable sums 
 they descend through the Verdier quotient $c$ in \cref{gregijweoirjferffwrf}. Hence the adjunction \eqref{qwefqwdqqeqed11231} induces the adjunction \eqref{qwefqwdqqeqed1123} whose right-adjoint also preserves countable colimits.
 \end{proof}
 
 In the following $-\rtimes_{\max}G$ and $\Res_{G}$ denote the functors induced by the functors on separable $C^{*}$-algebras given by  the maximal crossed product and the functor equipping a $C^{*}$-algebra with the trivial $G$-action.
 \begin{prop}\label{kogregwergwerg1}\mbox{}
\begin{enumerate}\item  If $G$ is finite, then we have  an adjunction
\begin{equation}\label{qwefqwdqqeqrwerwerwred1123dddd}   \Res_{G} :\Es\leftrightarrows \EsG: -\rtimes_{\max}G\end{equation} whose right-adjoint preserves countable colimits.
\item We have an adjunction 
\begin{equation}\label{qwefqwdqqeqrwerwerwred1123dddd1}   - \rtimes_{\max}G:\EsG\leftrightarrows \Es:  \Res_{G}\end{equation} 
whose right-adjoint preserves countable colimits.
\end{enumerate}
\end{prop}
\begin{proof}
{We} argue similarly as in the proof of \cref{kogregwergwerg} in order to descend the corresponding adjunctions in $KK$-theory  \cite[Thm.1.23.3 \& 4]{KKG}  to $E$-theory. Here we use that $\Res_{G}$ and $ - \rtimes_{\max}G$  on the level of separable $C^{*}$-algebras preserve exact sequences and countable sums.
\end{proof}

\subsection{$\mathrm{E}$-theory is compactly assembled}\label{kogpwergefrefwrfwrf}

{Recall \cref{ijtowggwerwgerg9}  characterising  an   $\infty$-category $\bC$ admitting countable filtered colimits as  \pcas{}  
%\infty$-category   is by   \cref{ijtowggwerwgerg9}     an} $\infty$-category admitting countable filtered colimits such that 
if its}  realization functor is a right-adjoint of an adjunction $$S:\bC\leftrightarrows \Ind^{\aleph_{1}}(\bC):|-|\ .$$
 {The left-adjoint $S$ is called the shape functor.  It is automatically fully faithful by \cref{jwoirthopgfrfwrgre}.}

The main result of this section is \cref{vsiowerfsdfuiohsdfoijasdf} stating that the countably cocomplete stable $\infty$-category $\EsG$ representing equivariant $E$-theory  introduced in \cref{wrejgioweferfw} is compactly assembled.
The idea is to use the construction of the version $\EsGf$ of this  category by a series of 
left- and right Bousfield localizations
\begin{equation}\label{fwefqwedqwedqwdqdqwd}\hspace{-0.5cm}\Ind^{\aleph_{1}}(G\nCalg_{\sepa,h})\xrightarrow{R}  \AsGc
 \xrightarrow{L_{\hat K_{G}}} L_{\hat K_{G}}\AsGc \xrightarrow{R_{K_{G}}}S_{K_{G}} \AsGc \xrightarrow{R_{S^{2}}} \EsGf
\end{equation} 
 which are described in detail in \cref{irthjzgjdiogjhjdicnmjksdisjf}. We start with the observation that an  {$\Ind^{\aleph_{1}}$}-completion is always \pcas{} and then  use  that the property of being  \pcas{}
  is inherited by  {appropriate Bousfield} localizations.

\begin{prop}\label{neivmsdhjghdikvhjf}
	For any $\infty$-category $\bC$ the category $\Ind^{\aleph_{1}}(\bC)$ is  \pcas{} and its shape functor is given by $\hat y:=\Ind^{\aleph_{1}}(y): \Ind^{\aleph_{1}}(\bC) \to \Ind^{\aleph_{1}}(\Ind^{\aleph_{1}}(\bC))$. 
\end{prop}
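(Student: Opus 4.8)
The plan is to deduce the statement from the criterion \cref{qrjigowergwergrwewf} for an object to admit a shape. Set $\bD:=\Ind^{\aleph_{1}}(\bC)$. I will use three standard properties of this category: it admits all countable filtered colimits; the objects in the image of $y_{\bC}\colon\bC\to\bD$ are compact in $\bD$ (the analogue of the third fact recorded in \cref{ergkoperwgwerfwefref}; one sees it by embedding $\Ind^{\aleph_{1}}(\bC)$ into $\PSh(\bC)$ in a countable‑filtered‑colimit‑preserving way, under which $y_{\bC}(c)$ goes to the representable presheaf, which is compact); and every object $A$ of $\bD$ is of the form $A\simeq\colim_{n\in\nat}y_{\bC}(A_{n})$ for a system $(A_{n})_{n\in\nat}$ in $\bC$, where the indexing may be chosen to be $\nat$ by \cite[Prop. 5.3.1.18]{htt}. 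The idea is then to feed the approximation $(y_{\bC}(A_{n}))_{n\in\nat}$ of $A$ into \cref{qrjigowergwergrwewf}.

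The required verification is immediate. Fix $A$ together with such an approximation, and let $(Z_{i})_{i\in I}$ be any system in $\bD$. Since each $y_{\bC}(A_{n})$ is compact we have $\colim_{i\in I}\Map_{\bD}(y_{\bC}(A_{n}),Z_{i})\simeq\Map_{\bD}(y_{\bC}(A_{n}),\colim_{i\in I}Z_{i})$ for every $n$, so the canonical comparison map of \cref{qrjigowergwergrwewf} reads
$$\lim_{n\in\nat}\colim_{i\in I}\Map_{\bD}(y_{\bC}(A_{n}),Z_{i})\simeq\lim_{n\in\nat}\Map_{\bD}(y_{\bC}(A_{n}),\colim_{i\in I}Z_{i})\simeq\Map_{\bD}(A,\colim_{i\in I}Z_{i}),$$
where the last equivalence uses $A\simeq\colim_{n}y_{\bC}(A_{n})$. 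Hence \cref{qrjigowergwergrwewf} applies: $A$ admits a shape and $S(A)\simeq\colim_{n\in\nat}y_{\bD}(y_{\bC}(A_{n}))$. As $A$ was arbitrary, $\bD^{\shp}=\bD$, so $\bD$ is \pcas{} by \cref{ijtowggwerwgerg9}, and accordingly the realization $\lvert-\rvert$ admits the left adjoint $S$ in the sense of \cref{eruwigowergwergferfrw}.

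It remains to identify $S$ with $\hat y=\Ind^{\aleph_{1}}(y_{\bC})$. By construction $\hat y$ preserves countable filtered colimits and its restriction along $y_{\bC}$ is $y_{\bD}\circ y_{\bC}$; hence $\hat y(A)\simeq\colim_{n}\hat y(y_{\bC}(A_{n}))\simeq\colim_{n}y_{\bD}(y_{\bC}(A_{n}))\simeq S(A)$ for every $A$, by the formula just obtained. To promote this to an equivalence of functors, observe that $S$, being a left adjoint, also preserves countable filtered colimits and that $S\circ y_{\bC}\simeq y_{\bD}\circ y_{\bC}\simeq\hat y\circ y_{\bC}$ (apply the shape formula to the compact objects $y_{\bC}(c)$); since precomposition with $y_{\bC}$ induces an equivalence $\Fun^{\cfil}(\bD,\Ind^{\aleph_{1}}(\bD))\xrightarrow{\simeq}\Fun(\bC,\Ind^{\aleph_{1}}(\bD))$ by the universal property of $\Ind^{\aleph_{1}}(\bC)=\bD$, we conclude $S\simeq\hat y$. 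The only point that I expect to require genuine care is the compactness of $y_{\bC}(c)$ in $\Ind^{\aleph_{1}}(\bC)$ for a general, not necessarily small, $\bC$, where one must attend to the universe in which $\Ind^{\aleph_{1}}$ and the auxiliary presheaf category are formed; everything else is a formal consequence of the universal property of $\Ind^{\aleph_{1}}$ and the theory of shapes developed above.
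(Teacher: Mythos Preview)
Your proof is correct but takes a genuinely different route from the paper's. The paper argues directly that $\hat y$ is left adjoint to the realization: it records the tautological equivalence $\Map(y_{\bD}\circ y_{\bC}(-),y_{\bD}(-))\simeq\Map(y_{\bC}(-),\lvert y_{\bD}(-)\rvert)$ on $\bC^{\op}\times\bD$, then left Kan-extends in the second variable and right Kan-extends in the first to obtain the global equivalence $\Map(\hat y(-),-)\simeq\Map(-,\lvert-\rvert)$. No appeal to \cref{qrjigowergwergrwewf} is made.

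Your argument instead exploits that $\Ind^{\aleph_{1}}(\bC)$ is compactly generated: every object is a countable filtered colimit of compacts $y_{\bC}(A_{n})$, so the criterion of \cref{qrjigowergwergrwewf} is trivially satisfied and yields the shape formula; you then identify $S$ with $\hat y$ via the universal property of $\Ind^{\aleph_{1}}$. This is arguably more transparent, since it makes explicit that the result is just the implication ``compactly generated $\Rightarrow$ compactly assembled'' already flagged in the introduction, and it avoids the Kan-extension bookkeeping. The paper's proof, on the other hand, is more self-contained (it does not invoke the machinery of \cref{erwgjoiwpergwerfrefrewfw}) and produces the adjunction in one stroke rather than first establishing existence of shapes object-by-object and then separately identifying the functor. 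Your caveat about compactness of $y_{\bC}(c)$ for non-small $\bC$ is well taken, but note that the paper's standing hypothesis in \cref{ergkoperwgwerfwefref} is that the ambient category is small, so this is not an issue in context.
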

\begin{proof}(compare with \cite[Ex. 21.1.2.13]{sag})
We consider the functors $y:\bC\to \Ind^{\aleph_{1}}(\bC)$ and $y_{ \Ind^{\aleph_{1}}(\bC)}:\Ind^{\aleph_{1}}(\bC)\to \Ind^{\aleph_{1}}(\Ind^{\aleph_{1}}(\bC))$. We let $ |-|_{ \Ind^{\aleph_{1}}(\bC)}$ denote the left-adjoint of $y_{ \Ind^{\aleph_{1}}(\bC)}$. In order to show that $\hat y$  is a  further left adjoint of $ |-|_{ \Ind^{\aleph_{1}}(\bC)}$   we start with the bi-natural equivalence
	\begin{align*} \Map_{\Ind^{\aleph_{1}}(\Ind^{\aleph_{1}}(\bC))}(y_{ \Ind^{\aleph_{1}}(\bC)}\circ y(-),  y_{ \Ind^{\aleph_{1}}(\bC)}  (-))  &\simeq \Map_{\Ind^{\aleph_{1}}(\bC)}(y(-), -) \\\simeq &\:\: \Map_{\Ind^{\aleph_{1}}(\bC)}(y(-), |  y_{\Ind^{\aleph_{1}}}(-) |_{\Ind^{\aleph_{1}}(\bC)}) \end{align*}
	of functors $\bC^{\op} \times  {\Ind^{\aleph_1}}(\bC) \to \Spc$. Left Kan-extension along $\bC^{\op} \times  {\Ind^{\aleph_1}}(\bC) \to \bC^{\op} \times  {\Ind^{\aleph_1}}(\Ind^{\aleph_{1}}(\bC))$ induces the bi-natural equivalence
	$$ \Map_{\Ind^{\aleph_{1}}(\Ind^{\aleph_{1}}(\bC))}(y_{ \Ind^{\aleph_{1}}(\bC)}\circ y(-), -) \simeq \Map_{\Ind^{\aleph_{1}}(\bC)}(y(-), |-|_{ \Ind^{\aleph_{1}}(\bC)})\ . $$
	 Finally right Kan extension along $\bC^{\op} \times  {\Ind^{\aleph_1}}(\Ind^{\aleph_1} (\bC)) \to \Ind^{{\aleph_{1}}}(\bC)^{\op} \times  {\Ind^{\aleph_1}}(\Ind^{\aleph_1}(\bC)) $ induces the required bi-natural equivalence
	$$ \Map_{\Ind^{\aleph_{1}}(\Ind^{\aleph_{1}}(\bC))}(\hat y(-), -) \simeq \Map_{\Ind^{\aleph_{1}}(\bC)}(-,  |-|_{ \Ind^{\aleph_{1}}(\bC)})\ . $$
\end{proof}

Let $$L:\bD\leftrightarrows \bC:R$$  be a right Bousfield localization  {and recall that notation $\hat R$ from \eqref{wrefrefreferwffwrfw}.}
\begin{prop} \label{iopjsdfvniohwefriohjqw}
	If $R$   preserves countable filtered colimits and  $\bC$ is \pcas, then $\bD$ is \pcas{}  
	and its shape functor is given by $\hat R\circ S_{\bC}\circ L$.
	\end{prop}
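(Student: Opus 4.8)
The plan is to verify the adjunction $\hat R \circ S_{\bC} \circ L \dashv |-|_{\bD}$ directly, by exhibiting a natural equivalence of mapping anima. Here $|-|_{\bD} : \Ind^{\aleph_{1}}(\bD) \to \bD$ is the realization and $S_{\bC} : \bC \to \Ind^{\aleph_{1}}(\bC)$ is the shape functor of $\bC$, which is a fully faithful left adjoint of $|-|_{\bC}$ by hypothesis and by \cref{jwoirthopgfrfwrgre}. First I would record the relevant commuting squares: since $R$ preserves countable filtered colimits, the induced functor $\hat R : \Ind^{\aleph_{1}}(\bD) \to \Ind^{\aleph_{1}}(\bC)$ fits into a square with $R$ and the $y$'s as in \eqref{wrefrefreferwffwrfw}, and the comparison map \eqref{fwewerfreferfwfref2} gives $|-|_{\bC} \circ \hat R \simeq R \circ |-|_{\bD}$. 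Dually, since $L$ is a left adjoint (hence preserves all colimits, in particular countable filtered ones), it induces $\hat L : \Ind^{\aleph_{1}}(\bC) \to \Ind^{\aleph_{1}}(\bD)$ with $|-|_{\bD} \circ \hat L \simeq L \circ |-|_{\bC}$, and $\hat L \dashv \hat R$ as an adjunction on $\Ind^{\aleph_{1}}$-completions (this is the standard fact that $\Ind^{\aleph_{1}}$ is functorial for colimit-preserving functors and preserves adjunctions, as used e.g. in the proof of \cref{kjogperwgefweff}).

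The computation then runs as follows. Fix $A$ in $\bD$ and $\hat B$ in $\Ind^{\aleph_{1}}(\bD)$. We have
\begin{align*}
\Map_{\Ind^{\aleph_{1}}(\bD)}(\hat R(S_{\bC}(L(A))), \hat B)
&\simeq \Map_{\Ind^{\aleph_{1}}(\bC)}(S_{\bC}(L(A)), \hat R(\hat B)) \\
&\simeq \Map_{\bC}(L(A), |\hat R(\hat B)|_{\bC}) \\
&\simeq \Map_{\bC}(L(A), R(|\hat B|_{\bD})) \\
&\simeq \Map_{\bD}(A, |\hat B|_{\bD})\ .
\end{align*}
Here the first equivalence is the adjunction $\hat L \dashv \hat R$ composed with $\hat R \circ \hat L \simeq \mathrm{id}$ on the image — more cleanly, it is the adjunction $\hat L \dashv \hat R$ together with the fact that $S_{\bC}$ is already an $\Ind^{\aleph_{1}}$-object so that $\hat R$ appears on the right; alternatively one uses directly the chain $\Map_{\Ind^{\aleph_{1}}(\bD)}(\hat R S_{\bC} L A, \hat B)$ and the adjunction for $\hat R$. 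The second equivalence is the defining property of the shape $S_{\bC}$ from \eqref{sdfvfeqrvcfvsdvfdsvs}. The third equivalence is $|-|_{\bC}\circ \hat R \simeq R \circ |-|_{\bD}$. The fourth is the adjunction $L \dashv R$. All equivalences are natural in $A$ and $\hat B$, so by Yoneda $\hat R \circ S_{\bC} \circ L$ is left adjoint to $|-|_{\bD}$; hence $\bD$ is \pcas{} in the sense of \cref{ijtowggwerwgerg9}, and its shape functor — being the essentially unique left adjoint of the realization — is $\hat R \circ S_{\bC} \circ L$.

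The step I expect to be the main obstacle is making the first equivalence fully rigorous, i.e. verifying that $\hat L \dashv \hat R$ is the correct induced adjunction and that it is compatible with the Yoneda embeddings in the required way; this is where one must be careful, because $S_{\bC}$ is a functor $\bC \to \Ind^{\aleph_{1}}(\bC)$ while $\hat R$ lives on the double completion. The cleanest fix is to not invoke $\hat L$ at all but instead argue as in \cref{neivmsdhjghdikvhjf}: start from the bi-natural equivalence of functors $\bC^{\op}\times\Ind^{\aleph_{1}}(\bD)\to\Spc$ given by the composite of the three displayed equivalences above restricted to $\hat B = y_{\bD}(B)$, then extend by left Kan extension along $\bC^{\op}\to\Ind^{\aleph_{1}}(\bC)^{\op}$ in the first variable and by the universal property in the second, exactly mirroring the proof of \cref{neivmsdhjghdikvhjf}. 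This avoids any delicate bookkeeping about double $\Ind$-completions. The remaining verifications — that $R$ and $L$ preserving the relevant colimits gives the comparison equivalences \eqref{fwewerfreferfwfref2} — are routine and already implicitly used elsewhere in \cref{irthjzgjdiogjhjdicnmjksdisjf}.
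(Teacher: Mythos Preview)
Your chain of equivalences has a genuine gap at the very first step. Note first that $\hat R:\Ind^{\aleph_1}(\bC)\to\Ind^{\aleph_1}(\bD)$, so the expression $\hat R(\hat B)$ for $\hat B\in\Ind^{\aleph_1}(\bD)$ does not typecheck; presumably you meant $\hat L(\hat B)$. But even with this correction, the equivalence
\[
\Map_{\Ind^{\aleph_1}(\bD)}(\hat R(X),\hat B)\;\simeq\;\Map_{\Ind^{\aleph_1}(\bC)}(X,\hat L(\hat B))
\]
would require $\hat R\dashv\hat L$, whereas the adjunction you have is $\hat L\dashv\hat R$. The only way to salvage the step is to know that $X=S_\bC(L(A))$ lies in the essential image of the fully faithful $\hat L$, so that $\hat L\hat R(X)\simeq X$ and one can use full faithfulness of $\hat L$ instead. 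But there is no a priori reason the shape of $L(A)$ in $\bC$ should be an ind-system built from objects in the image of $L$; this fails in general. Your proposed fix via Kan extension does not address this: restricting to $\hat B=y_\bD(B)$ and running steps 2--4 backwards leads to exactly the same comparison $\Map_\bD(R(C_n),B_m)$ versus $\Map_\bC(C_n,L(B_m))$, which the adjunction $L\dashv R$ does not identify.

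The paper's argument avoids this entirely by observing that $\hat R$ is itself a Dwyer--Kan localization and invoking \cite[Prop.~7.1.14]{Cisinski:2017}: given the two commutative squares with vertical localizations $\hat R$ and $\id_\bD$, and the upper horizontal arrows $S_\bC\circ L$ and $R\circ|-|_\bC$ forming an adjoint pair (which is a clean composition of $S_\bC\dashv|-|_\bC$ and $L\dashv R$), the adjunction automatically descends to the bottom arrows $\hat R\circ S_\bC\circ L$ and $|-|_\bD$. This descent-through-localizations is precisely the missing ingredient your direct computation cannot supply.
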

\begin{proof}
The  equivalence   \eqref{fwewerfreferfwfref2}  can be rewritten as a  commutative square
$$ \xymatrix{\Ind^{\aleph_{1}}(\bC) \ar[rr]^{R \circ | - |_{\bC}} \ar[d]^{ \hat R} & &  \bD \ar@{=}[d]^{\id_{\bD}} \\ \Ind^{\aleph_{1}}(\bD) \ar[rr]^{|-|_{\bD}}  & & \bD  }  \ .$$
We furthermore have a square 
$$\xymatrix{\Ind^{\aleph_{1}}(\bC) \ar@{<-}[rr]^{S_{\bC} \circ L} \ar[d]^{\hat R} & &  \bD \ar@{=}[d]^{\id_{\bD}} \\ \Ind^{\aleph_{1}}(\bD) \ar@{<-}[rr]^-{ \hat R\circ S_{\bC} \circ L}  & & \bD }\ . $$
The vertical maps  in both squares are Dwyer-Kan localizations, and the two upper horizontal arrows are adjoint to each other. This implies by \cite[Prop. 7.1.14]{Cisinski:2017} that the lower to morphisms are also adjoint to each other.  So $\bD$ is \pcas{} and the shape functor of $\bD$  is given by the composition
$\hat R\circ S_{\bC}\circ L$. 
\end{proof}
Let
$$ L: \bC \rightleftarrows \bD: R $$
be a left Bousfield localization {and let $\hat L$ be as in \eqref{wrefrefreferwffwrfw}.}  
\begin{prop}\label{mxcvbnuiohwefioja}
	If $R$   preserves countable filtered colimits and  $\bC$ is \pcas, then $\bD$ is \pcas{}  
	and its shape functor is given by $\hat L\circ S_{\bC}\circ R$. \end{prop}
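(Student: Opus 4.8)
The plan is to mirror the proof of \cref{iopjsdfvniohwefriohjqw}, but with a twist: since the roles of the left and right adjoints are now interchanged, the fully faithful functor is this time the \emph{right} adjoint $R$, so the clean "two squares plus \cite[Prop. 7.1.14]{Cisinski:2017}" packaging does not transfer verbatim (that argument relied on $S_{\bC}\circ L$ being visibly a composite of left adjoints). Instead I would exhibit the claimed shape functor directly as a left adjoint of the realization through a chain of $\Map$-equivalences, in the style of the proofs of \cref{kjogperwgefweff} and \cref{neivmsdhjghdikvhjf}.

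First I would collect the ingredients. Since $L$ is a Bousfield localization, $\bD$ inherits countable filtered colimits from $\bC$ (compute them by applying $L$ to the corresponding colimit in $\bC$), so $\lvert-\rvert_{\bD}$ is available. As $L$ preserves all colimits and $R$ preserves countable filtered colimits by hypothesis, the adjunction $L\dashv R$ induces an adjunction $\hat L\colon\Ind^{\aleph_{1}}(\bC)\leftrightarrows\Ind^{\aleph_{1}}(\bD)\colon\hat R$ exactly as in the proof of \cref{kjogperwgefweff}. Moreover $R$ is fully faithful, and \eqref{fwewerfreferfwfref2}, applied to the countable filtered colimit preserving functor $R\colon\bD\to\bC$ (that is, with the roles of the two $\infty$-categories in that equivalence exchanged), yields a natural equivalence $\lvert-\rvert_{\bC}\circ\hat R\simeq R\circ\lvert-\rvert_{\bD}$.

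Then I would verify that $\hat L\circ S_{\bC}\circ R$ is left-adjoint to $\lvert-\rvert_{\bD}$ by assembling, for $B$ in $\bD$ and $\hat B$ in $\Ind^{\aleph_{1}}(\bD)$, the chain
\begin{align*}
\Map_{\Ind^{\aleph_{1}}(\bD)}\big(\hat L(S_{\bC}(R(B))),\hat B\big)
&\simeq\Map_{\Ind^{\aleph_{1}}(\bC)}\big(S_{\bC}(R(B)),\hat R(\hat B)\big)\\
&\simeq\Map_{\bC}\big(R(B),\lvert\hat R(\hat B)\rvert_{\bC}\big)\\
&\simeq\Map_{\bC}\big(R(B),R(\lvert\hat B\rvert_{\bD})\big)\\
&\simeq\Map_{\bD}\big(B,\lvert\hat B\rvert_{\bD}\big)\ ,
\end{align*}
natural in both variables, where the first equivalence uses $\hat L\dashv\hat R$, the second uses that $S_{\bC}\dashv\lvert-\rvert_{\bC}$ since $\bC$ is \pcas{} (see \cref{ijtowggwerwgerg9} and \eqref{fwqeddcvsdvcs1}), the third is the equivalence $\lvert-\rvert_{\bC}\circ\hat R\simeq R\circ\lvert-\rvert_{\bD}$ from the previous paragraph, and the fourth uses that $R$ is fully faithful. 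By \cref{ekrogpegegferfwf} this shows that every object of $\bD$ admits a shape, hence $\bD$ is \pcas{} by \cref{ijtowggwerwgerg9}, with shape functor $\hat L\circ S_{\bC}\circ R$.

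I do not expect a genuine obstacle, only a point of care: because the fully faithful functor here is the right adjoint $R$, the expression $\hat L\circ S_{\bC}\circ R$ is not transparently a composite of left adjoints, which is why I would not try to imitate the square-and-descent argument of \cref{iopjsdfvniohwefriohjqw} and would instead carry out the $\Map$-computation above. The hypothesis that $R$ preserves countable filtered colimits is what makes this work, and it is used twice — once to form $\hat R$ (needed for the first and third equivalences) and once to invoke \eqref{fwewerfreferfwfref2} for $R$ itself — so I would flag both uses explicitly.
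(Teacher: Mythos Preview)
Your proof is correct and follows essentially the same approach as the paper. The only difference is one of packaging: the paper invokes \cref{kjogperwgefweff} directly (the lemma you cite as stylistic inspiration) to conclude that $L$ preserves shapes, hence $\hat L\circ S_{\bC}\simeq S_{\bD}\circ L$, and then precomposes with $R$ using $L\circ R\simeq\id_{\bD}$ to obtain the formula; your chain of $\Map$-equivalences is exactly the computation inside the proof of \cref{kjogperwgefweff}, specialized to $A=R(B)$, with the final adjunction step replaced by full faithfulness of $R$ (equivalently, $LR\simeq\id_{\bD}$).
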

\begin{proof}
By \cref{kjogperwgefweff} the functor $L$ preserves shapes. 
Since $L$ is essentially surjective it is then  clear from \cref{tgijoworegvferwfrewfw} that $\bD$ is \pcas.
Furthermore, by \cref{kjogperwgefweff} we have $\hat L\circ S_{\bC}\simeq S_{\bD}\circ L$.
We precompose with $R$ and use $L\circ R\simeq \id_{\bD}$ on the right-hand side in order to conclude the formula for the shape functor of $\bD$.
\end{proof}
 
\begin{theorem}\label{vsiowerfsdfuiohsdfoijasdf}
	The category $\EsG$ is \pcas.
\end{theorem}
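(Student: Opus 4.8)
The plan is to work with the concrete model $\EsGf$ built in \cref{irthjzgjdiogjhjdicnmjksdisjf}, which is one of the realizations of $\EsG$ by the comparison results of \cref{xvbnfiohregiohsdfpojsdf} (in particular $\EsGf\simeq\EsGs\simeq\EsG$ via \cref{iouheqwrfiotuiojfsjkbna1} and \cref{iouheqwrfiotuiojfsjkbna2}), and to exploit its presentation \eqref{fwefqwedqwedqwdqdqwd} as the target of a chain of Bousfield localizations that starts from the $\Ind^{\aleph_{1}}$-completion $\Ind^{\aleph_{1}}(G\nCalg_{\sepa,h})$. Since this starting point is \pcas{} by \cref{neivmsdhjghdikvhjf}, everything reduces to running it through the chain and checking at each node that the localization propagates the property of being \pcas.

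First I would handle the right Bousfield localization $R\colon\Ind^{\aleph_{1}}(G\nCalg_{\sepa,h})\to\AsGc$: it is the localization functor of a right Bousfield localization by \cref{wreig90wregfwerf}.\ref{gokpwergrwefwrfwref} and preserves countable filtered colimits by \cref{wreig90wregfwerf}.\ref{regkopwergerferfrewfw2}, so \cref{iopjsdfvniohwefriohjqw} gives that $\AsGc$ is \pcas{} --- which incidentally establishes the first half of \cref{jifofqweewf9}. Next, the left Bousfield localization $L_{\hat K_{G}}\colon\AsGc\to L_{\hat K_{G}}\AsGc$ has the inclusion as its right adjoint, and that preserves countable filtered colimits (as recorded in \cref{irthjzgjdiogjhjdicnmjksdisjf}), so \cref{mxcvbnuiohwefioja} shows $L_{\hat K_{G}}\AsGc$ is \pcas. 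Finally, $R_{K_{G}}\colon L_{\hat K_{G}}\AsGc\to S_{K_{G}}\AsGc$ and $R_{S^{2}}\colon S_{K_{G}}\AsGc\to\EsGf$ are localization functors of right Bousfield localizations whose right adjoints preserve countable filtered colimits --- for $R_{K_{G}}$ this is part of the construction in \cref{irthjzgjdiogjhjdicnmjksdisjf}, for $R_{S^{2}}$ it is \cref{gojwprgewrfwferfw}.\ref{egjkwerrewf1} --- so two more applications of \cref{iopjsdfvniohwefriohjqw} give that $S_{K_{G}}\AsGc$ and then $\EsGf$ are \pcas. As being \pcas{} is manifestly invariant under equivalence, transporting along $\EsG\simeq\EsGf$ finishes the proof.

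Once the earlier sections are in place, the argument above is pure bookkeeping; the only point requiring care in the assembly is matching the right propagation lemma to each node --- \cref{mxcvbnuiohwefioja} at the single left Bousfield step $L_{\hat K_{G}}$ and \cref{iopjsdfvniohwefriohjqw} at the three right Bousfield steps --- together with the verification that each right adjoint preserves countable filtered colimits. The genuine content, which I would regard as the real obstacle were it not already settled, lies in the three inputs imported from earlier: that an $\Ind^{\aleph_{1}}$-completion is automatically \pcas{} (\cref{neivmsdhjghdikvhjf}), that left and right Bousfield localizations with countable-filtered-colimit-preserving right adjoints inherit the property (\cref{mxcvbnuiohwefioja}, \cref{iopjsdfvniohwefriohjqw}), and --- above all --- that $\EsG$ really does admit the presentation \eqref{fwefqwedqwedqwdqdqwd} with the requisite exactness and colimit-preservation properties of its structure functors, which is precisely where the equivariant shape theory of $C^{*}$-algebras from \cref{erijgioepgwegerfwef} enters.
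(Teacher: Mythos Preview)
Your proof is correct and follows essentially the same route as the paper's own argument: identify $\EsG$ with $\EsGf$, start from $\Ind^{\aleph_{1}}(G\nCalg_{\sepa,h})$ being \pcas{} by \cref{neivmsdhjghdikvhjf}, and then propagate through the chain \eqref{fwefqwedqwedqwdqdqwd} using \cref{iopjsdfvniohwefriohjqw} at the three right Bousfield steps and \cref{mxcvbnuiohwefioja} at the single left Bousfield step. One small wording quibble: when you say the right Bousfield localizations $R_{K_{G}}$ and $R_{S^{2}}$ have ``right adjoints that preserve countable filtered colimits,'' note that $R_{K_{G}}$ and $R_{S^{2}}$ \emph{are} the right adjoints in those adjunctions, so it is these functors themselves whose preservation of countable filtered colimits you are invoking.
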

\begin{proof}
According to \cref{xvbnfiohregiohsdfpojsdf} we can use the construction of $\EsG$ as 
$\EsGf$. The latter is presented by   the sequence  \eqref{fwefqwedqwedqwdqdqwd} of   left- and right Bousfield localizations of $\Ind^{\aleph_1}(G\nCalg_{\sepa, h} ) $. This initial category is \pcas{} by \cref{neivmsdhjghdikvhjf}. We then use \cref{mxcvbnuiohwefioja} and \cref{iopjsdfvniohwefriohjqw} in order to see that being \pcas{} is inherited by the localizations. 
 To this end we use that the right-adjoints of all these Bousfield localizations preserve countable filtered colimits
as seen in \cref{irthjzgjdiogjhjdicnmjksdisjf}. 
\end{proof}

 The proof of \cref{vsiowerfsdfuiohsdfoijasdf} shows that also the intermediate categories in the sequence \eqref{fwefqwedqwedqwdqdqwd}  are compactly assembled. For later use we record:
\begin{kor}\label{vsiowerfsdfuiohsdfoijasdf1}
The $\infty$-category $\AsGc$ is \pcas.
\end{kor}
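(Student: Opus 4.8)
The plan is to extract this as a by-product of the proof of \cref{vsiowerfsdfuiohsdfoijasdf}. Recall that $\EsG$ is constructed (via $\EsGf$) as the end of the chain of Bousfield localizations
$$\Ind^{\aleph_{1}}(G\nCalg_{\sepa,h})\xrightarrow{R}\AsGc\xrightarrow{L_{\hat K_{G}}}L_{\hat K_{G}}\AsGc\xrightarrow{R_{K_{G}}}S_{K_{G}}\AsGc\xrightarrow{R_{S^{2}}}\EsGf\ ,$$
and the argument for \cref{vsiowerfsdfuiohsdfoijasdf} walks along this chain, applying \cref{neivmsdhjghdikvhjf} at the start and then \cref{iopjsdfvniohwefriohjqw} (for the right Bousfield localizations $R$, $R_{K_{G}}$, $R_{S^{2}}$) and \cref{mxcvbnuiohwefioja} (for the left Bousfield localization $L_{\hat K_{G}}$) at each subsequent step. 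Each application requires only that the right adjoint of the localization in question preserves countable filtered colimits, which is recorded in \cref{wreig90wregfwerf} and in the discussion of $L_{\hat K_{G}}$, $R_{K_{G}}$, $R_{S^{2}}$ in \cref{irthjzgjdiogjhjdicnmjksdisjf}. So by the time the chain reaches $\AsGc$ — i.e. after the single step $R$ — one already knows $\AsGc$ is \pcas.

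Concretely, the proof would read: by \cref{neivmsdhjghdikvhjf} the $\infty$-category $\Ind^{\aleph_{1}}(G\nCalg_{\sepa,h})$ is \pcas. By \cref{wreig90wregfwerf}.\ref{gokpwergrwefwrfwref} the functor $R:\AsGc\leftrightarrows\Ind^{\aleph_{1}}(G\nCalg_{\sepa,h})$ is the right adjoint of a right Bousfield localization, and by \cref{wreig90wregfwerf}.\ref{regkopwergerferfrewfw2} it preserves countable filtered colimits. Hence \cref{iopjsdfvniohwefriohjqw} applies and shows that $\AsGc$ is \pcas, with shape functor $\hat R\circ S_{\Ind^{\aleph_{1}}(G\nCalg_{\sepa,h})}\circ L$ where $L$ is the left adjoint of $R$.

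There is essentially no obstacle here: the corollary is an immediate consequence of one instance of \cref{iopjsdfvniohwefriohjqw}, and all the hypotheses have already been checked in \cref{irthjzgjdiogjhjdicnmjksdisjf}. The only point that merits a moment's care is the bookkeeping of which of $L$, $R$ in the pair $L:\AsGc\leftrightarrows\Ind^{\aleph_{1}}(G\nCalg_{\sepa,h}):R$ is the localization functor and which is the right adjoint landing in the larger category — \cref{wreig90wregfwerf}.\ref{gokpwergrwefwrfwref} fixes this, with $R$ playing the role of the right-adjoint-into-$\Ind^{\aleph_{1}}$ and $L$ its left adjoint, matching exactly the setup of \cref{iopjsdfvniohwefriohjqw}. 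Thus the statement follows, and indeed the same reasoning (continuing along the chain) gives that $L_{\hat K_{G}}\AsGc$ and $S_{K_{G}}\AsGc$ are \pcas{} as well, should one wish to record those too.
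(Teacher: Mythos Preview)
Your proposal is correct and takes essentially the same approach as the paper: the corollary is extracted as a by-product of the proof of \cref{vsiowerfsdfuiohsdfoijasdf}, since $\AsGc$ is the first intermediate stage in the chain \eqref{fwefqwedqwedqwdqdqwd} and \cref{iopjsdfvniohwefriohjqw} applied to the right Bousfield localization $R$ (using \cref{neivmsdhjghdikvhjf} and \cref{wreig90wregfwerf}.\ref{regkopwergerferfrewfw2}) already yields the result. One small wording slip: in your final paragraph you describe $R$ as the ``right-adjoint-into-$\Ind^{\aleph_{1}}$'', but in the adjunction $L:\AsGc\leftrightarrows\Ind^{\aleph_{1}}(G\nCalg_{\sepa,h}):R$ the functor $R$ goes \emph{from} $\Ind^{\aleph_{1}}(G\nCalg_{\sepa,h})$ \emph{to} $\AsGc$; your earlier concrete paragraph has the roles right, so this is just a lapse in phrasing.
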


We say that $A$ in $G\nCalg_{\sepa}$ is twice suspended and $K_{G}$-stable if it isomorphic to $A\otimes S^{2}(\C)\otimes K_{G}$ for some $A'$ in $G\nCalg_{\sepa}$.
We let $S_{\AsGc}$ and $S_{\EsG}$ denote the shape functors for $\AsGc$ and $\EsG$.
{Recall from \cref{rwogpwegrewfrefrfwf} the notion of a shape system in $G\nCalg_{\sepa}$.}

\begin{prop}\label{jigowggergwe9}Assume that  $(A_{n})_{n\in \nat}$ is a shape system for $A$.
\begin{enumerate}
\item \label{wekrogpwergewrfrfw} We have an equivalence $S_{\AsGc}(A)\simeq \colim_{n\in \nat} y_{\AsGc}(\asGc(A_{n}))$.
\item \label{wekrogpwergewrfrfw1} If $A$ is twice syspended and $K_{G}$-stable, then $S_{\EsG}(A)\simeq \colim_{n\in \nat} y_{\EsG}(\esG(A_{n}))$.
\end{enumerate}
 \end{prop}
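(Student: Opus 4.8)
The plan is to deduce both assertions from the general criterion \cref{wkotpgsfvsfgfdgg}: a slp-compact exhaustion of an object $A$ in an $\infty$-category $\bD$ admitting countable filtered colimits exhibits $\colim_{n\in\nat} y_{\bD}(A_{n})$ as the shape $S(A)$. So the task reduces to showing that the image of a shape system $(A_{n})_{n\in\nat}$ under $\asGc$ (respectively $\esG$, under the extra hypothesis) is an slp-compact exhaustion in $\AsGc$ (respectively $\EsG$). Since $\asGc$ and $\esG$ preserve countable filtered colimits (by \cref{uqwighfuihvsdiohjfhjkbvdxc} and \cref{cgrgregeffweeeee}) and $A\cong\colim_{n\in\nat}A_{n}$ in $G\nCalg_{\sepa}$, we automatically get $\asGc(A)\simeq\colim_{n\in\nat}\asGc(A_{n})$ and $\esG(A)\simeq\colim_{n\in\nat}\esG(A_{n})$, so these are genuine approximations; the content is in the slp-compactness of the structure maps.

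First I would handle \ref{wekrogpwergewrfrfw}. A shape system has semi-projective, but not necessarily explicit, structure maps $A_{n}\to A_{n+1}$. Here I would invoke \cref{rjifoqopffefwewerfr}.1: a semi-projective morphism $f$ in $G\nCalg_{\sepa}$ induces a strongly compact — in particular slp-compact — morphism $\asGc(f)$ in $\AsGc$. (As the introduction notes, the honest logical route is to first establish the main theorem \cref{jifofqweewf9} — already done in \cref{vsiowerfsdfuiohsdfoijasdf} and \cref{vsiowerfsdfuiohsdfoijasdf1} — and then derive \cref{rjifoqopffefwewerfr} from it; I would simply cite \cref{rjifoqopffefwewerfr} in its final form.) Thus $(\asGc(A_{n}))_{n\in\nat}$ is an slp-compact exhaustion of $\asGc(A)$, and \cref{wkotpgsfvsfgfdgg} gives $S_{\AsGc}(A)\simeq S_{\AsGc}(\asGc(A))\simeq\colim_{n\in\nat}y_{\AsGc}(\asGc(A_{n}))$, which is the claim. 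Strictly speaking I am applying the shape theory of \cref{ergkoperwgwerfwefref} to the object $\asGc(A)$ of $\AsGc$; since \cref{vsiowerfsdfuiohsdfoijasdf1} tells us $\AsGc$ is \pcas{} every object admits a shape, so there is no existence issue.

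For \ref{wekrogpwergewrfrfw1}, the extra hypothesis is exactly what \cref{rjifoqopffefwewerfr}.\ref{jiwergowegwerfwerf} requires: if $B$ is twice suspended and $K_{G}$-stable and $f\colon A'\to B$ is semi-projective, then $\esG(f)$ is strongly compact in $\EsG$. The subtlety is that in a shape system $(A_{n})_{n\in\nat}$ for $A$ with $A$ twice suspended and $K_{G}$-stable, the intermediate algebras $A_{n}$ need not themselves be twice suspended and $K_{G}$-stable, so \cref{rjifoqopffefwewerfr}.\ref{jiwergowegwerfwerf} does not immediately apply to the structure maps $A_{n}\to A_{n+1}$. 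I would circumvent this by passing to a related shape system: writing $A\cong A'\otimes C_{0}(\R^{2})\otimes K_{G}$, choose a shape system $(A'_{n})_{n\in\nat}$ for $A'$ (which exists by \cref{ojkergperwerge}) and tensor it with $C_{0}(\R^{2})\otimes K_{G}$. Since $-\otimes C_{0}(\R^{2})\otimes K_{G}$ preserves colimits and sends semi-projective maps to semi-projective maps (the semi-projective maps form a two-sided ideal), $(A'_{n}\otimes C_{0}(\R^{2})\otimes K_{G})_{n\in\nat}$ is again a shape system for $A$, and now every structure map has target which is twice suspended and $K_{G}$-stable, so \cref{rjifoqopffefwewerfr}.\ref{jiwergowegwerfwerf} gives slp-compactness of each $\esG(A'_{n}\otimes C_{0}(\R^{2})\otimes K_{G})\to\esG(A'_{n+1}\otimes C_{0}(\R^{2})\otimes K_{G})$. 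Applying \cref{wkotpgsfvsfgfdgg} in $\EsG$ yields $S_{\EsG}(A)\simeq\colim_{n\in\nat}y_{\EsG}(\esG(A'_{n}\otimes C_{0}(\R^{2})\otimes K_{G}))$.

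It then remains to see that the answer does not depend on the choice of shape system, i.e.\ that $\colim_{n\in\nat}y_{\EsG}(\esG(A_{n}))\simeq S_{\EsG}(A)$ for the originally given shape system $(A_{n})_{n\in\nat}$ as well. This is a uniqueness statement and follows formally from the universal property of the shape: any shape system for $A$ gives, via the structure maps to $A$, a compatible family $\esG(A_{n})\to\esG(A)$, hence a map $\colim_{n\in\nat}y_{\EsG}(\esG(A_{n}))\to S_{\EsG}(A)$ in $\Ind^{\aleph_{1}}(\EsG)$ whose realization is the identity of $\esG(A)$; the criterion \eqref{adsvasdccdcaseccacsdc} of \cref{qrjigowergwergrwewf}, which we have just verified for the modified system, combined with the essential uniqueness of corepresenting objects, forces this comparison map to be an equivalence. (Alternatively one observes directly that both $\colim_{n}y_{\EsG}(\esG(A_{n}))$ and $\colim_{n}y_{\EsG}(\esG(A'_{n}\otimes C_{0}(\R^{2})\otimes K_{G}))$ corepresent $\Map_{\EsG}(\esG(A),|-|)$ by the argument above, hence are equivalent.) The main obstacle is the bookkeeping around the twice-suspended, $K_{G}$-stable hypothesis in part \ref{wekrogpwergewrfrfw1} — making sure one has a shape system whose \emph{structure maps} (not just the total object) land in the range where \cref{rjifoqopffefwewerfr}.\ref{jiwergowegwerfwerf} applies — and the tensoring trick above is what resolves it; once that is set up, both parts are immediate applications of \cref{wkotpgsfvsfgfdgg}.
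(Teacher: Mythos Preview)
Your argument is circular. You rely on \cref{rjifoqopffefwewerfr} (which appears in the body as \cref{gjoiowepgrfwefrefw}) to show that $\asGc$ and $\esG$ send semi-projective maps to strongly compact maps, and then feed this into \cref{wkotpgsfvsfgfdgg}. But the proof of \cref{gjoiowepgrfwefrefw} explicitly \emph{uses} \cref{jigowggergwe9}: it factors a semi-projective $f:A\to B$ through some stage $B_n$ of a shape system for $B$, then invokes \cref{jigowggergwe9}.\ref{wekrogpwergewrfrfw} to identify $\colim_n y_{\AsGc}(\asGc(B_n))$ with $S_{\AsGc}(B)$ before appealing to \cref{wegjiopewgfsg}. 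So \cref{gjoiowepgrfwefrefw} sits strictly downstream of \cref{jigowggergwe9} in the dependency graph; you cannot cite it here, even ``in its final form''.

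The paper instead computes $S_{\AsGc}$ directly from the formulas produced while proving that $\AsGc$ is compactly assembled. By \cref{iopjsdfvniohwefriohjqw} one has $S_{\AsGc}\simeq\hat R\circ S_{\Ind^{\aleph_1}(G\nCalg_{\sepa,h})}\circ L$, and by \cref{neivmsdhjghdikvhjf} the shape functor on the ind-completion is $\hat y$. The substantive input is that for a shape system $(A_n)$ the object $\colim_n yL_h(A_n)$ is already $W_R$-local (this is what the proof of \cref{wreig90wregfwerf}, together with \cref{ewrogjpwergwerfwerfwrfw} for general rather than explicit shape systems, gives), so $L(\asGc(A))\simeq\colim_n yL_h(A_n)$; applying $\hat y$ and then $\hat R$ yields the claim. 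For part \ref{wekrogpwergewrfrfw1} one pushes through the further Bousfield localizations $R_{S^2}R_{K_G}L_{\hat K_G}$ via the formulas in \cref{iopjsdfvniohwefriohjqw} and \cref{mxcvbnuiohwefioja}; the twice-suspended $K_G$-stable hypothesis is precisely what places $\asGc(A)$ in the essential image of the inclusion $I:\EsG\hookrightarrow\AsGc$.

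There is also a false step in your treatment of part \ref{wekrogpwergewrfrfw1}: you assert that $-\otimes C_0(\R^2)\otimes K_G$ preserves semi-projectivity because ``the semi-projective maps form a two-sided ideal''. Being a two-sided ideal means closure under \emph{composition} with arbitrary morphisms, not under $-\otimes D$; there is no general reason for $f\otimes\id_D$ to remain semi-projective. And your uniqueness paragraph does not do what you need: having verified the shape criterion for one particular approximation of $A$ says nothing about whether a \emph{different} approximation $(\esG(A_n))_n$ also represents the shape. Corepresenting objects are unique, but you have not shown that the original system corepresents anything.
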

\begin{proof}
By \cref{iopjsdfvniohwefriohjqw} we have an equivalence 
$$S_{\AsGc}(A)\simeq  \hat R(S_{\Ind^{\aleph_{1}}(G\nCalg_{\sepa,h})}(L(A))) \ .$$
From the proof of \cref{wreig90wregfwerf} we know that  
$\colim_{n\in \nat} yL_{h}(A_{n})$ is a $W_{R}$-local replacement of $yL_{h}(A)$.
Therefore $L(A)\simeq \colim_{n\in \nat} yL_{h}(A_{n})$.  By \cref{neivmsdhjghdikvhjf}
we get
$$S_{\Ind^{\aleph_{1}}(G\nCalg_{\sepa,h})}(L(A))\simeq \colim_{n\in \nat} y_{\Ind^{\aleph_{1}}(G\nCalg_{\sepa,h})}(yL_{h}(A_{n}))\ .$$
We now apply $\hat R$ and use $y_{\AsGc} R\simeq \hat Ry_{\Ind^{\aleph_{1}}(G\nCalg_{\sepa,h})}$ in order to get 
$$S_{\AsGc}(A)\simeq  \colim_{n\in \nat}  y_{\AsGc}(R yL_{h}(A_{n}))$$
which in view of \eqref{543t445ts} is  the desired equivalence in  Assertion \ref{wekrogpwergewrfrfw}.

 In order to show Assertion \ref{wekrogpwergewrfrfw1}  we note that 
	  the adjoint of the composition of Bousfield localizations $R_{S^{2}} R_{K_{G}} L_{\hat K_{G}}:
	  \AsGc\to \EsG$ from \eqref{fqwfqwedwedqqwededq}  provides    a fully faithful inclusion $ I:\EsG \to \AsGc $ whose image consists of 
	  the objects of the form $\asGc(A)$ for twice suspended and $K_{G}$-stable algebras. Using
	   the explicit formulas for the shape functors in \Cref{iopjsdfvniohwefriohjqw} and \Cref{mxcvbnuiohwefioja} 
	   we conclude that
	 $$S_{\EsG}( \esG(A))\simeq   \hat R_{S^{2}}\hat R_{K_{G}} \hat L_{\hat K_{G}} S_{\AsGc}(I(A))\ .$$
The formula for $S_{\AsGc}$ from  Assertion \ref{wekrogpwergewrfrfw}     implies the desired formula for $S_{\EsG}$.
\end{proof}

The following proposition provides many examples of compact morphisms in $\EsG$.
\begin{prop}\label{gjoiowepgrfwefrefw}
 \mbox{}
Let $A\to B$ be a semi-projective morphism in $G\nCalg_{\sepa}$.
\begin{enumerate}
\item \label{wtggwregreewfre}
 The induced morphism   $\asGc(f): \asGc(A)\to \asGc(B)$ is strongly  compact in  $\AsGc$.   \item\label{wtggwregreewfre1} If $B$ is twice suspended and $K_{G}$-stable, then $\esG(f): \esG(A)\to \esG(B)$   is strongly compact in  $\EsG$.  \end{enumerate}
 \end{prop}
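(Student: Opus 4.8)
The plan is to deduce both assertions from \cref{wegjiopewgfsg}, using that $\AsGc$ and $\EsG$ are \pcas{} (\cref{vsiowerfsdfuiohsdfoijasdf1} and \cref{vsiowerfsdfuiohsdfoijasdf}) together with the explicit descriptions of their shape functors in \cref{jigowggergwe9}. The bridge from the $C^{*}$-algebraic hypothesis to these homotopy-theoretic statements is the observation that semi-projectivity of $f$ forces $f$ to factor, up to homotopy, through a finite stage of any shape system of $B$: this is exactly the slp for $\emptyset\to *$ recorded in \cref{wrtkohpgertwfrewfwrf}.

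For Assertion \ref{wtggwregreewfre} I would first choose, by \cref{ojkergperwerge}, a shape system $(B_{n})_{n\in\nat}$ for $B$, so $B\cong\colim_{n\in\nat}B_{n}$. Applying the slp for $\emptyset\to *$ from \cref{wrtkohpgertwfrewfwrf} (for the system $(B_{n})_{n\in\nat}$) to the point of the lower right corner given by the structure isomorphism $B\xrightarrow{\cong}\colim_{n}B_{n}$, I obtain an index $m$ and a morphism $g\colon A\to B_{m}$ in $G\nCalg_{\sepa,h}$ whose composite with the structure map $s_{m}\colon B_{m}\to B$ equals $L_{h}(f)$. Since $\asGc$ factors through $L_{h}$ (\cref{okprthrtertegtrgrg}), this gives a commutative triangle $\asGc(f)\simeq\asGc(s_{m})\circ\asGc(g)$ in $\AsGc$. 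Now, $\AsGc$ being \pcas{}, $\asGc(B)$ admits a shape, and by \cref{jigowggergwe9}.\ref{wekrogpwergewrfrfw} we have $S_{\AsGc}(\asGc(B))\simeq\colim_{n\in\nat}y_{\AsGc}(\asGc(B_{n}))$. As $y_{\AsGc}(\asGc(A))$ is compact, the composite of $y_{\AsGc}(\asGc(g))$ with the canonical map into the colimit defines a morphism $\tilde g\colon y_{\AsGc}(\asGc(A))\to S_{\AsGc}(\asGc(B))$ in $\Ind^{\aleph_{1}}(\AsGc)$. Using the naturality of the counit $|-|\circ y\xrightarrow{\simeq}\id$ from \eqref{sfdvojkogpvvsfdvsdvsdfv}, that $|-|$ preserves colimits, and that $\asGc$ preserves countable filtered colimits (\cref{uqwighfuihvsdiohjfhjkbvdxc}), one identifies $|\tilde g|$ with the morphism $\asGc(A)\xrightarrow{\asGc(g)}\asGc(B_{m})\xrightarrow{\asGc(s_{m})}\asGc(B)$, i.e.\ $|\tilde g|\simeq\asGc(f)$. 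Then \cref{wegjiopewgfsg}, applied in $\bD=\AsGc$ with the object $\asGc(A)$, the object $\asGc(B)$ of $\bD^{\shp}=\bD$, and the map $\tilde g$, shows that $\asGc(f)=|\tilde g|$ is strongly compact.

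Assertion \ref{wtggwregreewfre1} then follows by running the identical argument with $\esG$, $\EsG$, \cref{vsiowerfsdfuiohsdfoijasdf} and \cref{cgrgregeffweeeee} in place of $\asGc$, $\AsGc$, \cref{vsiowerfsdfuiohsdfoijasdf1} and \cref{uqwighfuihvsdiohjfhjkbvdxc}, and using \cref{jigowggergwe9}.\ref{wekrogpwergewrfrfw1} in place of \cref{jigowggergwe9}.\ref{wekrogpwergewrfrfw}. It is precisely at this last point that the hypothesis that $B$ be twice suspended and $K_{G}$-stable is needed, since otherwise there is no formula $S_{\EsG}(\esG(B))\simeq\colim_{n}y_{\EsG}(\esG(B_{n}))$ available.

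I do not expect a genuine obstacle here: the argument is short once \cref{wrtkohpgertwfrewfwrf}, \cref{jigowggergwe9} and \cref{wegjiopewgfsg} are in hand. The only step that requires care is the identification $|\tilde g|\simeq\asGc(f)$ (resp.\ $\simeq\esG(f)$), which rests on the naturality of the counit and the compatibility of realization with the colimit presentation of the shape; one should also note that there is no circularity, i.e.\ that \cref{jigowggergwe9}, \cref{wegjiopewgfsg} and the \pcas{}-property of $\AsGc$ and $\EsG$ have all been established without recourse to the present proposition — which is indeed the case, since \cref{vsiowerfsdfuiohsdfoijasdf} is proved by a Bousfield-localization argument that does not use it.
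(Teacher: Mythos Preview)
Your proposal is correct and follows essentially the same route as the paper: choose a shape system for $B$, use semi-projectivity of $f$ to factor it (up to homotopy) through a finite stage, invoke \cref{jigowggergwe9} to recognise the resulting morphism as $|\tilde g|$ for some $\tilde g\colon y(\asGc(A))\to S(\asGc(B))$, and conclude by \cref{wegjiopewgfsg}. The paper's proof is terser—it writes ``by semi-projectivity of $f$ get a factorization'' where you explicitly invoke the slp for $\emptyset\to *$ from \cref{wrtkohpgertwfrewfwrf}—but the content is the same, and your added remarks on the identification $|\tilde g|\simeq\asGc(f)$ and on non-circularity are accurate.
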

\begin{proof}
We choose a  shape system $(B_{n})_{n\in \nat} $ for $B $.
By semi-projectivity of $f$ get a factorization
$f:A\stackrel{f'}{\to} B_{n}\to B $ of $f$. This gives a map
$$g:y_{\AsGc}(\asGc(A))\stackrel{y_{\AsGc}(\asGc(f'))}{\to} y_{\AsGc}(\asGc(B_{n}))\to \colim_{n\in \nat} y_{\AsGc}(\asGc(B_{n}))\simeq S_{\AsGc}(B )$$ 
where we use \cref{jigowggergwe9}.\ref{wekrogpwergewrfrfw} for the last equivalence.
The morphism $\asGc(f)$  is equivalent to the composition   $$\asGc(A)\simeq |y_{\AsGc}(\asGc(A))|\stackrel{|g|}{\to}|S_{\AsGc}(\asGc(B))|\simeq \asGc(B )\ .$$
We now use {that} 
$|g|$ is strongly compact by \cref{wegjiopewgfsg}. This finishes the proof of Assertion \ref{wtggwregreewfre}.

The proof of Assertion \ref{wtggwregreewfre1} is completely analoguous using \cref{jigowggergwe9}.\ref{wekrogpwergewrfrfw1}.
\end{proof}

We finally  explain what we know   about examples of compact objects in $\AsGc$ and  $\EsG$.
  If $A$ is a semi-projective algebra in $G\nCalg_{\sepa}$ (i.e., $\id_{A}$ is semi-projective),
then $yL_{h}(A)$ is $W_{R}$-local and   compact in $\Ind^{\aleph_{1}}(G\nCalg_{h,\sepa})$.  It therefore remains compact in the localization $\AsGc$. We conclude:
\begin{kor}
For any semi-projective algebra $A$ in $G\nCalg_{\sepa}$ the object $\asGc(A)$ in $\AsGc$ is compact.
\end{kor}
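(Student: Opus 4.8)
The plan is to realise $\asGc(A)$ as $R(yL_{h}(A))$ through the construction \eqref{543t445ts} of $\asGc$ and to transport the manifest compactness of $yL_{h}(A)$ across the Dwyer--Kan localization $R$. By \cref{wreig90wregfwerf}.\ref{gokpwergrwefwrfwref} the functor $R\colon\Ind^{\aleph_{1}}(G\nCalg_{\sepa,h})\to\AsGc$ is the localization functor of a right Bousfield localization with fully faithful left adjoint $L\colon\AsGc\to\Ind^{\aleph_{1}}(G\nCalg_{\sepa,h})$ whose essential image is the full subcategory of $W_{R}$-local objects; in particular $R\circ L\simeq\id_{\AsGc}$, the left adjoint $L$ preserves all colimits that exist in $\AsGc$, and $R$ preserves countable filtered colimits by \cref{wreig90wregfwerf}.\ref{regkopwergerferfrewfw2}. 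Since $yL_{h}(A)$ lies in the essential image of $y$, it is a compact object of $\Ind^{\aleph_{1}}(G\nCalg_{\sepa,h})$. Granting in addition that $yL_{h}(A)$ is $W_{R}$-local, one finishes as follows: for a countable filtered system $(C_{i})_{i\in I}$ in $\AsGc$ one has $L(\colim_{i\in I}C_{i})\simeq\colim_{i\in I}L(C_{i})$, and combining this with \eqref{wefqdqsdvdvsdfv} (applicable since $yL_{h}(A)$ is $W_{R}$-local), with compactness of $yL_{h}(A)$ in $\Ind^{\aleph_{1}}(G\nCalg_{\sepa,h})$, and with $R\circ L\simeq\id$, one obtains $\Map_{\AsGc}(\asGc(A),\colim_{i\in I}C_{i})\simeq\colim_{i\in I}\Map_{\AsGc}(\asGc(A),C_{i})$, i.e. $\asGc(A)$ is compact.

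To establish the $W_{R}$-locality of $yL_{h}(A)$ I would argue via a retract. Choose an explicit shape system $(A_{n})_{n\in\nat}$ for $A$, which exists by \cref{ojkergperwerge}. In the proof of \cref{wreig90wregfwerf} it is shown that $\colim_{n\in\nat}yL_{h}(A_{n})$ is $W_{R}$-local, being the $W_{R}$-local replacement of $yL_{h}(A)$. Since $\id_{A}$ is semi-projective, the shifted lifting property for $\emptyset\to\ast$ from \cref{wrtkohpgertwfrewfwrf}, applied to the system $(A_{n})$, shows that $\id_{A}$ admits a factorization $A\xrightarrow{s}A_{m}\xrightarrow{\iota_{m}}A$ up to homotopy for some $m$, where $\iota_{m}$ denotes the structure map of the colimit. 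Consequently $yL_{h}(A)$ is a retract of $\colim_{n\in\nat}yL_{h}(A_{n})$ in $\Ind^{\aleph_{1}}(G\nCalg_{\sepa,h})$: the section is the composite $yL_{h}(A)\xrightarrow{yL_{h}(s)}yL_{h}(A_{m})\to\colim_{n\in\nat}yL_{h}(A_{n})$ and the retraction is the canonical map $\colim_{n\in\nat}yL_{h}(A_{n})\to yL_{h}(A)$, the composite being $yL_{h}(\iota_{m}\circ s)\simeq\id_{yL_{h}(A)}$. As the class of $W_{R}$-local objects is closed under retracts — mapping out of a retract produces a retract of mapping spaces, and a retract of an equivalence is an equivalence — it follows that $yL_{h}(A)$ is $W_{R}$-local, completing the proof.

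I do not expect a serious obstacle here: once the formal properties of the right Bousfield localization $R$ collected in \cref{wreig90wregfwerf} are available, the argument is essentially bookkeeping. The only step with genuine $C^{*}$-algebraic content is the factorization of $\id_{A}$ through a finite stage $A_{m}$ of an explicit shape system, and this is an immediate consequence of the semi-projectivity of $\id_{A}$ through the shifted lifting property of \cref{wrtkohpgertwfrewfwrf}. A minor technical point to record is the rectification of the weak retract identity $yL_{h}(\iota_{m}\circ s)\simeq\id$ to a coherent retract diagram in $\Ind^{\aleph_{1}}(G\nCalg_{\sepa,h})$, which is automatic in any $\infty$-category.
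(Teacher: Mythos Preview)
Your proof is correct and follows exactly the paper's approach: the paper's argument is the single sentence preceding the corollary, asserting that $yL_{h}(A)$ is $W_{R}$-local and compact in $\Ind^{\aleph_{1}}(G\nCalg_{\sepa,h})$ and hence remains compact in the localization $\AsGc$. You have supplied a clean retract argument for the $W_{R}$-locality step, which the paper leaves implicit.
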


For trivial $G$ known examples are $\C$, the one-fold suspension $S(\C)$ of $\C$, the Cuntz algebra $\cO_{\infty}$, the Toeplitz algebra, see  \cite{zbMATH03996430}. Furthermore, the class of semi-projective algebras is closed under forming finite free products.

The left Bousfield localization $L_{\hat K_{G}}:\AsGc\to L_{\hat K_{G}}\AsGc$ preserves compactness since its right-adjoint preserves
countable filtered colimits. In general, the right Bousfield localization  $R_{K_{G}}:
 L_{\hat K_{G}}\AsGc\to  S_{K_{G}} \AsGc $ in the composition \eqref{fwefqwedqwedqwdqdqwd} may destroy  compactness.  But if $G$ is finite, then  this localization is an equivalence and therefore preserves compactness.
 For the further discussion let us therefore assume that $G$ is finite. 
 If the resulting object $\asGck(A)$ in $S_{K_{G}}\asGc$ is local for the localization $R_{S^{2}}$, it remains compact after applying
$R_{S^{2}}$.

 {By the equivariant version of \cite[Thm. 4.3]{DaLo}  group objects in $ S_{K_{G}} \AsGc $ are local for $R_{S^{2}}$.} 
 Typical examples of group objects are objects in the image of the loop functor. As looping in $ S_{K_{G}}\AsGc  $ corresponds to suspending in $G\nCalg_{\sepa}$ we see that for finite groups $G$
a semi-projective algebra of the form $A\cong S(\C)\otimes A'$ (we say once suspended) induces a compact object $\esG(A) $ in $\EsG$. 
\begin{kor} \label{vhbnweouifhdvsiohadwfdja} If $G$ is finite and 
$A$ in $G\nCalg_{\sepa}$ is once suspended and semi-projective, then $\esG(A)$ is compact.
\end{kor}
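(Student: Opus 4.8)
The plan is to follow $\asGc(A)$ along the chain of Bousfield localizations \eqref{fwefqwedqwedqwdqdqwd} realizing $\EsGf\simeq\EsG$ (see \cref{xvbnfiohregiohsdfpojsdf}) and to check at each stage that compactness is preserved. Since $A$ is semi-projective, the preceding corollary gives that $\asGc(A)$ is a compact object of $\AsGc$. The functor $L_{\hat K_{G}}$ preserves compact objects because its right adjoint preserves countable filtered colimits, and for finite $G$ the functor $R_{K_{G}}$ is an equivalence, so it trivially preserves compactness; hence $\asGck(A)=R_{K_{G}}L_{\hat K_{G}}\asGc(A)$ is a compact object of $S_{K_{G}}\AsGc$. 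This step is pure bookkeeping of the adjoints recorded in \cref{irthjzgjdiogjhjdicnmjksdisjf}.

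For the final localization $R_{S^{2}}$ I would exploit the once-suspended hypothesis. Writing $A\cong S(\C)\otimes A'$ and using that, as observed above, looping in $S_{K_{G}}\AsGc$ corresponds to suspending in $G\nCalg_{\sepa}$ (which in turn follows from homotopy invariance and Schochet exactness of $\asGck$ applied to the Schochet-fibrant suspension sequence $0\to S(A')\to C_{0}([0,1),A')\to A'\to 0$), one gets $\asGck(A)\simeq\Omega\,\asGck(A')$ in the pointed left-exact $\infty$-category $S_{K_{G}}\AsGc$. In particular $\asGck(A)$ is a loop object and therefore a group object. By the equivariant version of \cite[Thm.~4.3]{DaLo} every group object of $S_{K_{G}}\AsGc$ is local for $R_{S^{2}}$, so $\asGck(A)$ is $R_{S^{2}}$-local.

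Finally I would invoke the general principle that a compact, $R_{S^{2}}$-local object of $S_{K_{G}}\AsGc$ stays compact under $R_{S^{2}}$: since $R_{S^{2}}$ is a right Bousfield localization whose fully faithful adjoint $\iota\colon\EsGf\to S_{K_{G}}\AsGc$ preserves countable filtered colimits, for a local object $X$ one has $\Map_{\EsGf}(R_{S^{2}}(X),-)\simeq\Map_{S_{K_{G}}\AsGc}(X,\iota(-))$, and combining the compactness of $X$ with the fact that $\iota$ commutes with countable filtered colimits shows that $R_{S^{2}}(X)$ is compact. Applying this with $X=\asGck(A)$ yields that $\esG(A)\simeq R_{S^{2}}(\asGck(A))$ is compact in $\EsGf\simeq\EsG$. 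The main obstacle is the middle step: one must be careful that $\asGck(A)$ is genuinely a group object even though $\asGck$ is only Schochet exact rather than exact, and therefore that the relevant suspension extension is Schochet fibrant, and one must supply (or carefully cite) the equivariant analogue of the locality result of \cite{DaLo}; everything else is a routine verification of which adjoints preserve countable filtered colimits.
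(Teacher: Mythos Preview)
Your proposal is correct and follows essentially the same approach as the paper: the paper embeds the proof in the paragraph immediately preceding the corollary, tracking compactness of $\asGc(A)$ through the chain \eqref{fwefqwedqwedqwdqdqwd}, using that $R_{K_{G}}$ is an equivalence for finite $G$, and invoking the equivariant version of \cite[Thm.~4.3]{DaLo} to see that once-suspended objects give group objects which are $R_{S^{2}}$-local. Your write-up is in fact more detailed than the paper's, spelling out the Schochet-fibrancy of the suspension sequence and the general principle about compact local objects under right Bousfield localizations.
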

The only example of a once suspended and semi-projective $G$-$C^{*}$-algebra we know is $S(\C)$.
\begin{kor}\label{gjiseogpgregesg}
If $G$ is finite, then $\esG(\C)$ is a compact object of $\EsG$.
\end{kor}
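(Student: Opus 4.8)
The plan is to obtain this as an immediate consequence of the preceding corollary together with a single desuspension. First I would use that $S(\C)$, equipped with the trivial $G$-action, is once suspended, since $S(\C)\cong S(\C)\otimes\C$, and is semi-projective in $G\nCalg_{\sepa}$; the latter is the equivariant analogue, for the trivial action, of the classical fact \cite{zbMATH03996430} that $C_{0}((0,1))$ is semi-projective, and it is provided by the equivariant version of Blackadar's shape theory developed in \cref{erijgioepgwegerfwef} (concretely, the relevant presentation of $S(\C)$ by generators and relations is of the type considered in \cref{egjweogrefwerfw}, so that $S(\C)$ is even explicit semi-projective and one applies \cref{tokgprhhehtheth}). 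Since $G$ is finite, the preceding corollary then shows that $\esG(S(\C))$ is a compact object of $\EsG$.

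Second, I would descend from $S(\C)$ to $\C$. By the exactness and homotopy invariance of $\esG$ we have the suspension equivalence \eqref{adsvacadqwe}, i.e. $\esG(S(\C))\simeq\Omega\,\esG(\C)$. As $\EsG$ is stable, $\Omega\colon\EsG\to\EsG$ is an equivalence of $\infty$-categories with inverse $\Sigma$, whence $\esG(\C)\simeq\Sigma\,\esG(S(\C))$. Since an equivalence of $\infty$-categories preserves compact objects, $\Sigma$ carries the compact object $\esG(S(\C))$ to a compact object, and therefore $\esG(\C)$ is compact.

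The argument is essentially a bookkeeping exercise once the preceding corollary is granted, so the only genuine input is the assertion that $S(\C)$ with trivial $G$-action is semi-projective in the equivariant category $G\nCalg_{\sepa}$ rather than merely in $\nCalg_{\sepa}$; this is the one point I would check carefully, and it is exactly what the equivariant generalization of \cite{zbMATH03996430} recorded in \cref{erijgioepgwegerfwef} supplies. Everything else is a formal consequence of the stability of $\EsG$.
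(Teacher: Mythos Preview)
Your proposal is correct and follows essentially the same approach as the paper: apply the preceding corollary to $S(\C)$ to get compactness of $\esG(S(\C))$, then use $\esG(\C)\simeq\Sigma\,\esG(S(\C))$ together with the fact that $\Sigma$ preserves compact objects. The paper's proof is more terse and simply cites the semi-projectivity of $S(\C)$ from \cite{zbMATH03996430} rather than trying to recover it via \cref{egjweogrefwerfw}; note that the latter construction produces semi-projective \emph{morphisms}, so your parenthetical justification would need to be phrased as showing that $\id_{S(\C)}$ arises this way.
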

\begin{proof}
We have $\esG(\C)\simeq \Sigma \esG(S(\C))$ and $\Sigma$ preserves compact objects.
\end{proof}

 \subsection{Further properties of $\Es$ and $\KKs$}\label{gugeiwprgejrgpowjeipof}
In this section we address some further structural questions about the categories $\KKs$ and $\Es$.
In particular we show that $\Es$ {is} strictly larger than  the bootstrap class (otherwise $\Es$ would be compactly assembled for a trivial reason), and that $\KKs$ is not rigid.   {In addition we} state interesting questions to which we {currently} do not have answers. {At the end of this section we use homological functors in order to provide  criteria for a map between separable $C^{*}$-algebras to induce an equivalence or a strong phantom {map} in $E$-theory.}

We start with some  preparations from $\infty$-category theory.
{We consider a} symmetric monoidal countably cocomplete stable {$\infty$-}category $ \bD $ with compact {tensor} unit {$\unit$ (see  \cref{ojkbpertherthregertg}) and with the property that}   the tensor product preserves countable colimits in each variable. {The following is an interpretation of the notion of rigid categories in the countably cocomplete setting, for rigid cateogries in the presentable setting see \cite{Efimov_2025}, \cite{Ramzi_2024_Rigid}}. %In this context, there exists the following refinement of compactly assembled categories:
\begin{ddd} \hfil
	\begin{enumerate}
		\item A map $f: D \to E$ in $\bD$ is trace-class if there exists an object $D^\prime$ in $\bD$, a map $ g: \unit \to D^\prime \otimes E $ and a map $h: D \otimes D^\prime \to \unit$ such that $f$ is equivalent to the composite
		\[ 
			D \xrightarrow{\id_D \otimes g} D \otimes D^\prime \otimes E \xrightarrow{h \otimes \id_E} E\ .
		\]
		\item {A trace{-}class exhaustion of $D$ in $\bD$ is a system 
		 $(D_{n})_{n\in \nat}$ with $D \simeq \colim_{n \in \nat} D_n$
		 and such that the structure maps $D_{n}\to D_{n+1}$ are trace-class for all $n$ in $\nat$.}
		 \item {$D$ in $\bD$ is called trace-class exhaustible if it  admits a trace-class exhaustion.}
		 
		\item The category $\bD$ is rigid if every object admits a trace-class exhaustion.
	\end{enumerate}
\end{ddd}

{
	\begin{rem}
		A category $\bD$ is rigid in the {sense above}   if and only if $\Ind_{\aleph_1}(\bD)$ is rigid as a presentable category (see \cite[Cor. 4.55]{Ramzi_2024_Rigid}). \hB
	\end{rem}
}

\begin{ex} \label{cijodjvnfue}
	{If} $R$ a commutative ring spectrum, {then} the category $\Mod(R)^{\aleph_1}$ is rigid. {{This} follows from the fact that} $\Mod(R)^{\aleph_1}$ is generated under countable filtered colimits by dualizable objects{, and since} every map between dualizable objects is trace-{class}.\hB
\end{ex}

\begin{prop} \label{knldsfe0wadsf} \hfil
	\begin{enumerate}
		\item \label{kophertgerge}Every trace-class map is weakly compact.
		\item\label{hrtopgkpergortkgpeorggertg} Every rigid category is compactly assembled.
	\end{enumerate}
\end{prop}
\begin{proof}
	%The first claim is a straightforward computation, mainly relying on the fact that the \uli{tensor unit} $\unit$ is compact. 
	{For Assertion \ref{kophertgerge} we verify the condition described in \cref{jowtogpwgerferfwefewfref} {using the same notation}. To this end we consider a countable system $(Z_{i})_{i\in I}$   with $Z\simeq \colim_{i\in I} Z_{i}$ and let $k:E\to Z$ be some morphism.
	Using the compactness of $\unit$ and the fact that the functor  $D'\otimes -$ preserves filtered colimits we can  find a lift indicated by the dotted arrow in
	$$\xymatrix{\unit\ar@{..>}[r]\ar[d]^{g}&D'\otimes Z_{i}\ar[d]^{\id_{D'}\otimes \iota}\\ D'\otimes E\ar[r]^{\id_{D'}\otimes k}&D'\otimes Z}$$ for some $i$ in $I$ where $\iota:Z_{i}\to Z$ is the structure map.
	We then apply $D\otimes-$ and extend the diagram as follows:
	$$\xymatrix{ D\ar@{..>}[r]\ar[d]^{\id_{D}\otimes g}&D\otimes D'\otimes Z_{i}\ar[d]^{\id_{D\otimes D'}\otimes \iota}\ar[r]^-{h\otimes \id_{Z_{i}}}&Z_{i}\ar@/^0.5cm/[ddl]^{\iota}\\ D\otimes D'\otimes E\ar[r]^{\id_{D\otimes D'}\otimes k}\ar[d]^{h\otimes \id_{E}}&D\otimes D'\otimes Z\ar[d]^{h\otimes \id_{Z}}&\\ E\ar[r]^{k}&Z&}\ .$$}
	
	The second claim follows from the first and \cref{vnaefnauen}.
\end{proof}

We {consider the commutative} ring spectrum ${S} \coloneqq \map_{\bD}(\unit, \unit)$. The symmetric monoidal structure on $\bD$ induces a lax symmetric monoidal functor
 \begin{equation}\label{opkweorgkwperfo}R: \bD \to \Mod({S})\ ,
\end{equation}
 such that there exists a commutative triangle
\[ 
	\xymatrix{& & \Mod({S}) \ar[d] \\ \bD \ar[urr]^{R}  \ar[rr]_{\map_{\bD}(\unit, -)} & & \Sp}
\]
of lax symmetric monoidal functors {so that} the structure map ${S} \to R(\unit)$ is an equivalence. Since $1$ is compact in $\bD$ the fuctor $\map_{\bD}(\unit, -)$ and {therefore also $R$ preserve all countable colimits.}
% T}herefore also $R(-)$ is countable colimit preserving.

\begin{ddd} \hfil
	\begin{enumerate}
		\item The bootstrap class $\cB$ in $\bD$ is the full subcategory of $\bD$ generated by the tensor unit under countable colimits.
		\item The Künneth class $\cKue$ in $\bD$ is the full subcategory spanned by the objects $D$ in $\bD$ such that for all $E$ in $\bD$ the structure map
		\[ 
			R(D) \otimes_{{S}} R(E) \to R(D \otimes E)
		\]
		is an equivalence.	
		\item {The class $\cT$ is the class of trace-class exhaustible objects.}
		
%		trace-class\fuli{sollte besser nuclear heißen, aber das ist im Konflikt. Wir müssen über Namen reden.} class $\cT$ is  the full subcategory on $\bD$ of objects which admit a trace-class exhaustion.

	\end{enumerate}
\end{ddd}
\begin{prop}\label{tokhperthrtgertg} \mbox{}
	\begin{enumerate}
		\item  \label{jzlpjrthrtzh} $\cK$ is a stable subcategory closed under countable colimits and   $\cB\subseteq \cK$.
		\item \label{jzlpjrthrtz2}   $\cT$ is a countably cocomplete stable subcategory.
		\item \label{jzlpjrthrtzh1} $\cT\cap \cK= \cB$.
	\end{enumerate}
\end{prop}

\begin{proof}%\fben{Hab das Argument im Moment versucht das Argument möglichst kurz zu halten. Damit sind im Moment viele kleine Argumente dem interessierten Leser überlassen.}
 {Since $R$   and $\otimes$ preserve countable colimits the Künneth class is closed under countable colimits.
Since the generator  $\unit$ of the bootstrap class is clearly in the Künneth class  
we can conclude 
Assertion \ref{jzlpjrthrtzh}.}

 {For  Assertion \ref{jzlpjrthrtz2}  we invoke}    \cite[Lemma 4.4.11]{som} {implying} that {pushouts of} trace-class exhaustible objects are again trace class exhaustible, that $\Omega$ preserves $\cT$ and finite coproducts of trace class exhaustible objects are clearly trace-class exhaustible.  It remains to show that $\cT$ is closed under countable {coproducts.} Let $(D_n)_{n \in \nat}$ be a family in $\cT$ and choose for every $n $ in $\nat$ a trace-class exhaustion $(D_{n,m})_{m \in \nat}$ of $D_n$. Then the system $(D_{0,n} \oplus \cdots \oplus D_{n,n})_{n \in \nat}$ with the structure maps
  \begin{equation}\label{eronncjkediedfg}
  	D_{0,n} \oplus \cdots \oplus D_{n,n} \xrightarrow{\alpha} D_{0,n} \oplus \cdots \oplus D_{n,n} \oplus D_{n+1,n} \xrightarrow{\beta} D_{0,n+1} \oplus \cdots \oplus D_{n+1,n+1} 
  \end{equation}
  is a trace-class exhaustion of $\bigoplus_{n \in \nat} D_n$. In this formula $\alpha$ is the inclusion of the first $n+1$-summands, and $\beta$ is the sum of the trace-class structure maps {$D_{i,n}\to D_{i,n+1}$}. Since trace-class maps are closed under finite sums and form an ideal the map in \eqref{eronncjkediedfg} is trace-class as required.

%	\uli{We first show the second assertion under the additional assumption that 
%	  $R(D) \simeq 0$ and then reduce the general case to this special situation.}
%	   
%	   \uli{So let $D$   in $\bD$ be trace class exhaustible and in the Künneth class and assume that $R(D)\simeq 0$. We will show that then $D\simeq 0$ which clearly implies that $D$ belongs to the bootstrap class.}
%	   	\uli{Since $R(D)\simeq 0$}  every trace-class map with codomain $D$ is \uli{trivial since  such a}   trace-class map is induced by a map $\unit \to C^\prime \otimes D$ for some $C^\prime$ in $\bD$ and
%	\[ 
%		\map_{\bD}(\unit, C^\prime \otimes D) \simeq \map_{\Mod(R)}(R, R(C^\prime \otimes D)) \overset{\text{Künneth}}{\simeq} \map_{\Mod(R)}(R, R(C^\prime) \otimes 0) \simeq 0.
%	 \]
%	\uli{We can choose a trace-class exhaustion $(D_{n})_{n\in \nat}$ of $D$. Then
%	every} structure map $D_n \to D$  \uli{is trace class and hence trivial. Since a} trace-class map is weakly compact \uli{by  \cref{knldsfe0wadsf}.\ref{hrtopgkpergortkgpeorggertg}, for every $n$ in $\nat$} there exists $m$ in $\nat$ large enough such that $D_n \to D_m$ is \uli{trivial}.  \uli{This implies that $D\simeq \colim_{n\in \nat} D_{n}\simeq 0$ as desired.}
%	
%
%We now turn to the reduction step. 
%

We now turn to Assertion \ref{jzlpjrthrtzh1}. {We first consider $D$ in $\cT \cap \cK$ satisfying additionally $R(D) \simeq 0$. Since $D$ is in $\cK$ this implies that {every trace-class map} %there exist no trace-class maps
 with codomain $D$ vanishes {since}  for all $C$ in $\bD$, we have
$$ 		\map_{\bD}(\unit, C \otimes D) \simeq \map_{\Mod(S)}(S, R(C \otimes D)) \overset{\text{Künneth}}{\simeq} \map_{\Mod(S)}(S, R(C) \otimes_S 0) \simeq 0\ . $$}Since $D$ is trace-class exhaustible this implies $D \simeq 0$ {as follows:} We can choose a trace-class exhaustion $(D_{n})_{n\in \nat}$ of $D$. Then
every  structure map $D_n \to {D}$  is trace class and hence trivial. Since a trace-class map is weakly compact  by  \cref{knldsfe0wadsf}.\ref{hrtopgkpergortkgpeorggertg}, for every $n$ in $\nat$   there exists $m$ in $\nat$ large enough such that $D_n \to D_m$ is trivial.  This implies that $D\simeq \colim_{n\in \nat} D_{n}\simeq 0$ as desired.

In light of the above argument, given $D$ in $\cT \cap \cK$ not necessarily satisfying $R(D) \simeq 0$, it suffices to find an approximation $ C \to D$ with $C$ in the bootstrap {class such} that $R(C) \to R(D)$ is an equivalence. 
By  Lurie's version of the Schwede-Shipley theorem we get the right Bousfield localization   in the upper part of the following diagram
 $$\xymatrix{\Mod(S)   \ar@/^0.3cm/[rr]^{\hat F}& {\perp}& \Ind_{\aleph_{1}}(\bD) \ar@/^0.3cm/[ll]^{\hat R}\\  \Mod(S)^{\aleph_{1}} \ar@{^{(}->}[u]   \ar@/^0.2cm/[drr]^{F} \ar@{..>}[rr]^{}&&  \bD\ar@/_-0.3cm/[ull]_(0.3){R}\ar[u]^{y} \\ && \bD' \ar@{^{(}->}[u]\ar@/^0.3cm/[ull]_{\perp}^{R'}}\ .$$
 This crucially uses the assumption that $\unit$ is compact in $\bD$ which implies that  the tensor  unit $y(\unit)$  of $ \Ind_{\aleph_{1}}(\bD) $ is compact as well.
  The functor $\hat R$ is the canonical refinement of $$\map_{\Ind_{\aleph_{1}}(\bD)}(y(\unit),-):\Ind_{\aleph_{1}}(\bD)\to \Sp$$ to a lax symmetric monoidal  $\Mod(S)$-valued functor. As indicated the restriction of $\hat R$ along $y$ coincides with $R$.
The fully faithful  symmetric monoidal functor $\hat F$ preserves colimits and is uniquely determined by the fact that it sends $R$ to $y(\unit)$.   Its restriction  to $\Mod(S)^{\aleph_{1}}$ (indicated by the dotted arrow)  therefore takes values in the bootstrap class of $\bD$. Here we view $\bD$ as a full subcategory of $\Ind_{\aleph_{1}}(\bD)$ via $y$.

We let  $\bD':=\cK\cap \cT$  denote the
full subcategory of $\bD$ on objects in the Künneth class which  in addition admit a trace class exhaustion. 
A symmetric monoidal functor    preserves trace-class maps. Since $\hat F$ is symmetric monoidal and colimit preserving it preserves trace class exhaustions.
 Since $\Mod(S)^{\aleph_1}$ is rigid  by  \Cref{cijodjvnfue} we see that  the dotted arrow admits a further factorization over a symmetric monoidal functor $F$ as indicated.
 We now show that the restriction $R'$  of $R$ to $\bD'$ takes values in $\Mod(S)^{\aleph_{1}}$. {Let $E$ be in $\bD'$ and choose a trace-class exhaustion $(E_n)_{n \in \nat}$ of $E$. Since $E$ {belongs to the Künneth class}  the maps $R(E_n) \to R(E_{{n+1}})$ are still trace class and in particular compact{. Since} $\Mod(S)$ is compactly generated this implies that $R(E_n) \to R(E_m)$ factorizes over a compact object for $m$ large enough. Thus $R(E)$ lies in $\Mod(S)^{\aleph_1}$.}
 % Since $\bD'$ is contained in the Künneth class the restriction of  $R$ to $\bD'$ is symmetric monoidal.
% We further use that $R$   preserves countable filtered colimits.
% Let $D$ be in $\bD'$ and choose  a trace class exhaustion $(D_{n})_{n\in \nat}$. 
% Then $(R(D_{n})_{n\in \nat})$ is a trace-class exhaustion of $R(D)$.
% Since $\Mod(R)$ is compactly generated, every compact map factorizes over a compact object.
% In particular, every trace class map factorizes over a compact object. We apply this to the structure maps of the trace class exhaustion of $R(D)$.
% It follows that $R(D)$ is also a colimit of an $\nat$-indexed system of compact objects and hence belongs to 
% $\Mod(R)^{\aleph_{1}}$. 
 We conclude that the adjunction $\hat F \dashv \hat R$ restricts to an adjunction
 $F\dashv R'$. 
 
 {The object $C:=F  R^\prime(D)$ belongs to the bootstrap class.
 Since $F$ is fully faithful  the counit $C\simeq F R^\prime(D) \to D$ 
 induces an equivalence
 $R(C)   \stackrel{\simeq}{ \to} R(D)$ as desired.}
\end{proof}

\begin{rem}
The argument above implies  an equivalence  $$F:\Mod(S)^{\aleph_{1}}\stackrel{\simeq}{\to} \cT\cap \cK$$   with inverse   $R'$. \hB
\end{rem}

%\fben{Können aus meiner Sicht genre noch einen Remark dazu machen, dass $F, R'$ invers zu einander sind. Entscheid gerne du.}

We have the countably cocomplete  stable $\infty$-categories $\KKs$ and $\Es$.
On $\KKs$ we consider the  two symmetric monoidal structures 
$\otimes_{\min}$ and $\otimes$ induced by the minimal and the maximal tensor product on $\nCalg_{\sepa}$, respectively,  while on
$\Es$ we only have the symmetric monoidal structure $\otimes$ induced by the maximal tensor product.
In any case the tensor product preserves countable colimits in each variable. 
The tensor units are given by $\kks(\C)$ and $\es(\C)$, respectivly.
 \begin{lem}\label{kopokherthertget}   The tensor units in $\KKs$ and $\Es$ are compact.
 \end{lem}
 \begin{proof}
 	{The case of $E$-theory is  \Cref{gjiseogpgregesg}.} %on we use that $\es (\C)$ is compact if and only if $ \es( S(\C))$ is compact\uli{. The latter fact}  follows from \Cref{gjiseogpgregesg}. In the case of $KK$-theory we can argue as follows.
 	
	{For the case of $KK$-theory
	we use that} $K$-theory functor $K:\nCalg_{\sepa}\to \Sp$ is corepresented in $\KKs$ by $\kks(\C)$:
 	$$K(-)\simeq \map_{\KKs}(\kks(\C),\kks(-)):\nCalg_{\sepa}\to \Sp\ .$$
 	Using  the description of   the $K$-theory groups of $C^{*}$-algebras in terms of homotopy classes of projections and unitaries  one can check  (the well-known fact) that $K$ preserves countable  sums. Since $\kks$ is surjective on objects and preserves countable sums  this implies the claim.  
 \end{proof}
%\begin{proof}
%We consider the case of $E$-theory. In the case of $KK$-theory we can argue similarly.
%The $K$-theory functor $K:\nCalg_{\sepa}\to \Sp$ is corepresented in $\Es$ by $\es(\C)$:
%$$K(-)\simeq \map_{\Es}(\es(\C),\es(-)):\nCalg_{\sepa}\to \Sp\ .$$
%Using  the description of   the $K$-theory groups of $C^{*}$-algebras in terms of homotopy classes of projections and unitaries  one can check  (the well-known fact) that $K$ preserves countable  sums.  By the universal property of $\es$  the
%functor $$ \map_{\Es}(\es(\C),-):\Es\to \Sp$$ therefore also preserves countable  sums. 
%We can 
% conclude that $\es(\C)$ is a compact object of $\Es$.
%\end{proof}

We {can %therefore 
apply} the general definitions from above to $\KKs$ and $\Es$.  
\begin{ddd}
	\mbox{}
	\begin{enumerate}
		\item We let $\bGsk$ and $\bGs$ denote the bootstrap classes in $\KKs$ and $\Es$.
		\item We let  $\cTmakk$, $\trkma$ and $\cTmikk$, $\trkmi$ denote the {Künneth classes and the classes of trace-class exhaustible objects} 
	 in $\KKs$ for the tensor products $\otimes$ and $\otimes_{\min}$. 
		\item We let $\cTe$ and $\tre$ denote the   {Künneth class and the class of trace-class exhaustible objects}   in $\Es$.
	\end{enumerate}
\end{ddd}

\begin{rem} {Given a separable $C^{*}$-algebra $A$
	   it is an interesting question to  descide  when $ \es(A)$ belongs to $\bGs$. If one knows   that $\es(A)\in \tre$, then \Cref{tokhperthrtgertg} implies that $\es(A)\in \bGs$  if and only if $\es(A)\in  \cTe$.  The latter condition is equivalent to the condition  that the natural transformation
	$$ K(A) \otimes_{KU} K(-) \to K(A \otimes -) $$}of functors from $\nCalg$ to $\Mod(KU)$ is an equivalence{.  This  condition only involves} $K$-theory and not bivariant $E$-theory.  
	
	{It is therefore} an interesting question to determine {for which $C^\ast$-algebras $A$ the $E$-theory class $\es(A)$} admits trace-class a exhaustion. The same applies to $ \kks(A)$ in $\KKs$ with respect to $\otimes $ and $\otimes_{\min}$. \hB
\end{rem}

{A major source of the complexity in the context of $KK$-theory  is the  difference between the classes $ \cTmikk $ and $ \cTmakk $. This    is the content of the following theorem   due to Uuye \cite[Ex. 5.1, Ex. 5.3]{Uuye:2011aa} which  relies} heavily on deep results of Skandalis \cite{zbMATH04065714} and Ozawa {\cite[App. A]{Ozawa_2003}}.
 \begin{theorem}
	The classes $\cTmikk \backslash \cTmakk$ and $ \cTmakk \backslash \cTmikk $ are non-empty.
\end{theorem}

Interesting examples of $C^*$-algebras {whose $KK$-classes belong to} $\cTmikk$ are the reduced group $C^\ast$-algebras {$C^{*}_{r}(G)$}  for groups $G$ satsfying the Baum-Connes conjecture with coefficients. {Using} Skandalis \cite{zbMATH04065714} one can find a group $G$   {satisfing} the Baum-Connes conjecture with coefficients and such that $\kks(C^\ast_r(G))\not\in \cTmakk$. \par
A canonical source   { for objects in} $\cTmakk$ is the kernel of the comparison functor $ c: \KKs \to \Es $ {in \eqref{herjthiertogjertioger}}. An example of a $C^{*}$-algebra {whose $KK$-class is in  the kernel of $c$ but not in  not}   $\cTmikk$ {is given in}   Ozawa {\cite[App. A]{Ozawa_2003}}.

\begin{prop} \label{vbnuiojaweiofhjjdf} \mbox{}
	\begin{enumerate} 
		\item\label{oujiogewrgrewgwregr} The class $ \bGsk $ is strictly smaller than $\KKs$.
		\item\label{oujiogewrgrewgwregr1} The class $\bGs$ is strictly smaller than $\Es$. \label{uirjofjfgjfid}
		\item\label{oujiogewrgrewgwregr2} The class $\cTe$ is strictly smaller than $\Es$. 
		\item\label{oujiogewrgrewgwregr3} The category $\KKs$ is not rigid with respect to $\otimes$. \label{wiofnvgkeqka}
		\item\label{oujiogewrgrewgwregr4} The category $\KKs$ is not rigid with respect to $\otimes_{\min}$. \label{oiiaenwfgonaoej2qr}
	\end{enumerate}
\end{prop}
Let us note that the first three points of the above proposition are well-known.
\begin{proof}
	We first observe that {Assertion} \ref{oujiogewrgrewgwregr} follows from Assertion \ref{oujiogewrgrewgwregr1}, and {Assertion} \ref{oujiogewrgrewgwregr1} in turn follows from {Assertion} \ref{oujiogewrgrewgwregr2} because $\bGs$ is contained in $\cTe$.  {In order to prove {Assertion} \ref{oujiogewrgrewgwregr2}}  we choose, using \cref{vbnuiojaweiofhjjdf}, an object $A$ in $\nCalg_{\sepa}$ such that $\kks(A)\not\in \cTmakk$. Since the comparison functor $c:\KKs \to \Es$ induces an equivalence
	$$  \map_{\KKs}(\kks(\C),\kks(-) )\simeq \map_{\Es}(\es(\C),{\es(-)}) \simeq K(-){:\nCalg_{\sepa}\to \Mod(KU)}  $$
	 {of lax symmetric monoidal functors we conclude that $\es (A) \not\in \cTe$}. 
	  
	   {In order to prove Assertion} \ref{oujiogewrgrewgwregr3} {we choose $ A$ in $\cTmakk \backslash \cTmikk$ using \cref{vbnuiojaweiofhjjdf}} . If ${A}$ admits a trace-class exhaustion with respect to $\otimes$, being in $\cTmakk$ implies by \Cref{tokhperthrtgertg} that $ A $ {belongs to} the bootstrap class. This is a contradiction to the assumption that {$ A\notin \cTmikk$}. \\ \\
	The proof of  {Assertion} \ref{oujiogewrgrewgwregr4}, is, up to switching the role of max and min, the same as the one of  {Assertion} \ref{oujiogewrgrewgwregr3}.
\end{proof}
The above proposition shows that $\Es$ can not be compactly assembled for the trivial reason that the category is equivalent to the compactly generated category $\bGs $. It similarly implies that $\KKs$ can not be generated by dualizable objects, but it leaves the following questions open.
\begin{prob} \label{bijasfiondfmg} \hfil
	\begin{enumerate}
		\item Is $\Es$ compactly generated?
		\item \label{ioqerjtasdfj} Is $\Es$ rigd?
		\item Is $\KKs$ compactly assembled or even compactly generated?
	\end{enumerate}
\end{prob}

An important property for $C^*$-algebras is nuclearity:
\begin{ddd}\mbox{}\begin{enumerate}
		\item    {$A$ in $\nCalg_{\sepa}$ is} nuclear  if  
		the natural transformation
		\begin{equation}\label{gwerferfrefw}(A\otimes -)\to (A\otimes_{\min}-)
		\end{equation}
		of endofunctors of $\nCalg_{\sepa}$  is an isomorphism. 
		\item The $E$-theoretic nuclear class $\nGs$  is  the smallest countably cocomplete stable  subcategory
		of $\Es$ containing $\es(A)$ for all nuclear  $A$ in $\nCalg_{\sepa}$.
		\item The $KK$-theoretic nuclear class $\nGsk$  is  the smallest countably cocomplete stable  subcategory
		of $\Es$ containing $\kks(A)$ for all nuclear  $A$ in $\nCalg_{\sepa}$.
	\end{enumerate}
\end{ddd}

% \begin{ddd} \mbox{}
% \begin{enumerate} \item We call  $A$ in $\nCalg_{\sepa}$ nuclear  if  
%the natural transformation
%\begin{equation}\label{gwerferfrefw}(A\otimes -)\to (A\otimes_{\min}-)
%\end{equation}
%of endofunctors of $\nCalg_{\sepa}$  is an isomorphism. 
%\item We call $A$ in $\nCalg_{\sepa}$ exact if the functor $$A\otimes_{\min}-:\nCalg_{\sepa}\to \nCalg_{\sepa}$$
%preserves exact sequences.
%\end{enumerate}\end{ddd}

%\begin{ddd}\mbox{}\begin{enumerate}
% \item The $E$-theoretic nuclear (exact) class $\nGs$ ($\exGs$) is  the smallest countably cocomplete stable  subcategory
%of $\Es$ containing $\es(A)$ for all nuclear (exact) $A$ in $\nCalg_{\sepa}$.
% \item The $KK$-theoretic nuclear (exact) class $\nGsk$ ($\kxGs$) is  the smallest countably cocomplete stable  subcategory
%of $\Es$ containing $\kks(A)$ for all nuclear  (exact) $A$ in $\nCalg_{\sepa}$.
%\end{enumerate}
% \end{ddd}
% Since $A\otimes -$ preserves exact sequences for any $A$ we see that nuclear algebras are exact. 
 Note that $\C$ is nuclear.
 We get inclusions 
 $$\bGs\subseteq \nGs\ , \qquad \bGsk\subseteq \nGsk \ .$$
The following is a famous classical question:
\begin{prob} \label{koperthgerth9} Do we have equalities
$$\bGsk= \nGsk , \qquad 
 \bGs= \nGs\ ?$$
\end{prob} 

\begin{prop}
	\mbox{}
	\begin{enumerate}
		\item \label{sdvuiohnoiasjdab} For all $A$ in $\nGsk$ the comparison map $c: \KKs \to \Es$ induces an equivalence
		\begin{equation} \label{wduioickop9jkah}
			\map_{\KKs}(A, -) \to \map_{\Es}(c(A), c(-)) \ .
		\end{equation}
		\item \label{ijkbnadsciouwiosjfa} The assertions $\bGsk= \nGsk$ and  $\bGs= \nGs$ are equivalent.
	\end{enumerate}
\end{prop}
\begin{proof}
	By stability it suffices to show that the map in \eqref{wduioickop9jkah} induces an isomorphism on $\pi_0$. If $A $ is the $KK$-class of a separable nuclear $C^*$-algebra this is a classical result due to Skandalis \cite{zbMATH04065714}. Since the class of $KK$-classes $A$ for which the map in \eqref{wduioickop9jkah} is an equivalence is closed under countable colimits and finite limits the Assertion \ref{sdvuiohnoiasjdab} follows.
	
	We now show Assertion \ref{ijkbnadsciouwiosjfa}. Since the comparison map $c$ preserves countable colimits the $KK$-theoretic assertion implies the $ E$-theoretic one. For the converse we consider the following square
	\begin{equation} \label{asxcjkybniosndfa}		\xymatrix{\bGsk \ar[rr]_{\simeq}^{c_{|\bGsk}} \ar@{^{(}->}[d] & & \bGs \ar@{^{(}->}[d]^{\simeq}  \\  \nGsk \ar[rr]_{\subseteq}^{c_{|\nGsk}}& & \nGs } \ .
	\end{equation}
	The upper functor is an equivalence since both bootstrap classes are generated by the compact objects $\kks(\C)$ or $\es(\C)$, respectively, and the canonical map between their endomorphism algebras is an equivalence. The lower functor is fully faithful by Assertion \ref{sdvuiohnoiasjdab}. In view of the diagram \eqref{asxcjkybniosndfa} the left vertical map is an equivalence.
\end{proof}

%{It is clear (since the comparison map $c$ preserves colimits) that the $KK$-theoretic assertion implies the $E$-theoretic one. }\fuli{Die Umkehrung sehe ich nicht. Wa könnten in $\ker(c)$ nicht-nucleare Objekte geben, oder.
%Ich sehe nur sowas wie $\ker(c)\cap \nGsk=0$. \ben{Genau die von dir genannte Aussage reicht die Rückrichtung zu zeigen. Wir müssen ja nur etwas für Algebren in $\nGsk$ zeigen.}} 

The \cref{koperthgerth9} is at least compatible with \Cref{vbnuiojaweiofhjjdf}.\ref{uirjofjfgjfid}.
\begin{prop}
	We have
	\begin{equation}
		\nGsk \not= \KKs \qquad \mbox{and} \qquad \nGs \not= \Es \ .
	\end{equation}
\end{prop} 

These assertions are well-known{, but}  we include an argument for the sake of completeness.

\begin{proof}
	{As the $KK$-theoretic claim is implied by the   $E$-theoretic one we only consider the latter.}     {We pick $A$ in $\nCalg_{\sepa}$ such that $\kks(A)\in \cTmikk \backslash \cTmakk$. Then there exists $B$ in $\nCalg_{\sepa}$ 
 which is not contained in the kernel of  the functor	\begin{equation} \label{voajknioeriojfji}
		F:=\Fib \left( K(A \otimes -) \to K(A \otimes_{\min} -)\right):\nCalg_{\sepa} \to \Mod(KU)\ .
	\end{equation}} Since $A$ is in $\cTmikk$ {the functor $$K(A \otimes_{\min} -):\nCalg_{\sepa}\to \Mod(KU)$$ is exact in addition to} being    homotopy-invariant, $\K$-stable and sum-preserving. The same applies to $K(A\otimes_{\max}-)$.  {The functor $F$ in
  \eqref{voajknioeriojfji} therefore has a factorization
  $$\xymatrix{\nCalg_{\sepa}\ar[rr]^{F}\ar[dr]_{\es}&&\Mod(KU)\\&\Es\ar@{..>}[ur]_{\bar F}&}\ ,$$
  where $\bar F$ is exact and countable colimit preserving.} Since {by definition   every nuclear $C^\ast$-algebra lies in the kernel of  $F$ we can conclude 
  that $\nGs\subseteq \ker(\bar F)$.  As $\es(B)\not\in \ker(\bar F) $   we conclude that  $\es(B)\not\in \nGs$.}
\end{proof}

{In view of}  \Cref{koperthgerth9} also the following more modest version of \Cref{bijasfiondfmg}.\ref{ioqerjtasdfj} is very interesting.
\begin{prob} \label{vuioaherjfoasof}
{Do we have $\nGs\subseteq \tre$.}
%	Is every nuclear $C^\ast$-algebra trace-class exhaustible in $\Es$?
\end{prob}
Let us focus on the $E$-theoretic part of \Cref{koperthgerth9}{. If}  the answer turns out to be yes, then the answer to \Cref{vuioaherjfoasof} also has to be yes, simply because every algebra in the bootstrap class is also trace-class exhaustible (see \Cref{tokhperthrtgertg}.\ref{jzlpjrthrtzh1}). On the other hand, if one can {verify} \Cref{vuioaherjfoasof}, then one reduces \Cref{koperthgerth9} to showing that every nuclear $C^\ast$-algebra satisfies {the Künneth formula}. \par

As a final topic in this section, we consider the $\PrLst$-{dual} of $E$ given by
$$ \Fun^{L}(\mathrm{E}, \Sp) \simeq \Fun^{\colim_{\omega}}(\Es,\Sp) \simeq \Fun^{h,K_G,\exa, \cfil}(\nCalg_{{\sepa}}, \Sp), $$
{where} we have combined $\mathrm{E} \simeq \Ind_{\aleph_1}(\Es)$ and the universal property of $\Es$ \Cref{wrejgioweferfw} with {\cref{gjerogrgesrgseg}.\ref{kophertherth9}}. The functors on the right hand side have precisely the abstract properties of $K$-theory {which motivates the following definition.
\begin{ddd} \label{ijvwiopjkdfehj9}
The objects of  $\Fun^{h,K_G,\exa, \cfil}(\nCalg_{{\sepa}}, \Sp)$ are called  generalized $K$-theory functors.
\end{ddd}}
% (that have nothing to do with the symmetric monoidal structure), we call them generalized $K$-theory functors. 
One can can ask whether $\es(A)$ is determined by {the values}  $L(A)$ for  {all 
 generalized $K$-theory functors $L$. Let us emphasize that this question is non-trivial  since    a generalized $K$-theory functor must preserve filtered colimits
 and has codomain $\Sp$.  This excludes the functor $\es$ which otherwise would lead to a tautology.}
\begin{prop}\label{oenvgpwffoq} Let $f: A \to B$ be a map in $\nCalg_{{\sepa}}$.
	\begin{enumerate}
		\item \label{refrefwf} The map $\es(f)$ is an equivalence if and only if  {$L(f)$ is an equivalence for all generalized $K$-theory functors $L  $.}
		\item \label{refrefwf1} The map $\es(f)$ is a strong phantom if and only if    $\pi_{{0}}L(f)$ is {the zero map for all generalized $K$-theory functors $L$}. 
	\end{enumerate}	
\end{prop}
\begin{proof}
	We start by reducing {Assertion \ref{refrefwf} to Assertion \ref{refrefwf1}.} {In order to} prove that $\es(f)$ is an equivalence it suffices to check that its fibre $F$ is $0$. Since $\Es$ is compactly assembled $F$ is $0$  if and only if it is a strong phantom object. Finally, we note that Assertion \ref{refrefwf1} implies that $F$ is a {strong} phantom object. \par
	 {It   remains} to check Assertion \ref{refrefwf1}.  {But the latter  is a specialization} of \Cref{vbeirjgowjgj}.
\end{proof}

\begin{rem}
	We are not aware of a proof of {\cref{oenvgpwffoq}.\ref{refrefwf}}   that does not rely on invoking that $\Es$ is compactly assembled. {Accordinly  we do not} know whether the collection of countable colimit preserving functors $\KKs \to \Sp$ jointly detects {equivalences.}
\end{rem}

{As a corollary of dualizability of $\mathrm{E}$  one can even recover  the latter from} the $\infty$-category of generalized $K$-theory functors. {To this end we let}
$$ \ev_{(-)}: \mathrm{E} \to \Fun^L(\Fun^L(\mathrm{E}, \Sp){,\Sp}), \quad \ev_A(L) {:=} L(A) $$
be the canonical {functor}.
\begin{kor}
	The canonical functor
	$$\mathrm{E} \xrightarrow{\ev_{(-)}} \Fun^{L}(\Fun^{L}(\mathrm{E},\Sp){,\Sp}) \simeq \Fun^{L}(\Fun^{h,K_G,\exa, \cfil}(\nCalg_{{\sepa}}, \Sp), \Sp) $$
	is an equivalence.
\end{kor}

\section{Analysis on homotopy categories}\label{tgkowpegfwerwggerfwf}

\subsection{The  light condensed topology}\label{erjogpwergwerfrwefwrefrwefwer}

In this section we introduce the light condensed topology on the $\Hom$-sets of an   $\infty$-category which admits finite products and countable filtered colimits.  For any object $C$ in $\bD$ the functor $\Fin^{\op}\to \bD$ sending a finite  set $X$  to $C^{X}:=\prod_{X}C$
left Kan-extends to light profinite {sets}, i.e., profinite  topological spaces which are  $\nat^{\op}$-indexed limits of finite sets.   One could try to consider  the power $C^{X}$ as the object of continuous functions from  $X$ to $C$. The light condensed topology is a way to make  this idea precise.

In the following sets will be considered as discrete topological spaces.
{The following notion was recently proposed  by Clausen-Scholze as a basis for the development of condensed mathematics.}
\begin{ddd} A light profinite topological space is a topological space which can be presented as  an $\nat^{\op}$-indexed limit of finite sets. \end{ddd}

\begin{rem}If $(X_{i})_{i\in I}$ is a system of finite sets indexed by a cofiltered countable category  $I$, then
$\lim_{i\in I}X_{i}$ is a light profinite space since we can choose a final functor $\nat^{\op}\to I$
and restrict the limit to $\nat^{\op}$ without changeing the resulting space. Furthermore, for $C$ in $\bD$ we have an equivalence $$C^{X}\simeq \lim_{i\in I} C^{X_{i}}\ .$$
\hB
 \end{rem}
 
We let $\Profinl$  denote the full subcategory of $\Top$ of light profinite spaces.

\begin{ex}\label{koergperfrefqfrfq}
The one-point compactification $\nat_{+}$ of the integers is an object of $\Profinl$.
It can be presented as a limit of the system of finite sets
\begin{equation}\label{eoirthjerpthetrge}\dots\to \{0,1,2,\infty\}\to \{0,1,\infty\}\to \{0,\infty\}\to \{\infty\}\ ,
\end{equation} 
where the map $\{0,\dots,n,\infty \}\to \{0,\dots,n-1,\infty\}
$ sends $i$ to $i$ for $i\le n-1$, and $n$ and $\infty$ to $\infty$. 

The light profinite space $\nat_{+}$ corepresents converging sequences. In other words, 
for any topological space $X$, a converging sequence $(x_{n})_{n\in \nat}$ with limit $x$  in $X$ is the same as a continuous map
$\nat_{+}\to X$ sending $n$ to $x_{n}$ and $\infty$ to $x$.
\hB\end{ex}

\begin{constr}{\em  Let $\Fin$ denote the category of finite sets.
For any $\infty$-category  $\bD$ with finite products  we  have a power functor
$$\Fin^{\op}\times \bD\to \bD\ , \quad (X,C) \mapsto C^{X}$$
which is characterised by a natural equivalence
$$  \map_{\bD}(B,C^{X})\simeq \prod_{X} \map_{\bD}(B,C) $$
for any  $B$ in $\bD$.
If $\bD$ is  admits countable colimits, then we can 
left Kan-extend the power functor  in the first variable:
  \begin{equation}\label{qewfojpqdwqwededqwdqweded}
 \xymatrix{ \Fin^{\op}\times \bD \ar[rr]^{(X,C)\mapsto C^{X}}\ar[dr] & & \bD \\ &  (\Profinl)^{\op}\times  \bD \ar@{..>}[ur]_{(X,C)\mapsto C^{X}} &}\end{equation} 
  \hB
}
\end{constr}

Let $\bD$ be an $\infty$-category with finite products and countable filtered colimits. For   objects $B,C$ of $\bD$ we 
 write $$[B,C]:=\pi_{0}\Map_{\bD}(B,C)$$ for the set of equivalence classes of maps from $B$ to $C$   in the homotopy category of $\bD$. If $\bD$ is pointed or stable, then $[B,C]$ is naturally a pointed set or  abelian group, respectively.
Using \eqref{qewfojpqdwqwededqwdqweded}  we   get a functor   $$ (\Profinl)^{\op}\ni X\mapsto [B,C^{X}]\in \Set\ .$$
The  collection of inclusions $(i_{x}:*\to X)_{x\in X}$ of the points of $X$   induces
 the evaluation     \begin{equation}\label{wergwergerferfwefwefwf}\ev:[B,C^{X}]\to \Hom_{\Set}(X,[B,C])\ , \quad \tilde f\mapsto (X\ni x\mapsto i_{x}^{*}\circ \tilde f\in [B,C])\ .
\end{equation}    
 \begin{ddd}
 The image of the map $\ev:[B,C^{X}]\to \Hom_{\Set}(X,[B,C])$ is called the set of   representable maps.
 \end{ddd}

  \begin{ddd}
 The light condensed topology on   $[B,C]$  is defined as the maximal topology such that all representable maps $X\to [B,C]$  from light profinite spaces $X$  are continuous.
  \end{ddd}
We will write $[B,C]^{\cd}$ for the set $[B,C]$ equipped with the light condensed  topology.

\begin{rem}\label{oijrfopqrfewdqwfqfef}
The following is just a reformulation of the definition:
 For any topological space $Y$ a map $[B,C]^{\cd}\to Y$  is continuous if and only if  the composition $X\to [B,C]^{\cd}\to Y$ is continuous for  all light profinite spaces  $X$ and  representable maps $X\to [B,C]$.
\hB
\end{rem}

\begin{rem}
 In general it is not clear that every continuous map $X\to [B,C]^{\cd}$ from a  light profinite space $X$
is representable. But if $\bD$ is stable, and  if $B$ admits a weakly compact approximation (see \cref{jigowergwerfewrfwerf}),  then by \cref{jewrogfwerfrfwf} this is true up to a map with values in strong phantoms.
Indeed, by  \cref{woekrpgerfwrefrewf} the map $X\to  [B,C]^{\sh}$ is also continuous, and  \cref{jewrogfwerfrfwf} provides
the corresponding preimage under the evaluation  \eqref{wergwergerferfwefwefwf} in 
 $[B,C^{X}]$.
 \hB
\end{rem}

 \begin{rem}\label{oijrfopqrfewdqwfqfefseq}
 In order to see that a sequence $(g_{n})_{n\in \nat}$  in $[B,C]^{\cd}$ converges to $g$ in $[B,C]^{\cd}$  
 it suffices to provide  a map $\tilde g$ in $[B,C^{\nat_{+}}]$ with evaluations $g_{n}$ at the points $n$ and
 $g$ at $\infty$. We shall see  in \cref{qreokgprefqewfqewfqd} that if $B$ admits a weakly compact approximation, then the converse is also true, i.,e., if the sequence $(g_{n})_{n\in \nat}$ converges to $g$, then there exists such a map $\tilde g$.
\hB
\end{rem}

    \begin{lem}\label{wetkopgwegrefrewferf} For all $A,B,C$ in $\bD$
  the composition $$[{B},{C}]^{\cd}\times [{A},{B}]^{\cd}\to [A,C]^{\cd}$$ is separately continuous.
  \end{lem}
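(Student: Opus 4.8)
The plan is to prove separate continuity by fixing one argument and checking continuity in the other using the reformulation in \cref{oijrfopqrfewdqwfqfef}: a map out of $[A,C]^{\cd}$ (or into it) is controlled by testing against representable maps from light profinite spaces. First I would fix a morphism $g$ in $[B,C]$ and show that post-composition $g_{*}\colon [A,B]^{\cd}\to [A,C]^{\cd}$ is continuous; then symmetrically fix a morphism $f$ in $[A,B]$ and show that pre-composition $f^{*}\colon [B,C]^{\cd}\to [A,C]^{\cd}$ is continuous. By the universal property of the light condensed topology on the source, it suffices in each case to check that for every light profinite space $X$ and every representable map $X\to [A,B]^{\cd}$ (respectively $X\to [B,C]^{\cd}$) the composite $X\to [A,C]^{\cd}$ is representable, hence continuous.

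For post-composition: a representable map $X\to [A,B]$ is by definition $\ev(\tilde f)$ for some $\tilde f$ in $[A,B^{X}]$. Using the power functor from \eqref{qewfojpqdwqwededqwdqweded}, the morphism $g\colon B\to C$ induces $g^{X}\colon B^{X}\to C^{X}$ (apply $(-)^{X}$ to $g$, or equivalently use functoriality of the Kan-extended power functor in the second variable), and I would set $\widetilde{g\circ f} := g^{X}\circ \tilde f$ in $[A,C^{X}]$. The key compatibility is that for each point $x$ in $X$ the inclusion $i_{x}\colon *\to X$ gives a commuting square relating $i_{x}^{*}\colon B^{X}\to B$ with $g^{X}$ and $g$, so that $i_{x}^{*}\circ(g^{X}\circ\tilde f)\simeq g\circ(i_{x}^{*}\circ\tilde f)$; this is just the statement that $i_{x}^{*}$ is a natural transformation of power functors, or can be extracted from the defining adjunction $\Map(B,C^{X})\simeq\prod_{X}\Map(B,C)$ in the finite case and then Kan-extended. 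Hence $\ev(\widetilde{g\circ f})$ equals the composite $X\to[A,B]\xrightarrow{g_{*}}[A,C]$, so that composite is representable.

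For pre-composition the argument is analogous but uses a different functoriality: given $\tilde g$ in $[B,C^{X}]$ representing $X\to[B,C]$, and $f\colon A\to B$, I would set $\widetilde{g\circ f}:=\tilde g\circ f$ in $[A,C^{X}]$, where here $f$ is precomposed directly (no power functor needed on $f$); again the evaluation identity $i_{x}^{*}\circ(\tilde g\circ f)\simeq(i_{x}^{*}\circ\tilde g)\circ f$ holds trivially, so $\ev(\tilde g\circ f)$ is $x\mapsto (i_{x}^{*}\tilde g)\circ f$, which is exactly $X\to[B,C]\xrightarrow{f^{*}}[A,C]$. Therefore this composite is representable, hence continuous, and we conclude by \cref{oijrfopqrfewdqwfqfef}. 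The main (mild) obstacle is the bookkeeping of the power functor in its second variable — one must check that the left Kan extension in \eqref{qewfojpqdwqwededqwdqweded} is functorial in $C$ and that the point-evaluations $i_{x}^{*}$ remain natural after Kan-extending from $\Fin$ to $\Profinl$; this follows because left Kan extension along $\Fin^{\op}\hookrightarrow(\Profinl)^{\op}$ preserves the relevant natural transformations, and $(-)^{X}=\lim_{i}(-)^{X_{i}}$ for $X=\lim_{i}X_{i}$ makes everything reduce to the finite case where the claims are immediate from the defining adjunction. No deep input is needed — neither stability, nor existence of weakly compact approximations, nor the \pcas{} hypothesis — so separate continuity is essentially formal; only joint continuity (not asserted here) would require the finer analysis of \cref{tgkowpegfwerwggerfwf}.
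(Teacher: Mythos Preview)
Your proposal is correct and takes essentially the same approach as the paper: both arguments fix one factor and use \cref{oijrfopqrfewdqwfqfef} to reduce to showing that a representable map $X\to[\cdot,\cdot]^{\cd}$ composed with $g_{*}$ or $f^{*}$ is again representable, via $g^{X}\circ\tilde f$ and $\tilde g\circ f$ respectively. Your write-up is somewhat more detailed about the functoriality of the power in the second variable, but the underlying argument is identical to the paper's.
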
\begin{proof}
We show continuity in the {left} argument.  
  We fix an element $f$ in $[A,B]$ and must show  that $-\circ f:[B,C]^{\cd}\to [A,C]^{\cd}$ is continuous. 
  To this end we must show that the map
  $X\xrightarrow{\ev(\tilde g)} [B,C]^{\cd}\xrightarrow{-\circ f}[A,C]^{\cd}$ is continuous for every  light profinite spaces  $X$  and $\tilde g$ in  $[B,C^{X}]$.  But this composition is  equal to $\ev(\tilde g\circ f)$ and therefore continuous by definition.

  We now show   continuity in the {right} argument.  
  We fix an element $g$ in $[B,C]$ and must show  that $g\circ -:[A,B]^{\cd}\to [A,C]^{\cd}$ is continuous. 
  To this end we must show that the map
  ${X}\xrightarrow{{\ev(\tilde f)}} [A,B]^{\cd}\xrightarrow{g\circ -}[A,C]^{\cd}$ is continuous for every light profinite spaces $X$  and
  $\tilde f $ in $[A,B^{X}]$. But this composition is equal to $\ev( g^{X}\circ \tilde f)$  and therefore continuous by definition.
     \end{proof}

Let $\phi:C\to D$ be a morphism in $\bD$ and $A,B$ objects.
\begin{lem}\label{wetijgowegegwerf}
The map $-\times \phi:[A,B]^{\cd}\to [A\times C,B\times D]^{\cd}$ is continuous.
\end{lem}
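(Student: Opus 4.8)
The plan is to mimic the proof of \cref{wetkopgwegrefrewferf}. By \cref{oijrfopqrfewdqwfqfef}, the map $-\times \phi:[A,B]^{\cd}\to [A\times C,B\times D]^{\cd}$ is continuous as soon as its precomposition with every representable map $\ev(\tilde f):X\to [A,B]$ (for $X$ a light profinite space and $\tilde f$ in $[A,B^{X}]$) is continuous; and to establish the latter I would show that this precomposition is itself a representable map into $[A\times C,B\times D]$, hence continuous by the very definition of the light condensed topology on the target.

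First I would record the natural equivalence $(B\times D)^{X}\simeq B^{X}\times D^{X}$ for a light profinite space $X$: powers are computed as limits (over the presentation of $X$ by finite sets, combined with the finite product $\prod_{X_{i}}$), and limits commute with the finite product. Then, given $\tilde f$ in $[A,B^{X}]$, I would build the candidate preimage $\tilde g$ in $[A\times C,(B\times D)^{X}]\simeq [A\times C,B^{X}\times D^{X}]$ as the pair $\bigl(\tilde f\circ \pr_{A},\ c^{*}\circ \phi\circ \pr_{C}\bigr)$ under this equivalence, where $\pr_{A},\pr_{C}$ are the projections off $A\times C$ and $c^{*}:D\to D^{X}$ is the constant morphism induced by the unique map $c:X\to *$ of light profinite spaces.

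Next I would check that $\ev(\tilde g)=(-\times\phi)\circ\ev(\tilde f)$ as functions $X\to[A\times C,B\times D]$. Evaluating at a point $x$ in $X$ with inclusion $i_{x}:*\to X$: under $(B\times D)^{X}\simeq B^{X}\times D^{X}$ the evaluation $i_{x}^{*}$ decomposes as $i_{x}^{*}\times i_{x}^{*}$, so $\ev(\tilde g)(x)=\bigl(i_{x}^{*}\circ\tilde f\circ\pr_{A},\ i_{x}^{*}\circ c^{*}\circ\phi\circ\pr_{C}\bigr)$. Since $i_{x}^{*}\circ c^{*}=(c\circ i_{x})^{*}=\id_{D}$ and $i_{x}^{*}\circ\tilde f=\ev(\tilde f)(x)$, this equals $\ev(\tilde f)(x)\times\phi$, which is also the value of $(-\times\phi)\circ\ev(\tilde f)$ at $x$. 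This identifies the precomposition with the representable map $\ev(\tilde g)$ and finishes the argument.

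The proof is entirely formal; I do not expect a genuine obstacle beyond careful bookkeeping with the natural equivalences. The only point requiring a moment's care is that evaluation at a point of $X$ commutes with the product decomposition $(B\times D)^{X}\simeq B^{X}\times D^{X}$ and trivializes the constant factor $D\to D^{X}$, but both facts are immediate from the functoriality of $(-)^{(-)}$ in $(\Profinl)^{\op}$.
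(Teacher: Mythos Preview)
Your proof is correct and follows essentially the same route as the paper's: given a representable map $\ev(\tilde f):X\to[A,B]$, you build the element $\tilde f\times\const(\phi)$ in $[A\times C,B^{X}\times D^{X}]\cong[A\times C,(B\times D)^{X}]$ and observe that it represents $(-\times\phi)\circ\ev(\tilde f)$. The paper records exactly this construction in a single sentence, while you spell out the pointwise verification of $\ev(\tilde g)(x)=\ev(\tilde f)(x)\times\phi$; there is no substantive difference.
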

\begin{proof}
Let $X$ be a light profinite space and $f:X\to [A,B]$ be a continuous map represented by 
$\tilde f$ in $[A,B^{X}]$. Then
the image of $\tilde f\times \const(\phi)$  in
$[A\times C,B^{X}\times D^{X}]\cong [A\times C,(B \times D)^{X}]$
represents the continuous map $f\times \phi$.
 \end{proof}

  Let $f:A\to B$ be a morphism in $\bD$, $C$ be an object of $\bD$, and let $X$ be a light profinite space.\begin{lem}\label{woigpregferferwfref}
 {We assume that $f$ is weakly compact}. If  $X\to [B,C]^{\cd}$ is representable,  then the composition $X\to [B,C]^{\cd}\to [A,C]^{\cd}$
 factorizes over a finite quotient of $X$.   \end{lem}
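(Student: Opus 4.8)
The plan is to reduce the statement to a single application of weak compactness of $f$, using the presentation of the power $C^{X}$ as a countable filtered colimit of finite powers. First I would fix a presentation $X\cong\lim_{i\in I}X_{i}$ with $I$ a countable cofiltered index category and the $X_{i}$ finite discrete sets, with projections $p_{i}\colon X\to X_{i}$ and transition maps compatible with the $p_{i}$. By the definition of the power functor \eqref{qewfojpqdwqwededqwdqweded} (equivalently the formula $B^{X}:=\colim_{i\in I^{\op}}\prod_{X_{i}}B$ from the introduction) one has $C^{X}\simeq\colim_{i\in I^{\op}}C^{X_{i}}$ as a countable filtered colimit in $\bD$, with structure maps $\sigma_{i}:=C^{p_{i}}\colon C^{X_{i}}\to C^{X}$; note that $I^{\op}$ is countable filtered, so $(C^{X_{i}})_{i\in I^{\op}}$ is a genuine system in the sense of \cref{werogjwpegorefkmlerwgwergwerg}. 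By hypothesis the given map $X\to[B,C]^{\cd}$ equals $\ev(\tilde g)$ for some $\tilde g\in[B,C^{X}]$.

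Next I would invoke the reformulation of weak compactness in \cref{jowtogpwgerferfwefewfref}, applied to the system $(C^{X_{i}})_{i\in I^{\op}}$, its colimit $C^{X}$, and the morphism $\tilde g\colon B\to C^{X}$. Since $f$ is weakly compact this produces an index $k\in I$ and a morphism $\tilde h\colon A\to C^{X_{k}}$ with $\sigma_{k}\circ\tilde h\simeq\tilde g\circ f$ in $\bD$. Then I would record the elementary compatibility that, for $x\in X$, the evaluation $\ev_{x}\colon C^{X}\to C$ at $x$ satisfies $\ev_{x}\circ\sigma_{k}=\ev_{p_{k}(x)}$, where $\ev_{p_{k}(x)}\colon C^{X_{k}}\to C$ is evaluation at $p_{k}(x)$ on the finite power; this is immediate from functoriality of $C^{(-)}$ applied to $\{x\}\xrightarrow{i_{x}}X\xrightarrow{p_{k}}X_{k}$, since $p_{k}\circ i_{x}=i_{p_{k}(x)}$. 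Combining these, the composite $X\to[B,C]^{\cd}\xrightarrow{-\circ f}[A,C]^{\cd}$ sends $x$ to the class of $\ev_{x}\circ\tilde g\circ f\simeq\ev_{p_{k}(x)}\circ\tilde h$, which depends only on $p_{k}(x)\in X_{k}$. Hence the composite factors through $p_{k}\colon X\to p_{k}(X)$, a finite quotient of $X$, followed by the map $p_{k}(X)\to[A,C]^{\cd}$, $y\mapsto[\ev_{y}\circ\tilde h]$, which is automatically continuous as $p_{k}(X)$ is finite discrete; continuity of the original composite is in any case already guaranteed by \cref{wetkopgwegrefrewfw}.

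I expect the only delicate point to be the opening bookkeeping: correctly identifying $C^{X}$ with $\colim_{i\in I^{\op}}C^{X_{i}}$, tracking the variance so that weak compactness is applied to an honest countable filtered system, and verifying that evaluations at points of $X$ are compatible with the colimit structure maps $\sigma_{k}$ as asserted. Once that is set up, the argument is a direct application of \cref{jowtogpwgerferfwefewfref} with no further input, and the same scheme is exactly what one later uses in \cref{jewrogfwerfrfwf} and the surrounding discussion of the shape and light condensed topologies.
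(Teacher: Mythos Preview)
Your proof is correct and follows essentially the same approach as the paper's: present $C^{X}$ as the countable filtered colimit $\colim_{i}C^{X_{i}}$, apply weak compactness of $f$ to the representing map $\tilde g:B\to C^{X}$ to obtain a lift $A\to C^{X_{k}}$, and read off the factorization through the finite quotient $X_{k}$. You spell out the evaluation compatibility $\ev_{x}\circ\sigma_{k}=\ev_{p_{k}(x)}$ more explicitly than the paper does, but the underlying argument is the same.
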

\begin{proof}
Let $X\cong \lim_{i\in \nat^{\op}} X_{i}$ be a presentation of $X$ as a limit  of  finite sets with surjective structure maps.
Then $C^{X}\simeq \colim_{i\in \nat} C^{X_{i}}$. We consider the square 
$$\xymatrix{A\ar@{..>}[r]\ar[d]^{f} & C^{X_{i}}\ar[d] \\ B\ar[r] &C^{X} }\ , $$
where $i$ in $\nat$ and the dotted completion exists by the weak compactness of $f$.
This means that $ X\to [A,C]$ factorizes over the finite quotient  $X\to X_{i}$. 
 \end{proof}

 For the following we assume that $\bD$ is pointed and recall \cref{ijwqfofjewfqedfq9} of a strong phantom map.

 \begin{lem}\label{roeigwprefrewfrewfwrfv}
 The  
  kernel of  $\ev:[B,C^{X}]\to  \Hom_{\Set}(X,[B,C])$   consists of strong phantom maps. 
  
 \end{lem}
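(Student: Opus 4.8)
The claim is that any element $\xi \in [B,C^X]$ lying in the kernel of the evaluation map $\ev\colon [B,C^X]\to \Hom_{\Set}(X,[B,C])$ represents a strong phantom map $B\to C^X$, i.e.\ composing it with any weakly compact $f\colon A\to B$ yields the zero morphism. First I would fix a weakly compact morphism $f\colon A\to B$ and the class $\xi$, and present $X$ as a limit $X\cong \lim_{i\in\nat^{\op}} X_i$ of finite sets with surjective structure maps, so that $C^X\simeq \colim_{i\in\nat} C^{X_i}$ in $\bD$. Since $f$ is weakly compact, by \cref{jowtogpwgerferfwefewfref} (applied to the system $(C^{X_i})_{i\in\nat}$ with colimit $C^X$) the composition $\xi\circ f\colon A\to B\to C^X$ factors through some finite stage: there is $i$ in $\nat$ and a map $g\colon A\to C^{X_i}$ with $\xi\circ f$ equivalent to the composite $A\xrightarrow{g} C^{X_i}\to C^X$.

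Next I would use the hypothesis $\ev(\xi)=0$, which says that for each point $x$ of $X$ the composite $i_x^*\circ\xi\colon B\to C$ is null. The key observation is that the finite product $C^{X_i}$ is detected by its $|X_i|$ projections $\pr_y\colon C^{X_i}\to C$ for $y\in X_i$, and these projections, composed with the structure map $C^{X_i}\to C^X$, correspond (via $X\to X_i$) to the evaluations at the points of $X$ lying over $y$. Concretely, for a point $x$ of $X$ with image $\bar x$ in $X_i$, the composite $A\xrightarrow{g}C^{X_i}\xrightarrow{\pr_{\bar x}}C$ equals $i_x^*\circ\xi\circ f = (i_x^*\circ\xi)\circ f$, which is null because $i_x^*\circ\xi\simeq 0$. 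Since the structure map $X\to X_i$ is surjective, every point $y$ of the \emph{finite} set $X_i$ is hit by some point $x$ of $X$, so every projection $\pr_y\circ g\colon A\to C$ is null. Because $C^{X_i}$ is a finite product, a map into it is null if and only if all its coordinate projections are null; hence $g\simeq 0$, and therefore $\xi\circ f\simeq 0$.

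Finally, since $f$ was an arbitrary weakly compact morphism, this shows that $\xi$ (viewed as a morphism $B\to C^X$) is a strong phantom map in the sense of \cref{ijwqfofjewfqedfq9}, which is exactly the assertion. I would phrase the last step carefully: what we have shown is that the morphism underlying $\xi$ annihilates all weakly compact maps, which is the definition of a strong phantom.

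\textbf{Main obstacle.} The only subtle point is the interchange between ``$\ev(\xi)=0$'' (a statement about $\pi_0$ of mapping spaces, i.e.\ about the composites $i_x^*\circ\xi$ up to homotopy) and the need to conclude that the factored map $g\colon A\to C^{X_i}$ is genuinely null as a morphism in $\bD$, not merely that each of its components vanishes on $\pi_0$. Here one must be slightly careful: $g$ is a single map into the finite product $C^{X_i}$, and a map into a finite product is null precisely when each coordinate is null, where ``null'' means equivalent to the zero morphism; since $X_i$ is finite this is a finite collection of conditions and presents no homotopy-coherence difficulty. The vanishing of each coordinate follows because $\pr_y\circ g\simeq i_x^*\circ\xi\circ f$ and $i_x^*\circ\xi\simeq 0$ gives $i_x^*\circ\xi\circ f\simeq 0$. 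So the argument goes through cleanly; the ``hard part'' is just making sure the finite-product detection and the surjectivity of $X\to X_i$ are invoked in the right order.
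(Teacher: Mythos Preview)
Your proof is correct and follows essentially the same approach as the paper's: present $X$ as a limit of finite sets with surjective structure maps, use weak compactness of $f$ to factor $\xi\circ f$ through a finite stage $C^{X_i}$, then use surjectivity of $X\to X_i$ together with the hypothesis $i_x^*\xi\simeq 0$ to kill each coordinate of the factored map, and finally invoke that a map into a finite product is detected by its components. Your discussion of the ``main obstacle'' is also well-placed and matches the paper's implicit use of the isomorphism $[A,C^{X_i}]\cong\prod_{x_i\in X_i}[A,C]$.
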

\begin{proof}
 Let $g$ be in $[B,C^{X}]$ and assume that $\ev(g)=0$. Then 
$i^{*}_{x}(g)=0$ for all $X$ in $x$. 
Let $f:A\to B$ be a weakly compact morphism.
We must show that $A\xrightarrow{f} B\xrightarrow{g}C$ vanishes.

Let $X\cong \lim_{i\in \nat^{\op}} X_{i}$ be a presentation of $X$
 as a countable limit of finite sets.  We can assume without loss of generality that the structure maps $X \to X_i$ are surjective for all $i$ in $\nat$. 
  We   have  an equivalence $C^{X}\simeq \colim_{i\in \nat} C^{X_{i}}$. Since $f$ is weakly compact 
we have a factorization
$$\xymatrix{A\ar@{..>}[r]^{h}\ar[d]^{f} &C^{X_{i}} \ar[d] \\B \ar[r]^{g} & C^{X}}$$  for some $i$ in $I$.  Let $ i_{x_{i}}:*\to X_{i}$ be the inclusion of some point. 
{By assumption there} exists a preimage $x$ of $x_{i}$ under $X\to X_{i}$. We then have 
  $0=i_{x}^{*}(g\circ f)=i_{x_{i}}^{*}h$.  Since $ ( i_{x_{i}}^{*})_{x_{i}\in X_{i}}:[A,C^{X_{i}}]\stackrel{\cong}{\to} \prod_{x_{i}\in X_{i}} [A,C]$ is an isomorphism
 we can thus conclude that $h =0$.
 This implies $g\circ f= 0$. 
  \end{proof}

 \begin{rem}
 One could ask under which conditions the map $[B,C^{X}]\to \Hom_{\Set}(X,[B,C])$ in \eqref{wergwergerferfwefwefwf}  is injective.
 This is clearly the case when $X$ is finite. But already in the case of  $X=\nat_{+}$ we get the map $[B,  C^{\nat_{+}}]\to \prod_{ x\in \nat_{+}}[B,C]$.  For giving a counter example  we assume that $\bD$ is stable and admits countable products. 
 Then  $\bigoplus_{x\in X} {C}$ is a summand in $C^{\nat_{+}}$.
 If ${B}$ is not a compact object in $\bD$, then we can not exclude the possibility of having a non-zero map ${B}\to 
 \bigoplus_{n\in \nat} C$ whose components are all zero. Consider e.g. the boundary map
 $\Omega (\prod_{n\in \nat}C/\bigoplus_{n\in \nat}C)\to \bigoplus_{n\in \nat}C$ of the fibre sequence
 extending the canonical map $ \bigoplus_{n\in \nat}C\to \prod_{n\in \nat}C$.   \hB
 
 \end{rem}

 For the rest of this section we  assume that $\bD$ is stable. Then  we have abelian groups $[A,B]$ and \cref{wetkopgwegrefrewferf} has the following consequence.

  \begin{kor}\label{wetkopgwegrefrewferf1} 
 The addition $[A,B]^{\cd}\times [A,B]^{\cd}\to [A,B]^{\cd}$ is separately continuous.
 \end{kor}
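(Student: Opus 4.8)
The plan is to deduce this from the already-established separate continuity of composition in \cref{wetkopgwegrefrewferf}, together with the product-compatibility in \cref{wetijgowegegwerf}. Since $\bD$ is stable, the finite product $B\times B$ is simultaneously a finite coproduct, so there is a fold map $\nabla\colon B\times B\to B$ whose two coordinates are $\id_{B}$, and we also have the diagonal $\Delta\colon A\to A\times A$. The key identity I would record first is that for any two morphisms $f,g\colon A\to B$ one has $f+g\simeq\nabla\circ(f\times g)\circ\Delta$ in the homotopy category: the morphism $(f\times g)\circ\Delta\colon A\to B\times B$ has coordinates $f$ and $g$, and postcomposition with $\nabla$ adds the coordinates. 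This is the $\infty$-categorical version of the usual computation in additive categories and uses precisely that finite products and finite coproducts agree in $\bD$.

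Granting this identity, I would fix $g$ in $[A,B]$ and write the map $f\mapsto f+g$ as a composite of three maps, each continuous for the light condensed topologies. First, $f\mapsto f\times g$ is a continuous map $[A,B]^{\cd}\to[A\times A,B\times B]^{\cd}$ by \cref{wetijgowegegwerf} (applied with $\phi=g$, $C=A$, $D=B$). Second, precomposition with the fixed morphism $\Delta$ gives a continuous map $[A\times A,B\times B]^{\cd}\to[A,B\times B]^{\cd}$ by continuity of composition in the left variable, \cref{wetkopgwegrefrewferf}. Third, postcomposition with the fixed morphism $\nabla$ gives a continuous map $[A,B\times B]^{\cd}\to[A,B]^{\cd}$ by continuity of composition in the right variable, \cref{wetkopgwegrefrewferf}. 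The composite of these three is exactly $f\mapsto f+g$, so addition is continuous in the first argument; by commutativity (or by symmetry of the whole argument) it is then continuous in the second argument as well.

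I do not expect any real obstacle here. The only step needing a little care is the verification of $f+g\simeq\nabla\circ(f\times g)\circ\Delta$ at the level of the stable $\infty$-category, which is the $\infty$-categorical incarnation of the standard fact in additive categories; it is also worth noting that stability is exactly what provides both the fold map $\nabla$ and the abelian-group structure on $[A,B]$ whose separate continuity is being asserted.
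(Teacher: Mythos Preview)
Your proof is correct and follows essentially the same approach as the paper: factor addition as $\mathrm{fold}\circ(f\times g)\circ\mathrm{diag}$ and invoke \cref{wetijgowegegwerf} and \cref{wetkopgwegrefrewferf}. The paper takes a slightly longer route by building $f\oplus g$ as the composite $(f\oplus\id_{B})\circ(\id_{A}\oplus g)$ before pre- and post-composing with the diagonal and fold, but your direct application of \cref{wetijgowegegwerf} to obtain $f\mapsto f\times g$ in one step is a harmless simplification.
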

 \begin{proof}
 The sum is given by the composition
 \begin{eqnarray}\label{korpwegwregfefwef}
[A,B]^{\cd}\times [A,B]^{\cd}&\stackrel{(-\oplus \id_{B})\times \id_{[A,B]^{\cd}}}{\to}&[A\oplus  B,B\oplus  B]^{\cd}\times [A,B]^{\cd}\\&\stackrel{\id_{[A\oplus  B,B\oplus  B]^{\cd}}\times (\id_{A}\oplus - )}{\to}&
[A\oplus  B,B\oplus  B]^{\cd}\times [A\oplus  A ,A\oplus  B]^{\cd}\nonumber\\
&\stackrel{\circ}{\to}& [A\oplus  A,B\oplus  B]^{\cd}\nonumber\\&\stackrel{\mathrm{fold}\circ - \circ  \diag}{\to}&[A,B]^{\cd}\nonumber\ .
\end{eqnarray} {We conclude}
%separately continuous 
by a multiple application of \cref{wetijgowegegwerf} and \cref{wetkopgwegrefrewferf}.
 \end{proof}

\begin{rem}In view of 
  \cref{wetkopgwegrefrewferf} and \cref{wetkopgwegrefrewferf1} 
 one could ask whether the composition or addition is continuous.
The difficulty consists of 
 understanding continuous maps out of the product. One can check that the composition
$X\to [B,C]^{\cd}\times [A,B]^{\cd}\xrightarrow{\circ} [A,C]^{\cd}$ is continuous if the first map has representable
components. But this does not {suffice} to conclude continuity of $\circ$.
As a consequence we do not know whether $[B,C]^{\cd}$ is a topological group in general. 
 \hB
\end{rem}

Recall the notion of a summable sequence from \cref{erijogoewrpgfrefw1}.\ref{erijogoewrpgfrefw}.
We assume that $\bD$ is stable.
 \begin{lem}\label{jirofwfwrefwrfwrefw}
 If $(g_{n})_{n\in \nat}$ is a summable sequence in $[B,C]$, then $\lim_{n\to \infty} g_{n}=0$ in $[B,C]^{\cd}$.
 \end{lem}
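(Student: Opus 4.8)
The statement to prove is: if $(g_{n})_{n\in \nat}$ is a summable sequence in $[B,C]$, then $\lim_{n\to \infty} g_{n}=0$ in $[B,C]^{\cd}$. The plan is to produce, from a witness of summability, a single morphism in $[B,C^{\nat_{+}}]$ whose evaluation recovers the sequence $(g_{n})_{n}$ at the finite points and $0$ at $\infty$; by \cref{oijrfopqrfewdqwfqfefseq} this is exactly what is needed to conclude that $(g_{n})_{n\in \nat}$ converges to $0$ in the light condensed topology.

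First I would unpack the definitions. Summability of $(g_{n})_{n\in \nat}$ means there is a morphism $\hat g:B\to \bigoplus_{n\in \nat} C$ whose $n$-th component is $g_{n}$. On the other hand, recall from \cref{koergperfrefqfrfq} that $\nat_{+}\cong \lim_{i\in \nat^{\op}}\{0,\dots,i,\infty\}$, so by the definition \eqref{qewfojpqdwqwededqwdqweded} of the power we have
$$C^{\nat_{+}}\simeq \colim_{i\in \nat}\, C^{\{0,\dots,i,\infty\}}\simeq \colim_{i\in \nat}\Big(\bigoplus_{n=0}^{i} C\ \oplus\ C\Big)\ ,$$
where the transition map $C^{\{0,\dots,i,\infty\}}\to C^{\{0,\dots,i+1,\infty\}}$ sends the $\infty$-coordinate to (the $(i{+}1)$-st coordinate plus) the $\infty$-coordinate — concretely, it is the inclusion $\bigoplus_{n=0}^{i}C\hookrightarrow \bigoplus_{n=0}^{i+1}C$ on the first block together with the identity (possibly composed with the summand inclusion) on the last factor. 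Passing to the colimit and using that the last factor is carried identically along all transition maps, one obtains a canonical splitting $C^{\nat_{+}}\simeq \bigoplus_{n\in \nat} C\ \oplus\ C$, where the first summand collects the "finite" coordinates and the second summand is the "value at $\infty$"; the evaluation map \eqref{wergwergerferfwefwefwf} at $n$ is projection to the $n$-th summand of $\bigoplus_{n}C$, and at $\infty$ it is projection to the last $C$.

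With this identification in hand, the morphism I want is simply
$$\tilde g\ :=\ (\hat g, 0)\ :\ B\ \longrightarrow\ \bigoplus_{n\in \nat} C\ \oplus\ C\ \simeq\ C^{\nat_{+}}\ ,$$
i.e. $\hat g$ into the finite part and the zero map into the value-at-$\infty$ part. Then $i_{n}^{*}\tilde g = \pr_{n}\circ\hat g = g_{n}$ for each $n\in \nat$, and $i_{\infty}^{*}\tilde g = 0$. By \cref{oijrfopqrfewdqwfqfefseq}, the existence of such a $\tilde g$ in $[B,C^{\nat_{+}}]$ shows that the sequence $(g_{n})_{n\in \nat}$ converges to $0$ in $[B,C]^{\cd}$, which is the assertion.

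\textbf{Main obstacle.} The routine but slightly delicate point is making the identification $C^{\nat_{+}}\simeq \bigoplus_{n\in \nat}C\oplus C$ precise and checking that under it the point-evaluations $i_{n}^{*}$ and $i_{\infty}^{*}$ are the coordinate projections claimed — in particular that the value-at-$\infty$ coordinate really survives the colimit as a single stable summand rather than getting mixed into the sequence coordinates. This is a direct computation with the pro-system \eqref{eoirthjerpthetrge} defining $\nat_{+}$ and the fact that finite products are finite coproducts in the stable (hence semi-additive) $\infty$-category $\bD$ and that $\otimes$/products distribute over the filtered colimit; I would spell it out only to the extent of identifying the transition maps and the induced splitting, and otherwise leave the bookkeeping to the reader.
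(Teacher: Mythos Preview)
Your proof is correct and follows essentially the same idea as the paper's. The paper is slightly more economical: it simply records the fibre sequence $\bigoplus_{\nat}C\xrightarrow{i} C^{\nat_{+}}\xrightarrow{i_{\infty}^{*}} C$ and sets $\tilde g:=i\circ \hat g$, so that $i_{\infty}^{*}\tilde g=0$ and $i_{n}^{*}\tilde g=g_{n}$ follow immediately --- thereby sidestepping the ``main obstacle'' you identify (your splitting does hold, since the constant inclusion $C\simeq C^{*}\to C^{\nat_{+}}$ is a section of $i_{\infty}^{*}$, but one does not need it).
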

\begin{proof}
We have a fibre sequence 
\begin{equation}\label{advsiojioqefasdvadsvs}\bigoplus_{\nat}C\xrightarrow{i} C^{\nat_{+}}\xrightarrow{i_{\infty}^{*}} C\ .
\end{equation}
Let  $\hat g:B\to \bigoplus_{\nat}C$ witness the summability of the sequence.
 We then get a map $\tilde g:=i\circ \hat g:B\to C^{\nat_{+}}$ which vanishes at $\infty$. 
 \end{proof}

 \begin{rem} We will see in \cref{rjiegowfdwewdqf} that the converse  is also true provided $B$ admits a weakly compact approximation.  \hB \end{rem}

\subsection{The shape topology}\label{werjigowegerfwerfwrefw}

Let $\bD$ an $\infty$-category with countable filtered colimits. 
 In this section we introduce
 the shape topology on the  $\Hom$-sets $[B,C]$ of $\bD$ as the minimal topology such that
 the maps $[B,C]\to [A,C]^{\disc}$ become continuous for all weakly compact maps $A\to B$.
  This topology is very well-behaved and coincides with the light condensed topology provided $\bD$ has finite products and   $B$ has a weakly compact approximation, see \cref{jigowergwerfewrfwerf}.
Similarly, many further assertions shown in this section depend on the assumption that certain objects admit weakly compact approximations.     But 
 note that if $\bD$ is compactly assembled, then every object of $\bD$ has a weakly compact approximation.

For objects $A,C$ of $\bD$ we write $[A,C]^{\disc}$ in order to stress that we consider this set with the discrete topology. 

Let $B,C$ be two objects of $\bD$.
 \begin{ddd}
 The  shape topology on the set $[B,C]$ is the minimal topology  such that
 the map $[B,C]^{\sh}\to [A,C]^{\disc}$ is continuous  for every weakly compact map $A\to B$.
 \end{ddd}
\begin{rem}\label{okqrepgqdedqwe}
The following is just a reformulation of the definition:
 For any topological space $X$ a map $X\to [C,B]^{\sh} $  is continuous if the composition $X\to [B,C]^{\sh}\to [A,C]^{\disc}$ is    continuous for every weakly compact map $A\to B$.
\hB \end{rem}
\begin{lem} \label{rqjifoergstgsgfgfg}
Assume that $(B_{n})_{n\in \nat}$ is a weakly compact approximation of $B$. Then the shape topology
on $[B,C]$ is the minimal topology such that the maps $[B,C]^{\sh}\to [B_{n},C]^{\disc}$ are continuous.
\end{lem}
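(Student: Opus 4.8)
The plan is to identify both topologies as initial (weak) topologies with respect to nested families of maps and to compare them. Write $\tau$ for the shape topology on $[B,C]$, i.e.\ the minimal topology making $[B,C]\to [A,C]^{\disc}$ continuous for every weakly compact $f\colon A\to B$, and write $\tau'$ for the minimal topology making the maps $[B,C]\to [B_{n},C]^{\disc}$ continuous for all $n$ in $\nat$.

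First I would check the inclusion $\tau'\subseteq\tau$. By \cref{jigowergwerfewrfwerf} the structure maps $B_{n}\to B$ of the weakly compact approximation $(B_{n})_{n\in\nat}$ are weakly compact, so the maps $[B,C]^{\sh}\to [B_{n},C]^{\disc}$ are continuous directly from the definition of the shape topology. Since $\tau'$ is by definition the coarsest topology with this property, we get $\tau'\subseteq\tau$.

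The substantive step is the reverse inclusion $\tau\subseteq\tau'$, and it rests on the factorization property of weakly compact maps. Given an arbitrary weakly compact map $f\colon A\to B$, I would apply \cref{jowtogpwgerferfwefewfref} to the system $(B_{n})_{n\in\nat}$ with $\colim_{n\in\nat}B_{n}\simeq B$ and bottom edge the identity of $B$, obtaining some $k$ in $\nat$ and a map $A\to B_{k}$ such that $f$ is the composite $A\to B_{k}\to B$ (the second arrow being the structure map). Precomposition then gives a factorization of $f^{*}$ as
$$[B,C]\longrightarrow [B_{k},C]^{\disc}\longrightarrow [A,C]^{\disc}\ ,$$
in which the first map is $\tau'$-continuous by the definition of $\tau'$ and the second is automatically continuous between discrete spaces. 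Hence every map $[B,C]\to[A,C]^{\disc}$ indexed by a weakly compact $f\colon A\to B$ is $\tau'$-continuous, and since $\tau$ is the coarsest topology with this property we conclude $\tau\subseteq\tau'$. Combining the two inclusions yields $\tau=\tau'$, which is the assertion.

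I do not expect a real obstacle here; the only point requiring a moment's care is that the completion supplied by \cref{jowtogpwgerferfwefewfref} is genuinely a factorization of $f$ itself through the structure map $B_{k}\to B$ (not merely of the underlying objects), but this is exactly what that remark provides when the bottom edge of the square is taken to be $\id_{B}$.
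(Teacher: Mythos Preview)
Your proof is correct and follows essentially the same approach as the paper: both arguments establish the two inclusions of topologies, using in the nontrivial direction the factorization of an arbitrary weakly compact map $f:A\to B$ through some $B_{k}$ via \cref{jowtogpwgerferfwefewfref}, and then factoring $f^{*}$ accordingly. The only difference is cosmetic: you spell out explicitly why $\tau'\subseteq\tau$ (the structure maps $B_{n}\to B$ are weakly compact by definition of a weakly compact approximation), whereas the paper simply declares this inclusion clear.
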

\begin{proof}
Let $[B,C]^{\mathrm{ap}}$ denote the set $[B,C]$ equipped with the minimal topology such that the maps $[B,C]^{\sh}\to [B_{n},C]^{\disc}$ are continuous for all $n$ in $\nat$.
It is clear that the identity  map $[B,C]^{\sh}\to [B,C]^{\mathrm{ap}}$ is continuous.

Assume now that $f:A\to B$ is a weakly compact map. Then there exists a factorization
$$\xymatrix{A\ar[rd]^{f}\ar@{..>}[rr]^{\tilde f}&& B_{n}\ar[dl]\\&B&}$$
for some $n$ in $\nat$. We get a factorization 
$$[B,C]^{\mathrm{ap}}\to [B_{n},C]^{\disc}  \xrightarrow{(\tilde f)^\ast}  [A,C]^{\disc}$$
 of $f^{*}$ as a composition of two continuous maps.
This shows that the identity  map $[B,C]^{\mathrm{ap}}\to [B,C]^{\sh}$ is also continuous.
\end{proof}

\begin{kor} \label{wqefu0dewdewdqwdqewd}
If $B$    admits a weakly compact approximation, then  for every $C$ in $\bD$  {the} topological space $[B,C]^{\sh}$ has countable neighbourhood bases.
\end{kor}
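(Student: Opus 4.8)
The plan is to reduce everything to \cref{rqjifoergstgsgfgfg}, which identifies the shape topology on $[B,C]$ with the initial topology with respect to the countable family of restriction maps $f_n\colon [B,C]^{\sh}\to [B_n,C]^{\disc}$ induced by a fixed weakly compact approximation $(B_n)_{n\in\nat}$ of $B$. Since each target carries the discrete topology, a subbasis for this initial topology is given by the sets $f_n^{-1}(S)$ for $n$ in $\nat$ and arbitrary $S\subseteq [B_n,C]$, and a basis by their finite intersections. The key additional observation I would invoke is that the $f_n$ assemble into a tower: if $s_n\colon B_n\to B_{n+1}$ denotes the structure map of the system, then the colimit coprojection $B_n\to B$ factors as $(B_{n+1}\to B)\circ s_n$, whence $f_n=s_n^{\ast}\circ f_{n+1}$; in particular $f_n$ factors through $f_{n+1}$.

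Concretely, I would fix $\phi$ in $[B,C]$ and set $V_n:=f_n^{-1}(\{f_n(\phi)\})$, which is an open neighbourhood of $\phi$ in $[B,C]^{\sh}$ precisely because $[B_n,C]^{\disc}$ is discrete. The factorization $f_n=s_n^{\ast}\circ f_{n+1}$ shows $V_{n+1}\subseteq V_n$, so $(V_n)_{n\in\nat}$ is a decreasing chain of open neighbourhoods of $\phi$. Now any basic open neighbourhood of $\phi$ is a finite intersection $\bigcap_{k=1}^{m}f_{n_k}^{-1}(S_k)$ with $f_{n_k}(\phi)\in S_k$; shrinking each $S_k$ to $\{f_{n_k}(\phi)\}$ only makes the set smaller, and a finite intersection of the resulting sets $V_{n_k}$ equals $V_{\max_k n_k}$. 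Hence $\{V_n : n\in\nat\}$ is a neighbourhood basis of $\phi$, and it is countable, so $[B,C]^{\sh}$ is first countable.

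I do not expect a serious obstacle here: once \cref{rqjifoergstgsgfgfg} is available the statement is a short formal consequence. The only two points that require a moment's care are (i) getting the variance right when asserting that $f_n$ factors through $f_{n+1}$ — this is exactly the statement that restriction along the coprojection $B_n\to B$ is the composite of restriction along $B_{n+1}\to B$ followed by restriction along $s_n$ — and (ii) using the discreteness of the spaces $[B_n,C]^{\disc}$ to ensure that the singleton preimages $V_n$ are genuinely open, which is what upgrades the countable family $(V_n)$ from a mere cofinal family of open sets to an actual neighbourhood basis.
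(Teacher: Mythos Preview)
Your proposal is correct and takes essentially the same approach as the paper, which states the corollary without proof as an immediate consequence of \cref{rqjifoergstgsgfgfg}. Your argument simply makes explicit why the initial topology for countably many maps into discrete targets is first countable; the tower observation $f_n=s_n^{\ast}\circ f_{n+1}$ is a pleasant refinement yielding a linearly ordered basis, though even without it the finite intersections $\bigcap_{n\in F}f_n^{-1}(\{f_n(\phi)\})$ over finite $F\subseteq\nat$ already give a countable neighbourhood basis.
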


\begin{prop}\label{worpjegperfrfwefewrf} If  $B$  admits a weakly compact approximation,
then the composition $$[A,B]^{\sh}\times [B,C]^{\sh}\to [A,C]^{\sh}$$ is continuous.
\end{prop}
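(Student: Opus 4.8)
The plan is to use the reformulation \cref{okqrepgqdedqwe}: the composition $[A,B]^{\sh}\times[B,C]^{\sh}\xrightarrow{\circ}[A,C]^{\sh}$ is continuous if and only if for every weakly compact map $f\colon A'\to A$ the composite
$$[A,B]^{\sh}\times[B,C]^{\sh}\xrightarrow{\circ}[A,C]^{\sh}\xrightarrow{f^{*}}[A',C]^{\disc}$$
is continuous. Since $[A',C]^{\disc}$ is discrete, this amounts to showing that for each $\psi\in[A',C]$ the set $\Sigma_{\psi}:=\{(h,g)\in[A,B]\times[B,C] : g\circ h\circ f=\psi\}$ is open in $[A,B]^{\sh}\times[B,C]^{\sh}$.

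So fix a point $(h_{0},g_{0})\in\Sigma_{\psi}$, and let $(B_{n})_{n\in\nat}$ be a weakly compact approximation of $B$ with structure maps $\beta_{n}\colon B_{n}\to B$. First I would observe that $w:=h_{0}\circ f\colon A'\to B$ is weakly compact, since the class of weakly compact morphisms is a two-sided ideal. Applying \cref{jowtogpwgerferfwefewfref} to the system $(B_{n})_{n\in\nat}$ with colimit $B$ and to the identity map $B\to B$ then yields some $n_{0}\in\nat$ and a map $u\colon A'\to B_{n_{0}}$ with $\beta_{n_{0}}\circ u\simeq w=h_{0}\circ f$. The key point is that we factor $h_{0}\circ f$, not $f$, through the approximation; this is precisely where the hypothesis that $B$ admits a weakly compact approximation enters, together with the ideal property.

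Now I would assemble the desired product neighbourhood from this common factorisation datum $(n_{0},u)$. Since $f$ is weakly compact, the map $[A,B]^{\sh}\to[A',B]^{\disc}$, $h\mapsto h\circ f$, is continuous by definition of the shape topology, so $U:=\{h\in[A,B] : h\circ f=h_{0}\circ f\}$ is open and contains $h_{0}$. Since $\beta_{n_{0}}\colon B_{n_{0}}\to B$ is a structure map of a weakly compact approximation, it is weakly compact (cf.\ \cref{jigowergwerfewrfwerf}), so the map $[B,C]^{\sh}\to[B_{n_{0}},C]^{\disc}$, $g\mapsto g\circ\beta_{n_{0}}$, is continuous and $V:=\{g\in[B,C] : g\circ\beta_{n_{0}}=g_{0}\circ\beta_{n_{0}}\}$ is open and contains $g_{0}$. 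For any $(h,g)\in U\times V$ one computes
$$g\circ h\circ f=g\circ h_{0}\circ f=g\circ\beta_{n_{0}}\circ u=g_{0}\circ\beta_{n_{0}}\circ u=g_{0}\circ h_{0}\circ f=\psi ,$$
so $U\times V\subseteq\Sigma_{\psi}$. Hence $\Sigma_{\psi}$ is open, which proves the claim. There is no genuine obstacle here beyond correctly organising the factorisation so that the same pair $(n_{0},u)$ is used to build both factors of the neighbourhood, which is what makes the final equality hold strictly.
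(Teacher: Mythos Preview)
Your proof is correct and essentially identical to the paper's: both use \cref{okqrepgqdedqwe} to reduce to showing that the composite into $[A',C]^{\disc}$ is locally constant for each weakly compact $A'\to A$, then factor the weakly compact map $A'\to A\xrightarrow{h_0}B$ through a member $B_{n_0}\to B$ of the approximation, and use the resulting pair of open neighbourhoods $U\times V$ (determined by $f^*$ and $\beta_{n_0}^*$ being locally constant) on which the composition is constant. The only differences are notational.
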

\begin{proof}
 In view of \cref{okqrepgqdedqwe}, for every weakly compact map
  $A'\to A$ we must show that the restriction
$$[A,B]^{\sh}\times [B,C]^{\sh}\to [A,C]^{\sh}\to [A',C]$$ is locally constant.
Let $(f,g)$ be a point in  $[A,B]^{\sh}\times [B,C]^{\sh}$. Then there exists a neighbourhood $U_{f}$ of $f$
on which $[A,B]^{\sh}\to [A',B]$ is constant with value $f_{|A'}$.  Since $B$ admits a weakly compact approximation
we can choose a factorization
 $$ \xymatrix{A'\ar[r]^{f'}\ar[d] & B'\ar[d] \\A \ar[r]^{f} & B} $$
 for some weakly compact map $B'\to B$. There exists a neighbourhood $V_{g}$ of $g$
  on which $[B,C]\to [B',C]$ is constant with value $g_{|B'}$.
  It follows that the composition map  is constant on the   neighbourhood   $U_{f}\times V_{g}$ of the point $(f,g)$.
\end{proof}

Let $\phi:C\to D$ be a morphism in $\bD$ and $A,B$ objects.
\begin{lem}\label{wetijgowegegwerf1}
The map $-\times \phi:[A,B]^{\sh}\to [A\times C,B\times D]^{\sh}$ is continuous.
\end{lem}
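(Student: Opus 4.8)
\textbf{Proof plan for \cref{wetijgowegegwerf1}.}
The statement to prove is that for a morphism $\phi:C\to D$ and objects $A,B$ in $\bD$, the map
$-\times \phi:[A,B]^{\sh}\to [A\times C,B\times D]^{\sh}$ is continuous. Recall by \cref{okqrepgqdedqwe} that continuity of a map into $[A\times C,B\times D]^{\sh}$ amounts to checking that, for every weakly compact map $E\to A\times C$, the composition $[A,B]^{\sh}\to [A\times C,B\times D]^{\sh}\to [E,B\times D]^{\disc}$ is continuous, i.e.\ locally constant, since the target is discrete.

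\textbf{Main idea and first step.} Fix a weakly compact morphism $g:E\to A\times C$. Composing with the projection $\pr_{A}:A\times C\to A$ gives a weakly compact morphism $g_{A}:=\pr_{A}\circ g:E\to A$ (weakly compact maps form a two-sided ideal, so this is weakly compact). By definition of the shape topology on $[A,B]^{\sh}$, the map $(g_{A})^{*}:[A,B]^{\sh}\to [E,B]^{\disc}$ is continuous. Now I claim the map in question factors through $(g_{A})^{*}$. Indeed, for $f\in [A,B]$, the element $(f\times\phi)\circ g\in [E,B\times D]$ has components $(\pr_{B}\circ(f\times\phi)\circ g,\ \pr_{D}\circ(f\times\phi)\circ g)$ under the canonical isomorphism $[E,B\times D]\cong [E,B]\times[E,D]$ (a product in $\bD$ gives a product of $\Hom$-sets, hence of discrete topological spaces). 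The first component equals $f\circ\pr_{A}\circ g = (g_{A})^{*}(f)$, and the second component equals $\phi\circ\pr_{C}\circ g$, which does \emph{not} depend on $f$ at all. Therefore the composite $[A,B]^{\sh}\xrightarrow{-\times\phi}[A\times C,B\times D]^{\sh}\xrightarrow{g^{*}}[E,B\times D]^{\disc}$ is the map $f\mapsto \big((g_{A})^{*}(f),\ \phi\circ\pr_{C}\circ g\big)$, i.e.\ it is $(g_{A})^{*}$ followed by the inclusion of the slice $[E,B]^{\disc}\times\{\phi\circ\pr_{C}\circ g\}\hookrightarrow [E,B\times D]^{\disc}$, which is continuous (the target being discrete). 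A composition of continuous maps is continuous, so this composite is continuous, and since $g$ was an arbitrary weakly compact map into $A\times C$, we conclude by \cref{okqrepgqdedqwe} that $-\times\phi$ is continuous.

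\textbf{Where care is needed.} The one subtlety is the identification $[E,B\times D]\cong [E,B]\times[E,D]$ as topological spaces in the discrete topology; this is immediate since a product in $\bD$ induces a product of mapping spaces, hence a product of $\pi_{0}$'s, and both sides carry the discrete topology. The other point worth spelling out is that $g_{A}=\pr_A\circ g$ is weakly compact: this uses that the class of weakly compact maps is a two-sided ideal, stated right after \cref{qrjiofoqdedeqw}. I do not expect any genuine obstacle here; the argument is a direct variant of the proof of \cref{wetijgowegegwerf} in the condensed setting, with ``representable'' replaced by the defining property of the shape topology. A brief remark comparing the two proofs (the light condensed version used a lift $\tilde f\times\const(\phi)$; here we instead observe the factorization through a weakly compact map) could be included but is not necessary.
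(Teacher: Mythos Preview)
Your proof is correct and is essentially identical to the paper's argument. The paper writes the weakly compact map as $(s,t):E\to A\times C$, notes that $s$ is weakly compact, and records the same factorization via the commutative square with bottom arrow $(-,t^{*}\phi):[E,B]^{\disc}\to [E,B\times D]^{\disc}$; your $g_{A}$ is their $s$ and your constant second component $\phi\circ\pr_{C}\circ g$ is their $t^{*}\phi$.
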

\begin{proof}
Let $(s ,t):E\to A\times C$ be a weakly compact morphism. Then $s$ is weakly compact and we have a commutative square
$$\xymatrix{[A,B]^{\sh}\ar[r]^-{-\times \phi}\ar[d]^{s^{*}}&[A\times C,B\times D]^{\sh}\ar[d]^{(s,t)^{*}}\\ [E,B]^{\disc}\ar[r]^-{(-, t^{*}\phi)}&[E,B\times D]^{\disc}}\ .$$
We must show that the right-down composition is continuous, but this clear since the down-right composition is continuous by definition of the shape topology on $[A,B]^{\sh}$.
 \end{proof}

In order to consider  in addition the  light condensed topology on the $\Hom$-sets of $\bD$ we now assume that $\bD$ has finite products in addition to countable filtered colimits.
\begin{prop}\label{rejiogerfwerfwrefrwef}
If $B$ has a weakly compact approximation, then for every $C$ in $\bD$  the light condensed and shape topologies on the mapping groups $[B,C]$ coincide.
\end{prop}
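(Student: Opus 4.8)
The plan is to show that the two topologies refine each other. First I would show that the shape topology is finer than the light condensed topology. By \cref{oijrfopqrfewdqwfqfefseq} (or rather its defining property in \cref{oijrfopqrfewdqwfqfef}), it suffices to prove that every representable map $X\to [B,C]$ from a light profinite space $X$ is continuous for the shape topology. Using \cref{okqrepgqdedqwe}, this amounts to showing that for every weakly compact map $f:A\to B$ the composition $X\to [B,C]^{\sh}\to [A,C]^{\disc}$ is continuous, i.e.\ locally constant. But this is exactly the content of \cref{woigpregferferwfref}: since $f$ is weakly compact and $X\to [B,C]$ is representable, that composition factors through a \emph{finite} quotient of $X$, hence is locally constant. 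Note that this direction does not even require the weakly compact approximation hypothesis on $B$.

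For the converse — that the light condensed topology is finer than the shape topology — I would use the hypothesis that $B$ admits a weakly compact approximation $(B_n)_{n\in\nat}$. By \cref{rqjifoergstgsgfgfg} the shape topology on $[B,C]$ is the minimal topology making all the maps $[B,C]\to [B_n,C]^{\disc}$ continuous, so it suffices to show each such map is continuous for the light condensed topology. By \cref{oijrfopqrfewdqwfqfef} this reduces to showing: for every light profinite space $X$ and every $\tilde g\in[B,C^{X}]$, the composition
$$X\xrightarrow{\ \ev(\tilde g)\ }[B,C]\longrightarrow [B_n,C]^{\disc}$$
is locally constant. The structure map $B_n\to B$ is weakly compact, so applying \cref{woigpregferferwfref} with $f$ the composite $B_n\to B$ shows that this composition factors through a finite quotient $X\to X_i$ of $X$, hence is locally constant. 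This gives continuity of $[B,C]^{\cd}\to [B_n,C]^{\disc}$ for all $n$, and therefore of the identity map $[B,C]^{\cd}\to[B,C]^{\sh}$.

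Combining the two directions, the identity maps $[B,C]^{\cd}\to[B,C]^{\sh}$ and $[B,C]^{\sh}\to[B,C]^{\cd}$ are both continuous, so the topologies agree; that this is moreover an isomorphism of topological groups then follows from \cref{wetkopgwegrefrewferf1}, or simply because the underlying group is the same. The main obstacle is organizing the bookkeeping so that \cref{woigpregferferwfref} is applied to the correct weakly compact map in each direction — in the first direction to an arbitrary weakly compact $f:A\to B$, in the second to the structure maps $B_n\to B$ of the chosen approximation — and recognizing that in both cases the relevant composition factors through a finite quotient of $X$ and is thus locally constant. Everything else is a formal unwinding of the definitions in \cref{oijrfopqrfewdqwfqfef}, \cref{okqrepgqdedqwe} and \cref{rqjifoergstgsgfgfg}.
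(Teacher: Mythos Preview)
Your two paragraphs prove the \emph{same} direction, and the harder direction is missing.

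Recall that the light condensed topology is by definition the \emph{maximal} topology making all representable maps $X\to[B,C]$ continuous. Hence if you show (as in your first paragraph, via \cref{okqrepgqdedqwe} and \cref{woigpregferferwfref}) that every representable map is continuous for the shape topology, the conclusion is that the shape topology is \emph{coarser} than the light condensed one, i.e.\ $\id:[B,C]^{\cd}\to[B,C]^{\sh}$ is continuous. This is the opposite of what you label it, and it is exactly what your second paragraph also proves (there you check $[B,C]^{\cd}\to[B_n,C]^{\disc}$ is continuous for all $n$, which again gives $[B,C]^{\cd}\to[B,C]^{\sh}$ continuous). The reference \cref{oijrfopqrfewdqwfqfef} characterizes continuous maps \emph{out of} $[B,C]^{\cd}$; it says nothing about maps \emph{into} $[B,C]^{\cd}$, so it cannot be used to reduce continuity of $[B,C]^{\sh}\to[B,C]^{\cd}$ to a statement about representable maps.

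What is missing is the genuine content: continuity of $[B,C]^{\sh}\to[B,C]^{\cd}$. The paper handles this by first observing that the shape topology is first countable (\cref{wqefu0dewdewdqwdqewd}), so it suffices to show that every sequence converging in $[B,C]^{\sh}$ also converges in $[B,C]^{\cd}$. This in turn requires \cref{qreokgprefqewfqewfqd}: given a sequence $(f_i)$ converging to $f$ in $[B,C]^{\sh}$, one must actually \emph{construct} an element $\tilde f\in[B,C^{\nat_+}]$ with the correct evaluations. That construction uses the weakly compact approximation $(B_n)$ in an essential way (choosing stabilization indices $i(n)$ and assembling compatible maps $B_n\to C^{\{0,\dots,i(n),\infty\}}$), and is the step your proposal does not supply.
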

\begin{proof}
The following lemma shows that the condensed topology is always finer than the shape topology.
\begin{lem}\label{woekrpgerfwrefrewf} For all $B,C $ in $\bD$ such that $B$ has a weakly compact approximation
the identity map of underlying sets
is a continuous map 
$$[B,C]^{\cd}\to [B,C]^{\sh}$$
\end{lem}
\begin{proof}
We use \cref{okqrepgqdedqwe}. We must show that for every light profinite space $X$ and continuous map 
$X\to [B,C]^{\cd}$  which is represented by an element of $[B,C^{X}]$, and  any weakly compact map $A\to B$, the composition $$X\to [B,C]^{\cd}\to [B,C]^{\sh}\to [A,C]$$
is locally constant.  But this follows from \cref{woigpregferferwfref}.\end{proof}

In order to show that $[B,C]^{\sh}\to [B,C]^{\cd}$ is also continuous we will show that
converging sequences in $[B,C]^{\sh}$ also converge in $[B,C]^{\cd}$. Since by \cref{wqefu0dewdewdqwdqewd}  the shape topology has countable neighbourhood bases  this implies continuity. 

   Let $(f_{i})_{i\in \nat}$ be a sequence which converges to $f$  in $[B,C]^{\sh}$.

 \begin{lem}\label{qreokgprefqewfqewfqd}
If $B$ has a weakly compact approximation, then
there exists $\tilde f$ in $  [B,C^{\nat_{+}}]$ with  values $f_{i}$ at all integers $i$  and the evaluation
$f$ at $\infty$.
\end{lem}
\begin{proof} We use the presentation of $\nat_{+} $   
	explained in \cref{koergperfrefqfrfq}.  We  choose a  weakly compact approximation $(B_{n})_{n\in \nat}$  of $B$.
 	We can then choose a strictly increasing sequence $(i(n))_{n\in \nat}$ of integers such that
	  $$f_{i(n)|B_{n}}\simeq f_{i(n)+1|B_{n}}\simeq  f_{i(n)+2|B_{n}}\simeq \dots\simeq f_{|B_{n}}$$
	  for every $n$ in $\nat$.  For every $n$ in $\nat$	we  define the morpism
	$$\hat f_{n}:=((f_{0})_{|B_{n}},\dots, (f_{i(n)})_{|B_{n}},(f_{|B_{n}})):B_{n}\to C^{\{0,\dots,i(n),\infty\}}$$
	Next we provide commutative squares
	$$\xymatrix{B_{n}\ar[r]^-{\hat f_{n}}\ar[d]&\ar[d]^{s^\ast}C^{\{0,\dots,i(n),\infty\}}\\B_{n+1}\ar[r]^-{\hat f_{n+1}}&C^{\{0,\dots,i(n+1),\infty\}},}$$ 
	where $s: \{0,\dots,i(n),\infty\} \to \{0,\dots,i(n+1),\infty\}$ is the structure map. This is equivalent to providing for each $k$ in $\{0,\dots,i(n+1),\infty\}$ a commutative {diagram}
	$$\xymatrix{ B_{n} \ar[rd]^{f_{s(k)|B_{n}}} \ar[r]^-{\hat f_{n}}\ar[d]&\ar[d]^{\pr_{s(k)}} C^{\{0,\dots,i(n),\infty\}}\\ B_{n+1}\ar[r]^-{f_{k |B_{n+1}}} & C}\ .$$ 
	For the right upper triangle, we choose the canonical filler. For the left lower triangle, we can also choose the canonical filler if $s(k) = k$ (i.e., if $k$ is in $\{0,\dots,i(n)-1,\infty\}$). If $k$ is in $ \{i(n), \dots,i(n+1)\}, $ then $s(k)= \infty$ and we choose an arbitrary filler which exists by the stabilization assumption. We thus get a map of systems 
	$\hat f:(B_{n})_{n\in \nat}\to (C^{\{0,\dots,i(n),\infty\}})_{n\in \nat}$.
	
	We claim that the map 
	$$\tilde f: B\simeq \colim_{n\in \nat} B_{n}\xrightarrow{\colim_{\nat}\hat f} \colim_{n\in \nat} C^{\{0,\dots,i(n),\infty\}}\simeq C^{\nat_{+}} $$   has the desired properties.
	For $k$ in $\nat$ the composition $i_{k}^{*}\circ \tilde f:B\to C^{\nat_{+}}\to C$
	is given by the colimit  of {a} system  {$(f_{k|B_{n}})_{\{n\in \nat \mid k \leq i(n)\}}$} which coincides with the  {constant} system 
	$(f_{k|B_{n}})_{n\in \nat}$  on the {partially ordered subset}  {$ \{n\in \nat \mid k \leq i(n-1)\}$}. We conclude that $i_{k}^{*}\circ \tilde f\simeq f_{k}$ for all $k$ in $\nat$.
	Furthermore, $i_{\infty}^{*}\circ \tilde f$ is the colimit of the {constant} system $(f_{|B_{n}})_{n\in \nat}$ and therefore 
	$i_{\infty}^{*}\circ \tilde f\simeq f$.
\end{proof}

Using \cref{oijrfopqrfewdqwfqfefseq} and \cref{qreokgprefqewfqewfqd} we see that if a sequence in $[B,C]^{\sh}$ converges, then it converges in $[B,C]^{\cd}$. This finishes the proof of \cref{rejiogerfwerfwrefrwef}.  \end{proof}

We consider $B,C$ in $\bD$ and 
let $(B_{n})_{n\in \nat}$ be a weakly  compact approximation of $B$. 
\begin{lem}\label{thkjptherhrtgergeg}
 The canonical map $p:  [B,C]^{\sh}\to \lim_{n\in \nat}[B_{n},C]^{\disc}$ presents  the target as the Hausdorff quotient of the domain.
\end{lem}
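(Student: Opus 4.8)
The plan is to recognise $p$ as the Kolmogorov ($T_{0}$) reflection of $[B,C]^{\sh}$ and then to observe that this reflection is already Hausdorff, so that the universal property characterising the Hausdorff quotient follows formally (the $T_{0}$-reflection has the required universal property with respect to all $T_{0}$ targets, and Hausdorff spaces are $T_{0}$). Concretely, I would realise $Z:=\lim_{n\in\nat}[B_{n},C]^{\disc}$ as the subspace of the product $\prod_{n\in\nat}[B_{n},C]^{\disc}$ consisting of the compatible families; as a subspace of a product of discrete spaces it is Hausdorff, in fact totally disconnected. By \cref{rqjifoergstgsgfgfg} the shape topology on $[B,C]$ is the initial topology for the maps $[B,C]\to[B_{n},C]^{\disc}$, equivalently for the single map $p\colon[B,C]\to Z$. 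The first routine step is then to note that a subset of $[B,C]^{\sh}$ is open precisely when it is $p^{-1}(V)$ for some open $V\subseteq Z$ — the family $\{p^{-1}(V)\}_{V}$ is already closed under arbitrary unions and finite intersections, hence is a topology, hence is the coarsest one making $p$ continuous — so $p$ is continuous.

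For surjectivity I would use that $(B_{n})_{n\in\nat}$ is an approximation, so $B\simeq\colim_{n\in\nat}B_{n}$ and hence $\Map_{\bD}(B,C)\simeq\lim_{n\in\nat}\Map_{\bD}(B_{n},C)$; the surjectivity of the canonical map from $\pi_{0}$ of an inverse limit of a tower onto the inverse limit of the $\pi_{0}$'s, recalled in \cref{ojiregofewrgreferwf}.\ref{qerkogpfweqedwed}, is exactly the statement that $p$ is onto. Combined with the description of the open sets above, $p$ is then a surjective quotient map onto a Hausdorff space: if $p^{-1}(V)$ is open it equals $p^{-1}(V')$ for some open $V'$, and surjectivity of $p$ forces $V=V'$, so $V$ is open.

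Finally I would check that $p$ identifies exactly the topologically indistinguishable points, which gives the universal property. If $p(x)=p(x')$, then every open neighbourhood of $x$, being of the form $p^{-1}(V)$ with $p(x)=p(x')\in V$, also contains $x'$, and symmetrically, so $x$ and $x'$ are topologically indistinguishable in $[B,C]^{\sh}$. Any continuous map $f\colon[B,C]^{\sh}\to Y$ sends topologically indistinguishable points to topologically indistinguishable points, and in a Hausdorff (hence $T_{0}$) space such points coincide; therefore $f$ is constant on the fibres of $p$, and since $p$ is a quotient map $f$ factors uniquely through $p$. This is precisely the universal property exhibiting $p$ as the Hausdorff quotient of $[B,C]^{\sh}$. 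I do not expect a genuine obstacle here; the only points requiring a little care are the surjectivity of $p$ (which rests on the $\lim^{1}$-surjectivity of \cref{ojiregofewrgreferwf}.\ref{qerkogpfweqedwed}) and the identification of the shape topology with the initial topology for $p$ (which is \cref{rqjifoergstgsgfgfg}).
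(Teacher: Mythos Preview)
Your proposal is correct and follows essentially the same route as the paper: both use \cref{rqjifoergstgsgfgfg} to identify the shape topology as the initial topology for $p$, deduce that $p$ is an open (equivalently, quotient) surjection, observe that the fibres of $p$ are precisely the topological-indistinguishability classes (equivalently, the point closures $\overline{\{f\}}=\bigcap_{n}p_{n}^{-1}(\{p_{n}(f)\})$), and conclude the universal property from openness. Your framing via the $T_{0}$-reflection that happens to be Hausdorff is a clean way to package the last step, and your explicit justification of surjectivity via \cref{ojiregofewrgreferwf}.\ref{qerkogpfweqedwed} is a welcome addition --- the paper's proof simply asserts surjectivity without pointing to this tower argument.
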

\begin{proof} 
The map $p $ is surjective and open by definition of the shape topology.
Let $p_{n}:[B,C]^{\sh}\to [B_{n},C]^{\disc}$ denote the canonical map.
Let $f$ be a point in $[B,C]^{\sh}$.  Then by \cref{rqjifoergstgsgfgfg} we have an equality ${p^{-1}(p(f))=}\bigcap_{n\in \nat} p^{-1}_{n}(\{p_{n}(f)\})=\overline{\{f\}}$.  Any continuous map $[B,C]^{\sh}\to X$ to a Hausdorff space 
therefore uniquely factorizes set-theoretically  through the quotient  $p$ and it is straightforward to see that this factorization is again continuous since $p$ is open.
 \end{proof}

Form now on we assume that $\bD$ is stable and countably cocomplete.

\begin{kor}\label{rijgoowegwefewrfwrf}
 $[B,C]^{\sh}$ is a topological abelian group for all $B,C$ in $\bD$. 
\end{kor}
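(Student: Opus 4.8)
The plan is to deduce \cref{rijgoowegwefewrfwrf} from the Hausdorff quotient description established in \cref{thkjptherhrtgergeg}. Fix a weakly compact approximation $(B_{n})_{n\in \nat}$ of $B$, which exists by the standing assumption. Since $\bD$ is stable, precomposition with morphisms is additive, so the structure maps of the tower $([B_{n},C])_{n\in \nat^{\op}}$ and the canonical maps $p_{n}\colon [B,C]\to [B_{n},C]$ are homomorphisms of abelian groups, and hence so is the induced map $p\colon [B,C]\to \lim_{n\in \nat}[B_{n},C]^{\disc}=:Q$ considered in \cref{thkjptherhrtgergeg}.

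First I would record that $Q$ is a topological abelian group: being the limit in $\Top$ of a tower of discrete abelian groups with additive structure maps, it is a (closed) subgroup of the topological abelian group $\prod_{n\in \nat}[B_{n},C]^{\disc}$, with the subspace topology. Next I would observe that the shape topology on $[B,C]$ agrees with the initial topology along $p$. Indeed, by \cref{rqjifoergstgsgfgfg} the shape topology is generated by the sets $p_{n}^{-1}(\{x\})$ for $n\in \nat$ and $x\in [B_{n},C]$; since $p_{n}=\pr_{n}\circ p$ and the sets $\pr_{n}^{-1}(\{x\})$ form a subbasis of the subspace topology on $Q$, these are precisely the sets $p^{-1}(V)$ for $V$ open in $Q$. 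In particular a map $T\to [B,C]^{\sh}$ is continuous if and only if $p\circ (T\to [B,C]^{\sh})$ is continuous.

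Finally I would run the standard argument that an abstract group carrying the initial topology along a homomorphism into a topological group is itself a topological group. Writing $a$ and $a_{Q}$ for the addition maps, we have $p\circ a=a_{Q}\circ (p\times p)$; the right-hand side is continuous because $p\times p$ and $a_{Q}$ are, so $a\colon [B,C]^{\sh}\times [B,C]^{\sh}\to [B,C]^{\sh}$ is continuous by the universal property of the initial topology. Applying the same reasoning to $-\id$ in place of $a$ yields continuity of negation, and hence $[B,C]^{\sh}$ is a topological abelian group. I do not anticipate a serious obstacle here; the only point requiring attention is the identification of the shape topology with the initial topology along $p$, and it is exactly the passage through the topological group $Q$ that upgrades the merely separate continuity of addition from \cref{wetkopgwegrefrewferf1} to joint continuity.
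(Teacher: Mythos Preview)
Your argument has a gap: you assume that $B$ admits a weakly compact approximation, claiming this is a standing assumption, but at this point in the paper it is not. The only standing hypotheses are that $\bD$ has countable filtered colimits and, just before this corollary, that $\bD$ is stable and countably cocomplete. The corollary is asserted for \emph{all} $B,C$ in $\bD$, and indeed the paper's proof uses no approximation. Since both \cref{rqjifoergstgsgfgfg} and \cref{thkjptherhrtgergeg} explicitly presuppose a weakly compact approximation of $B$, your route through the limit $Q=\lim_{n}[B_{n},C]^{\disc}$ is not available in the stated generality.

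The fix is immediate and brings you to the paper's argument. By definition the shape topology on $[B,C]$ is the initial topology with respect to the family of maps $f^{*}\colon [B,C]\to [A,C]^{\disc}$ ranging over \emph{all} weakly compact morphisms $f\colon A\to B$, not just those coming from a fixed approximation. Each $f^{*}$ is a group homomorphism because $\bD$ is stable, and the target is a (discrete) topological group. Your ``standard argument'' then applies verbatim: for each such $f$ the square
\[
\xymatrix{[B,C]^{\sh}\times [B,C]^{\sh}\ar[r]^-{+}\ar[d]_{f^{*}\times f^{*}}& [B,C]^{\sh}\ar[d]^{f^{*}}\\ [A,C]^{\disc}\times [A,C]^{\disc}\ar[r]^-{+}&[A,C]^{\disc}}
\]
commutes, the down-right composite is continuous, hence so is the right-down composite; by the universal property of the initial topology, addition is continuous. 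This is exactly the paper's proof. The detour through \cref{thkjptherhrtgergeg} buys nothing here and costs you the extra hypothesis.
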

\begin{proof}
Let $A\to B$ be a weakly compact morphism.
Then we have a commutative square
$$\xymatrix{[B,C]^{\sh}\times [B,C]^{\sh}\ar[r]^-{+}\ar[d]& [B,C]^{\sh}\ar[d]\\[A,C]^{\disc}\times [A,C]^{\disc}\ar[r]^-{+}&[A,C]^{\disc}}\ .$$
We must show that the right-down composition is continuous, but this is clear since 
the down-right composition is continuous by definition of the shape topology on $ [B,C]$.
\end{proof}

We consider $B,C$ in $\bD$ and 
let $(B_{n})_{n\in \nat}$ be a weakly  compact approximation of $B$. Then we
 have a Milnor exact sequence
\begin{equation}\label{vfqefcfdvsfdvre}0\to {\lim}^{1}_{n\in \nat} [B_{n},\Omega C]\to  [B,C]\to \lim_{n\in \nat}[B_{n},C]\to 0\ .
\end{equation} 
\begin{prop}\label{jierogwergwrefrewfwrf}
The closure of $0$ in $[B,C]^{\sh}$ is the subgroup of strong phantom maps and coincides with the image of the first map in the Milnor sequence.  
\end{prop}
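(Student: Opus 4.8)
The plan is to identify all three subgroups appearing in the statement with the kernel of the canonical map
$$
p\colon [B,C]^{\sh}\to \lim_{n\in\nat}[B_{n},C]^{\disc}
$$
associated to the given weakly compact approximation $(B_{n})_{n\in\nat}$ of $B$. First I would invoke \cref{rijgoowegwefewrfwrf} to know that $[B,C]^{\sh}$ is a topological abelian group, so that $\overline{\{0\}}$ is a (normal, hence closed) subgroup. The proof of \cref{thkjptherhrtgergeg} already records that for every $f$ in $[B,C]^{\sh}$ one has $\bigcap_{n\in\nat}p_{n}^{-1}(\{p_{n}(f)\})=\overline{\{f\}}$, where $p_{n}\colon[B,C]^{\sh}\to[B_{n},C]^{\disc}$ is the $n$-th component of $p$; specializing to $f=0$ gives directly $\overline{\{0\}}=\ker p$. (Alternatively one argues that, since \cref{thkjptherhrtgergeg} exhibits $p$ as the Hausdorff quotient map and the Hausdorff quotient of a topological group $G$ is $G/\overline{\{0\}}$, again $\overline{\{0\}}=\ker p$.)

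Next I would read off from the Milnor exact sequence \eqref{vfqefcfdvsfdvre} that its middle map $[B,C]\to\lim_{n\in\nat}[B_{n},C]$ is, after forgetting topologies, exactly the map underlying $p$: both are given by restriction along the structure maps $B_{n}\to B$. By exactness of \eqref{vfqefcfdvsfdvre}, the kernel of this map equals the image of the injective first map $\lim^{1}_{n\in\nat}[B_{n},\Omega C]\to[B,C]$. Hence $\overline{\{0\}}=\ker p$ coincides with the image of the first map in the Milnor sequence.

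Finally, to see that $\ker p$ is precisely the subgroup of strong phantom maps, I would observe that for $\phi$ in $[B,C]$ the condition $p(\phi)=0$ means that the composite $B_{n}\to B\xrightarrow{\phi}C$ vanishes for every $n$ in $\nat$; since $(B_{n})_{n\in\nat}$ is a weakly compact approximation of $B$, \cref{wekgowpefefwrf} says that this is equivalent to $\phi$ being a strong phantom map. Combining the three identifications proves the proposition. I do not expect a genuine obstacle here: the topological content has been front-loaded into \cref{thkjptherhrtgergeg} and \cref{rijgoowegwefewrfwrf}, and the only points needing care are the routine verification that $p$ and the Milnor middle map have the same underlying map of abelian groups, and keeping in mind that the weakly compact approximation hypothesis on $B$ is in force throughout this part of the section.
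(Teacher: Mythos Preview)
Your proposal is correct and follows essentially the same route as the paper. The paper's proof is more terse: it invokes \cref{wekgowpefefwrf} to identify the kernel of the second Milnor map with the strong phantoms, and then appeals directly to \cref{rqjifoergstgsgfgfg} (rather than going through \cref{thkjptherhrtgergeg}) to see that this kernel is the intersection of all closed neighbourhoods of $0$ and hence equals $\overline{\{0\}}$; your use of \cref{thkjptherhrtgergeg} simply packages that same argument.
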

\begin{proof}
It is clear from \cref{wekgowpefefwrf} that the kernel of the second map of the Milnor sequence consists precisely of the strong  phantoms. In view of the   definition of the shape topology and  \cref{rqjifoergstgsgfgfg} this kernel is the intersection of all closed neighbourhoods of $0$ 
  and thus coincides {with the} closure of $0$.
 \end{proof}

\begin{kor}\label{rjiegowfdwewdqf}  If $B$ has a weakly compact approximation, then a   sequence in $[B,C] $ is summable if and only if it converges to $0$ in  $[B,C]^{\sh}$.
  \end{kor}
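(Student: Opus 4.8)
The plan is to prove the two implications separately, in both cases pivoting on the fibre sequence \eqref{advsiojioqefasdvadsvs}, namely $\bigoplus_{\nat}C\xrightarrow{i}C^{\nat_{+}}\xrightarrow{i_{\infty}^{*}}C$, together with the hypothesis that $B$ admits a weakly compact approximation.

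For the ``only if'' direction, assume that $(g_{n})_{n\in\nat}$ is summable. Then \cref{jirofwfwrefwrfwrefw} already yields that $(g_{n})_{n\in\nat}$ converges to $0$ in $[B,C]^{\cd}$. Since $B$ has a weakly compact approximation, \cref{rejiogerfwerfwrefrwef} tells us that the light condensed topology and the shape topology on $[B,C]$ coincide, so the sequence also converges to $0$ in $[B,C]^{\sh}$.

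For the ``if'' direction, assume that $(g_{n})_{n\in\nat}$ converges to $0$ in $[B,C]^{\sh}$. Applying \cref{qreokgprefqewfqewfqd} with $f_{i}:=g_{i}$ and $f:=0$ produces a morphism $\tilde g\colon B\to C^{\nat_{+}}$ with $i_{n}^{*}\circ\tilde g\simeq g_{n}$ for all $n$ in $\nat$ and $i_{\infty}^{*}\circ\tilde g\simeq 0$. Since $i_{\infty}^{*}\circ\tilde g$ vanishes in $[B,C]$, the long exact sequence of mapping groups attached to the fibre sequence \eqref{advsiojioqefasdvadsvs} provides a lift $\hat g\colon B\to\bigoplus_{\nat}C$ with $i\circ\hat g\simeq\tilde g$. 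To see that $\hat g$ witnesses summability I would identify its components: using the presentations $C^{\nat_{+}}\simeq\colim_{k\in\nat}C^{\{0,\dots,k,\infty\}}$ and $\bigoplus_{\nat}C\simeq\colim_{k\in\nat}C^{\{0,\dots,k\}}$, the map $i$ is the colimit over $k$ of the inclusions of the fibres $C^{\{0,\dots,k\}}\to C^{\{0,\dots,k,\infty\}}$ of the evaluation-at-$\infty$ projections; since in a stable $\infty$-category such a fibre inclusion is (up to equivalence) the inclusion of the product over the subset, one obtains $\pr_{n}\circ i\simeq\pr_{n}$ on $\bigoplus_{\nat}C$, whence $\pr_{n}\circ\hat g\simeq i_{n}^{*}\circ i\circ\hat g\simeq i_{n}^{*}\circ\tilde g\simeq g_{n}$. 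Thus $\hat g$ has components $(g_{n})_{n\in\nat}$ and $(g_{n})_{n\in\nat}$ is summable in the sense of \cref{erijogoewrpgfrefw1}.

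The only point requiring a little care is the identification $\pr_{n}\circ i\simeq\pr_{n}$, i.e.\ the description of the fibre inclusion $i$ at each finite stage and the verification that these stages assemble to the countable coproduct $\bigoplus_{\nat}C$ with its canonical structure maps; this is where I would use that $\bD$ is stable and countably cocomplete. Everything else is a direct application of \cref{jirofwfwrefwrfwrefw}, \cref{rejiogerfwerfwrefrwef} and \cref{qreokgprefqewfqewfqd}.
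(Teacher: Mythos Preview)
Your ``only if'' direction matches the paper's: both cite \cref{jirofwfwrefwrfwrefw} and pass from the light condensed to the shape topology (the paper implicitly via \cref{woekrpgerfwrefrewf}, you via the full equality \cref{rejiogerfwerfwrefrwef}).

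For the ``if'' direction your route differs from the paper's. The paper asserts that by \cref{jierogwergwrefrewfwrf} ``almost all members of a zero sequence in $[B,C]^{\sh}$ are strong phantoms'' and then invokes \cref{iujgofffsdafdfaf}. You instead apply \cref{qreokgprefqewfqewfqd} to realise the sequence by a map $\tilde g\colon B\to C^{\nat_{+}}$ with $i_{\infty}^{*}\tilde g\simeq 0$, lift through the fibre sequence \eqref{advsiojioqefasdvadsvs} to obtain $\hat g\colon B\to\bigoplus_{\nat}C$, and verify that $\hat g$ has the correct components via the identification $i_{n}^{*}\circ i\simeq\pr_{n}$. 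This identification is correct (at each finite stage the fibre inclusion $C^{\{0,\dots,k\}}\hookrightarrow C^{\{0,\dots,k,\infty\}}$ followed by evaluation at $n\le k$ is the $n$-th projection), and your proof is complete.

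Your approach is arguably more robust: the paper's intermediate claim that almost all terms of a sequence converging to $0$ must be strong phantoms does not follow from \cref{jierogwergwrefrewfwrf} alone. Convergence to $0$ in the shape topology only gives that for each fixed $k$ the restriction to $B_{k}$ eventually vanishes, with a bound that may depend on $k$; this is weaker than eventually lying in $\overline{\{0\}}$, a subtlety the paper itself flags in the remark following \cref{regjiewerggre9}. Your construction of the summability witness directly via \cref{qreokgprefqewfqewfqd} and the fibre sequence avoids this issue entirely.
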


\begin{proof}
By \cref{jierogwergwrefrewfwrf} almost all members of a zero sequence in $[B,C]^{\sh}$ 
are strong phantoms. By \cref{iujgofffsdafdfaf} such a sequence is summable.
Conversely, by \cref{jirofwfwrefwrfwrefw}  a summable sequence  in $[B,C]$ converges to zero 
in $[B,C]^{\cd}$, and therefore also in $[B,C]^{\sh}$.
\end{proof}

 The following is a generalization of \cref{qreokgprefqewfqewfqd} from $\nat_{+}$ to general   light profinite spaces $X$.
 But note that here we assume that $\bD$ is stable and the conclusion is weaker.
 \begin{prop} \label{jewrogfwerfrfwf}
  If $B$ has a weakly compact approximation, then for
any continuous map $f:X\to [B,C]^{\sh}$  there exists a map $\phi:X\to [B,C]^{\sh}$ with values in strong phantom maps
such that $f-\phi$ is representable.   
  \end{prop}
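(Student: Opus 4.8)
The strategy is to approximate $f$ locally by maps that factor through finite quotients of $X$ and glue, using a weakly compact approximation $(B_n)_{n\in\nat}$ of $B$ to "discretize" $f$ at each finite stage. Write $X\cong \lim_{i\in\nat^{\op}} X_i$ as a countable limit of finite sets with surjective structure maps $X\to X_i$, so that $C^X\simeq \colim_{i\in\nat} C^{X_i}$ and $B\simeq\colim_{n\in\nat} B_n$. By \cref{rqjifoergstgsgfgfg} the shape topology on $[B,C]$ is generated by the maps $p_n\colon [B,C]^{\sh}\to[B_n,C]^{\disc}$, so the composition $X\xrightarrow{f}[B,C]^{\sh}\xrightarrow{p_n}[B_n,C]^{\disc}$ is locally constant, hence (as $X$ is compact) factors through a finite quotient $X\to X_{i(n)}$ for some $i(n)$ which we may take strictly increasing in $n$. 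This yields for each $n$ a map $X_{i(n)}\to[B_n,C]$, i.e. an element $\hat f_n$ of $[B_n, C^{X_{i(n)}}]$.

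\textbf{Assembling a tower.} Next I would promote the $\hat f_n$ to a map of $\nat^{\op}$-indexed towers $\hat f\colon (B_n)_{n\in\nat}\to (C^{X_{i(n)}})_{n\in\nat}$, exactly as in the proof of \cref{qreokgprefqewfqewfqd}: to give the commuting squares one reduces, via the product decomposition $C^{X_{i(n+1)}}\simeq\prod_{x\in X_{i(n+1)}}C$, to providing for each $x\in X_{i(n+1)}$ a triangle $B_n\to B_{n+1}\xrightarrow{(\hat f_{n+1})_x} C$ over $B_n\xrightarrow{(\hat f_n)_{s(x)}} C$, where $s\colon X_{i(n+1)}\to X_{i(n)}$ is the structure map. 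Since by construction $p_n(f)$ is compatible with $p_{n+1}(f)$ under $B_n\to B_{n+1}$, and the restriction of $f(x)$ to $B_n$ depends only on the image of $x$ in $X_{i(n)}$ (because $f$ composed with $p_n$ factors through $X_{i(n)}$), the two composites $B_n\to C$ agree and a filler exists; one chooses it arbitrarily. Taking colimits over $n$ gives
$$\tilde f\colon B\simeq\colim_{n\in\nat} B_n \xrightarrow{\colim \hat f}\colim_{n\in\nat} C^{X_{i(n)}}\simeq C^X\ ,$$
since the $X_{i(n)}$ are cofinal among the $X_i$. Thus $\tilde f\in[B,C^X]$, and we set $g:=\ev(\tilde f)\colon X\to[B,C]$, where $\ev$ is the map in \eqref{wergwergerferfwefwefwf}; by \cref{roeigwprefrewfrewfwrfv} and the fact that $\ev(\tilde f)$ is representable, $g\colon X\to[B,C]^{\sh}$ is continuous (using \cref{woekrpgerfwrefrewf} plus \cref{woigpregferferwfref}, or directly \cref{rqjifoergstgsgfgfg}).

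\textbf{Comparing $f$ and $g$, and the main obstacle.} It remains to set $\phi:=f-g$ and show its pointwise values are strong phantom maps. For each $x\in X$ and each $n$, the restriction $(f(x)-g(x))|_{B_n}$ vanishes: $f(x)|_{B_n}=p_n(f)(x)$ depends only on the image $x_n\in X_{i(n)}$, while $g(x)|_{B_n}$ is computed from the tower $\hat f$ and is $(\hat f_n)_{x_n}$ composed appropriately — but $\hat f_n$ was built precisely so that $(\hat f_n)_{x_n}= p_n(f)(x)$ up to the colimit structure maps. Hence $\phi(x)|_{B_n}\simeq 0$ for all $n$, and by \cref{wekgowpefefwrf} this says $\phi(x)$ is a strong phantom map. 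Finally $f-\phi=g$ is representable by $\tilde f$, and $\phi=f-g\colon X\to[B,C]^{\sh}$ is continuous since both $f$ and $g$ are and $[B,C]^{\sh}$ is a topological abelian group by \cref{rijgoowegwefewrfwrf}. The delicate point — the "main obstacle" — is the bookkeeping in the tower construction: one must carefully track that $\hat f_n$ restricted along $B_{n-1}\to B_n$ recovers (up to the chosen fillers) the data of $\hat f_{n-1}$, and that the arbitrary choices of fillers for the "new" points $x$ with $s(x)=$ a limit-type point do not spoil the conclusion $\phi(x)|_{B_n}\simeq 0$ — this works because the relevant restrictions land in $B_n$ where the stabilization $f(x)|_{B_n}= f(x')|_{B_n}$ already holds once $i(n)$ separates $x$ from $x'$, exactly the inductive choice made for $i(n)$.
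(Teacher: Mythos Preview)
Your proof is correct and reaches the same conclusion as the paper, but the route differs. You construct the lift $\tilde f\in[B,C^X]$ by explicitly assembling a map of towers $(B_n)_{n\in\nat}\to(C^{X_{i(n)}})_{n\in\nat}$ in the style of the proof of \cref{qreokgprefqewfqewfqd}, and then take colimits. The paper instead bypasses the tower construction entirely: after observing that the restrictions $f_{|B_n}$ yield a compatible family $(f_n)\in\lim_{n}\colim_i[B_n,C^{X_i}]$, it pushes this into $\lim_n[B_n,C^X]$ and invokes the surjectivity of the right-hand map in the Milnor sequence \eqref{vfqefcfdvsfdvre} to lift directly to an element $\hat f\in[B,C^X]$; the difference $f-\ev(\hat f)$ then has values in strong phantoms by \cref{jierogwergwrefrewfwrf}. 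The paper's argument is shorter and sidesteps exactly the coherence bookkeeping you flag as the ``main obstacle''; your approach is more explicit and self-contained but gains nothing beyond that. Two minor remarks: your worry about ``limit-type points'' is misplaced (the $X_{i(n)}$ are finite, so there are none), and the arbitrary fillers cause no trouble because the desired conclusion $\phi(x)|_{B_n}=0$ is an equality in the homotopy set $[B_n,C]$, which the colimit cone of the tower guarantees independently of the filler choices.
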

\begin{proof}  We choose a weakly compact approximation $(B_{n})_{n\in \nat}$ of $B$.
 Let $f:X\to [B,C]^{\sh}$ be a continuous map.  
By \cref{okqrepgqdedqwe} the restrictions $f_{|B_{n}}:X\to [B_{n},B]$ are locally constant.
Since $X$ is a light profinite   space every locally constant map out of $X$ factorizes over a finite  quotient. Let $(X\to X_{i})_{i\in I^{\op}}$ be the cofiltered system of finite quotients of $X$.
Then $C^{X}\simeq \colim_{i\in I }C^{X_{i}}$.
The map $f$ gives thus rise to an element in $(f_{n})_{n\in \nat^{\op}}$ in $ \lim_{n\in \nat^{\op}}  \colim_{i\in I}[ B_{n},C^{X_{i}}]$, where $f_{n}$ is represented by the map $B_{n}\to C^{X_{i(n)}}$ representing $f_{|B_{n}}$,
and where $X\to X_{i(n)}$ is precisely the quotient over which the latter factorizes.
By the surjectivity of the second map in the  Milnor sequence \eqref{vfqefcfdvsfdvre}   the image of $(f_{n})_{n\in \nat^{\op}}$ under
the canonical map 
$$  \lim_{n\in \nat^{\op}}  \colim_{i\in I}[ B_{n},C^{X_{i}}]\to  \lim_{n\in \nat^{\op}} [ B_{n},C^{X}]$$        lifts to an element  $\hat f$ in $[B,C^{X}]$. We let $f':=\ev(\hat f) :X\to [A,B]^{cd}$. Then by  \cref{jierogwergwrefrewfwrf} the difference $\phi:=f-f'$ takes values in strong phantom maps.
 \end{proof}

\subsection{Hausdorff quotients}\label{erjogiopwregwrefwref}
 
 Let $\bD$ be  an $\infty$-category  with countable filtered colimits.
 In this section we analyse the Hausdorff quotients of the spaces $[B,C]^{\sh}$.
 
 Let $B,C$ be objects of $\bD$.
 
 \begin{ddd}
 We let $\overline{[B,C]}$ denote the Hausdorff quotient of $[B,C]^{\sh}$.
 \end{ddd}
 
 \begin{lem}\label{wjeirgowergregwf9} If $A,B$  admit weakly compact approximations, then
 the  composition map has  a continuous factorization  $$\xymatrix{[B,C]^{\sh}\times [A,B]^{\sh}\ar[r]\ar[d] &[A,C]^{\sh} \ar[d] \\ \overline{[B,C]}\times \overline{[A,B]}  \ar[r] &\overline{[A,C]} } $$
 over the Hausdorff quotients.
  \end{lem}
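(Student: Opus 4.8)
The plan is to reduce everything to the universal property of the Hausdorff quotient together with the continuity of composition already established in \cref{worpjegperfrfwefewrf}. First I would recall that $\overline{[B,C]}$ is the Hausdorff quotient, so by \cref{thkjptherhrtgergeg} (applied to weakly compact approximations of $A$ and $B$) the quotient maps $q_{B,C}:[B,C]^{\sh}\to\overline{[B,C]}$ and $q_{A,B}:[A,B]^{\sh}\to\overline{[A,B]}$ are open, surjective, and universal among continuous maps to Hausdorff spaces. Since $\bD$ is assumed to have countable filtered colimits and $A,B$ admit weakly compact approximations, \cref{worpjegperfrfwefewrf} gives that the composition
$$[B,C]^{\sh}\times[A,B]^{\sh}\xrightarrow{\ \circ\ }[A,C]^{\sh}$$
is continuous, and then postcomposing with $q_{A,C}:[A,C]^{\sh}\to\overline{[A,C]}$ yields a continuous map $[B,C]^{\sh}\times[A,B]^{\sh}\to\overline{[A,C]}$ to a Hausdorff space.

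The key step is then to check that this continuous map factors through the product of quotient maps $q_{B,C}\times q_{A,B}$. Set-theoretically this is immediate once one knows that the composition map sends pairs related by ``same image in the Hausdorff quotient'' to the same point of $\overline{[A,C]}$: if $f\sim f'$ in $[B,C]^{\sh}$ and $g\sim g'$ in $[A,B]^{\sh}$, meaning they agree after pullback along every weakly compact map into $B$, respectively $A$, then $f\circ g$ and $f'\circ g'$ agree after pullback along every weakly compact map into $A$, so they have the same image in $\lim_{n}[A_n,C]^{\disc}$ and hence in $\overline{[A,C]}$; here one uses that $\overline{[A,C]}$ is exactly this limit by \cref{thkjptherhrtgergeg}. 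For the topological factorization one then invokes the fact that a product of open surjective quotient maps is again a quotient map — equivalently, one argues directly: a subset $U\subseteq\overline{[B,C]}\times\overline{[A,B]}$ is open iff its preimage under $q_{B,C}\times q_{A,B}$ is open, because both $q_{B,C}$ and $q_{A,B}$ are open surjections. Hence the set-theoretic factorization is automatically continuous.

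Concretely I would write: the map $q_{A,C}\circ\circ:[B,C]^{\sh}\times[A,B]^{\sh}\to\overline{[A,C]}$ is continuous and constant on the fibers of $q_{B,C}\times q_{A,B}$, and since $q_{B,C}$ and $q_{A,B}$ are open surjections their product $q_{B,C}\times q_{A,B}$ is a topological quotient map; therefore the induced set-theoretic map $\overline{[B,C]}\times\overline{[A,B]}\to\overline{[A,C]}$ is continuous, which is precisely the commutativity of the square in the statement. The main obstacle — really the only nontrivial point — is justifying that $q_{B,C}\times q_{A,B}$ is a quotient map; in general products of quotient maps need not be quotient maps, but openness of the factors (which is exactly what \cref{thkjptherhrtgergeg} provides via the definition of the shape topology as the minimal topology making the evaluations $[B,C]^{\sh}\to[A',C]^{\disc}$ continuous, so that the quotient onto $\lim_n[B_n,C]^{\disc}$ is open) rescues the argument, and I would cite this standard fact about open maps rather than reprove it.
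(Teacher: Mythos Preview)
Your proof is correct and follows essentially the same approach as the paper: invoke \cref{worpjegperfrfwefewrf} for continuity of the upper composition, then use openness of the quotient maps (from the proof of \cref{thkjptherhrtgergeg}) to descend to the Hausdorff quotients. The paper's proof is terser and leaves implicit both the set-theoretic well-definedness and the fact that a product of open surjections is a quotient map, which you spell out explicitly.
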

  \begin{proof}
  The upper horizontal map is  continuous by \cref{worpjegperfrfwefewrf}.
This implies that the composition map in the lower sequence is continuous, too, since the maps to the Hausdorff quotients are open as seen in the proof of  \cref{thkjptherhrtgergeg}.
  \end{proof}

    We consider a system $(B_{n,m})_{(n,m)\in \nat\times \nat}$. We assume that $(B_{n,m})_{m\in \nat}$
    is a weakly compact approximation of $B_{n}:=\colim_{m\in \nat} B_{n,m}$.
   Finally we set $B:=\colim_{n\in \nat} B_{n}$ and 
   let $C$ be in $\bD$.
    \begin{lem}\label{erokjgperwgrewgwregw9}
    We have an isomorphism of topological spaces
    $$\overline{[B,C]}\stackrel{\cong}{\to} \lim_{n\in \nat}\overline{[B_{n},C]}\ .$$ 
    \end{lem}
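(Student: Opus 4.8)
The strategy is to produce a map in both directions and show they are mutually inverse homeomorphisms, reducing everything to the one-step case already treated in \cref{thkjptherhrtgergeg}. Since $B_n \simeq \colim_{m\in\nat} B_{n,m}$ with the structure maps $B_{n,m}\to B_{n,m+1}$ weakly compact, and $B \simeq \colim_{n\in\nat}B_n$, the diagonal system $(B_{n,n})_{n\in\nat}$ is cofinal in $(B_{n,m})_{(n,m)\in\nat\times\nat}$, so $B \simeq \colim_{n\in\nat}B_{n,n}$. Moreover the composite structure maps $B_{n,n}\to B_{n+1,n+1}$ factor as $B_{n,n}\to B_{n,n+1}\to B_{n,n+2}\to \dots \to B_{n+1,n+1}$ (using that, by assumption, one can interleave the two filtrations: for each $n$ we have $B_{n,m}\to B_{n+1}$ and $B_n\to B_{n+1}$; after passing to a cofinal subsystem one arranges that these are compatible, exactly as in the argument for \cref{gjiowergrwfwerfrf}), hence each such map is weakly compact as a composite of weakly compact maps. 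Therefore $(B_{n,n})_{n\in\nat}$ is a weakly compact approximation of $B$, and \cref{thkjptherhrtgergeg} gives a homeomorphism
$$ \overline{[B,C]} \;\stackrel{\cong}{\to}\; \lim_{n\in\nat}[B_{n,n},C]^{\disc}\ . $$

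\textbf{Key steps.}
First I would record the cofinality statement and the resulting fact that $(B_{n,n})_n$ is a weakly compact approximation of $B$, invoking \cref{thkjptherhrtgergeg} to identify $\overline{[B,C]}$ with $\lim_{n}[B_{n,n},C]^{\disc}$. Next, for each fixed $n$, $(B_{n,m})_{m\in\nat}$ is by hypothesis a weakly compact approximation of $B_n$, so \cref{thkjptherhrtgergeg} again yields $\overline{[B_n,C]} \cong \lim_{m}[B_{n,m},C]^{\disc}$, and in particular the diagonal cofinal subsystem gives $\overline{[B_n,C]} \cong \lim_{m}[B_{n,m},C]^{\disc}$ computed along any cofinal subsequence of $m$'s. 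Then I would assemble these: the canonical map $\overline{[B,C]} \to \lim_{n}\overline{[B_n,C]}$ (induced by restriction along $B_n\to B$, which is continuous by definition of the shape topology and descends to the Hausdorff quotients since those maps are open, cf.\ the proof of \cref{thkjptherhrtgergeg}) becomes, under the two identifications, the canonical comparison map
$$ \lim_{n\in\nat}[B_{n,n},C]^{\disc} \;\longrightarrow\; \lim_{n\in\nat}\Big(\lim_{m\in\nat}[B_{n,m},C]^{\disc}\Big)\ . $$
By a standard cofinality/Fubini argument for limits of discrete spaces indexed by $\nat^{\op}\times\nat^{\op}$ (the double limit over $(n,m)$ agrees with the limit over the diagonal, since $\nat^{\op}\times\nat^{\op}$ is cofiltered and the diagonal $\nat^{\op}\to\nat^{\op}\times\nat^{\op}$ is cofinal), this comparison map is a homeomorphism. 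Composing, $\overline{[B,C]} \to \lim_n \overline{[B_n,C]}$ is a homeomorphism.

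\textbf{Main obstacle.}
The delicate point is the construction of the weakly compact approximation $(B_{n,n})_{n\in\nat}$ of $B$ out of the doubly-indexed system: one needs the structure maps $B_{n,n}\to B_{n+1,n+1}$ to exist compatibly and to be weakly compact. The existence of such compatible diagonal maps is exactly the kind of interleaving argument carried out in \cref{gjiowergrwfwerfrf} in the $C^\ast$-algebraic setting and is implicit in the hypothesis that $(B_{n,m})_{m}$ approximates $B_n$ compatibly in $n$; I would state precisely which compatibility of the system $(B_{n,m})_{(n,m)}$ is being assumed (namely that it is a functor on $\nat\times\nat$ with the colimit over the second variable computing $B_n$ and the colimit of the $B_n$ computing $B$, so that one may pass to the diagonal), and then weak compactness of $B_{n,n}\to B_{n+1,n+1}$ follows since weakly compact maps form a two-sided ideal (\cref{qrjiofoqdedeqw}) and each such map is a finite composite of the given weakly compact structure maps. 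Once this is in place, everything else is the two purely formal facts: \cref{thkjptherhrtgergeg} and cofinality of the diagonal in a product of copies of $\nat^{\op}$. \hB
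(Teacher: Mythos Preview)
Your overall approach is correct and matches the paper's proof: use diagonal cofinality to see $(B_{n,n})_{n\in\nat}$ is a weakly compact approximation of $B$, apply \cref{thkjptherhrtgergeg} once for $B$ and once for each $B_n$, and identify the limits via Fubini and cofinality of the diagonal in $\nat^{\op}\times\nat^{\op}$.

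There is one genuine confusion in your write-up, in the ``Main obstacle'' paragraph. You misstate the hypothesis as ``the structure maps $B_{n,m}\to B_{n,m+1}$ weakly compact'', but the assumption is only that $(B_{n,m})_{m}$ is a weakly compact \emph{approximation} of $B_n$ (\cref{jigowergwerfewrfwerf}), i.e.\ the maps $B_{n,m}\to B_n$ are weakly compact; nothing is assumed about the successive maps $B_{n,m}\to B_{n,m+1}$ or the horizontal maps $B_{n,m}\to B_{n+1,m}$. Consequently your argument that $B_{n,n}\to B_{n+1,n+1}$ is weakly compact (via a factorization through maps you believe to be weakly compact) is unfounded. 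Fortunately this is also unnecessary: to apply \cref{thkjptherhrtgergeg} you only need $(B_{n,n})_n$ to be a weakly compact \emph{approximation} of $B$, i.e.\ $B_{n,n}\to B$ weakly compact, and this is immediate since it factors as $B_{n,n}\to B_n\to B$ with the first map weakly compact by hypothesis and weakly compact maps forming an ideal. No interleaving argument \`a la \cref{gjiowergrwfwerfrf} is needed; the doubly-indexed system is already given as a functor on $\nat\times\nat$, so the diagonal restriction exists automatically.
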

    \begin{proof} 
    The diagonal $\nat\to \nat\times \nat$ is cofinal and
    $(B_{n,n})_{n\in \nat}$ is a weakly compact approximation of $B$.
  We get isomorphisms
    \begin{align*}
\overline{[B,C]}& \cong \lim_{n\in \nat^{\op}} [B_{n,n},C]^{\disc} \cong
 \lim_{(n,m)\in \nat^{\op}\times \nat^{\op} } [B_{n,m},C]^{\disc}\\ & \cong
 \lim_{n\in \nat^{\op}} \lim_{m\in \nat^{\op}} [B_{n,m},C]^{\disc}  \cong
  \lim_{n\in \nat^{\op}}\overline{[B_{n},C]}
\end{align*}
   using \cref{thkjptherhrtgergeg} in the first and last step.\end{proof}
    
    The main point which makes \cref{erokjgperwgrewgwregw9} work is that the system $(B_{n})_{n\in \nat}$
    has a functorial choice of systems of compact approximations. It is not clear that
    such a choice exists in general. But if the objects $B_{n}$
  admit shapes for all $n$ in $\nat$ (see \cref{ekrogpegegferfwf}), then we can rigidify the choice of approximations by assuming that they represent the shapes.

    \begin{prop}\label{wokpteghwergwerfwerg9}
For any system $(B_{n})_{n\in \nat}$  in $\bD$ with $\colim_{n\in \nat} B_{n}\simeq B$    such that $B_{n} $ belongs to $\bD^{\shp}$ for all $n$ in $\nat$  we have an equivalence
    $$\overline{[B,C]}\stackrel{\cong}{\to} \lim_{n\in \nat^{\op}}\overline{[B_{n},C]}\ .$$ 
\end{prop}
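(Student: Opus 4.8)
The plan is to reduce the assertion to \cref{erokjgperwgrewgwregw9} by equipping the system $(B_n)_{n\in\nat}$ with a functorial choice of weakly compact approximations, which is possible precisely because each $B_n$ admits a shape. Two preliminary observations are in order. First, $\bD^{\shp}$ is closed under countable filtered colimits by \cref{werojgpwertgwerrfwerfw}, so $B\simeq \colim_{n\in\nat} B_n$ again lies in $\bD^{\shp}$; in particular $|S(B)|\simeq B$ by \cref{jwoirthopgfrfwrgre}. Second, the shape functor preserves countable filtered colimits (this is the computation in the proof of \cref{werojgpwertgwerrfwerfw}), so $S(B)\simeq \colim_{n\in\nat} S(B_n)$ in $\Ind^{\aleph_{1}}(\bD)$.

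The technical heart is the construction of a functor $(B_{n,m})_{(n,m)\in\nat\times\nat}$ in $\bD$ together with natural equivalences $\colim_{m\in\nat} y(B_{n,m})\simeq S(B_n)$. I would build this column by column in $n$, in analogy with \cref{gjiowergrwfwerfrf}: for $n=0$ choose any presentation of $S(B_0)$ as a countable filtered colimit of objects in the image of $y$; given the $n$-th column, the structure map $S(B_n)\to S(B_{n+1})$ together with the compactness of the objects $y(B_{n,m})$ lets one factor the composites $y(B_{n,m})\to S(B_n)\to S(B_{n+1})$ through finite stages of a chosen presentation of $S(B_{n+1})$, and after passing to a cofinal subsequence one obtains the $(n+1)$-st column together with a map of $\nat$-indexed systems from the $n$-th one. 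Making these factorizations coherent is the $\infty$-categorical input that I expect to be the main obstacle; it is the same issue that is resolved in the $C^{*}$-algebraic setting of \cref{gjiowergrwfwerfrf}, and can alternatively be carried out by a simultaneous-resolution argument for countable diagrams in $\Ind^{\aleph_{1}}(\bD)$, cf. \cite[Sec. 5.3]{htt}. This is exactly where the hypothesis $B_n\in\bD^{\shp}$ is used: it lets us rigidify the approximations so that they represent the shapes.

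It then remains to invoke \cref{erokjgperwgrewgwregw9}. By \cref{wrijotgwegrerfwerf9}, each $(B_{n,m})_m$ is a weakly compact approximation of $B_n\simeq |S(B_n)|\simeq \colim_m B_{n,m}$, and $\colim_n B_n\simeq B$ by the first paragraph together with the cofinality of the diagonal $\nat\to\nat\times\nat$. Hence $(B_{n,m})_{(n,m)}$ satisfies the hypotheses of \cref{erokjgperwgrewgwregw9}, which produces a homeomorphism $\overline{[B,C]}\xrightarrow{\cong}\lim_{n\in\nat^{\op}}\overline{[B_n,C]}$; a final routine check identifies it with the canonical map induced by the structure morphisms $B_n\to B$, using that $B_{n,n}\to B_n\to B$ and the approximation maps are weakly compact, so that the relevant maps of $\Hom$-sets are continuous for the shape topology and descend to the Hausdorff quotients.
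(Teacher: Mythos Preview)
Your proposal is correct and follows essentially the same route as the paper: reduce to \cref{erokjgperwgrewgwregw9} by inductively constructing a double system $(B_{n,m})_{(n,m)\in\nat\times\nat}$ with $\colim_m y(B_{n,m})\simeq S(B_n)$, then invoke \cref{wrijotgwegrerfwerf9}. Two minor remarks: your preliminary observations that $B\in\bD^{\shp}$ and $S(B)\simeq\colim_n S(B_n)$ are not actually used in the reduction (the hypotheses of \cref{erokjgperwgrewgwregw9} only require that each column is a weakly compact approximation of its colimit and that $\colim_n B_n\simeq B$, which is given); and the coherence issue you flag is handled in the paper simply by noting that the map $S(B_{k-1})\to S(B_k)$ is a point in $\lim_{m}\colim_{m'}\Map_\bD(B_{k-1,m},B'_{k,m'})$, which unwinds to a ladder diagram after reindexing the target column along an increasing function $m'\colon\nat\to\nat$.
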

\begin{proof}
We get a system $(S(B_{n}))_{n\in \nat}$ in $\Ind^{\aleph_{1}}(\bD)$. For every $n$ we choose a system $(B_{n,m}')_{m\in \nat}$ in $\bD$ such that $S(B_{n})\simeq \colim_{m\in \nat} y(B_{n,m}')$.
  We now define a system  $(B_{n,m})_{(n,m)\in \nat\times \nat}$
    inductively by $n$ such that   $\colim_{m\in \nat}y(B_{n,m})\simeq S(B_{n})$.  Then by \cref{wrijotgwegrerfwerf9} the system $(B_{n,m})_{m\in \nat}$ is a weakly compact approximation of $B_{n}$ for every $n$ in $\nat$ and  we can deduce the assertion from \cref{erokjgperwgrewgwregw9}.

  Assume we have constructed the system $(B_{n,m})_{\{0,\dots,k-1\}\times \nat}$.
  Then we consider the map $S(B_{k-1})\to S(B_{k})$ in $\Ind^{\aleph_{1}}(\bD)$, using the usual $\lim-\colim$-formula for the mapping {anima} in the $\Ind$-category, as a point in the {anima}
  $$\lim_{m\in \nat^{\op}}\colim_{m'\in \nat} \map_{\bD}(B_{k-1,m},B'_{k,m'})\ .$$
   This point can be represented by a diagram 
   $$\xymatrix{\dots\ar[r]&B_{k-1,m-1}\ar[d]\ar[r]&B_{k-1,m}\ar[d]\ar[r]&B_{k-1,m+1}\ar[d]\ar[r]&\dots\\\dots\ar[r]&B_{k,m'(m-1)}'\ar[r]&B_{k,m'(m)}'\ar[r]&B_{k,m'(m+1)}'\ar[r]&\dots}$$
   for some increasing function $m':\nat\to \nat$.   We then set $B_{k,m}:=B_{k,m'(m)}'$.
This finishes the induction step.
\end{proof}

From now one we assume that $\bD$ is stable.
Recall the \cref{jigowergwerfewrfwerf} of    a weakly compact exhaustion. Let $X$
 be a   light profinite space.
\begin{prop}\label{ewijrgowefgerferfw} We assume that   $B$   admits a weakly compact approximation.   \begin{enumerate}
\item  We have a canonical  surjective map of sets
$$\overline{[B,C^{X}]} \to  \Hom_{\Top}(X,\overline{[B,C]})\ .$$ \item If $B$ admits a weakly compact exhaustion, then this map is a bijection. 
\end{enumerate}
\end{prop}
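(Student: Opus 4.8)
\emph{Overview and the map of (1).} The plan is to build the map at the level of $[B,C^{X}]$, show it annihilates strong phantoms so that it descends to the Hausdorff quotient, and then treat surjectivity via the Milnor sequence and injectivity via a factorisation argument. Concretely, I would send $\tilde f\in[B,C^{X}]$ to the composite $X\xrightarrow{\ev(\tilde f)}[B,C]\to\overline{[B,C]}$, with $\ev$ as in \eqref{wergwergerferfwefwefwf}. Fix a weakly compact approximation $(B_{n})_{n\in\nat}$ of $B$. Under the identification $\overline{[B,C]}\cong\lim_{n}[B_{n},C]^{\disc}$ of \cref{thkjptherhrtgergeg} this composite becomes the family of maps $X\to[B_{n},C]^{\disc}$, each of which is locally constant by \cref{woigpregferferwfref}; hence the composite is continuous. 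Since $\ev$ is additive and the quotient map is a group homomorphism, we get a homomorphism $[B,C^{X}]\to\Hom_{\Top}(X,\overline{[B,C]})$, and it kills the closure of $0$ in $[B,C^{X}]^{\sh}$: by \cref{jierogwergwrefrewfwrf} that closure is the subgroup of strong phantoms, and if $\tilde f$ is a strong phantom then so is each $i_{x}^{\ast}\tilde f$ (strong phantoms form a two-sided ideal), so $\ev(\tilde f)(x)$ lies in the closure of $0$ in $[B,C]^{\sh}$ for all $x$ and the composite map is constant at $0$. Thus the homomorphism descends to $\overline{[B,C^{X}]}\to\Hom_{\Top}(X,\overline{[B,C]})$.

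\emph{Surjectivity.} Given a continuous $g\colon X\to\overline{[B,C]}\cong\lim_{n}[B_{n},C]^{\disc}$, write it as a compatible family $g_{n}\colon X\to[B_{n},C]^{\disc}$. Choose a presentation $X\cong\lim_{i\in\nat^{\op}}X_{i}$ with surjective structure maps; each $g_{n}$, being locally constant on the light profinite space $X$, factors through a surjection $p_{n}\colon X\to X_{i(n)}$, and we may take $i(n)$ increasing. As $X_{i(n)}$ is finite, the induced map $X_{i(n)}\to[B_{n},C]$ corresponds under $[B_{n},C^{X_{i(n)}}]\cong\prod_{X_{i(n)}}[B_{n},C]$ to an element $f_{n}$, which we push into $\colim_{i}[B_{n},C^{X_{i}}]$. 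Compatibility of $(g_{n})$ and surjectivity of $p_{i(n+1)}$ show that $(f_{n})_{n}$ is a compatible family in $\lim_{n\in\nat^{\op}}\colim_{i}[B_{n},C^{X_{i}}]$; its image in $\lim_{n\in\nat^{\op}}[B_{n},C^{X}]$ lifts, by surjectivity of the second map in the Milnor sequence \eqref{vfqefcfdvsfdvre} for $B\simeq\colim_{n}B_{n}$, to some $\hat f\in[B,C^{X}]$. Finally I would check that $\hat f$ induces $g$: for $x\in X$ the $n$-th component in $[B_{n},C]$ of the image of $\ev(\hat f)(x)$ in $\overline{[B,C]}$ is $i_{x}^{\ast}(\hat f|_{B_{n}})=i_{p_{n}(x)}^{\ast}f_{n}=g_{n}(x)$, using that evaluation at $x$ on $C^{X}\simeq\colim_{i}C^{X_{i}}$ restricted to the level $C^{X_{i(n)}}$ is evaluation at $p_{n}(x)$.

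\emph{Injectivity when $B$ admits a weakly compact exhaustion.} Suppose $\tilde f\in[B,C^{X}]$ induces the zero map, so every $i_{x}^{\ast}\tilde f\colon B\to C$ is a strong phantom. Let $a\colon A\to B$ be weakly compact; I would apply \cref{jowtogpwgerferfwefewfref} to the system $(B_{n})$ and $\mathrm{id}_{B}$ to factor $a=\iota_{n}\circ a'$ through a structure map $\iota_{n}\colon B_{n}\to B$. Because $(B_{n})$ is an \emph{exhaustion}, $\iota_{n}$ is a finite composite of weakly compact maps, hence weakly compact, so $\tilde f\circ\iota_{n}$ is weakly compact and by \cref{jowtogpwgerferfwefewfref} applied to the system $(C^{X_{i}})_{i}$ it factors as $B_{n}\xrightarrow{h}C^{X_{i}}\to C^{X}$ for some finite $X_{i}$. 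For $x_{i}\in X_{i}$ and any preimage $x\in X$, the $x_{i}$-component of $h$ equals $(i_{x}^{\ast}\tilde f)\circ\iota_{n}=0$ (a strong phantom precomposed with the weakly compact $\iota_{n}$), so $h=0$ and $\tilde f\circ a=(\tilde f\circ\iota_{n})\circ a'=0$. Hence $\tilde f$ is a strong phantom, so it lies in the closure of $0$ by \cref{jierogwergwrefrewfwrf} and maps to $0$ in $\overline{[B,C^{X}]}$.

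\emph{Main obstacle.} All ingredients are already in place; the only genuine work is the bookkeeping in the surjectivity step — choosing the finite quotients $X_{i(n)}$ coherently, verifying the compatibility of $(f_{n})$, and confirming that the $\lim^{1}$-lift $\hat f$ reproduces the prescribed map $g$ at every point of $X$ and every level $B_{n}$ — which is routine but must be carried out with care.
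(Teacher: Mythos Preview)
Your argument is correct and follows essentially the same route as the paper. For surjectivity the paper takes a slightly slicker path: it lifts $g$ set-theoretically to $\hat f\colon X\to[B,C]$, observes that each composite $X\to[B,C]\to[B_n,C]$ agrees with $X\to\overline{[B,C]}\to[B_n,C]$ and is therefore continuous, concludes $\hat f$ is continuous for the shape topology, and then invokes \cref{jewrogfwerfrfwf} as a black box. Your construction of the element in $\lim_n\colim_i[B_n,C^{X_i}]$ followed by the Milnor lift is precisely the content of that proposition, unpacked. For injectivity the paper checks $\tilde f|_{B_n}=0$ for each $n$ (equivalent to your strong-phantom conclusion via \cref{wekgowpefefwrf}), factoring $\tilde f|_{B_{n+1}}$ through a finite $C^Y$ and then using weak compactness of $B_n\to B_{n+1}$; your version tests against an arbitrary weakly compact $a$ and factors through the approximation, which comes to the same thing.

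One genuine slip: you write that $\iota_n\colon B_n\to B$ ``is a finite composite of weakly compact maps, hence weakly compact''. It is not a finite composite of the structure maps $B_k\to B_{k+1}$; it is the colimit structure map $B_n\to\colim_kB_k$. The correct reason $\iota_n$ is weakly compact is simply that a weakly compact exhaustion is in particular a weakly compact approximation (noted right after \cref{jigowergwerfewrfwerf}), so $B_n\to B$ is weakly compact by definition. With this correction your injectivity argument goes through unchanged.
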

\begin{proof}
The  map is given by the  lower horizontal map in
$$\xymatrix{[B,C^{X}]\ar[r]^-{\ev}\ar[d] & \Hom_{\Top}(X,  [B,C] ) \ar[d] \\ \overline{[B,C^{X}]}\ar[r] & \Hom_{\Top}(X,\overline{[B,C]}) }$$
which exists since if $f$ in $ [B,C^{X}]$ is {strong} phantom, then $i_{x}^{*}f$ in $[B,C]$ is {strong} phantom for every $x$ in $X$. Here we use that by \cref{jierogwergwrefrewfwrf} the kernels of the left- and right vertical maps are given by
phantom maps or maps which have values in phantom maps, respectively.
 
 We now  show that this map is  surjective. 
 We fix a weakly compact approximation   $(B_{n})_{n\in \nat}$ for $B$.   We consider $f$ in $ \Hom_{\Top}(X,\overline{[B,C]})$.
 Let $\hat f:X\to   [B,C]$ be any set-theoretic lift. Then $X\xrightarrow{\hat f} [B,C]\to [B_{n},C]$
 coincides with $X\xrightarrow{f}  \overline{[B,C]}\to [B_{n},C]$ which is continuous. 
 Since $n$ is arbitrary we conclude by \cref{rqjifoergstgsgfgfg} that $\hat f$ is continuous.
 By \cref{jewrogfwerfrfwf} there exists $\tilde f$ in $[B,C^{X}]$  
 which represents $\hat f$ up to a map with values in strong phantom maps.
 This implies that the image of $\hat f$ in $ \overline{[B,C]}$ is the desired preimage of $f$.
 
 We now assume that   $(B_{n})_{n\in \nat}$ is a  weakly compact exhaustion for $B$ and show injectivity.
 Let $\bar f$ be  in $\overline{[B,C^{X}]}$ and assume that it goes to zero in $\Hom_{\Top}(X,\overline{[B,C]})$.
 Let $\tilde f$ in $ [B,C^{X}]$ be a preimage of $\bar f$.
 In order to check that $\bar f=0$ it suffices to check that the image of $\tilde f$ under $[B,C^{X}]\to [B_{n},C^{X}]$ vanishes for all $n$ in $\nat$.
 
 Let us fix $n$ in  $\nat$. Then   the image of $\tilde f$ in $[B_{n+1},C^{X}]$  is the image of a map $\tilde f'$ in $[B_{{n+1}},C^{Y}]$ under the canonical map
 $[B_{{n+1}},C^{Y}]\to  [B_{{n+1}},C^{X}]$, where $X\to Y$ is a quotient to a suitable finite set $Y$.
 
 Since $i_{x}^{*}\tilde f$ is a strong phantom map for every $x$ in $X$ we know that
 $i_{x}^{*}\tilde f=i_{y}^{*}\tilde f'$ is a strong phantom map, where $y$ is the image of $x$ in $Y$. 
 Since the structure map $B_{n}\to B_{n+1}$ is weakly compact
 we can now conclude that
 the image of $\tilde f'$ under $[B_{n+1},C^{Y}]\to [B_{n},C^{Y}]$ vanishes. 
 This implies that  the image of $\tilde f$ under $[B,C^{X}]\to [B_{n},C^{X}]$ vanishes.
    \end{proof}

Assume that all objects of $\bD$ have weakly compact approximations.
\begin{ddd} 
We can form the additive topologically enriched category $\bar \bD$ with the same objects as $\bD$, and with the topological morphism groups $$\Hom_{\bar \bD}(B,C):=\overline{[B,C]}$$ between objects $B,C$. 
\end{ddd}

The composition in $\bD$ is well-defined and continuous 
by \cref{wjeirgowergregwf9}. 
We have a functor
$$\bar \ho: \bD\to \ho\bD \to \bar \bD\ .$$

Recall from \cref{ergkoperwgwerfwefref} that $ \bD^{\shp}$ is the full subcategory   of objects  $C$ admitting shapes. By \cref{werojgpwertgwerrfwerfw}   it  is stable and closed under countable  colimits.
 Then \cref{wokpteghwergwerfwerg9} says:
\begin{kor} \label{kopgpertherhetrhgeget}
The restriction $\bar \ho_{|\bD^{\shp}}:\bD^{\shp}\to \bar \bD$  preserves countable  filtered colimits.
\end{kor}
 By \cref{ewijrgowefgerferfw} 
 we have:
 \begin{kor} For any two objects $B,C$ of $\bD$
 we have a canonical surjective map $$\Hom_{\bar \bD}(B,C^{X})\to \Hom_{\Top}(X,\Hom_{\bar \bD}(B,C))$$
 which is bijective if $B$  has a weakly compact exhaustion.
 \end{kor}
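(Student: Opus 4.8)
The statement is an immediate unpacking of \cref{ewijrgowefgerferfw} once one recalls the standing hypotheses of this subsection. The plan is as follows. First I would invoke the blanket assumption made just before the definition of $\bar\bD$, namely that every object of $\bD$ admits a weakly compact approximation; in particular $B$ does, so \cref{ewijrgowefgerferfw} applies to the pair $(B,C)$ (and, in part (ii), to $(B,C)$ together with a weakly compact exhaustion of $B$). Second, I would rewrite everything in terms of the morphism groups of $\bar\bD$: by definition $\Hom_{\bar\bD}(B,C^{X})=\overline{[B,C^{X}]}$ and $\Hom_{\bar\bD}(B,C)=\overline{[B,C]}$, so that the target $\Hom_{\Top}(X,\Hom_{\bar\bD}(B,C))$ is precisely $\Hom_{\Top}(X,\overline{[B,C]})$.

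With these identifications in place, the canonical map of the corollary is literally the canonical map produced in \cref{ewijrgowefgerferfw}, obtained by passing the evaluation $\ev:[B,C^{X}]\to \Hom_{\Top}(X,[B,C])$ to Hausdorff quotients on both sides (this is legitimate because, by \cref{jierogwergwrefrewfwrf}, the kernels of the two quotient maps are the phantom maps, respectively the maps with pointwise phantom values, and $\ev$ sends the former into the latter). Surjectivity is exactly part (i) of \cref{ewijrgowefgerferfw}, and bijectivity under the additional hypothesis that $B$ admits a weakly compact exhaustion is part (ii). So the body of the proof is a single sentence citing \cref{ewijrgowefgerferfw}.

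There is no real obstacle here: all the substance — the surjectivity argument via \cref{jewrogfwerfrfwf} and \cref{rqjifoergstgsgfgfg}, and the injectivity argument under an exhaustion via weak compactness of the structure maps — has already been carried out in \cref{ewijrgowefgerferfw}. The only thing to be a little careful about is that the phrase ``canonical map'' in the corollary refers to the same map as in \cref{ewijrgowefgerferfw}; I would make that explicit by recalling that it is induced by the evaluation $\ev$, so that its naturality in $B$, $C$ and $X$ (and hence its well-definedness as a map of $\bar\bD$-morphism sets) is inherited from that of $\ev$ together with the universal property of the Hausdorff quotient.
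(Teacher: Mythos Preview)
Your proposal is correct and matches the paper's approach exactly: the paper proves this corollary by a one-line citation of \cref{ewijrgowefgerferfw}, and your argument is precisely that, together with the (entirely appropriate) unpacking of the definitions $\Hom_{\bar\bD}(B,C^{X})=\overline{[B,C^{X}]}$ and $\Hom_{\bar\bD}(B,C)=\overline{[B,C]}$ and the standing assumption that all objects admit weakly compact approximations.
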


\begin{prop}\label{wekotgpwgrefwf} If $\bD$ is stable and compactly assembled,  
then the functor $\bar \ho:\bD\to \bar \bD$ is conservative. 
\end{prop}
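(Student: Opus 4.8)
## Proof plan for: if $\bD$ is stable and compactly assembled, then $\bar\ho:\bD\to\bar\bD$ is conservative.

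\textbf{Setup.} The claim is that if $f:B\to C$ is a morphism in $\bD$ whose image $\bar\ho(f)$ in $\overline{[B,C]}$ is an isomorphism in $\bar\bD$, then $f$ is an equivalence in $\bD$. Since $\bD$ is stable, it suffices to show that $\cone(f)\simeq 0$. The plan is to reduce to showing that the identity of $D:=\cone(f)$ is a strong phantom morphism, and then invoke the corollary (after \cref{weokrgpwregrefrfw9}) that a strong phantom object admitting a weakly compact exhaustion is trivial; since $\bD$ is \pcas{}, $D$ does admit a (strongly, hence weakly) compact exhaustion by \cref{regijfowergffreferwfw}.

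\textbf{Key steps.} First I would observe that $\bar\ho(f)$ being an isomorphism in $\bar\bD$ means precisely that $f$ is invertible in $\ho(\bD)$ \emph{modulo strong phantoms}: by \cref{jierogwergwrefrewfwrf} the kernel of $[B,C]^{\sh}\to\overline{[B,C]}$ is exactly the subgroup of strong phantom maps (here $B$ has a weakly compact approximation since $\bD$ is \pcas{}). So there is $g:C\to B$ in $\ho(\bD)$ with $g\circ f-\id_B$ and $f\circ g-\id_C$ both strong phantoms. Then I would use \cref{weijogperfwrefregdghfg}: since $\id_B$ is an equivalence and $g\circ f-\id_B=:\phi$ is a strong phantom with $B$ admitting a weakly compact exhaustion (again by \cref{regijfowergffreferwfw}, $\bD$ being \pcas{}), the map $g\circ f=\id_B+\phi$ — wait, more precisely $\id_B-(-\phi)$ — is an equivalence; similarly $f\circ g$ is an equivalence. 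Hence $f$ is both split mono and split epi in $\ho(\bD)$, so $f$ is an isomorphism in $\ho(\bD)$, and therefore an equivalence in $\bD$.

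\textbf{Main obstacle.} The delicate point is \cref{weijogperfwrefregdghfg} itself and the algebra with phantoms: the identity $(f-\phi)^{-1}\simeq f^{-1}+f^{-1}\circ\phi\circ f^{-1}$ used there requires that $f^{-1}\circ\phi\circ f^{-1}$ be a \emph{composable with itself to zero}-type element, i.e. one needs the composition of two strong phantoms to vanish, which is \cref{weokrgpwregrefrfw9} and relies on the existence of weakly compact exhaustions of the source. All these hypotheses are available because $\bD$ is \pcas{}. A secondary subtlety is keeping track of the fact that $g$ a priori lives only in $\ho(\bD)$, not in $\bD$; but since we only need to prove $\cone(f)\simeq 0$, and an equivalence in $\ho(\bD)$ between $B$ and $C$ implies $\cone(f)$ is a zero object (a morphism in a stable $\infty$-category is an equivalence iff it is an isomorphism in the homotopy category), this causes no problem. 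Thus the argument is essentially a formal consequence of \cref{jierogwergwrefrewfwrf}, \cref{weokrgpwregrefrfw9}, \cref{weijogperfwrefregdghfg}, and \cref{regijfowergffreferwfw}, the last supplying the weakly compact exhaustions that all the phantom lemmas require.
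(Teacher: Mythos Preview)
Your proposal is correct and takes essentially the same approach as the paper: pick a lift $g$ of an inverse of $\bar\ho(f)$, observe that $g\circ f-\id_B$ and $f\circ g-\id_C$ are strong phantoms by \cref{jierogwergwrefrewfwrf}, and apply \cref{weijogperfwrefregdghfg} to conclude that $g\circ f$ and $f\circ g$ are equivalences. The initial detour via $\cone(f)$ in your setup is unnecessary (and you abandon it yourself in the key steps), but the argument you actually give matches the paper's.
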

\begin{proof}
Let $f:B\to C$ be a morphism in $\bD$ such that its image 
 $\bar f$ in $\Hom_{\bar \bD}(B,C)$ is invertible.
   Let $\bar g$   in $\Hom_{\bar \bD}(C,B)$ be an inverse of $\bar f$ represented by a morphism $g:C\to B$. Then $\phi:= \id_{B}- g\circ f $ is a phantom.
We can conclude by  \cref{weijogperfwrefregdghfg}
  that  $g\circ f=\id_{B} -\phi$ is an equivalence.
Similarly we show that $f\circ g$ is an equivalence.
This implies that $f$ is an equivalence.

\end{proof}

 \subsection{Analysis in $\AsGc$ and $  \EsG$}\label{wgjkopwetgerwfrefwrfwref}
 
 We have two interesting examples of  \pcas{} $\infty$-categories 
 $\AsGc$  (see \cref{vsiowerfsdfuiohsdfoijasdf1}) and $  \EsG$ (see \cref{vsiowerfsdfuiohsdfoijasdf}) to which we can specialize the 
 general results about the topological $\Hom$-spaces and groups from the preceding sections. We note that both  {categories} are pointed, left-exact and  admit countable filtered colimits which commute with  finite limits.
   Furthermore, $ \EsG$ is stable.
By \cref{ewrogjpwergwerfwerfwrfw} the homotopy category of $\AsGc$  can be identified with the homotopy category of asymptotic morphisms  $\AsG$ from \cite{Guentner_2000}, and (see e.g. \cref{jwegoeferfrefwerf}) the homotopy category of $\EsG$ is the classical equivariant $E$-theory category $\EsGn$. 

In the non-equivariant case  the $\Hom$-sets of  $\AsG$ have been equipped with topologies in  \cite{Carrion:2023aa}. In this section  we show that these topologies coincide with the ones induced by
the  \pcas{}  $\infty$-category $\AsGc$, and that many of the results from 
\cite{Carrion:2023aa} are special cases of the statements shown in the preceding sections. 
Similarly, in the case of $E$-theory we will show that the topology on the $\Hom$-groups of $\EsGn$ constructed in the non-equivariant case in \cite{Carrion:2023aa} coincides with the one 
induced by the  \pcas{}  $\infty$-category $ \EsG$, and that the statements about this topology 
from \cite{Carrion:2023aa} are {again} special cases of the general facts shown in  in the preceding sections.

   A new aspect   is that we can compare the analysis in $G\nCalg_{{\sepa}}$ with the analysis in {$\EsG$ or $\AsGc$} 
   via the functors $\esG:G\nCalg_{\sepa}\to \EsG$ (see \cref{wrejgioweferfw}) or  $\asGc :G\nCalg_{\sepa}\to \AsGc$ (see \eqref{543t445ts}).
   
 For $B$ in $G\nCalg_{\sepa}$ we  have two functors 
$(\Profinl)^{\op}\to  \EsG$
$$X\mapsto \esG (C(X,B))\ , \quad  X\mapsto \esG(B)^{X}\ ,$$
where the second uses the power functor   \eqref{qewfojpqdwqwededqwdqweded}. 
Similarly we have functors $(\Profinl)^{\op}\to  \AsGc$
$$X\mapsto  \asGc (C(X,B))\ , \quad  X\mapsto \asGc (B)^{X}\ .$$

 \begin{prop}\label{ojiviopwgfrwerferfwf}
We have  {natural equivalences} of functors \begin{eqnarray}\label{fkowerpkfperferfwre}\esG (B)^{-}&\stackrel{\simeq}{\to} & \esG (C(-,B)) :(\Profinl)^{\op} \to  \EsG \\
\asGc  (B)^{-}&\stackrel{\simeq}{\to}&  \asGc  (C(-,B)) :(\Profinl)^{\op} \to  \AsGc \nonumber  \ . \end{eqnarray} 

\end{prop}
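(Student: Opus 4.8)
The plan is to reduce both equivalences to a single statement about the universal functors and then verify it using the universal properties and the compatibility of the power functor with colimits and finite products. First I would observe that for a \emph{finite} set $X$ we have a canonical isomorphism $C(X,B)\cong \prod_{X}B$ in $G\nCalg_{\sepa}$, and since $\esG$ is exact it preserves finite products (finite products are finite limits in a stable category, hence also finite coproducts, and these are preserved by an exact functor). Thus on the full subcategory $\Fin^{\op}\subseteq(\Profinl)^{\op}$ we already have a natural equivalence $\esG(B)^{X}\simeq \esG(C(X,B))$, and likewise for $\asGc$ using that $\asGc$ is left-exact (\cref{wreig90wregfwerf}.\ref{regkopwergerferfrewfw2}) so that it preserves finite products. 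The same remark applies verbatim to $\asGc$.

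Next I would extend this equivalence from finite sets to all light profinite spaces by a left Kan extension argument. By definition \eqref{qewfojpqdwqwededqwdqweded} the functor $X\mapsto \esG(B)^{X}$ on $(\Profinl)^{\op}$ is the left Kan extension along $\Fin^{\op}\to(\Profinl)^{\op}$ of its restriction to $\Fin^{\op}$. Concretely, writing a light profinite space as $X\simeq\lim_{i\in\nat^{\op}}X_{i}$ with the $X_{i}$ finite, we have $\esG(B)^{X}\simeq\colim_{i\in\nat}\esG(B)^{X_{i}}$. On the other hand $C(X,B)\cong\colim_{i\in\nat}C(X_{i},B)$ in $G\nCalg_{\sepa}$, since $C(-,B)$ sends the cofiltered limit $X\cong\lim_{i}X_{i}$ of finite sets to the filtered colimit of the $C(X_{i},B)=\prod_{X_{i}}B$ (this is the standard fact that functions on a profinite space are the colimit of functions on the finite quotients, and it uses that the structure maps may be taken surjective). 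Now $\esG$ preserves countable filtered colimits by \cref{cgrgregeffweeeee} (equivalently by \cref{weijgwoefrewfwdffsfd}), so $\esG(C(X,B))\simeq\colim_{i\in\nat}\esG(C(X_{i},B))\simeq\colim_{i\in\nat}\esG(B)^{X_{i}}\simeq\esG(B)^{X}$, naturally in $X$. For $\asGc$ the corresponding statement is \cref{uqwighfuihvsdiohjfhjkbvdxc}, which says $\asGc$ preserves countable filtered colimits, so the same chain of equivalences goes through.

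To make this a genuine natural equivalence of functors $(\Profinl)^{\op}\to\EsG$ rather than a pointwise one, I would package it as follows: the comparison map $\esG(B)^{X}\to\esG(C(X,B))$ is the mate of the natural transformation on $\Fin^{\op}$ obtained from the isomorphisms $C(X,B)\cong\prod_{X}B$, using the universal property of the left Kan extension defining $(-)^{X}$; it is then automatically natural in $X$, and the computation in the previous paragraph shows it is an equivalence on each object of $(\Profinl)^{\op}$, hence an equivalence of functors. The main obstacle I anticipate is purely bookkeeping: one has to be careful that the presentation $X\simeq\lim_{i}X_{i}$ can be chosen with surjective transition maps (so that $C(-,B)$ really is the filtered colimit along these maps and the colimit is computed over a genuinely $\nat$-indexed diagram), and that the symmetric-monoidal or at least product-preserving structure used on $\Fin^{\op}$ matches the one implicit in the power functor \eqref{qewfojpqdwqwededqwdqweded}; once these identifications are pinned down the rest is an immediate consequence of exactness (resp. left-exactness) and countable-filtered-colimit preservation of $\esG$ and $\asGc$.
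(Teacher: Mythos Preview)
Your proof is correct and follows essentially the same strategy as the paper: first establish the equivalence on $\Fin^{\op}$ using that $\esG$ (resp.\ $\asGc$) preserves finite products, then extend to $(\Profinl)^{\op}$ via the left Kan extension definition of the power functor together with the fact that $\esG$ (resp.\ $\asGc$) preserves countable filtered colimits (\cref{cgrgregeffweeeee}, resp.\ \cref{uqwighfuihvsdiohjfhjkbvdxc}). One small correction: your justification for $\asGc$ preserving finite products cites \cref{wreig90wregfwerf}.\ref{regkopwergerferfrewfw2}, which is about the \emph{category} $\AsGc$ being left-exact, not the functor; the paper instead invokes Schochet exactness of $\asGc$ (any map $B\to 0$ is a Schochet fibration, so finite products are Schochet-fibrant pullbacks and are preserved).
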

\begin{proof}
We consider the case of $\esG$. The case of $\asGc $ is analoguous.
We first  construct the  natural transformation in \eqref{fkowerpkfperferfwre}.
 Since the domain 
  is defined by a left Kan extension 
along $\Fin^{\op}\to (\Profinl)^{\op}$, it suffices to construct a natural transformation of the restriction
\begin{equation}\label{fweq9if09i09qwedewdqwedwdqwed} \esG (B)^{-}\to \esG (C(-,B)) :\Fin^{\op}\to\EsG \ .
\end{equation} 
Since both functors in \eqref{fweq9if09i09qwedewdqwedwdqwed} send finite disjoint unions to products  (for the right-hand sides this follows from the exactness of $\esG$, and in the case of $\asGc $ we employ Schochet exactness)
they are both left  Kan-extensions of their restriction to $ *$.  
The canonical equivalence 
$$ \esG (C(*,B))\simeq  \esG (B)^{*}$$
thus fixes an equivalence \eqref{fweq9if09i09qwedewdqwedwdqwed}.
If $X\cong \lim_{i\in I}X_{i}$ is a presentation of $X$ as a countable cofiltered limit of finite sets, then
$C(X,B)\cong \colim_{i\in I^{\op} } C(X_{i},B)$. Since
$\esG $ preserves countable filtered colimits by \cref{cgrgregeffweeeee} (or \cref{uqwighfuihvsdiohjfhjkbvdxc} for $\asGc $),
we see that \eqref{fkowerpkfperferfwre} is an equivalence, too.
 \end{proof}

 \begin{prop}\label{wkjoergopwerfwerfvfsvfdvsdvsdfv} For every separable  compact Hausdorff space $X$ and $A,B$ in $G\nCalg_{\sepa}$ we
have  {canonical maps} \begin{eqnarray*}[ \esG (A), \esG (C(X,B))] &\to& \Hom_{\Top}(X,[ \esG (A), \esG (B)]^{\sh}) \\{}[ \asGc  (A), \asGc (C(X,B))] &\to& \Hom_{\Top}(X,[ \asGc  (A), \asGc  (B)]^{\sh})\ .\end{eqnarray*}
If $X$ is a light profinite  space, then {their   images  consist} precisely of the
representable maps.
\end{prop}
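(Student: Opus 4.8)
The plan is to treat both cases uniformly; write $F$ for either $\esG$ or $\asGc$ and recall that $F$ preserves countable filtered colimits (by \cref{cgrgregeffweeeee}, respectively \cref{uqwighfuihvsdiohjfhjkbvdxc}) and that by \cref{ewrogjpwergwerfwerfwrfw} and \cref{mnxcviohsdfioasdojkloeqf} the target is a \pcas{} $\infty$-category, so every object admits a weakly compact approximation. First I would construct the map for a general separable compact Hausdorff space $X$. Using the $*$-homomorphisms $C(X,B)\to C(\{x\},B)\cong B$ induced by the points $x$ of $X$, we obtain for each $x$ in $X$ a map $[F(A),F(C(X,B))]\to [F(A),F(B)]$, hence a set-theoretic map $[F(A),F(C(X,B))]\to \Hom_{\Set}(X,[F(A),F(B)])$. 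To see that this lands in continuous maps into the shape topology, I would fix a weakly compact morphism $A'\to F(A)$ and check that the composite $X\to [F(A),F(B)]\to [A',F(B)]$ is locally constant; here one writes $C(X,B)$ as a colimit over a cofiltered family of quotients to compact metrizable, then finite, spaces only in the light case, so for the general step I would instead argue directly that the map $x\mapsto (A'\to F(A)\xrightarrow{f} F(C(X,B)) \to F(B))$ only depends on $x$ through a suitable finite partition — this works because, by weak compactness of $A'\to F(A)$ together with the description of $C(X,B)$ as a filtered colimit of $C(X_i,B)$ for $X_i$ ranging over continuous images in finite spaces, $f$ restricted to $A'$ factors through some $F(C(X_i,B))$, and $C(X_i,B)\cong\prod_{X_i}B$, so the evaluation is locally constant. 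That gives the asserted continuous map.

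For the identification of the image in the light profinite case I would combine \cref{ojiviopwgfrwerferfwf} with \cref{ewijrgowefgerferfw}. Write $X\cong\lim_{i\in I}X_i$ as a countable cofiltered limit of finite sets. By \cref{ojiviopwgfrwerferfwf} we have $F(C(X,B))\simeq F(B)^{X}$, so $[F(A),F(C(X,B))]\cong [F(A),F(B)^{X}]$, and under this identification the map constructed above becomes exactly the evaluation map $\ev\colon[F(A),F(B)^{X}]\to\Hom_{\Set}(X,[F(A),F(B)])$ from \eqref{wergwergerferfwefwefwf}. By definition, the image of $\ev$ is the set of representable maps $X\to[F(A),F(B)]^{\cd}$. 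Since $F(A)$ admits a weakly compact approximation, \cref{rejiogerfwerfwrefrwef} gives that the light condensed and shape topologies on $[F(A),F(B)]$ coincide, so the representable maps for the condensed topology are precisely the representable maps for the shape topology, and these are exactly the maps into $[F(A),F(B)]^{\sh}$ that arise from elements of $[F(A),F(C(X,B))]$. This closes the proof.

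I expect the main obstacle to be the local-constancy verification in the first part when $X$ is a general compact Hausdorff space rather than light profinite: one must be careful that $C(X,B)$ is genuinely a filtered colimit of the $C(X_i,B)$ over continuous surjections $X\to X_i$ onto sufficiently fine spaces — for second countable $X$ these $X_i$ can be taken compact metrizable, and then a further colimit presentation using that $B$ itself is separable — and that the weak compactness of $A'\to F(A)$ really does allow one to commute the $\Hom$ past this colimit; this is exactly where \cref{woigpregferferwfref} is the model statement, and I would phrase the argument so that the light profinite case reduces cleanly to that lemma while the compact Hausdorff case uses the same idea with finite replaced by metrizable/separable. The remaining steps are then formal applications of \cref{ojiviopwgfrwerferfwf}, \cref{rejiogerfwerfwrefrwef}, and the definitions of the two topologies, and I would keep them brief.
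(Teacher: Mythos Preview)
Your approach for the light profinite case (the second assertion) is correct and matches the paper: both reduce to \cref{ojiviopwgfrwerferfwf}, which identifies $F(C(X,B))\simeq F(B)^X$, so the constructed map is literally the evaluation $\ev$ of \eqref{wergwergerferfwefwefwf} and its image is by definition the set of representable maps.

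The gap is in the first assertion, for general (second countable) compact Hausdorff $X$. Your strategy is to verify continuity into the shape topology by checking local constancy of $x\mapsto (A'\to F(A)\to F(B))$ for every weakly compact $A'\to F(A)$, and for this you want a filtered colimit presentation $C(X,B)\cong \colim_i C(X_i,B)$ with the $X_i$ finite, so that weak compactness of $A'$ forces a factorization through some $F(B)^{X_i}$. But no such presentation exists when $X$ is not light profinite: for $X=[0,1]$, the algebra $C([0,1],B)$ is not a filtered colimit of $\prod_{X_i}B$ over finite $X_i$ in $G\nCalg_{\sepa}$. Your suggested workaround (take $X_i$ compact metrizable, then use separability of $B$) does not close the gap, since you still need the $X_i$ to be finite for $\prod_{X_i}$ to make sense and for the factorization argument of \cref{woigpregferferwfref} to apply. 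You correctly flag this as the main obstacle, but you do not resolve it.

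The paper's proof takes a different and shorter route that avoids this issue entirely. Since a second countable compact Hausdorff space is metrizable and hence sequential, continuity of $X\to [F(A),F(B)]^{\sh}$ can be tested on convergent sequences, i.e., on continuous maps $t:\nat_+\to X$. For each such $t$ one pulls back along the induced homomorphism $C(X,B)\to C(\nat_+,B)$ to obtain $t^*f$ in $[F(A),F(C(\nat_+,B))]$. Now $\nat_+$ \emph{is} light profinite, so \cref{ojiviopwgfrwerferfwf} gives $F(C(\nat_+,B))\simeq F(B)^{\nat_+}$, and $t^*f$ therefore represents the map $\nat_+\to [F(A),F(B)]$, which is then continuous for the light condensed topology by definition and hence for the shape topology by \cref{rejiogerfwerfwrefrwef}. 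The key idea you are missing is this reduction to $\nat_+$ via sequentiality; it replaces the non-existent global colimit presentation of $C(X,B)$ by the existent one for $C(\nat_+,B)$.

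Two minor remarks: the references you give for compact assembly of the targets should be \cref{vsiowerfsdfuiohsdfoijasdf} and \cref{vsiowerfsdfuiohsdfoijasdf1} rather than \cref{ewrogjpwergwerfwerfwrfw} and \cref{mnxcviohsdfioasdojkloeqf}; and \cref{ewijrgowefgerferfw}, which you invoke, concerns Hausdorff quotients and is not needed here.
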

\begin{proof} We again consider the case of $\esG$. The case of $\asGc $ is analoguous.
For the first assertion we must show that the map of  sets 
$$[ \esG (A), \esG (C(X,B))] \to \Hom_{\Set}(X,[ \esG (A), \esG (B)])$$
sending $f$ in $[ \esG (A), \esG (C(X,B))]$ to the function
$x\mapsto i_{x}^{*}f$ takes values in continuous maps.  Continuity of a map out of a separable  compact  Hausdorff space
can be tested with sequences. 

Let $f$ be an element in $[ \esG (A), \esG (C(X,B))]$.
We consider a continuous map
  $t:\nat_{+}\to X$.
Then we have a square  in $\ho \EsG $.
$$\xymatrix{  [\esG (A) ,\esG (C(X,B))] \ar[r]\ar[d] &\Hom_{\Set}(X,[ \esG (A), \esG ]) \ar[d] \\ [ \esG (A), \esG (C(\nat_{+},B))] \ar[r] &\Hom_{\Set}(\nat_{+},[ \esG (A), \esG ])  }  \ . $$
It suffices to show that the right-down composition sends $f$ to a continuous map. But this is the case in view of \cref{ojiviopwgfrwerferfwf} since this map is representable by $t^{*}f$ in $  [ \esG (A), \esG (C(\nat_{+},B))] $.

The second assertion is a consequence of \cref{ojiviopwgfrwerferfwf}.
\end{proof}
Recall that  from \cref{okprthrtertegtrgrg} that $G\nCalg_{\sepa}$ is topologically enriched. 
The following consequence of \cref{wkjoergopwerfwerfvfsvfdvsdvsdfv} is a version of \cite[Prop. 2.4]{Carrion:2023aa}.
\begin{kor}\label{gjweriofwefgdbgb}
For all $A,B$ in $G\nCalg_{\sepa}$ we have   canonical continuous 
 maps \begin{eqnarray*}
\underline{\Hom}_{G\nCalg_{\sepa}}(A,B)&\to& [\esG(A) ,\esG (B)]^{\sh} \ .\\
\underline{\Hom}_{G\nCalg_{\sepa}}(A,B)&\to& [\asGc (A) ,\asGc  (B)]^{\sh} 
\end{eqnarray*}
 \end{kor}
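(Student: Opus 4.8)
The plan is to define the map on underlying sets by $f\mapsto [\esG(f)]$ (respectively $f\mapsto [\asGc(f)]$, where $[-]$ denotes the homotopy class) and then to check that it is continuous for the point-norm topology on $\underline{\Hom}_{G\nCalg_{\sepa}}(A,B)$ recalled in \cref{okprthrtertegtrgrg} and for the shape topology on the target. It suffices to treat $\esG$: the argument for $\asGc$ is literally the same, using \cref{vsiowerfsdfuiohsdfoijasdf1} instead of \cref{vsiowerfsdfuiohsdfoijasdf} and the Schochet exactness of $\asGc$ instead of the exactness of $\esG$ when invoking \cref{ojiviopwgfrwerferfwf} and \cref{wkjoergopwerfwerfvfsvfdvsdvsdfv}.

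First I would reduce to sequential continuity. Since $A$ and $B$ are separable, $\underline{\Hom}_{G\nCalg_{\sepa}}(A,B)$ is the set of $*$-homomorphisms $A\to B$ with the point-norm topology; evaluating at a countable dense subset of $A$ embeds it into a countable product of copies of $B$ equipped with the norm topology, so it is metrizable, in particular a sequential space (and it agrees with its $k$-ification). On the other side, $\EsG$ is \pcas{} by \cref{vsiowerfsdfuiohsdfoijasdf}, so $\esG(A)$ admits a weakly compact approximation and hence $[\esG(A),\esG(B)]^{\sh}$ has countable neighbourhood bases by \cref{wqefu0dewdewdqwdqewd}. Therefore it is enough to show that our map sends convergent sequences to convergent sequences.

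Next I would turn a convergent sequence into a single morphism. Using the presentation of $\nat_{+}$ from \cref{koergperfrefqfrfq} together with \eqref{cdcadsnckladscasdcadc}, a sequence $(f_{n})_{n\in \nat}$ in $\underline{\Hom}_{G\nCalg_{\sepa}}(A,B)$ converging to $f$ is the same datum as a homomorphism $g\colon A\to C(\nat_{+},B)$ with $\ev_{n}\circ g=f_{n}$ for all $n$ in $\nat$ and $\ev_{\infty}\circ g=f$. Applying $\esG$ and the equivalence $\esG(C(\nat_{+},B))\simeq \esG(B)^{\nat_{+}}$ of \cref{ojiviopwgfrwerferfwf}, the class $[\esG(g)]$ lies in $[\esG(A),\esG(B)^{\nat_{+}}]$, and \cref{wkjoergopwerfwerfvfsvfdvsdvsdfv} exhibits its image under evaluation as a continuous map $\nat_{+}\to [\esG(A),\esG(B)]^{\sh}$. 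To conclude I would identify this continuous map: on the point $n$ it is $[i_{n}^{*}\esG(g)]=[\esG(\ev_{n}\circ g)]=[\esG(f_{n})]$ and on $\infty$ it is $[\esG(f)]$, whence $[\esG(f_{n})]\to [\esG(f)]$ in $[\esG(A),\esG(B)]^{\sh}$, as required.

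The only point that is not purely formal is this last identification, namely that the equivalence $\esG(C(X,B))\simeq \esG(B)^{X}$ of \cref{ojiviopwgfrwerferfwf} intertwines restriction of a $C^{*}$-algebra along the inclusion of a point $*\to X$ with the corresponding projection of the power object. This holds because that equivalence is built as the left Kan extension along $\Fin^{\op}\to(\Profinl)^{\op}$ of the canonical equivalence at $X=*$, hence is natural in $X\in\Fin^{\op}$, and in particular compatible with the maps $*\to X_{i}$ occurring in a presentation $X\cong\lim_{i}X_{i}$ of a light profinite space. Everything else — metrizability of the point-norm topology, the correspondence between continuous maps out of $\nat_{+}$ and homomorphisms into $C(\nat_{+},B)$, and "sequentially continuous $\Rightarrow$ continuous" for sequential domains — is standard, so I expect no real obstacle beyond carefully unwinding the definitions in \cref{ojiviopwgfrwerferfwf} and \cref{wkjoergopwerfwerfvfsvfdvsdvsdfv}.
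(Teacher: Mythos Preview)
Your proof is correct and follows essentially the same route as the paper: the corollary is deduced from \cref{wkjoergopwerfwerfvfsvfdvsdvsdfv} together with the description \eqref{cdcadsnckladscasdcadc} of the enrichment, by turning a continuous family of homomorphisms into a single map into $C(X,B)$ and then invoking \cref{ojiviopwgfrwerferfwf}. Your specialization to $X=\nat_{+}$ via metrizability is a harmless variant; note only that the countable neighbourhood basis on the target is not needed for the reduction to sequences---sequentiality of the metrizable domain already suffices.
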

By this corollary the functors $\EsGn$ and $\asG$ refine to functors between topologically enriched categories.

The following result is only for $E$-theory since we employ \cref{ewijrgowefgerferfw} in the proof which has only been shown  under a stability assumption.
\begin{kor}
For every light profinite space $X$ we have a surjective map
$$\overline{[ \esG (A), \esG (C(X,B))]}  \to  \Hom_{\Top}(X,\overline{[ \esG (A), \esG (B)]})\ .$$
\end{kor}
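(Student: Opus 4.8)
The plan is to derive this corollary by combining the equivalence $\esG(C(X,B)) \simeq \esG(B)^{X}$ from \cref{ojiviopwgfrwerferfwf} with the surjectivity statement of \cref{ewijrgowefgerferfw}. Concretely, the map in question should be the composition
$$\overline{[\esG(A), \esG(C(X,B))]} \xrightarrow{\ \simeq\ } \overline{[\esG(A),\esG(B)^{X}]} \longrightarrow \Hom_{\Top}(X, \overline{[\esG(A),\esG(B)]})\ ,$$
where the first map is induced by the equivalence $\esG(C(X,B))\simeq \esG(B)^{X}$ of \cref{ojiviopwgfrwerferfwf} (and is therefore a homeomorphism of Hausdorff quotients), and the second is the canonical map from \cref{ewijrgowefgerferfw}.

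First I would check that \cref{ewijrgowefgerferfw} applies with $\bD = \EsG$, $B = \esG(A)$, $C = \esG(B)$. This requires $\EsG$ to be stable (true, since $\esG$ takes values in a stable $\infty$-category by \cref{wrejgioweferfw}), $\EsG$ to admit finite products and countable filtered colimits (also part of the setup, and products exist since $\EsG$ is stable), and crucially that $\esG(A)$ admits a weakly compact approximation. The latter holds because $\EsG$ is \pcas{} by \cref{vsiowerfsdfuiohsdfoijasdf}: indeed, in a \pcas{} category every object admits a (strongly, hence weakly) compact exhaustion by \cref{regijfowergffreferwfw}, and a weakly compact exhaustion is in particular a weakly compact approximation. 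This also means the \emph{second} clause of \cref{ewijrgowefgerferfw} is available should one want the bijectivity statement, though the corollary only claims surjectivity. Then \cref{ewijrgowefgerferfw} directly yields the surjective map
$$\overline{[\esG(A),\esG(B)^{X}]} \longrightarrow \Hom_{\Top}(X,\overline{[\esG(A),\esG(B)]})$$
for the light profinite space $X$.

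Next I would note that the equivalence $\esG(B)^{X}\simeq \esG(C(X,B))$ supplied by \cref{ojiviopwgfrwerferfwf} induces a bijection $[\esG(A),\esG(B)^{X}] \cong [\esG(A),\esG(C(X,B))]$ which is compatible with restriction along weakly compact maps into $\esG(A)$, hence respects the shape topology and therefore descends to a homeomorphism $\overline{[\esG(A),\esG(C(X,B))]} \cong \overline{[\esG(A),\esG(B)^{X}]}$ of Hausdorff quotients. Precomposing the surjection above with this homeomorphism gives the asserted surjective map out of $\overline{[\esG(A),\esG(C(X,B))]}$.

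There is no real obstacle here — this is a routine assembly of two already-established results. The only point worth a sentence of care is checking that the map one writes down agrees with ``the canonical map'': i.e. that under the identification $\esG(C(X,B))\simeq \esG(B)^{X}$, the evaluation-at-points maps $i_{x}^{*}$ on $[\esG(A),\esG(B)^{X}]$ correspond to the maps induced by the point inclusions $\{x\}\hookrightarrow X$ at the level of $C^{*}$-algebras, namely the restriction homomorphisms $C(X,B)\to B$. This follows from the construction of the equivalence in \cref{ojiviopwgfrwerferfwf}, which is built from the canonical equivalence $\esG(C(*,B))\simeq \esG(B)^{*}$ by left Kan extension and is hence natural in $X$; in particular it is compatible with the point inclusions, so the two descriptions of the map coincide. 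With this compatibility in hand the corollary is immediate.
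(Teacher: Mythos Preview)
Your proof is correct and follows exactly the same approach as the paper's own proof, which simply cites \cref{ojiviopwgfrwerferfwf} and \cref{ewijrgowefgerferfw}. Your version is more detailed in spelling out why the hypotheses of \cref{ewijrgowefgerferfw} are satisfied (via \cref{vsiowerfsdfuiohsdfoijasdf} and \cref{regijfowergffreferwfw}) and in checking the compatibility of the evaluation maps, but the underlying argument is identical.
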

\begin{proof}
This follows immediately from \cref{ojiviopwgfrwerferfwf} and \cref{ewijrgowefgerferfw}.
\end{proof}

\begin{ex}\label{jireogwegrewgw9}
If $u$ is  in $U(M(B))^{G}$ ($u$ is a $G$-invariant unitary multiplier on $B$) and $f:A\to B$ is a homomorphism, then
$\esG (f)\simeq  \esG (u^{*}f u)$. 
Assume that $(u_{n})_{n\in \nat}$ is a sequence in $U(M(B))^{G}$ such that
$\lim_ {n\in \nat} u_{n}^{*}f u_{n}=g$ in $\underline{\Hom}_{G\nCalg_{\sepa}}(A,B)$. Then $f$ and $g$ are called approximately
unitary equivalent. 
 
\begin{kor}\label{wtokpghgwrerefsdfg}
If $f$ and $g$ are approximately unitary equivalent, then
$\esG (f)-\esG (g)$ is a strong phantom morphism.
\end{kor}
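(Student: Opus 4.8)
The statement to prove is \cref{wtokpghgwrerefsdfg}: if $f,g:A\to B$ in $G\nCalg_{\sepa}$ are approximately unitary equivalent, then $\esG(f)-\esG(g)$ is a strong phantom morphism in $\EsG$. The natural strategy is to package the approximate unitary equivalence into a single homomorphism into a power algebra $C(\nat_{+},B)$ and then invoke \cref{roeigwprefrewfrefrfv} together with \cref{ojiviopwgfrwerferfwf}. Concretely, the data $(u_{n})_{n\in\nat}$ in $U(M(B))^{G}$ with $\lim_{n}u_{n}^{*}fu_{n}=g$ gives a continuous map $\nat_{+}\to \underline{\Hom}_{G\nCalg_{\sepa}}(A,B)$ sending $n$ to $u_{n}^{*}fu_{n}$ and $\infty$ to $g$; by the adjunction \eqref{cdcadsnckladscasdcadc} this is the same as a homomorphism $h:A\to C(\nat_{+},B)$ in $G\nCalg_{\sepa}$. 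Evaluation at $n$ gives $\ev_{n}\circ h = u_{n}^{*}fu_{n}$ and evaluation at $\infty$ gives $\ev_{\infty}\circ h = g$.

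First I would apply $\esG$ to $h$ and use the equivalence $\esG(C(\nat_{+},B))\simeq \esG(B)^{\nat_{+}}$ from \cref{ojiviopwgfrwerferfwf} (together with the exactness and countable-filtered-colimit-preservation of $\esG$ that enters its proof) to regard $\esG(h)$ as an element of $[\esG(A),\esG(B)^{\nat_{+}}]$, i.e.\ as a representable map $\nat_{+}\to [\esG(A),\esG(B)]^{\cd}$. Under the evaluation \eqref{wergwergerferfwefwefwf} this map sends $n$ to $\esG(u_{n}^{*}fu_{n}) = \esG(f)$ — here one uses that conjugation by a $G$-invariant unitary multiplier induces the identity in $E$-theory, which holds because $\esG$ is $K_{G}$-stable (the standard argument: $u^{*}(-)u$ and $\id$ agree after stabilizing by $K_{G}$, or more directly via a homotopy of homomorphisms into $B\otimes K_{G}$) — and sends $\infty$ to $\esG(g)$. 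Hence the constant sequence with value $\esG(f)$ converges to $\esG(g)$ in the light condensed topology, so by \cref{roeigwprefrewfrefrfv} the difference $\esG(f)-\esG(g)$, which is the image under $\ev$ at any point minus the limit, lies in the kernel of $\ev$ and is therefore a strong phantom map. To extract the difference cleanly, I would instead consider the map $A\to C(\nat_{+},B)$ obtained by subtracting off the constant map with value $f$ — or work in the stable category and note $[\esG(A),\esG(B)^{\nat_{+}}]$ is an abelian group — so that the relevant element of $[\esG(A),\esG(B)^{\nat_{+}}]$ evaluates to $0$ at every $n$ and to $\esG(g)-\esG(f)$ at $\infty$; then \cref{roeigwprefrewfrefrfv} (whose conclusion is precisely that kernel elements of $\ev$ are strong phantoms) applies directly. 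Alternatively, and perhaps more transparently, one uses the fibre sequence $\bigoplus_{\nat}\esG(B)\xrightarrow{i}\esG(B)^{\nat_{+}}\xrightarrow{\ev_{\infty}^{*}}\esG(B)$ from \eqref{advsiojioqefasdvadsvs}: the class $\esG(h)-\mathrm{const}(\esG(f))$ has $\ev_{\infty}^{*}$ equal to $\esG(g)-\esG(f)$ while its components at finite $n$ vanish, hence it factors through $\bigoplus_{\nat}\esG(B)$, exhibiting $\esG(g)-\esG(f)$ as (the $\ev_{\infty}$ of) a summable family and thus a strong phantom by \cref{regjiewerggre9}.

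\textbf{Main obstacle.} The routine analytic input — that $(u_{n})$ defines a genuine $C^{*}$-homomorphism $A\to C(\nat_{+},B)$ rather than merely a set map — requires that $n\mapsto u_{n}^{*}fu_{n}$ is norm-continuous on $\nat_{+}$, which is exactly the hypothesis $\lim_{n}u_{n}^{*}fu_{n}=g$ in the point-norm topology of $\underline{\Hom}_{G\nCalg_{\sepa}}(A,B)$; this is immediate from the enrichment recalled in \cref{okprthrtertegtrgrg}. The only genuinely non-formal point is the identity $\esG(u^{*}fu)\simeq\esG(f)$ for a $G$-invariant unitary multiplier $u$; I expect this to be the part needing care, and I would handle it by the usual rotation-homotopy trick inside $M_{2}(M(B))$ after tensoring with $K_{G}$, invoking homotopy invariance and $K_{G}$-stability of $\esG$. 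Everything else is assembling the cited results (\cref{ojiviopwgfrwerferfwf}, \cref{roeigwprefrewfrefrfv} or \cref{regjiewerggre9}, and the description of $C(\nat_{+},B)$ as a filtered colimit of products over the finite truncations \eqref{eoirthjerpthetrge}).
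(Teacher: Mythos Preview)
Your overall strategy is correct and in substance the same as the paper's: you build the map $h:A\to C(\nat_{+},B)$ from the data $(u_{n})$, apply $\esG$, use \cref{ojiviopwgfrwerferfwf}, and observe that the resulting representable map $\nat_{+}\to [\esG(A),\esG(B)]$ is the constant sequence $\esG(f)$ with limit $\esG(g)$. The paper then finishes in one line: by \cref{gjweriofwefgdbgb} the sequence converges in the shape topology, and by \cref{jierogwergwrefrewfwrf} the closure of $0$ in $[\esG(A),\esG(B)]^{\sh}$ consists exactly of the strong phantoms, so $\esG(f)-\esG(g)$ is one.

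Where your write-up goes wrong is in the two proposed ways of closing the argument. Your appeal to \cref{roeigwprefrewfrefrfv} does not work as stated: that lemma says an element of $[B,C^{X}]$ evaluating to zero at \emph{every} point of $X$ is a strong phantom \emph{in $[B,C^{X}]$}; your element is nonzero at $\infty$, so it is not in the kernel of $\ev$, and in any case the conclusion you want concerns an element of $[B,C]$, not $[B,C^{X}]$. In your alternative via the fibre sequence \eqref{advsiojioqefasdvadsvs} you subtracted the wrong constant: to factor through $\bigoplus_{\nat}\esG(B)$ you need $\ev_{\infty}=0$, so you must take $\esG(h)-\mathrm{const}(\esG(g))$, whose finite components are then $\esG(f)-\esG(g)$; any lift to $\bigoplus_{\nat}\esG(B)$ has these as its components, witnessing summability of the constant sequence $(\esG(f)-\esG(g))_{n}$, and \cref{regjiewerggre9} gives the conclusion. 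With that correction your alternative is valid, and amounts to reproving by hand the implication ``limit of constant sequence $\Rightarrow$ difference is strong phantom'' that the paper gets for free from \cref{jierogwergwrefrewfwrf}.
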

\begin{proof}
We note that $ (\esG (u_{n}fu_{n}^{*}))_{n\in \nat}$ is the constant sequence and that by \cref{gjweriofwefgdbgb} we have an equality
$\lim_{ n\in \nat} \esG (u_{n}fu_{n}^{*})=\esG (g)$.
This implies the result in view of \cref{jierogwergwrefrewfwrf}.
\end{proof}
The following is a consequence of \cref{wekotgpwgrefwf} and \cref{wtokpghgwrerefsdfg}.
\begin{kor}
If $f:A\to B$ {and}  $g:B\to A$  {in $G\nCalg_{\sepa}$ are} such that
$f\circ g$ and $ g\circ f$ are approximately unitary equivalent to isomorphisms, then
$\esG (f)$ is an equivalence.
\end{kor}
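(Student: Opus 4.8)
The plan is to apply the functor $\bar\ho\colon\EsG\to\bar{\EsG}$ from \cref{erjogiopwregwrefwref} and to exploit its conservativity. Note first that $\bar{\EsG}$ is well-defined: by \cref{vsiowerfsdfuiohsdfoijasdf} the $\infty$-category $\EsG$ is compactly assembled, so every object admits a strongly compact, hence in particular weakly compact, exhaustion by \cref{regijfowergffreferwfw}. Write $u:=\esG(f)$ and $v:=\esG(g)$.

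By hypothesis there are isomorphisms $\iota_B\colon B\to B$ and $\iota_A\colon A\to A$ in $G\nCalg_{\sepa}$ such that $f\circ g$ is approximately unitary equivalent to $\iota_B$ and $g\circ f$ is approximately unitary equivalent to $\iota_A$. By \cref{wtokpghgwrerefsdfg} the differences $u\circ v-\esG(\iota_B)$ and $v\circ u-\esG(\iota_A)$ are strong phantom morphisms. By \cref{jierogwergwrefrewfwrf} the subgroup of strong phantom morphisms in a mapping group of $\EsG$ is precisely the closure of $0$ in the shape topology, hence is sent to $0$ by the canonical map to the Hausdorff quotient; therefore $\bar\ho$ annihilates strong phantoms. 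Since $\bar\ho$ is additive, it follows that $\bar\ho(u)\circ\bar\ho(v)=\bar\ho(\esG(\iota_B))$ and $\bar\ho(v)\circ\bar\ho(u)=\bar\ho(\esG(\iota_A))$, and both right-hand sides are isomorphisms in $\bar{\EsG}$ because $\iota_A$ and $\iota_B$ are isomorphisms in $G\nCalg_{\sepa}$.

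Consequently $\bar\ho(u)$ has a left inverse, namely $\bar\ho(\esG(\iota_A))^{-1}\circ\bar\ho(v)$, and a right inverse, namely $\bar\ho(v)\circ\bar\ho(\esG(\iota_B))^{-1}$; these two morphisms then necessarily agree, so $\bar\ho(u)=\bar\ho(\esG(f))$ is an isomorphism in $\bar{\EsG}$. Since $\EsG$ is stable and compactly assembled, \cref{wekotgpwgrefwf} tells us that $\bar\ho$ is conservative, and we conclude that $\esG(f)$ is an equivalence.

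The only substantive point is that strong phantoms vanish in $\bar{\EsG}$, which is \cref{jierogwergwrefrewfwrf} combined with the existence of weakly compact exhaustions from \cref{regijfowergffreferwfw}; the rest is formal manipulation of left and right inverses in an additive category. (Alternatively, one can avoid $\bar\ho$ entirely: \cref{wtokpghgwrerefsdfg} together with \cref{weijogperfwrefregdghfg} shows directly that $u\circ v$ and $v\circ u$ are equivalences in $\EsG$, and then the same two-sided-inverse argument, carried out in $\EsG$ itself, yields that $\esG(f)$ is an equivalence.)
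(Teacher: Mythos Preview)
Your proof is correct and follows essentially the same route as the paper: the paper simply states that the corollary is a consequence of \cref{wekotgpwgrefwf} and \cref{wtokpghgwrerefsdfg}, and you have spelled out precisely how those two ingredients combine. Your alternative remark at the end, using \cref{weijogperfwrefregdghfg} directly, is in fact the argument inside the paper's proof of \cref{wekotgpwgrefwf}, so either way you are tracking the intended reasoning.
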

{
As a consequence of \cref{weokrgpwregrefrfw9} we get:
\begin{kor}
If $f:A\to B$ and $g:B\to C$ are homomorphisms  $G\nCalg_{\sepa}$ which are approximately unitarily equivalent to zero, then $\esG(g\circ f)=0$.
\end{kor}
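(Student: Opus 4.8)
The plan is to combine the previous two corollaries with the composition property for strong phantom maps. First I would observe that the hypothesis ``$f$ is approximately unitarily equivalent to zero'' means there is a sequence $(u_n)_{n\in\nat}$ in $U(M(B))^G$ with $\lim_n u_n^* f u_n = 0$ in $\underline{\Hom}_{G\nCalg_{\sepa}}(A,B)$, where $0$ denotes the zero homomorphism; the same interpretation applies to $g$. Applying \cref{wtokpghgwrerefsdfg} (with ``$g$'' there replaced by the zero homomorphism, for which $\esG(0)\simeq 0$) we conclude that $\esG(f)$ is a strong phantom morphism in $\EsG$, and likewise $\esG(g)$ is a strong phantom morphism.

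Next I would write $\esG(g\circ f)\simeq \esG(g)\circ\esG(f)$ by functoriality, so that $\esG(g\circ f)$ is a composition of two strong phantom maps. To apply \cref{weokrgpwregrefrfw9} and conclude that this composition vanishes, the remaining point is that the source $\esG(A)$ admits a weakly compact exhaustion. This is where we invoke the main result: by \cref{vsiowerfsdfuiohsdfoijasdf} the $\infty$-category $\EsG$ is \pcas, hence by \cref{regijfowergffreferwfw} every object of $\EsG$ admits a (strongly, in particular weakly) compact exhaustion. Thus \cref{weokrgpwregrefrfw9} applies to the composition $\esG(A)\xrightarrow{\esG(f)}\esG(B)\xrightarrow{\esG(g)}\esG(C)$ and gives $\esG(g)\circ\esG(f)\simeq 0$, which is the assertion.

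I do not expect any serious obstacle here: the statement is a direct corollary assembled from \cref{wtokpghgwrerefsdfg}, \cref{weokrgpwregrefrfw9}, and the fact (\cref{vsiowerfsdfuiohsdfoijasdf} together with \cref{regijfowergffreferwfw}) that $\EsG$ is \pcas{} and hence all its objects admit weakly compact exhaustions. The only mild subtlety is making sure the form of \cref{wtokpghgwrerefsdfg} used is the one where one of the two homomorphisms is literally zero, so that the strong phantom ``difference'' $\esG(f)-\esG(0)$ is just $\esG(f)$; this is immediate from $\esG(0)\simeq 0$. One could alternatively phrase the proof so as to avoid the stability/\pcas{} input by noting that a strong phantom map composed with \emph{any} strong phantom map out of an object with merely a weakly compact exhaustion vanishes, but since $\EsG$ is already known to be \pcas{} the cleanest route is the one above.

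\begin{proof}
Since $f$ is approximately unitarily equivalent to zero and $\esG(0)\simeq 0$, \cref{wtokpghgwrerefsdfg} shows that $\esG(f)$ is a strong phantom morphism, and likewise $\esG(g)$ is a strong phantom morphism. By \cref{vsiowerfsdfuiohsdfoijasdf} the $\infty$-category $\EsG$ is \pcas{}, so by \cref{regijfowergffreferwfw} the object $\esG(A)$ admits a strongly, in particular weakly, compact exhaustion. We may therefore apply \cref{weokrgpwregrefrfw9} to the composition $\esG(A)\xrightarrow{\esG(f)}\esG(B)\xrightarrow{\esG(g)}\esG(C)$ of two strong phantom maps and conclude $\esG(g\circ f)\simeq\esG(g)\circ\esG(f)\simeq 0$.
\end{proof}
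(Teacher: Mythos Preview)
Your proof is correct and follows essentially the same approach as the paper: the paper simply states the corollary ``as a consequence of \cref{weokrgpwregrefrfw9}'', relying implicitly on \cref{wtokpghgwrerefsdfg} to see that $\esG(f)$ and $\esG(g)$ are strong phantoms and on the compactly assembled structure of $\EsG$ to supply the required weakly compact exhaustion. You have spelled out these implicit steps explicitly, which is fine.
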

It is clear that $g\circ f$ is again approximately unitarily equivalent to zero so that $\esG(g\circ f)$ is a phantom map by \cref{wtokpghgwrerefsdfg}. But the fact that this class vanishes  is less trivial.
}
\hB
\end{ex}

\begin{ex} \label{ogjwerferwferwfwe}
An exact sequence $$0\to A\to B\to C\to 0$$
in $G\nCalg_{\sepa}$ will be called weakly quasi-diagonal \cite{MR1784676} if there exists a family of invariant projections $(p_{n})_{n\in \nat}$
in $M(B)$ satisfying:
 \begin{enumerate}  \item For all $n$ in $\nat$ we have $p_{n}B\subseteq A$.
 \item For every $b$ in $B$ we have $\lim_{n\to \infty} (p_{n}b-bp_{n}) =0$.
  \item For every $a$ in $A$ we have  $\lim_{n\to \infty}  p_{n}a=a$.
\end{enumerate}
In  \cite{MR1784676} it was in addition required that the family $(p_{n})_{n\in \nat}$ is increasing.
If $p_{n}$ in $A$ for all $n$ in $\nat$, then  the exact sequence is called quasi-diagonal.
\begin{prop}\label{werrwer34rwffefwefwerf}
The boundary map $\delta:\esG(C)\to \Sigma \esG(A)$ associated to a weakly quasi-diagonal extension is a strong phantom map
\end{prop}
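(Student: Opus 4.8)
The plan is to show that the boundary map $\delta \colon \esG(C) \to \Sigma\esG(A)$ factors through the shape functor applied to $\esG(C)$ in a way that exhibits $\delta$ as vanishing on every weakly compact map into $\esG(C)$, or more directly to produce, for every semi-projective approximation of $C$, a filling which kills the relevant compositions. Concretely, by \cref{wekgowpefefwrf} it suffices to find a weakly compact approximation $(C_n)_{n\in\nat}$ of $\esG(C)$ such that each composition $C_n \to \esG(C) \xrightarrow{\delta} \Sigma\esG(A)$ vanishes. Using \cref{ojkergperwerge} choose a shape system $(C_n')_{n\in\nat}$ for $C$ in $G\nCalg_{\sepa}$; after suspending twice and tensoring with $K_G$ (which does not change the relevant $E$-theory classes but makes \cref{jigowggergwe9}.\ref{wekrogpwergewrfrfw1} applicable) we obtain by \cref{jigowggergwe9} that $\colim_{n\in\nat} y_{\EsG}(\esG(C_n'))$ represents the shape $S_{\EsG}(\esG(C))$, and by \cref{wrijotgwegrerfwerf9} the system $(\esG(C_n'))_{n\in\nat}$ is a weakly compact approximation of $\esG(C)$.

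The key step is then to analyze the compositions $\esG(C_n') \to \esG(C) \xrightarrow{\delta} \Sigma\esG(A)$ using the weak quasi-diagonality. Given the projections $(p_m)_{m\in\nat}$ in $M(B)$ witnessing weak quasi-diagonality, for a fixed $n$ the algebra $C_n'$ is a semi-projective (indeed explicit semi-projective) $G$-$C^*$-algebra mapping to $C$, and I would lift this map along $B \to C$ on the level of generators: since $p_m B \subseteq A$ and $\lim_m p_m a = a$, $\lim_m (p_m b - b p_m)=0$, one can, for the finitely many orbit-generators of $C_n'$ and the finitely many defining relations, choose $m$ large enough that the compressed maps $b \mapsto p_m \tilde b p_m$ (for lifts $\tilde b$ of the images in $C$) satisfy the relations up to the slack built into the explicit semi-projective presentation, hence — by the universal property used in \cref{tokgprhhehtheth}/\cref{egjweogrefwerfw} — define a genuine equivariant homomorphism $C_n' \to p_m B p_m \subseteq A$ whose composition with $A \to B \to C$ is homotopic (via a straight-line homotopy, exactly as in the proof of \cref{xjnbcuiohwedfiohgeriosdfylk}) to the original map $C_n' \to C$. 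This homotopy, after applying $\esG$, shows that $\esG(C_n') \to \esG(C)$ factors through $\esG(A) \to \esG(B) \to \esG(C)$ in the homotopy category, whence $\delta \circ \esG(C_n' \to C) \simeq 0$ because $\delta \circ \esG(B \to C) \simeq 0$ in the fibre sequence $\esG(A) \to \esG(B) \to \esG(C) \xrightarrow{\delta} \Sigma\esG(A)$.

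Having produced, for each $n$, an index $m(n)$ and a lift making $\esG(C_n') \to \esG(C) \xrightarrow{\delta} \Sigma\esG(A)$ null, I then invoke \cref{wekgowpefefwrf} with the weakly compact approximation $(\esG(C_n'))_{n\in\nat}$ of $\esG(C)$ to conclude that $\delta$ is a strong phantom map. The main obstacle I expect is the first part of the key step: making the compression $b \mapsto p_m \tilde b p_m$ into an honest equivariant $*$-homomorphism out of $C_n'$. This requires both that the compression be approximately multiplicative and $*$-preserving on the finite generating set (which follows from the commutator condition $\lim_m(p_m b - b p_m)=0$ together with the $C^*$-identity estimates) and that the relations of $C_n'$ — with their built-in $\varepsilon$-slack coming from the explicit shape system — be satisfied; one has to track how the norm estimates propagate through the finitely many polynomial relations and through the $G$-action (which commutes with the $p_m$ since they are invariant), exactly analogous to the norm-chasing in the proof of \cref{tokgprhhehtheth}. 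A secondary technical point is to make sure the straight-line homotopy between the compressed map composed back into $C$ and the original map lands in $C(\,[0,1],C)$ and is equivariant — this is automatic from invariance of the $p_m$ and continuity of the relevant families. Since this estimate only uses weak quasi-diagonality and not the increasing hypothesis of \cite{MR1784676}, the result holds in the stated generality.
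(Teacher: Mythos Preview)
There is a genuine gap in the key step. You construct a homomorphism $C_n'\to p_m B p_m\subseteq A$ by compressing lifts of generators, and then claim that its composition with $A\to B\to C$ is homotopic to the original map $C_n'\to C$. But $A$ is the kernel of $B\to C$, so the composition $A\to B\to C$ is the zero map; hence the composite $C_n'\to A\to B\to C$ is identically zero. Since the structure maps $C_n'\to C$ of a shape system are certainly not all null-homotopic (their colimit is $\id_C$), the claimed homotopy cannot exist. Consequently you have not produced a factorization of $\esG(C_n')\to\esG(C)$ through $\esG(B)$, and the argument collapses. The compression $b\mapsto p_m\tilde b p_m$ is the natural object to consider, but it has the wrong target: it lands in the ideal, not in a lift of $C$.

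The paper circumvents this by passing to a product construction. One regards $p=(p_n)_n$ as a multiplier of $\prod_n B$, lets $\tilde B\subseteq\prod_n B$ be the subalgebra generated by $\bigoplus_n B$, $\diag(B)$ and the elements $p\,\diag(b)$, and compares the original extension via the diagonal with the induced extension $0\to\tilde A/\bigoplus_n A\to\tilde B/\bigoplus_n B\to C\to 0$. In this quotient the image of $p$ is an invariant \emph{central} projection (because the commutators $[p_n,b]$ tend to zero and hence die in the quotient by the sum), so the lower extension splits and its boundary map vanishes. One then uses compactness of a given map $\esG(D)\to\esG(C)$ together with the identification $\esG(\tilde A/\bigoplus_n A)\simeq\colim_n\esG(\tilde A/\bigoplus_{k\le n}A)$ and the retraction $\bar\pr_{n+1}$ onto $A$ to pull the vanishing back to the original boundary map. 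The point is that quasi-diagonality does not give a lift $C\to B$ but only an \emph{asymptotic} splitting, and the product-over-$\nat$ trick is precisely what turns this asymptotic data into an honest splitting in the quotient category.
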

\begin{proof}
We consider $p:=(p_{n})_{n\in \nat}$ as a multiplier of $\prod_{n\in \nat} B$.
We let $\tilde B$ be the subalgebra of $\prod_{n\in \nat} B$ generated by
$\bigoplus_{n\in \nat} B$, $\diag(B)$ and the elements $p\diag(b)$ for all $b$ in $B$.
The projection $q: \prod_{n\in \nat} B\to \prod_{n\in \nat} C$ sends
the generators of $\tilde B$ to elements of the subalgebra $\bigoplus_{n\in \nat} C+\diag(C)$.
We therefore get a map of exact sequences $$\xymatrix{0\ar[r]&A\ar[r]\ar[d]&B\ar[r]\ar[d]&C\ar@{=}[d]\ar[r]&0\\0\ar[r]&\tilde A/ \bigoplus_{n\in \nat}A\ar[r]
 &\tilde B/\bigoplus_{n\in \nat}B\ar[r]&C\ar[r]&0}\ ,$$
where $\tilde A:=\ker(q)\cap \tilde B$ and the two left vertical maps are induced by the diagonal.
The multiplier $p$ induces an invariant central  projection   in $M(\tilde B/\bigoplus_{n\in \nat}B)$
which splits the lower sequence. Consequently, its boundary map $\bar \delta:\esG(C)\to \Sigma \esG(\tilde A/ \bigoplus_{n\in \nat}A)$ vanishes. Now $\bar \delta$ is equivalent to the composition
$$ \esG(C) \xrightarrow{\delta}  \Sigma \esG(A)\xrightarrow{\Sigma \esG(\diag)}\Sigma \esG(\tilde A)\to  
\colim_{n\in \nat}\Sigma \esG(\tilde A/ \bigoplus_{k=1}^{n}A)$$
  Let $g:\esG(D)\to \esG(C)$ be a compact morphism in $\EsG$. 
Since
$$ \esG(D)\xrightarrow{g}  \esG(C)\xrightarrow{\delta}  \Sigma \esG(A)\xrightarrow{\Sigma\esG(\diag)}\Sigma \esG(\tilde A)\to  
\colim_{n\in \nat} \Sigma\esG(\tilde A/ \bigoplus_{k=1}^{n}A)$$
vanishes, by compactness of $g$ and since $\esG$ is exact and preserves sums, 
 there exists $n$ in $\nat$ such that
$$ \esG(D)\to \esG(C)\xrightarrow{\delta}  \Sigma \esG(A)\xrightarrow{\Sigma\esG(\diag)} \Sigma\esG(\tilde A)\to  
 \Sigma \esG(\tilde A/ \bigoplus_{k=1}^{n}A)$$
vanishes. Recall that $\tilde A$ is a subalgebra of $\prod_{n\in \nat} A\cong \ker(q)$ so that we can consider the projection
$\pr_{n+1}:\tilde A\to A$ onto the $(n+1)$th component. It induces a projection $\bar \pr_{n+1}:  \tilde A/ \bigoplus_{k=1}^{n}A\to A$.
The composition $$A\xrightarrow{\diag}\tilde A\to \tilde A/ \bigoplus_{k=1}^{n}A\xrightarrow{\bar \pr_{n+1}}A$$
is the identity of $A$. This implies that  
$ \esG(D)\xrightarrow{g}  \esG(C)\xrightarrow{\delta} \Sigma \esG(A)$ vanishes. 
Since $g$ was an arbitrary compact morphism to $\esG(C)$ we conclude that $\delta$ is a phantom map.
\end{proof}

In the case of a trivial group $G$
 in  \cite{Salinas_1974}, \cite{zbMATH02232820},  \cite{MR1784676} the set of  classes in 
 $$\KK_{1}(A,B):=[ \kks(C),\Sigma \kks(A)]^{\KKs}$$ 
 classifying quasi-diagonal extensions has been identified with the   closure of zero in a natural topology on the $\KK$-groups.
 If $C$ is in the bootstrap class, then the comparison map $c$ from \eqref{herjthiertogjertioger} induces an isomorphism $$[ \kks(C),\Sigma \kks(A)]^{\KKs}\to [\es(C),\Sigma \es(A)]^{\Es} $$
 identifying these closures  and \cref{werrwer34rwffefwefwerf} recovers these classical results in view of 
 \cref{jierogwergwrefrewfwrf}. 
 Note that the papers listed above also study the converse question: Does the fact that  the extension class is a strong phantom imply that the extension is quasi-diagonal?
 \hB
\end{ex}

\begin{rem}\label{wtkoprhterherhgertg}
In this remark we explain how parts of the results of 
 \cite{Carrion:2023aa} are direct consequences of the fact that $\AsG$ or $\EsGn$ are the homotopy categories of the \pcas{}   $\infty$-categories 
$\AsGc$ or  $\EsG $ and the general statements from the preceding sections.
Everywhere we use that these categories are known to be  \pcas{} by \cref{jifofqweewf9} which implies the existence of compact approximations.

    \setlist[enumerate]{itemsep=0cm}  \begin{enumerate} 
\item The shape topology on   $ [ \esG (B), \esG (C)]^{\sh}$ or  $[\asGc (A),\asGc (B)]^{\sh}$ coincides with the topology on the $E$-theory groups or $\Hom$-sets in $\AsG$
introduced   
   \cite{Carrion:2023aa}. This follows from the characterization of the latter topology
   in  \cite[Prop. 2.6]{Carrion:2023aa} together with   \cite[Prop. 2.8]{Carrion:2023aa} which is the same as the characterization of the shape topology
   in \cref{rqjifoergstgsgfgfg}. Here we  use the fact that  a shape system for $B$ in $G\nCalg_{\sepa}$ 
   provides  a  weakly compact approximation 
    of $\asGc (B)$ in $\AsGc$,   or $ \esG (B)$ in $\EsG $ (assuming that $B$ is twice suspended and $K_{G}$-stable), see  \cref{jigowggergwe9}.
   \item  {The topologies on    $ [ \esG (B), \esG (C)]$ or  $[\asGc (A),\asGc (B)]$  introduced in  \cite{Carrion:2023aa} a priori depend on the choice of the separable $G$-$C^{*}$-algebras $A$ and $B$. 
  It only follows a posteriori  from the continuity of the composition  
    \cite[Thm. 2.15]{Carrion:2023aa}  that they  only depend on the classes of $A$ and $B$ in $\AsG$ or $\EsGn$.
In contrast, the  shape topology   is by definition an  invariant of the classes of $A$ and $B$ in $\AsGc$ or $\EsG$.}
      \item  The topological spaces $ [ \esG (B), \esG (C)]^{\sh}$ or  $[\asGc (A),\asGc (B)]^{\sh}$ are   first countable by \cref{wqefu0dewdewdqwdqewd}. This corresponds to     \cite[Thm. 2.10]{Carrion:2023aa}.   
  \item A converging sequence in $ [ \esG (B), \esG (C)]^{\sh}$ or $ [ \asGc  (B), \asGc  (C)]^{\sh}$ is 
by definition   a continuous map  $\nat_{+}\to [ \esG (B), \esG(C)) ]^{\sh}$ or $\nat_{+}\to   [ \asGc  (B), \asGc  (C)]^{\sh}$, respectively.  By \cref{qreokgprefqewfqewfqd}  and \cref{ojiviopwgfrwerferfwf} such a sequence can be represented {by}    elements of $[ \esG (B), \esG (C(\nat_{+},C))] $ or  $[ \asGc  (B),\asGc   (C(\nat_{+},C))]$, respectively.  The converse is clear: any element in $[ \esG (B), \esG (C(\nat_{+},C))] $ or $[ \asGc  (B),\asGc   (C(\nat_{+},C))]$ gives rise to a converging sequence by \cref{ojiviopwgfrwerferfwf}, \cref{oijrfopqrfewdqwfqfefseq} and the equality of the shape and light condensed topologies \cref{rejiogerfwerfwrefrwef}.
 There facts are   \cite[Thm. A]{Carrion:2023aa} and \cite[Thm 2.12]{Carrion:2023aa}, where they are 
 called Pimsner's Condition.
  \item By \cref{worpjegperfrfwefewrf} the composition in $\EsGn$ or $\AsG$  
  is jointly continuous for the shape topology.  This corresponds to  \cite[Thm 4.5]{Carrion:2023aa} and \cite[Thm 2.15]{Carrion:2023aa} .    \item    \cref{jierogwergwrefrewfwrf}  and \cref{wokpteghwergwerfwerg9}  describes the Hausdorffifications of the groups  $[ \esG (B), \esG (C)]^{\sh}$ and the spaces $[ \asGc  (B),\asGc   ( C)]^{\sh}$.   
     In this way we reproduce  \cite[Thm. 3.10]{Carrion:2023aa} and the corresponding  parts of   \cite[Thm. 4.6]{Carrion:2023aa}.
  \item By \cref{okqrepgqdedqwe} for any  compact map $\asGc (A) \to \asGc (B)$ the composition
$$\underline{\Hom}_{G\nCalg}(B,C)\to [\asGc(B) ,\asGc (C)]^{\sh}\to  [\asGc (A) ,\asGc(C)]^{\sh}$$
factorizes over a discrete space. By \cref{gjoiowepgrfwefrefw} this is in particular the  case   if the map  $\asGc (A) \to \asGc (B)$ comes from a   semi-projective map $A\to B$. This corresponds to  \cite[Prop. 2.8]{Carrion:2023aa}.
\item\label{rogoiwepgreferwfre}  By \cref{wjeirgowergregwf9}
  the composition maps
$$\overline{[\asGc  (B) ,\asGc  (C) ]}\times \overline{[\asGc  (A) ,\asGc  (B) ]}\to \overline{[\asGc  (A) ,\asGc (C) ]}$$ and
$$\overline{[\esG (B) ,\esG (C) ]}\times \overline{[\esG (A) ,\esG (B) ]}\to \overline{[\esG (A) ,\esG (C) ]}$$
are continuous. This corresponds to 
 \cite[Prop. 3.2]{Carrion:2023aa}  and the part of \cite[Thm. 4.6]{Carrion:2023aa}.  
 \item  By \cref{thkjptherhrtgergeg}  combined with \cref{gjoiowepgrfwefrefw} we have isomorphisms of topological groups or spaces
 \begin{eqnarray*}  \overline{[\esG (B),\asGc  (C)]}&\cong& \lim_{n\in \nat} [\esG (
A_{n}),\esG(C)]^{\disc}\\   \overline{[\asGc  (B),\asGc  (C)]}&\cong& \lim_{n\in \nat} [\asGc  (
A_{n}),\asGc  (C)]^{\disc}\ , \end{eqnarray*}
where $(A_{n})_{n\in \nat}$ is a  shape system for $B $ and we assume that $B$ 
is twice suspended and $K_{G}$-stable in the case of $\esG$.
This corresponds to 
  \cite[Thm. 3.7]{Carrion:2023aa} except for the countability assertions
  which is specific to the $C^{*}$-algebra situation and not a consequence of the general  $\infty$-categorical considerations. 
 
  \item We can form a topologically enriched additive category $\overline{ \EsG}$ (denoted by $EL$ in  \cite{Carrion:2023aa}) and the topologically enriched category $\overline{\AsG}$ with  the same objects as $G\nCalg_{\sepa}$
  and with topological  morphism groups 
  $\overline{[\esG (B) ,\esG (C) ]}$ or spaces  $\overline{[\asGc  (B) ,\asGc   (C) ]}$.
  The composition is well-defined by \cref{wtkoprhterherhgertg}.\ref{rogoiwepgreferwfre}.
     By  \cref{kopgpertherhetrhgeget}  the functors $$\overline{\esG}:G\nCalg_{\sepa}\to\overline{\EsG}\ , \qquad  
 \overline{\asGc }:G\nCalg_{\sepa}\to\overline{\AsG}$$
  preserves filtered colimits.
     These are   \cite[Thm. B]{Carrion:2023aa}  and  \cite[Thm. 3.10]{Carrion:2023aa}.
  \item By  \cref{wekotgpwgrefwf} the functor  $\EE^{G} \to\overline{ \EE^{G}}$  is conservative.
This 
   recovers  \cite[Prop. 4.7]{Carrion:2023aa}.
  Again there is no analogue for $\AsG\to \overline{\AsG}$ since \cref{wekotgpwgrefwf} requires stability.
  \end{enumerate}
  \end{rem}

 \bibliographystyle{alpha}
\bibliography{forschung2021}

\end{document}